\theoremstyle{definition}
\newtheorem{thm}{Theorem}[section]
\newtheorem{dfn}[thm]{Definition}
\newtheorem{ex}[thm]{Example}
\newtheorem{cor}[thm]{Corollary}
\newtheorem{prop}[thm]{Proposition}
\newtheorem{lem}[thm]{Lemma}
\newtheorem{rem}[thm]{Remark}
\newtheorem*{sotp}{Strategy of our proof}
\newtheorem*{oftp}{Organization of this paper}
\newtheorem*{ack}{Acknowledgment}
\numberwithin{thm}{section}
\newcommand{\Zpn}{\mathbb{Z}_{>0}}
\newcommand{\Znn}{\mathbb{Z}_{\geq 0}}
\newcommand{\Z}{\mathbb{Z}}
\newcommand{\Zp}{{\mathbb{Z}}_p}
\newcommand{\Zpt}{{\mathbb{Z}}_{p}^{\times}}
\newcommand{\Zps}{{\mathbb{Z}}_{p^2}}
\newcommand{\Zpst}{{\mathbb{Z}}_{p^2}^{\times}}
\newcommand{\Q}{\mathbb{Q}}
\newcommand{\Qp}{{\mathbb{Q}}_p}
\newcommand{\Qpt}{{\mathbb{Q}}_{p}^{\times}}
\newcommand{\Qps}{{\mathbb{Q}}_{p^2}}
\renewcommand{\O}{\mathcal{O}}
\newcommand{\R}{\mathbb{R}}
\newcommand{\C}{\mathbb{C}}
\newcommand{\Fp}{\mathbb{F}_p}
\newcommand{\Fpbar}{\overline{\mathbb{F}}_p}
\newcommand{\Fpt}{\mathbb{F}^{\times}_p}
\newcommand{\Fpst}{\mathbb{F}^{\times}_{p^2}}
\newcommand{\plim}[1][]{\mathop{\varprojlim}\limits_{#1}}
\newcommand{\A}{\mathbb{A}}
\newcommand{\D}{\mathbb{D}}
\newcommand{\G}{\mathbb{G}}
\renewcommand{\H}{\mathbb{H}}
\renewcommand{\P}{\mathbb{P}}
\newcommand{\X}{\mathbb{X}}
\newcommand{\bD}{\mathbf{D}}
\newcommand{\E}{\mathbf{E}}
\newcommand{\bG}{\mathbf{G}}
\renewcommand{\L}{\mathbf{L}}
\newcommand{\bV}{\mathbf{V}}
\newcommand{\bX}{\mathbf{X}}
\newcommand{\bi}{\mathbf{i}}
\newcommand{\bv}{\mathbf{v}}
\newcommand{\B}{\mathcal{B}}
\newcommand{\cL}{\mathcal{L}}
\newcommand{\M}{\mathcal{M}}
\newcommand{\cZ}{\mathcal{Z}}
\newcommand{\sC}{\mathscr{C}}
\newcommand{\sM}{\mathscr{M}}
\newcommand{\sS}{\mathscr{S}}
\newcommand{\Sbar}{\overline{S}}
\newcommand{\ebar}{\overline{e}}
\newcommand{\fbar}{\overline{f}\!}
\newcommand{\gbar}{\overline{g}}
\renewcommand{\hbar}{\overline{h}}
\newcommand{\sbar}{\overline{s}}
\newcommand{\Lambdabar}{\overline{\Lambda}}
\newcommand{\Omegabar}{\overline{\Omega}}
\newcommand{\pibar}{\overline{\pi}}
\DeclareMathOperator{\Ker}{Ker}
\DeclareMathOperator{\Ima}{Im}
\DeclareMathOperator{\Hom}{Hom}
\DeclareMathOperator{\End}{End}
\DeclareMathOperator{\id}{id}
\DeclareMathOperator{\Lie}{Lie}
\DeclareMathOperator{\Frac}{Frac}
\DeclareMathOperator{\tr}{Tr}
\DeclareMathOperator{\trd}{Trd}
\DeclareMathOperator{\nrd}{Nrd}
\DeclareMathOperator{\ord}{ord}
\DeclareMathOperator{\GL}{GL}
\DeclareMathOperator{\GSp}{GSp}
\DeclareMathOperator{\GU}{GU}
\DeclareMathOperator{\GSpin}{GSpin}
\DeclareMathOperator{\Sp}{Sp}
\DeclareMathOperator{\SO}{SO}
\DeclareMathOperator{\PGSp}{PGSp}
\DeclareMathOperator{\N}{N}
\DeclareMathOperator{\nilp}{Nilp}
\DeclareMathOperator{\spec}{Spec}
\DeclareMathOperator{\spf}{Spf}
\DeclareMathOperator{\loc}{loc}
\DeclareMathOperator{\sml}{sim}
\DeclareMathOperator{\red}{red}
\DeclareMathOperator{\length}{length}
\DeclareMathOperator{\OGr}{OGr}
\DeclareMathOperator{\inv}{inv}
\DeclareMathOperator{\Res}{Res}
\DeclareMathOperator{\si}{ss}
\DeclareMathOperator{\Proj}{Proj}
\DeclareMathOperator{\diag}{diag}
\DeclareMathOperator{\rk}{rank}
\DeclareMathOperator{\Vrt}{VL}
\DeclareMathOperator{\Vtx}{Vert}
\DeclareMathOperator{\Edg}{Edge}
\DeclareMathOperator{\VE}{VE}
\DeclareMathOperator{\BT}{BT}
\DeclareMathOperator{\ad}{ad}
\DeclareMathOperator{\Irr}{Irr}
\DeclareMathOperator{\Sh}{Sh}
\DeclareMathOperator{\an}{an}
\DeclareMathOperator{\ns}{nsm}
\DeclareMathOperator{\nfs}{nfs}
\DeclareMathOperator{\bs}{basic}
\DeclareMathOperator{\stab}{Stab}
\DeclareMathOperator{\disc}{disc}
\DeclareMathOperator{\pr}{pr}
\DeclareMathOperator{\Ram}{Ram}
\DeclareMathOperator{\KMPS}{KMPS}
\DeclareMathOperator{\Tor}{Tor}
\DeclareMathOperator{\sr}{sr}
\DeclareMathOperator{\nsr}{nsr}
\DeclareMathOperator{\hs}{hs}
\DeclareMathOperator{\nsp}{ns}
\DeclareMathOperator{\Fil}{Fil}
\DeclareMathOperator{\Lift}{Lift}
\DeclareMathOperator{\Isot}{Isot}
\DeclareMathOperator{\Lag}{Lag}
\DeclareMathOperator{\GM}{GM}
\title[The supersingular locus]{On supersingular loci of Shimura varieties for quaternionic unitary groups of degree $2$}
\author[Y.\ Oki]{Yasuhiro Oki}
\address{Graduate School of Mathematical Sciences, 
the University of Tokyo, 3-8-1 Komaba, Meguro-ku, Tokyo 153-8914, Japan.}
\email{oki@ms.u-tokyo.ac.jp}
\begin{document}
\maketitle

\begin{abstract}
We describe the structure of the supersingular locus of a Shimura variety for a quaternionic unitary similitude group of degree $2$ over a ramified odd prime $p$ if the level at $p$ is given by a special maximal compact open subgroup. More precisely, we show that such a locus is purely $2$-dimensional, and every irreducible component is birational to the Fermat surface. Furthermore, we have an estimation of the numbers of connected and irreducible components. To prove these assertions, we completely determine the structure of the underlying reduced scheme of the Rapoport--Zink space for the quaternionic unitary similitude group of degree $2$, with a special parahoric level. We prove that such a scheme is purely $2$-dimensional, and every irreducible component is isomorphic to the Fermat surface. We also determine its connected components, irreducible components and their intersection behaviors by means of the Bruhat--Tits building of $\PGSp_4(\Qp)$. In addition, we compute the intersection multiplicity of the GGP cycles associated to an embedding of the considering Rapoport--Zink space into the Rapoport--Zink space for the unramified $\GU_{2,2}$ with hyperspecial level for the minuscule case. 
\end{abstract}

\tableofcontents

\section{Introduction}\label{intr}

Let $(\bG,\bX)$ be a Shimura datum, that is, a pair consisting of a reductive connected group $\bG$ over $\Q$ and a finite disjoint union of hermitian symmetric domains $\bX$ satisfying certain conditions. Let $\A_f$ be the finite ad{\`e}le ring of $\Q$. For a compact open subgroup $K$ of $\bG(\A_f)$, we associate a complex manifold
\begin{equation*}
\Sh_K(\bG,\bX)^{\an}=\bG(\Q)\backslash (\bX\times \bG(\A_f)/K), 
\end{equation*}
called the Shimura variety. If $K$ is sufficiently small, the Shimura variety $\Sh_K(\bG,\bX)^{\an}$ descends to a quasi-projective variety $\Sh_K(\bG,\bX)$ over a number field $E$ depending on $(\bG,\bX)$, which is called the reflex field. Next, let $p$ be a prime number, and $\A_f^p$ the finite ad{\`e}le ring without $p$-component. Fix a place $\nu$ of $E$ above $p$, and we denote by $O_{E,\nu}$ the completion of the integer ring of $E$ with respect to $\nu$. Moreover, we assume that $(\bG,\bX)$ is of PEL type and $K=K^pK_p$, where $K^p$ is a subgroup of $\bG(\A_f^p)$, and $K_p$ is a parahoric subgroup of $G(\Qp)$. Then, Rapoport and Zink construct in \cite{rz} a scheme $\sS_{K}$ over $O_{E,\nu}$, which is a moduli space of abelian varieties with additional structures. In particular, if $G\otimes_{\Q}\R$ is isomorphic to $\GSp_{2n}$ for some $n\in \Zpn$, then it is an integral model of $\Sh_K(\bG,\bX)$. Now consider the geometric special fiber $\sS_{K,\Fpbar}$ of $\sS_{K}$, and define $\sS_K^{\si}$ as the reduced closed subscheme of $\sS_{K,\Fpbar}$ consisting of points such that the corresponding abelian varieties are supersingular. This $\sS_K^{\si}$ is called the supersingular locus. 

Making concrete descriptions of supersingular loci is important. For example, for unitary and orthogonal Shimura varieties, we have an application to the arithmetic intersection problem conjectured by Kudla in \cite{kud}. It predicts a relation between the intersection multiplicities of certain cycles (called special cycles) and the Fourier coefficients of the derivatives of an Eisenstein series. Some partial results are known for the conjecture above by using descriptions of supersingular loci, e.g.~\cite{kr} for Siegel $3$-folds and \cite{ter} for Hilbert modular surfaces. 

There are many known results for descriptions of supersingular loci. We give some results which are related to this paper in the following: 
\begin{itemize}
\item The case for $\bG=\GSp_{2n}$ (that is, a Siegel modular variety) and $K_p$ is hyperspecial. For $n=2,3$, Katsura and Oort \cite{ko}, \cite{ko2} describe the structure of $\sS_{K}^{\si}$. More precisely, when $n=2$, they proved that each irreducible component is birational to $\P^1_{\Fpbar}$. For general $n$, Li and Oort \cite{lo} give formulas on dimension and the number of irreducible components of $\sS_{K}^{\si}$. 
\item The case for $\bG=\GU_{1,n-1}$ associated to an imaginary quadratic extension $L/\Q$, $p$ is an odd prime which inerts in $L$, and $K_p$ is hyperspecial. In this case, the structure of $\sS_{K}^{\si}$ is given by Vollaard \cite{vol} for $n=3$, and Vollaard and Wedhorn \cite{vw} in general. Recently, Cho \cite{cho} considered in the case when $K_p$ is a parahoric subgroup given by the stabilizer of a single lattice and any $n$. 
\item The case for $\bG=\GU_{2,2}$ associated to an imaginary quadratic extension $L/\Q$, $p$ is an odd prime which inerts in $L$, and $K_p$ is hyperspecial. In this case, Howard and Pappas \cite{hp} give an explicit description of the structure of $\sS_{K}^{\si}$. Their method relies on an exceptional isomorphism between $\GU_{2,2}$ and $\GSpin_{4,2}$ corresponding to the identity $A_3=D_3$ of the Dyinkin diagrams. 
\item The case for $\bG=\GSpin_{n,2}$, $p$ is an odd prime and $K_p$ is hyperspecial. The structure of $\sS_{K}^{\si}$ is given by Howard and Pappas \cite{hp2} by using the same method as that for $\GU_{2,2}$ as above. 
\end{itemize}
For more history of works, see the introductions of \cite{vol} and \cite{wu}. 

In this paper, we describe the supersingular locus of the integral model of the Shimura variety for a quaternionic unitary similitude group of degree $2$ with coefficient $\bD$, an indefinite quaternion algebra over $\Q$, when $p$ is an odd prime which ramifies in $\bD$ and $K_p$ is a special maximal compact open subgroup. We also consider the related Rapoport--Zink space, which is associated to the quaternionic unitary similitude group of degree $2$. Moreover, as another application of the results on the Rapoport--Zink space, we compute the intersection multiplicity of certain cycles, which is called the GGP cycles. Let us explain our results more precisely in the sequel. 

\emph{Throughout this paper, let $p>2$ be an odd prime number. }

\subsection{Main theorem: local results}\label{mrlv}
Let $\Qp$ be the field of $p$-adic numbers, and $D$ the quaternion division algebra over $\Qp$. Write $D$ as below: 
\begin{equation*}
D=\Qp[\varepsilon ,\Pi],\quad \varepsilon^2\in \Zpt \setminus (\Zpt)^2,\quad \Pi^2=p,\quad \Pi \varepsilon=-\varepsilon \Pi. 
\end{equation*}
We define an involution $*$ on $D$ by 
\begin{equation*}
d^{*}:=\varepsilon(\trd(d)-d)\varepsilon^{-1}. 
\end{equation*}
Then the maximal order $O_D$ of $D$ is stable under $*$. Let $\Fp$ be a finite field of $p$ elements. Fix an algebraic closure $\Fpbar$ of $\Fp$, and denote by $W=W(\Fpbar)$ the ring of Witt vectors over $\Fpbar$. Consider triples $(X,\iota,\lambda)$ over a $W$-scheme $S$ on which $p$ is locally nilpotent, where 
\begin{itemize}
\item $X$ is a 4-dimensional $p$-divisible group over $S$, 
\item $\iota \colon O_D\rightarrow \End(X)$ is a ring homomorphism, 
\item $\lambda \colon X \rightarrow X^{\vee}$ is a polarization, 
\end{itemize}
such that the following conditions are fulfilled for any $d\in O_D$: 
\begin{itemize}
\item $\det(T-\iota(d) \mid \Lie(X))=(T^2-\trd_{D/\Qp}(d)T+\nrd_{D/\Qp}(d))^2$, 
\item $\lambda \circ \iota(d)=\iota(d^{*})^{\vee}\circ \lambda$. 
\end{itemize}
Fix an object $(\X_0,\iota_0,\lambda_0)$ over $\spec \Fpbar$ such that $\X_0$ is isoclinic of slope $1/2$ and $\lambda_0$ is an isomorphism, and define $\M_G$ (the subscript $G$ will be introduced in Section \ref{rzdt}) as the moduli space of $O_D$-linear quasi-isogenies $\rho \colon X\times_S \Sbar \rightarrow \X_0\times_{\spec \Fpbar} \Sbar$ such that the pull-back of the polarization of $X$ equals $c(\rho)\lambda_0$ for some $c(\rho)\in \Qpt$. Here, $\Sbar$ is the closed subscheme of $S$ defined by the ideal sheaf $p\O_S$. It is a formal scheme over $\spf W$, which is locally formally of finite type. Note that there is an action of $\GSp_4(\Qp)$ on $\M_G$. Moreover, there is a decomposition into open and closed formal subschemes
\begin{equation*}
\M_G=\coprod_{i\in \Z}\M_G^{(i)},
\end{equation*}
where $\M_G^{(i)}$ is the locus of $\M_G$ where $\ord_p(c(\rho))=i$. The action of $\GSp_4(\Qp)$ on $\M_G$ implies that each $\M_G^{(i)}$ are isomorphic to each other. 

We denote by $\B$ the Bruhat--Tits building of $\PGSp_4(\Qp)$. The main theorem in this section is describing the underlying reduced subscheme $\M_G^{\red}$ of $\M_G$ by means of $\B$. Let $\Vtx^{\hs}$ be the set of all hyperspecial vertices of $\B$, $\Vtx^{\nsp}$ the set of all non-special vertices of $\B$, and $\Edg^{\hs}$ the set of all edges connecting two adjacent hyperspecial vertices. Moreover, put
\begin{equation*}
\VE:=\Vtx^{\hs}\sqcup \Vtx^{\nsp}\sqcup \Edg^{\hs}. 
\end{equation*}
We introduce an order $\leq$ on $\VE$. For distinct $x,y\in \VE$, we have $x<y$ if one of the following hold: 
\begin{itemize}
\item $x\in \Vtx^{\nsp}$, $y\in \Vtx^{\hs}$ and $x,y$ are adjacent, 
\item $x\in \Vtx^{\nsp}$, $y\in \Edg^{\hs}$ and $\{x\}\cup y$ is the set of all vertices of a $2$-simplex in $\B$, 
\item $x\in \Edg^{\hs}$, $y\in \Vtx^{\hs}$ and $y\in x$. 
\end{itemize}

For $x\in \VE$, we associate a reduced closed subscheme $\M_{G,x}$ of the underlying reduced scheme $\M_G^{\red}$ of $\M_G$. See Section \ref{btsn}. Moreover, put $\M_{G,x}^{(0)}:=\M_{G,x}\cap \M_G^{(0)}$. 

\begin{thm}\label{thlc}
\begin{enumerate}
\item(Theorem \ref{mtl1} (i)) \emph{For $x,y\in \VE$, we have $\M_{G,y}\subset \M_{G,x}$ if and only if $y\leq x$. }
\item(Theorem \ref{mtl1} (ii)) \emph{Let $x\in \VE$. 
\begin{itemize}
\item If $x \in \Vtx^{\nsp}$, then $\M_{G,x}^{(0)}$ is a single $\Fpbar$-rational point. 
\item If $x \in \Edg^{\hs}$, then $\M_{G,x}^{(0)}$ is isomorphic to $\P^1_{\Fpbar}$. 
\item If $x \in \Vtx^{\hs}$, then $\M_{G,x}^{(0)}$ is isomorphic to the Fermat surface defined by
\begin{equation*}
x_0^{p+1}+x_1^{p+1}+x_2^{p+1}+x_3^{p+1}=0
\end{equation*}
in $\Proj \Fpbar [x_0,x_1,x_2,x_3]$. 
\end{itemize}
In particular, $\M_{G,x}^{(0)}$ is projective, smooth and irreducible of dimension $d(x)$, where
\begin{equation*}
d(x)=
\begin{cases}
2 &\text{if }x\in \Vtx^{\hs},\\
1 &\text{if }x\in \Edg^{\hs},\\
0 &\text{if }x\in \Vtx^{\nsp}. 
\end{cases}
\end{equation*}}
\end{enumerate}
\end{thm}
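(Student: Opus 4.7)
My plan is to pass from the moduli problem to a combinatorial description via covariant Dieudonn{\'e} theory. The $\Fpbar$-points of $\M_G$ correspond bijectively to certain lattices $M$ in the rational Dieudonn{\'e} module $N$ of $\X_0$: the $O_D$-action translates into $M$ being stable under $\iota_0(\Pi)$ and $\iota_0(\varepsilon)$, the polarization translates into a self-duality condition $M^{\vee} = p^i M$ (fixing the component $\M_G^{(i)}$), and the Kottwitz determinant condition constrains the relative position of $FM$ inside $M$. Under this dictionary, the action of $\GSp_4(\Qp)$ on $\M_G$ matches the natural action on lattice chains, and the building $\B$ parametrizes the relevant lattice-chain classes; each $\M_{G,x}$ is cut out by the lattice condition attached to $x$ as defined in Section \ref{btsn}.

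For part (i), the closure ordering is essentially formal from the lattice description: if $y \leq x$, the lattice condition for $y$ refines that for $x$ in the combinatorial sense dictated by the building, so $\M_{G,y} \subset \M_{G,x}$. For the converse I would show that each $\M_{G,x}$ contains an $\Fpbar$-point whose Dieudonn{\'e} lattice records precisely the simplex $x$ and no smaller one; any inclusion $\M_{G,y} \subset \M_{G,x}$ then forces $y \leq x$.

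Part (ii) splits into three cases. If $x \in \Vtx^{\nsp}$, the lattice condition together with the self-duality defining $\M_G^{(0)}$ pins down a unique Dieudonn{\'e} lattice, giving a reduced $\Fpbar$-point. If $x \in \Edg^{\hs}$, the moduli problem parametrizes a one-parameter family of Frobenius-stable lines in a two-dimensional quotient of a lattice; a direct computation in a suitable basis identifies this with $\P^1_{\Fpbar}$.

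The main obstacle is the hyperspecial vertex case $x \in \Vtx^{\hs}$. Here I expect $\M_{G,x}^{(0)}$ to parametrize, through its Dieudonn{\'e} lattice data, the isotropic lines in a $4$-dimensional hermitian space $V = M_0/\Pi M_0$ over $\Fps$, where the hermitian form is induced from the polarization and the $O_D$-action on a reference lattice $M_0$ of type $x$. The isotropy locus $\{[v] \in \P(V) : h(v,v) = 0\}$ inside $\P(V) \cong \P^3_{\Fpbar}$ is, in standard coordinates, cut out by $x_0^{p+1} + x_1^{p+1} + x_2^{p+1} + x_3^{p+1} = 0$, since a hermitian form over $\Fps/\Fp$ satisfies $h(v,v) = \sum x_i x_i^p$. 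To promote the bijection on $\Fpbar$-points to a scheme-theoretic isomorphism, I would exhibit an explicit morphism using the universal Dieudonn{\'e} data, verify it is a closed immersion via a tangent space computation, and conclude via smoothness and a dimension count that it is an isomorphism. This Dieudonn{\'e}-theoretic identification, and in particular the verification of smoothness of $\M_{G,x}^{(0)}$ at every point, is the most delicate step.
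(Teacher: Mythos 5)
Your overall strategy --- working directly with Dieudonn\'e lattices rather than passing through $\GU_{2,2}$ --- is viable in principle (it is essentially the route taken by Wang in \cite{wan}; the paper instead embeds $\M_G$ into $\M_H$ via Proposition \ref{mgeb}, identifies each $\M_{G,\Lambda}$ with a Howard--Pappas stratum $\M_{H,\varphi(\Lambda)}$ in Proposition \ref{btsc}, and imports reducedness, smoothness, irreducibility and the dimension count from \cite{hp}). But your treatment of the hyperspecial case contains a concrete error. The pairing induced on $V=M_0/\Pi M_0$ by the polarization and the $O_D$-action is \emph{alternating}, not hermitian: since $\Pi^{*}=\Pi$, the form $b(x,y):=(x,\Pi y)$ satisfies $b(x,y)=(\Pi x,y)=-(y,\Pi x)=-b(y,x)$, and it descends to a nondegenerate symplectic form on the $4$-dimensional $\Fpbar$-space $V$ (which, note, carries no natural $\Fps$-structure with conjugation of the kind your formula $h(v,v)=\sum x_ix_i^{p}$ presupposes). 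In particular $b(v,v)=0$ identically, so the stratum cannot be the vanishing locus of $v\mapsto b(v,v)$; the correct condition involves the $\sigma$-linear operator induced by $\Pi^{-1}F$, say $\tau$, and reads $b(v,\tau(v))=0$. In coordinates this is the surface $x_0x_3^{p}-x_0^{p}x_3+x_1x_2^{p}-x_1^{p}x_2=0$ of Section \ref{lwdm}, Case 3 --- a Deligne--Lusztig-type variety for $\Sp_4(\Fp)$, not for a unitary group. That surface is isomorphic to the Fermat surface over $\Fpbar$ but not over $\Fp$, and producing that isomorphism is a separate nontrivial step (in the paper it comes from Proposition \ref{svsu}); your proposal neither supplies it nor recognizes that it is needed.

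Two further points. First, the scheme-theoretic content --- that $\M_{G,x}$ is reduced, so that the description of $\Fpbar$-points upgrades to an isomorphism of varieties --- is the genuinely hard part; the paper proves it by a Grothendieck--Messing deformation argument in Appendix \ref{rdhp}, and your ``tangent space computation'' will have to amount to exactly that (note also that classical Dieudonn\'e theory gives no universal Dieudonn\'e module over non-perfect bases, so one must work with windows or crystals). Second, your outline for (i) is reasonable, but the ``only if'' direction needs the nonemptiness of the open strata and the equality $\dim\M_{G,x}=d(x)$, so it depends on (ii); compare the dimension comparison in the proof of Theorem \ref{mgld} (ii).
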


For $x\in \VE$, put $\BT_{G,x}^{(0)}:=\M_{G,x}^{(0)}\setminus \bigcup_{y<x}\M_{G,y}^{(0)}$. Then the closure of $\BT_{G,x}^{(0)}$ in $\M_G^{(0),\red}$ equals $\M_{G,x}^{(0)}$ by Theorem \ref{thlc} (ii). Moreover, Theorem \ref{thlc} (i) implies that it is equal to $\coprod_{y\leq x}\BT_{G,y}^{(0)}$. 

\begin{thm}\label{mthm}
\begin{enumerate}
\item(Theorem \ref{mtl1} (iii)) \emph{We have a locally closed stratification (called the Bruhat--Tits stratification)
\begin{equation*}
\M_G^{(0),\red}=\coprod_{x\in \VE}\BT_{G,x}^{(0)}. 
\end{equation*}
Hence $\M_G^{(0)}$ is connected and is purely $2$-dimensional. }
\item(Corollary \ref{dbst}) \emph{For $x\in \VE$, $\BT_{G,x}^{(0)}$ is isomorphic to the Deligne--Lusztig variety for $\GSp_{2d(x)}$ associated to the Coxeter element, where $d(x)$ is an integer defined in (ii). }
\end{enumerate}
\end{thm}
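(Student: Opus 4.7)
The strategy is to establish (i) by combining a Dieudonn\'e-theoretic covering argument with the closure relations of Theorem \ref{thlc}(i), and then to obtain (ii) by matching the explicit descriptions in Theorem \ref{thlc}(ii) with the known descriptions of Coxeter Deligne--Lusztig varieties for $\GSp_{2d}$ with $d\leq 2$. The crucial step is covering: one must show that the closed subschemes $\M_{G,x}^{(0)}$ for $x\in\VE$ exhaust $\M_G^{(0),\red}$. Following the Vollaard--Wedhorn template, each $\Fpbar$-point of $\M_G^{(0)}$ corresponds to a polarized, $O_D$-stable $W$-lattice $M$ in the rational Dieudonn\'e module of $\X_0$. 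From $M$ one constructs, via suitable sums and intersections of $M$ with its Frobenius/Verschiebung translates and its polarization dual, a canonical element $x=x(M)\in\VE$ such that the point lies in $\M_{G,x}^{(0)}$ and in no $\M_{G,y}^{(0)}$ with $y<x$.

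This simultaneously supplies the covering and a unique minimal index for each point, so the $\BT_{G,x}^{(0)}:=\M_{G,x}^{(0)}\setminus \bigcup_{y<x}\M_{G,y}^{(0)}$ are pairwise disjoint and partition $\M_G^{(0),\red}$, yielding the stratification in (i). Pure $2$-dimensionality then follows because every $x\in\VE$ satisfies $x\leq y$ for some $y\in\Vtx^{\hs}$, so every stratum is contained in the $2$-dimensional irreducible Fermat surface $\M_{G,y}^{(0)}$ of Theorem \ref{thlc}(ii), and $\M_G^{(0),\red}$ has no lower-dimensional components. Connectedness of $\M_G^{(0)}$ reduces to connectedness of $\B$: Theorem \ref{thlc}(i) implies that two Fermat surfaces $\M_{G,y_1}^{(0)}$ and $\M_{G,y_2}^{(0)}$ for adjacent $y_1,y_2\in\Vtx^{\hs}$ share the $\P^1$-stratum $\M_{G,e}^{(0)}$ associated to the edge $e$ joining them, and hyperspecial vertices of $\B$ can always be linked by a chain of such adjacencies.

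For part (ii) I would identify each open stratum with the asserted Coxeter DL variety case by case. If $x\in\Vtx^{\nsp}$, then $\BT_{G,x}^{(0)}$ is a single $\Fpbar$-rational point, which is the trivial Coxeter DL variety for $\GSp_0$. If $x\in\Edg^{\hs}$, then $\M_{G,x}^{(0)}\cong\P^1_{\Fpbar}$ by Theorem \ref{thlc}(ii), and the $\Fpbar$-points lying in $\bigcup_{y<x}\M_{G,y}^{(0)}$ form a finite $\Fpbar$-rational subset which, by explicit coordinate calculation on the line, has cardinality $p+1$ and is identified with $\P^1(\Fp)$; its complement is the Drinfeld curve, i.e., the Coxeter DL variety for $\GSp_2$. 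If $x\in\Vtx^{\hs}$, then $\M_{G,x}^{(0)}$ is the Fermat surface $F$ of Theorem \ref{thlc}(ii), and the subvarieties $\M_{G,y}^{(0)}$ for $y<x$ cut out on $F$ the classical configuration of $\Fps$-rational lines and their intersection points; the complement in $F$ is well known to realize the Coxeter DL variety for $\GSp_4$.

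The main obstacle will be the canonical assignment $M\mapsto x(M)\in\VE$ in the covering step. The involution $*$ coming from the ramified quaternion algebra interacts nontrivially with both $F$ and the $O_D$-action, so the standard recipes from the unramified unitary or symplectic cases require careful modification in this ramified quaternionic setting; in particular, one must track the $*$-self-dual lattices and match them to the three building strata. Once this assignment is in place, parts (i) and (ii) reduce to combinatorics of $\B$ together with cited classical facts about Coxeter DL varieties.
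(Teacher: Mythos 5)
Your architecture is sound, but the step you yourself flag as "the main obstacle" is precisely the mathematical content of part (i), and you do not supply it. The covering statement $\M_G^{(0),\red}=\bigcup_{x\in\VE}\M_{G,x}^{(0)}$ together with the existence of a canonical \emph{minimal} index $x(M)$ for each Dieudonn\'e lattice $M$ is not a routine adaptation of the Vollaard--Wedhorn recipe: in this ramified quaternionic setting the paper does \emph{not} construct $x(M)$ by taking sums and intersections of $M$ with its $F$-, $V$- and dual-translates inside the symplectic building of $J^{\ad}(\Qp)$. Instead it passes to the orthogonal side via the exceptional isomorphism $\PGSp_4\cong\SO_5$: the point $M$ is converted into a $\pi$-special lattice $L$ in $\L_{\Q}$ (Propositions \ref{mgpd}, \ref{pscr}), and the canonical index is the vertex lattice $\Lambda(L)=(L+\pi(L))^{(d)}\cap\L_{\Q}^{\Phi,\pi}$ of Proposition \ref{spvt}, whose proof requires a delicate length bookkeeping (Lemma \ref{y1il} and the case analysis on whether $p^{-1}y_1\in L^{(d)}$) before one can transport everything back to $\B$ via Propositions \ref{vtcr}, \ref{vibo} and \ref{vtvt}. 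The direct Dieudonn\'e-theoretic route you sketch is essentially Wang's independent approach \cite{wan}, and it is genuinely nontrivial there too; asserting that "one constructs a canonical element $x(M)$" without the construction leaves the stratification, the disjointness of the open strata, and hence all of (i) unproved. (Connectedness and pure $2$-dimensionality, as you set them up, do follow once the covering is in place, modulo the combinatorial facts that every stratum lies under a hyperspecial vertex and that hyperspecial vertices are chain-connected, which correspond to Propositions \ref{ltic}(i) and \ref{adjv}.)

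For part (ii) there is a second, subtler gap: the Deligne--Lusztig structure depends on the $\Fp$-structure, and the closed stratum for $x\in\Vtx^{\hs}$ is isomorphic to the Fermat surface only over $\Fpbar$; with the relevant Frobenius (the twist by $\overline{\Phi}\circ\pibar$ of Proposition \ref{svsu}) it is the surface $x_0x_3^p-x_0^px_3+x_1x_2^p-x_1^px_2=0$ of Section \ref{lwdm}. An "explicit coordinate calculation" on the Fermat surface with its standard Frobenius would therefore identify the wrong open stratum, and the complement of the sub-strata is not a "well known" $\GSp_4$-Coxeter DL variety in that embedding: the paper first realizes it as the Coxeter DL variety for $\SO_5$ (via \cite{hp} and the stratification of $S_{\Omega_0}$ from \cite{hlz}) and only then converts to $\GSp_4$ through the Dynkin identity $B_2=C_2$; similarly $B_1=A_1$ handles the edge strata. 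Your count of $p+1$ removed points on each $\P^1$-stratum and the two degenerate cases are fine, but the surface case needs the orthogonal intermediary (or an equivalent argument) to be a proof.
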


In addition to Theorem \ref{mthm}, we can also describe the non-formally smooth locus of the formal scheme $\M_{G}$ by using the Bruhat--Tits strata. Note that $\M_G$ is regular and flat over $\spf W$. See Corollary \ref{rzsg}. 

\begin{thm}\label{thsg}
\item(Theorem \ref{mtl1} (iv)) \emph{The non-formally smooth locus of $\M_G^{(0)}$ equals $\coprod_{\Vtx^{\nsp}}\BT_{G,x}^{(0)}$. }
\end{thm}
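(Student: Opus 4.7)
The plan is to use the Rapoport--Zink local model diagram to translate formal smoothness of $\M_G^{(0)}$ into smoothness of an explicit local model, and then to analyze the latter directly. By \cite{rz} there is a diagram $\M_G^{(0)} \xleftarrow{\pi} \widetilde{\M} \xrightarrow{\tau} \oM^{\loc}$ of formal $\spf W$-schemes with both $\pi$ and $\tau$ formally smooth and surjective. Hence $\M_G^{(0)}$ is formally smooth at a closed point $\bar{x}$ if and only if $\oM^{\loc}$ is formally smooth at $\tau(\widetilde{x})$ for any $\widetilde{x} \in \pi^{-1}(\bar{x})$, and it is enough to pin down the non-smooth locus of $\oM^{\loc}$ and to pull it back under $\pi \circ \tau^{-1}$.

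Next, I would write $\oM^{\loc}$ explicitly for the present setup: a closed formal subscheme of an $O_D$-equivariant Grassmannian of Lagrangian subspaces in the standard lattice, cut out by the Kottwitz determinant condition together with the polarization compatibility. Flatness and regularity of $\oM^{\loc}$ (hence of $\M_G$, as recorded in Corollary \ref{rzsg}) should be visible from these equations. Using a Jacobian calculation, I expect to show that $\oM^{\loc}$ is smooth over $\spf W$ away from a single distinguished closed point $x_0$ of its special fiber, at which the tangent space has dimension strictly one larger than the relative dimension of $\oM^{\loc}$.

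Third, I would identify the fibre $(\pi \circ \tau^{-1})(x_0)$ inside $\M_G^{\red}$. This amounts to unwinding the definition of $\M_{G,x}$ from Section \ref{btsn}: a point of $\M_G^{\red}$ lands over $x_0$ exactly when its associated Hodge filtration is in the most degenerate position relative to the lattice chain, and by the building-theoretic dictionary such configurations correspond precisely to the non-special vertices in $\Vtx^{\nsp}$. Combined with Theorem \ref{thlc} (ii), which identifies $\M_{G,x}^{(0)}$ for $x \in \Vtx^{\nsp}$ with the single $\Fpbar$-point $\BT_{G,x}^{(0)}$, this yields $(\pi \circ \tau^{-1})(x_0) = \bigsqcup_{x \in \Vtx^{\nsp}} \BT_{G,x}^{(0)}$, as required.

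The main obstacle will be the second step, i.e.\ pinning down the non-smooth locus of $\oM^{\loc}$. Although the Jacobian criterion is in principle routine, the interaction of the Kottwitz determinant condition, the polarization compatibility and the ramified $O_D$-action produces a system of defining equations whose rank can drop in unexpected ways; the delicate part is to verify that the tangent space attains the generic dimension on every stratum except the one corresponding to $\Vtx^{\nsp}$, so that edge-type and hyperspecial-type points remain formally smooth even though several Bruhat--Tits components of $\M_G^{\red}$ meet there. Once this is established, the matching in the third step is a combinatorial consequence of the dictionary already set up in Section \ref{btsn}.
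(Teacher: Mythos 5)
Your steps (1) and (2) reproduce what the paper does in Theorem \ref{flat} and Corollary \ref{rzsg}: the local model is the quadric $x_1x_4+x_2x_3+px_5^2=0$, its special fiber has a unique non-smooth point, namely the Lagrangian $\mathcal{F}$ with $\Pi\mathcal{F}=0$, and via the local model diagram the non-formally smooth locus of $\M_G^{(0)}$ is therefore the set of $\Fpbar$-points $(X,\iota,\lambda,\rho)$ with $\iota(\Pi)=0$ on $\Lie(X)$. Up to here you are on the paper's track. One small correction: the local model has no strata indexed by $\Vtx^{\nsp}$, $\Edg^{\hs}$, $\Vtx^{\hs}$ --- its non-smooth locus is a single point --- so there is no ``tangent space computation on each stratum'' to do; the Bruhat--Tits stratification lives only on $\M_G^{(0),\red}$, not on $M^{\loc}_G$.

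The genuine gap is your step (3). The identification of the locus $\{\iota(\Pi)=0 \text{ on } \Lie(X)\}$ with $\coprod_{x\in\Vtx^{\nsp}}\BT_{G,x}^{(0)}$ is the actual content of the theorem, and it is not a formal consequence of the dictionary of Section \ref{btsn}: the strata $\M_{G,\Lambda}$ are defined by integrality of the quasi-endomorphisms $\rho^{-1}\circ\Lambda^{\vee}\circ\rho$, not by the position of the Hodge filtration, so ``most degenerate filtration $\Leftrightarrow$ non-special vertex'' is precisely what must be proved. The paper's argument (Theorem \ref{sgvl}) runs as follows: $\iota(\Pi)=0$ on $\Lie(X)$ is equivalent to $(y_1^{-1}\circ F)(M)=M$ for the Dieudonn\'e lattice $M$ (a dimension count using the Kottwitz condition), which translates into $\pi\circ\Phi(L)=L$ for the associated $\pi$-special lattice $L$ via Proposition \ref{picr}. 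If this holds, then $L+\pi(L)$ is $\Phi$-stable and gives a vertex lattice of type $1$; conversely, for $\Lambda\in\Vrt(1)$ one uses that $\pi$ and $\Phi$ commute and shift the index of $\M_G^{(i)}$ by $\mp 1$, so $\pi\circ\Phi(L)$ again lies in the \emph{single-point} set $\M_{G,\Lambda}^{(0)}(\Fpbar)$ and must equal $L$. Your proposal contains no substitute for this equivalence, so as written it establishes only the reduction to the condition $\iota(\Pi)=0$ on $\Lie(X)$, not the stated description of the non-formally smooth locus.
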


\subsection{Main theorem: global results}\label{mrgv}

Let $\bD$ be an indefinite quaternion algebra over $\Q$ which is ramified at $p$, and $\bV$ a skew-hermitian $\bD$-module of rank $2$ in the sense of \cite{kot3} as in Section \ref{shqt}. Put $\bG:=\GU(\bV)$, which is an algebraic group over $\Q$. Then, $\bG$ is a non-trivial inner form of $\GSp_4$ over $\Q$. In particular, the same assertion holds after the base change from $\Q$ to $\Qp$. On the other hand, we have $\bG \otimes_{\Q}\R \cong \GSp_4$. Let $\bX$ be a $\bG(\R)$-conjugacy class inside the set of all homomorphisms $\Res_{\C/\R}\G_m\rightarrow \bG \otimes_{\Q}\R$ which contains the homomorphism induced by
\begin{equation*}
\Res_{\C/\R}\G_m\rightarrow \GSp_4;a+b\sqrt{-1}\mapsto 
{\begin{pmatrix}
aE_2&-bE_2\\
bE_2&aE_2
\end{pmatrix}
}. 
\end{equation*}
Here $E_2$ is the unit matrix of size $2$. Now fix an order $O_{\bD}$ of $\bD$ which is maximal at $p$. We write $K=K^pK_p$, where $K^p\subset \bG(\A_f^p)$ and $K_p\subset \bG(\Qp)$. Then, we further assume that $K_p$ is the stabilizer of a self-dual $O_{\bD}\otimes_{\Z}\Zp$-lattice, see Section \ref{shqt}. Then the algebraic variety $\Sh_K(\bG,\bX)$ is defined over $\Q$, and it is $3$-dimensional. Moreover, $\sS_K$ is defined over $\Z_{(p)}$ as a moduli space of abelian $4$-folds with $O_{\bD}$-linear actions, polarizations and $K^p$-level structures. See Section \ref{ssdf}. 

\begin{thm}\label{thgl}
\begin{enumerate}
\item (Theorem \ref{nsss}) \emph{The scheme ${\sS}_{K,W}:=\sS_K\times_{\spec \Zp}\spec W$ is regular and flat over $\spec W$. In particular, ${\sS}_{K,\Fpbar}$ is $3$-dimensional. Moreover, it is smooth over $\spec W$ outside a finite set of $\Fpbar$-rational points. }
\item (Theorem \ref{mtsh} (i)) \emph{The scheme $\sS_{K}^{\si}$ is purely $2$-dimensional. Every irreducible component is projective and birational to the Fermat surface defined by
\begin{equation*}
x_0^{p+1}+x_1^{p+1}+x_2^{p+1}+x_3^{p+1}=0
\end{equation*}
in $\Proj \Fpbar [x_0,x_1,x_2,x_3]$. }
\item (Theorem \ref{mtsh} (ii)) \emph{Let $F$ be an irreducible component of $\sS_{K}^{\si}$. Then the following hold:
\begin{itemize}
\item There are at most $(p+1)(p^2+1)$-irreducible components of $\sS_{K}^{\si}$ whose intersections with $F$ are birational to $\P^1_{\Fpbar}$. Here we endow the intersections with reduced structures. 
\item There are at most $(p+1)(p^2+1)$-irreducible components of $\sS_{K}^{\si}$ which intersect $F$ at a single $\Fpbar$-rational point. 
\item Other irreducible components of $\sS_{K}^{\si}$ do not intersect $F$. 
\end{itemize}}
\item (Theorem \ref{mtsh} (iii)) \emph{Each non-formally smooth point in $\sS_{K}^{\si}$ is contained in at most $2(p+1)$-irreducible components. }
\item (Theorem \ref{mtsh} (iv)) \emph{Each irreducible component of $\sS_{K}^{\si}$ contains at most $(p+1)(p^2+1)$-non-formally smooth points. }
\end{enumerate}
\end{thm}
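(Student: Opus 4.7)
The plan is to derive Theorem \ref{thgl} from the local results --- Theorems \ref{thlc}, \ref{mthm}, and \ref{thsg} --- via the Rapoport--Zink $p$-adic uniformization theorem of \cite{rz}. Applied to our PEL datum, this yields an isomorphism of formal schemes
\begin{equation*}
\widehat{\sS}_{K,W} \xrightarrow{\sim} I(\Q) \backslash \bigl(\M_G \times \bG(\A_f^p)/K^p\bigr),
\end{equation*}
where $\widehat{\sS}_{K,W}$ denotes the formal completion of $\sS_K \times_{\spec \Zp} \spec W$ along its supersingular locus, and $I$ is an inner form of $\bG$ that is anisotropic modulo center at $p$ and at $\infty$. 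Since $I(\Q)\backslash \bG(\A_f^p)/K^p$ is finite, passing to underlying reduced schemes realizes $\sS_K^{\si}$ as a finite disjoint union of quotients of open and closed subschemes of $\M_G^{(0),\red}$ by discrete, cocompact, and (for $K^p$ sufficiently small) torsion-free subgroups $\Gamma \subset \bG(\Qp) = \GSp_4(\Qp)$ acting on the Bruhat--Tits building $\B$.

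For (i), regularity and flatness of $\sS_{K,W}$ over $\spec W$ follow by \'etale descent from those of $\M_G$ over $\spf W$ (Corollary \ref{rzsg}), combined with Serre--Tate deformation theory away from the supersingular locus. The non-smooth locus of $\sS_{K,W}$ is then the image of the non-formally smooth locus of $\M_G^{(0)}$, which by Theorem \ref{thsg} equals $\coprod_{x\in \Vtx^{\nsp}} \BT_{G,x}^{(0)}$; since $\Gamma$ acts cocompactly on $\B$, its orbit space on $\Vtx^{\nsp}$ is finite, and the image is a finite set of $\Fpbar$-rational points.

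For (ii), Theorem \ref{mthm}(i) identifies the top-dimensional strata of $\M_G^{(0),\red}$ as $\M_{G,v}^{(0)}$ for $v\in \Vtx^{\hs}$, each isomorphic by Theorem \ref{thlc}(ii) to the Fermat surface. Every irreducible component of $\sS_K^{\si}$ arises as the image of such a stratum under the uniformization map; the stabilizer of the generic point in $\Gamma$ being finite, the image is birational to the Fermat surface. Projectivity follows because the image of a projective $\Fpbar$-scheme under a finite morphism into a separated scheme of finite type is projective.

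For (iii)--(v), the incidence structure of the global irreducible components corresponds, under the uniformization, to adjacencies of simplices in $\B$, up to identifications by $\Gamma$. The relevant local counts in the building of $\PGSp_4(\Qp)$ are as follows: at each hyperspecial vertex, the spherical building of the reductive quotient $\GSp_4$ over $\Fp$ has exactly $(p+1)(p^2+1)$ vertices of each of the other two types, corresponding to isotropic lines (giving adjacent non-special vertices) and Lagrangian planes (giving adjacent hyperspecial vertices via $\Edg^{\hs}$) in a $4$-dimensional symplectic $\Fp$-space; at each non-special vertex, the residual quotient is isogenous to $\SL_2 \times \SL_2$ over $\Fp$, whose spherical building is the complete bipartite graph $K_{p+1,p+1}$, yielding exactly $2(p+1)$ adjacent hyperspecial vertices. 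Combined with the partial order of Theorem \ref{thlc}(i), these local counts bound the numbers of irreducible components meeting a given component along a $\P^1$ or at a single point, the number of components through a non-formally smooth point, and the number of non-formally smooth points on a component. The bounds are upper bounds because $\Gamma$ can identify distinct local neighbors of a simplex, but cannot produce new incidences.

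The main technical obstacle is verifying the Rapoport--Zink uniformization in this ramified setting, namely matching the global Shimura datum $(\bG, \bX)$ and the integral structure from $O_{\bD}$ with the local PEL datum used to define $\M_G$ in Section \ref{rzdt}, and confirming that every supersingular class is captured by the basic stratum handled in Section \ref{mrlv}. Once the uniformization is established, assertions (i)--(v) follow from Theorems \ref{thlc}, \ref{mthm}, and \ref{thsg} together with elementary combinatorics in the building of $\PGSp_4(\Qp)$.
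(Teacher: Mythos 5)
Your overall strategy coincides with the paper's: $p$-adic uniformization of the supersingular (= basic) locus by $\M_G$, followed by the local structure results and building combinatorics. But there are two places where the logic as written does not close.

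First, part (i). Uniformization only describes the formal completion of $\sS_{K,W}$ \emph{along} $\sS_K^{\si}$, so neither the regularity/flatness of $\sS_{K,W}$ at non-supersingular points nor the finiteness of the non-smooth locus can be extracted from it without further input. The paper instead runs the local model diagram (\cite[Theorem 6.4]{hai} together with Theorem \ref{flat}) at \emph{every} point of the special fiber: this gives regularity and flatness globally and identifies the non-smooth points as exactly those $\Fpbar$-points where $\iota(\Pi)=0$ on $\Lie(A)$. Your claim that the non-smooth locus is ``the image of the non-formally smooth locus of $\M_G^{(0)}$'' silently assumes that every such point is supersingular; this is a genuine step, proved in Theorem \ref{nsss} by the Dieudonn{\'e}-module computation that $\iota(\Pi)=0$ on $\Lie(A)$ forces $F(M)=\Pi M$, hence $F^2(M)=pM$ and slope $1/2$. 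Without this, finiteness of the non-smooth locus does not follow. (Separately, the hypotheses of the uniformization theorem --- basic $=$ supersingular, non-emptiness of $\sS_K^{\si}$, and the Hasse principle for $\bG$ --- all require proof; you flag this but it is the content of Propositions \ref{bsss} and \ref{ssne}.)

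Second, in (ii) the phrase ``the stabilizer of the generic point in $\Gamma$ being finite'' is not sufficient for birationality: a finite nontrivial stabilizer would give a nontrivial quotient. You need triviality (which does follow since $\Gamma$ is discrete and torsion-free, so $\Gamma\cap \stab_{J^0}(x)=\{\id\}$), \emph{and} you must rule out identifications coming from $\gamma\in\Gamma$ that move the component: the paper shows (Proposition \ref{birt}) that for $\gamma\neq\id$ the intersection $\M_{G,x}^{(0)}\cap\M_{G,\gamma(x)}^{(0)}$ is at most a single point, so that after deleting the finitely many non-special points one gets an open $U$ with $\gamma(U)\cap U=\emptyset$, whence $\pi_i$ is injective on $U$ and birational. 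Finally, for the single-point intersections in (iii) your listed counts (isotropic lines and Lagrangian planes at a hyperspecial vertex, $2(p+1)$ at a non-special vertex) are not enough: one also needs Proposition \ref{ltic} (v), which counts, for a fixed pair $\Lambda_1\in\Vrt(5)$, $\Lambda_2\in\Vrt(1)$, the type-$5$ lattices $\Lambda$ with $\Lambda\cap\Lambda_1=\Lambda_2$ (there are $p$ of them), giving the bound $p(p+1)(p^2+1)$ appearing in Theorem \ref{mtsh} (ii) rather than $(p+1)(p^2+1)$.
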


\begin{rem}
Very recently, Wang proved in \cite{wan} the same results as Sections \ref{mrlv} and \ref{mrgv}. His research is independent of the author's one, and the author recognized it after completing this paper. Wang's method is direct, that is, it does not use an exceptional isomorphism. Moreover, he also consider a relation between his Bruhat--Tits stratification and the affine Deligne--Lusztig variety for $G$. On the other hand, our method can be expected to generalize for certain spinor similitude groups of arbitrary degrees. See the strategy of our proof after Section \ref{icmr} for details. 
\end{rem}

\subsection{Application: computation of the intersection multiplicity of the GGP cycles}\label{icmr}

Let $\Qps$ be the unramifed quadratic extension of $\Qp$, and denote by $\tau$ the non-trivial Galois automorphism of $\Qps$ over $\Qp$. Consider triples $(X,\iota,\lambda)$ over a $W$-scheme $S$ on which $p$ is locally nilpotent, where 
\begin{itemize}
\item $X$ is a 4-dimensional $p$-divisible group over $S$, 
\item $\iota \colon \Zps \rightarrow \End(X)$ is a ring homomorphism, 
\item $\lambda \colon X \rightarrow X^{\vee}$ is a polarization, 
\end{itemize}
such that the following conditions are fulfilled for any $a\in \Zps$: 
\begin{itemize}
\item $\det(T-\iota(a) \mid \Lie(X))=(T^2-\tr_{\Qps/\Qp}(a)T+\N_{\Qps/\Qp}(a))^2$, 
\item $\lambda \circ \iota(a)=\iota(\tau(a))^{\vee}\circ \lambda$. 
\end{itemize}
Note that $(\X_0,\iota_0,\lambda_0)$ be the object as in Section \ref{mrlv}. Define $\M_H$ (the subscript $H$ will be $\GU_{2,2}$, the unramified unitary similitude group of signature $(2,2)$) as the moduli space of quasi-isogenies $\rho \colon X\times_S \Sbar \rightarrow \X_0\times_{\spec \Fp} \Sbar$ which commutes with the additional structures. It is a formal scheme over $\spf W$, which is locally formally of finite type. We have a relation between $\M_G$ and $\M_H$ as follows: 
\begin{prop}\label{mhit}
(Proposition \ref{mgeb}) \emph{There is a closed immersion 
\begin{equation*}
i_{G,H}\colon\M_G\rightarrow \M_H
\end{equation*}
whose image consists of $(X,\iota,\lambda,\rho)\in \M_H$ satisfying $\rho^{-1}\circ \iota_0({\Pi})\circ \rho \in \End(X)$ (see Section \ref{mrlv} for the definition of $\Pi$). }
\end{prop}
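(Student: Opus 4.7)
The plan is to produce $i_{G,H}$ as the morphism which forgets the action of $\Pi \in O_D$, retains only the action of $\Zps = \Zp[\varepsilon] \subset O_D$, and then to recognize its image as precisely the stated closed locus in $\M_H$.

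First I would verify that restricting $\iota$ from $O_D$ to $\Zps$ defines a morphism $\M_G \to \M_H$. Given $(X,\iota,\lambda,\rho) \in \M_G(S)$, the restriction of $\iota$ to $\Zps$ is a ring action, and the Kottwitz determinant condition of $\M_H$ follows from that of $\M_G$ because $\trd_{D/\Qp}|_{\Qps} = \tr_{\Qps/\Qp}$ and $\nrd_{D/\Qp}|_{\Qps} = \N_{\Qps/\Qp}$. For the polarization compatibility, a direct computation yields $a^* = \varepsilon(\trd(a) - a)\varepsilon^{-1} = x - y\varepsilon = \tau(a)$ for every $a = x+y\varepsilon \in \Zps$, so the $\M_G$-condition $\lambda \circ \iota(a) = \iota(a^*)^\vee \circ \lambda$ is exactly the $\M_H$-condition. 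Finally, $O_D$-linearity of $\rho$ specializes to $\Zps$-linearity, and the scalar-similitude compatibility on $\lambda$ is inherited.

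Next I would characterize the image. The $O_D$-linearity of $\rho$ forces $\iota(\Pi) = \rho^{-1} \circ \iota_0(\Pi) \circ \rho$ as quasi-endomorphisms of $X$, so this quasi-endomorphism is an honest endomorphism on the image of $i_{G,H}$. Conversely, given $(X, \iota_H, \lambda, \rho) \in \M_H(S)$ satisfying $\Pi' := \rho^{-1} \circ \iota_0(\Pi) \circ \rho \in \End(X)$, I would extend $\iota_H$ to $\iota \colon O_D \to \End(X)$ by declaring $\iota(\Pi) := \Pi'$. The relations $\iota(\Pi)^2 = p$ and $\iota(\Pi)\iota(\varepsilon) = -\iota(\varepsilon)\iota(\Pi)$ are inherited from the corresponding relations for $\iota_0$ together with the identity $\iota(\varepsilon) = \rho^{-1} \circ \iota_0(\varepsilon) \circ \rho$. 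The polarization identity $\lambda \circ \iota(\Pi) = \iota(\Pi)^\vee \circ \lambda$, using the computation $\Pi^* = -\varepsilon \Pi \varepsilon^{-1} = \Pi$, follows from the analogous identity for $(\X_0,\iota_0,\lambda_0)$ combined with the scalar-similitude property of $\rho$. The Kottwitz determinant condition for all $d \in O_D$ then reduces, via the $\iota(\varepsilon)$-eigenspace decomposition of $\Lie(X)$ and the relation $\iota(\Pi)^2 = p$, to the two cases $d \in \Zps$ (built into $\M_H$) and $d = \Pi$, namely $\det(T - \iota(\Pi) \mid \Lie(X)) = (T^2 - p)^2$, which propagates from the corresponding identity for $\iota_0(\Pi)$ on $\Lie(\X_0)$ through the quasi-isogeny $\rho$.

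For representability, I would invoke the standard fact (see \cite{rz}) that, over a formal scheme carrying a $p$-divisible group, the locus on which a fixed quasi-endomorphism is actually an endomorphism is represented by a closed formal subscheme. Applied to the universal object of $\M_H$ and the quasi-endomorphism $\rho^{-1} \circ \iota_0(\Pi) \circ \rho$, this produces a closed formal subscheme $\M_H' \subset \M_H$ through which the forgetful morphism factors; the construction of the preceding paragraph supplies a functorial two-sided inverse $\M_H' \to \M_G$, so $\M_G \to \M_H'$ is an isomorphism and $i_{G,H}$ is therefore a closed immersion. The step I expect to be most delicate is the verification of the determinant condition along $\Pi$ in families rather than just at geometric points: although $\iota(\Pi)^2 = p$ constrains the eigenvalues of $\iota(\Pi)$ on $\Lie(X)$ to $\{\pm\sqrt{p}\}$, distinguishing the $(+\sqrt{p})$- and $(-\sqrt{p})$-eigenspaces uniformly over $S$ requires transferring the signature decomposition from $(\X_0,\iota_0,\lambda_0)$ along the universal quasi-isogeny, and this is most cleanly argued on the level of displays.
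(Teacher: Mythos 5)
Your overall strategy is the same as the paper's: $i_{G,H}$ is the forgetful functor, its image is characterized as the locus where $\rho^{-1}\circ\iota_0(\Pi)\circ\rho$ is integral, and closedness of that locus comes from \cite[Proposition 2.9]{rz}. The paper disposes of the compatibility checks in one stroke by the rigidity of quasi-isogenies: identities among quasi-endomorphisms of $X\times_S\Sbar$ obtained by conjugating the corresponding identities for $(\X_0,\iota_0,\lambda_0)$ by $\rho$ lift uniquely to $S$. Your explicit verifications of the ring relations, of $a^*=\tau(a)$ on $\Zps$, of $\Pi^*=\Pi$, and of the polarization identity are correct instances of this.

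The one step that would fail as written is your treatment of the Kottwitz condition. You propose to obtain $\det(T-\iota(\Pi)\mid\Lie(X))=(T^2-p)^2$ by propagating the corresponding identity on $\Lie(\X_0)$ "through the quasi-isogeny $\rho$", and you flag the transfer of the eigenspace decomposition along $\rho$ as the delicate point. That mechanism is not available: $\rho$ exists only over $\Sbar$, and a quasi-isogeny induces nothing on Lie algebras (it becomes invertible only after inverting $p$, which kills $\Lie$ in characteristic $p$), so no determinant identity on $\Lie(X)$ over $S$ can be read off from $\X_0$ this way. Fortunately nothing needs to be propagated: the condition is automatic from the $H$-structure together with the relations just established. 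Write $\Lie(X)=\Lie_0\oplus\Lie_1$ for the eigendecomposition under $\Zps\otimes_{\Zp}\O_S\cong\O_S\times\O_S$, each summand locally free of rank $2$ by the Kottwitz condition for $H$. Since $\Pi a=\tau(a)\Pi$, the endomorphism $\iota(\Pi)$ interchanges the summands, say with components $C\colon\Lie_0\to\Lie_1$ and $B\colon\Lie_1\to\Lie_0$ satisfying $BC=p\cdot\id$ and $CB=p\cdot\id$ because $\iota(\Pi)^2=p$. For $d=a+\Pi b$ one gets the block form $\bigl(\begin{smallmatrix} a & \tau(b)B\\ bC & \tau(a)\end{smallmatrix}\bigr)$ with scalar diagonal blocks, whence $\det(T-\iota(d)\mid\Lie(X))=\det\bigl((T-a)(T-\tau(a))I_2-p\N_{\Qps/\Qp}(b)I_2\bigr)=(T^2-\trd_{D/\Qp}(d)T+\nrd_{D/\Qp}(d))^2$ for every $d\in O_D$ at once. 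Note also that the condition does not literally "reduce to the two cases $d\in\Zps$ and $d=\Pi$" — characteristic polynomials are neither additive nor multiplicative in $d$ — but the block computation above makes that reduction unnecessary. With this replacement your argument is complete and agrees with the paper's.
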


Now put $\End^0(\X_0):=\End(\X_0)\otimes_{\Z}\Q$ and $\End_{O_D}^0(\X_0):=\End_{O_D}(\X_0)\otimes_{\Z}\Q$. We define a $6$-dimensional quadratic space as
\begin{equation*}
\L_{\Q}^{\Phi}:=\iota_0(\varepsilon \Pi)^{-1}\circ \{f\in \End_{O_D}^0(\X_0)\mid \trd_{\End_{O_D}^0(\X_0)/\Qp}(f)=0,f^{\vee}=f\}\oplus \Qp \iota_0(\Pi^{-1})\subset \End^0(\X_0)
\end{equation*}
with quadratic form $v\mapsto v\circ v$ over $\Qp$. Moreover, put $J_H^0:=\GSpin(\L_{\Q}^{\Phi})$, which is an algebraic group over $\Qp$. Then $J_H^0(\Qp)$ acts on $\L_{\Q}^{\Phi}$ and $\M_H$. We define $\Delta$ as the image of 
\begin{equation*}
(\id,i_{G,H})\colon p^{\Z}\backslash \M_G\rightarrow p^{\Z}\backslash (\M_G\times_{\spf W}\M_H), 
\end{equation*}
called the \emph{GGP cycle}. Moreover, for $g\in J_H^0(\Qp)$, put $g\Delta:=(\id \times g)(\Delta)$. We consider the intersection multiplicity 
\begin{equation*}
\langle \Delta,g\Delta \rangle:=\chi(p^{\Z}\backslash (\M_G\times_{\spf W}\M_H), \O_{\Delta}\otimes^{\mathbb{L}}\O_{g\Delta}). 
\end{equation*}
We call that $g$ is regular semi-simple and minuscule if the $\Zp$-submodule
\begin{equation*}
L(g):=\sum_{i=0}^{5}\Zp(g^i\cdot \iota_0(\Pi^{-1}))
\end{equation*}
of $\L_{\Q}^{\Phi}$ is a lattice, and satisfies $pL(g)^{\natural}\subset L(g)\subset L(g)^{\natural}$, where $L(g)^{\natural}$ is the dual lattice of $L(g)$ in $\L_{\Q}^{\Phi}$. If $g$ is regular semi-simple and minuscule and $\M_H^g\neq \emptyset$, then $g$ induces an action on $L(g)^{\natural}/L(g)$. Let $P_g$ be the characteristic polynomial of $g$ on $L(g)^{\natural}/L(g)$. 

For a non-zero polynomial $R\in \Fp[T]$, we define the reciprocal of $R$ by 
\begin{equation*}
R^{*}(T):=T^{\deg(R)}R(T^{-1}), 
\end{equation*}
and we call that $R$ is self-reciprocal if $R^{*}=R$. Then $P_g$ is self-reciprocal. Let $\Irr(P_g)$ be the set of all monic irreducible factors of $P_g$, and $\Irr^{\sr}(P_g)$ the set of all self-reciprocal monic irreducible factors of $P_g$. Moreover, put $\Irr^{\nsr}(P_g):=\Irr(P_g)\setminus \Irr^{\sr}(P_g)$. Let $\Irr^{\nsr}(P_g)/\sim$ be the quotient of the set $\Irr^{\nsr}(P_g)$ by the relation $R\sim cR^*$ for some $c\in \Fpt$. Moreover, for $R\in \Irr(P_g)$, let $m(R)$ be the multiplicity of $R$ in $P_g$. Then, the function $m$ on $\Irr^{\nsr}(P_g)$ factors through $\Irr^{\nsr}(P_g)/\sim$. 

\begin{thm}\label{imgg}
(Theorem \ref{mnin}) \emph{Assume that $g\in J_H^0(\Qp)$ is regular semi-simple, minuscule and satisfies $\M_H^g\neq \emptyset$. 
\begin{enumerate}
\item The following are equivalent: 
\begin{itemize}
\item We have $\Delta \cap g\Delta \neq \emptyset$. 
\item There is a unique $Q_{g}\in \Irr^{\sr}(P_{g})$ such that $m(Q_{g})$ is odd.
\end{itemize}
If the conditions above hold, then we have
\begin{equation*}
\#(\Delta \cap g\Delta)(\Fpbar)=\deg Q_{g}\prod_{[R]\in \Irr^{\nsr}(P_{g})/\sim}(1+m(R))<\infty. 
\end{equation*}
\item We have an equality
\begin{equation*}
\langle \Delta,g\Delta \rangle=\deg Q_g\frac{m(Q_g)+1}{2}\prod_{[R]\in \Irr^{\nsr}(P_g)/\sim}(1+m(R)). 
\end{equation*}
\end{enumerate}}
\end{thm}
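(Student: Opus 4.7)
The plan is to reduce the GGP intersection on $p^{\Z}\backslash(\M_G\times_{\spf W}\M_H)$ to an intersection inside $\M_H$ alone, then to a local computation at each $\Fpbar$-point, and finally to a linear-algebraic problem for the action of $g$ on $L(g)^{\natural}/L(g)$. Using Proposition~\ref{mhit} and projecting onto the second factor, the cycles $\Delta$ and $g\Delta$ correspond to the closed formal subschemes $\M_G$ and $g\M_G$ of $\M_H$, cut out respectively by the conditions that $\iota_0(\Pi)$ and $g\iota_0(\Pi)g^{-1}$ lift to endomorphisms of the universal $p$-divisible group. Consequently
\begin{equation*}
\Delta\cap g\Delta \cong \M_G\cap g\M_G,\qquad \langle \Delta,g\Delta\rangle=\chi\bigl(p^{\Z}\backslash \M_H,\O_{\M_G}\otimes^{\mathbb{L}}\O_{g\M_G}\bigr),
\end{equation*}
so (i) becomes the problem of counting $\Fpbar$-points of $\M_G\cap g\M_G$, and (ii) becomes the problem of summing the local lengths.

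For (i), I would analyze $\M_G\cap g\M_G\subset \M_H^g$ via the Dieudonn\'e-theoretic description of $\M_H$ due to Howard--Pappas. Any $\Fpbar$-point of $\M_H^g$ corresponds to a $g$-stable $W$-lattice in the rational Dieudonn\'e module satisfying the Kottwitz and polarization conditions; the minuscule hypothesis $pL(g)^{\natural}\subset L(g)\subset L(g)^{\natural}$ then ensures that such lattices are parametrized by the $g$-stable isotropic $\Fp$-lines inside the quadratic space $L(g)^{\natural}/L(g)$, and the extra $\iota_0(\Pi)$- and $g\iota_0(\Pi)g^{-1}$-stability conditions cut out a distinguished sublocus. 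Decomposing $L(g)^{\natural}/L(g)$ into generalized $g$-eigenspaces indexed by $\Irr(P_g)$, the quadratic form pairs each non-self-reciprocal factor $R$ with $R^{*}$ into a hyperbolic summand, whereas each self-reciprocal factor sits on a non-degenerate summand. A linear-algebra analysis then shows that a $g$-stable isotropic line exists in the total space precisely when some self-reciprocal factor has odd multiplicity, that such a factor $Q_g$ must be unique when it exists, and that the count of lines is
\begin{equation*}
\#(\Delta \cap g\Delta)(\Fpbar)=\deg Q_g\prod_{[R]\in \Irr^{\nsr}(P_g)/\sim}(1+m(R)),
\end{equation*}
arising from $\deg Q_g$ Galois-conjugate eigenlines in the $Q_g$-summand and $(1+m(R))$ choices of isotropic subspace inside each hyperbolic pair.

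For (ii), I would localize at each intersection point $x\in (\Delta\cap g\Delta)(\Fpbar)$. By Corollary~\ref{rzsg} and the regularity of $\M_H$ from Howard--Pappas, the closed immersions $\M_G,g\M_G\hookrightarrow \M_H$ are locally cut out by regular sequences, so the derived intersection at $x$ collapses to a Koszul computation whose length is the desired multiplicity. Choosing local coordinates on $\M_H$ adapted to the eigenspace decomposition above, the pair $(\M_G,g\M_G)$ at $x$ corresponds to two half-filtrations on the odd-multiplicity Jordan block attached to $Q_g$, and a direct length count yields the contribution $(m(Q_g)+1)/2$ at each point, independently of $x$. Combining with the point count from (i) gives the formula in (ii).

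The main obstacle will be the local intersection multiplicity calculation in (ii): establishing a clean local model at each $x\in \M_G\cap g\M_G$ so that the derived intersection collapses to the asserted length. Controlling how the $\iota_0(\Pi)$-endomorphism condition deforms in a formal neighborhood of $x$ inside $\M_H$, and matching the self-reciprocal factor $Q_g$ with the geometric length, requires a careful translation between the Dieudonn\'e-theoretic description of $\M_H$ and the local structure of the two closed subschemes. Once this local model is in place, the $(m(Q_g)+1)/2$ factor should follow from standard linear-algebra bookkeeping on the Jordan block.
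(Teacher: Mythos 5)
Your opening reduction is not correct: $\Delta\cap g\Delta$ is not isomorphic to $\M_G\cap g\M_G$ (the intersection of the two translated closed subschemes of $\M_H$), and the containment $\M_G\cap g\M_G\subset\M_H^g$ that you then invoke is false. A point of $\Delta\cap g\Delta$ is a pair $(x,y)$ with $y=i_{G,H}(x)$ and $y=g(i_{G,H}(x))$, so the intersection is the locus where $i_{G,H}(x)$ is \emph{fixed} by $g$, i.e.\ $i_{G,H}(\M_G)\cap\M_H^g$ (this is Lemma \ref{dlgh}). By contrast, a point of $i_{G,H}(\M_G)\cap g\,i_{G,H}(\M_G)$ only satisfies the weaker condition that both $y_1$ and $g\cdot y_1$ lift to endomorphisms, i.e.\ it lies in $\cZ(\Zp y_1+\Zp(g\cdot y_1))$, which in general strictly contains the correct locus. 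Since the point count in (i) and the length computation in (ii) depend on which closed subscheme is being intersected, this must be repaired; the paper does so by proving $i_{G,H}(\M_G)\cap\M_H^g=\cZ(L(g))^g=\M_{H,L(g)^{\natural}}^g$ (Proposition \ref{nhtr}), where the full lattice $L(g)=\sum_{i}\Zp(g^i\cdot y_1)$ --- not just $y_1$ and $g\cdot y_1$ --- enters, precisely because $g$-fixedness of the Dieudonn\'e lattice forces stability under every $g^i\cdot y_1$.

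Second, your account of the point count and the multiplicity underestimates the non-elementary inputs. The $\Fpbar$-points of $p^{\Z}\backslash\M_{H,L(g)^{\natural}}$ are not parametrized by isotropic lines in $L(g)^{\natural}/L(g)$; they correspond to special lattices, equivalently to Lagrangians $\cL$ in $\Omegabar_0(L(g)^{\natural})\otimes_{\Fp}\Fpbar$ with $\rk(\cL\cap\overline{\Phi}_*(\cL))$ large (the varieties $\Sbar_{\Omegabar_0}$ of Section \ref{nseo}). The stratification by $g$-stable vertex sublattices and its bijection with factorizations $P_g=RQR^{*}$ (Lemmas \ref{bjsm} and \ref{cmbt}) is indeed linear algebra over $\Fp$, but the two facts that a stratum's $\gbar$-fixed locus is nonempty iff the induced characteristic polynomial is irreducible, and that it then has exactly $\deg Q$ points, are statements about Frobenius-twisted fixed points of Deligne--Lusztig varieties (Proposition \ref{neir}, imported from Li--Zhu), not about $g$-eigenlines; this is also where uniqueness of the odd-multiplicity self-reciprocal factor is forced. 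Likewise the local contribution $(m(Q_g)+1)/2$ in (ii) rests on Li--Zhu's computation of the complete local ring $\Fpbar[X]/(X^{c_g})$ at a fixed point of $\Sbar^{\gbar}$, valid only for $p>c_g$ (verified here since $c_g\le 2$), combined with $c_g=(m(Q_g)+1)/2$, which uses that $P_g$ equals the minimal polynomial of $\gbar$ (Lemma \ref{chmp}, where regular semi-simplicity is used). Your Koszul/regular-sequence argument for collapsing $\O_{\Delta}\otimes^{\mathbb{L}}\O_{g\Delta}$ does match Proposition \ref{nhtr} (ii), but the remaining steps cannot be completed by bookkeeping on a Jordan block alone.
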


\begin{sotp}
First, we explain the strategy of the proof of Theorems \ref{thlc} and \ref{thgl}. For results on singularities of $\M_G$ and $\sS_{K,W}$, we determine the structure of the local model explicitly. On the other hand, for results on $\M_G^{\red}$ and $\sS_{K}^{\si}$, the strategy is based on the case for $\GU_{1,n-1}$ over an inert prime established by \cite{vol} and \cite{vw}. The method is as follows: 
\begin{enumerate}
\item construct a locally closed stratification of the corresponding Rapoport--Zink space by means of a certain Bruhat--Tits building, 
\item apply the $p$-adic uniformization theorem of Rapoport--Zink \cite[Theorem 6.30]{rz} for our Shimura variety, and reduce to the result in (i). 
\end{enumerate}
In our case, the Rapoport--Zink space appearing in (i) is $\M_G$, and the involving Bruhat--Tits building is $\B$. Our approach to (i) is as follows. 

We regard $\M_G$ as a closed formal subscheme of $\M_H$ by the closed immersion $i_{G,H}$ in Proposition \ref{mhit}. Under this situation, we can construct a closed immersion $\GSp_4\rightarrow J_H^0$ which is compatible with the actions on $\M_G$ and $\M_H$. This induces an embedding of the Bruhat--Tits building $i\colon \B \rightarrow \B'$, where $\B'$ is the Bruhat--Tits building of $(J_H^0)^{\ad}(\Qp)=\SO(\L_{\Q}^{\Phi})(\Qp)$. On the other hand, Howard and Pappas constructed in \cite{hp} a stratification $\{\BT_{H,\bullet}\}$ of $\M_H$ by means of $\B'$. We denote by $\M_{H,\bullet}$ the closure of $\BT_{H,\bullet}$ in $\M_H$. Now we define $\M_{G,x}:=\M_{H,i(x)}\cap \M_G$ for $x\in \VE$. Then we can prove the equality $\M_{G,x}=\M_{H,i(x)}$ and the desired assertions in Theorem \ref{thlc}. 

Note that our method relies on the exceptional isomorphism $\PGSp_{4}\cong \SO_{5}$ corresponding to the identity $C_2=B_2$ of Dynkin diagrams. Hence we cannot expect to generalize it to quaternionic unitary similitude groups of arbitrary degrees. However, we can hope it for a family of spinor similitude groups. More precisely, we expect such a generalization which associates an embedding $\SO_{n+1}\rightarrow \SO_{n+2}$. This is one of our future problems. 

Next, we explain the strategy of the proof of Theorem \ref{imgg}. We use the argument introduced in \cite{lz}, which proves a similar result for Rapoport--Zink spaces for unramified spinor similitude groups with hyperspecial level structures. The most different point from \cite{lz} is the proof of the reducedness of the Bruhat--Tits strata of $\M_H$. Our proof is based on generalizing some linear algebraic results in \cite[\S 2.4]{hp}, which does not involve the integral models of Shimura varieties. 
\end{sotp}

\begin{oftp}
In Section \ref{rzgu}, we specify a Rapoport--Zink space $\M_G$ that we consider in this paper. In Section \ref{wkhp}, we recall and refine the argument in \cite[\S 2]{hp} to study $\M_G$. In Section \ref{lmfl}, we consider the flatness and the non-formally smooth locus of $\M_G$. In Section \ref{vlbt}, we introduce the notion of vertex lattices and construct the Bruhat--Tits stratification. In Section \ref{dlvs}, we interpret the Bruhat--Tits strata in terms of Deligne--Lusztig varieties. In Section \ref{shvr} we specify a Shimura variety and its supersingular locus that we consider, and prove the global results as the first application of our local results. In Section \ref{arit}, we define the GGP cycle, and compute the intersection multiplicity for the minuscule case as the second application of our local results. In appendix \ref{hpcr}, we give a complement of the proof of \cite[Corollary 2.14]{hp}. In Appendix \ref{rdhp}, we give a proof of the reducedness of the Bruhat--Tits strata of $\M_H$. 
\end{oftp}

\begin{ack}
This paper is the author's master thesis. I would like to thank my advisor Yoichi Mieda for his constant support and encouragement. He carefully read the draft version of this paper, and pointed out many mistakes and typos. 

This work was supported by the Program for Leading Graduate Schools, MEXT, Japan.
\end{ack}

\subsection{Notation}\label{nota}
In this paper, we use the following terminologies: 
\begin{itemize}
\item \textbf{$p$-adic fields and their extensions. } For an algebraically closed field $k$ of characteristic $p$, let $W(k)$ be the ring of Witt vectors over $k$. Denote by $\sigma$ the $p$-th power Frobenius on $k$ or its lift to $W(k)$. If $k=\Fpbar$, write $W$ and $K_0$ for $W(k)$ and $\Frac(W(k))$ respectively. We normalize the $p$-adic valuation $\ord_p$ on $\Q$, $\Qp$ or $\Frac(W(k))$ so that $\ord_p(p)=1$. 
\item \textbf{Dual lattices. } Let $O_F$ be one of $\Z_{(p)}$, $\Zp$ and $W$, where $\Z_{(p)}$ is the valuation ring in $\Q$ with respect to $\ord_p$. Put $F:=\Frac(O_F)$. Further, let $(V,(\,,\,))$ be a finite-dimensional vector space over $F$ with a symmetric or a symplectic form, and $\Lambda \subset V$ an $O_F$-lattice. Then we denote the dual lattice of $\Lambda$ by
\begin{equation*}
\Lambda^{\vee}:=\{v\in V\mid (v,\Lambda)\subset O_F \}, 
\end{equation*}
except for $V=\L_{\Q}^{\Phi}$ appered in Section \ref{icmr}. In this case, we denote the above dual lattice by $\Lambda^{\natural}$. 
\item \textbf{Perpendiculars. } Let $k$ be a field, $V$ a symplectic or a quadratic space over $k$ of finite-dimensional, and $W$ a $k$-subspace of $V$. Then we denote by $U^{\perp}$ the perpendicular of $U$ in $V$.  
\item \textbf{Hermitian forms. } Let $k$ be a field, and $B$ either a quadratic extension of $k$ or a quaternion algebra over $k$. We put $\overline{b}:=\trd(b)-b$ for $b\in B$. Let $V$ be a finite free \emph{left} $B$-module. In this paper, a $B/k$-hermitian form on $V$ means a \emph{non-degenerate} $B$-valued bilinear form $\langle \,,\,\rangle$ satisfying $\langle x_1,x_2\rangle=\overline{\langle x_2,x_1\rangle}$ and
\begin{equation*}
\langle b_1 x_1,b_2 x_2\rangle=b_1\langle x_1,x_2\rangle \overline{b_2}
\end{equation*}
for $b_1,b_2\in B$ and $x_1,x_2\in V$. We can apply the results in \cite{sch} to it, since we may regard $V$ as a right $B$-module by restricting scalars by $b\mapsto \overline{b}$. We say that a pair $(V,\langle\,,\,\rangle)$ where $\langle\,,\,\rangle$ is a $B/k$-hermitian form on $V$ as a $B/k$-hertmitian space. 
\item \textbf{Symplectic similitude group. } For $n\in \Zpn$, let 
\begin{equation*}
J_{2n}:={\begin{pmatrix}
0& I_n\\
-I_n &0
\end{pmatrix}}, 
\end{equation*}
where $I_n$ is the anti-diagonal matrix of size $n$ that has $1$ at every non-zero entry. We define an algebraic group $\GSp_{2n}$ over $\Z$ as
\begin{equation*}
\GSp_{2n}(R)=\{(A,c)\in \GL_{2n}(R)\times \G_m(R)\mid {}^tAJ_{2n}A=cJ_{2n}\}
\end{equation*}
for any ring $R$. 
\end{itemize}

\section{Rapoport--Zink space for $\GU_2(D)$}\label{rzgu}

In this section, we recall a Rapoport--Zink space for $\GU_2(D)$. We use the notation in Section \ref{mrlv} for a while. 

\subsection{Rapoport--Zink datum}\label{rzdt}
First, we recall a \emph{Rapoport--Zink datum} of PEL type defined in \cite[Definition 3.18]{rz}. See also \cite[Definition 2.1]{mie}. It is a tuple $(B,*,O_B,V,(\,,\,),b,\mu,\mathcal{L})$, where
\begin{itemize}
\item $B$ is a finite-dimensional semi-simple algebra over $\Qp$, 
\item $*\colon B\rightarrow B$ is an involution, 
\item $O_B$ is a maximal order of $B$ which is stable under $*$, 
\item $V$ is a left $B$-module which is finite-dimensional over $\Qp$, 
\item $(\,,\,)\colon V\times V\rightarrow \Qp$ is a non-degenerate symplectic form such that $(dx,y)=(x,d^*y)$ for any $d\in B$ and $x,y\in V$. 
\end{itemize}
By the data above, we define an algebraic group $G$ over $\Qp$ by
\begin{equation}\label{utsl}
G(R)=\{(g,c)\in \GL_{B\otimes_{\Qp} R}(V\otimes_{\Qp} R)\times \G_m(R)\mid (g(v),g(w))=c(v,w)\text{ for all }v,w\in V\otimes_{\Qp} R\}
\end{equation}
for each $\Qp$-algebra $R$. Then we can explain the remaining objects: 
\begin{itemize}
\item $b\in G(K_0)$, 
\item $\mu \colon \G_{m}\otimes_{\Z} K\rightarrow G\otimes_{\Qp}K$ is a cocharacter defined over a finite extension $K$ of $K_0$, 
\item $\mathcal{L}$ is a self-dual lattice chain, see \cite[Definition 3.1, Definition 3.13]{rz}. 
\end{itemize}
We require the following conditions: 
\begin{itemize}
\item the isocrystal $(V\otimes_{\Qp}K_0,b\circ \sigma)$ has slopes in the interval $[0,1]$, 
\item $\sml(b)=p$, where $\sml \colon G\rightarrow \G_m;(g,c)\mapsto c$ is the similitude character of $G$, 
\item the weight decomposition of $V\otimes_{\Qp}K$ with respect to $\mu$ contains only the weights $0$ and $1$. 
\end{itemize}

We introduce a Rapoport--Zink datum considered in this paper. Note that a partial datum is given in \cite[1.42]{rz}. 

Let $D:=\Qps [\Pi]$ be the quaternion division algebra over $\Qp$ such that $\Pi^2=p$ and $\Pi a=\tau(a)\Pi$ for any $a\in \Qps$. Fix an element $\varepsilon \in \Zpst$, and define an involution $*$ on $D$ by
\begin{equation*}
d^{*}:=\varepsilon \overline{d} \varepsilon^{-1}. 
\end{equation*}
Then we have ${\Pi}^{*}=\Pi$ and $a^{*}=\tau(a)$ for any $a\in \Qps$. Moreover, the unique maximal order $O_D$ of $D$ is stable under $*$. 

Next, let $V:=D^{\oplus 2}$ be a left $D$-module, and $\Lambda^0:=O_D^{\oplus 2}\subset V$ an $O_D$-lattice. Let us define a $\Qp$-valued bilinear form $(\,,\,)$ on $V$ by
\begin{equation*}
((x_1,x_2),(y_1,y_2)):=\trd_{D/\Qp}(\Pi^{-1}(x_1^*y_2-x_2^*y_1))
\end{equation*} 
for $(x_1,x_2),(y_1,y_2)\in V$. 

\begin{lem}\label{sdlt}
\emph{The bilinear form $(\,,\,)$ on $V$ is non-degenerate, alternating and satisfies the following properties:
\begin{enumerate}
\item $(dx,y)=(x,d^{*}y)$ for all $d\in D$ and $x,y\in V$. 
\item $(\Lambda^{0})^{\vee}=\Lambda^0$. 
\end{enumerate}}
\end{lem}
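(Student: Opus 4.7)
The plan is to unpack the definition, use the two key identities $\Pi^{*}=\Pi$ and $a^{*}=\tau(a)$ for $a\in \Qps$ (both of which follow from $\overline{\varepsilon}=-\varepsilon$ and $\varepsilon\Pi=-\Pi\varepsilon$), and reduce property (ii) and non-degeneracy to a standard identification of the codifferent of $O_D$ with $\Pi^{-1}O_D$.

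First I would note that $\Qp$-bilinearity is immediate since $*$ is $\Qp$-linear and $\trd_{D/\Qp}$ is $\Qp$-linear. For the alternating property, I would compute
\begin{equation*}
((y_1,y_2),(x_1,x_2))=\trd(\Pi^{-1}(y_1^{*}x_2-y_2^{*}x_1))
\end{equation*}
and then apply $*$ inside each trace term. Using $(abc)^{*}=c^{*}b^{*}a^{*}$, $(\Pi^{-1})^{*}=\Pi^{-1}$ and $\trd(z^{*})=\trd(z)$ together with the cyclic invariance $\trd(zw)=\trd(wz)$, one obtains $\trd(\Pi^{-1}y_1^{*}x_2)=\trd(\Pi^{-1}x_2^{*}y_1)$ and likewise for the other term, giving the required sign change.

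Property (i) follows by a direct computation: $(dx_i)^{*}=x_i^{*}d^{*}$ so that
\begin{equation*}
(d(x_1,x_2),(y_1,y_2))=\trd(\Pi^{-1}(x_1^{*}d^{*}y_2-x_2^{*}d^{*}y_1))=((x_1,x_2),d^{*}(y_1,y_2)).
\end{equation*}

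The substantive step is (ii), together with non-degeneracy. The key input is the identification of the codifferent
\begin{equation*}
\{x\in D\mid \trd_{D/\Qp}(xO_D)\subset \Zp\}=\Pi^{-1}O_D,
\end{equation*}
which I would verify by writing $O_D=O_{\Qps}\oplus O_{\Qps}\Pi$, expanding the reduced trace pairing, and using that $O_{\Qps}$ is self-dual under $\tr_{\Qps/\Qp}$ (the unramified case). Granted this, for $v=(v_1,v_2)\in V$ testing against $(y_1,0)$ and $(0,y_2)$ with $y_i\in O_D$ shows that $v\in (\Lambda^0)^{\vee}$ iff both $\Pi^{-1}v_i^{*}$ lie in the codifferent $\Pi^{-1}O_D$, i.e.\ iff $v_i^{*}\in O_D$; since $*$ preserves $O_D$, this is equivalent to $v\in \Lambda^0$. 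The same argument yields non-degeneracy: if $v_1\neq 0$ then $v_1^{*}D=D$, so the functional $y_2\mapsto \trd(\Pi^{-1}v_1^{*}y_2)$ on $D$ is non-zero, and pairing $v$ with $(0,y_2)$ for a suitable $y_2$ gives a non-zero value (and symmetrically for $v_2\neq 0$).

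The only place I expect to have to be careful is the codifferent computation, since it requires tracking how $\Pi$ interacts with $O_{\Qps}$ via $\tau$; everything else is a direct unwinding of the definitions together with the two identities $\Pi^{*}=\Pi$ and $a^{*}=\tau(a)$.
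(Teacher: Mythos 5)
Your proposal is correct. For the alternating property and for (i) you do exactly what the paper does: apply $*$ inside the reduced trace, use $\Pi^{*}=\Pi$, $\trd(z^{*})=\trd(z)$ and cyclicity. Where you diverge is in (ii) and non-degeneracy. The paper proves non-degeneracy by exhibiting an explicit dual element (for $x_1\neq 0$ it pairs $x$ with $y=(0,(x_1^{*})^{-1}\Pi)$ to get the value $2$), and proves (ii) by pairing against the test vectors $(\Pi^{i},0)$ and $(0,\Pi^{i})$ for $i\in\{0,1\}$ and invoking $\trd_{D/\Qp}(a+\Pi b)=\tr_{\Qps/\Qp}(a)$; you instead route both through the identification of the inverse different $\{x\in D\mid \trd_{D/\Qp}(xO_D)\subset \Zp\}=\Pi^{-1}O_D$ and non-degeneracy of the reduced trace pairing. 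Your version is a systematic repackaging of the same computation, and it is arguably the more careful one: testing only against powers of $\Pi$, as the paper does, yields the conditions $\tr_{\Qps/\Qp}(a),\tr_{\Qps/\Qp}(b)\in\Zp$ for $y=a+\Pi b$, which by themselves do not force $a,b\in O_{\Qps}$ --- one must also test against $(c\Pi^{i},0)$ for $c\in O_{\Qps}$ and use the self-duality of $O_{\Qps}$ under the trace, which is precisely the content of your codifferent computation. So carry out that computation as you outline it (it is the standard unramified-plus-uniformizer calculation and comes out to $O_{\Qps}\oplus \Pi^{-1}O_{\Qps}=\Pi^{-1}O_D$), and the rest of your argument closes without issue.
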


\begin{proof}
Before the proof, we recall some properties of the reduced trace. The following hold: 
\begin{itemize}
\item For any $d_1,d_2\in D$, we have $\trd_{D/\Qp}(d_1d_2)=\trd_{D/\Qp}(d_2d_1)$. 
\item For any $d\in D$, we have $\trd_{D/\Qp}(\overline{d})=\trd_{D/\Qp}(d)$. 
\end{itemize}
Using these properties, for $d\in D$ we have
\begin{equation*}
\trd_{D/\Qp}(d^{*})=\trd_{D/\Qp}(\varepsilon \overline{d}\varepsilon^{-1})=\trd_{D/\Qp}(\overline{d})=\trd_{D/\Qp}(d). 
\end{equation*}

First, we prove that $(\,,\,)$ is alternating. Take $(x_1,x_2),(y_1,y_2)\in V$. Then we have
\begin{equation*}
((x_1,x_2),(y_1,y_2))=\trd_{D/\Qp}((\Pi^{-1}(x_1^*y_2-x_2^*y_1))^{*})=\trd_{D/\Qp}((\Pi^{*})^{-1}(y_2^*x_1-y_1^*x_2)). 
\end{equation*}
Since $\Pi^{*}=\Pi$, we obtain
\begin{equation*}
\trd_{D/\Qp}((\Pi^{*})^{-1}(y_2^*x_1-y_1^*x_2))
=-\trd_{D/\Qp}(\Pi^{-1}(y_1^*x_2-y_2^*x_1))=-((y_1,y_2),(x_1,x_2)), 
\end{equation*}
as desired. 

Second, we prove that $(\,,\,)$ is non-degenerate. Take $x=(x_1,x_2)\in V\setminus \{(0,0)\}$. We may assume $x_1\neq 0$. Then $y=(0,(x_1^{*})^{-1}\Pi)$ satisfies $(x,y)=2\neq 0$ by definition. 

We show the remaining two properties. The assertion (i) follows from the definition of $(\,,\,)$. For the assertion (ii), take $(x,y)\in V$. Then we have
\begin{equation*}
((\Pi^{i},0),(x,y))=\trd_{D/\Qp}(\Pi^{i-1}y),\quad ((0,\Pi^{i}),(x,y))=\trd_{D/\Qp}(-\Pi^{i-1}x)
\end{equation*}
for $i\in \{0,1\}$. Since
\begin{equation*}
\trd_{D/\Qp}(a+\Pi b)=\trd_{\Qps/\Qp}(a)
\end{equation*}
for $a,b\in \Qps$, we have $\trd_{D/\Qp}(\Pi^{i-1}y), \trd_{D/\Qp}(-\Pi^{i-1}x)\in \Zp$ for all $i\in \{0,1\}$ if and only if $x,y\in O_{D}$. Hence the assertion (ii) follows. 
\end{proof}

Let $G$ be the algebraic group over $\Qp$ defined by the tuple $(D,*,O_D,V,(\,,\,))$. We claim that $G$ is isomorphic to $\GU_2(D)$, which is a non-trivial inner form of $\GSp_4\otimes_{\Z}\Qp$ and splits over $\Qps$. This is pointed out in \cite[1.42]{rz}. However we give a proof here to use the argument in Section \ref{shqt}. 

We use the following lemma:
\begin{lem}\label{skhm}
\emph{Let $k$ be a field whose characteristic is not equal to $2$, $B$ a quaternion algebra over $k$. Suppose that there is a subfield $k_0$ of $k$ and a quaternion division algebra $B_0$ over $k_0$ such that $B=B_0\otimes_{k_0}k$. Let $e\in B_0^{\times}$ be an element such that $\overline{e}=-e$ and $e^2\in k_0^{\times}\setminus (k_0^{\times})^2$. Define an involution $*$ of $B$ by $b^{*}:=e\overline{b}e^{-1}$ for any $b\in B$. Moreover, let $V$ be a finite free left $B$-module, and $(\,,\,)$ a $k$-valued symplectic form on $V$ such that $(bx,y)=(x,b^{*}y)$ for any $b\in B$ and $x,y\in V$. Then there is a unique $B/k$-hermitian form $\langle \,,\, \rangle$ on $V$ such that $\trd_{B/k}(e^{-1}\langle x,y\rangle)=(x,y)$ for any $x,y\in V$. Moreover, if $R$ is a $k$-algebra, $(g,c)\in \GL_{B\otimes_{k}R}(V\otimes_{k}R)\times \G_{m}(R)$ and $x,y\in V\otimes_{k}R$. Then $(g(x),g(y))=c(x,y)$ if and only if $\langle g(x),g(y)\rangle=c\langle x,y\rangle$. }
\end{lem}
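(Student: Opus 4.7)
\medskip

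\textbf{Plan of proof.} The strategy is to build $\langle\,,\,\rangle$ by inverting the reduced trace pairing and then verify the requisite properties one by one. The key input is that the $k$-bilinear form $B\times B\to k$, $(b_1,b_2)\mapsto \trd_{B/k}(b_1 b_2)$, is non-degenerate, and that this non-degeneracy is preserved under arbitrary base change $R/k$.

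\emph{Step 1: Construction.} Fix $x,y\in V$. The map $B\to k$, $b\mapsto (bx,y)$, is $k$-linear, so by non-degeneracy of the trace pairing there is a unique $\langle x,y\rangle\in B$ satisfying
\begin{equation*}
\trd_{B/k}\bigl(b\,\langle x,y\rangle\, e^{-1}\bigr) = (bx,y) \qquad \text{for all } b\in B.
\end{equation*}
Taking $b=1$ gives the requested identity $\trd_{B/k}(e^{-1}\langle x,y\rangle)=(x,y)$; uniqueness of a $B/k$-hermitian form satisfying this identity follows because $B$-sesquilinearity forces the full family of equations displayed above.

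\emph{Step 2: Sesquilinearity and hermitian property.} Linearity in each variable over $k$ is clear, and the defining equation immediately yields $\langle b_1 x_1,x_2\rangle = b_1\langle x_1,x_2\rangle$. For the right side, I will substitute $b_2 y$ for $y$ and use $(bx,b_2 y)=(b_2^{*}bx,y)$ together with $b_2^{*}=e\overline{b_2}e^{-1}$ and cyclicity of the reduced trace; this will give $\langle x,b_2 y\rangle = \langle x,y\rangle \overline{b_2}$. For the hermitian identity, I will use $(bx_1,x_2)=-(x_2,bx_1)=-(b^{*}x_2,x_1)$, convert back via the defining equation, and then apply $\trd(d)=\trd(\overline d)$ combined with $\overline{d_1d_2}=\overline{d_2}\,\overline{d_1}$ and the crucial sign $\overline{e^{-1}}=-e^{-1}$ coming from $\overline e=-e$; the two minus signs will cancel, yielding $\langle x_2,x_1\rangle = \overline{\langle x_1,x_2\rangle}$. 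Non-degeneracy of $\langle\,,\,\rangle$ follows at once from non-degeneracy of $(\,,\,)$ by specializing the defining equation to $b=1$.

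\emph{Step 3: Similitude equivalence.} After base change, the defining equation still characterizes the $(B\otimes_k R)/R$-hermitian extension of $\langle\,,\,\rangle$, because the trace pairing remains non-degenerate over $R$. For $(g,c)$ as in the statement and $x,y\in V\otimes_k R$, I use that $g$ is $B\otimes_k R$-linear, hence $bg(x)=g(bx)$. Then for every $b\in B\otimes_k R$,
\begin{equation*}
\trd\bigl(b\,\langle g(x),g(y)\rangle\, e^{-1}\bigr) = (bg(x),g(y)) = (g(bx),g(y)),
\end{equation*}
while $c(bx,y) = \trd(b\cdot c\langle x,y\rangle\, e^{-1})$. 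Thus the two similitude conditions are equivalent modulo the kernel of the trace pairing over $R$, which is trivial. Conversely, taking $b=1$ shows that $\langle g(x),g(y)\rangle=c\langle x,y\rangle$ implies $(g(x),g(y))=c(x,y)$.

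\emph{Main obstacle.} The only delicate point is the hermitian identity in Step 2: it requires tracking the bar-involution of $B$ through a product involving both $e^{-1}$ and $b^{*}=e\overline b e^{-1}$, where the sign $\overline e=-e$ must exactly compensate the sign coming from the symplectic (rather than symmetric) nature of $(\,,\,)$. Once this bookkeeping is performed, the rest of the proof is formal.
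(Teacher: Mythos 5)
Your proof is correct, and it takes a genuinely different route from the paper's. The paper first invokes Scharlau's structure theorem to produce an element $d\in B_0^{\times}$ with $d^2\in k_0^{\times}$, $de=-ed$ and $B=k[e,d]$; it then proves uniqueness by observing that the difference of two candidate forms lands pointwise in $k+kd+kde$ (the kernel of $b\mapsto \trd_{B/k}(e^{-1}b)$) and killing the three coefficients by evaluating at $ex_0$, $dex_0$, $dx_0$, and it proves existence by exhibiting the explicit formula $\langle x,y\rangle=\tfrac{1}{2}(ex,y)+\tfrac{e}{2}(x,y)+\tfrac{d}{2}(ed^{-1}x,y)-\tfrac{ed}{2}(d^{-1}x,y)$. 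You instead define $\langle x,y\rangle$ coordinate-freely as the Riesz representative of the functional $b\mapsto(bx,y)$ with respect to the non-degenerate reduced trace pairing, which handles existence and uniqueness simultaneously and dispenses with the auxiliary element $d$ entirely; the price is that the hermitian symmetry must then be checked by the trace manipulation you describe, where the sign from $\overline{e}=-e$ cancels the sign from the alternating form — and that computation does go through. Your approach also makes the base-change statement cleaner, since the trace pairing has invertible Gram matrix over $k$ and hence stays perfect over any $R$, whereas the paper simply asserts that the similitude equivalence "follows from the definition." One small reading note: the final equivalence must be understood as quantified over all $x,y$ (as your argument correctly treats it), since the direction from the symplectic similitude condition to the hermitian one genuinely needs the identity $(bg(x),g(y))=c(bx,y)$ for all $b$, not just the single scalar equality at one pair.
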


\begin{proof}
Since $e^2\in k_0^{\times}\setminus (k_0^{\times})^2$ and the characteristic of $k_0$ not equals $2$, the subalgebra $k_0(e)$ of $B_0$ is a separable quadratic field extension of $k_0$. Moreover, the involution $b\mapsto \overline{b}$ induces the non-trivial Galois automorphism of $k_0(e)/k_0$. Hence there is an element $d\in B_0^{\times}$ such that $d^2\in k_0^{\times}$, $de=-ed$ and $B_0=k_0[e,d]$ by \cite[8.12.2.~Theorem]{sch}. Hence we obtain $B=k[e,d]$. 

Fix an element $d$ as above. First, we prove the uniqueness. Assume that two $B/k$-hermitian forms $\langle \,,\,\rangle_{i}$ for $i\in \{1,2\}$ on $V$ satisfy $(x,y)=\trd_{B/k}(e^{-1}\langle x,y\rangle_i)$ for $x,y\in V$. Then we have
\begin{equation}\label{hfun}
\langle x,y\rangle_1-\langle x,y\rangle_2 \in k+kd+kde. 
\end{equation}
In particular, for fixed $x_0,y_0\in V\setminus \{0\}$, we obtain $\langle x_0,y_0\rangle_1-\langle x_0,y_0\rangle_2=a_0+a_1d+a_2ed$ for some $a_0,a_1,a_2\in k$. On the other hand, the assertion (\ref{hfun}) for $x=ex_0,dex_0$ and $dx_0$ imply $a_0=a_1=a_2=0$. Hence we obtain the equality of $B/k$-hermitian forms $\langle\,,\,\rangle_1=\langle \,,\,\rangle_2$. 

To prove the existence, define a $B$-valued bilinear form $\langle\,,\,\rangle$ on $V$ by
\begin{equation*}
\langle x,y\rangle:=\frac{1}{2}(ex,y)+\frac{e}{2}(x,y)+\frac{d}{2}(ed^{-1}x,y)-\frac{ed}{2}(d^{-1}x,y). 
\end{equation*}
Then $\langle \,,\,\rangle$ is a $B/k$-hermitian form satisfying the desired condition. The rest of the assertion follows from the definition of $\langle \,,\,\rangle$ as above. 
\end{proof}

\begin{cor}\label{unhm}
\emph{Keep the notation in Lemma \ref{skhm}, and put $n:=\rk_B V$. We further assume that one of the following hold: 
\begin{itemize}
\item $k$ is a non-archimedean local field, 
\item $k\cong \R$ and $B$ is split, 
\item $k$ is a number field, and $B$ is totally indefinite. 
\end{itemize}
Then the following hold: 
\begin{enumerate}
\item Let $V'$ be another finite free left $D$-module, and $(\,,\,)'$ a $k$-valued symplectic form on $V'$ such that $(bx,y)'=(x,b^{*}y)'$ for any $b\in B$ and $x,y\in V'$. Then there is a $B$-linear isometry of symplectic spaces between $(V,(\,,\,))$ and $(V',(\,,\,)')$ if and only if $\rk_B V'=n$. 
\item The algebraic group $G$ over $k$ defined by the formula (\ref{utsl}) for any $k$-algebra $R$ equals $\GU_n(B)$. In particular $G$ is isomorphic to $\GSp_{2n}\otimes_{\Z}k$ if $B$ is split. 
\end{enumerate}}
\end{cor}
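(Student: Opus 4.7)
The strategy is to translate both assertions, via Lemma \ref{skhm}, into the language of $B/k$-hermitian forms and then to apply known classification theorems for such forms. Lemma \ref{skhm} establishes a canonical bijection between $k$-valued symplectic forms $(\,,\,)$ on $V$ satisfying $(bx,y)=(x,b^*y)$ and $B/k$-hermitian forms $\langle\,,\,\rangle$ on $V$, and moreover shows that the two similitude conditions $(g(x),g(y))=c(x,y)$ and $\langle g(x),g(y)\rangle=c\langle x,y\rangle$ are equivalent. Consequently (i) becomes the assertion that rank is a complete invariant for $B/k$-hermitian forms in each of the three listed cases, while (ii) becomes a description of the corresponding similitude group.

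A direct calculation in the basis $\{1,e,d,ed\}$ of $B$ from the proof of Lemma \ref{skhm} identifies the $*$-fixed subspace as $k\{1,d,ed\}$, so $*$ is of orthogonal type. For (i), I would then invoke the classification of $B/k$-hermitian forms with respect to such an involution on a quaternion algebra. For non-archimedean local $k$ this is classical (see Scharlau \cite{sch}, Chapter 10). For $k\cong\R$ with $B$ split, $B\cong M_2(\R)$ and a Morita computation converts a $B$-compatible symplectic form on $V$ into an alternating form on a $2n$-dimensional real vector space, which is unique up to isomorphism. For $k$ a number field with $B$ totally indefinite, every completion $B\otimes_k k_v$ falls into one of the two preceding cases (the totally indefinite hypothesis guarantees that $B$ is split at every real place), so one concludes by the Hasse principle for hermitian forms over quaternion algebras.

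For (ii), the identification $G=\GU_n(B)$ is immediate from the similitude compatibility of Lemma \ref{skhm}, since by definition $\GU_n(B)$ is the similitude group of a $B/k$-hermitian space of rank $n$. When $B$ is split, the Morita computation mentioned above transports the setup to the symplectic similitude group of a $2n$-dimensional $k$-vector space, which yields the isomorphism $G\cong\GSp_{2n}\otimes_{\Z}k$.

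The main obstacle is expected to be the number-field case of (i): one must verify carefully that the Hasse principle applies in this setting and that the totally indefinite hypothesis is exactly what ensures local uniqueness (up to isomorphism) at every completion, ruling out the archimedean signature invariants that would appear at real places where $B$ ramifies. The secondary technical point is the Morita computation over $\R$ with $B$ split, but this reduces to a short coordinate calculation once a standard form on the Morita bimodule is fixed.
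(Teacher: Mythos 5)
Your proposal follows essentially the same route as the paper: Lemma \ref{skhm} reduces both statements to the classification of $B/k$-hermitian forms by rank alone, which the paper obtains from \cite[10.1.8]{sch} (together with \cite[Theorem 25.5]{shi} in the non-archimedean local case) — exactly the results you invoke, with your Morita computation for split $B$ and your Hasse-principle phrasing of the number-field case being just explicit versions of \cite[10.1.8 (i)]{sch} and \cite[10.1.8 (iii)]{sch}. One caution: the form produced by Lemma \ref{skhm} is hermitian for the \emph{canonical} involution $b\mapsto\overline{b}$ (that is what ``$B/k$-hermitian'' means in this paper's notation), not for the orthogonal-type involution $*$; the rank-only classification holds for the former, whereas forms adapted to an orthogonal-type involution correspond to skew-hermitian forms for the canonical involution and carry additional invariants even over local fields, so your aside about $*$ being of orthogonal type must not steer you toward the wrong classification theorem.
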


\begin{proof}
First, if $k$ is a non-archimedean local field, then \cite[Theroem 25.5]{shi} (or its proof) and \cite[10.1.8 (ii)]{sch} imply that a hermitian form over $B$ of dimension $n$ is unique up to isometry. Therefore, (i) and (ii) follows from Lemma \ref{skhm}. Second, if $k\cong \R$ and $B$ is split, then (i) and (ii) follow from the same argument by using \cite[10.1.8 (i)]{sch}. Third, if $k$ is a number field, and $B$ is totally indefinite, then (i) and (ii) follows from the same argument by using \cite[10.1.8 (iii)]{sch}.
\end{proof}

Applying Corollary \ref{unhm} to $k=\Qp$, $B=D$, $V=D^{\oplus 2}$ and $e=\varepsilon$, We have $G=\GU_2(D)$. 

We now introduce the notion of polarized isocrystals, which is called \emph{$p$-polarized $D$-isocrystals} in \cite[Definition 2.1]{mie}. 

\begin{dfn}\label{pdic}
Let $k$ be an algebraically closed field of characteristic $p$. 
\begin{enumerate}
\item A \emph{polarized $D$-isocrystal over $k$} is a triple $(N,\Phi,(\,,\,))$, where
\begin{itemize}
\item $(N,\Phi)$ is an isocrystal over $k$, 
\item $(\,,\,)\colon N\times N\rightarrow \Frac(W(k))$ is a non-degenerate symplectic form over $\Frac(W(k))$,
\end{itemize}
such that the following conditions hold for any $x,y\in N$: 
\begin{itemize}
\item $(dx,y)=(x,d^{*}y)$ for any $d\in D$,
\item $(\Phi(x),\Phi(y))=p\sigma((x,y))$. 
\end{itemize}
\item Let $(N_i,\Phi_i,(\,,\,)_i)\,(i=1,2)$ be two polarized $D$-isocrystals. A \emph{morphism of polarized $D$-isocrystals from $(N_1,\Phi_1,(\,,\,)_1)$ to $(N_2,\Phi_2,(\,,\,)_2)$} is a morphism $f\colon (N_1,\Phi_1)\rightarrow (N_2,\Phi_2)$ of isocrystals which satisfies the following: 
\begin{itemize}
\item $f$ commutes with $D$-actions on $N_1$ and $N_2$,
\item there is $c\in \Qpt$ such that $(f(x),f(y))_2=c(x,y)_1$ for any $x,y\in N_1$. 
\end{itemize}
\end{enumerate}
\end{dfn}

Let
\begin{equation*}
b\colon V\otimes_{\Qp}K_0\rightarrow V\otimes_{\Qp}K_0\,;\,(d_1\otimes a_1,d_2\otimes a_2)\mapsto (d_1\Pi \otimes a_1,d_2\Pi \otimes a_2). 
\end{equation*}
Then we have $b\in G(K_0)$ and $\sml(b)=p$ by Lemma \ref{sdlt} (i). Let $F:=b\circ \sigma$ and $\D_{\Q}:=V\otimes_{\Qp}K_0$, and extend the $D$-action and $(\,,\,)$ to $V\otimes_{\Qp} K_0$ over $K_0$. Then $(\D_{\Q},F,(\,,\,))$ is a polarized $D$-isocrystal over $\Fpbar$. Moreover, $(\D_{\Q},F)$ is an isocrystal which is isoclinic of slope $1/2$, and $\D:=\Lambda^0\otimes_{\Zp}W$ is a $W$-lattice in $\D_{\Q}$ which is stable under $F$ and $pF^{-1}$. 

Let $\mu \colon \G_{m}\otimes_{\Z} K_0 \rightarrow G\otimes_{\Qp} K_0$ be the composite of the cocharacter of $\GSp_4$
\begin{equation*}
\G_{m}\otimes_{\Z}K_0\rightarrow \GSp_{4}\otimes_{\Z}K_0;z\mapsto \diag(z,z,1,1)
\end{equation*}
and an inner twist $\GSp_{4}\otimes_{\Z} K_0\cong \GU_2(D)\otimes_{\Qp} K_0$. Then $\mu$ satisfies the condition for weight decomposition. Finally, we can easily check that $\{\Pi^n\Lambda^0\}_{n\in \Z}$ is a self-dual multi-chain of $O_D$-lattices in $V$. Therefore we obtain a Rapoport--Zink datum 
\begin{equation*}
\mathcal{D}=(D,*,O_D,V,(\,,\,),b,\mu,\{\Pi^n\Lambda^0\}_{n\in \Z}). 
\end{equation*}

\subsection{Rapoport--Zink space for $G$}\label{rzsp}

We denote by $\nilp_{W}$ the category of $W$-schemes on which $p$ is locally nilpotent. 

For $S\in \nilp_{W}$, a \emph{$p$-divisible group with $G$-structure} over $S$ is a triple $(X,\iota,\lambda)$ consisting of the following data: 
\begin{itemize}
\item $X$ is a 4-dimensional $p$-divisible group over $S$, 
\item $\iota \colon O_D\rightarrow \End(X)$ is a ring homomorphism, 
\item $\lambda \colon X \rightarrow X^{\vee}$ is a polarization in the sense of \cite[3.20]{rz} (that is, a quasi-isogeny satisfying $\lambda^{\vee}=-\lambda$), 
\end{itemize}
such that the following conditions are fulfilled for any $d\in O_D$: 
\begin{itemize}
\item (Kottwitz condition) $\det(T-\iota(d) \mid \Lie(X))=(T^2-\trd_{D/\Qp}(d)T+\nrd_{D/\Qp}(d))^2$, 
\item $\lambda \circ \iota(d)=\iota(d^{*})^{\vee}\circ \lambda$. 
\end{itemize}

There is a $p$-divisible group with $G$-structure $(\X_0,\iota_0,\lambda_0)$ over $\Fpbar$ such that $\X_0$ is isoclinic of slope $1/2$ and $\lambda_0$ is an isomorphism. Indeed, we can construct such an object from the polarized $D$-isocrystal $(\D_{\Q},F,(\,,\,))$ and the self-dual lattice $\D$. See the argument after \cite[Definition 2.1]{mie}. 

Now we define $\M_G$, \emph{the Rapoport--Zink space for $G$}, to be the functor that parameterizes the equivalence classes of $(X,\iota,\lambda,\rho)$ for $S\in \nilp_{W}$, where $(X,\iota,\lambda)$ is a $p$-divisible group with $G$-structure over $S$, and 
\begin{equation*}
\rho \colon X\times_{S} \Sbar \rightarrow \X_0\times_{\spec \Fpbar} \Sbar
\end{equation*}
is an $O_D$-linear quasi-isogeny such that 
\begin{equation*}
\rho^{\vee}\circ \lambda_0 \circ \rho=c(\rho)\lambda
\end{equation*}
for some locally constant function $c(\rho) \colon \Sbar \rightarrow \Qp^{\times}$. Here $\Sbar$ is the closed subscheme of $S$ which is defined by $p\O_S$. 
Two tuples $(X_1,\iota_1,\lambda_1,\rho_1)$ and $(X_2,\iota_2,\lambda_2,\rho_2)$ are equivalent if $\rho_2^{-1} \circ \rho_1$ lifts to an isomorphism $X_1\rightarrow X_2$ over $S$. 

By \cite[Theorem 3.25]{rz}, the functor $\M_G$ is representable by a formal scheme over $\spf W$, which is formally locally of finite type. Moreover, we have $\M_G=\coprod_{i\in \Z}\M_G^{(i)}$, where $\M_G^{(i)}$ is the locus of $(X,\iota,\lambda,\rho)$ such that $c(\rho)(\Sbar)\subset p^i\Zp^{\times}$. 

\subsection{The algebraic group $J$}\label{grpj}

We define an algebraic group $J$ over $\Qp$ by
\begin{equation*}
J(R)=\{g\in G(K_0\otimes_{\Qp} R)\mid g\circ F=F\circ g\}
\end{equation*}
for any $\Qp$-algebra $R$. The representability of $J$ is obtained in \cite[Proposition 1.12]{rz}. We show that $J$ is isomorphic to $\GSp_4$ over $\Qp$. This is pointed out in \cite[1.42]{rz} for the $\Qp$-rational points. However, we recall a proof here since the argument for the assertion will also appear in Section \ref{flpf}. 

\begin{dfn}\label{y1pa}
We denote by $y_1$ the action $\iota_0({\Pi})$ on $\X_0$. We identify it as an action of $\Pi$ on $\D$ under the isomorphism $\D(\X_0)\cong \D$. 
\end{dfn}

\begin{dfn}\label{e0e1}
For $i\in \{0,1\}$, we define $\varepsilon_i,\D_i$ and $\D_{\Q,i}$ as below: 
\begin{gather*}
\varepsilon_i:=1\otimes \frac{1}{2}+(-1)^i\varepsilon \otimes \frac{1}{2\varepsilon}\in \Zps \otimes_{\Zp} W,\\
\D_i:=\varepsilon_i(\D)=\{x\in \Lambda^0\otimes_{\Zp}W \mid (a\otimes 1)x=(1\otimes \tau^i(a))x\text{ for all }a\in \Zps \}, \\
\D_{\Q,i}:=\D_i\otimes_W K_0=\varepsilon_i(V \otimes_{\Qp}K_0).
\end{gather*}
\end{dfn}
We have equalities 
\begin{equation*}
\D=\D_0\oplus \D_1,\quad \D_{\Q}=\D_{\Q,0}\oplus \D_{\Q,1},
\end{equation*}
and both $\D_{\Q,0}$ and $\D_{\Q,1}$ are totally isotropic (cf.~\cite[1.42]{rz}). Moreover, we have equalities 
\begin{gather*}
y_1(\D_i)\subset \D_{1-i},\quad y_1 (\D_{\Q,i})=\D_{\Q,1-i}, \\
F(\D_i)\subset \D_{1-i},\quad F(\D_{\Q,i})=\D_{\Q,1-i}. 
\end{gather*}

\begin{dfn}\label{rlbl}
Let $i\in \{0,1\}$. 
\begin{itemize}
\item Let $(\,,\,)_0$ be a $K_0$-valued bilinear form on $\D_{\Q,0}$ as follows for any $x,y\in \D_{\Q,0}$:
\begin{equation*}
(x,y)_0:=(x,y_1 (y)). 
\end{equation*}
\item Put $F_0:=y_1^{-1}\circ F$ on $\D_{\Q,0}$. 
\item Define $V_0$ a $\Qp$-vector space as the $F_0$-fixed part of $\D_{\Q,0}$. It is $4$-dimensional. 
\end{itemize}
\end{dfn}
The bilinear form $(\,,\,)_0$ is a non-degenerate symplectic form on $\D_{\Q,0}$ by Lemma \ref{sdlt}, and $(\D_{\Q,0},F_0)$ is isoclinic of slope $0$. Moreover, we have
\begin{equation*}
(F_0(x),F_0(y))_0=\sigma((x,y)_0)
\end{equation*}
for any $x,y\in \D_{\Q,0}$. See \cite[1.42]{rz}. Therefore the symplectic form $(\,,\,)_0$ induces a symplectic form on $V_0$, and there is an isomorphism $(V_0\otimes_{\Qp}K_0,\id \otimes \sigma)\cong (\D_{\Q,0},F_0)$ by the definition of $V_0$. 

We recall the map $\Phi$ defined in \cite[\S 2.2]{hp}. 
\begin{dfn}\label{phdf}
We define a $\sigma$-linear endomorphism $\Phi$ on $\End(\D_{\Q})$ as follows: 
\begin{equation*}
\Phi \colon \End(\D_{\Q})\rightarrow \End(\D_{\Q});f\mapsto F\circ f\circ F^{-1}. 
\end{equation*}
\end{dfn}
Then $(\End(\D_{\Q}),\Phi)$ is isoclinic of slope $0$. Moreover, there are isomorphisms 
\begin{equation*}
\End^0(\X_0)\cong \End(\D_{\Q})^{\Phi},\quad \End_{O_D}^0(\X_0)\cong \End_{D}(\D_{\Q})^{\Phi}. 
\end{equation*}
Here $\End(\D_{\Q})^{\Phi}$ and $\End_{D}(\D_{\Q})^{\Phi}$ are the $\Phi$-fixed parts of $\End(\D_{\Q})$ and $\End_{D}(\D_{\Q})$ respectively. On the other hand, put
\begin{equation*}
\Phi_0 \colon \End(\D_{\Q,0})\rightarrow \End(\D_{\Q,0});f\mapsto F_0\circ f\circ F_0^{-1}. 
\end{equation*}
Then we have an isomorphism 
\begin{equation*}
\End(\D_{\Q,0})^{\Phi_0}=\End(V_0)
\end{equation*}
by the definition of $V_0$. 

\begin{prop}\label{endd}
\emph{There are isomorphisms $\End_{D}(\D_{\Q})\cong \End(\D_{\Q,0})$ and $\End_{D}(\D_{\Q})^{\Phi}\cong \End(\D_{\Q,0})^{\Phi_0}$. }
\end{prop}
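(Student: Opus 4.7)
The plan is to construct the first isomorphism via restriction to $\D_{\Q,0}$. Let $f \in \End_D(\D_{\Q})$. Since $f$ is $K_0$-linear and commutes with the action of $\Zps \subset O_D$, and since $\D_{\Q,0}$ and $\D_{\Q,1}$ are the eigenspaces for the two orthogonal idempotents $\varepsilon_0,\varepsilon_1 \in \Zps \otimes_{\Zp} K_0$, I would first observe that $f$ preserves the decomposition $\D_{\Q} = \D_{\Q,0} \oplus \D_{\Q,1}$. Next, since $f$ also commutes with $y_1 = \iota_0(\Pi)$, which exchanges $\D_{\Q,0}$ and $\D_{\Q,1}$, the restriction $f|_{\D_{\Q,1}}$ is determined by $f_0 := f|_{\D_{\Q,0}}$ via $f|_{\D_{\Q,1}} = y_1 \circ f_0 \circ y_1^{-1}$. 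Conversely, given any $f_0 \in \End(\D_{\Q,0})$, the rule $f_0 \oplus (y_1 \circ f_0 \circ y_1^{-1})$ defines a $D$-linear endomorphism of $\D_{\Q}$: commutation with $\Pi$ is built into the construction, and commutation with $\Zps$ follows from $K_0$-linearity once one notes that the $\Zps$-action on $\D_{\Q,i}$ agrees with $K_0$-multiplication by $\tau^i(a)$. This produces the bijection $\End_D(\D_{\Q}) \cong \End(\D_{\Q,0})$, $f \mapsto f|_{\D_{\Q,0}}$.

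For the second isomorphism, I would verify that this bijection intertwines $\Phi$ and $\Phi_0$. Because $F$ is $D$-linear it commutes with $y_1$, so for $x \in \D_{\Q,0}$ we have $F^{-1}(x) \in \D_{\Q,1}$, and hence
\[
\Phi(f)(x) = F \circ f \circ F^{-1}(x) = F \circ (y_1 \circ f_0 \circ y_1^{-1}) \circ F^{-1}(x) = y_1 \circ F \circ f_0 \circ F^{-1} \circ y_1^{-1}(x).
\]
Using $y_1^2 = p$, equivalently $y_1 = p \cdot y_1^{-1}$ as $K_0$-linear operators, the factors of $p$ on either side cancel to give
\[
\Phi(f)|_{\D_{\Q,0}} = y_1 \circ F \circ f_0 \circ F^{-1} \circ y_1^{-1} = y_1^{-1} \circ F \circ f_0 \circ F^{-1} \circ y_1 = F_0 \circ f_0 \circ F_0^{-1} = \Phi_0(f_0),
\]
where in the last step I use $F_0 = y_1^{-1} \circ F$ and $F_0^{-1} = F^{-1} \circ y_1$. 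Thus the restriction bijection identifies $\Phi$-fixed parts with $\Phi_0$-fixed parts, yielding the second isomorphism.

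The verifications are essentially bookkeeping, and I do not anticipate a serious obstacle. The only point requiring a bit of care is the identity $y_1^2 = p$, which is what converts the outer conjugation by $y_1$ appearing in the formula for $\Phi(f)|_{\D_{\Q,0}}$ into conjugation by $y_1^{-1}$ and so matches the definition $F_0 = y_1^{-1} \circ F$; the rest follows by tracking the decomposition $\D_{\Q} = \D_{\Q,0} \oplus \D_{\Q,1}$ under the maps $f$, $F$, and $y_1$.
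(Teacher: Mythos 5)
Your proof is correct and follows essentially the same route as the paper: restriction to $\D_{\Q,0}$ gives the forward map, and the inverse is the extension $f_0\oplus (y_1\circ f_0\circ y_1^{-1})$, which is exactly the paper's $\psi(f_0)\colon x+y_1(y)\mapsto f_0(x)+y_1(f_0(y))$. Your explicit computation that restriction intertwines $\Phi$ with $\Phi_0$ (using $y_1^2=p$ and the commutativity of $F$ with $y_1$) is a slightly more detailed version of the paper's remark that commuting with $y_1$ and $F$ implies commuting with $F_0=y_1^{-1}\circ F$, and it is sound.
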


\begin{proof}
First, we construct a homomorphism $\End_{D}(\D_{\Q}) \rightarrow \End(\D_{\Q,0})$. Take $f\in \End_{D}(\D_{\Q})$. Since $f$ commutes with $\Qps$-action, we have $f(\D_{\Q,0})\subset \D_{\Q,0}$. Therefore we obtain the map 
\begin{equation*}
\phi \colon \End_{D}(\D_{\Q}) \rightarrow \End(\D_{\Q,0})^{\Phi_0};f\mapsto f\vert_{\D_{\Q,0}}. 
\end{equation*}
Moreover, if $f\in \End_{D}(\D_{\Q})^{\Phi}$, then $\phi(f)$ commutes with $F_0$ since it commutes with $y_1$ and $F$. 

Next, we construct the inverse morphism of $\phi$. Take $f_0\in \End(\D_{\Q,0})^{\Phi_0}$. Put
\begin{equation*}
\psi(f_0)\colon \D_{\Q}\rightarrow \D_{\Q};x+y_1(y)\mapsto f_0(x)+y_1(f_0(y))
\end{equation*}
for $x,y\in \D_{\Q,0}$. Then, $\psi(f_0)$ commutes with the $D$-action by definition. Therefore we obtain the map
\begin{equation*}
\phi \colon \End(\D_{\Q,0})\rightarrow \End_{D}(\D_{\Q}). 
\end{equation*}
Moreover, if $f_0\in \End(\D_{\Q,0})^{\Phi_0}$, then $\psi(f_0)$ commutes with $F$. By the definitions of $\phi$ and $\psi$, they are inverse to each other. Hence the assertion follows. 
\end{proof}

Let $R$ be a $\Qp$-algebra. Then we have equalities
\begin{gather*}
V\otimes_{\Qp}(K_0\otimes_{\Qp} R)=(\D_{\Q,0}\otimes_{K_0}R)\oplus (V_{K_0,1}\otimes_{K_0}R),\\
(V_0\otimes_{\Qp} R)\otimes_{\Qp} K_0=\D_{\Q,0}\otimes_{\Qp}R. 
\end{gather*}
Therefore we have
\begin{equation*}
V\otimes_{\Qp}(K_0\otimes_{\Qp} R)=((V_0\otimes_{\Qp} R)\otimes_{\Qp} K_0)\oplus y_1((V_0\otimes_{\Qp} R)\otimes_{\Qp} K_0). 
\end{equation*}

\begin{prop}\label{jgsp}
\emph{
\begin{enumerate}
\item The isomorphism $\End_{D}(\D_{\Q})\cong \End(\D_{\Q,0})$ in Proposition \ref{endd} induces an isomorphism $G(K_0\otimes_{\Qp}R)\cong \GSp(\D_{\Q,0})(K_0\otimes_{\Qp}R)$ for any $\Qp$-algebra $R$. 
\item The isomorphism $\End_{O_D}^0(\X_0)\cong \End_{D}(\D_{\Q})^{\Phi}\cong \End(\D_{\Q,0})^{\Phi_0} \cong \End(V_0)$ induced by Proposition \ref{endd} induces an isomorphism $J\cong \GSp(V_0)$ of algebraic groups over $\Qp$. 
\end{enumerate}}
\end{prop}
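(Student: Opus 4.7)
The plan is to handle (i) first, then derive (ii) by passing to $\Phi_0$-fixed points via the isoclinic-of-slope-$0$ descent from $(\D_{\Q,0}, F_0)$ to $V_0$. The key input is the Lagrangian decomposition $\D_\Q = \D_{\Q,0} \oplus y_1(\D_{\Q,0})$ recorded after Definition \ref{e0e1}: both summands are totally isotropic for $(\,,\,)$, and both are preserved by every $D$-linear endomorphism, since such an endomorphism commutes with $\varepsilon_0, \varepsilon_1$ and with $y_1$.

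For (i), I would first observe that writing an element of $\D_\Q \otimes_{K_0}(K_0 \otimes_{\Qp} R)$ as $v_0 + y_1(v_1)$ with $v_0, v_1 \in \D_{\Q,0} \otimes_{K_0}(K_0 \otimes_{\Qp} R)$, the symplectic form satisfies
\begin{equation*}
(v_0 + y_1(v_1),\, w_0 + y_1(w_1)) = (v_0, w_1)_0 - (w_0, v_1)_0,
\end{equation*}
by the totally isotropic property of each summand and the very definition of $(\,,\,)_0$. A short substitution then shows that a $D$-linear automorphism $g$ preserves $(\,,\,)$ with similitude factor $c$ if and only if its restriction $g_0 := g|_{\D_{\Q,0}\otimes(K_0\otimes_{\Qp} R)}$ preserves $(\,,\,)_0$ with the same $c$. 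Combined with Proposition \ref{endd} base-changed to $K_0 \otimes_{\Qp} R$, this yields the claimed isomorphism $G(K_0 \otimes_{\Qp} R) \cong \GSp(\D_{\Q,0})(K_0 \otimes_{\Qp} R)$, functorially in $R$ and compatibly with the similitude characters.

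For (ii), I would restrict the isomorphism of (i) to elements satisfying the Frobenius-commutation condition defining $J$. Since $F = y_1 \circ F_0$ on $\D_{\Q,0}$ by Definition \ref{rlbl} and any $g$ commutes with $y_1$ by $D$-linearity, the condition $g \circ F = F \circ g$ is equivalent to $\Phi_0(g_0) = g_0$. Because $(\D_{\Q,0}, F_0)$ is isoclinic of slope $0$ with descent $V_0$ and $(F_0(x), F_0(y))_0 = \sigma((x,y)_0)$, the form $(\,,\,)_0$ descends to a non-degenerate symplectic form on $V_0$ over $\Qp$, with $(V_0 \otimes_{\Qp} K_0, \id\otimes\sigma) \cong (\D_{\Q,0}, F_0)$ as symplectic isocrystals. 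Hence $\GSp(\D_{\Q,0})(K_0\otimes_{\Qp} R)^{\Phi_0} \cong \GSp(V_0)(R)$, which gives the desired isomorphism of algebraic groups over $\Qp$.

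The step requiring the most care is the bookkeeping for the similitude factor $c$ under the Lagrangian-decomposition reduction in (i), so that the resulting bijection is an isomorphism of algebraic groups (not merely of abstract groups) and intertwines the similitude characters. Once this is in place, the descent in (ii) is automatic: $\Phi_0$-invariance of $g_0$ forces $c \in (K_0 \otimes_{\Qp} R)^{\sigma \otimes \id} = R^\times$, and the remaining verifications amount to unwinding definitions.
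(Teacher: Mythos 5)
Your proposal is correct and follows essentially the same route as the paper: the key point in both is that a $D$-linear $g$ commutes with $y_1$, so under the Lagrangian decomposition $\D_{\Q}=\D_{\Q,0}\oplus y_1(\D_{\Q,0})$ the condition $(g(x),g(y))=c(x,y)$ is equivalent to $(g_0(x),g_0(y))_0=c(x,y)_0$, and (ii) follows by passing to the $\Qp$-forms. Your write-up merely makes explicit the formula $(v_0+y_1(v_1),w_0+y_1(w_1))=(v_0,w_1)_0-(w_0,v_1)_0$ and the descent $\GSp(\D_{\Q,0})(K_0\otimes_{\Qp}R)^{\Phi_0}\cong\GSp(V_0)(R)$, which the paper leaves implicit.
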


\begin{proof}
(i): Take a $\Qp$-algebra $R$ and $g\in (\End_{D}(\D_{\Q})\otimes_{\Qp} R)^{\times}\cong \GL_R(\D_{\Q,0}\otimes_{\Qp} R)$, where the isomorphism above is induced by the isomorphism in Proposition \ref{endd}. Then, for $c\in R^{\times}$, it suffices to show that we have $(g(x),g(y))=c(x,y)$ for $x\in \D_{\Q}$ if and only if $(g(x),g(y))_0=c(x,y)_0$ for $x\in \D_{\Q,0}$. However, this holds since $g$ commutes with $y_1$. 

(ii): This follows from the same argument for the proof of (i) by using the isomorphism 
\begin{equation*}
(\End_{O_D}(\X_0)\otimes_{\Qp} R)^{\times}\cong \GL_R(V_0\otimes_{\Qp} R)
\end{equation*}
instead of $(\End_{D}(\D_{\Q})\otimes_{\Qp} R)^{\times}\cong \GL_R(\D_{\Q,0}\otimes_{\Qp} R)$. 
\end{proof}

Next, we give a $\Qp$-basis of $V_0$. 
\begin{dfn}\label{efdf}
Define elements $\ebar_i\in \D_{\Q,0}\,(1\leq i\leq 4)$ and $\fbar_j\in \D_{\Q,1}\,(1\leq j\leq 4)$ as follow:
\begin{gather*}
\ebar_1:=(\varepsilon_0,0),\quad \ebar_2:=(0,\varepsilon_0),\quad
\ebar_3:=(\varepsilon_0\Pi,0),\quad \ebar_4:=(0,\varepsilon_0\Pi),\\
\fbar_1:=(\varepsilon_1,0),\quad \fbar_2:=(0,\varepsilon_1),\quad
\fbar_3:=(\varepsilon_1\Pi,0),\quad \fbar_4:=(0,\varepsilon_1\Pi). 
\end{gather*}
\end{dfn}
Then $\ebar_1,\ldots, \ebar_4$ form a $K_0$-basis of $\D_{\Q,0}$, and $\fbar_1,\ldots,\fbar_4$ form a $K_0$-basis of $\D_{\Q,1}$. 

\begin{lem}\label{eifj}
\emph{
\begin{enumerate}
\item We have equalities
\begin{equation*}
y_1(\ebar_1)=\fbar_3,\quad y_1(\ebar_2)=\fbar_4,\quad y_1(\ebar_3)=p\fbar_1,\quad y_1(\ebar_4)=p\fbar_2. 
\end{equation*}
\item For $1\leq i,j\leq 4$, we have $(\ebar_i,\fbar_j)=(-1)^{i-1}\delta_{i,5-j}$, where $\delta_{i,j}$ is Kronecker's delta. 
\end{enumerate}}
\end{lem}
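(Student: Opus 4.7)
The plan is to reduce both parts to three preliminary identities describing how $\Pi$, the involution $*$, and the reduced trace interact with the orthogonal idempotents $\varepsilon_0,\varepsilon_1$.

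First I would establish: (a) $\Pi\varepsilon_0=\varepsilon_1\Pi$ and $\Pi\varepsilon_1=\varepsilon_0\Pi$ in $D\otimes_{\Qp}K_0$, which is the relation $\Pi a=\tau(a)\Pi$ for $a\in\Qps$ extended $K_0$-linearly, combined with $\tau(\varepsilon_0)=\varepsilon_1$; (b) $\varepsilon_0^{*}=\varepsilon_1$ and $\varepsilon_1^{*}=\varepsilon_0$, which follows from $\overline{\varepsilon}=-\varepsilon$ (a consequence of $\trd_{\Qps/\Qp}(\varepsilon)=0$) together with the definition $d^{*}=\varepsilon\overline{d}\varepsilon^{-1}$; (c) $\trd_{D/\Qp}(\varepsilon_i)=1$ while $\trd_{D/\Qp}(\Pi^{\pm1}\varepsilon_i)=\trd_{D/\Qp}(\varepsilon_i\Pi^{\pm1})=0$, where the first comes from $\trd(1)=2$ and $\trd(\varepsilon)=0$, and the second from $\trd(\Pi)=0=\trd(\varepsilon\Pi)$ (the latter because $\Pi\varepsilon=-\varepsilon\Pi$ and $\trd$ is cyclic).

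For part (i), $y_1$ acts as left multiplication by $\Pi$ on $V\otimes_{\Qp}K_0$, so applying (a) and $\Pi^{2}=p$ gives $y_1(\ebar_1)=(\Pi\varepsilon_0,0)=(\varepsilon_1\Pi,0)=\fbar_3$ and $y_1(\ebar_3)=(\Pi\varepsilon_0\Pi,0)=(\varepsilon_1\Pi^{2},0)=(p\varepsilon_1,0)=p\fbar_1$; the identities for $\ebar_2$ and $\ebar_4$ are identical computations performed in the second coordinate.

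For part (ii), the defining formula $((x_1,x_2),(y_1,y_2))=\trd(\Pi^{-1}(x_1^{*}y_2-x_2^{*}y_1))$ vanishes automatically whenever $\ebar_i$ and $\fbar_j$ occupy the same coordinate slot (both first or both second), which disposes of eight of the sixteen cases. For the remaining eight cross-slot pairings I expand each using (a) and (b), so the pairing becomes $\pm\trd_{D/\Qp}$ applied to an element of the form $\Pi^{-1}\varepsilon_{a}\Pi^{k}$ with $k\in\{0,1\}$; by (c) this is nonzero precisely when the net power of $\Pi$ inside $\trd$ is zero. A short case check then shows that the four surviving nonzero pairings occur at $(i,j)\in\{(1,4),(2,3),(3,2),(4,1)\}$ with values $1,-1,1,-1$, matching the formula $(-1)^{i-1}\delta_{i,5-j}$.

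The main obstacle is purely bookkeeping rather than anything conceptual: tracking the exchange $\varepsilon_0\leftrightarrow\varepsilon_1$ under both $*$ and $\tau$, correctly counting $\Pi$-powers in each pairing, and handling the sign coming from the second summand $-x_2^{*}y_1$ of the symplectic form. No individual calculation occupies more than a line once identities (a)--(c) are in hand.
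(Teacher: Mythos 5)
Your proposal is correct, and it is essentially the same argument as the paper's: part (i) is read off from the definitions via $\Pi\varepsilon_0=\varepsilon_1\Pi$, and part (ii) is the same direct expansion of the symplectic form, with the vanishing and the values $\pm1$ coming out exactly as you describe (the paper just writes out the representative case $(i,j)=(1,4)$ in the $\{1,\varepsilon\}$-basis rather than via your identities (a)--(c)). Your organization through the idempotent/trace identities is a slightly cleaner way to do the same bookkeeping, but it is not a different route.
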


\begin{proof}
(i): This follows from the definitions of $e_i$ and $f_j$. 

(ii): We only prove for $(i,j)=(1,4)$. Other cases are similar. We have
\begin{align*}
(\ebar_1,\fbar_4)=&\left((1,0)\otimes \frac{1}{2}+(\varepsilon,0)\otimes \frac{1}{2\varepsilon},
(0,\Pi)\otimes \frac{1}{2}-(0,\varepsilon \Pi)\otimes \frac{1}{2\varepsilon}\right)\\
=&\,\frac{1}{4}((1,0),(0,\Pi))-\frac{1}{4\varepsilon^2}((\varepsilon,0),(0,\varepsilon \Pi))
+\frac{1}{4\varepsilon}((\varepsilon,0),(0,\Pi))-\frac{1}{4\varepsilon}((1,0),(0,\varepsilon \Pi)). 
\end{align*}
On the other hand, we have $((1,0),(0,\Pi))=\trd(1)=2$ and thus $((\varepsilon,0),(0,\varepsilon \Pi))=-2\varepsilon^2$. Moreover, $((\varepsilon,0),(0,\Pi))=\trd(\varepsilon)=0$ and hence $((1,0),(0,\varepsilon \Pi))=0$. Therefore $(\ebar_1,\fbar_4)=1$. 
\end{proof}

\begin{dfn}\label{epdf}
Let us define elements $e'_i\in \D_{\Q,0}\,(1\leq i\leq 4)$ as follow:
\begin{equation*}
e'_1:=\ebar_1,\quad e'_2:=\ebar_3,\quad e'_3:=p^{-1}\ebar_4,\quad e'_4:=\ebar_2. 
\end{equation*}
\end{dfn}

Then we have $F_0(e'_i)=e'_i$ for $1\leq i\leq 4$, that is, $e'_i\in V_0$. Moreover, $e'_1,e'_2,e'_3,e'_4$ form a basis of $V_0$ whose Gram matrix of $(\,,\,)_0$ is $J_4$ by Lemma \ref{eifj}. The basis will be used in Sections \ref{excp} and \ref{jbts}.

\begin{dfn}\label{gpdf}
\begin{itemize}
\item We define a left-action of $J(\Qp)$ on $\M_G$ by
\begin{equation*}
J(\Qp)\times \M_G(S)\rightarrow \M_G(S);(g,(X,\iota,\lambda,\rho))\mapsto (X,\iota,\lambda,(g\times \id_{\Sbar}) \circ \rho)
\end{equation*}
for any $S\in \nilp_W$. 
\item Let $g_p\in J(\Qp)$ be the element corresponding to 
\begin{equation*}
{\begin{pmatrix}
&&&1\\
&&1&\\
&p&&\\
p&&&
\end{pmatrix}}
\end{equation*}
under the isomorphism $J(\Qp)\cong \GSp_4(\Qp)$ induced by the basis $e'_1,e'_2,e'_3,e'_4$. 
\end{itemize}
\end{dfn}

The following follow from the definition of $g_p$: 

\begin{prop}\label{rzis}
\emph{
\begin{enumerate}
\item We have $\sml(g_p)=-p$ and $g_p^2=p$. 
\item For any $i\in \Z$, the morphism
\begin{equation*}
g_p^i \colon \M_G^{(0)}\rightarrow \M_G^{(i)};\,x\mapsto g_p^i(x)
\end{equation*}
is an isomorphism. 
\item There is an isomorphism $p^{\Z}\backslash \M_G\cong \M_G^{(0)}\sqcup \M_G^{(1)}$. 
\end{enumerate}}
\end{prop}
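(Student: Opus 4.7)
The plan is straightforward and splits cleanly along the three parts of the proposition.

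For (i), I would just compute directly from the matrix representation. Writing $g_p$ as the $4\times 4$ matrix displayed in Definition~\ref{gpdf}, a block computation gives $g_p^2=pI_4$, so $g_p^2$ acts on $V_0$ as multiplication by $p$; under the isomorphism $J(\Qp)\cong \GSp_4(\Qp)$ this is the central element $p$. For the similitude factor, the basis $e'_1,\dots,e'_4$ has Gram matrix $J_4$ by Lemma~\ref{eifj}, so it suffices to verify ${}^t g_p J_4 g_p=-pJ_4$, which is a short computation in $4\times 4$ matrices. Hence $\sml(g_p)=-p$ and $g_p^2=p$.

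For (ii), the key observation is to track how the $J(\Qp)$-action changes the function $c(\rho)$. If $(X,\iota,\lambda,\rho)\in\M_G(S)$ and $\rho':=(g\times\id_{\Sbar})\circ\rho$ for $g\in J(\Qp)$, then
\begin{equation*}
(\rho')^{\vee}\circ \lambda_0\circ \rho'=\rho^{\vee}\circ (g^{\vee}\circ \lambda_0\circ g)\circ \rho=\sml(g)\,\rho^{\vee}\circ \lambda_0\circ \rho=\sml(g)c(\rho)\lambda,
\end{equation*}
where the middle equality uses that the polarization $\lambda_0$ on $\X_0$ corresponds to the symplectic form on $\D_\Q$, and $g\in G(K_0)$ has similitude $\sml(g)$. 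Thus $c(\rho')=\sml(g)c(\rho)$. Applied to $g=g_p^i$ with $\sml(g_p)=-p$ by part (i), we get $\ord_p(c(\rho'))=\ord_p(c(\rho))+i$, so the action of $g_p^i$ sends $\M_G^{(0)}$ into $\M_G^{(i)}$; since $J(\Qp)$ acts by automorphisms on $\M_G$ (with inverse given by $g_p^{-i}$), this restriction is an isomorphism.

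For (iii), by part (i) the element $p\in \Qp^{\times}\subset J(\Qp)$ equals $g_p^2$, so the cyclic group $p^{\Z}$ identifies with the subgroup $g_p^{2\Z}\subset \langle g_p\rangle$. By part (ii), $g_p^{2\Z}$ shifts the decomposition indices by even integers, i.e.\ $g_p^{2}$ sends $\M_G^{(i)}$ isomorphically onto $\M_G^{(i+2)}$. Choosing the unique representative with index in $\{0,1\}$ for each orbit yields the desired isomorphism $\M_G^{(0)}\sqcup\M_G^{(1)}\xrightarrow{\sim} p^{\Z}\backslash \M_G$; freeness of the $p^{\Z}$-action is immediate since distinct powers of $p$ give distinct indices. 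None of the three steps is a genuine obstacle; the only thing to be careful about is consistent sign and duality conventions when deducing $g^{\vee}\circ\lambda_0\circ g=\sml(g)\lambda_0$ from the similitude condition, which is the one place where a mis-sign would break the rest of the computation.
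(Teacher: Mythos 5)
Your proposal is correct and is essentially the argument the paper intends: the paper gives no proof beyond the remark that the proposition "follows from the definition of $g_p$," and your three steps (matrix computation of $\sml(g_p)$ and $g_p^2$ in the basis $e'_1,\ldots,e'_4$, the identity $c((g\times\id)\circ\rho)=\sml(g)c(\rho)$, and the identification $p=g_p^2$ shifting indices by $2$) are exactly the computations being left to the reader. The sign convention you flag at the end is handled correctly, since $g^{\vee}\circ\lambda_0\circ g=\sml(g)\lambda_0$ follows from $\lambda_0$ being the polarization induced by the symplectic form $(\,,\,)$ on $\D_{\Q}$.
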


\section{Rapoport--Zink space for $\GU_{2,2}$}\label{wkhp}

In this section, we recall and refine the method of \cite{hp}, which studies the Rapoport--Zink space for $\GU_{2,2}$, the unramified unitary similitude group of signature $(2,2)$ over $\Qp$ (with hyperspecial level structure). The results in this section will be used in Section \ref{btsn}. 

\subsection{Definition of the Rapoport--Zink space for $\GU_{2,2}$}

Here we regard $V$ as a $\Qps$-vector space by forgetting the action of $\Pi$. Let $H$ be an algebraic group over $\Qp$ defined by
\begin{equation*}
H(R)=\{(g,c)\in \GL_{\Qps\otimes_{\Qp} R}(V\otimes_{\Qp} R)\times \G_m(R)\mid (g(v),g(w))=c(v,w)\text{ for all }v,w\in V\otimes_{\Qp} R\}
\end{equation*}
for each $\Qp$-algebra $R$. Then we have a natural closed immersion
\begin{equation*}
\varphi \colon G\hookrightarrow H. 
\end{equation*}
In the following, we identify $G$ as a closed subgroup of $H$ by $\varphi$. 

For $S\in \nilp_{W}$, a \emph{$p$-divisible group with $H$-structure} over $S$ is a triple $(X,\iota,\lambda)$ consisting of the following data: 
\begin{itemize}
\item $X$ is a 4-dimensional $p$-divisible group over $S$, 
\item $\iota \colon \Zps \rightarrow \End(X)$ is a ring homomorphism, 
\item $\lambda \colon X \rightarrow X^{\vee}$ is a polarization in the sense of \cite[3.20]{rz}, 
\end{itemize}
such that the following conditions are fulfilled for any $a\in \Zps$: 
\begin{itemize}
\item (Kottwitz condition) $\det(T-\iota(a) \mid \Lie(X))=(T^2-\tr_{\Qps/\Qp}(a)T+\N_{\Qps/\Qp}(a))^2$, 
\item $\lambda \circ \iota(a)=\iota(\tau(a))^{\vee}\circ \lambda$. 
\end{itemize}

\begin{ex}\label{gugu}
Let $(\X_0,\iota_0,\lambda_0)$ be the $p$-divisible group with $G$-structure fixed in Section \ref{rzdt}. Then $(\X_0,\iota_0\vert_{\Zps},\lambda_0)$ is a $p$-divisible group with $H$-structure. Note that $\tau=*$ on $\Qps \subset D$. 
\end{ex}

In this paper, as a framing object we use a $p$-divisible group with $H$-structure which follows from a $p$-divisible group with $G$-structure as in Example \ref{gugu}. We define $\M_{H}$, the \emph{Rapoport--Zink space for $H$}, as the functor that parameterizes the equivalence classes of $(X,\iota,\lambda,\rho)$ for $S\in \nilp_{W}$, where $(X,\iota,\lambda)$ is a $p$-divisible group with $H$-structure over $S$, and 
\begin{equation*}
\rho \colon X\times_{S}\overline{S} \rightarrow \X_0\times_{\spec \Fpbar}\overline{S}
\end{equation*}
is a $\Zps$-linear quasi-isogeny such that 
\begin{equation*}
\rho^{\vee}\circ \lambda_0 \circ \rho=c(\rho)\lambda
\end{equation*}
for some locally constant function $c\colon \overline{S}\rightarrow \Qp^{\times}$. 
Two $p$-divisible groups with $H$-structures $(X_1,\iota_1,\lambda_1,\rho_1)$ and $(X_2,\iota_2,\lambda_2,\rho_2)$ are equivalent if $\rho_2^{-1}\circ \rho_1$ lifts to an isomorphism $X_1\rightarrow X_2$ over $S$. 

By the same reason as for $\M_G$, the functor $\M_{H}$ is representable by a formal scheme over $\spf W$, which is formally locally of finite type. Moreover, we have $\M_{H}=\coprod_{i\in \Z}\M_{H}^{(i)}$, where $\M_H^{(i)}$ is the locus of $(X,\iota,\lambda,\rho)$ such that $c(\rho)(\overline{S})\subset p^i\Zp^{\times}$. 

\begin{rem}
\begin{enumerate}
\item Let $(X,\iota,\lambda,\rho)\in \M_{H}(S)$ where $S\in \nilp_{W}$, and denote by $h$ the height of $\lambda$. Then $p^{-h}\lambda$ is a principal polarization on $X$. Moreover, we have $(X,\iota,p^{-h}\lambda,\rho)\in \M_{H}(S)$ which is equivalent to $(X,\iota,\lambda,\rho)$. Hence $\M_{H}$ coincides with the Rapoport--Zink space considered in \cite[\S 2]{hp}. 
\item The formal scheme $\M_H$ is attached to the Rapoport--Zink datum $(\Qps,\tau,\Zps,V,(\,,\,),\varphi(b),\varphi \circ \mu,\{p^n\Lambda^0\}_{n\in \Z})$.
\end{enumerate}
\end{rem}

\begin{prop}\label{mgeb}
\emph{The morphism 
\begin{equation*}
i_{G,H}\colon\M_G \rightarrow \M_{H};\,(X,\iota,\lambda,\rho)\mapsto (X,{\iota}\vert_{\Zps},\lambda,\rho)
\end{equation*}
is a closed immersion. A tuple $(X,\iota,\lambda,\rho)$ in $\M_{H}$ lies in the image of $i$ if and only if ${\rho}^{-1}\circ y_1\circ \rho \in \End(X)$. }
\end{prop}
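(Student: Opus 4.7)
\emph{Proof proposal.} The map $i_{G,H}$ forgets the action of $\Pi\in O_D$, retaining only the $\Zps$-action; equivalently, to lift a point of $\M_H$ to a point of $\M_G$ one must specify an element $\pi\in\End(X)$ extending $\iota$ to an $O_D$-action, and satisfying compatibility with $\lambda$, $\rho$, and the Kottwitz condition. My plan is to show (a) that $\pi$, if it exists, is forced by $\rho$ to equal $\rho^{-1}\circ y_1\circ\rho$, so $i_{G,H}$ is a monomorphism of functors and the image condition is necessary; (b) that, conversely, if $\pi:=\rho^{-1}\circ y_1\circ \rho$ lies in $\End(X\times_S\Sbar)$, then a unique $O_D$-extension of $\iota$ exists and all remaining conditions are automatic; and (c) that the locus on $\M_H$ where this quasi-endomorphism is integral is a closed formal subscheme.

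For (a) and (b), the $O_D$-linearity of $\rho$ with respect to a hypothetical action $\iota'$ extending $\iota$ forces $\rho\circ\iota'(\Pi)=\iota_0(\Pi)\circ\rho=y_1\circ\rho$, giving $\iota'(\Pi)=\rho^{-1}\circ y_1\circ\rho$ as quasi-endomorphisms. Conversely, if this quasi-endomorphism lifts to an actual $\pi\in\End(X)$, one can simply \emph{define} $\iota'(\Pi):=\pi$ and extend $\iota$ to $\iota'\colon O_D\to\End(X)$. The relations $\pi^2=p$ and $\pi\circ\iota(a)=\iota(\tau(a))\circ\pi$ for $a\in\Zps$ follow from conjugating the corresponding identities for $y_1$ on $\X_0$ by $\rho$, using that $\rho$ is $\Zps$-linear and that $\Pi^2=p$, $\Pi a=\tau(a)\Pi$ in $O_D$. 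The polarization compatibility $\lambda\circ\iota'(\Pi)=\iota'(\Pi)^\vee\circ\lambda$ (note $\Pi^*=\Pi$) is obtained by combining the identity $\rho^\vee\circ\lambda_0\circ\rho=c(\rho)\lambda$ with $\lambda_0\circ y_1=y_1^\vee\circ\lambda_0$. Finally, the Kottwitz condition for $\iota'$ is polynomial in $d\in O_D$; it holds by hypothesis for $d\in\Zps$, and for $d=\Pi$ it reduces, modulo standard identities, to the statement that $\pi$ is locally nilpotent on $\Lie(X)$ with characteristic polynomial $(T^2-p)^2$, which can be verified using that $\pi^2=p$ and rank considerations (or transported directly from the corresponding computation on $\X_0$ through $\rho$).

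For (c), the condition ``$\rho^{-1}\circ y_1\circ\rho\in\End(X)$'' is closed in $\M_H$: after multiplying by a suitably large power of $p$ one obtains an isogeny, and the locus where a given quasi-endomorphism of the universal $p$-divisible group is an honest endomorphism is cut out by the vanishing of the obstruction to extending it through denominators, which is a closed condition on the base (cf.\ the representability of the locus of integral quasi-isogenies in \cite{rz}). Since $i_{G,H}$ is a monomorphism of formal schemes, locally of finite presentation, and its image is closed, $i_{G,H}$ is a closed immersion.

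The main technical obstacle I anticipate is verifying closedness of the image cleanly: one needs to know that the ``integrality'' locus of a quasi-endomorphism of the universal object on $\M_H$ is a closed formal subscheme, which relies on the Grothendieck--Messing type representability of endomorphism schemes. The verification of the Kottwitz condition for $\iota'$ at $d=\Pi$ is also a point where care is needed, but it can be handled by transporting the identity from $(\X_0,\iota_0)$ through the $\Zps$-linear quasi-isogeny $\rho$, using that both sides of the characteristic polynomial identity are deformation-invariant in an appropriate sense.
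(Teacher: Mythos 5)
Your proposal is correct and follows essentially the same route as the paper: the $\Pi$-action is forced to be $\rho^{-1}\circ y_1\circ\rho$ (whence injectivity, via rigidity of quasi-isogenies, and the necessity of the image condition), the converse is obtained by defining $\iota'(\Pi)$ to be this endomorphism when it is integral, and closedness of the integrality locus is exactly the appeal to \cite[Proposition 2.9]{rz} that the paper makes. Your extra care with the Kottwitz condition at $d=\Pi$ and the polarization compatibility only fills in verifications the paper leaves implicit.
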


\begin{proof}
First, we show that $i$ is injective. Suppose that two $p$-divisible groups with $G$-structures $(X_1,\iota_1,\lambda_1,\rho_1)$ and $(X_2,\iota_2,\lambda_2,\rho_2)$ are equivalent as $p$-divisible groups with $H$-structures. Let $\widetilde{\rho}\colon X_1\rightarrow X_2$ be an isomorphism which lifts $\rho_2\circ \rho_1^{-1}$. Then $\widetilde{\rho}$ commutes with the actions of $y_1$ by the rigidity of quasi-isogenies. See the assertion after \cite[Definition 2.8]{rz}. Thus $\widetilde{\rho}$ gives an equivalence between $(X_1,\iota_1,\lambda_1,\rho_1)$ and $(X_2,\iota_2,\lambda_2,\rho_2)$ as $p$-divisible groups with $G$-structures. 

Next, we show the second assertion. It is clear that $(X,\iota,\lambda,\rho)$ in $\M_H$ which lies in the image of $i$ satisfies the property ${\rho}^{-1}\circ y_1\circ \rho \in \End(X)$. On the other hand, for $(X,\iota,\lambda,\rho)$ in $\M_H$ such that ${\rho}^{-1}\circ y_1\circ \rho \in \End(X)$, we can define an action of $\Pi$ on $X$ by ${\rho}^{-1}\circ y_1\circ \rho$. Then it is a $p$-divisible group with $G$-structure by the rigidity of quasi-isogenies, and its image under $i$ is identical to $(X,\iota,\lambda,\rho)$. 

Finally, by the description of the image above and \cite[Proposition 2.9]{rz}, we obtain that $i$ is a closed immersion. 
\end{proof}

\begin{rem}
Some analogues of Proposition \ref{mgeb} for unramified unitary similitude groups of signature $(1,n-1)$ appears in \cite[\S 10]{Li2019} and \cite[\S 10]{rsz}, that pursue the theory of arithmetic intersection. They involve the Rapoport--Zink spaces $\M_{1,n-1}^{(i)}$ with fixed objects $(\X_{1,n-1},\iota,\lambda^{(i)})$ for $i\in \{0,1\}$, where
\begin{itemize}
\item $\X_{1,n-1}$ is a $p$-divisible group of dimension $n-1$ which is isoclinic of slope $1/2$,
\item $\iota \colon \Zps \rightarrow \End(\X_{1,n-1})$ is a ring homomorphism,
\item $\lambda^{(i)} \colon \X_{1,n-1}\rightarrow \X_{1,n-1}^{\vee}$ is a polarization,
\end{itemize}
satisfying the following: 
\begin{itemize}
\item $\det(T-\iota(a))=(T-a)(T-\overline{a})^{n-1}$ for $a\in \Zps$,
\item $\lambda^{(i)}\circ \iota(a)=\iota(\overline{a})\circ \lambda^{(i)}$ for $a\in \Zps$,
\item $\Ker(\lambda^{(i)})\subset \X_{1,n-1}[p]$ and has height $2i$. 
\end{itemize}
More precisely, \cite[\S 10]{Li2019} uses a closed immersion $\M_{1,n-1}^{(1)}\hookrightarrow \M_{1,n}^{(0)}$ to prove the Kudla--Rapoport conjecture for $\M_{1,n-1}^{(1)}$. On the other hand, \cite[\S 10]{rsz} considers a closed immersion $\M_{1,n-1}^{(0)}\hookrightarrow \M_{1,n}^{(1)}$. 
\end{rem}

\subsection{$\pi$-special quasi-endomorphisms}

In this section, we construct a quadratic subspace $\L_{\Q}^{\Phi,\pi}$ in $\End^0(\X_0)$ with a quadratic form $f\mapsto f\circ f$ over $\Qp$, whose elements are called \emph{$\pi$-special quasi-endomorphisms}. We realize it as a subspace of the quadratic space $\L_{\Q}^{\Phi}$ defined in \cite[\S 2.2]{hp}. We use such a space to describe combinatorial properties of the underlying space of $\M_G$, which is similar to the case for $\M_H$ in \cite{hp}. 

Let $(\D_{\Q},F,(\,,\,))$ be the polarized $D$-isocrystal introduced in Definition \ref{pdic}. We define a $\Qps \otimes_{\Qp} K_0$-valued hermitian form $\langle \,,\,\rangle$ on $\D_{\Q}$ by
\begin{equation*}
\langle x,y\rangle:=\frac{1}{2}\otimes (\iota_0(\varepsilon)x,y)+\frac{\varepsilon}{2}\otimes (x,y)
\end{equation*}
for $x,y\in \D_{\Q}$. This is a unique $\Qps/\Qp$-hermitian form such that $\tr_{\Qps/\Qp}((\varepsilon^{-1}\otimes 1)\langle\,,\,\rangle)=(\,,\,)$. We have the following by definition: 

\begin{lem}\label{fhmr}
\emph{For $x,y\in \D_{\Q}$, we have $\langle F(x),F(y)\rangle=p\sigma(\langle x,y\rangle)$. }
\end{lem}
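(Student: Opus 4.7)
The plan is to unwind the definition of $\langle\,,\,\rangle$ and reduce to two facts: the Frobenius-equivariance of the symplectic form from the polarized $D$-isocrystal axiom, namely $(F(x),F(y))=p\sigma((x,y))$, and the commutation $F\circ \iota_0(\varepsilon)=\iota_0(\varepsilon)\circ F$. The latter is immediate from the fact that $\iota_0(\varepsilon)\in \End^0(\X_0)$ corresponds under the isomorphism $\End^0(\X_0)\cong \End(\D_{\Q})^{\Phi}$ (recalled in Section \ref{grpj}) to a $\Phi$-fixed endomorphism of $\D_{\Q}$, so $\Phi(\iota_0(\varepsilon))=F\circ \iota_0(\varepsilon)\circ F^{-1}=\iota_0(\varepsilon)$.

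Given these, I would simply compute. On the target side, expanding the definition gives
\begin{equation*}
\langle F(x),F(y)\rangle=\frac{1}{2}\otimes (\iota_0(\varepsilon)F(x),F(y))+\frac{\varepsilon}{2}\otimes (F(x),F(y)).
\end{equation*}
For the second summand the polarized $D$-isocrystal condition gives $(F(x),F(y))=p\sigma((x,y))$ directly. For the first summand, the commutation above lets us rewrite $\iota_0(\varepsilon)F(x)=F(\iota_0(\varepsilon)x)$, so applying the same polarization identity yields $(\iota_0(\varepsilon)F(x),F(y))=p\sigma((\iota_0(\varepsilon)x,y))$. Collecting and using that $\sigma$ acts on $\Qps \otimes_{\Qp} K_0$ only through the $K_0$-factor (so it fixes $1$, $\varepsilon$, and $1/2$ in the left tensor slot), we get
\begin{equation*}
\langle F(x),F(y)\rangle=p\left(\frac{1}{2}\otimes \sigma((\iota_0(\varepsilon)x,y))+\frac{\varepsilon}{2}\otimes \sigma((x,y))\right)=p\sigma(\langle x,y\rangle),
\end{equation*}
as required.

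There is really no hard step here; the only thing to be careful about is the convention that $\sigma$ acts trivially on the $\Qps$-factor of $\Qps \otimes_{\Qp} K_0$ (so that the $\Qps$-coefficients $1/2$ and $\varepsilon/2$ are carried through unchanged), and the recognition that the $\Phi$-fixedness of $\iota_0(\varepsilon)\in \End(\D_{\Q})$ is exactly the commutation with $F$ that one needs. Once these conventions are fixed, the lemma is a one-line verification.
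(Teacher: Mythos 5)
Your proof is correct and is exactly the verification the paper has in mind: the paper states Lemma \ref{fhmr} with only the remark that it holds ``by definition'', and your computation — combining the polarized $D$-isocrystal axiom $(F(x),F(y))=p\sigma((x,y))$ with the commutation of $F$ and $\iota_0(\varepsilon)$ (equivalently, the $\Phi$-fixedness of $\iota_0(\varepsilon)$ in $\End(\D_{\Q})$) and the convention that $\sigma$ acts on $\Qps\otimes_{\Qp}K_0$ only through the $K_0$-factor — is precisely that unwinding. No gap.
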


From now on, let us fix an element $\zeta \in \Zps$ with $\N_{\Qps/\Qp}(\zeta)=-1$. 

\begin{lem}\label{base}
\emph{There is a $W$-basis $e_i,f_j$ ($1\leq i,j\leq 4$) of $\D$ with $e_i\in \D_{0}$ and $f_j\in \D_{1}$, such that  
\begin{gather*}
\langle e_i,f_j\rangle=\delta_{ij}\cdot \varepsilon_0, \\
F(e_1)=f_1,\quad F(e_2)=f_2,\quad F(e_3)=pf_3,\quad F(e_4)=pf_4,\\
F(f_1)=pe_1,\quad F(f_2)=pe_2,\quad F(f_3)=e_3,\quad F(f_4)=e_4,
\end{gather*}
and
\begin{gather*}
y_1 (e_1)=-\zeta^{-1}f_2,\quad y_1 (e_2)=\zeta^{-1}f_1,\quad
y_1 (e_3)=p\zeta f_4,\quad y_1 (e_4)=-p\zeta f_3,\\
y_1 (f_1)=p\zeta e_2,\quad y_1 (f_2)=-p\zeta e_1,\quad
y_1 (f_3)=-\zeta^{-1}e_4,\quad y_1 (f_4)=\zeta^{-1}e_3. 
\end{gather*}
Here $\delta_{ij}$ is Kronecker's delta, and $\varepsilon_0\in \Qps \otimes_{\Qp} K_0$ is the element defined in Definition \ref{e0e1}. }
\end{lem}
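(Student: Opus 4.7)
The claim is a concrete linear-algebra assertion about the polarized $D$-isocrystal $(\D,F,\langle\,,\,\rangle)$ together with the $\Pi$-action $y_1$, and my plan is to prove it by direct computation starting from the basis $\ebar_i, \fbar_j$ of Definition \ref{efdf} and then making an explicit $\Qps$-linear change of basis.

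First I would tabulate all three structures on $\ebar_i, \fbar_j$. The $y_1$-action is already provided by Lemma \ref{eifj}(i). For $F = b\circ\sigma$, since $\sigma|_{\Zps} = \tau$ swaps the idempotents ($\sigma(\varepsilon_0) = \varepsilon_1$) and $b$ acts by right-multiplication by $\Pi$, one reads off
\begin{equation*}
F(\ebar_1)=\fbar_3,\quad F(\ebar_2)=\fbar_4,\quad F(\ebar_3)=p\fbar_1,\quad F(\ebar_4)=p\fbar_2,
\end{equation*}
together with the symmetric formulas on the $\fbar_j$ (e.g.\ $F(\fbar_3) = p\ebar_1$). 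For the hermitian form, the defining condition $(a\otimes 1)x = (1\otimes \tau^i(a))x$ on $\D_{\Q,i}$ means that $\iota_0(\varepsilon)$ acts on $\D_{\Q,0}$ (resp.\ $\D_{\Q,1}$) as $K_0$-scalar multiplication by $\varepsilon$ (resp.\ $-\varepsilon$); combining this with Lemma \ref{eifj}(ii) yields $\langle \ebar_i,\fbar_j\rangle = (-1)^{i-1}\delta_{i,5-j}\cdot \varepsilon\varepsilon_0$, so the hermitian form is anti-diagonal in this basis.

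Next I would construct the desired basis. The essential difficulty is that the $F$-orbit structure (pairing $\ebar_i$ with $\fbar_{i\pm 2}$) is misaligned with the hermitian pair structure (pairing $\ebar_i$ with $\fbar_{5-i}$), so no pure pair $(\ebar_i,\fbar_j)$ gives the diagonal form $\langle e_i,f_j\rangle = \delta_{ij}\varepsilon_0$ demanded. The resolution is to take $\Qps$-linear combinations. Concretely, set $e_1, e_2\in \D_0$ to be specific $\Qps$-combinations of $\ebar_1, \ebar_2$ with coefficients expressed in terms of $\zeta$, using $\zeta\tau(\zeta) = -1$ and hence $\tau(\zeta) = -\zeta^{-1}$, chosen so that $F(e_i) = f_i$, $\langle e_i,f_j\rangle = \delta_{ij}\varepsilon_0$, and $y_1(e_1) = -\zeta^{-1}f_2$, $y_1(e_2) = \zeta^{-1}f_1$ all hold simultaneously; then declare $f_i := F(e_i)$. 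Construct $e_3, e_4\in \D_0$ as analogous combinations of $\ebar_3, \ebar_4$, and put $f_3, f_4$ as the appropriate $p$-scaled $F$-preimages, where the roles of $F$-scaling by $1$ versus $p$ are reversed because $F(\ebar_3)$ already carries a factor of $p$.

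Verification and integrality form the last step. All coefficients in the change-of-basis matrix lie in $W^\times$ since $\zeta,\tau(\zeta),\varepsilon\in W^\times$, so $\{e_i,f_j\}$ is indeed a $W$-basis of $\D$. Each asserted identity — the pairing values $\langle e_i,f_j\rangle$, the $F$-images, and the $y_1$-images — is then checked by direct expansion, invoking $K_0$-bilinearity of $\langle\,,\,\rangle$ with the sesquilinearity $\langle x,ay\rangle = \langle x,y\rangle\overline{a}$ for $a\in \Qps$, $\sigma$-linearity of $F$, $K_0$-linearity of $y_1$, the relation $y_1^2 = p$, and Lemma \ref{fhmr} as a sanity check (indeed, $\langle F(e_1),F(f_1)\rangle = \langle f_1,pe_1\rangle = p\overline{\varepsilon_0} = p\varepsilon_1 = p\sigma(\varepsilon_0)$). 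The main obstacle is this simultaneous three-way reconciliation: the element $\zeta$ with $\N_{\Qps/\Qp}(\zeta) = -1$ is precisely what makes all three compatibilities realizable on a single basis, and the remainder is careful bookkeeping with the signs inherited from the symplectic form and from $\tau(\zeta) = -\zeta^{-1}$.
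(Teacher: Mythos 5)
Your proposal is correct and follows essentially the same route as the paper: the paper's proof likewise takes explicit $\Zps$-linear combinations of $\ebar_1,\ebar_2$ (resp.\ $\ebar_3,\ebar_4$, $\fbar_1,\fbar_2$, $\fbar_3,\fbar_4$) — namely $e_1=a\ebar_1+b\ebar_2$, $e_2=\zeta^{-1}\sigma(a)\ebar_1+\zeta^{-1}\sigma(b)\ebar_2$, etc., with $a\sigma(b)-\sigma(a)b=\varepsilon^{-1}$ — and verifies the three compatibilities directly via Lemmas \ref{eifj} and \ref{fhmr}. Your identification of the key mechanisms (the misalignment of the $F$-orbit and hermitian-pair structures, the role of $\sigma(\zeta)=-\zeta^{-1}$ in reconciling $F(e_2)=f_2$ with $f_2$ defined through $y_1$, and the unit determinant of the base change) matches the paper's computation; the only thing left implicit is the explicit normalization $a\sigma(b)-\sigma(a)b=\varepsilon^{-1}$ forced by $\langle e_1,f_1\rangle=\varepsilon_0$.
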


\begin{proof}
Fix $a,b\in \Zps \subset K_0$ such that $a\sigma(b)-\sigma(a)b=\varepsilon^{-1}$ (for example, put $a:=1$ and $b:=-\varepsilon^{-1}/2$). Let us define $e_i$ and $f_j$ as below: 
\begin{align*}
e_1&:=a\ebar_1+b\ebar_2,&
e_2&:=\zeta^{-1}\sigma(a)\ebar_1+\zeta^{-1}\sigma(b)\ebar_2,&
e_3&:=a\ebar_3+b\ebar_4,&
e_4&:=-\zeta \sigma(a)\ebar_3-\zeta \sigma(b)\ebar_4,\\
f_1&:=\sigma(a)\fbar_3+\sigma(b)\fbar_4,&
f_2&:=-\zeta a\fbar_3-\zeta b\fbar_4,&
f_3&:=\sigma(a)\fbar_1+\sigma(b)\fbar_2,&
f_4&:=\zeta^{-1} a\fbar_1+\zeta^{-1}b\fbar_2. 
\end{align*}
By Lemmas \ref{eifj} and \ref{fhmr}, the elements above satisfy the desired conditions. 
\end{proof}

Let $\bigwedge^2_{\Qps} \D_{\Q}$ be the second exterior power of $\D_{\Q}$ as a $\Qps \otimes K_0$-module. We regard it as a $K_0$-subspace of $\End(\D_{\Q})$ by
\begin{equation*}
(x\wedge y)(z)=\langle x,z\rangle y-\langle y,z\rangle x
\end{equation*}
for $x,y\in \D_{\Q}$. Moreover, we introduce a hermitian form $\langle \,,\,\rangle$ on $\bigwedge^2_{\Qps}\D_{\Q}$ by 
\begin{equation*}
\langle v_1\wedge v_2,w_1\wedge w_2\rangle:=\langle v_1,w_1\rangle \langle v_2,w_2\rangle-\langle v_1,w_2\rangle \langle v_2,w_1\rangle
\end{equation*}
for any $v_1,v_2,w_1,w_2\in \D_{\Q}$. 

We give two lemmas on this subspace. 

\begin{lem}\label{wdhm}
\emph{Let $v\in \bigwedge^2_{\Qps}\D_{\Q}$, and regard it as an element of $\End(\D_{\Q})$ by the injection above. Then, for $x,y\in \D_{\Q}$ we have 
\begin{equation*}
\langle x,v(y)\rangle=-{\langle v(x),y\rangle}^{*}. 
\end{equation*}}
\end{lem}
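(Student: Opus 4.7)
The plan is to reduce to a decomposable wedge $v = a \wedge b$ and then read off both sides of the identity by a direct, essentially one-line computation, using only the formula for the action of $\bigwedge^{2}_{\Qps} \D_{\Q}$ on $\D_{\Q}$ and the standard manipulation rules for the hermitian form $\langle\,,\,\rangle$.

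First, I would note that both $\langle x, v(y)\rangle$ and $-\langle v(x), y\rangle^{*}$ are additive in $v$ and transform identically under $(\Qps \otimes_{\Qp} K_0)$-scaling of $v$: multiplying $v$ by $c$ scales the left-hand side by $c^{*}$ (pulled out of the second slot via $\langle x, cz\rangle = \langle x, z\rangle c^{*}$), while on the right one has $\langle cv(x), y\rangle = c \langle v(x), y\rangle$, and then applying $*$ again produces the factor $c^{*}$. Hence it is enough to treat decomposable elements $v = a \wedge b$ with $a, b \in \D_{\Q}$.

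For such a $v$, substitute the prescribed formula $(a \wedge b)(z) = \langle a, z\rangle b - \langle b, z\rangle a$ into each side. After pulling the $\Qps$-scalars out of the second slot of $\langle\,,\,\rangle$, the left-hand side becomes
\begin{equation*}
\langle x, v(y)\rangle = \langle x, b\rangle\,\langle a, y\rangle^{*} - \langle x, a\rangle\,\langle b, y\rangle^{*}.
\end{equation*}
For the right-hand side one first gets
\begin{equation*}
\langle v(x), y\rangle = \langle a, x\rangle\,\langle b, y\rangle - \langle b, x\rangle\,\langle a, y\rangle,
\end{equation*}
so applying $*$, reversing the sign, and invoking the hermitian symmetry $\langle u, w\rangle^{*} = \langle w, u\rangle$ reproduces exactly the expression above. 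This settles the decomposable case, and by the linearity reduction settles the lemma.

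The only mildly delicate point is the careful tracking of the conjugate-linearity conventions, namely which slot of $\langle\,,\,\rangle$ absorbs a scalar with a $*$-twist and which without, and the compatibility of the involution $*$ on $\Qps \otimes_{\Qp} K_0$ with the hermitian structure. Once this bookkeeping is in place the identity is formal, so I do not anticipate any genuine obstacle.
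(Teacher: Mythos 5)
Your proposal is correct and is essentially the paper's own proof: reduce by additivity to a decomposable $v=a\wedge b$, expand both sides using $(a\wedge b)(z)=\langle a,z\rangle b-\langle b,z\rangle a$, pull scalars out of the appropriate slots of $\langle\,,\,\rangle$, and match terms via the hermitian symmetry $\langle u,w\rangle^{*}=\langle w,u\rangle$. The extra remark about compatibility with scaling of $v$ is harmless but not needed, since additivity alone handles sums of decomposables.
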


\begin{proof}
This is pointed out in the proof of \cite[Proposition 2.8]{hp}. However, we make a proof since we use this assertion in Lemma \ref{pipr} below. 

We may assume $v=x_0\wedge y_0$ for some $x_0,y_0\in \D_{\Q}$. 
In this case, we have 
\begin{equation*}
\langle x,v(y)\rangle=\langle x,\langle x_0,y\rangle y_0-\langle y_0,y\rangle x_0 \rangle=\langle x,y_0\rangle \langle x_0,y\rangle^{*}-\langle x,x_0\rangle \langle y_0,y\rangle^{*}. 
\end{equation*}
On the other hand we obtain
\begin{equation*}
-{\langle v(x),y\rangle}^{*}=-\langle \langle x_0,x\rangle y_0-\langle y_0,x\rangle x_0,y \rangle^{*}=\langle x,y_0\rangle \langle x_0,y\rangle^{*}-\langle x,x_0\rangle \langle y_0,y\rangle^{*}, 
\end{equation*}
which concludes the assertion. 
\end{proof}

\begin{lem}\label{piwd}
\emph{We have
\begin{equation*}
y_1=p\zeta e_1\wedge e_2-\zeta^{-1}e_3\wedge e_4-\zeta^{-1}f_1\wedge f_2+p\zeta f_3\wedge f_4
\end{equation*}
as elements in $\End(\D_{\Q})$. }
\end{lem}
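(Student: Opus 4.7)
The plan is to prove this by direct verification on the $K_0$-basis $\{e_i, f_j\}_{1\le i,j\le 4}$ of $\D_{\Q}$ constructed in Lemma \ref{base}. Both sides of the asserted identity are $K_0$-linear endomorphisms of $\D_{\Q}$, so I only need to evaluate them on the basis and check agreement. The expressions for $y_1$ applied to each basis vector are already given in Lemma \ref{base}, so the real work is computing the action of the wedge-product endomorphism on the right-hand side.

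The computation rests on three preliminary observations that I would record at the start of the proof. First, since $\D_{\Q,0}$ and $\D_{\Q,1}$ are totally isotropic for $(\,,\,)$ and $\iota_0(\varepsilon)$ preserves each of them, one has $\langle e_i,e_j\rangle = 0 = \langle f_i,f_j\rangle$ for all $i,j$. Second, Lemma \ref{base} gives $\langle e_i,f_j\rangle = \delta_{ij}\varepsilon_0$, and applying the hermitian symmetry $\langle x,y\rangle = \overline{\langle y,x\rangle}$ together with $\overline{\varepsilon_0} = \varepsilon_1$ yields $\langle f_i,e_j\rangle = \delta_{ij}\varepsilon_1$. Third, by the definition of $\varepsilon_0$ and $\varepsilon_1$ (Definition \ref{e0e1}) together with the identification of the $\Zps$-action on $\D_{\Q,i}$ with the scalar $\tau^i$-twisted $K_0$-action, $\varepsilon_0$ acts as the identity on $\D_{\Q,0}$ and as $0$ on $\D_{\Q,1}$, while $\varepsilon_1$ acts as the identity on $\D_{\Q,1}$ and as $0$ on $\D_{\Q,0}$.

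With these in hand, I would apply the defining formula $(x\wedge y)(z) = \langle x,z\rangle y - \langle y,z\rangle x$ to each basis vector. For instance, evaluating the right-hand side on $e_1\in \D_{\Q,0}$: the terms $(e_1\wedge e_2)(e_1)$ and $(e_3\wedge e_4)(e_1)$ vanish by isotropy, $(f_3\wedge f_4)(e_1) = 0$, and $(f_1\wedge f_2)(e_1) = \varepsilon_1 f_2 = f_2$, so the right-hand side gives $-\zeta^{-1}f_2 = y_1(e_1)$. The remaining seven cases follow by the same bookkeeping: for $e_k\in \D_{\Q,0}$ only the $f_i\wedge f_j$ terms survive, for $f_k\in \D_{\Q,1}$ only the $e_i\wedge e_j$ terms survive, and in each case the surviving wedge evaluates to exactly the vector specified in Lemma \ref{base}.

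There is no real obstacle here; the proof is a routine verification. The only point requiring attention is the interpretation of the elements $\varepsilon_0,\varepsilon_1\in \Zps\otimes_{\Zp}W$ as scalars when acting on $\D_{\Q,0}$ and $\D_{\Q,1}$ via the two commuting actions of $\Zps$ (the $\iota_0$-action and the structural $W$-action), but this is handled uniformly by the third observation above. The lemma will then be used in Lemma \ref{pipr} to control the interaction between $y_1$ and the hermitian form, which is why the explicit wedge decomposition — and not just the existence of such a decomposition — is what one records.
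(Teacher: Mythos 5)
Your proposal is correct and follows essentially the same route as the paper: verify the identity on the $K_0$-basis $e_i,f_j$ using the formula $(x\wedge y)(z)=\langle x,z\rangle y-\langle y,z\rangle x$, the products $\langle e_i,f_j\rangle=\delta_{ij}\varepsilon_0$ from Lemma \ref{base}, and the fact that $\varepsilon_0$ (resp.\ $\varepsilon_1$) acts as the identity on $\D_{\Q,0}$ (resp.\ $\D_{\Q,1}$). The paper carries out the case of $f_1$ and declares the rest similar, whereas you carry out $e_1$; the preliminary observations you isolate (isotropy of $\D_{\Q,0}$ and $\D_{\Q,1}$ for $\langle\,,\,\rangle$, and $\langle f_i,e_j\rangle=\delta_{ij}\varepsilon_1$) are exactly what makes the remaining cases routine.
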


\begin{proof}
It suffices to show the equality on $e_i$ and $f_j$, since they form a $K_0$-basis of $\D_{\Q}$. We only prove it for $f_1$. Other cases are similar. 

For the right-hand side, we have 
\begin{equation*}
(p\zeta e_1\wedge e_2-\zeta^{-1}e_3\wedge e_4-\zeta^{-1}f_1\wedge f_2+p\zeta f_3\wedge f_4)(f_1)=(p\zeta e_1\wedge e_2)(f_1)=p\zeta \langle e_1,f_1\rangle e_2. 
\end{equation*}
Moreover, we have
\begin{equation*}
p\zeta \langle e_1,f_1\rangle e_2=p\zeta \varepsilon_0e_2=p\zeta e_2. 
\end{equation*}
by the property on $\langle e_i,f_j\rangle$ in Lemma \ref{base} and $e_2\in \D_{\Q,0}$. On the other hand, we have $y_1(f_1)=p\zeta e_2$ by the last property in Lemma \ref{base}. Hence the equality for $f_1$ follows. 
\end{proof}

Let us recall the Hodge star operator on $\bigwedge^2_{\Qps} \D_{\Q}$ defined in \cite[\S 2.2]{hp}. Put
\begin{equation*}
\omega:=e_1\wedge e_2\wedge e_3\wedge e_4+f_1\wedge f_2\wedge f_3\wedge f_4\in \bigwedge^4_{\Qps} \D_{\Q}. 
\end{equation*}
Then the Hodge star operator $\star$ is the $K_0$-linear map $\bigwedge^2_{\Qps} \D_{\Q}\rightarrow \bigwedge^2_{\Qps} \D_{\Q}$ such that for $x\in \bigwedge^2_{\Qps} \D_{\Q}$, $x^{\star}$ satisfies $y\wedge x^{\star}=\langle y,x\rangle \omega$ for any $y\in \bigwedge^2_{\Qps} \D_{\Q}$. 

\begin{rem}
In \cite[\S 2.2]{hp}, they also define the Hodge star operator with respect to $\alpha \omega$ for any $\alpha \in \Qps$ with $\N_{\Qps/\Qp}(\alpha)=1$. However we use it only for $\alpha=1$. 
\end{rem}

Let us recall the $\sigma$-linear endomorphism $\Phi$ on $\End(\D_{\Q})$ introduced in Definition \ref{phdf}. It is defined by $f\mapsto F\circ f\circ F^{-1}$. We study relations between $\Phi$ and $\bigwedge^2_{\Qps} \D_{\Q}$. 

\begin{lem}\label{phst}
\emph{
\begin{enumerate}
\item For $x,y\in \D_{\Q}$, we have $\Phi(x\wedge y)=p^{-1}F(x)\wedge F(y)$. In particular, $\bigwedge^2_{\Qps} \D_{\Q}$ is stable under $\Phi$, that is, $(\bigwedge^2_{\Qps} \D_{\Q},\Phi)$ is a subisocrystal of $(\End(\D_{\Q}),\Phi)$. 
\item For $v,w\in \bigwedge^2_{\Qps} \D_{\Q}$, we have $\langle \Phi(v),\Phi(w)\rangle=\sigma(\langle v,w\rangle)$. 
\item For $v,w\in \bigwedge^2_{\Qps}\D_{\Q}$, put $\Phi(v\wedge w):=\Phi(v)\wedge \Phi(w)$. Then we have $\Phi(\omega)=\omega$. 
\item The map $\Phi$ commutes with $\star$. 
\end{enumerate}}
\end{lem}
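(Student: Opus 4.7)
The four statements form a tight chain: (i) is the computational core, (ii) follows by combining (i) with Lemma \ref{fhmr} pairwise, (iii) reduces to an explicit basis computation using Lemma \ref{base}, and (iv) is a formal consequence of (ii) and (iii) together with the defining property of $\star$. I would prove them in this order.

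For (i), I would unfold the definition: for $z\in \D_{\Q}$,
\[
\Phi(x\wedge y)(z) = F\bigl(\langle x, F^{-1}(z)\rangle\, y - \langle y, F^{-1}(z)\rangle\, x\bigr).
\]
Rewriting Lemma \ref{fhmr} as $\langle x, F^{-1}(z)\rangle = p^{-1}\sigma^{-1}\langle F(x), z\rangle$ and using that $F$ is $\sigma$-semilinear (extended trivially on the $\Qps$-factor of $\Qps\otimes_{\Qp}K_0$) to pull each scalar through $F$, each summand contributes a factor of $p^{-1}\langle F(x), z\rangle F(y)$ (resp.\ the analogue for the second term), so the expression equals $p^{-1}(F(x)\wedge F(y))(z)$. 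For (ii), it is enough to work with pure wedges $v = x_1\wedge x_2$, $w = y_1\wedge y_2$. By (i) the prefactor $p^{-2}$ appears, and Lemma \ref{fhmr} gives $\langle F(x_i), F(y_j)\rangle = p\,\sigma\langle x_i, y_j\rangle$ for each of the four terms in the Gram-type determinant defining $\langle v, w\rangle$; the two $p$'s cancel the $p^{-2}$, leaving $\sigma\langle v, w\rangle$ exactly.

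For (iii), I would iterate (i) to obtain $\Phi(v_1\wedge v_2\wedge v_3\wedge v_4) = p^{-2}\,F(v_1)\wedge F(v_2)\wedge F(v_3)\wedge F(v_4)$ and then plug in the $F$-values from Lemma \ref{base}. For the $e$-summand, $F(e_1)\wedge F(e_2)\wedge F(e_3)\wedge F(e_4) = p^2\, f_1\wedge f_2\wedge f_3\wedge f_4$, and the $p^{-2}$ cancels; the $f$-summand gives $e_1\wedge e_2\wedge e_3\wedge e_4$ by the symmetric computation, so summing recovers $\omega$. For (iv), I would apply $\Phi$ to the defining identity $y\wedge x^{\star} = \langle y, x\rangle\,\omega$ after replacing $y$ by $\Phi^{-1}(y)$: using that $\Phi$ on the top exterior power is $\sigma$-semilinear (a consequence of (i)) and that $\Phi(\omega)=\omega$ by (iii), the left-hand side becomes $y\wedge \Phi(x^{\star})$, while the right-hand side becomes $\sigma\langle \Phi^{-1}(y), x\rangle\,\omega = \langle y, \Phi(x)\rangle\,\omega$ by (ii). Uniqueness in the defining property of $\star$ then forces $\Phi(x^{\star}) = \Phi(x)^{\star}$.

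The main obstacle is bookkeeping rather than depth: one must be careful that the $\sigma$ appearing in Lemma \ref{fhmr} is the one acting trivially on $\Qps$ and as the Witt-vector Frobenius on $K_0$, and that this is compatible both with the $\sigma$-semilinearity of $F$ when extended to $\Qps\otimes_{\Qp}K_0$-valued pairings and with the $\Qps\otimes_{\Qp}K_0$-bilinearity of $\langle\,,\,\rangle$ on $\bigwedge^2_{\Qps}\D_{\Q}$. Once this is fixed, all four parts are essentially formal consequences of Lemmas \ref{fhmr} and \ref{base}.
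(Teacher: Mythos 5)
Your proposal is correct and follows essentially the same route as the paper's proof: (i) by unfolding the definition and pulling scalars through the $\sigma$-semilinear $F$ via Lemma \ref{fhmr}, (ii) on pure wedges with the $p^{-2}$ cancelling against the four factors of $p$, (iii) by the explicit basis computation from Lemma \ref{base}, and (iv) by comparing the two expressions for $\Phi(w)\wedge(\cdot)$ against $\omega$ and invoking the uniqueness in the definition of $\star$. No gaps.
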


\begin{proof}
Note that the assertion (i) and (iii) are pointed out after \cite[Proposition 2.4]{hp}. 

(i): Take $z\in \D_{\Q}$. Then we have
\begin{equation*}
\Phi(x\wedge y)(z)=F(\langle x,F^{-1}(z)\rangle y-\langle y,F^{-1}(z)\rangle x)
=p^{-1}F(\sigma^{-1}(\langle F(x),z\rangle) y-\sigma^{-1}(\langle F(y),z\rangle) x). 
\end{equation*}
By Lemma \ref{fhmr}, we obtain
\begin{align*}
F(\langle x,F^{-1}(z)\rangle y-\langle y,F^{-1}(z)\rangle x)
=&\,p^{-1}F(\sigma^{-1}(\langle F(x),z\rangle) y-\sigma^{-1}(\langle F(y),z\rangle) x)\\
=&\,p^{-1}(\langle F(x),z\rangle F(y)-\langle F(y),z\rangle F(x))\\
=&\,p^{-1}\langle F(x),F(y)\rangle (z), 
\end{align*}
which implies the desired equality. 

(ii): We may assume $v=x_1\wedge x_2$ and $w=y_1\wedge y_2$ for some $x_1,v_2,w_1,w_2\in \D_{\Q}$. By (i) we have
\begin{align*}
\langle \Phi(x_1\wedge x_2),\Phi(y_1\wedge y_2)\rangle=& \,p^{-2}\langle F(x_1)\wedge F(x_2),F(y_1)\wedge F(y_2)\rangle \\
=&\, p^{-2}\langle F(x_1),F(y_1)\rangle \langle F(x_2),F(y_2)\rangle-p^{-2}\langle F(x_1),F(y_2)\rangle \langle F(x_2),F(y_1)\rangle. 
\end{align*}
On the other hand, we have 
\begin{gather*}
p^{-2}\langle F(x_1),F(y_1)\rangle \langle F(x_2),F(y_2)\rangle-p^{-2}\langle F(x_1),F(y_2)\rangle \langle F(x_2),F(y_1)\rangle \\
=\sigma(\langle x_1,y_1\rangle) \sigma(\langle x_2,y_2\rangle)-\sigma(\langle x_1,y_2\rangle) \sigma(\langle x_2,y_1\rangle)\\
=\sigma(\langle x_1\wedge x_2,y_1\wedge y_2\rangle).  
\end{gather*}
Therefore the assertion follows. 

(iii): By (i), we have 
\begin{equation*}
\Phi(\omega)=p^{-2}(F(e_1)\wedge F(e_2)\wedge F(e_3)\wedge F(e_4)+F(f_1)\wedge F(f_2)\wedge F(f_3)\wedge F(f_4)). 
\end{equation*}
Using the second property of $e_i$ and $f_j$ in Lemma \ref{base}, we obtain
\begin{equation*}
p^{-2}(F(e_1)\wedge F(e_2)\wedge F(e_3)\wedge F(e_4)+F(f_1)\wedge F(f_2)\wedge F(f_3)\wedge F(f_4))=\omega, 
\end{equation*}
which implies the assertion. 

(iv): Take $v,w\in \bigwedge^2_{\Qps} \D_{\Q}$. By (ii), we have
\begin{equation*}
\Phi(w)\wedge \Phi(v)^{\star}=\langle \Phi(w),\Phi(v)\rangle \omega=\sigma(\langle w,v\rangle)\omega. 
\end{equation*}
On the other hand, we have
\begin{equation*}
\Phi(w)\wedge \Phi(v^{\star})=\Phi(w\wedge v^{\star})=\Phi(\langle w,v\rangle \omega)=\sigma(\langle w,v\rangle)\omega
\end{equation*}
by (iii) and the definition of $\star$. Hence we have $\Phi(v)^{\star}=\Phi(v^{\star})$. 
\end{proof}

Now we define a map $\pi$ as follows: 
\begin{equation*}
\pi \colon \End(\D_{\Q})\rightarrow \End(\D_{\Q});\,h\mapsto \iota_0(\varepsilon \Pi)\circ h\circ \iota_0(\varepsilon \Pi)^{-1}
\end{equation*}

We have $\pi^2=\id_{\End(\D_{\Q})}$ by definition. 

\begin{lem}\label{pipr}
\emph{
\begin{enumerate}
\item For any $v\in \bigwedge^2_{\Qps} \D_{\Q}$, we have $\pi(v)= -y_1^{-1} \circ v\circ y_1$. 
\item For $x\wedge y\in \bigwedge^2_{\Qps} \D_{\Q}$ with $x,y\in \D_{\Q}$, we have $\pi(x\wedge y)=p^{-1}y_1 (x)\wedge y_1 (y)$. In particular, $\bigwedge^2_{\Qps} \D_{\Q}$ is stable under $\pi$.
\item For $v,w\in \bigwedge^2_{\Qps}\D_{\Q}$, we have $\langle \pi(v),\pi(w)\rangle=\langle v,w\rangle^{*}$. 
\item For $v,w\in \bigwedge^2_{\Qps}\D_{\Q}$, put $\pi(v\wedge w):=\pi(v)\wedge \pi(w)$. Then, for $a\in \Qps$ we have $\pi(\iota_0(a)\omega)=\iota_0(a^{*})\omega$. 
\item The map $\pi$ commutes with $\Phi$ and $\star$. 
\end{enumerate}}
\end{lem}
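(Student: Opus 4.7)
I would prove (ii) first by direct wedge-by-wedge computation, then derive (i) from $y_1^2 = p\cdot\mathrm{id}$, and finally use (ii) to deduce (iii)--(v). For (i) and (ii), by $K_0$-linearity it suffices to handle a pure wedge $v = x\wedge y$. The computation
\begin{equation*}
\pi(x\wedge y)(z) = \iota_0(\varepsilon\Pi)\bigl(\langle x,\iota_0(\varepsilon\Pi)^{-1}z\rangle y - \langle y,\iota_0(\varepsilon\Pi)^{-1}z\rangle x\bigr)
\end{equation*}
requires commuting the $\Qps\otimes K_0$-scalars past $\iota_0(\varepsilon\Pi)$, which is $\tau$-semilinear over $\Qps$ (from $\varepsilon\Pi\cdot a = \tau(a)\,\varepsilon\Pi$ in $D$, using $\varepsilon \in \Qps$ and $\Pi a = \tau(a)\Pi$). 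The scalar identity $(\varepsilon\Pi)^{-1} = -p^{-1}\varepsilon^{-1}\Pi$ in $D$ and the hermitian identity $\langle y_1(x),\,y_1(w)\rangle = -p\,\tau(\langle x,w\rangle)$ -- a direct consequence of $\Pi^* = \Pi$, $\Pi^2 = p$, and the definition of $\langle\,,\,\rangle$ -- together convert the right-hand side into $p^{-1}\bigl(\langle y_1 x,z\rangle y_1 y - \langle y_1 y,z\rangle y_1 x\bigr)$, the signs being consistent because $\iota_0(\varepsilon)$ acts by $\pm \varepsilon$ on $\D_{\Q,0}$ versus $\D_{\Q,1}$. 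This proves (ii). Statement (i) then follows from $y_1^{-1} = p^{-1}y_1$ by a parallel direct computation of $-y_1^{-1}\circ(x\wedge y)\circ y_1$ using the same hermitian identity.

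For (iii), apply (ii) to $v = x_1\wedge x_2$ and $w = u_1\wedge u_2$: the expression $\langle\pi(v),\pi(w)\rangle = p^{-2}\langle y_1 x_1\wedge y_1 x_2,\, y_1 u_1\wedge y_1 u_2\rangle$ expands via the bilinear formula on wedges into a difference of two products of $\langle y_1 x_i,\,y_1 u_j\rangle$, and applying $\langle y_1(\cdot),y_1(\cdot)\rangle = -p\,(\cdot)^*$ to each factor contributes $p^2$, cancelling $p^{-2}$, while the $*$ pulls out to give $\langle v,w\rangle^*$. For (iv), extending (ii) to four-fold wedges yields
\begin{equation*}
\pi(\omega) = p^{-2}\bigl(y_1(e_1)\wedge y_1(e_2)\wedge y_1(e_3)\wedge y_1(e_4) + y_1(f_1)\wedge y_1(f_2)\wedge y_1(f_3)\wedge y_1(f_4)\bigr),
\end{equation*}
and substituting the explicit formulas from Lemma \ref{base}, the product of scalar factors in each four-fold wedge equals $p^2$ and the signs cancel after the two transpositions needed to restore standard ordering of the $e$'s and $f$'s; hence $\pi(\omega) = \omega$. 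For general $a\in\Qps$, the $\tau$-semilinearity of $\pi$ on the $\Qps$-factor gives $\pi(\iota_0(a)\omega) = \iota_0(a^*)\omega$.

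For (v), commutation with $\Phi$ reduces to $F\circ\iota_0(\varepsilon\Pi) = \iota_0(\varepsilon\Pi)\circ F$, which holds because the $O_D$-action on the isocrystal $\D(\X_0) = \D$ commutes with $F$ by construction. Commutation with $\star$ follows by applying $\pi$ to the defining identity $w\wedge v^\star = \langle w,v\rangle\omega$: by (iv) (extended to $\Qps\otimes K_0$-scalars via $K_0$-linearity of $\pi$) and (iii),
\begin{equation*}
\pi(w)\wedge\pi(v^\star) = \langle w,v\rangle^*\omega = \langle\pi(w),\pi(v)\rangle\omega = \pi(w)\wedge\pi(v)^\star,
\end{equation*}
and since $\pi$ is a bijection of $\bigwedge^2_{\Qps}\D_\Q$ (as $\pi^2 = \mathrm{id}$), letting $w$ vary and invoking non-degeneracy of the wedge pairing forces $\pi(v^\star) = \pi(v)^\star$. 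The most delicate step is the hermitian identity $\langle y_1(x),y_1(w)\rangle = -p\,\tau(\langle x,w\rangle)$ underpinning (i)--(iii): it requires careful bookkeeping of how $\varepsilon^* = -\varepsilon$, $\Pi\varepsilon = -\varepsilon\Pi$, and the definition of $\langle\,,\,\rangle$ interact, since the sign and the factor of $-p$ must emerge precisely from the interplay of $\iota_0(\varepsilon)$ and $\iota_0(\Pi)$ with the symplectic form.
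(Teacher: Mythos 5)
Your proposal is correct, and parts (iii)--(v) coincide with the paper's argument almost verbatim, including the reduction of $\star$-commutation to (iii) and (iv) via the defining identity $w\wedge v^{\star}=\langle w,v\rangle \omega$ (the paper leaves implicit the surjectivity/non-degeneracy point you spell out). The genuine difference is in (i) and (ii). The paper proves (i) \emph{first}, as a one-line consequence of the anticommutation $\iota_0(\varepsilon)\circ v=-v\circ \iota_0(\varepsilon)$ for $v\in \bigwedge^2_{\Qps}\D_{\Q}$ (which holds because $v\circ \iota_0(a)=\iota_0(\tau(a))\circ v$ and $\tau(\varepsilon)=-\varepsilon$), and then deduces (ii) from (i) combined with Lemma \ref{wdhm} applied to $y_1$ --- legitimate because Lemma \ref{piwd} exhibits $y_1$ itself as an element of $\bigwedge^2_{\Qps}\D_{\Q}$, so the identity $\langle x,y_1(z)\rangle=-\langle y_1(x),z\rangle^{*}$ comes for free. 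You instead prove (ii) by brute force and derive your key identity $\langle y_1(x),y_1(y)\rangle=-p\,\langle x,y\rangle^{*}$ directly from the definition of $\langle\,,\,\rangle$ and the polarization condition $(dx,y)=(x,d^{*}y)$; this is correct (it is Lemma \ref{fhmr} with $F$ replaced by $y_1$, up to the sign coming from $\varepsilon^{*}=-\varepsilon$ and $\Pi^{*}=\Pi$), but it costs you exactly the semilinear sign-bookkeeping you flag as delicate, which the paper's ordering avoids entirely. Your semilinearity shortcut in (iv) --- establishing $\pi(\omega)=\omega$ and then invoking $\pi(a\cdot v)=\tau(a)\cdot \pi(v)$ --- is a mild streamlining of the paper's computation, which carries $a$ through from the start; both are valid.
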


\begin{proof}
(i): This follows from the equality $\iota_0(\varepsilon)\circ v=-v \circ \iota_0(\varepsilon)$. 

(ii): Take $z\in \D_{\Q}$. By (i) and Lemmas \ref{piwd}, \ref{wdhm}, we have
\begin{equation*}
\pi(x\wedge y)(z)=-y_1^{-1}(\langle x,y_1 (z)\rangle y-\langle y,y_1 (z)\rangle x)
=y_1^{-1} ({\langle y_1 (x),z\rangle}^{*} y-{\langle y_1 (y),z\rangle}^{*}x). 
\end{equation*}
Since $\Pi a=a^{*}\Pi$ for any $a\in \Qps$, we obtain
\begin{align*}
y_1^{-1} ({\langle y_1 (x),z\rangle}^{*} y-{\langle y_1 (y),z\rangle}^{*}x)
=&p^{-1}(\langle y_1 (x),z\rangle y_1 (y)-\langle y_1 (y),z\rangle y_1 (x))\\
=&p^{-1}(y_1 (x)\wedge y_1 (y))(z), 
\end{align*}
which implies the desired equality. 

(iii): We may assume $v=x\wedge x'$ and $w=y\wedge y'$ for some $x,x',y,y'\in \D_{\Q}$. By (ii) we have
\begin{align*}
\langle \pi(x\wedge x'),\pi(y\wedge y')\rangle
=&\,p^{-2}\langle y_1 (x)\wedge y_1 (x'),y_1(y)\wedge y_1(y')\rangle \\
=&\,p^{-2}\langle y_1 (x),y_1 (y)\rangle \langle y_1 (x'),y_1 (y')\rangle-p^{-2}\langle y_1 (x),y_1 (y')\rangle \langle y_1 (x'),y_1 (y)\rangle. 
\end{align*}
Note that we can apply Lemma \ref{wdhm} by Lemma \ref{piwd}. Hence we have
\begin{gather*}
p^{-2}\langle y_1 (x),y_1 (y)\rangle \langle y_1 (x'),y_1 (y')\rangle-p^{-2}\langle y_1 (x),y_1 (y')\rangle \langle y_1 (x'),y_1 (y)\rangle \\
=\langle x,y\rangle^{*} \langle x',y'\rangle^{*}-\langle x,y'\rangle^{*} \langle x',y\rangle^{*}\\
=\langle x\wedge x',y\wedge y'\rangle^{*}. 
\end{gather*}
Therefore the assertion follows. 

(iv): By (ii), we have
\begin{equation*}
\pi(\iota_0(a)\omega)=p^{-2}(ay_1(e_1)\wedge y_1(e_2)\wedge y_1(e_3)\wedge y_1(e_4)+\sigma(a)y_1(f_1)\wedge y_1(f_2)\wedge y_1(f_3)\wedge y_1(f_4)). 
\end{equation*}
Using the last property of $e_i$ and $f_j$ in Lemma \ref{base}, we obtain
\begin{gather*}
p^{-2}(ay_1(e_1)\wedge y_1 (e_2)\wedge y_1(e_3)\wedge y_1(e_4)+\sigma(a)y_1 (f_1)\wedge y_1 (f_2)\wedge y_1 (f_3)\wedge y_1 (f_4))\\
=\sigma(a)e_1\wedge e_2\wedge e_3\wedge e_4+af_1\wedge f_2\wedge f_3\wedge f_4. 
\end{gather*}
On the other hand, we have
\begin{equation*}
\iota_0(a^{*})\omega=\sigma(a)e_1\wedge e_2\wedge e_3\wedge e_4+af_1\wedge f_2\wedge f_3\wedge f_4, 
\end{equation*}
since $\sigma(a)=a^{*}$. Hence the assertion follows. 

(v): The first assertion follows from the commutativity of $y_1$ and $F$. For the second assertion, take any $v,w\in \bigwedge^2_{\Qps}\D_{\Q}$. By (iii) we have
\begin{equation*}
\pi(w)\wedge \pi(v)^{\star}=\langle \pi(w),\pi(v)\rangle \omega=\langle w,v\rangle^{*} \omega=\langle w,v\rangle^{*} \omega. 
\end{equation*}
On the other hand, by (iv) we have
\begin{equation*}
\pi(w)\wedge \pi(v^{\star})={\pi}(w\wedge v^{\star})={\pi}(\langle w,v\rangle \omega)={\langle w,v\rangle}^{*} \omega. 
\end{equation*}
Thus the assertion follows. 
\end{proof}

We set
\begin{equation*}
\L_{\Q}:=\{v\in \bigwedge^2_{\Qps} \D_{\Q} \mid v^{\star}=v\}, 
\end{equation*}
endowed with a quadratic form over $K_0$ by $Q(v):=v\circ v$ (see \cite[Proposition 2.4 (2)]{hp} for the map $Q$ being $K_0$-valued). Define the associated symmetric bilinear form $[\,,\,]$ on $\L_{\Q}$ by
\begin{equation*}
[v,w]=Q(v+w)-Q(v)-Q(w). 
\end{equation*}
By Lemma \ref{phst} (iv), $(\L_{\Q},\Phi)$ becomes an isocrystal of slope $0$. Hence we can consider a $\Qp$-vector space $\L_{\Q}^{\Phi}$, the $\Phi$-fixed part of $\L_{\Q}$. 
The structure of $\L_{\Q}^{\Phi}$ is determined in \cite[Proposition 2.6]{hp}. Here we refine their assertion. 

\begin{dfn}\label{xidf}(cf.~the proof of \cite[Proposition 2.6]{hp})
We define $x_i\,(1\leq i\leq 6)$ in $\L_{\Q}$ as follow: 
\begin{align*}
x_1&:=e_1\wedge e_2+f_3\wedge f_4,\,&x_2&:=e_3\wedge e_4+f_1\wedge f_2,\\
x_3&:=e_1\wedge e_3+f_4\wedge f_2,\,&x_4&:=e_4\wedge e_2+f_1\wedge f_3,\\
x_5&:=e_1\wedge e_4+f_2\wedge f_3,\,&x_6&:=e_2\wedge e_3+f_1\wedge f_4. 
\end{align*}
\end{dfn}
Then we have
\begin{equation*}
\left([x_i,x_j]\right)_{1\leq i,j\leq 6}={\begin{pmatrix}
0&-1&&&&\\
-1&0&&&&\\
&&0&-1&&\\
&&-1&0&&\\
&&&&0&-1\\
&&&&-1&0
\end{pmatrix}}. 
\end{equation*}
Next, we (re-)define the elements $y_i\,(1\leq i\leq 6)$ in $\L_{\Q}$ by
\begin{align*}
y_1&:=p\zeta x_1-\zeta^{-1}x_2=\iota_0({\Pi}),&
y_2&:=\varepsilon (p\zeta x_1+\zeta^{-1}x_2),\\
y_3&:=x_3+x_4,&y_4&:=\varepsilon(x_3-x_4),\\
y_5&:=x_5+x_6,&y_6&:=\varepsilon(x_5-x_6).
\end{align*}
Then the Gram matrix is given by
\begin{equation*}
\left(\frac{[y_i,y_j]}{2}\right)_{1\leq i,j\leq 6}={\begin{pmatrix}
p&&&&&\\
&-\varepsilon^2p&&&&\\
&&-1&&&\\
&&&\varepsilon^2&&\\
&&&&-1&\\
&&&&&\varepsilon^2
\end{pmatrix}}
\end{equation*}
and hence $y_1,\ldots ,y_6$ form an orthogonal basis of $\L_{\Q}^{\Phi}$. Put 
\begin{gather*}
\L_{\Q}^{\Phi,\pi}:=\{v\in \L_{\Q}^{\Phi}\mid \pi(v)=v\}, \\
\L_{\Q}^{\Phi,-\pi}:=\{v\in \L_{\Q}^{\Phi}\mid \pi(v)=-v\}. 
\end{gather*}
Then $\pi(y_i)=y_i$ for $2\leq i\leq 6$ by the last property in Lemma \ref{base}. On the other hand, we have $\pi(y_1)=-y_1$ by definition. Therefore we have $\L_{\Q}^{\Phi,\pi}=\bigoplus^{6}_{i=2}y_i\Qp$ and $\L_{\Q}^{\Phi,-\pi}=y_1\Qp$. 
Consequently, we obtain the following: 

\begin{prop}\label{lqpi}
\emph{
\begin{enumerate}
\item We have an orthogonal decomposition $\L_{\Q}^{\Phi}=\L_{\Q}^{\Phi,\pi}\oplus \L_{\Q}^{\Phi,-\pi}$. Moreover, $\L_{\Q}^{\Phi,-\pi}=y_1\Qp$ and it is the unique orthogonal complement of $\L_{\Q}^{\Phi,\pi}$ in $\L_{\Q}^{\Phi}$. Hence $\pi$ is the reflection on $\L_{\Q}^{\Phi}$ with respect to $y_1$. 
\item The discriminant of $(\L_{\Q}^{\Phi,\pi},Q)$ is equal to $-{\varepsilon}^{2}p$. The Hasse invariant of $(\L_{\Q}^{\Phi,\pi},Q)$ equals $1$. 
\item The discriminant of $(\L_{\Q}^{\Phi,\pi},p^{-1}\varepsilon^{-2} Q)$ is equal to $1$. The Hasse invariant of $(\L_{\Q}^{\Phi,\pi},p^{-1}\varepsilon^{-2}Q)$ equals $1$. 
\end{enumerate}}
\end{prop}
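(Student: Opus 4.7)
The plan is to deduce all three parts from the explicit orthogonal basis $y_{1},\ldots,y_{6}$ of $\L_{\Q}^{\Phi}$ together with the $\pi$-eigenvalue computation recorded just before the proposition. For (i), first I would verify that $\pi$ restricts to an orthogonal $\Qp$-linear involution of $\L_{\Q}^{\Phi}$: it preserves $\L_{\Q}^{\Phi}$ because $\pi$ commutes with $\Phi$ by Lemma \ref{pipr}(v); it preserves $Q$ because $\pi$ is an inner automorphism of $\End(\D_{\Q})$ and hence fixes the scalar subring $K_{0}$, so that $Q(\pi(v))=\pi(v)\circ \pi(v)=\pi(v\circ v)=v\circ v=Q(v)$ for any $v\in \L_{\Q}$; and $\pi^{2}=\id$ by construction. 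Since $p\neq 2$, this forces a canonical orthogonal decomposition $\L_{\Q}^{\Phi}=\L_{\Q}^{\Phi,\pi}\oplus \L_{\Q}^{\Phi,-\pi}$ into $\pm 1$-eigenspaces, and the eigenvalue list $\pi(y_{1})=-y_{1}$, $\pi(y_{i})=y_{i}$ for $i\geq 2$ identifies these eigenspaces explicitly as $\L_{\Q}^{\Phi,-\pi}=\Qp y_{1}$ and $\L_{\Q}^{\Phi,\pi}=\bigoplus_{i=2}^{6}\Qp y_{i}$. Uniqueness of the orthogonal complement is then automatic because $Q|_{\Qp y_{1}}$ is non-degenerate thanks to $Q(y_{1})=p\neq 0$; and the reflection assertion reduces to the tautology that an orthogonal involution which acts as $-1$ on an anisotropic line and as $+1$ on its perpendicular is the reflection through that line.

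Parts (ii) and (iii) are then diagonal Gram-matrix bookkeeping. In the basis $y_{2},\ldots,y_{6}$ the form $Q|_{\L_{\Q}^{\Phi,\pi}}$ is diagonal with entries $(-\varepsilon^{2}p,-1,\varepsilon^{2},-1,\varepsilon^{2})$, and $p^{-1}\varepsilon^{-2}Q|_{\L_{\Q}^{\Phi,\pi}}$ is diagonal with entries $(-1,-p^{-1}\varepsilon^{-2},p^{-1},-p^{-1}\varepsilon^{-2},p^{-1})$. The discriminant is the product of the diagonal entries modulo $(\Qpt)^{2}$; both cases collapse to a one-line computation using that $(\varepsilon^{2})^{2}$ and $p^{2}$ are squares. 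The Hasse invariant is the product $\prod_{i<j}(a_{i},a_{j})_{p}$ of Hilbert symbols of the diagonal entries, evaluated by the standard recipe for an odd prime: any symbol of two units is trivial, a symbol of the form $(u,p)_{p}$ with $u\in \Zpt$ equals the Legendre symbol $\bigl(\tfrac{u}{p}\bigr)$, and the defining assumption $\varepsilon^{2}\in \Zpt\setminus (\Zpt)^{2}$ gives $\bigl(\tfrac{\varepsilon^{2}}{p}\bigr)=-1$. A short bookkeeping of the ten symbols in each case, with the factors $(-1)^{(p-1)/2}$ arising from $(-1,p)_{p}$ cancelling in pairs, yields Hasse invariant $1$.

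The only step demanding real care is the Hilbert-symbol bookkeeping in the Hasse invariant computation, in particular tracking which diagonal entries carry odd $p$-adic valuation and which products of units reduce to $\pm 1$ modulo squares; the remainder of the proposition is a formal consequence of Lemma \ref{pipr} together with the explicit orthogonal basis already constructed.
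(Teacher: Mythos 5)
Your route is the paper's own route: the paper's entire proof of this proposition is the computation displayed immediately before it (the orthogonal basis $y_1,\dots,y_6$ with Gram matrix $\diag(p,-\varepsilon^2p,-1,\varepsilon^2,-1,\varepsilon^2)$ for $Q$, together with $\pi(y_1)=-y_1$ and $\pi(y_i)=y_i$ for $2\le i\le 6$), followed by ``Consequently, we obtain the following.'' Your part (i) is correct and in fact supplies justifications the paper leaves implicit (that $\pi$ is a $Q$-isometry because conjugation fixes the scalars $Q(v)=v\circ v$, and that an involutive isometry in odd residue characteristic gives an orthogonal eigenspace decomposition); note that stability of $\L_{\Q}$ under $\pi$ also uses that $\pi$ commutes with $\star$, which is the other half of Lemma \ref{pipr} (v). Your Hasse-invariant bookkeeping is correct in both cases: for (ii) the only nontrivial symbols are the four $(-\varepsilon^2p,a_j)_p$, namely $(\tfrac{-1}{p})$ twice and $(\tfrac{\varepsilon^2}{p})=-1$ twice, with product $1$; for (iii) the six symbols among the four odd-valuation entries likewise multiply to $1$.

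The one step that fails as written is the discriminant in (iii). The product of your diagonal entries $(-1,-p^{-1}\varepsilon^{-2},p^{-1},-p^{-1}\varepsilon^{-2},p^{-1})$ is $-p^{-4}\varepsilon^{-4}\equiv -1$ in $\Qpt/(\Qpt)^2$, not $1$: three of the five entries are negative. Equivalently, rescaling a rank-$5$ form by $c$ multiplies the determinant by $c^5\equiv c$ modulo squares, so the determinant in (iii) is $p^{-1}\varepsilon^{-2}\cdot(-\varepsilon^2p)\equiv -1$; and the signed normalization $(-1)^{n(n-1)/2}\det$ is trivial for $n=5$, so it cannot repair the sign. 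Hence the assertion that ``both cases collapse to a one-line computation'' yielding the stated values is not verified for (iii): carried out honestly, your method gives discriminant $-1$ there (it does give $-\varepsilon^2p$ in case (ii)). You need to either identify the discriminant convention under which the answer is $+1$ (none of the standard ones for odd rank does this while leaving (ii) unchanged) or flag that the stated value in (iii) should read $-1$ under the plain determinant-modulo-squares convention. The Hasse-invariant claims, and everything in (i) and (ii), stand.
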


\subsection{Exceptional isomorphisms}\label{excp}

First, let us recall basic facts of the theory of Clifford algebras and spinor similitude groups. See also \cite{bas}. 

Let $U$ be a finite-dimensional quadratic space over a field $k$, and $C(U)$ the Clifford algebra of $U$. We have a $\Z/2$-grading which is denoted by
\begin{equation*}
C(U)=C^{+}(U)\oplus C^{-}(U). 
\end{equation*}
We regard $U$ as a subspace of $C^{-}(U)$. We have a canonical involution $v\mapsto v'$ on $C(U)$ which is characterized by $(v_1\cdots v_n)'=v_n\cdots v_1$ for $v_1,\ldots,v_n\in U$. 

We define an algebraic group $\GSpin(U)$ over $k$ by
\begin{equation*}
\GSpin(U)(R)=\{g\in (C^{+}(U)\otimes_k R)^{\times}\mid g(U\otimes_k R) g^{-1}=U\otimes_k R,\,g'g\in R^{\times}\}
\end{equation*}
for any $k$-algebra $R$. There is an exact sequence of algebraic groups over $k$: 
\begin{equation*}
1\rightarrow \G_m \rightarrow \GSpin(U) \rightarrow \SO(U)\rightarrow 1.
\end{equation*}
Here the morphism $\GSpin(U)\rightarrow \SO(U)$ is defined as
\begin{equation*}
\GSpin(U)(R) \rightarrow \SO(U)(R);g\mapsto [v\mapsto gvg^{-1}]
\end{equation*}
for any $k$-algebra $R$. 

We apply the results above to $\L_{\Q}^{\pi}$, the $\pi$-invariant part of $\L_{\Q}$. The inclusion $\L^{\pi}_{\Q} \subset \End(\D_{\Q})$ induces an injection 
\begin{equation*}
i\colon C(\L^{\pi}_{\Q}) \rightarrow \End(\D_{\Q})
\end{equation*}
by \cite[Proposition 2.4 (2)]{hp}. It commutes with the actions of $\Phi$ by definition. 

\begin{prop}
\emph{The injection $i$ induces an isomorphism 
\begin{equation*}
C^{+}(\L^{\pi}_{\Q})\cong \End_D(\D_{\Q}). 
\end{equation*}}
\end{prop}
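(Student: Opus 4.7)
My plan is to verify that the injection $i$ carries $C^{+}(\L_{\Q}^{\pi})$ into $\End_{D}(\D_{\Q})$, then conclude that the resulting map is an isomorphism by a dimension count together with the injectivity of $i$ on all of $C(\L_{\Q}^{\pi})$ (which has already been recorded via \cite[Proposition 2.4 (2)]{hp}).

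For the containment I will start from Lemma \ref{pipr}(i): every $v \in \bigwedge^{2}_{\Qps}\D_{\Q}$ satisfies $\pi(v) = -y_{1}^{-1}\circ v \circ y_{1}$, so an element of $\L_{\Q}^{\pi}$ is characterized by $v \circ y_{1} = -y_{1}\circ v$, i.e.\ it anticommutes with $y_{1} = \iota_{0}(\Pi)$. A direct calculation using the defining formula $(x \wedge y)(z) = \langle x, z\rangle y - \langle y, z\rangle x$ together with the sesquilinearity $\langle x, \iota_{0}(a)z \rangle = \langle x, z\rangle \overline{a}$ for $a \in \Qps$ shows that each elementary wedge is $\tau$-semilinear over $\Qps$, in the sense that $(x \wedge y)\circ \iota_{0}(a) = \iota_{0}(\overline{a})\circ (x \wedge y)$. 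Consequently, for any $v_{1}, v_{2} \in \L_{\Q}^{\pi}$, the product $v_{1}\circ v_{2}$ commutes with $\iota_{0}(\Qps)$ (two $\tau$-semilinearities compose to a $\Qps$-linear map) and with $\iota_{0}(\Pi)$ (two anticommutations give a commutation). Since $D = \Qps \oplus \Qps\cdot \Pi$, this shows $v_{1}v_{2} \in \End_{D}(\D_{\Q})$, and the inclusion $i(C^{+}(\L_{\Q}^{\pi})) \subset \End_{D}(\D_{\Q})$ follows by $K_{0}$-linearity.

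I will close the argument by comparing dimensions. From Proposition \ref{lqpi}, together with the observation that $(\L_{\Q}, \Phi)$ is isoclinic of slope $0$ and hence $\L_{\Q}^{\pi} = \L_{\Q}^{\Phi, \pi} \otimes_{\Qp} K_{0}$, one obtains $\dim_{K_{0}} \L_{\Q}^{\pi} = 5$ and therefore $\dim_{K_{0}} C^{+}(\L_{\Q}^{\pi}) = 2^{4} = 16$. On the other hand, $D$ splits over the unramified quadratic extension $\Qps \subset K_{0}$, so $D \otimes_{\Qp} K_{0} \cong M_{2}(K_{0})$ and $\D_{\Q} \cong (D \otimes_{\Qp} K_{0})^{\oplus 2}$ is a free $M_{2}(K_{0})$-module of rank $2$; Morita equivalence identifies $\End_{D}(\D_{\Q})$ with $M_{4}(K_{0})$, also of $K_{0}$-dimension $16$. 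The injectivity of $i$ on $C(\L_{\Q}^{\pi})$ restricts to the injection $C^{+}(\L_{\Q}^{\pi}) \hookrightarrow \End_{D}(\D_{\Q})$, which the dimension equality upgrades to an isomorphism. The one step requiring real care is the $\tau$-semilinearity computation above, since it depends on tracking the sesquilinearity convention of $\langle \cdot, \cdot\rangle$ correctly; all of the remaining work is routine bookkeeping.
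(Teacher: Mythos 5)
Your proposal is correct and follows essentially the same route as the paper: establish that elements of $\L_{\Q}^{\pi}$ anticommute with $\iota_0(\varepsilon)$ and $\iota_0(\Pi)$ (you phrase the $\Qps$-part as $\tau$-semilinearity of wedges, which is equivalent), deduce the containment $i(C^{+}(\L_{\Q}^{\pi}))\subset \End_D(\D_{\Q})$, and conclude by the dimension count $2^{5-1}=16=\dim_{K_0}\End_D(\D_{\Q})$ together with the injectivity of $i$. The only difference is presentational: you spell out the semilinearity computation and the Morita identification that the paper leaves implicit.
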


\begin{proof}
If $v\in \L_{\Q}^{\pi}$ then we have $v\circ \iota(d)=-\iota(d)\circ v$ for $d\in \{\varepsilon ,\Pi \}$. Hence the image of $C^{+}(\L_{\Q}^{\pi})$ by the injection $i$ is contained in $\End_D(\D_{\Q})$. On the other hand, we have $\dim_{K_0}C^{+}(\L^{\pi}_{\Q})=2^{5-1}=16$ since $\dim_{K_0}\L_{\Q}^{\pi}=5$. Moreover, we have $\dim_{K_0}\End_D(\D_{\Q})=16$ since $\D_{\Q}$ is free of rank $2$ over $D\otimes_{\Qp} K_0$. Therefore the image of $C^{+}(\L_{\Q}^{\pi})$ by the injection $i$ equals $\End_D(\D_{\Q})$. 
\end{proof}

Let $R$ be a $\Qp$-algebra. We define an action of $H(K_0\otimes_{\Qp} R)$ on $\End(\D_{\Q}\otimes_{\Qp} R)=\End(\D_{\Q})\otimes_{\Qp} R$; cf.~\cite[\S 2.6]{hp} for $R=\Qp$. It is defined by
\begin{equation*}
h\cdot f:=h\circ f\circ h^{-1}
\end{equation*}
for $h\in H(K_0\otimes_{\Qp} R)$ and $f\in \End(\D_{\Q})\otimes_{\Qp} R$. If $x\wedge y\in \bigwedge^{2}_{\Qps}\D_{\Q}$ with $x,y\in \D_{\Q}$, then we have
\begin{equation*}
g\cdot(x\wedge y)=\sml(g)^{-1}g(x)\wedge g(y). 
\end{equation*}
On the other hand, let
\begin{equation*}
H^0(K_0\otimes_{\Qp} R)=\{h\in H(K_0\otimes_{\Qp} R)\mid \sml(h)^2=\det{}_{\Qps \otimes_{\Qp} (K_0\otimes_{\Qp} R)}(h)\}. 
\end{equation*}
Then the action of $h\in H(K_0\otimes_{\Qp} R)$ commutes with $\star$ if and only if $h\in H^0(K_0\otimes_{\Qp} R)$. See \cite[\S 2.3]{hp}. Moreover, \cite[Proposition 2.7]{hp} shows that there is an isomorphism of groups between $H^0(K_0)$ and $\GSpin(\L_{\Q})(K_0)$, which commutes with the actions on $\L_{\Q}$. 

\begin{rem}
The groups $H(K_0)$ and $H^0(K_0)$ are identical to the groups $\GU(\D_{\Q})$ and $\GU^0(\D_{\Q})$ in \cite[\S 2.3]{hp} respectively. 
\end{rem}

We identify $\D_{\Q,1}$ as a dual space of $\D_{\Q,0}$ by 
\begin{equation*}
(\,,\,)\colon \D_{\Q,0}\times \D_{\Q,1}\rightarrow K_0. 
\end{equation*}
We also denote an element of $H(K_0\otimes_{\Qp} R)$ as $(h_0,h_1)$, where $h_i\in \GL(\D_{\Q,i}\otimes_{\Qp} R)$ for $i\in \{0,1\}$. Then we have following: 

\begin{lem}\label{htis}
\emph{
\begin{enumerate}
\item We have two homomorphisms
\begin{align*}
H(K_0\otimes_{\Qp} R)\rightarrow \GL(\D_{\Q,0}\otimes_{\Qp} R)\times \G_{m}(R)&;h\mapsto (h_0,\sml(h)),\\
\GL(\D_{\Q,0}\otimes_{\Qp} R)\times \G_{m}(R)\rightarrow H(K_0\otimes_{\Qp} R)&;(h_0,c)\mapsto (h_0,c(h_0^{\wedge})^{-1}). 
\end{align*}
Here $(h_0^{\wedge})^{-1}$ is the dual map of $h_0$. They are inverse to each other. 
\item For $h=(h_0,h_1)\in H(K_0\otimes_{\Qp} R)$, we have equalities
\begin{equation*}
\det{}_{\Qps \otimes_{\Qp} (K_0\otimes_{\Qp} R)}(h)=(\det{}_{K_0\otimes_{\Qp} R}(h_0),\det{}_{K_0\otimes_{\Qp} R}(h_1))=(\det{}_{K_0\otimes_{\Qp} R}(h_0),\sml(h)^4\det{}_{K_0\otimes_{\Qp} R}(h_0)^{-1}). 
\end{equation*}
in $\Qps \otimes_{\Qp}(K_0\otimes_{\Qp} R)\cong (K_0\otimes_{\Qp} R)\times (K_0\otimes_{\Qp} R)$. 
\end{enumerate}}
\end{lem}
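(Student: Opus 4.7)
The plan is to exploit the decomposition $\D_{\Q}=\D_{\Q,0}\oplus \D_{\Q,1}$ coming from the idempotents $\varepsilon_0,\varepsilon_1\in \Qps \otimes_{\Qp}K_0$, together with the facts recorded earlier that both summands are totally isotropic for $(\,,\,)$ and the pairing $\D_{\Q,0}\times \D_{\Q,1}\rightarrow K_0$ is a perfect duality. After base change to $K_0\otimes_{\Qp}R$, this decomposition is preserved by any element of $H(K_0\otimes_{\Qp}R)$ because such elements are $\Qps$-linear and hence commute with $\varepsilon_0,\varepsilon_1$; in particular writing $h=(h_0,h_1)$ makes sense.

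For (i), I will first verify that the rule $h\mapsto (h_0,\sml(h))$ is a homomorphism: it is the product of the two obvious homomorphisms (restriction to $\D_{\Q,0}$ and the similitude character). Next, given $(h_0,c)\in \GL(\D_{\Q,0}\otimes_{\Qp}R)\times \G_m(R)$, define $h$ to be $h_0$ on $\D_{\Q,0}\otimes_{\Qp}R$ and $c(h_0^{\wedge})^{-1}$ on $\D_{\Q,1}\otimes_{\Qp}R$, where we use the duality $\D_{\Q,1}\cong \D_{\Q,0}^{\vee}$ induced by $(\,,\,)$. To show $h\in H(K_0\otimes_{\Qp}R)$, it suffices to check the similitude property on pairs of elements. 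For two elements of $\D_{\Q,0}\otimes_{\Qp}R$ (or two of $\D_{\Q,1}\otimes_{\Qp}R$) both sides vanish by total isotropy. For $v\in \D_{\Q,0}\otimes_{\Qp}R$ and $w\in \D_{\Q,1}\otimes_{\Qp}R$ the required identity $(h_0(v),c(h_0^{\wedge})^{-1}(w))=c(v,w)$ is precisely the definition of the dual map. The two constructions are inverse to each other by inspection: starting from $(h_0,c)$ the output recovers $h_0$ on the first summand and has similitude $c$; starting from $h$ the relation $h_1=\sml(h)(h_0^{\wedge})^{-1}$ is exactly the above compatibility.

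For (ii), decompose $\Qps \otimes_{\Qp}(K_0\otimes_{\Qp}R)\cong (K_0\otimes_{\Qp}R)\times (K_0\otimes_{\Qp}R)$ through $\varepsilon_0,\varepsilon_1$; then $\D_{\Q}\otimes_{\Qp}R$ becomes a free module of rank $4$ over each factor, and an endomorphism that preserves the decomposition has determinant given by the pair of the two componentwise determinants. This proves the first equality. For the second equality, I apply (i) to write $h_1=\sml(h)(h_0^{\wedge})^{-1}$ and take determinants over $K_0\otimes_{\Qp}R$; since $\D_{\Q,0}$ is $4$-dimensional over $K_0$, scalar multiplication by $\sml(h)$ contributes $\sml(h)^{4}$, and $\det(h_0^{\wedge})=\det(h_0)$.

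There is no serious obstacle; the only point that requires attention is justifying that the pairing $\D_{\Q,0}\times \D_{\Q,1}\rightarrow K_0$ remains a perfect duality after base change to $K_0\otimes_{\Qp}R$, so that the formula $h_1=c(h_0^{\wedge})^{-1}$ makes sense over the general $\Qp$-algebra $R$. This follows from Lemma \ref{sdlt} (ii), which gives that $\Lambda^{0}$ is self-dual and hence that $(\,,\,)$ induces a perfect pairing between the two summands, a property preserved under arbitrary base change.
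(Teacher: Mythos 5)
The paper states Lemma \ref{htis} without proof, so there is nothing to compare against; your argument is correct and is exactly the routine verification one would expect: $\Qps$-linearity forces preservation of the splitting $\D_{\Q}\otimes_{\Qp}R=(\D_{\Q,0}\otimes_{\Qp}R)\oplus(\D_{\Q,1}\otimes_{\Qp}R)$, total isotropy of the two summands reduces the similitude condition to the cross pairing, and perfectness of that pairing (which survives base change) gives $h_1=\sml(h)(h_0^{\wedge})^{-1}$; the determinant computation in (ii) then follows from $\det(h_0^{\wedge})=\det(h_0)$ and $\dim_{K_0}\D_{\Q,0}=4$. One cosmetic remark: for the two maps to be mutually inverse the factor $\G_m(R)$ in the statement should really be read as $\G_m(K_0\otimes_{\Qp}R)$, since $\sml(h)$ a priori lies in $(K_0\otimes_{\Qp}R)^{\times}$; your proof implicitly (and correctly) uses this reading, and the same convention appears in the paper's Lemma \ref{gu0d}.
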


\begin{lem}\label{g0ig}
\emph{The group $G(K_0\otimes_{\Qp} R)$ is contained in $H^0(K_0\otimes_{\Qp} R)$. }
\end{lem}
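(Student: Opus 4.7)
The plan is to exploit the decomposition $\D_{\Q}\otimes_{K_0}(K_0\otimes_{\Qp}R)=(\D_{\Q,0}\otimes_{\Qp}R)\oplus(\D_{\Q,1}\otimes_{\Qp}R)$ coming from the idempotents $\varepsilon_0,\varepsilon_1\in\Qps\otimes_{\Qp}K_0$, and compute both sides of the defining equality of $H^0$ component by component. For $g\in G(K_0\otimes_{\Qp}R)$, writing $g=(g_0,g_1)$ with $g_i\in\GL(\D_{\Q,i}\otimes_{\Qp}R)$, Lemma~\ref{htis}(ii) gives
\begin{equation*}
\det{}_{\Qps\otimes_{\Qp}(K_0\otimes_{\Qp}R)}(g)=(\det{}_{K_0\otimes_{\Qp}R}(g_0),\det{}_{K_0\otimes_{\Qp}R}(g_1))
\end{equation*}
under the identification $\Qps\otimes_{\Qp}(K_0\otimes_{\Qp}R)\cong(K_0\otimes_{\Qp}R)\times(K_0\otimes_{\Qp}R)$. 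Since $\sml(g)$ lies in the diagonal image of $\G_m(R)$ in this product, it suffices to show $\det(g_0)=\det(g_1)=\sml(g)^2$.

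First I would observe that because $g$ is $D$-linear (not merely $\Qps$-linear), it commutes with the action of $\Pi$, i.e.\ with $y_1$. Since $y_1$ restricts to a $K_0$-linear isomorphism $\D_{\Q,0}\xrightarrow{\sim}\D_{\Q,1}$ (with inverse $p^{-1}y_1$), the relation $g_1\circ y_1=y_1\circ g_0$ gives $g_1=y_1\circ g_0\circ y_1^{-1}$, whence $\det(g_1)=\det(g_0)$.

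Next I would invoke Proposition~\ref{jgsp}(i): restriction to $\D_{\Q,0}$ identifies $g_0$ with an element of $\GSp(\D_{\Q,0})(K_0\otimes_{\Qp}R)$ whose similitude factor is exactly $\sml(g)$ (this is what the proof of Proposition~\ref{jgsp}(i) verifies via the symplectic form $(\,,\,)_0$). For a symplectic similitude of a rank-$2n$ symplectic module with similitude factor $c$, taking determinants of both sides of ${}^tgJg=cJ$ yields $\det(g)^2=c^{2n}$, and a standard sign argument (e.g.\ by continuity/specialization from the identity, or directly from the Pfaffian) upgrades this to $\det(g)=c^n$. Applied to $g_0$, which acts on the rank-$4$ module $\D_{\Q,0}\otimes_{\Qp}R$, this gives $\det(g_0)=\sml(g)^2$.

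Combining the two steps yields $\det_{\Qps\otimes(K_0\otimes R)}(g)=(\sml(g)^2,\sml(g)^2)=\sml(g)^2$, so $g\in H^0(K_0\otimes_{\Qp}R)$ as claimed. The only mildly delicate point is the sign in $\det(g_0)=\sml(g)^2$ (as opposed to $-\sml(g)^2$); this is easily handled using that the Pfaffian of a symplectic similitude is $c^n$ with the right sign, or equivalently by reducing to the case $g_0=\mathrm{diag}(c,c,1,1)$ on the standard symplectic space where the computation is immediate.
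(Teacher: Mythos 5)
Your proof is correct and follows essentially the same route as the paper: both reduce via Lemma \ref{htis} (ii) and the identification $G(K_0\otimes_{\Qp}R)\cong\GSp(\D_{\Q,0})(K_0\otimes_{\Qp}R)$ of Proposition \ref{jgsp} (i) to the fact that a symplectic similitude of a rank-$4$ module has determinant equal to the square of its similitude factor. The only (harmless) variation is that you handle the $\D_{\Q,1}$-component by conjugating with $y_1$, whereas the paper simply reads off $\det(h_1)=\sml(h)^4\det(h_0)^{-1}$ from the second formula in Lemma \ref{htis} (ii), which makes that step unnecessary.
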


\begin{proof}
Take $h=(h_0,h_1)\in H(K_0\otimes_{\Qp} R)$. By Lemma \ref{htis}, we have $\sml(h)^2=\det_{\Qps \otimes_{\Qp} (K_0\otimes_{\Qp} R)}(h)$ if and only if $\sml(h)^2=\det_{K_0\otimes_{\Qp} R}(h_0)$. On the other hand, the map above induces an isomorphism $G(K_0\otimes_{\Qp} R)\cong \GSp(\D_{\Q,0})(K_0\otimes_{\Qp}R)$ by Proposition \ref{jgsp} (i). Then the square of the similitude equals the determinant for all elements in $G(K_0\otimes_{\Qp} R)$ since $\dim_{K_0}\D_{\Q,0}=4$. 
\end{proof}

By Lemma \ref{g0ig}, $G(K_0\otimes_{\Qp} R)$ acts on $\L_{\Q}^{\pi}\otimes_{\Qp} R$ as the same formula for the action of $H^0(K_0\otimes_{\Qp} R)$ on $\End(\D_{\Q})\otimes_{\Qp} R$, $(\bigwedge^2_{\Qps}\D_{\Q})\otimes_{\Qp} R$ or $\L_{\Q}\otimes_{\Qp} R$.  

\begin{prop}\label{exis}
\emph{The isomorphism $\End_D(\D_{\Q})\cong C^+(\L_{\Q}^{\pi})$ induces an isomorphism $G(K_0\otimes_{\Qp} R)\cong \GSpin(\L_{\Q}^{\pi})(R)$, which commutes with the actions on $\L_{\Q}^{\pi}\otimes_{\Qp} R$. }
\end{prop}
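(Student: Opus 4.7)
The plan is to combine the isomorphism $\End_D(\D_{\Q})\cong C^+(\L_{\Q}^{\pi})$ proved just above with the Howard--Pappas identification $H^0\cong \GSpin(\L_{\Q})$ of \cite[Proposition 2.7]{hp}, and to realize the desired isomorphism as the restriction of that identification to the subgroup $G\subset H^0$.

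First I would upgrade \cite[Proposition 2.7]{hp} from $K_0$-points to a functorial statement in $R$, obtaining $H^0(K_0\otimes_{\Qp}R)\cong \GSpin(\L_{\Q})(R)$ for every $\Qp$-algebra $R$, compatibly with the conjugation actions on $\L_{\Q}\otimes_{\Qp}R$. This is routine because the underlying Clifford-algebra identification $C(\L_{\Q})\cong \End(\D_{\Q})$ and the defining relations of both $H^0$ and $\GSpin$ are functorial in $R$. Second, I would build a closed immersion $\GSpin(\L_{\Q}^{\pi})\hookrightarrow \GSpin(\L_{\Q})$ from the orthogonal decomposition $\L_{\Q}=\L_{\Q}^{\pi}\oplus \L_{\Q}^{-\pi}$ of Proposition \ref{lqpi} (i): the inclusion $C(\L_{\Q}^{\pi})\hookrightarrow C(\L_{\Q})$ is one of $\Z/2$-graded subalgebras, and because orthogonal vectors anti-commute in the Clifford algebra, an even-length product of vectors in $\L_{\Q}^{\pi}$ commutes under conjugation with every vector of $\L_{\Q}^{-\pi}$. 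Hence conjugation by any $g\in C^+(\L_{\Q}^{\pi})$ preserves $\L_{\Q}$ automatically, and the similitude condition $g'g\in R^{\times}$ is unaffected by the enlargement.

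The heart of the argument is to identify $G\subset H^0$ with $\GSpin(\L_{\Q}^{\pi})\subset \GSpin(\L_{\Q})$ under these isomorphisms. For $g\in H(K_0\otimes_{\Qp}R)$, membership in $G$ is exactly $D$-linearity, that is, commuting with $\iota_0(\Pi)$ in addition to the $\Qps$-linearity already built into $H$. This is equivalent to $g\in (\End_D(\D_{\Q})\otimes_{\Qp}R)^{\times}$, which via $\End_D(\D_{\Q})\cong C^+(\L_{\Q}^{\pi})$ places $g$ in $(C^+(\L_{\Q}^{\pi})\otimes_{\Qp}R)^{\times}$. Such a $g$ commutes with $\pi$ (conjugation by $\iota_0(\varepsilon\Pi)$) because it commutes with both $\iota_0(\varepsilon)$ and $\iota_0(\Pi)$, so its conjugation action on $\L_{\Q}\otimes R$ preserves the $\pi$-fixed subspace $\L_{\Q}^{\pi}\otimes R$; and $g'g\in R^{\times}$ already holds inside $\GSpin(\L_{\Q})(R)$. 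The reverse inclusion is the same chain of equivalences read backwards, using that the inverse of $g\in \GSpin(\L_{\Q}^{\pi})(R)$ can be computed as $(g'g)^{-1}g'$ inside $C^+(\L_{\Q}^{\pi})\otimes R$, so that a unit of $C^+(\L_{\Q}^{\pi})\otimes R$ in $\GSpin(\L_{\Q})$ is automatically a unit of the smaller algebra.

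The most delicate point I expect is bookkeeping the similitude factor: the $H^0$-action on $\L_{\Q}\subset \bigwedge^{2}_{\Qps}\D_{\Q}$ carries the twist $g\cdot(x\wedge y)=\sml(g)^{-1}g(x)\wedge g(y)$ displayed above Lemma \ref{htis}, whereas the $\GSpin$-action is pure conjugation. Matching the two is the content of \cite[Proposition 2.7]{hp}, so what must really be verified here is that this match restricts cleanly to the subgroup $\GSpin(\L_{\Q}^{\pi})$ after base change to an arbitrary $R$, which should be formal once the $R$-point version of the Howard--Pappas isomorphism is in hand.
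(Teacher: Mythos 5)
Your proposal is correct and is essentially the route the paper takes: the paper proves Proposition \ref{exis} by running the proof of \cite[Proposition 2.7]{hp} for the subalgebra $C^+(\L_{\Q}^{\pi})\cong\End_D(\D_{\Q})$, using Lemma \ref{g0ig} (i.e.\ $G\subset H^0$) to see that conjugation preserves $\L_{\Q}\otimes_{\Qp}R$, $D$-linearity (commutation with $\pi$) to see that it preserves $\L_{\Q}^{\pi}\otimes_{\Qp}R$, and the adjoint identity $\langle gx,y\rangle=\langle x,g'y\rangle$ from Lemma \ref{wdhm} to identify $g'g=\sml(g)$ --- exactly the "similitude bookkeeping" you flag as the delicate point. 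The only difference is packaging: the paper checks the two defining conditions of $\GSpin(\L_{\Q}^{\pi})(R)$ directly for $g\in (C^+(\L_{\Q}^{\pi})\otimes_{\Qp}R)^{\times}$, rather than first establishing the functorial isomorphism $H^0(K_0\otimes_{\Qp}R)\cong\GSpin(\L_{\Q})(R)$ and the compatibility $\GSpin(\L_{\Q}^{\pi})=\GSpin(\L_{\Q})\cap (C^{+}(\L_{\Q}^{\pi}))^{\times}$, so your extra intermediate step is sound but not needed.
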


\begin{proof}
We follow the proof of \cite[Proposition 2.7]{hp}. Take a $\Qp$-algebra $R$ and $g\in (\End_D(\D_{\Q})\otimes_{\Qp} R)\cong (C^+(\L_{\Q}^{\pi})\otimes R)^{\times}$. Note that we have $\langle gx,y\rangle=\langle x,g'y\rangle$ for any $x,y\in \D_{\Q}\otimes_{\Qp} R$ by Lemma \ref{wdhm}. It is equivalent to the condition that $(gx,y)=(x,g'y)$ for any $x,y\in \D_{\Q}\otimes_{\Qp} R$. 

First, suppose $g \in G(K_0\otimes_{\Qp} R)$. By Lemma \ref{g0ig}, the action of $g$ on $\End_D(\D_{\Q})\otimes_{\Qp} R$ preserves the subspace $\L_{\Q}\otimes_{\Qp} R$. Furthermore, the action of $g$ also preserves $\L_{\Q}^{\pi}\otimes_{\Qp} R$ since $g$ is $D$-linear. On the other hand, by the argument above we have
\begin{equation*}
(x,(g'g)\cdot y)=(g\cdot x,g\cdot y)=\sml(g)(x,y)
\end{equation*}
for any $x,y\in \D_{\Q}\otimes_{\Qp} R$. We therefore obtain $g'g=\sml(g)\in R^{\times}$, which concludes $g\in \GSpin(\L_{\Q}^{\pi})(R)$. Second, suppose $g\in \GSpin(\L_{\Q}^{\pi})(R)$. Then we have $g'g\in R^{\times}$ and
\begin{equation*}
(g(x),g(y))=(x,(g'g)(y))=(g'g)(x,y)
\end{equation*}
for any $x,y\in \D_{\Q}\otimes_{\Qp} R$. Hence we have $(g,(g'g))\in G(K_0\otimes_{\Qp} R)$. 

The compatibility with the action on $\L_{\Q}^{\pi}\otimes_{\Qp} R$ is a consequence of the definition. 
\end{proof}

Taking the $\Phi$-invariants of the isomorphism in Proposition \ref{exis} for any $\Qp$-algebra $R$, we obtain the following: 

\begin{cor}\label{eis1}
\emph{There is an isomorphism $J\rightarrow \GSpin(\L_{\Q}^{\Phi,\pi})$ of algebraic groups over $\Qp$. Therefore there is an isomorphism $J^{\ad}\cong \SO(\L_{\Q}^{\Phi,\pi})$ of algebraic groups over $\Qp$. Here $J^{\ad}$ is the adjoint group of $J$, which is isomorphic to $\PGSp(V_0)$. }
\end{cor}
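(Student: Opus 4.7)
The plan is to deduce the corollary by taking $\Phi$-fixed points of the functorial isomorphism of Proposition \ref{exis}, combined with a Galois descent from $K_0$ to $\Qp$ that is enabled by the fact that $(\L_{\Q}^{\pi},\Phi)$ is isoclinic of slope zero. I will first check $\Phi$-equivariance of the isomorphism in Proposition \ref{exis}, then identify $\Phi$-fixed points on both sides, and finally pass to adjoint groups.

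First, I would verify that the isomorphism $G(K_0\otimes_{\Qp}R)\cong \GSpin(\L_{\Q}^{\pi})(R)$ intertwines the natural $\Phi$-actions on the two sides. On the left, $\Phi$ acts via $\sigma$ on the first factor of $K_0\otimes_{\Qp}R$, which under the embedding $G\hookrightarrow \GL(\D_{\Q})$ corresponds to conjugation by $F$. On the right, $\Phi$ preserves $\L_{\Q}^{\pi}$ by Lemma \ref{phst}(iv) together with Lemma \ref{pipr}(v), so it extends to the Clifford algebra $C^{+}(\L_{\Q}^{\pi})$ and thence to $\GSpin(\L_{\Q}^{\pi})(R)$. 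Since the isomorphism in Proposition \ref{exis} is induced by the inclusion $\L_{\Q}^{\pi}\subset \End(\D_{\Q})$, which is tautologically compatible with conjugation by $F$, the isomorphism is $\Phi$-equivariant.

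Next, I would take $\Phi$-fixed points. By the definition of $J$ in Section \ref{grpj}, the fixed subgroup on the left is precisely $J(R)$. For the right side I would use that $(\L_{\Q}, \Phi)$ is an isocrystal of slope $0$ (Lemma \ref{phst}) and that $\pi$ commutes with $\Phi$, so $(\L_{\Q}^{\pi},\Phi)$ is also isoclinic of slope $0$; hence the natural map
\begin{equation*}
\L_{\Q}^{\Phi,\pi}\otimes_{\Qp}K_0\longrightarrow \L_{\Q}^{\pi}
\end{equation*}
is an isomorphism of quadratic spaces. This yields $C^{+}(\L_{\Q}^{\pi})=C^{+}(\L_{\Q}^{\Phi,\pi})\otimes_{\Qp}K_0$ as $\Z/2$-graded algebras with involution, so
\begin{equation*}
\GSpin(\L_{\Q}^{\pi})(R)=\GSpin(\L_{\Q}^{\Phi,\pi})(K_0\otimes_{\Qp}R)
\end{equation*}
for every $\Qp$-algebra $R$, and its $\Phi$-fixed subgroup is exactly $\GSpin(\L_{\Q}^{\Phi,\pi})(R)$. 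Combining these identifications produces the desired isomorphism $J\cong \GSpin(\L_{\Q}^{\Phi,\pi})$ of algebraic groups over $\Qp$.

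For the second statement, I would use the standard exact sequence
\begin{equation*}
1\longrightarrow \G_m\longrightarrow \GSpin(\L_{\Q}^{\Phi,\pi})\longrightarrow \SO(\L_{\Q}^{\Phi,\pi})\longrightarrow 1
\end{equation*}
recalled at the start of Section \ref{excp}. Because $\L_{\Q}^{\Phi,\pi}$ is $5$-dimensional (Proposition \ref{lqpi}) and $5$ is odd, the central $\G_m$ coincides with the center of $\GSpin(\L_{\Q}^{\Phi,\pi})$, so the quotient by the center is $\SO(\L_{\Q}^{\Phi,\pi})$; transporting this via the isomorphism $J\cong \GSpin(\L_{\Q}^{\Phi,\pi})$ gives $J^{\ad}\cong \SO(\L_{\Q}^{\Phi,\pi})$, consistent with $J^{\ad}=\PGSp(V_0)$ under the exceptional isomorphism $\PGSp_4\cong \SO_5$. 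The only delicate point is the descent step: checking that the formation of the $\GSpin$ group commutes with passage from $K_0$ to $\Qp$ via $\Phi$-invariants; this is where the slope-zero hypothesis on $(\L_{\Q}^{\pi},\Phi)$ is essential and is the main technical ingredient of the argument.
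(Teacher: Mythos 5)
Your proposal is correct and follows exactly the route the paper takes: the paper's proof consists of the single sentence ``Taking the $\Phi$-invariants of the isomorphism in Proposition \ref{exis} for any $\Qp$-algebra $R$, we obtain the following,'' and your argument simply supplies the implicit details ($\Phi$-equivariance of the isomorphism, the slope-zero descent $\L_{\Q}^{\pi}\cong\L_{\Q}^{\Phi,\pi}\otimes_{\Qp}K_0$, and the passage to adjoint groups via the exact sequence for $\GSpin$ in odd dimension). No gaps.
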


\section{Local model and singularity of $\M_G$}\label{lmfl}

In this section, we prove the flatness of $\M_G$, which is conjectured after the proof of \cite[Corollary 3.35]{rz}. To do this, we consider the local model instead of $\M_G$ itself. The local model is also useful to study the singularity of $\M_G$. 

\subsection{Local model}

We define the \emph{local model} $M^{\loc}_G$ to be the functor that parameterizes $O_D\otimes_{\Zp}\O_S$-modules $\mathcal{F}$ of $\Lambda^0 \otimes_{\Zp} \O_S$ for any $\Zp$-scheme $S$, satisfying the following: 
\begin{itemize}
\item $\mathcal{F}$ is a direct summand of $\Lambda^0 \otimes_{\Zp}\O_S$ as an $\O_S$-module, 
\item $\mathcal{F}^{\perp}=\mathcal{F}$, 
\item (Kottwitz condition) $\det(T-\iota(d) \mid \mathcal{F})=(T^2-\trd_{D/\Qp}(d)T+\nrd_{D/\Qp}(d))^2$ for any $d\in O_D$.
\end{itemize}

The goal of this section is the following: 

\begin{thm}\label{flat}
\emph{
\begin{enumerate}
\item The scheme $M^{\loc}_{G,W}:=M^{\loc}_{G}\times_{\spec \Zp}\spec W$ is flat over $\spec W$. Moreover, it is regular and irreducible of dimension $4$.
\item The scheme $M^{\loc}_{G,W}$ is smooth over $\spec W$ outside the unique $\Fpbar$-rational point $x_0$ such that the corresponding subspace $\mathcal{F}$ of $\Lambda^0 \otimes_{\Zp}\Fpbar$ satisfies $\Pi \mathcal{F}=0$. Moreover, there is an isomorphism 
\begin{equation*}
\O_{M^{\loc}_{G,W},x_0}\cong (W[t_1,t_2,t_3,t_4]/(t_1t_2+t_3t_4+p))_{(t_1,t_2,t_3,t_4)}. 
\end{equation*}
\end{enumerate}}
\end{thm}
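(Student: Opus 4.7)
The plan is to exhibit $M^{\loc}_{G,W}$ explicitly as a closed subscheme of a Grassmannian and then compute in affine charts. Since $p$ is odd, the idempotents $\varepsilon_0, \varepsilon_1 \in \Zps \otimes_{\Zp} W$ split $\Lambda^0 \otimes_{\Zp} W = \Lambda_0 \oplus \Lambda_1$ into two free $W$-modules of rank $4$; by $\varepsilon^{*} = -\varepsilon$ and Lemma \ref{sdlt}, each $\Lambda_i$ is totally isotropic and $(\cdot,\cdot)$ restricts to a perfect pairing $\Lambda_0 \times \Lambda_1 \to W$. The Kottwitz condition applied to a generator of $\Zps$ over $\Zp$ forces any $S$-valued point $\mathcal{F}$ to split as $\mathcal{F}_0 \oplus \mathcal{F}_1$ with $\mathcal{F}_i := \varepsilon_i \mathcal{F}$ a rank-$2$ direct summand of $\Lambda_i \otimes_W \O_S$, while self-duality translates to $\mathcal{F}_1 = \mathcal{F}_0^{\perp}$ under the perfect pairing. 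The remaining $O_D$-stability becomes $\Pi \mathcal{F}_0 \subset \mathcal{F}_0^{\perp}$ (the Kottwitz condition for $\Pi$ being automatic from $\Pi^{2} = p$ on $\mathcal{F}$), equivalently, $\mathcal{F}_0$ is isotropic for the alternating $W$-bilinear form $\langle x, y \rangle := (\Pi x, y)$ on $\Lambda_0$. This identifies $M^{\loc}_{G,W}$ with the closed subscheme of $\Gr(2, \Lambda_0)_W$ cut out by the single isotropy relation for $\langle \cdot, \cdot \rangle$.

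I would then compute the Gram matrix of $\langle \cdot, \cdot \rangle$ in the explicit $W$-basis $(\varepsilon_0, 0), (\varepsilon_0 \Pi, 0), (0, \varepsilon_0), (0, \varepsilon_0 \Pi)$ of $\Lambda_0$; its determinant equals $p^{2}$ times a $W$-unit, so $\langle \cdot, \cdot \rangle$ is nondegenerate over $K_0$ while its reduction modulo $p$ has a $2$-dimensional radical, namely the image of $\Pi \Lambda_0$. Hence the special fiber $M^{\loc}_{G,\Fpbar}$ parameterizes $2$-planes in $\Lambda_0 \otimes \Fpbar$ meeting this radical in positive dimension, an irreducible $3$-dimensional Schubert-type variety that is smooth away from the single point where $\mathcal{F}_0$ coincides with the radical; that point is $x_0$, where the condition reads exactly $\Pi \mathcal{F} = 0$.

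On any Grassmannian chart missing $x_0$, the linear part of the isotropy equation is nonzero modulo the maximal ideal, so the Jacobian criterion gives smoothness over $\spec W$ of relative dimension $3$. On the chart centered at $x_0$, parameterize the deformation of $\mathcal{F}_0$ by four coordinates $t_1, t_2, t_3, t_4$; the defining polynomial $Q$ has trivial linear part in the $t_i$ modulo $p$ (because the Gram matrix of $\langle \cdot, \cdot \rangle$ vanishes on its own radical modulo $p$), and a direct expansion using the explicit Gram matrix yields $Q = u_1(t_1 t_2 + t_3 t_4) + p \cdot u_2 + p \cdot (\text{linear in }t_i) + (\text{higher order})$ with $u_1, u_2 \in W^{\times}$. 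An \'etale-local change of variables then absorbs the units and corrections to give the presentation $\O_{M^{\loc}_{G,W}, x_0} \cong (W[t_1, t_2, t_3, t_4]/(t_1 t_2 + t_3 t_4 + p))_{(t_1,t_2,t_3,t_4)}$. This local ring is regular of dimension $4$; together with smoothness elsewhere, $M^{\loc}_{G,W}$ is regular globally, and irreducibility follows from regularity combined with irreducibility of the generic fiber, with flatness an immediate consequence. The main obstacle is the explicit Gram matrix expansion that pins down both $u_1$ and $u_2$ as $W$-units, especially the constant term $p \cdot u_2$: if $u_2$ were divisible by $p$, the local ring would have a different form and the scheme would fail to be regular at $x_0$, so the proof hinges on tracking $p$-adic valuations carefully through the reduction from $O_D$-modules to the single Grassmannian equation.
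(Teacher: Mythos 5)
Your reduction is correct and reaches the same quadric, but by a genuinely different route from the paper's. The paper transfers the problem to the orthogonal side of the exceptional isomorphism $C_2=B_2$: it identifies $M^{\loc}_{G,W}$ with the scheme $\Isot_{\L_0}$ of isotropic lines in the rank-$5$ quadratic lattice $\L_0=\bigoplus_{i=2}^{6}Wy_i$ (Proposition \ref{gnfb}, whose proof rests on Lemma \ref{liat} from Appendix \ref{rdhp}), and then quotes \cite[Proposition 12.6]{hpr} for the explicit equation $x_1x_4+x_2x_3+px_5^2=0$ and the location of the unique non-smooth point. You stay on the symplectic side: after splitting $\Lambda^0\otimes_{\Zp}W=\Lambda_0\oplus\Lambda_1$ by $\varepsilon_0,\varepsilon_1$, you realize $M^{\loc}_{G,W}$ inside $\Gr(2,\Lambda_0)$ as the isotropic locus of the degenerate alternating form $(\Pi x,y)$, whose Gram matrix in the basis $\ebar_1,\ebar_2,\ebar_3,\ebar_4$ is $J_2\oplus pJ_2$; under the Pl\"ucker embedding this is the hyperplane section $p_{12}+p\,p_{34}=0$ of the Pl\"ucker quadric, i.e.\ the same quadric in $\P^4_W$. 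Your chart computation at $x_0$ gives exactly $ad-bc+p$, so the worry about the units $u_1,u_2$ evaporates: the constant term is exactly $p$ because $\Pi\ebar_3=p\fbar_1$ and $(\fbar_1,\ebar_4)$ is a unit. Your version is self-contained and avoids both the appendix lemma and the citation of \cite{hpr}, at the cost of redoing by hand what that reference packages.

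One step must be written out. Requiring $\mathcal{F}$ to be an $O_D$-submodule gives two inclusions, $\Pi\mathcal{F}_0\subset\mathcal{F}_1$ and $\Pi\mathcal{F}_1\subset\mathcal{F}_0$, and you only impose the first (isotropy of $\mathcal{F}_0$). Since $\mathcal{F}_1=\mathcal{F}_0^{\perp}$ is determined by $\mathcal{F}_0$, the second inclusion is an additional closed condition on $\mathcal{F}_0$, and in local-model problems such ``dual'' conditions are not automatically implied by the primal ones. Here they coincide: the second inclusion says that $\mathcal{F}_0^{\perp}$ is isotropic for $(\Pi y,y')$ on $\Lambda_1$, whose Gram matrix in $\fbar_1,\ldots,\fbar_4$ is again $J_2\oplus pJ_2$, and passing to complementary Pl\"ucker coordinates shows both conditions cut out the same hyperplane $p_{12}+p\,p_{34}=0$; concretely, on your chart at $x_0$ one finds $\mathcal{F}_0^{\perp}=\langle\fbar_3-d\fbar_1+b\fbar_2,\ \fbar_4+c\fbar_1-a\fbar_2\rangle$ and the second equation is again $p+ad-bc$. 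With that checked, the only other loose end is ``irreducibility from regularity'': argue instead that regularity makes the irreducible components disjoint, that properness over $\spec W$ forces each component to meet the special fiber, and that the special fiber $\{p_{12}=0\}\cap\Gr(2,4)_{\Fpbar}$ is an irreducible quadric cone; connectedness, hence irreducibility, follows.
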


By \cite[Proposition 3.33]{rz}, we obtain the following: 

\begin{cor}\label{rzsg}
\emph{The formal scheme $\M_G$ is flat over $\spf W$. Moreover, it is regular of purely $4$-dimensional (that is, the local ring is regular of dimension $4$ for each point) and formally smooth over $\spf W$ outside the discrete set of $\Fpbar$-rational points such that the corresponding tuple $(X,\iota,\lambda,\rho)$ satisfies $\iota(\Pi)=0$ on $\Lie(X)$. }
\end{cor}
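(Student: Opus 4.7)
The plan is to analyze $M_G^{\mathrm{loc}}$ explicitly via the $\varepsilon$-eigenspace decomposition after base change to $W$. The idempotents $\varepsilon_0,\varepsilon_1 \in O_{\Qps}\otimes_{\Zp}W$ split $\Lambda^0\otimes_{\Zp}W=\D_0\oplus\D_1$ into free rank-$4$ $W$-modules, and the Kottwitz condition applied to $\varepsilon$ prescribes the characteristic polynomial $(T^2-\varepsilon^2)^2=(T-\varepsilon)^2(T+\varepsilon)^2$ in $W[T]$. Since the minimal polynomial of $\iota(\varepsilon)$ divides $T^2-\varepsilon^2$ and $2\varepsilon^2\in W^\times$, every $\mathcal F \in M_G^{\mathrm{loc}}(S)$ over $W$ splits as $\mathcal F=\mathcal F_0\oplus\mathcal F_1$ with each $\mathcal F_i\subset\D_i\otimes_W\O_S$ a rank-$2$ subbundle via the idempotents. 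Because $\varepsilon^*=-\varepsilon$, each $\D_i$ is totally isotropic for $(\,,\,)$ and the form induces a perfect duality $\D_0\times\D_1\to\O_S$; self-duality $\mathcal F^\perp=\mathcal F$ then gives $\mathcal F_1=\mathcal F_0^{\perp}$, so $\mathcal F$ is recovered from the single rank-$2$ subbundle $\mathcal F_0$. Using $\Pi^*=\Pi$, antisymmetry of $(\,,\,)$, and the fact that Kottwitz for $\Pi$ is automatic on a rank-$4$ module with $\Pi^2=p$, the remaining $\Pi$-stability collapses to requiring $\mathcal F_0$ to be isotropic for the antisymmetric $\O_S$-bilinear form $\beta(x,y):=(\Pi x,y)$ on $\D_0\otimes_W\O_S$.

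With the basis $e_1,\ldots,e_4$ of $\D_0$ from Lemma \ref{base} and the induced pairing $(e_i,f_j)=\varepsilon^{-1}\delta_{ij}$ (computed from $\langle e_i,f_j\rangle=\delta_{ij}\varepsilon_0$), a direct calculation using $\Pi e_1=-\zeta^{-1}f_2$, $\Pi e_2=\zeta^{-1}f_1$, $\Pi e_3=p\zeta f_4$, $\Pi e_4=-p\zeta f_3$ shows that the Gram matrix of $\beta$ has block form $\mathrm{diag}(uJ_2,\,pu'J_2)$ with $u,u'\in W^\times$ and $J_2=\bigl(\begin{smallmatrix}0&1\\-1&0\end{smallmatrix}\bigr)$. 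Hence $\beta\otimes_W W$ is non-degenerate symplectic while $\beta\otimes_W\Fpbar$ has rank $2$ with radical $\mathrm{span}(e_3,e_4)$, and $M^{\mathrm{loc}}_{G,W}$ is realized as the hypersurface in the smooth $W$-scheme $\mathrm{Gr}(2,4)_W$ cut out by the single equation $\beta(\mathcal F_0,\mathcal F_0)=0$. I would then cover $\mathrm{Gr}(2,4)_W$ by its six standard affine charts; in each of the five charts whose origin $\mathcal F_0$ is not $\mathrm{span}(e_3,e_4)$, some coordinate paired nontrivially with $e_1$ or $e_2$ appears linearly in the isotropy equation with unit coefficient (coming from the unit block of $\beta$), so the corresponding open subscheme is smooth over $W$ of relative dimension $3$.

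In the remaining chart, parameterized by $\mathcal F_0=\mathrm{span}(y_1e_1+y_3e_2+e_3,\,y_2e_1+y_4e_2+e_4)$, the equation reduces (up to a global unit) to $y_1y_4-y_2y_3=p\zeta^2$, and an obvious linear change of variables absorbing $\zeta^2$ and a sign brings it to the stated form $t_1t_2+t_3t_4+p=0$. The local ring at the origin is regular of dimension $4$ because the defining polynomial has nonzero image in $\mathfrak m/\mathfrak m^2$ of the ambient local ring $W[t_1,\ldots,t_4]_{(p,t_1,\ldots,t_4)}$, thanks to its $p$-term being of first order; away from the origin some $t_i$ appears linearly and the scheme is smooth over $W$. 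This gives (ii), with the unique singular point corresponding to $\mathcal F_0=\mathrm{span}(e_3,e_4)$, equivalently $\Pi\mathcal F=0$. For (i), regularity of every local ring gives flatness of $M^{\mathrm{loc}}_{G,W}$ over $W$, and irreducibility is inherited from the generic fiber, which is the Lagrangian Grassmannian $\mathrm{LGr}(2,4)_{K_0}$ for the non-degenerate $\beta\otimes_W K_0$. The principal obstacle will be the careful unit-and-sign bookkeeping needed to bring the local equation at $x_0$ into the exact normal form $t_1t_2+t_3t_4+p=0$, together with confirming that only one of the six charts produces a non-smooth point; once these are in hand the rest is assembly.
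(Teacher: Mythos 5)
Your proposal is essentially a direct proof of Theorem \ref{flat}, and it takes a genuinely different route from the paper's. The paper identifies $M^{\loc}_{G,W}$ with the scheme $\Isot_{\L_0}$ of isotropic lines in a rank-$5$ quadratic $W$-lattice, via the Clifford-algebra correspondence between $\Zps$-stable Lagrangians and isotropic lines (Lemma \ref{liat}), and then quotes \cite[Proposition 12.6]{hpr} for the explicit equation $x_1x_4+x_2x_3+px_5^2=0$ of that quadric. You instead stay on the symplectic side: the $\varepsilon$-eigenspace splitting reduces everything to a rank-$2$ subbundle $\mathcal F_0\subset\D_0\otimes\O_S$, and the local model becomes the "isotropic" locus in $\Gr(2,4)_W$ for the alternating form $\beta(x,y)=(\Pi x,y)$, whose Gram matrix $\diag(uJ_2,pu'J_2)$ you then analyse chart by chart. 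This is precisely the "more direct proof which reduces to \cite[\S 3]{yu}" that the paper mentions in Section \ref{flpf} but does not carry out; it buys you independence from the exceptional isomorphism and from \cite{hpr}, at the cost of the explicit chart computations. Your computations check out: with $(e_i,f_j)=\varepsilon^{-1}\delta_{ij}$ and the $\Pi$-action of Lemma \ref{base}, the bad chart does give $y_1y_4-y_2y_3=p\zeta^2u'/u$, and the unique non-smooth point is $\mathcal F_0=\langle e_3,e_4\rangle$, i.e.\ $\Pi\mathcal F=0$.

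Two points need attention. First, the reduction of $\Pi$-stability to "$\beta\vert_{\mathcal F_0}=0$" is asserted but not proved: writing $\mathcal F=\mathcal F_0\oplus\mathcal F_1$ with $\mathcal F_1=\mathrm{Ann}_{\D_1}(\mathcal F_0)$, the condition $\Pi\mathcal F\subset\mathcal F=\mathcal F^{\perp}$ is equivalent to the vanishing of $(\Pi\,\cdot,\cdot)$ on $\mathcal F_0$ \emph{and} on $\mathcal F_1$ (the cross terms vanish because $\D_1$ is totally isotropic), and the second condition is not formally a consequence of the first. It does follow — in the bad chart the two conditions produce literally the same equation $y_1y_4-y_2y_3=p\zeta^2u'/u$, using that all the constants $(e_i,f_i)$ coincide — but this must be verified, otherwise your hypersurface could a priori be too large. (Relatedly, in the chart around $\langle e_1,e_2\rangle$ no coordinate appears linearly; the equation there is $\text{unit}+p\cdot(\text{quadratic})$, so that chart is smooth over $W$ because its special fiber is empty, not because of a linear term.) Second, the statement you are proving is about $\M_G$, not about $M^{\loc}_{G}$: you still need the local model diagram, i.e.\ \cite[Proposition 3.33]{rz}, to transport regularity, flatness, and the description of the non-formally-smooth locus from $M^{\loc}_{G,W}$ to $\M_G$; this is the entire content of the paper's one-line deduction of Corollary \ref{rzsg} from Theorem \ref{flat}, and it also supplies the discreteness of the non-formally-smooth locus (the preimage of a single point of the local model is $0$-dimensional). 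Finally, "regularity gives flatness" should be phrased as: each local ring is a regular local domain in which $p$ is nonzero (no component lies in the special fiber), so $p$ is a non-zero-divisor and the scheme is flat over the discrete valuation ring $W$.
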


\begin{rem}
There is a point in $\M_G$ which is not formally smooth over $\spf W$. Indeed, the $\Fpbar$-rational point corresponding to $(\X_0,\iota_0,\lambda_0,\id_{\X_0})$ satisfies the condition since $\Lie(\X_0)=\D/F^{-1}(p\D)=\D/y_1 \D$. 
\end{rem}

\subsection{Proof of Theorem \ref{flat}}\label{flpf}

In this paper, we use a generalization of \cite[2.4]{hp} in Appendix \ref{rdhp}. Note that there is a more direct proof which reduces to \cite[\S 3]{yu}. See \cite[\S 2.3]{wan}. 

Put
\begin{equation*}
\L_0:=\{v\in \L_{\Q}^{\pi}\mid v(\D)\subset \D \}=\bigoplus_{i=2}^{6}Wy_i\subset \L_{\Q}^{\pi}. 
\end{equation*}
(see Section \ref{rzdt} for the definition of $\D$). Note that we have $\L_0\subset \L_0^{\vee}$ and $\length_{W}(\L_0^{\vee}/\L_0)=1$. Let $\Isot_{\L_0}$ be the functor which parametrizes all isotropic lines in $\L_0\otimes_{W}\O_S$ for any $W$-scheme $S$. Here, an isotropic line in $\L_0\otimes_{W}\O_S$ is an $\O_S$-submodule of $\L_0\otimes_{W}\O_S$ which is locally a direct summand of rank $1$ which is totally isotropic with respect to $Q$. 

\begin{prop}\label{gnfb}
\emph{
\begin{enumerate}
\item For a $W$-algebra $R$ and $\mathcal{F}\in M^{\loc}_{G,W}(R)$, 
\begin{equation*}
l^{\pi}(\mathcal{F}):=\{v\in \L_0\otimes_{W}R\mid v(\mathcal{F})=0\}
\end{equation*}
is an isotropic line in $\L_0\otimes_{W}R$. Hence $\mathcal{F}\mapsto l^{\pi}(\mathcal{F})$ induces a morphism of $W$-schemes
\begin{equation*}
f_{\pi}\colon M^{\loc}_{G,W}\xrightarrow{\cong} \Isot_{\L_0}. 
\end{equation*}
\item The morphism $f_{\pi}$ in (i) is an isomorphism. Moreover, the image of $\Pi(\D \otimes_{W}\Fpbar)\in M^{\loc}_{G}$ under $f_{\pi}$ equals the radical of $\L_0\otimes_{W}\Fpbar$. 
\end{enumerate}}
\end{prop}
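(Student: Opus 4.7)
The plan is to derive Proposition \ref{gnfb} from the analogous description of the local model $M^{\loc}_{H,W}$ for $\M_H$, which is a refinement of \cite[Corollary 2.14]{hp} and is established in Appendices \ref{hpcr} and \ref{rdhp}, by restricting to $\pi$-invariant data. Set $\L_0^H := \{v \in \L_\Q \mid v(\D) \subset \D\}$, a rank-$6$ quadratic $W$-lattice containing $\L_0$ as the $\pi$-fixed sublattice, and for $\mathcal{F} \in M^{\loc}_{H,W}(R)$ put $l^H(\mathcal{F}) := \{v \in \L_0^H \otimes_W R \mid v(\mathcal{F}) = 0\}$. The Howard--Pappas description provides an isomorphism $f^H \colon M^{\loc}_{H,W} \xrightarrow{\sim} \Isot_{\L_0^H}$ sending $\mathcal{F}$ to $l^H(\mathcal{F})$.

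For part (i), the isotropy of $l^\pi(\mathcal{F})$ is a Clifford-algebra computation: the defining relation of $Q$ yields $v \circ w + w \circ v = [v,w] \cdot \id$ in $\End(\D_\Q \otimes R)$ for $v, w \in \L_0 \otimes R$, so if both $v$ and $w$ kill $\mathcal{F}$ then the scalar $[v,w]$ kills $\mathcal{F}$ and hence vanishes; taking $v = w$ gives $Q(v) = 0$. For the fact that $l^\pi(\mathcal{F})$ is locally a direct summand of rank $1$ and that $\mathcal{F} \mapsto l^\pi(\mathcal{F})$ defines a morphism of schemes, I would factor $f_\pi$ through $f^H$. The decisive input is Lemma \ref{pipr} (i), which gives $\pi(v) = -y_1^{-1} \circ v \circ y_1$: if $\mathcal{F}$ is stable under $y_1 = \iota_0(\Pi)$ (as is the case for $\mathcal{F} \in M^{\loc}_{G,W}(R)$), then $l^H(\mathcal{F})$ is stable under $\pi$ and hence lies in $\L_0 \otimes_W R$. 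Thus $f_\pi$ is the restriction of $f^H$ to $M^{\loc}_{G,W} \subset M^{\loc}_{H,W}$, and part (i) follows.

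For part (ii), injectivity of $f_\pi$ is inherited from that of $f^H$. For surjectivity, given an isotropic line $\ell \subset \L_0 \otimes_W R$, I would view it inside $\L_0^H \otimes_W R$, apply $(f^H)^{-1}$ to produce some $\mathcal{F} \in M^{\loc}_{H,W}(R)$, and verify using Lemma \ref{pipr} (i) in reverse that the $\pi$-stability of $\ell$ forces $\mathcal{F}$ to be stable under $y_1$ and to satisfy the Kottwitz condition for $\Pi$, promoting $\mathcal{F}$ to a point of $M^{\loc}_{G,W}(R)$. For the identification of $f_\pi(\Pi(\D \otimes_W \Fpbar))$, a direct computation using the formulas of Lemma \ref{base} together with the explicit description of $y_2 = \varepsilon(p\zeta x_1 + \zeta^{-1} x_2)$ from Definition \ref{xidf} shows that $y_2 \cdot y_1(\D) \subset p\D$, whereas $y_3, \dots, y_6$ do not have this property modulo $p$; hence $l^\pi(\Pi(\D \otimes_W \Fpbar)) = \Fpbar \cdot \overline{y_2}$, and since the Gram matrix of $\L_0$ reduces mod $p$ to $\diag(0, -1, \varepsilon^2, -1, \varepsilon^2)$, this line is precisely the radical.

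The main obstacle is the surjectivity half of (ii): one must check that the $\pi$-stability of an isotropic line alone (not a priori coming from a $p$-divisible group with $G$-structure) suffices to force the $\Pi$-action to preserve the corresponding Lagrangian $\mathcal{F}$, together with the Kottwitz identity for $\Pi$. Granted the local model description for $\M_H$ from the appendix, this reduces to the linear-algebraic identity in Lemma \ref{pipr} (i) and a straightforward verification of the Kottwitz polynomial.
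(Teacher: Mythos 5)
Your overall route is the same as the paper's: both arguments reduce to the $\Zps$-linear statement that $\mathcal{F}\mapsto\{v\in \L\otimes_W R\mid v(\mathcal{F})=0\}$, with $\L:=\{v\in\L_{\Q}\mid v(\D)\subset\D\}$, gives an isomorphism $\Lag^{\Zps}_{\D}\cong\Isot_{\L}$ (this is exactly Lemma \ref{liat} of Appendix \ref{rdhp}), and then cut down by the $\Pi$-action. Your isotropy computation, the treatment of surjectivity in (ii), and the identification of $f_\pi(\Pi(\D\otimes_W\Fpbar))$ with $\Fpbar\,\overline{y_2}$ and hence with the radical are all correct and consistent with the paper.

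The genuine gap is the step ``$\mathcal{F}$ is $y_1$-stable $\Rightarrow$ $l^{H}(\mathcal{F})$ is $\pi$-stable $\Rightarrow$ $l^{H}(\mathcal{F})\subset\L_0\otimes_W R$.'' First, $\pi$ does not preserve the lattice $\L$: by Lemma \ref{pipr} (ii) one has $\pi(x_1)=p^{-1}\zeta^{-2}x_2\notin\L$, so ``$\pi$-stability'' of a line in $\L\otimes_W R$ is not even well posed integrally. Second, and more seriously, stability of a line under an involution does not imply containment in the $(+1)$-part: $y_1\in\L$ spans a $\pi$-stable line with $\pi(y_1)=-y_1$, and $\L\neq\L_0\oplus Wy_1$ (indeed $Wy_1+Wy_2=pWx_1+Wx_2$ has index $p$ in $Wx_1+Wx_2$), so there is no eigenspace decomposition of $\L\otimes_W R$ to appeal to. The implication you need is true, but for a different reason: since $[x_1,y_1]=\zeta^{-1}\in W^{\times}$, the map $[\,\cdot\,,y_1]\colon\L\to W$ is surjective, whence $\L_0\otimes_W R=\{w\in\L\otimes_W R\mid [w,y_1]=0\}$; and if $v(\mathcal{F})=0$ and $y_1(\mathcal{F})\subset\mathcal{F}$, then the scalar $[v,y_1]=v\circ y_1+y_1\circ v$ annihilates the rank-$4$ direct summand $\mathcal{F}$, hence vanishes. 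This is the same Clifford-relation trick you already use for isotropy, applied to the pair $(v,y_1)$ instead of $(v,v)$. With this substitution your argument closes; note also that, like the paper, you still owe a (routine) verification that $\Pi$-stability of a $\Zps$-Lagrangian forces the Kottwitz identity for all of $O_D$, which you correctly flag but do not carry out.
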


\begin{proof}
Put $\L:=\{v\in \L_{\Q}\mid v(\D)\subset \D\}$. By Lemma \ref{liat}, $l_R(\mathcal{F}):=\{v\in \L \otimes_{W}R\mid v(\mathcal{F})=0\}$ is an isotropic line in $\L \otimes_{W}R$ for any $\mathcal{F}\in \Lag^{\Zps}_{\D}(R)$ (that is, a Lagrangian subspace of $\D$), and $\mathcal{F}\mapsto l_R(\mathcal{F})$ induces an isomorphism $\Lag^{\Zps}_{\D}\cong \Isot_{\L}$. Moreover, $\mathcal{F}$ is $\Pi$-stable if and only if $l_R(\mathcal{F})\subset \L_0 \otimes_{W}R$. Note that the $\Pi$-stable locus of $\Lag^{\Zps}_{\D}$ equals $M^{\loc}_{G,W}$ by definition. Hence the assertions (i) and the isomorphy of $f_{\pi}$ follow. On the other hand, by the definitions of $f_{\pi}$ and $e_i,f_j$ (see Lemma \ref{base}), we have $f_{\pi}(\Pi(\D \otimes_{W}\Fpbar))=\Fpbar y_2$ and $\Fpbar y_2$ is the radical of $\L_0 \otimes_{W}\Fpbar$. Hence the assertion for $f_{\pi}(\Pi(\D \otimes_{W}\Fpbar))$ follows. 
\end{proof}

By definition, the $W$-scheme $\Isot_{\L_0}$ is the $W$-scheme $Q(\L_0)$ in the sense of \cite[12.7.1]{hpr}. Hence we have the following: 

\begin{thm}\label{hprt} (\cite[Proposition 12.6]{hpr})
\emph{There is an isomorphism of $W$-schemes between $\Isot_{\L_0}$ and the hypersurface defined by
\begin{equation*}
x_1x_4+x_2x_3+px_5^2=0
\end{equation*}
in $\Proj W[x_1,x_2,x_3,x_4,x_5]$. Moreover, the radical of $\L_0\otimes_{W}\Fpbar$ corresponds to the unique non-smooth point of the latter scheme. }
\end{thm}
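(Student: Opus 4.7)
The plan is to realize $\Isot_{\L_0}$ directly as the zero locus of the quadratic form $Q$ in $\P(\L_0) \cong \P^4_W$, and to bring $Q$ into the asserted normal form by an explicit $W$-linear change of coordinates. Since $\L_0$ is free of rank $5$ over $W$, the functor sending a $W$-scheme $S$ to locally-direct-summand rank-$1$ submodules of $\L_0 \otimes_W \O_S$ is represented by $\P(\L_0) \cong \P^4_W$, and imposing the isotropy condition via the single homogeneous equation $Q = 0$ realizes $\Isot_{\L_0}$ as the closed subscheme of $\P(\L_0)$ cut out by this quadric. The basis $y_2, \ldots, y_6$ of $\L_0$ is orthogonal, so from the Gram matrix computed just before Proposition \ref{lqpi},
\[
Q(t_1 y_2 + t_2 y_3 + t_3 y_4 + t_4 y_5 + t_5 y_6) = -\varepsilon^2 p\, t_1^2 - t_2^2 + \varepsilon^2 t_3^2 - t_4^2 + \varepsilon^2 t_5^2.
\]

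Next, I split each hyperbolic pair $-t_{2i}^2 + \varepsilon^2 t_{2i+1}^2 = (\varepsilon t_{2i+1} + t_{2i})(\varepsilon t_{2i+1} - t_{2i})$ and rescale the coefficient of $t_1^2$ by the unit $\varepsilon$. Setting
\[
x_1 = t_2 + \varepsilon t_3,\ \ x_4 = t_2 - \varepsilon t_3,\ \ x_2 = t_4 + \varepsilon t_5,\ \ x_3 = t_4 - \varepsilon t_5,\ \ x_5 = \varepsilon t_1,
\]
a direct computation yields $-Q = x_1 x_4 + x_2 x_3 + p x_5^2$. The substitution matrix has determinant $4\varepsilon^3 \in W^{\times}$ (using $p \neq 2$ and $\varepsilon \in W^{\times}$), hence it is an isomorphism of free $W$-modules and identifies $\Isot_{\L_0}$ with the closed subscheme of $\Proj W[x_1, \ldots, x_5]$ cut out by $x_1 x_4 + x_2 x_3 + p x_5^2$.

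For the last assertion, the partial derivatives of the defining equation are $(x_4, x_3, x_2, x_1, 2p x_5)$. On the special fiber, where $p = 0$, this forces the singular locus to be the single point $[0:0:0:0:1]$, while on the generic fiber the last partial is a unit multiple of $x_5$ and the hypersurface is smooth (consistent with $-\varepsilon^2 p$ being the discriminant, which is nonzero in $K_0$). Under the inverse change of variables the point $[0:0:0:0:1]$ corresponds to $t_1 \neq 0$, $t_2 = \cdots = t_5 = 0$, i.e.\ the line $\Fpbar \cdot y_2 \subset \L_0 \otimes_W \Fpbar$, which is precisely the radical, as one sees from reducing the Gram matrix modulo $p$: the only vanishing diagonal entry is the one at $y_2$. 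I expect no genuine obstacle; the argument reduces to bookkeeping of signs and units, which proceeds cleanly because $2$ and $\varepsilon$ are units in $W$.
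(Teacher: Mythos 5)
Your argument is correct, and it is genuinely different in character from what the paper does: the paper offers no computation at all for this statement, but simply invokes the general result \cite[Proposition 12.6]{hpr}, which describes the scheme $Q(\L)$ of isotropic lines for an arbitrary quadratic $W$-lattice $\L$ with $\L\subset\L^{\vee}$ and $\length_W(\L^{\vee}/\L)=1$. You instead exploit the explicit orthogonal basis $y_2,\ldots,y_6$ of $\L_0$ with Gram matrix $\diag(-\varepsilon^2p,-1,\varepsilon^2,-1,\varepsilon^2)$, split the two unimodular hyperbolic pairs by hand (legitimate because $2,\varepsilon\in W^{\times}$; your substitution has determinant $4\varepsilon^3\in W^{\times}$), and rescale the remaining variable to land exactly on $x_1x_4+x_2x_3+px_5^2=-Q$, whose vanishing locus agrees with that of $Q$. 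The identification of $[0{:}0{:}0{:}0{:}1]$ with the line $\Fpbar y_2$, which is indeed the radical of $\L_0\otimes_W\Fpbar$ (the only diagonal entry of the Gram matrix divisible by $p$), matches what the paper records separately in Proposition \ref{gnfb}. What your approach buys is a self-contained, elementary verification tailored to this rank-$5$ lattice, including a concrete Jacobian-criterion check of the singular locus on both fibers; what the citation buys is uniformity (the HPR statement covers such lattices in any rank and underlies the analogous local-model computations elsewhere). The only point worth making explicit, which you gloss slightly, is that the $S$-points of the closed subscheme $V_+(Q)\subset\P(\L_0)$ really coincide functorially with isotropic rank-$1$ local direct summands; this is standard since $\L_0$ is free and $Q$ is homogeneous of degree $2$, so it does not affect the validity of the proof.
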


Hence, all assertions in Theorem \ref{flat} follow from Theorem \ref{hprt}. 

\section{Bruhat--Tits stratification}\label{vlbt}

In this section, we start to study the underlying space of $\M_G$. First, we describe the set of points of $p^{\Z}\backslash \M_G$ by specific self-dual lattices in the certain quadratic space $\L_{\Q}$. Second, we define the notion of vertex lattices. They can be written in terms of the Bruhat--Tits building of $\SO(\L_{\Q}^{\Phi,\pi})(\Qp)$. We introduce the closed formal subschemes attached to vertex lattices and construct the locally closed stratification by them. Finally, using the exceptional isomorphism $J^{\ad}(\Qp)\cong \SO(\L_{\Q}^{\Phi,\pi})(\Qp)$, we rewrite the stratification above the Bruhat--Tits building of $J^{\ad}(\Qp)$. As an application of this, we also prove some properties of the $J(\Qp)$-action on $\M_G^{\red}$. 

\subsection{Description of the set of geometric points of $\M_G$}\label{rtpt}

In this section, let $k/\Fpbar$ be a field extension, $W(k)$ the Cohen ring of $k$ and $K:=\Frac W(k)$. We construct a bijection between the $k$-rational points of $p^{\Z}\backslash \M_G$ and the set of certain lattices in $\L_{\Q}\otimes_{K_0}K$, which is compatible with that of $H$ in \cite[Corollary 2.14]{hp}. First, we recall the bijection in \cite[\S 2.4]{hp}. A Dieudonn{\'e} lattice in $\D_{\Q,K}:=\D_{\Q}\otimes_{K_0} K$ is a $\Zps$-stable $W(k)$-lattice $M$ in $\D_{\Q,K}$ satisfying the following conditions: 
\begin{itemize}
\item $M^{\vee}=p^iM$ for some $i\in \Z$, 
\item $pM\subset F^{-1}(pM)\subset M$, 
\item $\dim_{k}M/F^{-1}(pM)=4$. 
\end{itemize}
There is a relation between Dieudonn{\'e} lattices and $\M_H$ as follows. It is based on the theory of windows, see \cite{zin}. 
\begin{thm}\label{rtdu} (\cite[Corollary 2.11]{hp})
\emph{There is a bijection
\begin{equation*}
\M_{H}(k)\cong \{\text{Dieudonn{\'e} lattices in }\D_{\Q,K}\}. 
\end{equation*}}
\end{thm}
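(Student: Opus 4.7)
The plan is to apply Dieudonn{\'e} theory over perfect base fields, combined with Zink's theory of windows/displays when $k/\Fpbar$ is not perfect, to set up the bijection explicitly in both directions. The framing object $(\X_0,\iota_0,\lambda_0)$ rigidifies the picture by fixing once and for all a rational Dieudonn{\'e} module with symplectic form and $\Zps$-action, whose base change to $K$ is precisely $\D_{\Q,K}$. A $k$-point of $\M_H$ will then correspond to a $\Zps$-stable $W(k)$-lattice inside $\D_{\Q,K}$ endowed with compatible Frobenius, Verschiebung, and duality properties.

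Given $(X,\iota,\lambda,\rho)\in \M_H(k)$, I would take the covariant Dieudonn{\'e} module $M(X)$ — a free $W(k)$-module of rank $8$ with operators $F,V$ satisfying $FV=VF=p$ — and use the quasi-isogeny to obtain an isomorphism of isocrystals $M(X)[1/p]\xrightarrow{\sim}\D_{\Q}\otimes_{K_0}K=\D_{\Q,K}$, through which $M(X)$ is viewed as a $W(k)$-lattice $M\subset \D_{\Q,K}$. Conversely, given such a lattice $M$, classical Dieudonn{\'e} theory (or a display, for non-perfect $k$) reconstructs a $p$-divisible group $X$; the $\Zps$-stability supplies $\iota$; the duality $M^{\vee}=p^{i}M$ supplies a polarization $\lambda$ with $\ord_p c(\rho)=i$; and the ambient identification $M[1/p]\subset \D_{\Q,K}$ supplies $\rho$ by comparison with the Dieudonn{\'e} module of $\X_{0,k}$.

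The verification of the three axioms of a Dieudonn{\'e} lattice is then routine: $\Zps$-stability reflects $\iota$; the duality $M^{\vee}=p^{i}M$ follows from the polarization identity $\rho^{\vee}\circ \lambda_0\circ \rho=c(\rho)\lambda$ together with the canonical isomorphism $M(X^{\vee})\cong M(X)^{\vee}$; the chain $pM\subset F^{-1}(pM)\subset M$ is automatic since $F^{-1}(pM)=VM$; and $\dim_k M/VM=\dim X=4$ reflects the dimension of $X$. The Kottwitz determinant condition translates, via the identification $\Lie(X)\cong M/VM$, into a condition on the $\Zps\otimes_{\Zp}k$-module structure that is automatic from the signature prescribed in the Rapoport--Zink datum.

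The main obstacle is the non-perfect case: classical Dieudonn{\'e} theory requires $k$ perfect, so one must invoke Zink's theory of displays (or Lau's frame-theoretic refinement) to obtain the equivalence between $p$-divisible groups over $k$ and lattices of the required type. A secondary technical point is that the moduli-theoretic equivalence relation on tuples must match equality (not merely isomorphism) of $W(k)$-lattices inside $\D_{\Q,K}$; this is exactly what the rigidity of quasi-isogenies guarantees, since an isomorphism $X_1\xrightarrow{\sim}X_2$ lifting $\rho_2^{-1}\circ \rho_1$ exists if and only if the associated lattices coincide inside $\D_{\Q,K}$.
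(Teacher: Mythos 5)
Your proposal is correct and follows essentially the same route as the paper, which simply cites \cite[Corollary 2.11]{hp} and notes that the result rests on the theory of windows \cite{zin}: one passes from $(X,\iota,\lambda,\rho)$ to its (covariant) Dieudonn{\'e} module viewed inside $\D_{\Q,K}$ via $\rho$, checks the three lattice axioms exactly as you describe, and handles non-perfect $k$ (where $W(k)$ is the Cohen ring) by Zink's windows. The only point worth making explicit is that the Kottwitz condition is not "automatic" in the abstract but becomes so here because the signature $(2,2)$ is balanced and the duality $M^{\vee}=p^iM$ forces the two $\Zps$-eigenspaces of $M/F^{-1}(pM)$ to have equal dimension; this is why the definition of a Dieudonn{\'e} lattice need not record it separately.
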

On the other hand, a special lattice in $\L_{\Q,K}:=\L_{\Q}\otimes_{K_0} K$ is a self-dual lattice $L$ in $\L_{\Q,K}$ such that 
\begin{equation*}
\length_{W(k)}(L+\Phi_{*}(L))/L=1, 
\end{equation*}
where $\Phi_{*}(L)$ is the $W(k)$-lattice in $\L_{\Q,K}$ generated by $\Phi(L)$. They are related as follows: 
\begin{thm}\label{hppc} (\cite[Corollary 2.14]{hp})
\emph{The map
\begin{equation*}
p^{\Z}\backslash \{\text{Dieudonn{\'e} lattices in }\D_{\Q,K}\} \rightarrow \{\text{special lattices in }\L_{\Q,K}\}
\end{equation*}
which is defined by
\begin{equation*}
M\mapsto L(M):=\{v\in \L_{\Q,K}\mid v(F^{-1}(pM))\subset F^{-1}(pM)\}
\end{equation*}
is bijective. }
\end{thm}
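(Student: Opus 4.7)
The plan is to verify that the assignment $M \mapsto L(M)$ gives a well-defined map into special lattices, descends to $p^{\Z}$-orbits, and admits an inverse constructed via the Clifford algebra action.

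\textbf{Well-definedness and self-duality.} First I would check that $L(pM) = L(M)$, since $F^{-1}(p \cdot pM) = p F^{-1}(pM)$ has the same stabilizer inside $\L_{\Q,K}$ as $F^{-1}(pM)$. That $L(M)$ is a $W(k)$-lattice is immediate from its definition, after rescaling. For self-duality with respect to $[\,,\,]$, the Dieudonn{\'e} condition $M^{\vee} = p^i M$ combined with the $F$-equivariance of the hermitian form in Lemma \ref{fhmr} implies that $F^{-1}(pM)$ is self-dual up to a power of $p$. The adjoint identity $\langle x, v(y)\rangle = -\langle v(x), y\rangle^{*}$ from Lemma \ref{wdhm} then forces $L(M)^{\vee} = L(M)$, since a $v$ preserving a self-dual lattice $N$ is paired with $w$ preserving $N$ to give values in $W(k)$.

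\textbf{The special lattice condition.} Unwinding definitions using $\Phi(v) = F \circ v \circ F^{-1}$ gives
\[
\Phi_{*}(L(M)) = \{v \in \L_{\Q,K} \mid v(M) \subset M\},
\]
so $L(M)$ is the set of $v$ preserving $F^{-1}(pM)$, while $\Phi_{*}(L(M))$ is the set preserving the larger lattice $M$. The quotient $(L(M) + \Phi_{*}(L(M)))/L(M)$ should correspond, via the Clifford action of $\L_{\Q}^{\pi}$ on the Hodge filtration $M/F^{-1}(pM)$, to an isotropic line in a suitable quadratic space over $k$ attached to $\L_{\Q,K}/L(M)$. The Kottwitz condition, which prescribes $\dim_{k} M/F^{-1}(pM) = 4$, then guarantees that this isotropic line is one-dimensional, giving length exactly $1$.

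\textbf{Inverse map.} Given a special lattice $L \subset \L_{\Q,K}$, I would exploit the exceptional isomorphism $C^{+}(\L_{\Q}^{\pi}) \cong \End_D(\D_{\Q})$ of Proposition \ref{exis} at the integral level to reconstruct $M$. Concretely, one defines $N \subset \D_{\Q,K}$ as a self-dual $W(k)$-lattice stabilized by $L$ (acting through its embedding into $\End(\D_{\Q})$) and compatible with $F$ and $\Phi$; this $N$ is unique up to $p^{\Z}$-scaling because $\D_{\Q,K}$ is essentially a spin representation of the even Clifford algebra. Then $M$ is recovered from $N$ via the relation $N = F^{-1}(pM)$. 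Checking that the two constructions are mutually inverse reduces to showing that $L$ and $F^{-1}(pM)$ determine each other uniquely up to the $p^{\Z}$-ambiguity.

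\textbf{Main obstacle.} The hard part is the construction of the inverse, i.e., surjectivity: showing that every special lattice $L$ actually arises from some Dieudonn{\'e} lattice. This requires \emph{integral} control of the Clifford representation and the verification that the candidate $M$ built from $L$ satisfies the full Kottwitz determinant condition, not merely the naive dimension and self-duality constraints. It is precisely here that the specialness of $L$ enters in an essential way, and where the author's complement (Appendix \ref{hpcr}) to the Howard--Pappas proof is presumably needed to close the gap.
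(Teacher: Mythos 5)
The forward direction of your argument has a genuine gap at the self-duality step. You claim that $L(M)^{\vee}=L(M)$ follows because $F^{-1}(pM)$ is self-dual up to a power of $p$ and because two operators preserving a self-dual lattice pair into $W(k)$. That reasoning only yields the inclusion $L(M)\subset L(M)^{\vee}$: the stabilizer in $\L_{\Q,K}$ of a (nearly) self-dual lattice need \emph{not} be self-dual. The paper's Appendix \ref{hpcr} exhibits the counterexample $M_i=(p^i\D_0\oplus p^{-i}\D_1)\otimes_W W(k)$, which is self-dual as a symplectic lattice, yet $L^{\sharp}(M_i)=p^{2\left|i\right|}\L\otimes_W W(k)$ is self-dual only for $i=0$. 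What is actually needed is that $F^{-1}(pM)$ is \emph{special} nearly self-dual, i.e.\ lies in the $H^0(K)C_K$-orbit of $\D\otimes_W W(k)$; equivalently, writing $F^{-1}(pM)=h(\D\otimes_W W(k))$ with $h=(h_0,h_1)$, that $\ord_p\det(h_0)=\ord_p\det(h_1)$ (Lemma \ref{gu0d}). This is Proposition \ref{dsns}, and its proof is a genuine computation: one uses the Kottwitz condition in the form $\length_{W(k)}(\D'_{W(k),1-i}/F_{*}(\D'_{W(k),i}))=2$ to control the valuations of $\det(h_0)$ and $\det(h_1)$. None of this is visible in your sketch, and it cannot be replaced by the pairing identity of Lemma \ref{wdhm}.

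Relatedly, you have misplaced the role of Appendix \ref{hpcr}: you invoke it for surjectivity, but its actual purpose is to repair the forward direction, namely the well-definedness of $M\mapsto L(M)$ as a map into \emph{self-dual} lattices. The repair consists in restricting the lattice correspondence of \cite[Lemma 2.15]{hp} to special nearly self-dual lattices (Proposition \ref{sdmd}) and then verifying that $F^{-1}(pM)$ satisfies this restriction for every Dieudonn{\'e} lattice $M$ (Proposition \ref{dsns}). Once that is in place, injectivity and surjectivity follow from the restricted bijection together with the $\Phi$-stability analysis exactly as in \cite{hp}; your Clifford-algebra sketch of the inverse is in the right spirit but does not supply the missing ingredient.
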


\begin{rem}
The original proof of Theorem \ref{hppc} needs a little modification. We will consider it more precisely in Appendix \ref{hpcr}. 
\end{rem}

\begin{dfn}
\begin{enumerate}
\item A \emph{$\Pi$-stable Dieudonn{\'e} lattice in $\D_{\Q,K}$} is a Dieudonn{\'e} lattice $M$ in $\D_{\Q,K}$ such that $y_1(M)\subset M$. 
\item A \emph{$\pi$-special lattice in $\L_{\Q,K}$} is a special lattice $L$ in $\L_{\Q,K}$ containing $y_1$. 
\end{enumerate}
\end{dfn}

The following follows from Proposition \ref{mgeb}: 

\begin{prop}\label{mgpd}
The bijection in Theorem \ref{rtdu} induces a bijection
\begin{equation*}
\M_{G}(k)\cong \{\Pi \text{-stable Dieudonn{\'e} lattices in }\D_{\Q,K}\}. 
\end{equation*}
\end{prop}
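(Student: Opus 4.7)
The statement is a direct corollary of Proposition \ref{mgeb} combined with the bijection of Theorem \ref{rtdu}, so the plan is essentially bookkeeping: translate the integrality condition ``$\rho^{-1}\circ y_1\circ\rho\in\End(X)$'' from Proposition \ref{mgeb} into the lattice condition ``$y_1(M)\subset M$''.

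First I would fix $(X,\iota,\lambda,\rho)\in\M_H(k)$ and let $M\subset\D_{\Q,K}$ be the Dieudonn\'e lattice attached to it by Theorem \ref{rtdu}. Concretely, using the framing $\rho$ to identify the rational covariant Dieudonn\'e module of $X$ with $\D_{\Q,K}$, the lattice $M$ is the image of the (integral) Dieudonn\'e module of $X$. By Proposition \ref{mgeb}, this point lies in the image of $i_{G,H}$, equivalently in $\M_G(k)$, precisely when the quasi-endomorphism $\rho^{-1}\circ y_1\circ\rho$ of $X$ lifts to an honest endomorphism.

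Next I would translate this condition to one on $M$ by passing through the Dieudonn\'e functor. Recall that $y_1=\iota_0(\Pi)\in\End(\X_0)$ acts on $\D_{\Q}=V\otimes_{\Qp}K_0$ by left multiplication by $\Pi$. This map is $F$-equivariant because $b$ is right multiplication by $\Pi$ on each component of $V=D^{\oplus 2}$, which commutes with left multiplication by $\Pi$, while $\sigma$ fixes $\Pi\in V$; so $y_1$ does define a quasi-endomorphism of the isocrystal $(\D_{\Q},F)$. Under the framing $\rho$, the quasi-endomorphism $\rho^{-1}\circ y_1\circ\rho$ of $X$ therefore corresponds to left multiplication by $y_1$ on $\D_{\Q,K}$. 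By the standard description
\[
\End(X)=\{f\in\End^0(X)\mid f(M)\subset M\}
\]
coming from covariant Dieudonn\'e theory, this quasi-endomorphism lifts to $\End(X)$ if and only if $y_1(M)\subset M$, which is precisely the $\Pi$-stability of $M$.

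Combining the two steps, the bijection of Theorem \ref{rtdu} restricts to the claimed bijection between $\M_G(k)$ and $\Pi$-stable Dieudonn\'e lattices in $\D_{\Q,K}$. There is no real obstacle: the argument is formal, and the only non-trivial verification is the $F$-equivariance of $y_1$ on $\D_{\Q}$, which is immediate from the explicit description of $b$, $y_1$ and $\sigma$ in Section \ref{rzdt}.
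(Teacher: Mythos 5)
Your proposal is correct and matches the paper's (essentially unwritten) argument: the paper simply states that the proposition follows from Proposition \ref{mgeb}, and your translation of the condition $\rho^{-1}\circ y_1\circ\rho\in\End(X)$ into $y_1(M)\subset M$ via the (co)variant Dieudonn\'e/window functor is exactly the intended bookkeeping. The only cosmetic remark is that the commutation of $y_1$ with $F$ is cleanest phrased as: $\iota_0(\Pi)$ is left multiplication, hence commutes with $b$ (right multiplication by $\Pi$) and with $\id_V\otimes\sigma$; this is already recorded in the paper after Definition \ref{e0e1}.
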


Next, we describe a correspondence between $\Pi$-stable Dieudonn{\'e} lattices in $\D_{\Q,K}$ and $\pi$-special lattices in $\L_{\Q,K}$, using the bijection in Theorem \ref{hppc}. 

\begin{prop}\label{pscr}
\emph{The bijection in Theorem \ref{hppc} induces a bijection
\begin{equation*}
p^{\Z}\backslash \{\Pi \text{-stable Dieudonn{\'e} lattices in }\D_{\Q,K}\} \rightarrow \{\pi \text{-special lattices in }\L_{\Q,K}\}. 
\end{equation*}}
\end{prop}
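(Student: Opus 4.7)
The plan is to show that the bijection $M\mapsto L(M)$ of Theorem \ref{hppc} restricts to the stated bijection, and the key step is a direct manipulation using that $y_1=\iota_0(\Pi)$ commutes with $F$. First I would check that the $p^{\Z}$-equivalence on the Dieudonn{\'e} side preserves $\Pi$-stability: replacing $M$ by $p^iM$ trivially does not change the inclusion $y_1(M)\subset M$. Likewise, $L(p^iM)=L(M)$, because $F$ is $\sigma$-linear with $\sigma(p)=p$, hence $F^{-1}(p\cdot p^iM)=p^i F^{-1}(pM)$, and the stabilizer of $p^iF^{-1}(pM)$ inside $\L_{\Q,K}$ coincides with that of $F^{-1}(pM)$. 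Thus Theorem \ref{hppc} already yields a well-defined map from $p^{\Z}\backslash \{\Pi\text{-stable Dieudonn{\'e} lattices}\}$ into the set of special lattices, and it remains to identify the image with the set of $\pi$-special lattices.

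The main step is the equivalence
\begin{equation*}
M\text{ is $\Pi$-stable}\iff y_1\in L(M).
\end{equation*}
Note that $y_1$ lies in $\L_{\Q}$ (indeed in $\L_{\Q}^{\Phi,-\pi}$) by the explicit expression in Lemma \ref{piwd} and the discussion preceding Proposition \ref{lqpi}, so the condition $y_1\in L(M)$ is meaningful. Since $y_1$ comes from an $O_D$-linear endomorphism of $\X_0$, it acts as a $W$-linear endomorphism of $\D_{\Q,K}$ that commutes with $F$. Therefore
\begin{equation*}
y_1(F^{-1}(pM))=F^{-1}(y_1(pM))=F^{-1}(p\,y_1(M)),
\end{equation*}
which is contained in $F^{-1}(pM)$ if and only if $p\,y_1(M)\subset pM$, i.e.\ if and only if $y_1(M)\subset M$. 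By the defining condition of $L(M)$ this is exactly $y_1\in L(M)\iff M$ is $\Pi$-stable, so the bijection of Theorem \ref{hppc} restricts as required.

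Since the argument reduces to a one-line calculation resting on the commutativity of $y_1$ with $F$, I do not anticipate any serious obstacle. The only preliminary point worth confirming is that $y_1$ actually lies in $\L_{\Q}$ (not merely in $\End(\D_{\Q})$): this is supplied by the explicit formula of Lemma \ref{piwd} exhibiting $y_1$ inside $\bigwedge^2_{\Qps}\D_{\Q}$, together with self-duality under the Hodge star, which is recorded in the discussion before Proposition \ref{lqpi} where $y_1$ appears as one of the basis vectors of $\L_{\Q}^{\Phi}$.
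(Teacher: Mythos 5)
Your proposal is correct and follows essentially the same route as the paper: the paper's proof simply observes that, by the definition of $L(M)$, $\Pi$-stability of $M$ is equivalent to $y_1\in L(M)$, and your computation $y_1(F^{-1}(pM))=F^{-1}(p\,y_1(M))$ (valid since $y_1$ commutes with $F$) is exactly the verification left implicit there. The preliminary checks (well-definedness modulo $p^{\Z}$ and $y_1\in\L_{\Q}$) are also consistent with the surrounding results in the paper.
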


\begin{proof}
Let $M$ be a Dieudonn{\'e} lattice in $\D_{\Q,K}$, and denote the corresponding special lattice in $\L_{\Q,K}$ by $L$. By the definition of the bijection in Theorem \ref{hppc}, $M$ is $\Pi$-stable if and only if $y_1\in L$, that is, $L$ is $\pi$-special. 
\end{proof}

Finally, we give a relation between the action of $\Pi$ and the map $\pi$, which will be used in Section \ref{btsn}: 

\begin{prop}\label{picr}
\emph{Let $M$ be a $\Pi$-stable Dieudonn{\'e} lattice in $\D_{\Q,K}$ and $L$ a corresponding $\pi$-special lattice in $\L_{\Q,K}$ under the bijection in Proposition \ref{pscr}. 
\begin{enumerate}
\item If $M$ corresponds to a point of $\M_G^{(i)}(k)$ for some $i\in \Z$. Then both $y_1(M)$ and $F(M)$ are also $\Pi$-stable Dieudonn{\'e} lattices in $\D_{\Q,K}$, and the corresponding points of $\M_G(k)$ lie in $\M_G^{(i-1)}$. 
\item Under the bijection in Proposition \ref{pscr}, $y_1(M)$ and $F(M)$ correspond to $\pi(L)$ and $\Phi(L)$ respectively. 
\end{enumerate}}
\end{prop}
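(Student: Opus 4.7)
The plan is to verify both parts by direct manipulation, working from the defining formula
\begin{equation*}
L(M') = \{v \in \L_{\Q,K} : v(F^{-1}(pM')) \subset F^{-1}(pM')\}
\end{equation*}
of the bijection in Proposition~\ref{pscr}. The two algebraic identities that will do all of the work are the commutation $F \circ y_1 = y_1 \circ F$ (valid because $y_1 = \iota_0(\Pi)$ lifts to an element of $\End(\X_0)$, and the rigidity of quasi-isogenies then forces the commutation on $\D_{\Q,K}$), together with $\pi(v) = -y_1^{-1} \circ v \circ y_1$ from Lemma~\ref{pipr}(i) and $\Phi(v) = F \circ v \circ F^{-1}$ from Definition~\ref{phdf}.

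For (i), I would verify the defining conditions of a $\Pi$-stable Dieudonn\'e lattice for $y_1(M)$ and $F(M)$ in turn. $\Pi$-stability of $y_1(M)$ is the inclusion $y_1(y_1(M)) = pM \subset y_1(M)$, which holds because $y_1(M) \subset M$ gives $pM = y_1(y_1(M)) \subset y_1(M)$; and $\Pi$-stability of $F(M)$ is immediate from $y_1 \circ F = F \circ y_1$. The Dieudonn\'e chain $pM' \subset F^{-1}(pM') \subset M'$ and the dimension condition $\dim_k M'/F^{-1}(pM') = 4$ transfer as follows: for $M' = y_1(M)$ the commutation yields $F^{-1}(py_1(M)) = y_1(F^{-1}(pM))$, so the chain for $M$ gets multiplied by the automorphism $y_1$ of $\D_{\Q,K}$; for $M' = F(M)$ one computes $F^{-1}(pF(M)) = pM$ directly, and $\dim_k F(M)/pM = \dim_k M/F^{-1}(pM) = 4$ via the $\sigma$-semilinear bijection induced by $F$. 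The shift in polarization level from $p^i$ to $p^{i-1}$ follows from $y_1^{*} = \Pi^{*} = \Pi = y_1$, which gives $(y_1(M))^{\vee} = y_1^{-1}(M^{\vee}) = p^{-1}y_1(p^i M) = p^{i-1}y_1(M)$ (using $y_1^{-1} = p^{-1}y_1$); and from $(Fx, Fy) = p\sigma((x,y))$, which similarly gives $(F(M))^{\vee} = p^{i-1}F(M)$.

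For (ii), each identification reduces to a one-line manipulation. For $L(y_1(M)) = \pi(L)$: setting $N = F^{-1}(pM)$, the commutation yields $F^{-1}(py_1(M)) = y_1 N$, and the identity $v \circ y_1 = -y_1 \circ \pi(v)$ shows that $v(y_1 N) \subset y_1 N$ if and only if $\pi(v)(N) \subset N$, i.e.\ $\pi(v) \in L$, i.e.\ $v \in \pi(L)$ (using $\pi^2 = \id$). For $L(F(M)) = \Phi(L)$: the condition $v \in L(F(M))$ becomes $v(pM) \subset pM$ since $F^{-1}(pF(M)) = pM$; while $v \in \Phi(L)$ means $\Phi^{-1}(v) \in L$, which, via $\Phi^{-1}(v)(N) = F^{-1}(v(pM))$, expands to $v(pM) \subset pM$. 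The two conditions match. The fact that $\pi(L)$ and $\Phi(L)$ are themselves $\pi$-special lattices then follows automatically from (i) via Proposition~\ref{pscr}, so no separate verification is needed. I do not expect any serious obstacle; the only care required is tracking the sign in Lemma~\ref{pipr}(i) (innocuous for lattice stabilization conditions) and keeping the $\sigma$-semilinearity of $F$ straight during the dual and dimension computations.
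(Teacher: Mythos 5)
Your proposal is correct and follows essentially the same route as the paper's (much terser) proof: the commutation $F\circ y_1=y_1\circ F$ handles all Dieudonn\'e-lattice conditions except the duality, the shift from $p^i$ to $p^{i-1}$ comes from $\Pi^{*}=\Pi$, $y_1^2=p$ and $(F(x),F(y))=p\sigma((x,y))$, and part (ii) is the unwinding of the defining formula $L(M)=\{v\mid v(F^{-1}(pM))\subset F^{-1}(pM)\}$ together with $\pi(v)=-y_1^{-1}\circ v\circ y_1$ and $\Phi(v)=F\circ v\circ F^{-1}$. You simply make explicit the computations the paper leaves to the reader.
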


\begin{proof}
(i): We only prove the assertion for $y_1(M)$. Other case is similar. Since the action of $\Pi$ commutes with $F$, we can see that $y_1(M)$ satisfies the conditions of $\Pi$-stable Dieudonn{\'e} lattices except for that of relation with the dual. However, using the assumption $M^{\vee}=p^iM$ and Lemma \ref{sdlt}, we have $(y_1(M))^{\vee}=p^{i-1}y_1(M)$. Therefore the assertion for $y_1(M)$ follows. 

(ii): This follows from the definition of the bijection in Proposition \ref{pscr}. 
\end{proof}

\subsection{Vertex lattices}\label{vtdn}

First, we introduce the notion of vertex lattices, which will be indices of a locally closed stratification of $\M_G$. 

\begin{dfn}
\begin{itemize}
\item A \emph{vertex lattice} is a lattice $\Lambda \subset \L^{\Phi,\,\pi}_{\Q}$ such that $p\Lambda \subset \Lambda^{\vee} \subset \Lambda$. 
We call $t(\Lambda):=\dim_{\Fp}(\Lambda/\Lambda^{\vee})$ the \emph{type} of $\Lambda$. 
\item We denote the set of all vertex lattices by $\Vrt$. 
\end{itemize}
\end{dfn}

\begin{lem}
\emph{For $\Lambda \in \Vrt$, we have $t(\Lambda)\in \{1,3,5\}$. }
\end{lem}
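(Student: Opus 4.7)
The upper bound $t(\Lambda) \le 5$ is immediate from the inclusion $\Lambda^{\vee} \subset \Lambda$ in the $5$-dimensional space $\L_{\Q}^{\Phi,\pi}$ (Proposition~\ref{lqpi}(i)). The content of the lemma is therefore the parity assertion: I claim that $t(\Lambda)$ is odd, and the natural way to see this is to compare the determinant of a Gram matrix of $\Lambda$ with the discriminant of the ambient quadratic space.

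The plan is to diagonalize the restriction of $Q$ to $\Lambda$. Because $p\Lambda \subset \Lambda^{\vee} \subset \Lambda$, a standard elementary-divisor argument yields an orthogonal basis $e_1,\dots,e_5$ of $\Lambda$ for which $\Lambda^{\vee}$ has basis $\{p^{a_i}e_i\}$ with each $a_i \in \{0,1\}$; then $t(\Lambda)=\#\{i:a_i=1\}$. Writing $c_i=[e_i,e_i]/2$, the dual condition forces $\ord_p(c_i)=-a_i$, so
\begin{equation*}
\ord_p\!\left(\det G_{\Lambda}\right) \;=\; \sum_{i=1}^{5}\ord_p(c_i) \;=\; -\,t(\Lambda).
\end{equation*}

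On the other hand, $\det G_{\Lambda}$ represents the discriminant of $(\L_{\Q}^{\Phi,\pi},Q)$ in $\Qpt/(\Qpt)^2$, and by Proposition~\ref{lqpi}(ii) this discriminant equals $-\varepsilon^2 p$, which has odd $p$-adic valuation. Consequently $\ord_p(\det G_{\Lambda})$ is odd, hence so is $t(\Lambda)$. Combined with the trivial bounds $0 \le t(\Lambda) \le 5$, this forces $t(\Lambda) \in \{1,3,5\}$.

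The only slightly technical point is the elementary-divisor step, which is essentially bookkeeping; the substantive input is the computation of the discriminant already carried out in Proposition~\ref{lqpi}. No obstacle is expected.
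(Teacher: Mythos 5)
Your proposal is correct and follows essentially the same route as the paper: fix an orthogonal $\Zp$-basis of $\Lambda$ (possible since $p>2$), note that $t(\Lambda)=-\sum_i\ord_p(Q(v_i))$ lies in $\{0,\dots,5\}$, and conclude oddness from the fact that this sum is congruent mod $2$ to $\ord_p(\disc(\L_{\Q}^{\Phi,\pi}))$, which is odd by Proposition~\ref{lqpi}~(ii). The only cosmetic difference is that you phrase the diagonalization as an elementary-divisor step, whereas the paper simply invokes the existence of an orthogonal basis for a quadratic form over $\Zp$ with $p$ odd.
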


\begin{proof}
Fix an orthogonal $\Zp$-basis $v_1,\ldots,v_5$ of $\Lambda$ (this is possible since $p>2$). Then we have
\begin{equation*}
t(\Lambda)=-\sum_{i=1}^{5}\ord_p(Q(v_i))
\end{equation*}
and $0\leq t(\Lambda)\leq 5$ by the definition of vertex lattices. On the other hand, we have
\begin{equation*}
\ord_p(\disc(\L_{\Q}^{\Phi,\pi}))=\sum_{i=1}^{5}\ord_p(Q(v_i)) \bmod 2, 
\end{equation*}
where $\disc(\L_{\Q}^{\Phi,\pi})$ is the discriminant of $\L_{\Q}^{\Phi,\pi}$. By Proposition \ref{lqpi} (ii), we know that $\ord_p(\disc(\L_{\Q}^{\Phi,\pi}))$ is odd. Therefore $t(\Lambda)$ is also odd. 
\end{proof}

For $t\in \{1,3,5\}$, we set $\Vrt(t):=\{\Lambda \in \Vrt \mid t(\Lambda)=t\}$.

\begin{prop}
\emph{We have $\Vrt(t)\neq \emptyset$ for each $t\in \{1,3,5\}$. }
\end{prop}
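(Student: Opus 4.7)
The plan is to exhibit an explicit vertex lattice of each type, using a direct construction for $t=1$ and the Witt classification of quadratic forms over $\Qp$ for $t\in\{3,5\}$.

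For $t=1$, set $\Lambda := p^{-1}\Zp y_2 \oplus \bigoplus_{i=3}^{6} \Zp y_i$. Since $y_2,\ldots,y_6$ is an orthogonal basis of $\L_{\Q}^{\Phi,\pi}$ with $Q(y_2)=-\varepsilon^2 p$ and $Q(y_i)\in\Zp^{\times}$ for $i\ge 3$ (Definition \ref{xidf}), a direct computation yields $\Lambda^{\vee} = \bigoplus_{i=2}^{6}\Zp y_i$. Thus $p\Lambda \subset \Lambda^{\vee} \subset \Lambda$ and $\dim_{\Fp}(\Lambda/\Lambda^{\vee})=1$, so $\Lambda\in \Vrt(1)$.

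For $t\in\{3,5\}$, consider the diagonal quadratic form
\begin{equation*}
V_t := \langle c_1 p^{-1}, \ldots, c_t p^{-1}, c_{t+1}, \ldots, c_5 \rangle
\end{equation*}
for units $c_i \in \Zp^{\times}$ to be chosen. As in the $t=1$ case, the $\Zp$-lattice spanned by a diagonal basis realizing $V_t$ is automatically a vertex lattice of type $t$. Since non-degenerate quadratic forms over $\Qp$ (for $p$ odd) are classified up to isometry by dimension, discriminant in $\Qp^{\times}/\Qp^{\times 2}$, and Hasse invariant, it suffices to choose the $c_i$'s so that $V_t$ has discriminant $-\varepsilon^2 p$ and Hasse invariant $1$, matching $\L_{\Q}^{\Phi,\pi}$ by Proposition \ref{lqpi}(ii); then $V_t \cong \L_{\Q}^{\Phi,\pi}$ by Witt's theorem, and the image of the diagonal lattice gives an element of $\Vrt(t)$.

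The discriminant condition simplifies to $\prod_{i=1}^5 c_i \equiv -\varepsilon^2 \pmod{(\Zp^{\times})^2}$ (using $p^{-t}\equiv p$ modulo squares for odd $t$). The main computation is the Hasse invariant: using bimultiplicativity of the Hilbert symbol and the identity $(a,b)_p=1$ for $a,b\in\Zp^{\times}$ (valid for $p>2$), one finds for odd $t$ that
\begin{equation*}
\epsilon_p(V_t) = \left(\frac{-1}{p}\right)^{\binom{t}{2}} \left(\frac{\prod_{i > t} c_i}{p}\right),
\end{equation*}
where $(\cdot/p)$ denotes the Legendre symbol. For $t=5$ this equals $1$ automatically, and the discriminant condition is realized by $c_1 = -\varepsilon^2$, $c_2=\cdots=c_5=1$. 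For $t=3$ the Hasse condition reduces to $c_4 c_5 \equiv -1 \pmod{(\Zp^{\times})^2}$, which combined with the discriminant condition gives $c_1 c_2 c_3 \equiv \varepsilon^2$; both are simultaneously satisfiable by an elementary choice of units (case by case on the class of $-1$ in $\Zp^{\times}/\Zp^{\times 2}$). The main obstacle is the Hasse-invariant computation for the mixed-valuation diagonal form $V_t$; once carried out, existence in each type follows immediately from Witt's theorem.
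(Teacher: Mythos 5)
Your proof is correct and follows essentially the same route as the paper: both reduce to the invariants of $\L_{\Q}^{\Phi,\pi}$ computed in Proposition \ref{lqpi} together with the classification of quadratic forms over $\Qp$ by dimension, discriminant and Hasse invariant, and then exhibit explicit lattices in a convenient basis. The only difference is cosmetic --- the paper takes a quasi-split basis $w_1,\ldots,w_5$ furnished by Proposition \ref{lqpi} (iii) and writes down a single chain $\Lambda'_1\subset \Lambda'_2\subset \Lambda'_3$ of lattices of types $1,3,5$, whereas you use diagonal forms with mixed valuations (plus a separate direct construction in the $y_i$-basis for $t=1$) and verify the Hasse invariant by hand.
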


\begin{proof}
By Proposition \ref{lqpi} (iii), there is a $\Qp$-basis $w_1,\ldots,w_5$ of $\L_{\Q}^{\Phi}$ whose Gram matrix is given by
\begin{equation*}
\begin{pmatrix}
p^{-1}&&&&0\\
&&&&p^{-1}\\
&&&p^{-1}&\\
&&p^{-1}&&\\
0&p^{-1}&&&
\end{pmatrix}. 
\end{equation*}
Now let $\Lambda'_i\,(1\leq i\leq 3)$ be lattices in $\L_{\Q}^{\Phi,\pi}$ as below: 
\begin{align*}
\Lambda'_1&:=\Zp w_1\oplus \Zp w_2\oplus \Zp w_3\oplus \Zp pw_4\oplus \Zp pw_5, \\
\Lambda'_2&:=\Zp w_1\oplus \Zp w_2\oplus \Zp w_3\oplus \Zp w_4\oplus \Zp pw_5,\\
\Lambda'_3&:=\Zp w_1\oplus \Zp w_2\oplus \Zp w_3\oplus \Zp w_4\oplus \Zp w_5. 
\end{align*}
Then we have $\Lambda'_i\in \Vrt(2i-1)$. 
\end{proof}

Next, we consider combinatorial properties of vertex lattices. 

\begin{dfn}\label{sov5}
For $\Lambda \in \Vrt$, put $\SO(\Lambda):=\{g\in \SO(\L_{\Q}^{\Phi,\pi})(\Qp)\mid g\cdot \Lambda=\Lambda \}$. 
\end{dfn}

\begin{prop}\label{ltic}
\emph{
\begin{enumerate}
\item If $\Lambda \in \Vrt(1)\sqcup \Vrt(3)$. Then 
\begin{equation*}
\#\{\Lambda'\in \Vrt(5) \mid \Lambda \subset \Lambda'\}=
\begin{cases}
2(p+1) & \text{if }t(\Lambda)=1,\\
2 & \text{if }t(\Lambda)=3. 
\end{cases}
\end{equation*}
\item If $\Lambda \in \Vrt(3)$. Then $\#\{\Lambda'\in \Vrt(1) \mid \Lambda' \subset \Lambda \}=p+1$. 
\item Let $\Lambda \in \Vrt(5)$. For $t\in \{1,3\}$, the group $\stab(\Lambda)$ acts transitively on the set $\{\Lambda'\in \Vrt(t) \mid \Lambda' \subset \Lambda \}$, which has exactly $(p+1)(p^2+1)$-elements. 
\item If $\Lambda_1\in \Vrt(5)$ and $\Lambda_2\in \Vrt(3)$. Then $\#\{\Lambda \in \Vrt(5) \mid \Lambda \cap \Lambda_1=\Lambda_2 \}=1$. 
\item If $\Lambda_1\in \Vrt(5)$ and $\Lambda_2\in \Vrt(1)$. Then $\#\{\Lambda \in \Vrt(5) \mid \Lambda \cap \Lambda_1=\Lambda_2 \}=p$. 
\end{enumerate}}
\end{prop}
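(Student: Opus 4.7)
The plan is to reduce each statement to a count of isotropic subspaces in a finite-dimensional $\Fp$-quadratic space attached to each vertex lattice.

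For $\Lambda \in \Vrt(t)$, introduce two naturally non-degenerate $\Fp$-quadratic spaces
\[ V_{\Lambda}^{+} := \Lambda/\Lambda^{\vee}\ \ (\text{form }p[\cdot,\cdot]\!\!\mod p,\ \dim = t),\qquad V_{\Lambda}^{-} := \Lambda^{\vee}/p\Lambda\ \ (\text{form }[\cdot,\cdot]\!\!\mod p,\ \dim = 5-t); \]
non-degeneracy in each case follows from $\Lambda^{\vee\vee} = \Lambda$. Lattice inclusions between vertex lattices correspond to isotropic data: given $\Lambda' \subset \Lambda$ of types $t' \leq t$, the chain $\Lambda^{\vee} \subset \Lambda'^{\vee} \subset \Lambda' \subset \Lambda$ realizes $\Lambda'^{\vee}/\Lambda^{\vee} \subset V_{\Lambda}^{+}$ as the perpendicular of $\Lambda'/\Lambda^{\vee}$, of dimension $(t-t')/2$, and the vertex condition on $\Lambda'$ is equivalent to that perpendicular being totally isotropic. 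Dually, if $\Lambda \subset \Lambda'$ with $t' \geq t$, the condition $p\Lambda' \subset \Lambda'^{\vee} \subset \Lambda^{\vee}$ forces $\Lambda' \subset p^{-1}\Lambda^{\vee}$, and $\Lambda'/\Lambda \subset p^{-1}\Lambda^{\vee}/\Lambda \cong V_{\Lambda}^{-}$ is totally isotropic of dimension $(t'-t)/2$; for $t' = 5$ an index count shows this isotropy automatically implies $\Lambda' \in \Vrt(5)$.

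Next I would pin down the Witt type of $V_{\Lambda}^{\pm}$ for each $t$. Proposition~\ref{lqpi}(ii)--(iii) implies that $\L_{\Q}^{\Phi,\pi}$ has Witt index $2$, hence admits an orthogonal decomposition $H \oplus H \oplus \langle \varepsilon^{2}p\rangle$; exhibiting explicit vertex lattices in this basis one checks directly that, for $t = 5$, $V_{\Lambda}^{+}$ is the unique non-degenerate $5$-dimensional $\Fp$-form (Witt index $2$); for $t = 3$, $V_{\Lambda}^{+}$ is $3$-dimensional of Witt index $1$ and $V_{\Lambda}^{-}$ is $2$-dimensional hyperbolic; for $t = 1$, $V_{\Lambda}^{-}$ is $4$-dimensional hyperbolic of Witt index $2$. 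Since $\SO(\L_{\Q}^{\Phi,\pi})$ is split (identified with $\PGSp_{4}$ via the exceptional isomorphism of Corollary~\ref{eis1}), Bruhat--Tits theory yields transitivity of $\SO(\L_{\Q}^{\Phi,\pi})(\Qp)$ on $\Vrt(t)$, so these identifications extend to every $\Lambda$ of the given type. Statements (i)--(iii) then follow from standard formulas: the $5$-dimensional $V_{\Lambda}^{+}$ carries $(p+1)(p^{2}+1)$ maximal isotropic subspaces as well as $(p+1)(p^{2}+1)$ isotropic lines, giving (iii); the $3$-dimensional $V_{\Lambda}^{+}$ carries $p+1$ isotropic lines, giving (ii); and in (i) the $4$-dimensional hyperbolic space has $2(p+1)$ Lagrangians (the two rulings of size $p+1$ on $\P^{1}\times \P^{1}$) while the $2$-dimensional hyperbolic space has exactly two isotropic lines. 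The transitivity of $\stab(\Lambda)$ in (iii) follows from Witt's theorem applied to $\SO(V_{\Lambda}^{+})(\Fp)$, combined with the surjection $\stab(\Lambda) \twoheadrightarrow \SO(V_{\Lambda}^{+})(\Fp)$ coming from the smooth integral model of $\SO(\L_{\Q}^{\Phi,\pi})$ at the hyperspecial vertex $\Lambda$, whose reductive special fiber is $\SO(V_{\Lambda}^{+})$.

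Finally, for (iv) and (v) fix $\Lambda_{1} \in \Vrt(5)$ and $\Lambda_{2} \in \Vrt(t)$ with $t \in \{1,3\}$; the hypothesis $\Lambda_{2} \subset \Lambda \cap \Lambda_{1}$ forces $\Lambda_{2} \subset \Lambda_{1}$. Setting $\bar V := p^{-1}\Lambda_{2}^{\vee}/\Lambda_{2} = V_{\Lambda_{2}}^{-}$, the dictionary of the first paragraph identifies vertex lattices $\Lambda \in \Vrt(5)$ containing $\Lambda_{2}$ with Lagrangians $L = \Lambda/\Lambda_{2}$ of $\bar V$; writing $L_{1} = \Lambda_{1}/\Lambda_{2}$, a direct verification shows $(\Lambda \cap \Lambda_{1})/\Lambda_{2} = L \cap L_{1}$, so the condition $\Lambda \cap \Lambda_{1} = \Lambda_{2}$ translates to $L \cap L_{1} = 0$. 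In case (iv), $\bar V$ is $2$-dimensional hyperbolic with exactly two isotropic lines, yielding a unique such $L \neq L_{1}$; in case (v), $\bar V$ is $4$-dimensional hyperbolic, and two Lagrangians meet in $0$ precisely when they lie in the same ruling of $p+1$, yielding $p$ choices for $L \neq L_{1}$. The main technical work lies in the second paragraph, where the Witt types of $V_{\Lambda}^{\pm}$ must be pinned down uniformly in $\Lambda$; once this is secured, the rest is routine finite geometry.
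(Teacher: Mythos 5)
Your reduction is the same as the paper's: both pass to the finite quadratic spaces $\Lambda/\Lambda^{\vee}$ and $\Lambda^{\vee}/p\Lambda$ (the paper's $\Omega_0(\Lambda)$ and $\Omega'_0(\Lambda)$ of Definition \ref{omlm}), set up the same dictionary between vertex-lattice inclusions and totally isotropic subspaces, and invoke the same counts (your figures agree with Lemma \ref{tinb}, including the ruling argument for the $p$ disjoint Lagrangians in part (v)) and the same smoothness/surjectivity argument for the transitivity of $\stab(\Lambda)$ in (iii) (Lemma \ref{sosj}). The one genuine divergence is how the Witt types of these reductions are pinned down uniformly in $\Lambda$. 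The paper (Lemma \ref{qtvx}) does this directly for an arbitrary $\Lambda$: choose an orthogonal $\Zp$-basis, read off the Hasse invariant of $\L_{\Q}^{\Phi,\pi}$ from it, and use Proposition \ref{lqpi}(ii) to force the discriminant of the reduction, hence its hyperbolicity. You instead check explicit lattices and transport the answer via transitivity of $\SO(\L_{\Q}^{\Phi,\pi})(\Qp)$ on each $\Vrt(t)$, citing Bruhat--Tits theory. That transitivity is true, but as you invoke it you risk circularity: the identification of $\Vrt$ with the building (Proposition \ref{vibo}) is proved in the paper \emph{after} and \emph{using} Proposition \ref{ltic} (via Proposition \ref{v3eg}). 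To keep your route you would need to establish the transitivity independently, e.g.\ from the Jordan-splitting classification of quadratic $\Zp$-lattices for odd $p$ (a type-$t$ vertex lattice decomposes as a unimodular piece of rank $5-t$ orthogonal to a $p$-modular piece of rank $t$, each determined up to isometry by the invariants of $\L_{\Q}^{\Phi,\pi}$), at which point the Witt types drop out anyway and the transitivity becomes superfluous --- which is essentially why the paper computes the Hasse invariant directly instead. Everything else in your argument is correct and routine once this is secured.
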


To show the assertions above, we need some results about quadratic spaces over finite fields. We denote by $\H_{\Fp}$ the hyperbolic plane over $\Fp$. Moreover, we endow a $1$-dimensional space $\Fp$ with the norm form. 

\begin{dfn}\label{omlm}
For $\Lambda \in \Vrt$, let 
\begin{equation*}
\Omega_0(\Lambda):=\Lambda/\Lambda^{\vee},\quad \Omega'_0(\Lambda):=\Lambda^{\vee}/p\Lambda. 
\end{equation*}
These are $\Fp$-vector space. We endow $\Omega_0(\Lambda)$ and $\Omega'_0(\Lambda)$ with quadratic forms $-\varepsilon^{2}pQ\bmod p$ and $Q\bmod p$ respectively. 
\end{dfn}

\begin{lem}\label{qtvx}
\emph{Let $\Lambda \in \Vrt$. 
\begin{enumerate}
\item If $\Lambda \in \Vrt(t)$ for some $t\in \{1,3\}$ then there is an isomorphism $\Omega'_0(\Lambda)\cong \H_{\Fp}^{\oplus (5-t(\Lambda))/2}$ of quadratic spaces over $\Fp$. 
\item If $\Lambda \in \Vrt(t)$ for some $t\in \{3,5\}$ then there is an isomorphism $\Omega_0(\Lambda)\cong \H_{\Fp}^{\oplus (t(\Lambda)-1)/2}\oplus \Fp$ of quadratic spaces over $\Fp$. 
\end{enumerate}}
\end{lem}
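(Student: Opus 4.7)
The plan is to pick an orthogonal $\Zp$-basis of $\Lambda$, identify $\Omega_0(\Lambda)$ and $\Omega'_0(\Lambda)$ explicitly with respect to it, and then pin down their isomorphism classes by computing discriminants. Since $p$ is odd, $\Lambda$ admits an orthogonal $\Zp$-basis $v_1,\dots,v_5$, which (as in the argument that $t(\Lambda)\in\{1,3,5\}$) may be reordered so that $\ord_p Q(v_i)=-1$ for $i\leq t$ and $\ord_p Q(v_i)=0$ for $i>t$. Write $Q(v_i)=p^{-1}u_i$ for $i\leq t$ and $Q(v_i)=w_i$ for $i>t$ with $u_i,w_i\in\Zp^{\times}$. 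A direct check gives $\Lambda^{\vee}=\bigoplus_{i\leq t}\Zp(pv_i)\oplus\bigoplus_{i>t}\Zp v_i$, so that
\[
\Omega_0(\Lambda)=\bigoplus_{i\leq t}\Fp\bar v_i,\qquad \Omega'_0(\Lambda)=\bigoplus_{i>t}\Fp\bar v_i,
\]
on which the forms $-\varepsilon^2 pQ\bmod p$ and $Q\bmod p$ restrict to diagonal quadratic forms with entries $-\varepsilon^2 u_i$ and $w_i$ respectively. In particular both are non-degenerate.

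The key step is to compute $\disc(\Omega_0(\Lambda))$ and $\disc(\Omega'_0(\Lambda))$ in $\Fp^{\times}/(\Fp^{\times})^2$ by exploiting the two invariants of $\L_{\Q}^{\Phi,\pi}$ recorded in Proposition \ref{lqpi}(ii). The discriminant identity $\prod_i Q(v_i)\equiv-\varepsilon^2 p\pmod{(\Qp^{\times})^2}$ becomes, after absorbing $p^{-t}\equiv p\bmod (\Qp^{\times})^2$ (valid since $t$ is odd),
\[
\prod_{i\leq t}u_i\cdot\prod_{j>t}w_j\equiv-\varepsilon^2\pmod{(\Fp^{\times})^2}.
\]
For the Hasse invariant, I expand $\prod_{i<j}(Q(v_i),Q(v_j))_p$ using the formula $(p^{\alpha}u,p^{\beta}v)_p=\chi(-1)^{\alpha\beta}\chi(u)^{\beta}\chi(v)^{\alpha}$ (with $\chi$ the mod-$p$ quadratic residue symbol), grouping pairs $(i,j)$ by the cases $i,j\leq t$ and $i\leq t<j$. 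The oddness of $t$ makes the factor $\prod\chi(u_i)^{t-1}$ trivial and reduces $\prod\chi(w_j)^{t}$ to $\chi(\prod w_j)$, leaving $\mathrm{Hasse}=\chi(-1)^{\binom{t}{2}}\chi(\prod_{j>t}w_j)$. Setting this equal to $1$ yields $\prod_{j>t}w_j\equiv(-1)^{\binom{t}{2}}\pmod{(\Fp^{\times})^2}$, and combining with the discriminant relation gives
\[
\disc(\Omega'_0(\Lambda))\equiv(-1)^{\binom{t}{2}}\quad\text{and}\quad\disc(\Omega_0(\Lambda))\equiv\varepsilon^4(-1)^{\binom{t}{2}}\equiv(-1)^{\binom{t}{2}}.
\]

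To conclude, I appeal to the classification of non-degenerate quadratic spaces over $\Fp$ by dimension together with discriminant modulo squares, combined with $\disc(\H_{\Fp}^n)=(-1)^n$ and $\disc(\Fp)=1$; the parity identity $(-1)^{\binom{t}{2}}=(-1)^{(5-t)/2}=(-1)^{(t-1)/2}$ for $t$ odd (immediate since $(t^2-5)/2$ and $(t-1)^2/2$ are even when $t-1$ is even) then matches the computed discriminants with $\H_{\Fp}^{(5-t)/2}$ and $\H_{\Fp}^{(t-1)/2}\oplus\Fp$ respectively. The main obstacle is the Hasse invariant calculation: one must enumerate Hilbert symbols case-by-case over $\alpha_i,\alpha_j\in\{0,-1\}$ and repeatedly use the oddness of $t$ to cancel otherwise obstinate factors, so that only the sign $\chi(-1)^{\binom{t}{2}}$ and the single product $\chi(\prod_{j>t} w_j)$ remain at the end.
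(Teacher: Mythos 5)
Your proof is correct and follows essentially the same route as the paper: choose an orthogonal $\Zp$-basis ordered by the valuation of $Q(v_i)$, read off $\Omega_0(\Lambda)$ and $\Omega'_0(\Lambda)$ as diagonal forms, pin down their discriminants using the discriminant and Hasse invariant of $\L_{\Q}^{\Phi,\pi}$ from Proposition \ref{lqpi} (ii), and invoke the classification of nondegenerate quadratic forms over $\Fp$ by dimension and discriminant. The only difference is cosmetic: you treat all odd $t$ uniformly (and verify the parity identities explicitly), whereas the paper works out the single case $t(\Lambda)=3$ of (i) and declares the rest analogous.
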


\begin{proof}
We will only prove the assertion (i) for $t(\Lambda)=3$. The other assertions follow by the same argument. 

Take an orthogonal basis $v_1,\ldots,v_5$ over $\Zp$ of $\Lambda$. Since $\Lambda \in \Vrt(3)$, after permuting $v_i$, we may write 
\begin{equation*}
Q(v_i)=
\begin{cases}
u_i& \text{if }i=1,2,\\
p^{-1}u_i&\text{if }3\leq i\leq 5
\end{cases}
\end{equation*}
for some $u_i\in \Zpt$. Then we have $\Lambda^{\vee}/p\Lambda=\Fp v_1 \oplus \Fp v_2$. Using the basis $v_i$, the Hasse invariant of $\L_{\Q}^{\Phi,\pi}$ equals $(p,-u_1u_2)_p$, where $(\,,\,)_p$ is the Hilbert symbol with respect to $p$. On the other hand, Proposition \ref{lqpi} (ii) implies $(p,-u_1u_2)_p=1$, that is, $u_1u_2=-1$ in $\Zp^{\times}/(\Zp^{\times})^2$. Therefore we have $\det(\Lambda^{\vee}/p\Lambda)=-1$ in $\Fp^{\times}/(\Fp^{\times})^2$, which implies that $\Lambda^{\vee}/p\Lambda$ is isomorphic to $\H_{\Fp}$. 
\end{proof}

\begin{lem}\label{tinb}
\emph{
\begin{enumerate}
\item For $t\in \Zpn$, the group $\SO(\H_{\Fp}^{\oplus t}\oplus \Fp)(\Fp)$ acts transitively on the set of all isotropic lines in $\H_{\Fp}^{\oplus t}\oplus \Fp$. The number of non-zero isotropic lines in $\H_{\Fp}^{\oplus t}\oplus \Fp$ equals $\sum^{2t-1}_{i=0}p^i$. 
\item The group $\SO(\H_{\Fp}^{\oplus 2}\oplus \Fp)(\Fp)$ acts transitively on the set of all totally isotropic subspaces of dimension $2$ in $\H_{\Fp}^{\oplus 2}\oplus \Fp$. The number of the set above equals $(p+1)(p^2+1)$. 
\item The number of isotropic lines in $\H_{\Fp}$ equals $2$. 
\item The number of totally isotropic subspaces of dimension $2$ in $\H_{\Fp}^{\oplus 2}$ equals $2(p+1)$. 
\item Let $W$ be a totally isotropic subspaces of dimension $2$ in $\H_{\Fp}^{\oplus 2}$. Then the number of Lagrangian subspace $W'\neq W$ in $\H_{\Fp}^{\oplus 2}$ such that $W\cap W'=\{0\}$ equals $p$. 
\end{enumerate}}
\end{lem}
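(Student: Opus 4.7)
The plan is to deduce all five assertions from Witt's extension theorem combined with elementary counts of isotropic subspaces in quadratic spaces over $\Fp$, making free use of the hypothesis $p > 2$.

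I would first dispatch (iii) and (iv) by direct inspection. Choosing a hyperbolic basis $e, f$ of $\H_{\Fp}$ (so $Q(e)=Q(f)=0$, $(e,f)=1$), an isotropic vector $ae+bf$ satisfies $2ab=0$, giving precisely the two isotropic lines $\Fp e$ and $\Fp f$. For (iv), I would exhibit the two rulings of the $D_2$-type quadric $\H_{\Fp}^{\oplus 2}$ explicitly (for instance via the Klein isomorphism with $\P^1_{\Fp}\times \P^1_{\Fp}$, or equivalently by direct construction after fixing a hyperbolic basis $e_1,f_1,e_2,f_2$), observing that each ruling consists of $p+1$ Lagrangian planes and that distinct Lagrangians in \emph{different} rulings meet in a line, while distinct Lagrangians in the \emph{same} ruling meet trivially. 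This gives the total $2(p+1)$ and will also be the key input for (v).

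Next I would treat (i) and (ii) in parallel. Witt's extension theorem immediately yields transitivity of the full orthogonal group on isotropic lines (resp.\ on maximal isotropic subspaces) of a fixed dimension. To upgrade to $\SO$-transitivity, it suffices to exhibit an element of determinant $-1$ in the $\O$-stabilizer of a given isotropic subspace $U$: the quotient $U^{\perp}/U$ is a nondegenerate quadratic space of positive odd dimension, hence contains an anisotropic vector, and any lift $v \in U^{\perp}$ is still anisotropic (since $(v+\ell)^{2}=v^{2}$ for $\ell \in U$); the reflection $\tau_v$ then stabilizes $U$ and has determinant $-1$. For the count in (i), I would invoke the standard formula that a nondegenerate quadratic space of odd dimension $2t+1$ over $\Fp$ has exactly $p^{2t}$ isotropic vectors, so the number of isotropic lines is $(p^{2t}-1)/(p-1)=\sum_{i=0}^{2t-1}p^{i}$. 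For (ii), I would count pairs $(\ell,W)$ with $\ell$ an isotropic line inside a Lagrangian plane $W\subset \H_{\Fp}^{\oplus 2}\oplus \Fp$: on the one hand, (i) with $t=2$ gives $1+p+p^{2}+p^{3}$ choices of $\ell$, and for each $\ell$ the Lagrangians through $\ell$ biject with isotropic lines in the three-dimensional nondegenerate space $\ell^{\perp}/\ell$, of which there are $p+1$ (the points of a smooth conic in $\P^{2}_{\Fp}$); on the other hand, each Lagrangian contains $p+1$ isotropic lines, so the number of Lagrangians is $(1+p+p^{2}+p^{3})\cdot(p+1)/(p+1)=(p+1)(p^{2}+1)$.

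Finally, for (v) I would fix a hyperbolic basis $e_1,f_1,e_2,f_2$ with $W=\Fp e_1\oplus \Fp e_2$. Any Lagrangian $W'$ complementary to $W$ has a basis of the form $f_i+a_{i1}e_1+a_{i2}e_2$ ($i=1,2$); the isotropy conditions $a_{11}=a_{22}=0$ together with the orthogonality condition $a_{12}+a_{21}=0$ leave exactly one free parameter in $\Fp$, giving $p$ such $W'$. The main obstacle throughout is careful bookkeeping for the counts; no deep input beyond Witt's theorem and the standard cardinality formulas for quadrics over finite fields is required.
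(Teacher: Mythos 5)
Your proposal is correct. For comparison: the paper does not actually prove Lemma \ref{tinb} at all --- its ``proof'' is a one-line citation to \cite[3.7.2, 3.7.4]{wil}, so your self-contained argument is a genuine alternative to (or rather an unpacking of) what the paper leaves to the reference. Your route is the standard one and all the steps check out: the reflection trick for upgrading $\O$-transitivity (from Witt's extension theorem) to $\SO$-transitivity works because a lift $v\in U^{\perp}$ of an anisotropic vector of $U^{\perp}/U$ satisfies $(x,v)=0$ for $x\in U$, so $\tau_v$ fixes $U$ pointwise and has determinant $-1$; the count $p^{2t}$ of isotropic vectors in an odd-dimensional nondegenerate space gives $(p^{2t}-1)/(p-1)$ lines; the double count over flags $\ell\subset W$ yields $(1+p+p^2+p^3)(p+1)/(p+1)=(p+1)(p^2+1)$ Lagrangians in part (ii); and the two-rulings description of the split four-dimensional space correctly gives $2(p+1)$ for (iv) and, since Lagrangians in the same ruling meet trivially while those in different rulings meet in a line, the $p$ remaining members of the ruling of $W$ for (v), consistent with your explicit parametrization by the single parameter $a_{12}$ with $a_{21}=-a_{12}$. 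The only thing your write-up buys beyond the paper is transparency; the only thing the paper's citation buys is brevity.
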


\begin{proof}
These follow from \cite[3.7.2, 3.7.4]{wil}. 
\end{proof}

If $g\in \SO(\Lambda)$, then we have $g(\Lambda^{\vee})=\Lambda^{\vee}$, which concludes that $g\vert_{\Lambda}$ induces an element of $\SO(\Omega_0(\Lambda))(\Fp)$. Hence we obtain a homomorphism
\begin{equation*}
\red_{\Lambda}\colon \SO(\Lambda)\rightarrow \SO(\Omega_0(\Lambda))(\Fp);\,g \mapsto \gbar_{\Lambda}. 
\end{equation*}

\begin{lem}\label{sosj}
\emph{Under the notation above, we further assume that $\Lambda \in \Vrt(5)$. Then the homomorphism $\red_{\Lambda}$ is surjective. }
\end{lem}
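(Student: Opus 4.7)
The plan is to realize $\SO(\Lambda)$ as the $\Zp$-points of a smooth affine group scheme over $\Zp$ whose special fiber is $\SO(\Omega_0(\Lambda))$, so that surjectivity follows from Hensel's lemma. The first step is to observe that a vertex lattice of type $5$ is actually self-dual up to rescaling: from $p\Lambda \subset \Lambda^{\vee} \subset \Lambda$ together with $\dim_{\Fp}(\Lambda/\Lambda^{\vee}) = 5 = \rk_{\Zp}\Lambda$ one deduces $\Lambda^{\vee} = p\Lambda$. Consequently the rescaled form $Q' := -\varepsilon^2 p Q$ is $\Zp$-valued on $\Lambda$, and its associated bilinear form has invertible Gram matrix over $\Zp$; its reduction modulo $p$ is precisely the quadratic space $\Omega_0(\Lambda)$, which is non-degenerate by Lemma \ref{qtvx}.

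Next I would introduce the group scheme $\underline{\SO}(\Lambda, Q')$ over $\Zp$. Because the bilinear form of $(\Lambda, Q')$ is perfect and $p > 2$, this is a smooth (reductive) group scheme of type $B_2$, whose generic fiber is $\SO(\L_{\Q}^{\Phi,\pi})$ and whose special fiber is $\SO(\Omega_0(\Lambda))_{/\Fp}$. Smoothness combined with Hensel's lemma yields that the reduction map
\begin{equation*}
\underline{\SO}(\Lambda,Q')(\Zp)\twoheadrightarrow \underline{\SO}(\Lambda,Q')(\Fp)=\SO(\Omega_0(\Lambda))(\Fp)
\end{equation*}
is surjective. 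To identify this with $\red_{\Lambda}$, note that $Q'$ differs from $Q$ by the scalar $-\varepsilon^2 p \in \Qpt$, so the two orthogonal groups coincide; and an element of $\SO(\L_{\Q}^{\Phi,\pi})(\Qp)$ preserves $\Lambda$ if and only if it extends to a $\Zp$-point of $\underline{\SO}(\Lambda,Q')$. Hence $\underline{\SO}(\Lambda,Q')(\Zp) = \SO(\Lambda)$ and the reduction is exactly $\red_{\Lambda}$.

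The only delicate point is the smoothness assertion for $\underline{\SO}(\Lambda, Q')$ over $\Zp$. This is a standard fact about orthogonal group schemes of unimodular quadratic lattices in odd residue characteristic, but if one wishes to avoid invoking it as a black box, an equivalent and hands-on alternative is the following: using Lemma \ref{qtvx} together with an orthogonal Witt decomposition of $\Lambda$ adapted to the two hyperbolic planes and the anisotropic line, one can explicitly lift generators (reflections and hyperbolic rotations) of $\SO(\Omega_0(\Lambda))(\Fp) \cong \SO_{4,1}(\Fp)$ to elements of $\SO(\Lambda)$. Either way, once surjectivity of $\red_{\Lambda}$ is established, the lemma is proved.
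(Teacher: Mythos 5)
Your proof is correct and is essentially the paper's own argument: the paper likewise passes to the rescaled self-dual lattice $\E(\Lambda)=\iota_0(\varepsilon\Pi)\Lambda$ (multiplication by $\iota_0(\varepsilon\Pi)$ rescales $Q$ by $(\varepsilon\Pi)^2=-\varepsilon^2p$, matching your $Q'$), observes that the resulting special orthogonal group scheme over $\Zp$ is smooth since $p>2$, and deduces surjectivity of $\SO(\E(\Lambda))(\Zp)\rightarrow \SO(\E(\Lambda))(\Fp)$ from smoothness via \cite[Proposition 2.8.13]{fu}. Your identification of $\red_{\Lambda}$ with this reduction map and the remark that $\Lambda^{\vee}=p\Lambda$ for type $5$ are exactly the inputs the paper uses implicitly.
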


\begin{proof}
The lattice $\E(\Lambda):=\iota_0(\varepsilon \Pi)\Lambda$ in $\E_0$ is self-dual. Let $\SO(\E(\Lambda))$ be the algebraic group over $\Zp$. Then we have $\SO(\E(\Lambda))(\Zp)\cong \SO(\Lambda)$. Furthermore it induces isomorphisms
\begin{equation*}
\SO(\E(\Lambda))(\Fp)\cong \SO(\Omega_0(\Lambda))(\Fp)\cong \SO(\H_{\Fp}^{\oplus 2}\oplus \Fp)(\Fp). 
\end{equation*}
Since $p>2$, the algebraic group $\SO(\E(\Lambda))$ is smooth over $\Zp$. Therefore the homomorphism $\SO(\E(\Lambda))(\Zp)\rightarrow \SO(\E(\Lambda))(\Fp)$ is surjective by \cite[Proposition 2.8.13]{fu}, which concludes the assertion. 
\end{proof}

\begin{proof}[Proof of Proposition \ref{ltic}]
(i): We have a bijection below: 
\begin{equation*}
\{\Lambda'\in \Vrt(5)\mid \Lambda \subset \Lambda'\}\xrightarrow{\cong}
\{W\subset \Omega'_0(\Lambda)\colon \Fp\text{-subspace}\mid W=W^{\perp}\};\Lambda' \mapsto p\Lambda'/p\Lambda. 
\end{equation*}
By Lemma \ref{qtvx} (i) and Lemma \ref{tinb} (iii), (iv), the number of the set in the right-hand side equals 
\begin{equation*}
\begin{cases}
2(p+1)& \text{if }t(\Lambda)=1,\\
2& \text{if }t(\Lambda)=2. 
\end{cases}
\end{equation*}
Hence the assertion follows. 

(ii): We have a bijection below, which commutes with actions of $\stab(\Lambda)$ and $\SO(\Omega_0(\Lambda))(\Fp)$: 
\begin{equation*}
\{\Lambda'\in \Vrt(1)\mid \Lambda' \subset \Lambda \}\xrightarrow{\cong}
\{P\subset \Omega_0(\Lambda) \colon \Fp\text{-subspace}\mid 0\neq P\subset P^{\perp}\};
\Lambda' \mapsto (\Lambda')^{\vee}/\Lambda^{\vee}. 
\end{equation*}
By Lemma \ref{qtvx} (ii) and Lemma \ref{tinb} (i), $\SO(\Omega_0(\Lambda))(\Fp)$ acts transitively on the set in the right-hand side, which has exactly $p+1$-elements. Hence the assertion follows from Lemma \ref{sosj}. 

(iii): We have a bijection below: 
\begin{equation*}
\{\Lambda'\in \Vrt(t)\mid \Lambda' \subset \Lambda \}\xrightarrow{\cong}
\{P\subset \Omega_0(\Lambda) \colon \Fp\text{-subspace}\mid 0\neq P\subset P^{\perp},\dim_{\Fp}P=(5-t)/2\};\,
\Lambda' \mapsto (\Lambda')^{\vee}/p\Lambda. 
\end{equation*}
By Lemma \ref{qtvx} (ii) and Lemma \ref{tinb} (ii), the number of the set in the right-hand side equals $(p+1)(p^2+1)$. Applying Lemma \ref{sosj} to $\Lambda$, we obtain the assertion. 

(iv): The bijection in the proof of (i) for $\Lambda=\Lambda_2$ induces a bijection
\begin{equation*}
\{\Lambda \in \Vrt(5)\mid \Lambda_2 \subset \Lambda,\,\Lambda \cap \Lambda_1=\Lambda_2\}\xrightarrow{\cong}
\{W\subset \Omega'(\Lambda_2) \colon \Fp\text{-subspace}\mid W=W^{\perp},W\cap (p\Lambda_1/p\Lambda)=\{0\}\}. 
\end{equation*}
By Lemma \ref{tinb} (iii), the number of the right-hand side of the bijection above equals $1$. Hence the assertion follows. 

(v): The bijection in the proof of (i) for $\Lambda=\Lambda_2$ induces a bijection
\begin{equation*}
\{\Lambda \in \Vrt(5)\mid \Lambda_2 \subset \Lambda,\,\Lambda \cap \Lambda_1=\Lambda_2\}\xrightarrow{\cong}
\{W\subset \Omega'(\Lambda_2) \colon \Fp\text{-subspace}\mid W=W^{\perp},W\cap (p\Lambda_1/p\Lambda)=\{0\}\}. 
\end{equation*}
By Lemma \ref{tinb} (v), the number of the right-hand side of the bijection above equals $p$. Hence the assertion follows. 
\end{proof}

Next, we state the property which concerns with the connectedness of $\M_G^{(0)}$. The proof is exactly the same as that of \cite[Proposition 5.1.5]{hp2}:  

\begin{prop}\label{adjv}
For any $\Lambda,\Lambda'\in \Vrt$, there is a sequence 
\begin{equation*}
\Lambda_0=\Lambda,\Lambda_1,\ldots ,\Lambda_n=\Lambda'
\end{equation*}
such that every $\Lambda_i$ lies in $\Vrt$, and we have either $\Lambda_i\subset \Lambda_{i+1}$ or $\Lambda_i\supset \Lambda_{i+1}$ for each $i\in \{0,\ldots,n-1\}$. 
\end{prop}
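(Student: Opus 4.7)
The plan is to reduce the claim to the case of type-$5$ vertex lattices and then argue by induction on a natural distance function, mirroring the strategy of \cite[Proposition 5.1.5]{hp2}.

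First I would perform the reduction: by Proposition \ref{ltic}(i), every $\Lambda \in \Vrt(1) \sqcup \Vrt(3)$ is contained in some $\widetilde{\Lambda} \in \Vrt(5)$, so appending the one-step chain $\Lambda \subset \widetilde{\Lambda}$ (and similarly for $\Lambda'$) reduces the problem to connecting any two elements of $\Vrt(5)$. For the type-$5$ case I would set
\begin{equation*}
d(\Lambda, \Lambda') := \length_W\bigl((\Lambda + \Lambda')/\Lambda\bigr)
\end{equation*}
and proceed by induction on $d$, the base case $d = 0$ being trivial.

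For the inductive step, when $d \geq 1$ I aim to construct $\Lambda'' \in \Vrt(5)$ and $\Lambda_2 \in \Vrt(3)$ with $\Lambda_2 \subset \Lambda \cap \Lambda''$ and $d(\Lambda'', \Lambda') < d(\Lambda, \Lambda')$; the two-step chain $\Lambda \supset \Lambda_2 \subset \Lambda''$ concatenated with the inductive chain from $\Lambda''$ to $\Lambda'$ then finishes the proof. To produce $\Lambda''$, I work inside the $\mathbb{F}_p$-quadratic space $\Omega_0(\Lambda) = \Lambda/\Lambda^{\vee} \cong \mathbb{H}_{\mathbb{F}_p}^{\oplus 2} \oplus \mathbb{F}_p$ from Lemma \ref{qtvx}(ii). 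Considering the image $\bar{L}$ of $\Lambda \cap \Lambda'$ in $\Omega_0(\Lambda)$, which is nontrivial when $d \geq 1$, I extend $\bar{L}$ to a $2$-dimensional totally isotropic subspace $P \subset \Omega_0(\Lambda)$ (possible by Lemma \ref{tinb}(ii)), let $\Lambda_2 \subset \Lambda$ be the preimage of $P^{\perp}$ in $\Lambda$, and take $\Lambda''$ to be the unique type-$5$ vertex lattice distinct from $\Lambda$ with $\Lambda \cap \Lambda'' = \Lambda_2$ supplied by Proposition \ref{ltic}(iv). By construction $\Lambda_2 \subset \Lambda \cap \Lambda''$ and $\Lambda_2 \in \Vrt(3)$.

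The main obstacle is verifying the strict decrease $d(\Lambda'', \Lambda') < d(\Lambda, \Lambda')$. This forces the choice of $P$ to be compatible with $\Lambda'$, not merely maximal isotropic: one must select $P$ so that the passage $\Lambda \rightsquigarrow \Lambda''$ (which replaces one $p$-direct summand of $\Lambda$ by one of $\Lambda''$ transverse to $\Lambda_2$) shrinks the span $\Lambda + \Lambda'$. Here the key point is that in $\Omega_0(\Lambda)$ the image of $\Lambda'$ intersects $P^{\perp}$ in a subspace strictly larger than its intersection with $\bar L$ itself, which forces $(\Lambda'' + \Lambda')$ to be a proper $W$-submodule of $\Lambda + \Lambda'$. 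Carrying out this transversality analysis rigorously, using the structure of isotropic flags described in Lemmas \ref{qtvx} and \ref{tinb}, is the technical heart of the argument.
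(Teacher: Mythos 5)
Your skeleton---reduce to $\Vrt(5)$ via Proposition \ref{ltic}(i), then induct on $d(\Lambda,\Lambda')=\length_{\Zp}((\Lambda+\Lambda')/\Lambda)$---is the right one, and it is essentially the strategy of \cite[Proposition 5.1.5]{hp2} that the paper simply cites. But the inductive step as written does not work. First, the image $\bar L$ of $\Lambda\cap\Lambda'$ in $\Omega_0(\Lambda)=\Lambda/p\Lambda$ has dimension at least $5-d$ (by Nakayama it is proper, but for $d=1$ it is $4$-dimensional) and is in general not totally isotropic; a totally isotropic subspace of the nondegenerate $5$-dimensional space $\H_{\Fp}^{\oplus 2}\oplus\Fp$ has dimension at most $2$, so $\bar L$ cannot be ``extended to a $2$-dimensional totally isotropic subspace $P$''. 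Second, even for a genuine $2$-dimensional totally isotropic $P$, the preimage $\Lambda_2$ of $P^{\perp}$ satisfies $\Lambda_2^{\vee}=$ (preimage of $P$), hence $t(\Lambda_2)=\dim(P^{\perp}/P)=1$: your $\Lambda_2$ lies in $\Vrt(1)$, not $\Vrt(3)$. This is exactly the correspondence in the proof of Proposition \ref{ltic}(iii) (isotropic \emph{planes} give type-$1$ sublattices, isotropic \emph{lines} give type-$3$ ones), so Proposition \ref{ltic}(iv) does not apply as invoked; for a type-$1$ intersection one would be in the situation of \ref{ltic}(v), with $p$ choices of $\Lambda''$ rather than a unique one.

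More seriously, the decisive point---that $\Lambda''$ can be chosen so that $d(\Lambda'',\Lambda')<d(\Lambda,\Lambda')$---is precisely what you leave unproved, and you say so yourself. Since a single step $\Lambda\supset\Lambda_2\subset\Lambda''$ can only change $d$ by a bounded amount, the entire content of the induction is the existence of a direction in which $d$ strictly drops; labelling this ``the technical heart'' without carrying it out leaves the proof incomplete. A workable version of the step picks $x\in\Lambda'$ with $x\notin\Lambda$ but $px\in\Lambda\cap\Lambda'$ and uses $x$ to build $\Lambda''$ with $\Lambda\cap\Lambda''\in\Vrt$ and $\Lambda\cap\Lambda'\subsetneq\Lambda''\cap\Lambda'$, which is how the argument of \cite[Proposition 5.1.5]{hp2} proceeds; that is the verification you would need to supply.
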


In the sequel, we call a vertex lattice in the sense of \cite[Definition 2.17]{hp} an \emph{$H$-vertex lattice}. Denote by $\Vrt_H$ the set of all $H$-vertex lattices. We relate vertex lattices with $H$-vertex lattices. 

\begin{dfn}\label{hvdl}
For any lattice $L$ in $\L_{\Q}$ or $\L_{\Q}^{\Phi}$, we denote the dual lattice of $L$ by $L^{\natural}$. 
\end{dfn}

\begin{prop}\label{vtcr}
\emph{We have two maps
\begin{align*}
\varphi \colon \Vrt \rightarrow \{\widetilde{\Lambda}\in \Vrt_H\mid p^{-1}y_1\in \widetilde{\Lambda}\}&;\,\Lambda \mapsto \Lambda \oplus p^{-1}y_1\Zp, \\
\psi \colon \{\widetilde{\Lambda}\in \Vrt_H\mid p^{-1}y_1\in \widetilde{\Lambda}\}\rightarrow \Vrt &;\,\widetilde{\Lambda} \mapsto \widetilde{\Lambda}\cap \L_{\Q}^{\Phi,\pi}. 
\end{align*}
They are inverse to each other. Moreover, if $\Lambda \in \Vrt(t)$ for some $t\in \{1,3,5\}$, then $\varphi(\Lambda)$ is an $H$-vertex lattice of type $t+1$. }
\end{prop}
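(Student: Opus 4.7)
The plan is to exploit the orthogonal decomposition $\L_{\Q}^{\Phi}=\L_{\Q}^{\Phi,\pi}\oplus y_1\Qp$ from Proposition \ref{lqpi} (i), and to read off from the Gram matrix after Definition \ref{xidf} that $[y_1,y_1]/2=p$, hence $Q(y_1)=p$ and $Q(p^{-1}y_1)=p^{-1}$. Consequently the dual of $p^{-1}y_1\Zp$ inside the $1$-dimensional space $y_1\Qp$ (with respect to the associated bilinear form, using that $p>2$ so that $2\in\Zp^\times$) is exactly $y_1\Zp$, and $p^{-1}y_1\Zp$ itself is a type-$1$ lattice in the sense that $p(p^{-1}y_1\Zp)=y_1\Zp\subset p^{-1}y_1\Zp$.

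First I would verify that $\varphi$ lands in the claimed set. For $\Lambda\in\Vrt$, since the sum $\varphi(\Lambda)=\Lambda\oplus p^{-1}y_1\Zp$ is orthogonal, the dual splits as $\varphi(\Lambda)^{\natural}=\Lambda^{\vee}\oplus y_1\Zp$; combining $p\Lambda\subset\Lambda^{\vee}\subset\Lambda$ with $p(p^{-1}y_1\Zp)=y_1\Zp\subset p^{-1}y_1\Zp$ shows $p\varphi(\Lambda)\subset\varphi(\Lambda)^{\natural}\subset\varphi(\Lambda)$, so $\varphi(\Lambda)$ is an $H$-vertex lattice, and $p^{-1}y_1\in\varphi(\Lambda)$ by construction. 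Moreover the quotient decomposes as $\Lambda/\Lambda^{\vee}\oplus p^{-1}y_1\Zp/y_1\Zp$, so the type is $t(\Lambda)+1$, which handles the final assertion.

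The main technical step is showing that $\psi$ is a genuine inverse. For $\widetilde{\Lambda}\in\Vrt_H$ with $p^{-1}y_1\in\widetilde{\Lambda}$, I would write any $v\in\widetilde{\Lambda}$ as $v=v'+\alpha y_1$ with $v'\in\L_{\Q}^{\Phi,\pi}$ and $\alpha\in\Qp$. Pairing with $y_1$ gives $\alpha=[v,y_1]/(2p)=[v,p^{-1}y_1]/2$. Now the $H$-vertex condition $p\widetilde{\Lambda}\subset\widetilde{\Lambda}^{\natural}$ forces $[v,w]\in p^{-1}\Zp$ for all $v,w\in\widetilde{\Lambda}$; applied to $w=p^{-1}y_1\in\widetilde{\Lambda}$ this gives $\alpha\in p^{-1}\Zp$ (using $p>2$), hence $\alpha y_1\in p^{-1}y_1\Zp\subset\widetilde{\Lambda}$ and therefore $v'=v-\alpha y_1\in\widetilde{\Lambda}\cap\L_{\Q}^{\Phi,\pi}=\psi(\widetilde{\Lambda})$. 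This establishes the direct sum decomposition $\widetilde{\Lambda}=\psi(\widetilde{\Lambda})\oplus p^{-1}y_1\Zp$, which is the crux of the proposition. From this the relation $\varphi\circ\psi=\id$ is immediate, and $\psi\circ\varphi=\id$ follows because $\L_{\Q}^{\Phi,\pi}\cap y_1\Qp=0$. Finally, the same orthogonal splitting of duals shows that $\psi(\widetilde{\Lambda})^{\vee}=\widetilde{\Lambda}^{\natural}\cap\L_{\Q}^{\Phi,\pi}$ satisfies $p\psi(\widetilde{\Lambda})\subset\psi(\widetilde{\Lambda})^{\vee}\subset\psi(\widetilde{\Lambda})$, so $\psi(\widetilde{\Lambda})\in\Vrt$.

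The only place where anything subtle happens is the extraction of $\alpha\in p^{-1}\Zp$; this is exactly where the non-degeneracy of the $H$-vertex condition on $\widetilde{\Lambda}$ together with the normalization $Q(y_1)=p$ are both needed. Everything else is bookkeeping with orthogonal direct sums.
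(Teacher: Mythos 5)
Your argument is correct, and it reaches the crux of the proposition --- the decomposition $\widetilde{\Lambda}=(\widetilde{\Lambda}\cap\L_{\Q}^{\Phi,\pi})\oplus p^{-1}y_1\Zp$ for an $H$-vertex lattice $\widetilde{\Lambda}$ containing $p^{-1}y_1$ --- by a genuinely different route than the paper. The paper isolates exactly this step as Lemma \ref{anis} and proves it by passing to the rescaled space $(\L_{\Q}^{\Phi},p^{-1}Q)\cong \H^{\oplus 2}\oplus \Qps$, invoking \cite[Lemma 29.2]{shi} to embed $p\widetilde{\Lambda}$ in a self-dual lattice, splitting that lattice off the norm-one vector corresponding to $p^{-1}y_1$, and projecting. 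You instead compute the $y_1$-coefficient $\alpha$ of $v\in\widetilde{\Lambda}$ directly from the orthogonality $\L_{\Q}^{\Phi,\pi}\perp y_1$ (Proposition \ref{lqpi} (i)) and $Q(y_1)=p$, namely $2\alpha=[v,p^{-1}y_1]$, and then bound it by the $H$-vertex condition $p\widetilde{\Lambda}\subset\widetilde{\Lambda}^{\natural}$ applied to the pair $(v,p^{-1}y_1)$, giving $2\alpha\in p^{-1}\Zp$ and hence $\alpha\in p^{-1}\Zp$ since $p>2$. This is more elementary and self-contained: it avoids the coordinate change and the appeal to an external existence result for self-dual overlattices, at the (harmless) cost of being tailored to the one-dimensional orthogonal complement $y_1\Qp$, which is all that is needed here. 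The remaining verifications --- that $\varphi(\Lambda)$ is an $H$-vertex lattice of type $t+1$ via $\varphi(\Lambda)^{\natural}=\Lambda^{\vee}\oplus y_1\Zp$, that the two maps are mutually inverse, and that $\psi(\widetilde{\Lambda})\in\Vrt$ by projecting the chain $p\widetilde{\Lambda}\subset\widetilde{\Lambda}^{\natural}\subset\widetilde{\Lambda}$ onto the $\L_{\Q}^{\Phi,\pi}$-factor --- match the paper's bookkeeping.
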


To prove the assertion above, we need the following: 

\begin{lem}\label{anis}
\emph{Let $\widetilde{\Lambda}$ be an $H$-vertex lattice containing $p^{-1}y_1$. Then we have
\begin{equation*}
\{y\in \Qp y_1 \mid \text{there is an element $z\in \L_{\Q}^{\Phi,\pi}$ such that }y+z\in \widetilde{\Lambda}\} \subset p^{-1}y_1\Zp. 
\end{equation*}}
\end{lem}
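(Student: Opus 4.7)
The plan is to exploit the orthogonal decomposition $\L_{\Q}^{\Phi} = \L_{\Q}^{\Phi,\pi} \oplus \Qp y_1$ established in Proposition \ref{lqpi} together with the self-duality property built into the notion of $H$-vertex lattice. First I would unwind the assumption on $\widetilde{\Lambda}$: an $H$-vertex lattice in $\L_{\Q}^{\Phi}$ satisfies $p\widetilde{\Lambda} \subset \widetilde{\Lambda}^{\natural} \subset \widetilde{\Lambda}$, which is equivalent to $[\widetilde{\Lambda},\widetilde{\Lambda}] \subset p^{-1}\Zp$ with respect to the bilinear form $[\,,\,]$ associated to $Q$.

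Next I would take arbitrary $y = \alpha y_1 \in \Qp y_1$ and $z \in \L_{\Q}^{\Phi,\pi}$ with $y+z \in \widetilde{\Lambda}$, and pair against the hypothesis-provided element $p^{-1}y_1 \in \widetilde{\Lambda}$. Since both vectors lie in $\widetilde{\Lambda}$, the previous step yields
\begin{equation*}
[y+z,\,p^{-1}y_1] \in p^{-1}\Zp.
\end{equation*}
Orthogonality of $\L_{\Q}^{\Phi,\pi}$ and $\L_{\Q}^{\Phi,-\pi} = \Qp y_1$ (Proposition \ref{lqpi} (i)) kills the $z$ contribution, so the left side equals $[y,p^{-1}y_1] = 2\alpha \cdot p^{-1}\cdot Q(y_1)$. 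Now $y_1 = \iota_0(\Pi)$ gives $Q(y_1) = y_1 \circ y_1 = p$, so $[y,p^{-1}y_1] = 2\alpha$.

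Thus $2\alpha \in p^{-1}\Zp$, and since $p > 2$ we conclude $\alpha \in p^{-1}\Zp$, i.e.\ $y \in p^{-1}y_1 \Zp$, as required. The only real bookkeeping step is to confirm the convention for $H$-vertex lattices (that they satisfy $p\widetilde{\Lambda}\subset \widetilde{\Lambda}^{\natural}\subset \widetilde{\Lambda}$, matching the $\Vrt$ convention of Section \ref{vtdn}) and to identify $Q(y_1) = p$ from $y_1 = \iota_0(\Pi)$; these are the only places where one must be careful, and neither presents a genuine obstacle. The argument is essentially a one-line pairing computation once the decomposition and the self-duality bound are in hand.
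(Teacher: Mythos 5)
Your proof is correct, and it takes a genuinely different and more elementary route than the paper. The paper transfers the problem to the explicit model $(\L_{\Q}^{\Phi},p^{-1}Q)\cong \H^{\oplus 2}\oplus \Qps$, invokes \cite[Lemma 29.2]{shi} to embed $p\widetilde{\Lambda}$ in a self-dual lattice $\widetilde{L}$ containing $1$, splits off the unimodular rank-one summand $\Zp\cdot 1$, and reads the bound off the second projection. You instead observe that the $H$-vertex condition $p\widetilde{\Lambda}\subset\widetilde{\Lambda}^{\natural}$ gives $[\widetilde{\Lambda},\widetilde{\Lambda}]\subset p^{-1}\Zp$, pair $y+z$ against the given element $p^{-1}y_1\in\widetilde{\Lambda}$, kill the $z$-term by the orthogonality $\L_{\Q}^{\Phi,\pi}\perp\Qp y_1$ of Proposition \ref{lqpi}~(i), and use $Q(y_1)=\iota_0(\Pi)\circ\iota_0(\Pi)=p$ (visible in the Gram matrix, where $[y_1,y_1]/2=p$) to get $2\alpha\in p^{-1}\Zp$, hence $\alpha\in p^{-1}\Zp$ since $p>2$. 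All the ingredients check out against the paper's conventions (the dual convention for $\widetilde{\Lambda}^{\natural}$ and the inclusion $p\widetilde{\Lambda}\subset\widetilde{\Lambda}^{\natural}\subset\widetilde{\Lambda}$ are confirmed by Proposition \ref{vtcr} and by the rescaled condition $\widetilde{L}\subset\widetilde{L}^{\vee}\subset p^{-1}\widetilde{L}$ quoted in the paper's own proof). Your argument is shorter and self-contained, avoiding both the identification with $\H^{\oplus 2}\oplus\Qps$ and the external lattice-embedding lemma; it also shows that only half of the vertex-lattice condition is needed, together with the fact that $\ord_p Q(y_1)=1$. The paper's route is more in line with the self-dual-lattice formalism it reuses from \cite{hp}, but for this particular statement your one-line pairing computation is a clean improvement.
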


\begin{proof}
We use the correspondence between $H$-vertex lattices and lattices $\widetilde{L}$ in $\H^{\oplus 2}\oplus \Qps \cong (\L_{\Q}^{\Phi},p^{-1}Q)$ with $\widetilde{L} \subset \widetilde{L}^{\vee}\subset p^{-1}\widetilde{L}$, appearing in the proof of \cite[Proposition 2.22]{hp}. Here $L^{\vee}$ is the dual lattice of $L$ with respect to $p^{-1}Q$. Under this correspondence, the assertion is equivalent to the formula
\begin{equation*}
e(\widetilde{\Lambda}):=\{y\in \Qp \mid \text{there is an element $z\in \H^{\oplus 2}\oplus \Qp \varepsilon$ such that }y+z\in p\widetilde{\Lambda}\} \subset \Zp. 
\end{equation*}
By \cite[Lemma 29.2 (1),(3)]{shi}, there is a self-dual lattice $\widetilde{L}$ containing $p\widetilde{\Lambda}$. Note that $1\in \widetilde{L}$ since $p^{-1}y_1\in \widetilde{\Lambda}$. Hence we have a decomposition $\widetilde{L}=\widetilde{L}_0\oplus \Zp$, where $\widetilde{L}_0=\widetilde{L}\cap (\H^{\oplus 2}\oplus \Qp \varepsilon)$. Since $e(\widetilde{\Lambda})$ is the image of $p\widetilde{\Lambda}$ under the second projection $(\H^{\oplus 2}\oplus \Qp \varepsilon)\oplus \Qp \rightarrow \Qp$, the assertion follows. 
\end{proof}

\begin{proof}[Proof of Proposition \ref{vtcr}]
If $\Lambda$ is a vertex lattice of type $t\in \{1,3,5\}$ then $\varphi(\Lambda)$ is an $H$-vertex lattice of type $t+1$ containing $p^{-1}y_1$ since $\varphi(\Lambda)^{\natural}=\Lambda^{\vee}\oplus y_1\Zp$. On the other hand, if $\widetilde{\Lambda}$ is an $H$-vertex lattice containing $p^{-1}y_1$, then we have $\widetilde{\Lambda}=(\widetilde{\Lambda}\cap \L^{\Phi,\pi})\oplus p^{-1}y_1\Zp$ by Lemma \ref{anis}. Since $\widetilde{\Lambda}$ is an $H$-vertex lattice, we obtain that $\psi(\widetilde{\Lambda})$ is a vertex lattice. The assertions $\psi \circ \varphi=\id_{\Vrt}$ and $\varphi \circ \psi=\id$ follow from the definitions of $\varphi$ and $\psi$. 
\end{proof}

For a lattice $L$ in $\L_{\Q}$ and an integer $r\in \Znn$, we set $L^{(r)}:=L+\Phi(L)+\cdots+\Phi^r(L)$. 

\begin{prop}\label{spvt}
\emph{For a $\pi$-special lattice $L$ in $\L_{\Q}$, set $L^{r}:=(L+\pi(L))^{(r)}$ for $r\in \Znn$. There is an integer $d\in \{0,1,2\}$ such that $L=L^{-1}\subsetneq \cdots \subsetneq L^{d}=L^{d+1}$ and $L^{i}/L^{i-1}$ is of length $1$ for $0\leq i\leq d$. Moreover, 
\begin{equation*}
\Lambda(L):=L^d\cap \L_{\Q}^{\Phi,\pi}
\end{equation*}
is a vertex lattice of type $2d+1$ and $\Lambda(L)^{\vee}=L\cap \L_{\Q}^{\Phi,\pi}$. }
\end{prop}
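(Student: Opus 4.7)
The plan is to adapt the Howard--Pappas analysis of special lattices \cite[Proposition 2.19]{hp} to the $\pi$-special setting, via the identity $L^{r} = L^{(r)} + \pi L^{(r)}$ with $L^{(r)} := L + \Phi L + \cdots + \Phi^r L$, which holds because $\Phi\pi = \pi \Phi$ by Lemma \ref{pipr}(v). First I would analyze the base step $L = L^{-1} \subsetneq L^0 = L + \pi L$. The $\pi$-eigenspace decomposition $\L_{\Q,K} = \L_{\Q,K}^{+\pi} \oplus K y_1$ is orthogonal, and since $\pi$ preserves $[\cdot,\cdot]$ by Lemma \ref{pipr}(iii), $\pi L$ is self-dual. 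Writing $L^\pm := L \cap \L_{\Q,K}^{\pm\pi}$ and $\bar L^\pm$ for the projections onto the two eigenspaces, self-duality of $L$ gives $\bar L^\pm = (L^\mp)^{\vee_\mp}$. A direct computation from Definition \ref{xidf} shows $[y_1, y_1] = 2p$; hence no $W(k)$-lattice in $K y_1$ can be self-dual, and the $\pi$-special condition $y_1 \in L$ forces $L^- = W(k) y_1$ and $\bar L^- = p^{-1} W(k) y_1$. The projection $L/(L^+ \oplus L^-) \to \bar L^-/L^-$ is then an isomorphism, yielding $\length((L + \pi L)/L) = 1$ and the explicit formula $L + \pi L = \bar L^+ \oplus p^{-1} W(k) y_1$.

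Next I would show the length-one increment at each subsequent step together with the bound $d \leq 2$. Since $\Phi y_1 = y_1$, the $\pi$-decomposition propagates to every $L^{(r)}$, giving
\[
L^r = (\bar L^+)^{(r)} \oplus p^{-1} W(k) y_1,
\]
where $(\bar L^+)^{(r)} \subset \L_{\Q,K}^{+\pi}$ is the $\Phi$-iterated lattice. Applying \cite[Proposition 2.19]{hp} to the special lattice $L$, the HP sequence $L^{(r)}$ grows by length one at each step up to the stabilization index $d_H \in \{1,2,3\}$, and $\widetilde{\Lambda}_H(L) := L^{(d_H)} \cap \L_{\Q}^\Phi$ is an $H$-vertex lattice of type $2d_H$. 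Because each $\Phi^{r+1} L$ is itself $\pi$-special (it is self-dual and contains $y_1$ since $\Phi y_1 = y_1$), the analysis of the base step applies uniformly to produce length-one $\pi$-symmetrizations at every step, yielding that $L^r$ grows by length one up to $d = d_H - 1 \in \{0,1,2\}$.

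After stabilization, $L^d$ is both $\Phi$- and $\pi$-stable. Since $(\L_{\Q}, \Phi)$ has slope zero by Lemma \ref{phst}(ii), descent yields $L^d = \widetilde{\Lambda}'(L) \otimes_{\Zp} W(k)$ with $\widetilde{\Lambda}'(L) := L^d \cap \L_{\Q}^\Phi = \widetilde{\Lambda}_H(L) + \pi \widetilde{\Lambda}_H(L)$. I would then verify that $\widetilde{\Lambda}'(L)$ is an $H$-vertex lattice of type $2d+2$ containing $p^{-1} y_1$ (the latter inherited from $L^0$), so Proposition \ref{vtcr} identifies $\Lambda(L) := L^d \cap \L_{\Q}^{\Phi,\pi} = \widetilde{\Lambda}'(L) \cap \L_{\Q}^{\Phi,\pi}$ as a vertex lattice of type $2d+1$. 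The duality $\Lambda(L)^\vee = L \cap \L_{\Q}^{\Phi,\pi}$ then follows by dualizing the inclusion $L \subset L^d$: self-duality of $L$ gives $(L^d)^\vee \subset L$, and taking $\Phi$- and $\pi$-invariants matches the two sides via Proposition \ref{vtcr} once more.

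The main obstacle will be verifying that $L^{r+1}/L^r$ has length exactly one (not two) at each intermediate step: the $\pi$-translate of $\Phi^{r+1} L$ could a priori contribute an independent length-$1$ direction beyond the HP contribution $\Phi^{r+1} L \bmod L^{(r)}$. Controlling this requires exploiting the interleaving between the HP flag for $L$ and for $\pi L = (\pi L)^{(\bullet)}$, together with the fact that the $-\pi$-component $K y_1$ is already saturated in $L^0$ and hence only the $+\pi$-direction contributes at each subsequent step.
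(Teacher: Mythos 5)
Your overall strategy — reduce to \cite[Proposition 2.19]{hp} via the identity $L^{r}=L^{(r)}+\pi(L^{(r)})=L^{(r)}+p^{-1}y_1W(k)$ and transfer the result through Proposition \ref{vtcr} — is the same as the paper's, and your analysis of the base step (forcing $L\cap Ky_1=W(k)y_1$ from $[y_1,y_1]=2p$ and $y_1\in L$) is a correct unwinding of Lemma \ref{y1il}. However, there are two genuine problems. First, the uniform claim $d=d_H-1$ is false. The correct relation depends on whether $p^{-1}y_1\in L^{(d_H)}$: if it does, then $d=d_H-1$; if it does not (which forces $d_H\leq 2$), then $d=d_H$. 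Both cases occur. For instance, take $\Lambda\in \Vrt(3)$, so that $\M_{G,\Lambda}^{(0)}=\M_{H,\varphi(\Lambda)}^{(0)}\cong \P^1_{\Fpbar}$; among the finitely many points of this $\P^1$ lying in $\BT_{H,\widetilde{\Lambda}'}$ for type-$2$ $H$-vertex lattices $\widetilde{\Lambda}'\subset \varphi(\Lambda)$, only $p+1$ of the $\widetilde{\Lambda}'$ contain $p^{-1}y_1$ (Proposition \ref{ltic} (ii)); at any of the remaining points one has $d_H=1$ and $p^{-1}y_1\notin L^{(1)}$, yet $\Lambda(L)=\Lambda$ has type $3$, so $d=1=d_H$, not $d_H-1=0$. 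Your argument as written would assign these points to the wrong stratum.

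Second, the assertion that "the analysis of the base step applies uniformly to produce length-one $\pi$-symmetrizations at every step" does not by itself show $\length_{W(k)}L^{r}/L^{r-1}\leq 1$, and you explicitly leave this as "the main obstacle" without resolving it; as it stands the chain $L^{-1}\subsetneq\cdots\subsetneq L^{d}$ with length-one graded pieces is unproven. The paper closes this gap with the elementary identity
\begin{equation*}
\length_{W}L^{r}/L^{(r)}+\length_{W}L^{(r)}/L^{(r-1)}
=\length_{W}L^{r}/L^{r-1}+\length_{W}L^{r-1}/L^{(r-1)},
\end{equation*}
noting that $\length_{W}L^{r}/L^{(r)}\in\{0,1\}$ simply records whether $p^{-1}y_1\in L^{(r)}$. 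Letting $r_0$ be the first index with $p^{-1}y_1\in L^{(r_0)}$ (if any), this identity simultaneously yields the length-one increments, the stabilization index in both cases above, and the collapse $L^{r_0}=L^{r_0-1}$ in the second case. I would recommend replacing your interleaving heuristic with this bookkeeping argument; it is shorter and automatically produces the correct case division for $d$.
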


To prove Proposition \ref{spvt}, we use the following: 

\begin{lem}\label{y1il}
\emph{If $L$ is a $\pi$-special lattice in $\L_{\Q}$, then we have $L+\pi(L)=L+p^{-1}y_1W$. }
\end{lem}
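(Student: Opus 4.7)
The plan is to identify $\pi$ as the orthogonal reflection through $y_1$ on the whole of $\L_{\Q,K}$, and then read both inclusions off the explicit reflection formula.

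First I would verify that $\pi$ is a $K_0$-linear isometry of $(\L_{\Q,K}, Q)$. The $K_0$-linearity is built into the definition, and $Q$ is preserved since $Q(\pi(v)) = \pi(v) \circ \pi(v) = \iota_0(\varepsilon \Pi)\, v^2\, \iota_0(\varepsilon \Pi)^{-1} = v^2 = Q(v)$, because $v^2 = Q(v)$ is a scalar. By Proposition \ref{lqpi}(i), the restriction of $\pi$ to the $\Qp$-form $\L_{\Q}^{\Phi}$ is the reflection $r_{y_1}$ through the line $\Qp y_1$; since $\L_{\Q,K} = \L_{\Q}^{\Phi} \otimes_{\Qp} K$ and $r_{y_1}$ is determined by the reflection formula, $\pi$ coincides with $r_{y_1}$ on $\L_{\Q,K}$. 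Using $Q(y_1) = p$ from the Gram matrix recorded after Definition \ref{xidf}, this gives
\[ \pi(v) = v - p^{-1} [v, y_1]\, y_1 \qquad \text{for all } v \in \L_{\Q,K}. \]

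With this identity, the inclusion $L + \pi(L) \subseteq L + p^{-1} y_1 W$ is immediate: for any $v \in L \subseteq L^\vee$ and $y_1 \in L$ (this is the $\pi$-specialness of $L$), the pairing $[v, y_1]$ lies in $W$, so $\pi(v) - v \in p^{-1} y_1 W$.

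For the reverse inclusion it suffices to exhibit some $v_0 \in L$ with $[v_0, y_1] \in W^\times$, for then $v_0 - \pi(v_0) = p^{-1} y_1$ shows $p^{-1} y_1 \in L + \pi(L)$. The image of the $W$-linear map $\phi_{y_1} \colon L \to W$, $v \mapsto [v, y_1]$, is the smallest ideal $p^n W$ with $p^{-n} y_1 \in L^\vee = L$. If $n \geq 1$, then $p^{-1} y_1 \in L$ and hence $Q(p^{-1} y_1) = p^{-1} \in W$, contradicting self-duality of $L$. Hence $n = 0$, $\phi_{y_1}$ is surjective, and any preimage of $1$ serves as $v_0$. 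I do not foresee a substantive obstacle here; the only subtle step is checking that the global reflection formula extends from the $\Qp$-form to $\L_{\Q,K}$, which is routine once the isometry property of $\pi$ is in place.
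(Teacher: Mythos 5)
Your proof is correct, and it takes a somewhat different route from the paper's. The paper dualizes first: it reduces the claimed equality of sums to the equality of intersections $L\cap\pi(L)=L\cap(\L_{\Q}^{\pi}\oplus y_1W)$ and checks both inclusions via the eigenspace decomposition $v=\tfrac{1}{2}(v+\pi(v))+\tfrac{1}{2}(v-\pi(v))$, using $y_1\in L$ together with the identification $L\cap\L_{\Q}^{-\pi}=y_1W$. You instead work directly with the sum, writing $\pi$ as the explicit reflection $\pi(v)=v-p^{-1}[v,y_1]y_1$ (valid on all of $\L_{\Q}$ by $K_0$-linear extension from the $\Qp$-form, where Proposition \ref{lqpi} (i) applies; note the isometry check is not actually needed for this extension, only linearity). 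The forward inclusion then drops out of self-duality, and your surjectivity argument for $v\mapsto[v,y_1]$ handles the reverse inclusion; in doing so it makes fully explicit the fact that $p^{-1}y_1\notin L$ (since $Q(p^{-1}y_1)=p^{-1}\notin W$), which the paper's identification $L\cap\L_{\Q}^{-\pi}=y_1W$ also requires but justifies only by the remark that $L$ is $\pi$-special. Both arguments ultimately rest on Proposition \ref{lqpi} (i); yours trades the dualization step for the reflection formula and is, if anything, slightly more self-contained. One cosmetic point: by Definition \ref{hvdl} the dual of a lattice in $\L_{\Q}$ is denoted $L^{\natural}$ rather than $L^{\vee}$.
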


\begin{proof}
Let $L$ be a $\pi$-special lattice in $\L_{\Q}$. Taking the duals, it suffices to show $L\cap \pi(L)=L\cap (\L_{\Q}^{\pi}\oplus y_1W)$. 

First, take $v\in L\cap \pi(L)$. Then we have $\pi(v)\in L$, and thus we obtain $(v+\pi(v))/2\in \L_{\Q}^{\pi}$ and $(v-\pi(v))/2\in L\cap \L_{\Q}^{-\pi}=y_1W$. Here we use the assumption that $L$ is $\pi$-special. Hence we obtain $v=(v+\pi(v))/2+(v-\pi(v))/2 \in \L_{\Q}^{\pi}\oplus y_1W$. 

Next, take $v\in L\cap (\L_{\Q}^{\pi}\oplus y_1W)$ with $v=w+ay_1$, where $w\in \L_{\Q}^{\pi}$ and $a\in W$. Since $L$ is $\pi$-special, we have $y_1\in L$. Therefore we obtain $w\in L\cap \L_{\Q}^{\pi}\subset L\cap \pi(L)$, which concludes $v\in L\cap \pi(L)$. 
\end{proof}

\begin{proof}[Proof of Proposition \ref{spvt}]
First, we have $\length_{W}L_0/L=1$ by $p^{-1}y_1\not\in L$ and Lemma \ref{y1il}. By \cite[Proposition 2.19]{hp}, there is the minimum integer $d\in \{1,2,3\}$ such that $L^{(d)}$ is $\Phi$-stable. Note that $p^{-1}y_1\in L^{(d)}$ if $d=3$, since $y_1\in (L^{(d)})^{\natural}=pL^{(d)}$. We have
\begin{equation*}
\length_{W}L^{r}/L^{r-1}\leq \length_{W}L^{(r)}/L^{(r-1)}\leq 1
\end{equation*}
for any $r\in \Zpn$ by Lemma \ref{y1il} and \cite[Proposition 2.19]{hp}. On the other hand, we have an equality for $r\in \Zpn$:
\begin{equation}\label{lneq}
\length_{W}L^{r}/L^{(r)}+\length_{W}L^{(r)}/L^{(r-1)}
=\length_{W}L^{r}/L^{r-1}+\length_{W}L^{r-1}/L^{(r-1)}. 
\end{equation}

First, assume $p^{-1}y_1\not\in L^{(d)}$ (hence $d\leq 2$). We claim that $d$ is the minimum integer among $r\in \Znn$ such that $L^r$ is $\Phi$-stable. The $\Phi$-stability of $L^d$ follows from the equality $L^d=L^{(d)}+p^{-1}y_1W$, which is a consequence of Lemma \ref{y1il}. Next, if $0\leq r\leq d$ then we prove that $L^{r-1}\subsetneq L^r$. By the assumption $p^{-1}y_1\not\in L^{(d)}$, we have $\length_{W}L^{r}/L^{(r)}=\length_{W}L^{r-1}/L^{(r-1)}=1$ by Lemma \ref{y1il}. On the other hand, we have $\length_{W}L^{(r)}/L^{(r-1)}=1$ by the minimality of $d$. Therefore we obtain $\length_{W}L^{r}/L^{r-1}=1$ by (\ref{lneq}). 

Next, assume $p^{-1}y_1\in L^{(d)}$. We claim that $d-1$ is the minimum integer among $r\in \Znn$ such that $L^r$ is $\Phi$-stable. Let $r_0$ be the minimum integer among $r\in \Znn$ such that $p^{-1}y_1\in L^{(r)}$. Then $1\leq r_0\leq d$. Therefore, we have $\length_{W}L^{(r)}/L^{(r-1)}=1$ for $1\leq r\leq r_0$. On the other hand, by the definition of $r_0$ and Lemma \ref{y1il}, we have $\length_{W}L^{r_0}/L^{(r_0)}=0$ and $\length_{W}L^{r}/L^{(r)}=1$ for $0\leq r\leq r_0-1$. Combining them with the formula (\ref{lneq}) for $r=1,\ldots ,r_0$, we obtain $L^{r_0}=L^{r_0-1}$ and $\length_{W}L^r/L^{r-1}=1$ for $1\leq r\leq r_0-1$, that is, $r_0-1$ is the minimum integer among $r\in \Znn$ such that $L^r$ is $\Phi$-stable. Hence it suffices to show $r_0=d$. Now suppose $r_0<d$. Then we have $p^{-1}y_1\in L^{(d-1)}$, which concludes $L^{(d-1)}=L^{d-1}$. By using the formula (\ref{lneq}) for $r=d$, we obtain
\begin{equation*}
\length_{W}L^{(d)}/L^{(d-1)}=\length_{W}L^{d}/L^{d-1}. 
\end{equation*}
Since $L^{r_0}$ is $\Phi$-invariant, we have $L^d=L^{r_0}=L^{d-1}$. Consequently, the left-hand side is $0$, which contradicts the assumption of $d$. Hence we have $r_0=d$. 
\end{proof}

\subsection{Bruhat--Tits stratification}\label{btsn}

First, we recall the closed formal subscheme $\M_{H,\widetilde{\Lambda}}$ of $\M_H$ attached to an $H$-vertex lattice $\widetilde{\Lambda}$. It is defined as the locus of $(X,\iota,\lambda,\rho)$ such that $\rho^{-1}\circ \Lambda^{\natural}\circ \rho \subset \End(X)$. Moreover, put $\M_{H,\widetilde{\Lambda}}^{(i)}:=\M_{H,\widetilde{\Lambda}}\cap \M_H^{(i)}$. 

We define closed formal subschemes of $\M_G$ by the same manner. 

\begin{dfn}\label{clsb}
Let $\Lambda \in \Vrt$ be a vertex lattice. 
\begin{itemize}
\item We define a closed formal subschme $\M_{G,\Lambda}$ of $\M_G$ as the locus of $(X,\iota,\lambda,\rho)$ with $\rho^{-1}\circ \Lambda^{\vee} \circ \rho \subset \End(X)$. 
\item For $i\in \Z$, we set $\M_{G,\Lambda}^{(i)}:=\M_{G,\Lambda}\cap \M_G^{(i)}$. 
\end{itemize}
\end{dfn}

The bijection in Proposition \ref{pscr} induces a bijection as follows: 
\begin{align*}
p^{\Z} \backslash \M_{G,\Lambda}(\Fpbar)
&\cong \{L\colon \pi\text{-special lattice in }\L_{\Q}\mid \Lambda^{\vee} \subset L \}\\
&=\{L\colon \pi\text{-special lattice in }\L_{\Q}\mid \Lambda(L) \subset \Lambda \}. 
\end{align*}

\begin{prop}\label{btsc}
\emph{Let $\Lambda \in \Vrt$. 
\begin{enumerate}
\item We have $\M_{G,\Lambda}=\M_{H,\varphi(\Lambda)}$. 
\item We have $\M_{G,\Lambda}^{(i)}=\M_{H,\varphi(\Lambda)}^{(i)}$ for any $i\in \Z$. 
\end{enumerate}}
\end{prop}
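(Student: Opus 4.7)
The plan is to reduce both equalities to an equivalence of closed conditions on $S$-valued points of $\M_H$ via the closed immersion $i_{G,H}$ from Proposition \ref{mgeb}. The key input is a clean description of the lattice $\varphi(\Lambda)^{\natural}$ inside $\L_{\Q}^{\Phi}$.

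First, I will compute $\varphi(\Lambda)^{\natural}$ explicitly. By Proposition \ref{lqpi} (i), the quadratic space $\L_{\Q}^{\Phi}$ decomposes orthogonally as $\L_{\Q}^{\Phi,\pi} \oplus \Qp y_1$, and $\varphi(\Lambda) = \Lambda \oplus p^{-1}y_1 \Zp$ respects this decomposition. Since $Q(y_1) = \iota_0(\Pi)^2 = p$ and $p$ is odd, a direct dual computation inside $\Qp y_1$ yields $(p^{-1}y_1\Zp)^{\natural} = y_1\Zp$. Hence
\[
\varphi(\Lambda)^{\natural} = \Lambda^{\vee} \oplus y_1 \Zp
\]
inside $\L_{\Q}^{\Phi}$; this identity is implicit in the proof of Proposition \ref{vtcr}.

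Next, I will translate the defining condition of $\M_{H,\varphi(\Lambda)}$. For $(X,\iota,\lambda,\rho) \in \M_H(S)$, the condition $\rho^{-1} \circ \varphi(\Lambda)^{\natural} \circ \rho \subset \End(X)$ decomposes into the two conditions $\rho^{-1} \circ y_1 \circ \rho \in \End(X)$ and $\rho^{-1} \circ \Lambda^{\vee} \circ \rho \subset \End(X)$. By Proposition \ref{mgeb}, the first condition cuts out exactly the image of $i_{G,H}$ inside $\M_H$; and once this holds (so that the point lifts uniquely to $\M_G$ with $\iota(\Pi)$ given by $\rho^{-1}\circ y_1\circ \rho$), the second condition is precisely the defining condition of $\M_{G,\Lambda}$. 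This gives (i) at the level of $S$-valued points functorially. Since both closed formal subschemes are cut out by the same type of condition (``a prescribed family of quasi-endomorphisms lifts to honest endomorphisms'', representable by a closed immersion by \cite[Proposition 2.9]{rz}), this upgrades to an equality of closed formal subschemes of $\M_H$, identifying $\M_{G,\Lambda}$ with $\M_{H,\varphi(\Lambda)}$ via $i_{G,H}$.

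For (ii), the immersion $i_{G,H}$ preserves the locally constant function $c(\rho)$, since it alters neither the polarization nor the quasi-isogeny. Therefore it identifies $\M_G^{(i)}$ with $i_{G,H}(\M_G) \cap \M_H^{(i)}$, and (ii) follows from (i) by intersecting with $\M_H^{(i)}$. I do not foresee any serious obstacle here: the proposition reduces to the lattice identity $\varphi(\Lambda)^{\natural} = \Lambda^{\vee} \oplus y_1\Zp$ combined with the functorial description of the image of $i_{G,H}$ from Proposition \ref{mgeb}.
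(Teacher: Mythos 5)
Your proof is correct and follows essentially the same route as the paper: both reduce the statement to the lattice identity $\varphi(\Lambda)^{\natural}=\Lambda^{\vee}\oplus \Zp y_1$ and the characterization of the image of $i_{G,H}$ from Proposition \ref{mgeb}, with (ii) following by intersecting with $\M_H^{(i)}$. Your explicit verification that $(p^{-1}y_1\Zp)^{\natural}=y_1\Zp$ (using $Q(y_1)=p$ and $p$ odd) just makes explicit what the paper leaves implicit in the proof of Proposition \ref{vtcr}.
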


\begin{proof}
(i): By Proposition \ref{mgeb}, $\M_{G,\Lambda}$ is the locus of $(X,\iota,\lambda,\rho)$ in $\M_{H}$ such that $\rho^{-1}\circ y_1 \circ \rho \subset \End(X)$ and $\rho^{-1}\circ \Lambda^{\vee}\circ \rho  \subset \End(X)$. This is equivalent to the condition $\rho^{-1}\circ (\varphi(\Lambda))^{\natural} \circ \rho \subset \End(X)$ since $\varphi(\Lambda)^{\natural}=\Lambda^{\vee}\oplus \Zp y_1$. 

(ii): This follows from (i) and $\M_G^{(i)}\subset \M_H^{(i)}$. 
\end{proof}

\begin{cor}\label{redv}
\emph{Let $\Lambda \in \Vrt$. 
\begin{enumerate}
\item The formal scheme $\M_{G,\Lambda}$ is a reduced scheme of characteristic $p$. 
\item We have the following: 
\begin{itemize}
\item if $\Lambda \in \Vrt(1)$, then $\M_{G,\Lambda}^{(0)}$ is a single point, 
\item if $\Lambda \in \Vrt(3)$, then $\M_{G,\Lambda}^{(0)}$ is isomorphic to $\P^1_{\Fpbar}$, 
\item if $\Lambda \in \Vrt(5)$, then $\M_{G,\Lambda}^{(0)}$ is isomorphic to the Fermat surface defined by
\begin{equation*}
x_0^{p+1}+x_1^{p+1}+x_2^{p+1}+x_3^{p+1}=0
\end{equation*}
in $\Proj \Fpbar [x_0,x_1,x_2,x_3]$. 
\end{itemize}
In particular, $\M_{G,\Lambda}^{(i)}$ is projective, smooth and irreducible of dimension $(t(\Lambda)-1)/2$ for any $i\in \Z$. 
\end{enumerate}}
\end{cor}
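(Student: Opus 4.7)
The key observation is that Proposition~\ref{btsc} identifies $\M_{G,\Lambda}$ with $\M_{H,\varphi(\Lambda)}$ (and matches the $(i)$-components), and Proposition~\ref{vtcr} tells us that $\varphi$ sends $\Vrt(t)$ bijectively onto the set of $H$-vertex lattices of type $t+1$ that contain $p^{-1}y_1$. Hence the relevant $H$-vertex lattices are of types $2, 4, 6$ when $t(\Lambda)=1,3,5$ respectively. The plan is therefore to deduce both statements from the corresponding known results about the Bruhat--Tits strata of $\M_H$ and then transport them through the closed immersion $i_{G,H}$.

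For (i), I would argue that $\M_{H,\widetilde{\Lambda}}$ is contained in the special fiber of $\M_H$ (because its $\Fpbar$-points parameterize quasi-isogenies which, together with $\widetilde{\Lambda}^{\natural}$, bound a $W$-lattice stable under $F$ and $V$ only modulo $p$); hence so is $\M_{G,\Lambda}=\M_{H,\varphi(\Lambda)}$, giving the characteristic $p$ claim. The reducedness of $\M_{H,\widetilde{\Lambda}}$ is established in Appendix~\ref{rdhp}, so applying Proposition~\ref{btsc} gives the reducedness of $\M_{G,\Lambda}$.

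For (ii), I would invoke the explicit description of $\M_{H,\widetilde{\Lambda}}^{(0)}$ from~\cite{hp} as a closed Deligne--Lusztig variety for $\SO(\Omega_0(\widetilde{\Lambda}))$ attached to the Coxeter element, where $\Omega_0(\widetilde{\Lambda})=\widetilde{\Lambda}/\widetilde{\Lambda}^{\natural}$. Concretely: when $\widetilde{\Lambda}=\varphi(\Lambda)$ is of type $2$, the quadric $\Omega_0(\widetilde{\Lambda})$ has rank $2$ and the associated variety is a single $\Fpbar$-rational point; for type $4$ it is isomorphic to $\P^1_{\Fpbar}$; and for type $6$ the Coxeter Deligne--Lusztig variety for $\SO_6$ is classically isomorphic to the Fermat surface
\begin{equation*}
x_0^{p+1}+x_1^{p+1}+x_2^{p+1}+x_3^{p+1}=0
\end{equation*}
in $\Proj \Fpbar[x_0,x_1,x_2,x_3]$. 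Combining with Proposition~\ref{btsc} yields the three cases of (ii), and the projectivity, smoothness, irreducibility and the dimension $(t(\Lambda)-1)/2$ are properties built into each model.

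The expected main obstacle does not lie within the proof of this corollary itself, but has been handled upstream: the reducedness of the Bruhat--Tits strata of $\M_H$ (Appendix~\ref{rdhp}), which refines the argument of~\cite[\S 2.4]{hp}, and the Howard--Pappas geometric identification of $\M_{H,\widetilde{\Lambda}}^{(0)}$ with an explicit Deligne--Lusztig variety. Granted these inputs, the present corollary is simply the translation of those results to $\M_G$ via $i_{G,H}$.
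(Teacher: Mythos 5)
Your proposal is correct and follows essentially the same route as the paper: part (i) is Proposition \ref{btsc} (i) combined with the reducedness result Theorem \ref{redh} from Appendix \ref{rdhp} (which already includes the characteristic-$p$ statement, so your separate special-fiber argument is not needed), and part (ii) is Proposition \ref{btsc} (ii) combined with the Howard--Pappas description of $\M_{H,\varphi(\Lambda)}^{(0)}$, with Proposition \ref{rzis} (ii) handling the passage from $i=0$ to general $i$. No gaps.
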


\begin{proof}
(i): By Proposition \ref{btsc} (i), it suffices to show that $\M_{H,\varphi(\Lambda)}$ is reduced. This follows from Theorem \ref{redh}. 

(ii): By Propositions \ref{rzis} (ii) and \ref{btsc} (ii), it suffices to show that $\M_{H,\varphi(\Lambda)}^{(0)}$ is projective, smooth and irreducible of dimension $(t(\Lambda)-1)/2$. This follows from \cite[Theorem 3.10]{hp}. 
\end{proof}

\begin{thm}\label{mgld}
\emph{Let $\Lambda_1,\Lambda_2\in \Vrt$. 
\begin{enumerate}
\item We have
\begin{equation*}
\M_{G,\Lambda_1}\cap \M_{G,\Lambda_2}=
\begin{cases}
\M_{G,\Lambda_1\cap \Lambda_2} & \text{if }\Lambda_1\cap \Lambda_2\in \Vrt, \\
\emptyset & \text{otherwise}. 
\end{cases}
\end{equation*}
\item We have $\M_{G,\Lambda_1}\subset \M_{G,\Lambda_2}$ if and only if $\Lambda_1\subset \Lambda_2$. 
\end{enumerate}}
\end{thm}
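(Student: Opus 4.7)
\textbf{Proof plan for Theorem \ref{mgld}.}

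The plan is to reduce everything to the analogous statement for $\M_H$, which is available in \cite{hp}, via the identification $\M_{G,\Lambda}=\M_{H,\varphi(\Lambda)}$ of Proposition \ref{btsc} (i). The main computation to be done on the $G$-side is to check that the map $\varphi \colon \Vrt \to \Vrt_H$ is compatible with intersection.

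First, from the explicit description $\varphi(\Lambda)=\Lambda \oplus p^{-1}y_1\Zp$ in Proposition \ref{vtcr}, one sees immediately that for any $\Lambda_1,\Lambda_2 \in \Vrt$,
\begin{equation*}
\varphi(\Lambda_1)\cap \varphi(\Lambda_2)=(\Lambda_1\cap \Lambda_2)\oplus p^{-1}y_1\Zp,
\end{equation*}
and both sides contain $p^{-1}y_1$. Applying the bijection $\psi$ of Proposition \ref{vtcr} and the characterization therein, $\varphi(\Lambda_1)\cap \varphi(\Lambda_2)$ is an $H$-vertex lattice if and only if $\Lambda_1\cap \Lambda_2 \in \Vrt$, and when this holds the intersection is exactly $\varphi(\Lambda_1\cap \Lambda_2)$.

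For (i), I invoke Proposition \ref{btsc} (i) to rewrite $\M_{G,\Lambda_1}\cap \M_{G,\Lambda_2}=\M_{H,\varphi(\Lambda_1)}\cap \M_{H,\varphi(\Lambda_2)}$, and apply the analogous intersection statement for $\M_H$ from \cite[\S 4]{hp}: this intersection equals $\M_{H,\varphi(\Lambda_1)\cap \varphi(\Lambda_2)}$ when the lattice intersection is an $H$-vertex lattice, and is empty otherwise. Combining with the computation of the previous paragraph and Proposition \ref{btsc} once more yields the stated formula.

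For (ii), the forward direction is immediate from Definition \ref{clsb}: if $\Lambda_1\subset \Lambda_2$ then $\Lambda_2^{\vee}\subset \Lambda_1^{\vee}$, and the condition $\rho^{-1}\circ \Lambda_1^{\vee}\circ \rho \subset \End(X)$ implies $\rho^{-1}\circ \Lambda_2^{\vee}\circ \rho \subset \End(X)$. For the converse, assume $\M_{G,\Lambda_1}\subset \M_{G,\Lambda_2}$. Then $\M_{G,\Lambda_1}=\M_{G,\Lambda_1}\cap \M_{G,\Lambda_2}$ is non-empty, so part (i) forces $\Lambda_1\cap \Lambda_2\in \Vrt$ and gives $\M_{G,\Lambda_1}=\M_{G,\Lambda_1\cap \Lambda_2}$. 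Corollary \ref{redv} (ii) then yields $\tfrac{t(\Lambda_1)-1}{2}=\dim \M_{G,\Lambda_1}^{(0)}=\dim \M_{G,\Lambda_1\cap \Lambda_2}^{(0)}=\tfrac{t(\Lambda_1\cap \Lambda_2)-1}{2}$, so $t(\Lambda_1)=t(\Lambda_1\cap \Lambda_2)$. Since $\Lambda_1\cap \Lambda_2 \subset \Lambda_1$ and both vertex lattices have the same index $p^{t}$ over their duals, they are equal, hence $\Lambda_1\subset \Lambda_2$.

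The substantive step is the citation of the analogous $\M_H$-intersection result from \cite{hp}; everything else is bookkeeping on the lattice side, using the explicit form $\varphi(\Lambda)=\Lambda\oplus p^{-1}y_1\Zp$ and the numerical invariant $t(\Lambda)$ read off from $\dim \M_{G,\Lambda}^{(0)}$.
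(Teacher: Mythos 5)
Your proof is correct, and part (ii) coincides with the paper's argument (including the use of Corollary \ref{redv} (ii) to compare types and the observation that a vertex sublattice of the same type must be the whole lattice). Part (i), however, takes a genuinely different route. The paper argues directly on the $G$-side: the inclusion $\M_{G,\Lambda_1}\cap \M_{G,\Lambda_2}=\M_{G,\Lambda_1\cap\Lambda_2}$ (when $\Lambda_1\cap\Lambda_2\in\Vrt$) comes from the identity $(\Lambda_1\cap\Lambda_2)^{\vee}=\Lambda_1^{\vee}+\Lambda_2^{\vee}$ and the moduli description in Definition \ref{clsb}, and the non-emptiness criterion is extracted from a point $L$ of the intersection via the chain $p(\Lambda_1\cap\Lambda_2)\subset(\Lambda_1\cap\Lambda_2)^{\vee}\subset\Lambda(L)^{\vee}\subset\Lambda(L)\subset\Lambda_1\cap\Lambda_2$ furnished by Proposition \ref{spvt} (following the method of Rapoport--Terstiege--Wilson). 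You instead transport everything to $\M_H$ through $\M_{G,\Lambda}=\M_{H,\varphi(\Lambda)}$ and the compatibility $\varphi(\Lambda_1)\cap\varphi(\Lambda_2)=(\Lambda_1\cap\Lambda_2)\oplus p^{-1}y_1\Zp$, which is correct since the decomposition $\L_{\Q}^{\Phi}=\L_{\Q}^{\Phi,\pi}\oplus\Qp y_1$ is direct, and then quote the analogous intersection statement for $\M_H$. Your approach is shorter and more in the spirit of the paper's general strategy of reducing to Howard--Pappas, but it hinges entirely on the quoted $H$-side result; that statement is \cite[Theorem 3.10]{hp} (in their Section 3, not Section 4 as you wrote), so you should fix the reference, and you should also note explicitly that $\M_{G,\Lambda_1}\neq\emptyset$ in part (ii) follows from Corollary \ref{redv} (ii). The paper's direct argument has the advantage of being self-contained given Proposition \ref{spvt}, and its technique is the one reused later (e.g.\ in Proposition \ref{nhtr}).
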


\begin{proof}
(i): We follow the proof of \cite[Proposition 4.3 (ii)]{rtw}. For $S\in \nilp_W$ and $(X,\iota,\lambda,\rho)\in \M_G(S)$, we have $\rho^{-1}\circ (\Lambda_1\cap \Lambda_2)^{\vee}\circ \rho=\rho^{-1}\circ (\Lambda_1^{\vee}+\Lambda_2^{\vee})\circ \rho \subset \End(X)$ if and only $\rho^{-1}\circ \Lambda_i^{\vee}\circ \rho \subset \End(X)$ for $i\in \{1,2\}$. Therefore, if $\Lambda_1\cap \Lambda_2\in \Vrt$, then we have $\M_{G,\Lambda_1}\cap \M_{G,\Lambda_2}=\M_{G,\Lambda_1\cap \Lambda_2}$. Next, assume $\M_{G,\Lambda_1}\cap \M_{G,\Lambda_2}\neq \emptyset$. Then there is a $\pi$-special lattice $L$ such that $L\in (\M_{G,\Lambda_1}\cap \M_{G,\Lambda_2})(\Fpbar)$. We have $\Lambda(L)\subset \Lambda_1\cap \Lambda_2$, and therefore
\begin{equation*}
p(\Lambda_1\cap \Lambda_2)\subset \Lambda_1^{\vee}+\Lambda_2^{\vee}=(\Lambda_1\cap \Lambda_2)^{\vee}\subset \Lambda(L)^{\vee}\subset \Lambda(L)\subset \Lambda_1\cap \Lambda_2, 
\end{equation*}
that is, $\Lambda_1\cap \Lambda_2\in \Vrt$. 

(ii): By definition, we have $\M_{G,\Lambda_1}\subset \M_{G,\Lambda_2}$ if $\Lambda_1\subset \Lambda_2$. On the other hand, assume $\M_{G,\Lambda_1}\subset \M_{G,\Lambda_2}$. By (i), we have $\Lambda_1\cap \Lambda_2\in \Vrt$ and 
\begin{equation*}
\M_{G,\Lambda_1\cap \Lambda_2}=\M_{G,\Lambda_1}\neq \emptyset. 
\end{equation*}
Hence it suffices to show the equality $\Lambda_1\cap \Lambda_2=\Lambda_1$. We have
\begin{equation*}
\frac{t(\Lambda_1\cap \Lambda_2)-1}{2}=\dim \M_{G,\Lambda_1\cap \Lambda_2}=\dim \M_{G,\Lambda_1}=\frac{t(\Lambda_1)-1}{2}
\end{equation*}
by Corollary \ref{redv} (ii). Hence we have $t(\Lambda_1\cap \Lambda_2)=t(\Lambda_1)$. Since $\Lambda_1\cap \Lambda_2 \subset \Lambda_1$, we obtain the desired equality. 
\end{proof}

Let us define a locally closed subscheme of $\M_G$ attached to $\Lambda \in \Vrt$ by 
\begin{equation*}
\BT_{G,\Lambda}:=\M_{G,\Lambda}\setminus \bigcup_{{\Lambda}'\subsetneq \Lambda}\M_{G,\,{\Lambda}'}. 
\end{equation*}
Then we have a bijection as follows by Proposition \ref{spvt}: 
\begin{equation*}
p^{\Z}\backslash \BT_{G,\Lambda}(\Fpbar)\cong \{L\colon \pi\text{-special lattice in }\L_{\Q}^{\pi}\mid \Lambda(L)=\Lambda \}. 
\end{equation*}
We set $\BT_{G,\Lambda}^{(i)}:=\BT_{G,\Lambda}\cap\,\M_{G}^{(i)}$ for each $i\in \Z$. 

\begin{thm}\label{btm0}
\emph{
\begin{enumerate}
\item We have a locally closed stratification
\begin{equation*}
\M_G^{(0),\red}=\coprod_{\Lambda \in \Vrt}\BT_{G,\Lambda}^{(0)}. 
\end{equation*}
Each $\BT_{G,\Lambda}^{(0)}$ is irreducible of dimension $(t(\Lambda)-1)/2$. 
\item For any $\Lambda \in \Vrt$, the closure of $\BT_{G,\Lambda}^{(0)}$ in $\M_{G}^{(0)}$ equals $\M_{G,\Lambda}^{(0)}=\coprod_{\Lambda' \subset \Lambda}\BT_{G,\Lambda}^{(0)}$.
\item The scheme $\M_G^{(0),\red}$ is connected. 
\item Let $\Irr(\M_G^{(0)})$ be the set of all irreducible components of $\M_G^{(0)}$. Then we have a bijection
\begin{equation*}
\Vrt(5)\xrightarrow{\cong} \Irr(\M_G^{(0)});\Lambda \mapsto \M_{G,\Lambda}^{(0)}. 
\end{equation*}
In particular, $\M_G^{(0)}$ is purely $2$-dimensional. 
\end{enumerate}}
\end{thm}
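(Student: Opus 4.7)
The plan is to deduce all four parts from the $\Fpbar$-point bijection $p^{\Z}\backslash \BT_{G,\Lambda}^{(0)}(\Fpbar) \cong \{L : \Lambda(L)=\Lambda\}$ displayed above the theorem, combined with the structural properties of the closed strata $\M_{G,\Lambda}^{(0)}$ supplied by Corollary \ref{redv} and Theorem \ref{mgld}. For (i), Propositions \ref{mgpd}, \ref{pscr} and \ref{spvt} show that every $\Fpbar$-point of $\M_G^{(0)}$ corresponds (modulo $p^{\Z}$) to a $\pi$-special lattice $L$ with a uniquely attached vertex lattice $\Lambda(L)\in \Vrt$, and the bijection above partitions $\M_G^{(0),\red}(\Fpbar)$ into the $\BT_{G,\Lambda}^{(0)}(\Fpbar)$. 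Reducedness of each $\M_{G,\Lambda}^{(0)}$ (Corollary \ref{redv}(i)) upgrades this set-theoretic partition to a locally closed stratification of schemes. A duality count $t(\Lambda) = t(\Lambda') + 2\dim_{\Fp}(\Lambda/\Lambda')$ for $\Lambda' \subsetneq \Lambda$ in $\Vrt$ forces $t(\Lambda') < t(\Lambda)$ strictly, so each proper closed substratum $\M_{G,\Lambda'}^{(0)}$ has strictly smaller dimension than $\M_{G,\Lambda}^{(0)}$; therefore $\BT_{G,\Lambda}^{(0)}$ is open and dense in the irreducible $\M_{G,\Lambda}^{(0)}$, hence itself irreducible of dimension $(t(\Lambda)-1)/2$.

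Part (ii) follows at once: the density just observed identifies the closure of $\BT_{G,\Lambda}^{(0)}$ as $\M_{G,\Lambda}^{(0)}$, and the refinement $\M_{G,\Lambda}^{(0)} = \coprod_{\Lambda'\subset \Lambda}\BT_{G,\Lambda'}^{(0)}$ is read off from the $\Fpbar$-point bijection displayed before the theorem (since $L\in \M_{G,\Lambda}^{(0)}$ is equivalent to $\Lambda(L)\subset \Lambda$). For (iii), Theorem \ref{mgld}(ii) shows that if $\Lambda_1\subset \Lambda_2$ then $\M_{G,\Lambda_1}^{(0)} \subset \M_{G,\Lambda_2}^{(0)}$, so these irreducible closed subsets meet; Proposition \ref{adjv} joins any two vertex lattices by a chain of such inclusions, so the whole union $\M_G^{(0),\red} = \bigcup_{\Lambda\in \Vrt}\M_{G,\Lambda}^{(0)}$ is connected.

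For (iv), since $t(\Lambda)\in \{1,3,5\}$, the closed strata of maximal dimension $2$ are exactly those with $\Lambda\in \Vrt(5)$. For such $\Lambda$, $\M_{G,\Lambda}^{(0)}$ is closed and irreducible, and by Theorem \ref{mgld}(ii) cannot be contained in any other $\M_{G,\Lambda''}^{(0)}$, so it is an irreducible component; the same statement gives injectivity of $\Lambda \mapsto \M_{G,\Lambda}^{(0)}$. For surjectivity, an irreducible component $Z$ has its generic point in some $\BT_{G,\Lambda}^{(0)}$, so $Z\subset \M_{G,\Lambda}^{(0)}$; if $t(\Lambda)<5$, Proposition \ref{ltic}(iii) produces $\Lambda''\in \Vrt(5)$ with $\Lambda \subsetneq \Lambda''$, yielding the strict containment $Z\subset \M_{G,\Lambda}^{(0)}\subsetneq \M_{G,\Lambda''}^{(0)}$ that contradicts maximality, so $\Lambda \in \Vrt(5)$ and $Z=\M_{G,\Lambda}^{(0)}$. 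Pure $2$-dimensionality is then immediate. The main obstacle is the scheme-theoretic upgrade in part (i): one genuinely needs reducedness of each $\M_{G,\Lambda}^{(0)}$, which via Proposition \ref{btsc} rests on the reducedness of $\M_{H,\widetilde{\Lambda}}$ proved in Appendix \ref{rdhp}. Without this input the argument would describe only the underlying point set, not the reduced scheme structure.
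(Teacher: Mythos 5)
Your proposal is correct and follows essentially the same route as the paper: the point-set decomposition via Proposition \ref{spvt}, the scheme-theoretic upgrade via the reducedness and irreducibility of $\M_{G,\Lambda}^{(0)}$ (Corollary \ref{redv}, resting on Proposition \ref{btsc} and Appendix \ref{rdhp}), connectedness via Proposition \ref{adjv}, and the identification of irreducible components via the containment criterion of Theorem \ref{mgld} (ii) together with the fact that every vertex lattice is contained in one of type $5$. One small slip: for that last fact in part (iv) you should cite Proposition \ref{ltic} (i) (which produces $\Lambda''\in \Vrt(5)$ containing a given $\Lambda$ of type $1$ or $3$), not Proposition \ref{ltic} (iii), which counts lattices \emph{inside} a type-$5$ lattice.
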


The stratification in Theorem \ref{btm0} (i) is called the \emph{Bruhat--Tits stratification of $\M_G$}. 

\begin{proof}
(i): This follows from Proposition \ref{spvt} and Corollary \ref{redv} (ii). 

(ii): This follows from the irreducibility of Corollary \ref{redv} (ii). 

(iii): This follows from the same argument as that of \cite[Theorem 6.4.1]{hp2}. Here we use Proposition \ref{adjv}. 

(iv): By (i) and Proposition \ref{ltic} (i), we have an equality of underlying sets
\begin{equation*}
\M_G^{(0),\red}=\bigcup_{\Lambda \in \Vrt(5)}\M_{G,\Lambda}. 
\end{equation*}
Hence $\M_{G,\Lambda}^{(0)}$ is an irreducible component of $\M_G^{(0)}$, and the map $\Lambda \mapsto \M_{G,\Lambda}^{(0)}$ is surjective. On the other hand, the injectivity of the map follows from Theorem \ref{mgld} (ii). 
\end{proof}

We further consider the geometric structure of $\M_G$. Let $\M_G^{\nfs}$ be the non-formally smooth locus of $\M_G$ over $\spf W$, and set $\M_G^{(0),\nfs}:=\M_G^{\nfs}\cap \M_G^{(0)}$. 

\begin{thm}\label{sgvl}
\emph{There is an equality
\begin{equation*}
\M_G^{(0),\nfs}=\coprod_{\Lambda \in \Vrt(1)}\M_{G,\Lambda}^{(0)},
\end{equation*}}
\end{thm}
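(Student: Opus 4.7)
The plan is to combine the characterization of $\M_G^{\nfs}$ from Corollary \ref{rzsg} with the dictionary between $\Pi$-stable Dieudonn\'e lattices and $\pi$-special lattices developed in Section \ref{rtpt}. By Corollary \ref{rzsg}, $\M_G^{(0),\nfs}$ is already known to be a reduced discrete set of $\Fpbar$-rational points, namely those $(X,\iota,\lambda,\rho)$ for which $\iota(\Pi)=0$ on $\Lie(X)$. On the other side, Corollary \ref{redv} (ii) shows that for $\Lambda\in \Vrt(1)$ each $\M_{G,\Lambda}^{(0)}$ is a single reduced $\Fpbar$-point. Since both sides are reduced $0$-dimensional closed subschemes, it suffices to establish the corresponding equality on $\Fpbar$-points.

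Fix an $\Fpbar$-point of $\M_G^{(0)}$, let $M$ be the associated $\Pi$-stable Dieudonn\'e lattice (Proposition \ref{mgpd}), and put $L:=L(M)$. The first step is to read off the condition $\iota(\Pi)=0$ on $\Lie(X)=M/VM$ as $\Pi M\subset VM$, and observe that since both $\Pi M$ and $VM$ have index $p^{4}$ in $M$, this inclusion forces $\Pi M=VM$; combined with $FV=p=\Pi^{2}$, one checks the further equivalent form $FM=\Pi M$. The second step, via Proposition \ref{picr} (ii), turns the equality $FM=\Pi M$ of Dieudonn\'e lattices into the identity $\Phi(L)=\pi(L)$ of $\pi$-special lattices, the lift from $p^{\Z}$-orbits to actual lattices being forced by the matching of $W$-covolumes.

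The main step, which I expect to be the hardest, is to identify $\Phi(L)=\pi(L)$ with the condition $d=0$ of Proposition \ref{spvt}, i.e.\ with $\Lambda(L)\in \Vrt(1)$. The implication $\Phi L=\pi L\Rightarrow d=0$ is immediate, since then $\Phi(L+\pi L)=\Phi L+\pi\Phi L=\pi L+L$ is $\Phi$-stable. The converse is the real obstacle: from $d=0$ we only know $\Phi L\subset L^{0}:=L+\pi L$, and we must pin down $\Phi L$ as $\pi L$. Here the plan is to use that $L$, $\pi L$, and $\Phi L$ are all self-dual sublattices of $L^{0}$ (for $\pi L$, because $\pi$ is an isometry of $[\,,\,]$; for $\Phi L$, by Lemma \ref{phst} (ii) which gives $[\Phi v,\Phi w]=\sigma[v,w]$). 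Using Lemma \ref{y1il} one computes $(L^{0})^{\vee}=L\cap \pi L$ and $\length_{W}L^{0}/(L\cap \pi L)=2$, so the quotient $U:=L^{0}/(L\cap \pi L)$ inherits a non-degenerate $\Fpbar$-valued quadratic form of dimension $2$. Self-dual sublattices of $L^{0}$ of $W$-colength $1$ containing $L\cap \pi L$ correspond bijectively to isotropic lines in $U$; since $p>2$ and $\Fpbar$ is algebraically closed, $U$ is a hyperbolic plane with exactly two isotropic lines, which are precisely $L/(L\cap \pi L)$ and $\pi L/(L\cap \pi L)$. Hence $\Phi L\in \{L,\pi L\}$, and the case $\Phi L=L$ is excluded by the definition of a special lattice ($\length_{W}(L+\Phi L)/L=1$), forcing $\Phi L=\pi L$. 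Chaining the equivalences then yields the theorem.
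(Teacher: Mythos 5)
Your proposal is correct, and its overall skeleton matches the paper's: both reduce to $\Fpbar$-points via Corollary \ref{rzsg}, translate the condition $\iota(\Pi)=0$ on $\Lie(X)$ into $F(M)=y_1(M)$ by the index count $\dim_{\Fpbar}M/y_1(M)=4=\dim_{\Fpbar}M/pF^{-1}(M)$, and then into $\Phi(L)=\pi(L)$ via Proposition \ref{picr}. The implication $\Phi(L)=\pi(L)\Rightarrow L^0$ is $\Phi$-stable (hence $\Lambda(L)\in\Vrt(1)$) is also the paper's argument. Where you genuinely diverge is the converse. The paper handles it globally: for $\Lambda\in\Vrt(1)$ it notes that $\pi\circ\Phi$ maps the singleton $\M_{G,\Lambda}^{(0)}(\Fpbar)$ into itself (using Lemma \ref{pipr} and Proposition \ref{picr} to track the component shifts), so the unique point $L$ must satisfy $\pi\circ\Phi(L)=L$; this leans on Corollary \ref{redv}(ii), i.e.\ ultimately on the Howard--Pappas description of the strata. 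You instead argue locally and linear-algebraically: from $\Phi L\subset L^0$ you observe that $L$, $\pi L$, $\Phi L$ are self-dual lattices squeezed between $(L^0)^{\vee}=L\cap\pi L$ and $L^0$, hence correspond to isotropic lines in the $2$-dimensional nondegenerate quadratic space $L^0/(L^0)^{\vee}$ over $\Fpbar$, of which there are exactly two, namely the images of $L$ and $\pi L$; the definition of a special lattice rules out $\Phi L=L$. This is a clean, self-contained alternative that avoids invoking the singleton property of the type-$1$ strata for the equivalence itself (you still use Corollary \ref{redv} only for the scheme-theoretic framing, and Theorem \ref{mgld}(ii) for disjointness of the union, as does the paper). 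Both arguments are valid; yours trades the paper's appeal to the geometry of the strata for an elementary lattice count.
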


\begin{proof}
Note that Corollary \ref{rzsg} implies that for any non-formally smooth point $x\in \M_G^{(0)}$ is $\Fpbar$-rational, and the corresponding $\Pi$-stable Dieudonn{\'e} lattice $M$ satisfies $y_1(M)\subset F^{-1}(pM)$. This is equivalent to the condition $(y_1^{-1}\circ F)(M)=M$. Indeed, the equivalence above follows from $\dim_{\Fpbar}M/y_1(M)=4=\dim_{\Fpbar}M/pF^{-1}(M)$. Here the second equality is a consequence of the Kottwitz condition. Moreover, if $L$ is the $\pi$-special lattice corresponding to $M$, then the condition $(y_1^{-1}\circ F)(M)=M$ is translated into $\pi\circ \Phi(L)=L$ by Proposition \ref{picr}. 

First, take $\Lambda \in \Vrt(1)$, and write $\M_{G,\Lambda}^{(0)}(\Fpbar)={L}$, where $L$ is a $\pi$-special lattice in $\L_{\Q}$. Then we have $L+\pi(L)=\Lambda$, and
\begin{equation*}
\pi(L)\in \M_{G,\Lambda}^{(1)}(\Fpbar)\subset p^{\Z}\backslash \M_{G,\Lambda}(\Fpbar)
\end{equation*}
by Proposition \ref{picr}. Moreover, we have 
\begin{equation*}
\pi \circ \Phi(L)=\Phi \circ \pi(L)\in \M_{G,\Lambda}^{(0)}(\Fpbar)\subset p^{\Z}\backslash \M_{G,\Lambda}(\Fpbar)
\end{equation*}
by Lemma \ref{pipr} (iv) and Proposition \ref{picr}. On the other hand, $\M_{G,\Lambda}^{(0)}(\Fpbar)$ consists of a single point by Corollary \ref{redv} (ii). Hence we obtain $\pi \circ \Phi(L)=L$, that is, $\M_{G,\Lambda}^{(0)}\subset \M_G^{(0),\nfs}$. Note that the disjointness of the right-hand side follows from Theorem \ref{mgld} (ii). 

Next, take a $\pi$-special lattice $L$ in $\L_{\Q}$ which corresponds to a point in $\M_G^{(0),\nfs}$. Then we have $\pi(L)=\Phi(L)$, which implies that $L^{0}=L+\pi(L)$ is $\Phi$-invariant since $\pi^2=\id_{\L_{\Q}}$. Hence we have $L^0\in \Vrt(1)$ and $\M_{G,L^0}^{(0)}=\{x\}$. 
\end{proof}

\begin{cor}\label{ctnb}
\emph{
\begin{enumerate}
\item Each non-formally smooth point of $\M_G^{(0)}$ is contained in $2(p+1)$-irreducible components. 
\item Each irreducible component of $\M_G^{(0)}$ contains $(p+1)(p^2+1)$-non-formally smooth points. 
\item For each irreducible component $F$ of $\M_G^{(0)}$, the number of irreducible components of $\M_G^{(0)}$ such that the intersections with $F$ are $1$-dimensional is $(p+1)(p^2+1)$. 
\item For each irreducible component $F$ of $\M_G^{(0)}$, the number of irreducible components of $\M_G^{(0)}$ which intersect at a single point is $p(p+1)(p^2+1)$. 
\end{enumerate}}
\end{cor}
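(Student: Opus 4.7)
The plan is to convert each of the four counting statements into an assertion about vertex lattices, using the dictionary already established: by Theorem \ref{btm0} (iv) the irreducible components of $\M_G^{(0)}$ correspond bijectively to $\Vrt(5)$ via $\Lambda_1 \mapsto \M_{G,\Lambda_1}^{(0)}$; by Theorem \ref{sgvl} together with Corollary \ref{redv}, the non-formally smooth points correspond to $\Vrt(1)$ via $\Lambda \mapsto \M_{G,\Lambda}^{(0)}$ (each such stratum being a single $\Fpbar$-point); and by Theorem \ref{mgld} the incidences and intersections of these strata are governed by containment and lattice-intersection in $\Vrt$. Proposition \ref{ltic} will then supply all the required cardinalities.

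For (i), I identify the non-formally smooth point with its $\Lambda \in \Vrt(1)$; the irreducible components containing it are precisely the $\M_{G,\Lambda'}^{(0)}$ for $\Lambda' \in \Vrt(5)$ with $\Lambda \subset \Lambda'$ (Theorem \ref{mgld} (ii)), and Proposition \ref{ltic} (i) gives the count $2(p+1)$. For (ii), the non-formally smooth points inside a fixed component $\M_{G,\Lambda_1}^{(0)}$ with $\Lambda_1 \in \Vrt(5)$ correspond dually to $\{\Lambda \in \Vrt(1) \mid \Lambda \subset \Lambda_1\}$, which has $(p+1)(p^2+1)$ elements by Proposition \ref{ltic} (iii).

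For (iii) and (iv), I fix $F = \M_{G,\Lambda_1}^{(0)}$ with $\Lambda_1 \in \Vrt(5)$ and consider $\Lambda_2 \in \Vrt(5) \setminus \{\Lambda_1\}$. By Theorem \ref{mgld} (i) the intersection $F \cap \M_{G,\Lambda_2}^{(0)}$ is either empty or equals $\M_{G,\Lambda_1 \cap \Lambda_2}^{(0)}$ with $\Lambda_1 \cap \Lambda_2 \in \Vrt$; since two rank-$5$ $\Zp$-lattices one contained in the other with the same type must coincide, $\Lambda_1 \cap \Lambda_2$ has type in $\{1,3\}$. By Corollary \ref{redv} the intersection is $1$-dimensional iff $\Lambda_1 \cap \Lambda_2 \in \Vrt(3)$, and a single point iff $\Lambda_1 \cap \Lambda_2 \in \Vrt(1)$. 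The map $\Lambda_2 \mapsto \Lambda_1 \cap \Lambda_2$ onto $\{\Lambda \in \Vrt(3) \mid \Lambda \subset \Lambda_1\}$ has fibers of size $1$ by Proposition \ref{ltic} (iv); combined with the base cardinality $(p+1)(p^2+1)$ from Proposition \ref{ltic} (iii), this gives (iii). Analogously, the map onto $\{\Lambda \in \Vrt(1) \mid \Lambda \subset \Lambda_1\}$ has fibers of size $p$ by Proposition \ref{ltic} (v) over a base of cardinality $(p+1)(p^2+1)$, yielding the count $p(p+1)(p^2+1)$ for (iv).

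Once the translation to vertex lattices is made, the entire argument is bookkeeping against Proposition \ref{ltic}. The only mild step requiring independent verification is the dichotomy used in (iii)--(iv): for distinct $\Lambda_1,\Lambda_2 \in \Vrt(5)$ with $\Lambda_1 \cap \Lambda_2 \in \Vrt$, the intersection cannot itself be of type $5$, so the count of neighbors of $F$ splits cleanly between those meeting $F$ in a curve and those meeting $F$ at a point.
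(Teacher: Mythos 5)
Your proposal is correct and follows exactly the route the paper takes: translate everything into the language of vertex lattices via Theorems \ref{sgvl}, \ref{btm0} (iv) and \ref{mgld}, and then read off the cardinalities from Proposition \ref{ltic} (i), (iii), (iv), (v). You have merely written out in full the bookkeeping that the paper's proof leaves as a list of citations, including the correct observation that $\Lambda_1\cap\Lambda_2$ cannot have type $5$ for distinct $\Lambda_1,\Lambda_2\in\Vrt(5)$.
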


\begin{proof}
(i): This is a consequence of Propositions \ref{ltic} (i), \ref{mgld} (ii) and Theorem \ref{sgvl}. 

(ii): This follows from Propositions \ref{ltic} (iii), \ref{mgld} (ii) and Theorem \ref{sgvl}. 

(iii): This is a consequence of Propositions \ref{ltic} (iii), (iv) and \ref{mgld}. 

(iv): This follows from Propositions \ref{ltic} (iii), (v) and \ref{mgld}. 
\end{proof}

\begin{rem}
\begin{enumerate}
\item We can count numbers as in Corollary \ref{ctnb} for $\M_H^{(0)}$ by the same method.
\item We modify the number of irreducible components of $\M_H^{(0)}$ which intersects at a single point for each irreducible component of $\M_H^{(0)}$ asserted in \cite[Theorem 3.12 (1)]{hp}. The precise value is $p(p^2+1)(p^3+1)$. This follows from the analogue of Proposition \ref{ltic} (v) and the fact that each irreducible component of $\M_H^{(0)}$ contains $(p^2+1)(p^3+1)$-superspecial points. Here a superspecial point is an element of $\M_H(\Fpbar)$ such that the corresponding Dieudonn{\'e} lattice $M$ in $\D_{\Q}$ satisfies $F^2(M)=pM$. 
\end{enumerate}
\end{rem}

We write vertex lattices in terms of the Bruhat--Tits building. Define a simplicial complex $\mathcal{V}$ with an action of $\SO(\L_{\Q}^{\Phi,\pi})$ as follows: 
\begin{itemize}
\item The set of vertices in $\mathcal{V}$ is the set $\Vrt(1)\sqcup \Vrt(5)$. 
\item The adjacency relation $\sim$ is given as below for distinct $\Lambda,\Lambda'\in \Vrt(1)\sqcup \Vrt(5)$: 
\begin{itemize}
\item if $t(\Lambda)=1$ and $t(\Lambda')=5$. Then $\Lambda \sim \Lambda'$ if $\Lambda \subset \Lambda'$, 
\item if $t(\Lambda)=t(\Lambda)=5$. Then $\Lambda \sim \Lambda'$ if $\length_{W} ((\Lambda+\Lambda')/\Lambda)=1$ and $\length_{W}((\Lambda+\Lambda')/\Lambda')=1$. 
\end{itemize}
\item For $m\in \{0,1,2\}$, an $m$-simplex is a subset of $(m+1)$-vertex lattices $\{\Lambda_0,\ldots ,\Lambda_m\}$ which are mutually adjacent. 
\item $\SO(\L_{\Q}^{\Phi,\pi})(\Qp)$ acts simplicially on $\mathcal{V}$ by
\begin{equation*}
\SO(\L_{\Q}^{\Phi,\pi})\times (\Vrt(1)\sqcup \Vrt(5))\rightarrow \Vrt(1)\sqcup \Vrt(5); (g,\Lambda)\mapsto g\Lambda. 
\end{equation*}
\end{itemize}

\begin{prop}\label{v3eg}
\emph{There is a bijection between the set $\Vrt(3)$ and the set of edges connecting two adjacent vertex lattices of type $5$, that is, the set of $1$-simplexes $\{\Lambda_1,\Lambda_2\}$ such that $\Lambda_1,\Lambda_2\in \Vrt(5)$. }
\end{prop}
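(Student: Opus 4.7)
The plan is to define mutually inverse maps: the forward map sends $\Lambda_3 \in \Vrt(3)$ to the unique pair of elements of $\Vrt(5)$ containing $\Lambda_3$, while the inverse sends an edge $\{\Lambda_1, \Lambda_2\}$ to $\Lambda_1 \cap \Lambda_2$.

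For the forward map, Proposition~\ref{ltic}(i) produces exactly two $\Lambda_1, \Lambda_2 \in \Vrt(5)$ with $\Lambda_3 \subset \Lambda_i$, corresponding (via the bijection used in its proof) to the two distinct isotropic lines $W_i := p\Lambda_i/p\Lambda_3$ in the quadratic space $\Omega'_0(\Lambda_3)$, which is a hyperbolic plane $\H_{\Fp}$ by Lemma~\ref{qtvx}(i). Since any two distinct isotropic lines in $\H_{\Fp}$ are transversal and jointly span the whole plane, $W_1 \cap W_2 = 0$ and $W_1 + W_2 = \Omega'_0(\Lambda_3)$; pulling back to lattices gives $\Lambda_1 \cap \Lambda_2 = \Lambda_3$ and $p(\Lambda_1 + \Lambda_2) = \Lambda_3^{\vee}$. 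Using the length-counting identity $\length_W((\Lambda_1+\Lambda_2)/\Lambda_i) = \length_W(\Lambda_j/(\Lambda_1 \cap \Lambda_2)) = \length_W(\Lambda_j/\Lambda_3) = \dim_{\Fp} W_j = 1$ for $\{i,j\} = \{1,2\}$, the pair $\{\Lambda_1, \Lambda_2\}$ is indeed an edge of $\mathcal{V}$.

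For the inverse map, let $\{\Lambda_1, \Lambda_2\}$ be an edge with $\Lambda_i \in \Vrt(5)$. From $\length_W((\Lambda_1+\Lambda_2)/\Lambda_i) = 1$ one deduces $\length_W(\Lambda_i/(\Lambda_1 \cap \Lambda_2)) = 1$, and then, since $\Lambda_i/(\Lambda_1 \cap \Lambda_2)$ is a one-dimensional $\Fp$-vector space, multiplication by $p$ annihilates it, so $p\Lambda_i \subset \Lambda_1 \cap \Lambda_2$ for each $i$ and hence $p(\Lambda_1+\Lambda_2) \subset \Lambda_1 \cap \Lambda_2$. Together with the standard identity $(\Lambda_1 \cap \Lambda_2)^{\vee} = \Lambda_1^{\vee} + \Lambda_2^{\vee} = p(\Lambda_1+\Lambda_2)$, this shows $\Lambda_1 \cap \Lambda_2 \in \Vrt$, and the filtration $p(\Lambda_1+\Lambda_2) \subset p\Lambda_i \subset \Lambda_1 \cap \Lambda_2$ yields $t(\Lambda_1 \cap \Lambda_2) = 5 - 1 - 1 = 3$.

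The two constructions are mutually inverse: starting from $\Lambda_3 \in \Vrt(3)$ we recovered $\Lambda_3 = \Lambda_1 \cap \Lambda_2$ directly in the forward step; conversely, starting from an edge and setting $\Lambda_3 := \Lambda_1 \cap \Lambda_2 \in \Vrt(3)$, both $\Lambda_1$ and $\Lambda_2$ are type-$5$ extensions of $\Lambda_3$, and by Proposition~\ref{ltic}(i) they exhaust the set of such extensions, which has cardinality $2$. The only slightly delicate point is the identification of $\Omega'_0(\Lambda_3)$ with $\H_{\Fp}$ (already established in Lemma~\ref{qtvx}(i)), making the two isotropic lines transversal; beyond this, the argument is pure length arithmetic and presents no real obstacle.
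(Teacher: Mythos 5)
Your proof is correct and follows essentially the same route as the paper: the bijection is $\{\Lambda_1,\Lambda_2\}\mapsto \Lambda_1\cap\Lambda_2$, with Proposition \ref{ltic}(i) supplying bijectivity, and you merely make explicit (via the two transversal isotropic lines in $\Omega'_0(\Lambda_3)\cong \H_{\Fp}$) the surjectivity check that the paper leaves implicit. One cosmetic slip: the filtration you use to compute the type is written as $p(\Lambda_1+\Lambda_2)\subset p\Lambda_i\subset \Lambda_1\cap\Lambda_2$, whereas the first inclusion goes the other way ($p\Lambda_i\subset p(\Lambda_1+\Lambda_2)\subset \Lambda_1\cap\Lambda_2$); the arithmetic $5-1-1=3$ is nonetheless the correct computation.
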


\begin{proof}
For a $1$-simplex $\{\Lambda_1,\Lambda_2\}$ with $\Lambda_i\in \Vrt(5)$ for $i\in \{1,2\}$, we show that $\Lambda_1\cap \Lambda_2\in \Vrt(3)$. For $i\in \{1,2\}$, since $\Lambda_i\in \Vrt(5)$, we have $\Lambda_i^{\vee}=p\Lambda_i$. Hence we have 
\begin{equation*}
p(\Lambda_1 \cap \Lambda_2)\subset p(\Lambda_1+\Lambda_2)=p\Lambda_1+p\Lambda_2=(\Lambda_1\cap \Lambda_2)^{\vee}. 
\end{equation*}
On the other hand, since $\Lambda_1$ and $\Lambda_2$ are adjacent, we have
\begin{equation*}
p(\Lambda_1+\Lambda_2)\subset \Lambda_1\cap \Lambda_2. 
\end{equation*}
Hence we obtain $\Lambda_1\cap \Lambda_2\in \Vrt$. Moreover, the adjacency relation for $\Lambda_1$ and $\Lambda_2$ also implies that $\Lambda_1\cap \Lambda_2\in \Vrt(3)$. Therefore, we obtain a map
\begin{equation*}
\{\{\Lambda_1,\Lambda_2\}\colon \text{$1$-simplex}\mid \Lambda_1,\Lambda_2\in \Vrt(5)\}\rightarrow \Vrt(3);\{\Lambda_1,\Lambda_2\} \mapsto \Lambda_1\cap \Lambda_2. 
\end{equation*}
The bijectivity of the map above follows from Proposition \ref{ltic} (i). 
\end{proof}

We can also prove the assertion as below, which follows from the same argument as the proof of \cite[Proposition 2.22]{hp} by using Propositions \ref{lqpi} (iii) and \ref{v3eg}. See also \cite[20.3]{gar}. 

\begin{prop}\label{vibo}
There is an $\SO(\L_{\Q}^{\Phi,\pi})$-equivariant isomorphism between the simplicial complex $\mathcal{V}$ and the Bruhat--Tits building of $\SO(\L_{\Q}^{\Phi,\pi})$. 
\end{prop}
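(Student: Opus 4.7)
The plan is to follow the strategy of \cite[Proposition 2.22]{hp} adapted to our $5$-dimensional situation. First I would use Proposition \ref{lqpi} (iii) to identify $(\L_{\Q}^{\Phi,\pi},p^{-1}\varepsilon^{-2}Q)$ with the split quadratic space $\H_{\Qp}^{\oplus 2}\oplus \Qp$ of dimension $5$. Consequently $\SO(\L_{\Q}^{\Phi,\pi})$ is the split group of type $B_2$ over $\Qp$, and by Bruhat--Tits theory its building is a two-dimensional simplicial complex whose affine Dynkin diagram has three nodes; in particular there are three $\SO(\L_{\Q}^{\Phi,\pi})(\Qp)$-orbits of vertices.

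Next I would invoke the description of the building in terms of lattices, as in \cite[Section 20.3]{gar}: vertices correspond to $\Zp$-lattices $\Lambda \subset \L_{\Q}^{\Phi,\pi}$ satisfying $p\Lambda \subset \Lambda^{\vee}\subset \Lambda$, with the three orbits distinguished by $t(\Lambda)=\dim_{\Fp}(\Lambda/\Lambda^{\vee})\in \{1,3,5\}$, while edges and $2$-simplices correspond to chains of such lattices under inclusion. Using Propositions \ref{ltic}, \ref{adjv} and \ref{spvt}, together with the combinatorial counts from Lemma \ref{tinb}, one verifies that the incidence pattern coming from strict inclusion matches the expected apartment structure for a $B_2$-building; in particular the link of any type-$5$ vertex is a generalized quadrangle with the correct parameters. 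The action of $\SO(\L_{\Q}^{\Phi,\pi})(\Qp)$ is obviously simplicial, and the bijection in Proposition \ref{v3eg} identifies the remaining vertex orbit (of type $3$) with the midpoints of edges joining two type-$5$ vertices. The $\SO(\L_{\Q}^{\Phi,\pi})(\Qp)$-equivariant simplicial map sends $\Lambda \in \Vrt(1)\sqcup \Vrt(5)$ to the corresponding vertex, and each edge between two type-$5$ vertices to the edge whose midpoint is the type-$3$ vertex attached by Proposition \ref{v3eg}.

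The main obstacle I expect is the verification that our simplified complex $\mathcal{V}$, which a priori only remembers vertices of types $1$ and $5$ together with a specified set of edges (the pairs of adjacent type-$5$ vertices being recovered from $\Vrt(3)$ via Proposition \ref{v3eg}), recovers the full $B_2$-building rather than a subcomplex or quotient. Concretely one must show that every $2$-simplex of the building arises from a triple $(\Lambda_1,\Lambda_3,\Lambda_5)$ with $\Lambda_i\in \Vrt(i)$ and $\Lambda_1\subset \Lambda_3\subset \Lambda_5$, and that distinct such triples give distinct $2$-simplices. This is a combinatorial check combining Proposition \ref{ltic} (i)--(iii) (which enumerates the vertex lattices containing or contained in a given one) with Proposition \ref{v3eg} and the transitivity provided by Lemma \ref{sosj}; once these incidence and transitivity statements are in place the simplicial map above is a bijection on simplices of every dimension, hence the desired isomorphism.
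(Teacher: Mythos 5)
Your sketch follows the same route as the paper, whose proof is precisely the one-line reduction to the argument of \cite[Proposition 2.22]{hp} via Proposition \ref{lqpi} (iii) (splitness of $(\L_{\Q}^{\Phi,\pi},p^{-1}\varepsilon^{-2}Q)$) and Proposition \ref{v3eg}, with a pointer to \cite[20.3]{gar}; the incidence and transitivity checks you outline are exactly the content of that cited argument. The only slip is the passing claim that the three vertex orbits of the building are distinguished by $t(\Lambda)\in\{1,3,5\}$ --- as you in fact use afterwards, type-$3$ lattices are not vertices but correspond to edges between adjacent type-$5$ vertices (the two hyperspecial vertex types both land in $\Vrt(5)$), which is why $\mathcal{V}$ needs no separate type-$3$ vertices.
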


\subsection{Bruhat--Tits building of $J^{\ad}(\Qp)$ and Bruhat--Tits strata}\label{jbts}

We interpret the simplicial complex $\mathcal{V}$ by the isomorphism $J^{\ad}\cong \SO(\L_{\Q}^{\Phi,\pi})$ constructed in Corollary \ref{eis1}. 

First, we recall the Bruhat--Tits building of $J^{\ad}(\Qp)\cong \PGSp_4(\Qp)$. See also \cite[20.1]{gar} and \cite[Chapter 2]{fan}. We use the $4$-dimensional symplectic space $(V_0,(\,,\,)_0)$ over $\Qp$ constructed in Section \ref{grpj}. See Definition \ref{rlbl}. For a lattice $T$ in $V_0$, let $[T]$ be the homothety class of lattices in $V_0$ containing $T$. 

We define a simplicial complex $\B$ as follows: 
\begin{itemize}
\item The set of vertices consists of sets of lattices $\triangle$ in $V_0$ such that there is (necessarily unique) $T\in \triangle$ satisfying the conditions as follows: 
\begin{itemize}
\item $\triangle=[T]\cup [T^{\vee}]$,
\item $pT\subset T^{\vee}\subset T$. 
\end{itemize}
We denote by $\Vtx$ the set of all vertices. 
\item The adjacency relation $\sim$ on the set of vertices is defined as follows: for vertices $\triangle_1,\triangle_2$, we have $\triangle_1\sim \triangle_2$ if there are lattices $T_1,T_2$ in $V_0$ such that
\begin{itemize}
\item $T_i\in \triangle_i$ and $pT_i \subset T_{i}^{\vee}\subset T_i$ for $i\in \{1,2\}$,
\item $T_1\subset T_2$ or $T_2\subset T_1$. 
\end{itemize}
\item For $m\in \{0,1,2\}$, an $m$-simplex is a subset of $(m+1)$-vertices $\{\triangle_0,\ldots,\triangle_m\}$ which are mutually adjacent. 
\item The group $J(\Qp)\cong \GSp(V_0)(\Qp)$ acts simplicially on $\B$ by 
\begin{equation*}
J(\Qp)\times \Vtx\rightarrow \Vtx; (g,\triangle)\mapsto g\triangle:=\{gT\mid T\in \triangle \}. 
\end{equation*}
Since the action of the center of $J(\Qp)$ is trivial, the action of $J(\Qp)$ factors through $J^{\ad}(\Qp)$. 
\end{itemize}

For a vertex $\triangle$ of $\B$, Put $t(\triangle):=\dim_{\Fp}(T/T^{\vee})\in \Z$, where $T\in \triangle$ is the unique lattice in $V_0$ satisfying $pT\subset T^{\vee}\subset T$. Note that the number $t(\triangle)$ is independent of the choice of $T$, and we have $t(\triangle)\in \{0,2,4\}$. We call $t(\triangle)$ the \emph{type} of $\triangle$. 

\begin{dfn}
\begin{enumerate}
\item For $t\in \{0,2,4\}$, we denote by $\Vtx(t)$ the set of all vertices $\triangle$ of $\B$ satisfying $t(\triangle)=t$. 
\item Put $\Vtx^{\hs}:=\Vtx(0)\sqcup \Vtx(4)$. An element of $\Vtx^{\hs}$ is called a \emph{hyperspecial vertex}. 
\item Put $\Vtx^{\nsp}:=\Vtx(2)$. An element of $\Vtx^{\nsp}$ is called a \emph{non-special vertex}. 
\item Let $\Edg^{\hs}$ be the set of all edges connecting two adjacent hyperspecial vertices, that is, $1$-simplexes $\{\triangle'_0,\triangle'_2\}$ where $\triangle'_i\in \Vtx(2i)$ for $i=0,2$. 
\end{enumerate}
\end{dfn}

\begin{rem}
\begin{enumerate}
\item Let $\triangle \in \Vtx$ and $T\in \triangle$. Then we have $\triangle=[T]$ if and only if $\triangle \in \Vtx^{\hs}$. If $\triangle \in \Vtx^{\hs}$, then we have $\triangle=[T]\sqcup [T^{\vee}]$.  
\item There is an isomorphism of simplicial complexes between the Bruhat--Tits building of $\Sp_4(\Qp)$ and $\B$ with $\Sp(V_0)(\Qp)$-actions by sending $[T]$ to $[T]\cup [T^{\vee}]$; cf.~\cite[20.1]{gar}. 
\item Let $g\in J^{\ad}(\Qp)$ satisfying $\ord_p(\sml(g))\in \Z\setminus 2\Z$, and $T$ a lattice in $V_0$ satisfying $pT\subset T^{\vee}\subset T$ and $\dim_{\Fp}(T/T^{\vee})=2$. Then the homothety class $[gT]$ does not contain a lattice $T'$ in $V_0$ such that $pT'\subset (T')^{\vee}\subset T'$. However, $[(gT)^{\vee}]$ contains such a lattice. 
\end{enumerate}
\end{rem}

We further consider the lattices in $V_0$. 
\begin{dfn}\label{j0df}
We define a subgroup $J^0$ of $J(\Qp)$ by 
\begin{equation*}
J^0:=\{g \in J(\Qp)\mid \sml(g)\in \Zpt \}. 
\end{equation*}
\end{dfn}

\begin{prop}\label{trjv}
Let $T_0$ be a self-dual lattice in $V_0$, and $T_1$ a lattice in $V_0$ satisfying $pT_1\subsetneq T_1^{\vee}\subsetneq T_1$ (that is, $\dim_{\Fp}(T/T^{\vee})=2$). 
\begin{enumerate}
\item For any lattice $T$ in $V_0$ satisfying $T^{\vee}=p^iT$ for some $i\in \Z$, there is an element $g\in J(\Qp)$ such that $T=g(T_0)$. 
\item Let $i\in \Z$, and $T$ a lattice in $V_0$ satisfying $T^{\vee}=p^iT$. Then we have an equality
\begin{equation*}
\{g(T)\mid g\in J^0\}=\{T'\colon \text{lattice in }V_0\mid (T')^{\vee}=p^iT'\}. 
\end{equation*}
\item For a lattice $T$ in $V_0$ satisfying $pT\subsetneq T^{\vee}\subsetneq T$, there is an element $g\in J^0$ such that $T=g(T_1)$. 
\end{enumerate}
\end{prop}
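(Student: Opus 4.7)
The plan is to reduce all three assertions to the symplectic elementary divisors theorem: for any $\Zp$-lattice $T$ in the $4$-dimensional symplectic space $(V_0,(\,,\,)_0)$, there exist a symplectic basis $(e_1,e_2,f_2,f_1)$ of $V_0$ (meaning $(e_i,f_j)_0=\delta_{ij}$ and $(e_i,e_j)_0=(f_i,f_j)_0=0$) together with integers $a_1\geq a_2$ such that
\begin{equation*}
T=\Zp e_1\oplus \Zp e_2\oplus \Zp p^{a_2}f_2\oplus \Zp p^{a_1}f_1,
\end{equation*}
from which a direct computation yields
\begin{equation*}
T^{\vee}=\Zp p^{-a_1}e_1\oplus \Zp p^{-a_2}e_2\oplus \Zp f_2\oplus \Zp f_1.
\end{equation*}

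For (i), I apply this theorem to both $T_0$ and $T$. Self-duality $T_0^{\vee}=T_0$ forces the exponents for $T_0$ to vanish, while the hypothesis $T^{\vee}=p^{i}T$ forces $a_1=a_2=-i$. The linear map that sends the symplectic basis adapted to $T_0$ to the symplectic basis adapted to $T$, scaled by $p^{-i}$ on the $f$-part, lies in $\GSp(V_0)(\Qp)=J(\Qp)$ with similitude $p^{-i}$ and carries $T_0$ to $T$. For (ii), pick any $g\in J(\Qp)$ with $gT=T'$ provided by (i), and use the functoriality $(gT)^{\vee}=\sml(g)^{-1}g(T^{\vee})$ to compute $(T')^{\vee}=\sml(g)^{-1}p^iT'$. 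Since we also assume $(T')^{\vee}=p^iT'$, the scalar $\sml(g)^{-1}$ must preserve the lattice $p^iT'$, forcing $\sml(g)\in \Zpt$ and hence $g\in J^0$. The reverse inclusion is immediate from the same identity applied to any $g\in J^0$.

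For (iii), I again apply the elementary divisors theorem to $T$. The inclusions $pT\subsetneq T^{\vee}\subsetneq T$ force $a_1,a_2\in \{-1,0\}$, and the dimension constraint $\dim_{\Fp}(T/T^{\vee})=2$ pins down $\{a_1,a_2\}=\{-1,0\}$; the same conclusion holds for $T_1$. I then take the linear map sending the symplectic basis of $V_0$ adapted to $T_1$ to the one adapted to $T$ (matching the exponents $-1$ and $0$ on corresponding $f$-vectors), which is an element of $\Sp(V_0)(\Qp)\subset J^0$ carrying $T_1$ to $T$. The main (and only) delicate point, which I would want to record carefully, is the verification that the hypotheses on $T$ really do pin down the elementary-divisor pattern to $\{-1,0\}$; once this is in hand, the key advantage of (iii) over (i) is that the two patterns agree, so we need only a symplectic automorphism (similitude $1$) rather than a genuine similitude, which automatically places the transporter in $J^0$.
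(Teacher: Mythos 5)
Your proposal is correct, and for parts (i) and (iii) it takes a genuinely different route from the paper. The paper proves (i) by citing Kottwitz's transitivity result for lattices self-dual up to scalar, and proves (iii) by first producing a self-dual lattice $T'$ with $T'\subsetneq T\subsetneq p^{-1}T'$, transporting $T'$ to $T_0$ via (ii), reducing modulo $p$ to the transitivity of $\GSp_4(\Fp)$ on isotropic lines in $T_0\otimes_{\Zp}\Fp$, and then lifting to $\GSp_4(\Zp)$ using smoothness; your argument instead runs everything through the symplectic elementary divisor theorem, reading off the exponent pattern $\{a_1,a_2\}$ from the duality hypotheses and matching adapted symplectic bases. (Part (ii) is the same similitude-factor computation in both.) Your approach is more self-contained and elementary --- it avoids both the external citation and the reduction-plus-lifting machinery, and it also silently supplies the existence of the intermediate self-dual lattice $T'$ that the paper asserts without proof --- while the paper's method illustrates the mod-$p$ transitivity-and-smoothness technique it reuses elsewhere (e.g.\ Lemma \ref{sosj}). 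One small point worth recording in your write-up of (iii): the strict inclusions $pT\subsetneq T^{\vee}\subsetneq T$ already force $\{a_1,a_2\}=\{-1,0\}$ (they rule out $a_1=a_2=0$ and $a_1=a_2=-1$, and the non-strict inclusions confine each $a_j$ to $\{-1,0\}$), so the dimension count is automatic rather than an additional constraint.
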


To prove Proposition \ref{trjv}, let us define an algebraic group $J_{\Zp}$ over $\Zp$ as
\begin{equation*}
J_{\Zp}(R)=\{(g,c)\in \GL_{R}(T_0\otimes_{\Zp} R)\times \G_m(R)\mid (g(v),g(w))=c(v,w)\text{ for all }v,w\in V\otimes_{\Zp} R\}
\end{equation*}
for any $\Zp$-algebra $R$. Then we have $J_{\Zp}\otimes_{\Zp} \Qp \cong J$ and $J_{\Zp}\cong \GSp_4\otimes_{\Z} \Zp$. 

We endow $T_{0,\Fp}:=T_0\otimes_{\Zp}\Fp$ with the non-degenerate symplectic form $(\,,\,)_0\bmod p$ over $\Fp$. 

\begin{lem}\label{spfp}
\emph{The group $J_{\Zp}(\Fp)$ acts transitively on the set of isotropic lines in $T_{0,\Fp}$. }
\end{lem}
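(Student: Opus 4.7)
The plan is to reduce the claim to the classical transitivity of the symplectic group on nonzero vectors. First, I would observe that the form $(\,,\,)_0 \bmod p$ on $T_{0,\Fp}$ is alternating, so $(v,v)_0 \equiv 0$ for every $v \in T_{0,\Fp}$. Consequently every line in $T_{0,\Fp}$ is automatically isotropic, and the lemma reduces to showing that $J_{\Zp}(\Fp)$ acts transitively on $\P(T_{0,\Fp})(\Fp)$, or equivalently on $T_{0,\Fp}\setminus \{0\}$ up to scaling by $\Fpt$.

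Next, under the isomorphism $J_{\Zp}\cong \GSp_4\otimes_{\Z}\Zp$, the subgroup $\Sp(T_{0,\Fp})(\Fp)\subset J_{\Zp}(\Fp)$ (the kernel of the similitude character modulo $p$) preserves $(\,,\,)_0\bmod p$. It therefore suffices to prove the stronger statement that $\Sp(T_{0,\Fp})(\Fp)$ acts transitively on the nonzero vectors of $T_{0,\Fp}$, and then pass to lines.

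For that transitivity, I would invoke Witt's extension theorem for symplectic spaces, which in this case amounts to an elementary construction: given any nonzero $v\in T_{0,\Fp}$, non-degeneracy of $(\,,\,)_0\bmod p$ furnishes a vector $v^{*}\in T_{0,\Fp}$ with $(v,v^{*})_0=1$; the plane $\langle v,v^{*}\rangle$ is a hyperbolic plane, its orthogonal complement is a non-degenerate symplectic plane of dimension $2$, and there one can pick another symplectic pair $(w,w^{*})$. Hence any nonzero $v$ extends to a symplectic basis $(v,v^{*},w,w^{*})$ of $T_{0,\Fp}$. Since any two symplectic bases are carried to one another by a unique element of $\Sp(T_{0,\Fp})(\Fp)$, transitivity on nonzero vectors follows, and with it the lemma.

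I do not anticipate any genuine obstacle here; the assertion is a special case of the well-known Witt extension property for symplectic spaces over a field, and the only mild point to record is the reduction to alternating forms so that the phrase ``isotropic line'' is tautological.
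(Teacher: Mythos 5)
Your argument is correct and is essentially the same as the paper's, which simply cites \cite[3.5.4]{wil} for the transitivity of the symplectic group on nonzero vectors; you have merely written out the standard Witt-extension construction behind that citation. The preliminary observation that every line is isotropic for an alternating form, and the reduction from $\GSp_4(\Fp)$ to its subgroup $\Sp_4(\Fp)$, are both fine.
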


\begin{proof}
These follow from \cite[3.5.4]{wil}. 
\end{proof}

\begin{lem}\label{spsj}
\emph{For $n\in \Zpn$, the canonical homomorphism $\GSp_{2n}(\Zp)\rightarrow \GSp_{2n}(\Fp)$ is surjective. }
\end{lem}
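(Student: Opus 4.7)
The plan is to exploit the fact that $\GSp_{2n}$ is a smooth group scheme over $\spec \Zp$ and then invoke the general principle that for a smooth $\Zp$-scheme $X$ the reduction map $X(\Zp) \to X(\Fp)$ is surjective. This is precisely the tool used in the proof of Lemma \ref{sosj} via \cite[Proposition 2.8.13]{fu}, so no new machinery is needed.

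For the smoothness of $\GSp_{2n}$ over $\Zp$, recall from Section \ref{nota} that $\GSp_{2n}$ is the closed subscheme of $\GL_{2n} \times \G_m$ cut out by ${}^t A J_{2n} A = c J_{2n}$. The similitude character $\sml \colon \GSp_{2n} \to \G_m$, $(A,c) \mapsto c$, is a faithfully flat homomorphism with kernel $\Sp_{2n}$, giving a short exact sequence of group schemes
\begin{equation*}
1 \to \Sp_{2n} \to \GSp_{2n} \to \G_m \to 1.
\end{equation*}
Since $\Sp_{2n}$ and $\G_m$ are smooth over $\spec \Z$, it follows that $\GSp_{2n}$ is smooth over $\spec \Z$, and in particular over $\spec \Zp$.

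With smoothness in hand, the lemma is immediate: for any $\overline{g} \in \GSp_{2n}(\Fp)$, the smoothness of $\GSp_{2n}$ over $\spec \Zp$ combined with \cite[Proposition 2.8.13]{fu} yields an element $g \in \GSp_{2n}(\Zp)$ whose reduction modulo $p$ equals $\overline{g}$. If a more elementary argument is preferred, one can proceed by hand: lift $\overline{c} := \sml(\overline{g}) \in \Fpt$ to some $c \in \Zpt$, choose an explicit element $h_c \in \GSp_{2n}(\Zp)$ with $\sml(h_c) = c$ (for instance, a diagonal matrix adapted to $J_{2n}$), and then use surjectivity of $\Sp_{2n}(\Zp) \to \Sp_{2n}(\Fp)$ (which again follows from smoothness of $\Sp_{2n}$, or from the well-known Hensel-type lifting argument) to lift $\overline{g}\,\overline{h}_c^{-1} \in \Sp_{2n}(\Fp)$ to some $g' \in \Sp_{2n}(\Zp)$; then $g' h_c \in \GSp_{2n}(\Zp)$ reduces to $\overline{g}$. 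There is no serious obstacle here beyond verifying smoothness, which is classical.
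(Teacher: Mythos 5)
Your proof is correct and follows exactly the same route as the paper: the paper's entire argument is that $\GSp_{2n}\otimes_{\Z}\Zp$ is smooth over $\Zp$, so surjectivity of reduction follows from \cite[Proposition 2.8.13]{fu}. You additionally justify the smoothness (via the extension of $\G_m$ by $\Sp_{2n}$) and sketch an elementary alternative, but the core argument is identical.
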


\begin{proof}
The algebraic group $\GSp_{2n}\otimes_{\Z}\Zp$ is smooth over $\Zp$. Hence the assertion follows from \cite[Proposition 2.8.13]{fu}. 
\end{proof}

\begin{proof}[Proof of Proposition \ref{trjv}]
(i): This is a consequence of \cite[Corollary 7.3]{kot3}. 

(ii): First, note that $gT$ for $g\in J^0$ satisfies $(gT)^{\vee}=p^{i}gT$, since
\begin{equation*}
(gT,p^i gT)_0=\sml(g)(T,p^iT)_0=(T,p^iT)_0=\Zp. 
\end{equation*}
On the other hand, take a lattice $T'$ in $V_0$ satisfying $(T')^{\vee}=p^iT'$. By (i), there is an element $g\in J(\Qp)$ such that $T'=gT$. Then we have 
\begin{equation*}
\Zp=(T',p^iT')_0=(gT,p^igT)_0=\sml(g)(T,p^iT)_0=\sml(g)\Zp. 
\end{equation*}
Consequently we have $\sml(g)\in \Zpt$, that is, $g\in J^0$. 

(iii): Take a lattice $T$ in $V_0$ satisfying $pT\subsetneq T^{\vee}\subsetneq T$. Then there is a self-dual lattice $T'$ in $V_0$ such that $T'\subsetneq T\subsetneq p^{-1}T'$. We have $\dim_{\Fp}(pT/pT')=\dim_{\Fp}(T/T')=1$. Let us fix $T'$ as above. By (ii), there is an element $g_0\in J^0$ such that $T'=g_0T_0$. Then we have $T_0\subset g_0^{-1}T\subset p^{-1}T_0$. Hence there is an element $\gbar \in J_{\Zp}(\Fp)$ such that $g_0^{-1}(pT)/pT_0=\gbar(pT_1/pT_0)$ by Lemma \ref{spfp}. Furthermore, by Lemma \ref{spsj} there is $g\in J_{\Zp}(\Zp)$ such that $g\bmod p=\overline{g}$. Then we have $g_0^{-1}T=gT_1$, that is, $T=(g_0g)T_1$. 
\end{proof}

Now we give a reinterpretation of the Bruhat--Tits stratification of $\M_{G}^{(0)}$. 

\begin{prop}\label{vtvt}
\emph{There is an isomorphism of simplicial complexes 
\begin{equation*}
\Psi \colon \B \xrightarrow{\cong} \mathcal{V}, 
\end{equation*}
which commutes with the actions of $J^{\ad}(\Qp)\cong \SO(\L_{\Q}^{\Phi,\pi})(\Qp)$. Moreover, $\Psi$ induces correspondences as follow: 
\begin{enumerate}
\item The set $\Vtx^{\hs}$ corresponds to the set $\Vrt(5)$. 
\item The set $\Edg^{\hs}$ corresponds to the set $\Vrt(3)$. 
\item The set $\Vtx^{\nsp}$ corresponds to the set $\Vrt(1)$. 
\end{enumerate}}
\end{prop}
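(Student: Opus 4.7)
The plan is to identify both $\B$ and $\mathcal{V}$ as Bruhat--Tits buildings of two isomorphic reductive groups and then pin down the type correspondences on orbit representatives. By construction, $\B$ is the Bruhat--Tits building of $J^{\ad}(\Qp)\cong \PGSp(V_0)(\Qp)$ (see \cite[20.1]{gar}), while Proposition \ref{vibo} identifies $\mathcal{V}$ with the Bruhat--Tits building of $\SO(\L_{\Q}^{\Phi,\pi})(\Qp)$. The exceptional isomorphism $J^{\ad}\cong \SO(\L_{\Q}^{\Phi,\pi})$ from Corollary \ref{eis1} is an isomorphism of reductive algebraic groups over $\Qp$, and therefore functorially induces a $J^{\ad}(\Qp)$-equivariant isomorphism of their Bruhat--Tits buildings $\Psi\colon \B\xrightarrow{\cong}\mathcal{V}$.

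To establish (i) and (iii), observe that $J^{\ad}(\Qp)$ acts transitively on $\Vtx^{\hs}$ and on $\Vtx^{\nsp}$: the transitivity within each $\Vtx(t)$ comes from Proposition \ref{trjv}, and the element $\diag(1,1,p,p)\in \GSp(V_0)(\Qp)$ of similitude $p$ swaps $\Vtx(0)$ and $\Vtx(4)$. On the orthogonal side, $\SO(\L_{\Q}^{\Phi,\pi})(\Qp)$ acts transitively on each $\Vrt(t)$ by Witt's extension theorem (compare the proof of Proposition \ref{ltic} (iii) and Lemma \ref{sosj}). The equivariance of $\Psi$ then reduces the problem to a single orbit representative of each type, and one checks by direct computation that a self-dual lattice $T_0=\bigoplus_{i}\Zp\, e'_i\subset V_0$ maps to a vertex lattice of type $5$ in $\L_{\Q}^{\Phi,\pi}$, and that a lattice $T_2\subset V_0$ with $\dim_{\Fp}(T_2/T_2^{\vee})=2$ maps to a vertex lattice of type $1$. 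Part (ii) is then immediate from (i), the equivariance of $\Psi$, and Proposition \ref{v3eg}, which identifies $\Vrt(3)$ with the set of edges between two adjacent type-$5$ vertices in $\mathcal{V}$.

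The main obstacle is extracting a concrete formula for $\Psi$ at the level of lattices, because $V_0$ and $\L_{\Q}^{\Phi,\pi}$ are two different representations of the isomorphic groups $J^{\ad}\cong \SO(\L_{\Q}^{\Phi,\pi})$ (a half-spin representation and the standard representation, respectively, under the accidental isomorphism $\Spin_5\cong \Sp_4$). The basis computation can be carried out explicitly by tracing $\Psi$ through the chain of identifications $C^{+}(\L_{\Q}^{\pi})\cong \End_D(\D_{\Q})$ of Section \ref{excp} and $\End_D(\D_{\Q})^{\Phi}\cong \End(V_0)$ of Proposition \ref{endd}, using the explicit bases $\{e'_i\}$ from Definition \ref{epdf} and $\{y_j\}$ from Definition \ref{xidf}. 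As a sanity check, the absence of a self-dual lattice in $\L_{\Q}^{\Phi,\pi}$ (Proposition \ref{lqpi} (ii)) forces the hyperspecial vertices of $\B$ to map to lattices of type $5$ rather than of type $1$, thereby pinning down the orientation of the correspondence.
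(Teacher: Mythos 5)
Your overall route -- obtain $\Psi$ from Proposition \ref{vibo} together with the exceptional isomorphism of Corollary \ref{eis1}, then identify the vertex classes, then get (ii) from Proposition \ref{v3eg} -- is exactly the paper's route, and the reduction of (i) and (iii) to a single orbit representative via transitivity on both sides is sound. The genuine soft spot is the decisive step: you never actually determine which way the correspondence goes. The "direct computation" tracing a self-dual lattice $T_0\subset V_0$ through $C^{+}(\L_{\Q}^{\pi})\cong \End_D(\D_{\Q})^{\Phi}\cong \End(V_0)$ is acknowledged as "the main obstacle" and then deferred rather than performed (and it is genuinely delicate, since $\Psi$ is defined only through the abstract functoriality of buildings, not by a lattice-level formula). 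The fallback "sanity check" does not close the gap: Proposition \ref{lqpi} (ii) says $\L_{\Q}^{\Phi,\pi}$ has no self-dual lattices, but the two candidate vertex classes of $\mathcal{V}$ are $\Vrt(5)$ and $\Vrt(1)$, neither of which consists of self-dual lattices, so the absence of self-dual lattices by itself does not tell you which class carries the hyperspecial stabilizers.

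The gap is easily repaired, and without any Clifford-algebra computation, by a purely combinatorial invariant that $\Psi$ must preserve. For instance: in $\B$ a hyperspecial vertex has exactly $(p+1)(p^2+1)$ non-special neighbours while a non-special vertex has exactly $2(p+1)$ hyperspecial neighbours (count hyperplanes in $T_0/pT_0$, resp.\ lines in the $2$-dimensional symplectic space $T/T^{\vee}$); by Proposition \ref{ltic} (i) and (iii) the same two numbers occur in $\mathcal{V}$, with $(p+1)(p^2+1)$ attached to $\Vrt(5)$ and $2(p+1)$ to $\Vrt(1)$. Since these numbers differ for $p>2$ and a $J^{\ad}(\Qp)$-equivariant simplicial isomorphism preserves them, $\Psi(\Vtx^{\hs})=\Vrt(5)$ and $\Psi(\Vtx^{\nsp})=\Vrt(1)$ are forced. (Equivalently, one can argue that every chamber of either complex contains exactly two vertices of the first class and one of the second.) This is in substance what the paper invokes by citing the argument of \cite[\S 3.6]{hp}; I recommend you replace both the deferred computation and the discriminant heuristic by such a counting argument.
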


\begin{proof}
By Proposition \ref{vibo} and the isomorphism $J^{\ad}\cong \SO(\L_{\Q}^{\Phi,\pi})$, there is a $J^{\ad}(\Qp)$-equivariant isomorphism of simplicial complexes
\begin{equation*}
\Psi \colon \B \xrightarrow{\cong} \mathcal{V}. 
\end{equation*}
By the same argument as \cite[\S 3.6]{hp}, the assertions (i) and (ii) follows. Since $\Psi$ preserves the adjacent relations by definition, we obtain the assertion (iii). 
\end{proof}

By Proposition \ref{vtvt}, we can restate the Bruhat--Tits stratification by means of $\B$. Put $\VE:=\Vtx^{\hs}\sqcup \Vtx^{\nsp}\sqcup \Edg^{\hs}$. Note that we have a bijection $\Psi \colon \VE\cong \Vrt$ by Proposition \ref{vtvt}. We define an order $\leq$ on $\VE$ as follows for $x,y\in \VE$: 
\begin{itemize}
\item if $x,y\in \Vtx$. Then $x\leq y$ if $x\in \Vtx^{\nsp}$, $y\in \Vtx^{\hs}$ and they are adjacent. 
\item if $x\in \Vtx$ and $y\in \Edg^{\hs}$, then $x\leq y$ if $x\in \Vtx^{\nsp}$ and $\{x\}\cup y$ is a $2$-simplex. 
\item if $x\in \Edg^{\hs}$ and $y\in \Vtx$, then $x\leq y$ if $y\in x$. 
\end{itemize}

The following follows from the definition of $\leq$: 

\begin{prop}\label{odis}
\emph{The bijection $\Psi$ induces an isomorphism $(\VE,\leq)\cong (\Vrt,\subset)$ of ordered sets. }
\end{prop}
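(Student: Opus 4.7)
The plan is to verify, case by case, that the bijection $\Psi$ is compatible with the orderings in both directions, using Propositions \ref{vtvt}, \ref{v3eg}, and \ref{ltic} together with duality/length arguments for vertex lattices.

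For the forward direction $x \leq y \Rightarrow \Psi(x) \subset \Psi(y)$, I would treat the three cases in the definition of $\leq$. If $x \in \Vtx^{\nsp}$ and $y \in \Vtx^{\hs}$ are adjacent, then by the definition of the adjacency relation in $\mathcal{V}$ (used to construct $\Psi$), $\Psi(x) \in \Vrt(1)$ is contained in $\Psi(y) \in \Vrt(5)$. If $x \in \Vtx^{\nsp}$ and $y = \{y_1, y_2\} \in \Edg^{\hs}$ with $\{x\}\cup y$ a $2$-simplex, then $x$ is adjacent to each $y_i$, so $\Psi(x) \subset \Psi(y_i)$ for $i = 1, 2$, and Proposition \ref{v3eg} identifies $\Psi(y)$ with $\Psi(y_1) \cap \Psi(y_2)$, giving $\Psi(x) \subset \Psi(y)$. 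The remaining case $x = \{y, y'\} \in \Edg^{\hs}$, $y \in \Vtx^{\hs}$ is immediate from $\Psi(x) = \Psi(y) \cap \Psi(y') \subset \Psi(y)$.

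For the reverse direction $\Psi(x) \subset \Psi(y) \Rightarrow x \leq y$, I would rule out the ``wrong-direction'' cases by a length computation: if $\Psi(x), \Psi(y) \in \Vrt(t)$ have $\Psi(x) \subset \Psi(y)$, taking duals yields $\Psi(y)^\vee \subset \Psi(x)^\vee$, and comparing the chain $\Psi(y)^\vee \subset \Psi(x)^\vee \subset \Psi(x) \subset \Psi(y)$ against $\length_W(\Psi(y)/\Psi(y)^\vee) = t$ forces equality, so $x = y$. The same computation excludes containments such as $\Vrt(5) \subsetneq \Vrt(3)$, $\Vrt(5) \subsetneq \Vrt(1)$, $\Vrt(3) \subsetneq \Vrt(1)$: for example, $\Psi(x) \in \Vrt(5)$ in $\Psi(y) \in \Vrt(t)$ with $t < 5$ forces $\Psi(y)^\vee \subset p\Psi(x) \subset p\Psi(y)$, contradicting $p\Psi(y) \subsetneq \Psi(y)^\vee$. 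The three remaining cases reproduce the three clauses of $\leq$: $\Vrt(1) \subset \Vrt(5)$ directly gives adjacency by the definition of $\mathcal{V}$; $\Vrt(1) \subset \Vrt(3)$ combined with the realization $\Psi(y) = \Lambda_1 \cap \Lambda_2$ via Proposition \ref{v3eg} gives $\Psi(x) \subset \Lambda_i$ for both $i$, i.e.\ $\{x\} \cup y$ is a $2$-simplex.

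The main obstacle is the case $\Psi(x) \in \Vrt(3) \subset \Psi(y) \in \Vrt(5)$, where I must show that $\Psi(y)$ is one of the two endpoints of the edge $x$. I would handle this by a length count showing $\length_W(\Psi(y)/\Psi(x)) = 1$ (using $p\Psi(y) \subset \Psi(x)$ from taking duals of $\Psi(x)^\vee \supset \Psi(y)^\vee = p\Psi(y)$, and comparing with the $3$-dimensional quotient $\Psi(x)/\Psi(x)^\vee$). Then Proposition \ref{ltic}(i) states that exactly $2$ elements of $\Vrt(5)$ contain $\Psi(x)$; since the two endpoints $\Lambda_1, \Lambda_2$ of the edge $x$ already sit inside $\{\Lambda' \in \Vrt(5) \mid \Psi(x) \subset \Lambda'\}$ (each contains $\Lambda_1 \cap \Lambda_2 = \Psi(x)$), this set must equal $\{\Lambda_1, \Lambda_2\}$, forcing $y \in x$. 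Once these cases are all assembled, the two directions together give the desired isomorphism of ordered sets.
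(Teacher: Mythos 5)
Your proof is correct: the paper states Proposition \ref{odis} without proof (merely asserting that it "follows from the definition of $\leq$"), and your case-by-case verification is precisely the argument being left implicit. The forward direction via simplicial compatibility of $\Psi$ and the identification $\Psi(\{y_1,y_2\})=\Psi(y_1)\cap \Psi(y_2)$ from Proposition \ref{v3eg}, together with the converse via monotonicity of the type under inclusion (your duality/length count, which in general gives $\length_{\Zp}(\Lambda'/\Lambda)=(t(\Lambda')-t(\Lambda))/2$ for $\Lambda \subset \Lambda'$) and the counting statement of Proposition \ref{ltic} (i) for the $\Vrt(3)\subset \Vrt(5)$ case, covers all cases correctly.
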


\begin{dfn}\label{mgve}
For $x\in \VE$, put 
\begin{align*}
\M_{G,x}&:=\M_{G,\Psi(x)},& \BT_{G,x}&:=\BT_{G,\Psi(x)},\\
\M_{G,x}^{(0)}&:=\M_{G,\Psi(x)}^{(0)},& \BT_{G,x}^{(0)}&:=\BT_{G,\Psi(x)}^{(0)}
\end{align*}
(see Definition \ref{clsb} for the definition of $\M_{G,\Psi(x)}$). 
\end{dfn}

The following is a restatement of the results in the previous section: 

\begin{thm}\label{mtl1}
\emph{
\begin{enumerate}
\item For $x,y\in \VE$, we have $\M_{G,x}\subset \M_{G,y}$ if and only if $x\leq y$. 
\item For $x\in \VE$, we have the following: 
\begin{itemize}
\item if $x \in \Vtx^{\nsp}$, then $\M_{G,x}^{(0)}$ is a single $\Fpbar$-rational point, 
\item if $x \in \Edg^{\hs}$, then $\M_{G,x}^{(0)}$ is isomorphic to $\P^1_{\Fpbar}$, 
\item if $x \in \Vtx^{\hs}$, then $\M_{G,x}^{(0)}$ is isomorphic to the Fermat surface defined by
\begin{equation*}
x_0^{p+1}+x_1^{p+1}+x_2^{p+1}+x_3^{p+1}=0
\end{equation*}
in $\Proj \Fpbar [x_0,x_1,x_2,x_3]$. 
\end{itemize}
In particular, $\M_{G,x}^{(0)}$ is projective, smooth and irreducible. 
\item Put $\BT_{G,x}^{(0)}:=\BT_{G,x}\cap \M_G^{(0)}$ for $x\in \VE$. Then we have a locally closed stratification
\begin{equation*}
\M_G^{(0),\red}=\coprod_{x\in \VE}\BT_{G,x}^{(0)}
\end{equation*}
(we also call the equality above the Bruhat--Tits stratification). In particular, $\M_G^{(0),\red}$ is connected and is purely $2$-dimensional.  
\item There is an equality
\begin{equation*}
\M_G^{(0),\nfs}=\coprod_{x\in \Vtx^{\nsp}}\M_{G,x}^{(0)}. 
\end{equation*}
\item For distinct $x,y\in \Vtx^{\hs}$, the following hold: 
\begin{itemize}
\item if $x\cup y$ is a $2$-simplex, then $\M_{G,x}^{(0)}\cap \M_{G,y}^{(0)}=\M_{G,x\cup y}^{(0)}$,
\item if $x,y$ are not adjacent and there is (necessarily unique) $z\in \Vtx^{\nsp}$ such that $x\cup z,z\cup y$ are $2$-simplexes, then $\M_{G,x}^{(0)}\cap \M_{G,y}^{(0)}=\M_{G,z}^{(0)}$,
\item if $x,y$ do not satisfy the above two conditions, then $\M_{G,x}^{(0)}\cap \M_{G,y}^{(0)}=\emptyset$. 
\end{itemize}
\end{enumerate}}
\end{thm}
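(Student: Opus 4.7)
The plan is to observe that Theorem \ref{mtl1} is essentially a translation of results already established in Sections \ref{vtdn}--\ref{btsn} from the language of vertex lattices to the language of the Bruhat--Tits building $\B$. The bridge is the $J^{\ad}(\Qp)$-equivariant bijection $\Psi \colon \VE \xrightarrow{\cong} \Vrt$ of Proposition \ref{vtvt}, which matches $\Vtx^{\hs} \leftrightarrow \Vrt(5)$, $\Edg^{\hs} \leftrightarrow \Vrt(3)$, $\Vtx^{\nsp} \leftrightarrow \Vrt(1)$, and which by Proposition \ref{odis} is in addition an isomorphism of ordered sets $(\VE,\leq) \cong (\Vrt,\subset)$. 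Under Definition \ref{mgve}, this converts every statement about $\M_{G,\Lambda}$ into a corresponding statement about $\M_{G,x}$.

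For (i), I would simply cite Theorem \ref{mgld} (ii), which gives $\M_{G,\Psi(x)} \subset \M_{G,\Psi(y)}$ iff $\Psi(x) \subset \Psi(y)$, and combine it with Proposition \ref{odis}. For (ii), each of the three cases $x \in \Vtx^{\nsp}$, $\Edg^{\hs}$, $\Vtx^{\hs}$ is mapped by $\Psi$ to the corresponding type-$1$, type-$3$, type-$5$ vertex lattice, and the geometric description (point, $\mathbb P^1_{\Fpbar}$, Fermat surface) is then exactly Corollary \ref{redv} (ii); the dimension formula $d(x) = (t(\Psi(x))-1)/2$ matches the table in the statement. For (iii), I would apply Theorem \ref{btm0} (i) and (iii)--(iv) to obtain the stratification, connectedness and pure $2$-dimensionality, and relabel the index set via $\Psi$. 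For (iv), the statement is just Theorem \ref{sgvl} reindexed via $\Psi^{-1}(\Vrt(1)) = \Vtx^{\nsp}$.

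The only part requiring a slight additional step is (v). By Theorem \ref{mgld} (i), for distinct $x,y \in \Vtx^{\hs}$ we have
\begin{equation*}
\M_{G,x}^{(0)} \cap \M_{G,y}^{(0)} =
\begin{cases}
\M_{G,\Psi(x) \cap \Psi(y)}^{(0)} & \text{if } \Psi(x) \cap \Psi(y) \in \Vrt, \\
\emptyset & \text{otherwise},
\end{cases}
\end{equation*}
so the task is to determine when $\Psi(x) \cap \Psi(y) \in \Vrt$ and to identify its type. Since $\Psi(x), \Psi(y) \in \Vrt(5)$, the intersection $\Psi(x)\cap \Psi(y)$, when it is a vertex lattice, has type $1$ or $3$. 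The type-$3$ case corresponds (by Proposition \ref{v3eg}) to $\Psi(x)$ and $\Psi(y)$ being adjacent in $\mathcal V$, i.e.\ $\{x,y\} \in \Edg^{\hs}$, giving the first bullet. The type-$1$ case corresponds to the existence of a common type-$1$ sublattice, which under $\Psi$ is precisely a non-special vertex $z$ adjacent to both $x$ and $y$; uniqueness of $z$ follows because $\Psi(x) \cap \Psi(y)$ is determined by $x$ and $y$. The remaining case is when $\Psi(x) \cap \Psi(y) \notin \Vrt$, which gives the empty intersection.

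The main conceptual obstacle is none of the individual steps, which are largely bookkeeping, but rather making sure the combinatorial dictionary between the simplicial complex $\mathcal V$ constructed from vertex lattices and the building $\B$ of $\PGSp_4(\Qp)$ is set up so that both the order structure (for (i)) and the adjacency/incidence structure (for (v)) are preserved; this is precisely the content of Propositions \ref{vtvt} and \ref{odis} already established, so once they are invoked the theorem reduces to citation.
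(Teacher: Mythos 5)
Your proposal is correct and follows essentially the same route as the paper: the paper's proof of Theorem \ref{mtl1} likewise reduces each part to Theorem \ref{mgld}, Corollary \ref{redv}, Theorems \ref{btm0} and \ref{sgvl}, transported through the dictionary $\Psi$ of Propositions \ref{vtvt} and \ref{odis}. Your extra unpacking of part (v) via Proposition \ref{v3eg} is a slightly more explicit version of the same citation-based argument.
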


\begin{proof}
(i): This follows from Theorem \ref{mgld} (ii) and Proposition \ref{odis}. 

(ii): This follows from Corollary \ref{redv} (ii) and Proposition \ref{odis}. Note that we can also prove the connectedness of $\M_G^{(0)}$ and the assertion on the dimension of $\M_G$ by the Bruhat--Tits stratification and the connectedness of $\B$. 

(iii): This follows from Theorem \ref{btm0} (i), (ii) and Proposition \ref{odis}. 

(iv): This follows from Theorem \ref{sgvl} and Proposition \ref{vtvt} (iii). 

(v): This follows from Theorem \ref{mgld} (i) and Proposition \ref{odis}. 
\end{proof}

Finally, we consider the $J(\Qp)$-action on $\M_{G}^{\red}$. 

\begin{prop}\label{jact}
\emph{
\begin{enumerate}
\item Let $\Irr(\M_G^{\red})$ be the set of all irreducible components of $\M_G^{\red}$, and take $\triangle_i\in \Vtx(2i)$ for $i\in \{0,2\}$. Then $\{\M_{G,\triangle_0}^{(0)},\M_{G,\triangle_2}^{(0)}\}$ is a set of complete representatives of $J(\Qp)\backslash \Irr(\M_G^{\red})$. In particular, we have $\#(J(\Qp)\backslash \Irr(\M_G^{\red}))=2$. 
\item The group $J(\Qp)$ acts transitively on the set $\M_G^{\nfs}$. 
\end{enumerate}}
\end{prop}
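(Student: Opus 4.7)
The strategy is to translate both assertions into orbit-counting problems for $J(\Qp)$ on the building $\B$, using the Bruhat--Tits stratification.

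First I spell out the combinatorial reformulation. By Theorem \ref{btm0}(iv) combined with Proposition \ref{vtvt}, the map $\triangle \mapsto \M_{G,\triangle}^{(0)}$ is a bijection $\Vtx^{\hs} \xrightarrow{\cong} \Irr(\M_G^{(0)})$; composing with the isomorphism $g_p^i\colon \M_G^{(0)}\xrightarrow{\cong}\M_G^{(i)}$ of Proposition \ref{rzis}, we get $\Vtx^{\hs}\xrightarrow{\cong}\Irr(\M_G^{(i)})$ for every $i\in \Z$, and hence $\Irr(\M_G^{\red})$ is in bijection with $\Vtx^{\hs}\times \Z$. Likewise Theorem \ref{mtl1}(iv) and Corollary \ref{redv}(ii) identify the underlying set of $\M_G^{\nfs}$ with $\Vtx^{\nsp}\times \Z$. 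For $g\in J(\Qp)$, the framing action satisfies $c(g\rho)=\sml(g)c(\rho)$ because $g^\vee \lambda_0 g=\sml(g)\lambda_0$; the induced action on $\L_{\Q}^{\Phi,\pi}$ is the conjugation $v\mapsto gvg^{-1}$, which preserves the quadratic form $Q$ and hence vertex lattices. Consequently $g\cdot \M_{G,\Lambda}^{(i)}=\M_{G,\,g\Lambda g^{-1}}^{(i+\ord_p\sml(g))}$, and both orbit questions become orbit questions for the action $(\triangle,i)\mapsto (g\cdot \triangle,\,i+\ord_p\sml(g))$.

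Next I analyse the action of $J^0$ and of $g_p$. By Proposition \ref{trjv}(ii), $J^0$ acts transitively on self-dual lattices and on lattices $T$ with $T^{\vee}=pT$, hence on $\Vtx(0)$ and on $\Vtx(4)$; by Proposition \ref{trjv}(iii), it acts transitively on $\Vtx^{\nsp}$. Since $\sml(J^0)\subset \Zpt$, elements of $J^0$ fix the superscript. For $g_p$ we have $\sml(g_p)=-p$, so $\ord_p\sml(g_p)=1$. Using the identity $(gT)^{\vee}=\sml(g)^{-1}gT^{\vee}$ (a direct consequence of $(gx,gy)_0=\sml(g)(x,y)_0$), a short lattice computation shows that for $T_0$ self-dual the lattice $T':=p^{-1}g_pT_0$ satisfies $pT'=(T')^{\vee}$, so $g_p$ sends $\Vtx(0)$ into $\Vtx(4)$; since $g_p^2=p$ acts trivially on $\B$, this is a bijection $\Vtx(0)\leftrightarrow \Vtx(4)$. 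A parallel computation, taking the representative $T'=(g_pT)^{\vee}$ when $pT\subsetneq T^{\vee}\subsetneq T$, shows that $g_p$ preserves $\Vtx^{\nsp}$ as well.

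Finally, any $g\in J(\Qp)$ admits a decomposition $g=g_0\,g_p^{k}$ with $k=\ord_p\sml(g)\in \Z$ and $g_0\in J^0$, since $gg_p^{-k}$ then has similitude of valuation $0$. Combining this with the previous paragraph, the orbit of $(\triangle_0,0)$ with $\triangle_0\in \Vtx(0)$ is $(\Vtx(0)\times 2\Z)\cup (\Vtx(4)\times (2\Z+1))$ and that of $(\triangle_2,0)$ with $\triangle_2\in \Vtx(4)$ is the complement, which proves (i). For (ii), the $J^0$-transitivity on $\Vtx^{\nsp}$ combined with the superscript shifts by powers of $g_p$ (which preserve $\Vtx^{\nsp}$ setwise) yields a single $J(\Qp)$-orbit on $\Vtx^{\nsp}\times \Z$, hence on $\M_G^{\nfs}$. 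The main delicacy is the lattice-theoretic verification of how $g_p$ permutes vertex types; this is the non-trivial diagram automorphism swapping the two hyperspecial types while fixing the non-special one, and it is exactly what makes the orbit count in (i) equal $2$.
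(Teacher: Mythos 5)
Your argument is correct and follows essentially the same route as the paper's: both reduce via the Bruhat--Tits stratification to counting $J(\Qp)$-orbits on $\Vtx^{\hs}$ and $\Vtx^{\nsp}$, use powers of $g_p$ to normalize the index $i$, and invoke Proposition \ref{trjv} (ii), (iii) for the transitivity of $J^0$ on each vertex type. The only difference is that you additionally compute explicitly that $g_p$ interchanges $\Vtx(0)$ and $\Vtx(4)$ while preserving $\Vtx^{\nsp}$; the paper's proof avoids this by noting that after normalizing into $\M_G^{(0)}$ one lands on some hyperspecial vertex and then applying $J^0$-transitivity directly, but your extra computation is sound and gives a sharper description of the two orbits.
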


\begin{proof}
(i): Take two irreducible components $F_1,F_2$ of $\M_G^{\red}$. For $i=1,2$, if $F_i\subset \M_G^{(n_i)}$ for some $n_i\in \Z$ then $g_p^{-n_i}F_i$ is an irreducible component of $\M_G^{(0),\red}$. Hence we may assume $F_i\subset \M_G^{(0),\red}$. Now write $F_i=\M_{G,\triangle'_i}^{(0)}$, where $\triangle'_i\in \Vtx^{\hs}$. Take $T_i\in \triangle'_i$ satisfying $pT_i\subset T_i^{\vee}\subset T_i$. Then $F_1$ and $F_2$ are in the same orbit if and only if there is $g\in J^0$ such that $g\triangle'_1=\triangle'_2$, which is equivalent to $gT_1=T_2$. Therefore the assertion follows from Proposition \ref{trjv} (ii). 

(ii): By Theorem \ref{mtl1} (iv), we have an $J^0(\Qp)$-equivariant isomorphism $\M_G^{(0),\nfs}\cong \Vtx^{\nsp}$. Hence the assertion follows from Proposition \ref{trjv} (iii). 
\end{proof}

\begin{rem}
The finiteness of $J(\Qp)\backslash \Irr(\M_G^{\red})$ is already obtained by \cite[Proposition 2.8, Theorem 1.1]{mie}.
\end{rem}

\section{Deligne--Lusztig varieties and Bruhat--Tits stratification}\label{dlvs}

In this section, we relate the Bruhat--Tits strata with some Deligne--Lusztig varieties. 

\subsection{Generalized Deligne--Lusztig varieties for odd special orthogonal groups}
Let $G_0$ be a split reductive group over $\Fp$. Fix a split maximal torus $T_0$ of $G_0$ and a Borel subgroup $B_0$ containing $T_0$. Put $G:=G_0\otimes_{\Fp} \Fpbar,\,T:=T_0\otimes_{\Fp} \Fpbar$ and $B:=B_0\otimes_{\Fp} \Fpbar$. We denote the Frobenius of $G$ by $\overline{\Phi}$. Let $W$ be the Weyl group and $\Delta^{*}=\{\alpha_1,\ldots ,\alpha_n\}$ be the simple roots corresponding to $(T,B)$. Let $s_i:=s_{\alpha_i}\in W$ be the simple reflection corresponding to $\alpha_i$. For $I\subset \Delta^{*}$, let $W_I$ be the subgroup of $W$ generated by $\{s_i\in W\mid i\in I\}$ and $P_I:=BW_IB$ the corresponding parabolic subgroup. Then we have an equality
\begin{equation*}
G=\coprod_{w\in W_I\backslash W/W_I}P_IwP_I. 
\end{equation*}
Hence we obtain the relative position map
\begin{equation*}\label{ivmp}
\inv \colon G/{P_I}\times G/{P_I}\rightarrow W_I\backslash W/W_I
\end{equation*}
by the composite of $G/{P_I}\times G/{P_I}\rightarrow P_I\backslash G/P_I$ defined by $(g_1,g_2)\mapsto g_1^{-1}g_2$ and the bijection $P_I\backslash G/P_I\cong W_I\backslash W/W_I$ coming from the equality above. 

\begin{dfn}
For $I\subset \Delta^{*}$ and $w\in W_I\backslash W/W_I$, we define a \emph{generalized Deligne--Lusztig variety} as a locally closed subscheme of $G/P_{I}$ which is defined by
\begin{equation*}
X_{P_I}(w):=\{g\in G/P_I\mid \inv(g,\overline{\Phi}(g))=w\}. 
\end{equation*}
\end{dfn}

We keep the notations above. We consider the case where $G_0$ is an odd special orthogonal group. See also \cite[Example 4.1.3]{wu}. Let $m\in \Znn$ and $(\Omega_0,[\,,\,])$ a $(2m+1)$-dimensional quadratic space over $\Fp$. 
\begin{dfn}
A basis $e_1,\ldots ,e_{2m+1}$ of $\Omega_0$ is \emph{elementary} if $e_{m+1}$ is anisotropic and
\begin{equation*}
[e_i,e_{j}]=\begin{cases}
1 &\text{if $i+j=2m+2$ and }i\neq m+1,\\
0 &\text{otherwise}.
\end{cases}
\end{equation*}
\end{dfn}
By \cite[Lemma 22.3]{shi}, there is an elementary basis of $\Omega_0$. We fix such a basis $e_1,\ldots ,e_{2m+1}$ of $\Omega_0$. Put $G_0:=\SO(\Omega_0)$, which is a split reductive group over $\Fp$. We regard a subgroup of $\GL_{2m+1}$ by the basis $e_1,\ldots ,e_{2m+1}$. Let $T_0\subset G_0$ be the diagonal torus, and $B_0\subset G_0$ the upper-triangular Borel subgroup. Then the Weyl group $W$ can be identified with the subgroup of the symmetric group $\mathfrak{S}_{2m+1}$: 
\begin{equation*}
W=\{w\in \mathfrak{S}_{2m+1}\mid w(i)+w(2m+2-i)=2m+2\text{ for any }i\in \{1,\ldots ,2m+1\}\}. 
\end{equation*}
The set of all simple reflections in $W(T)$ is $\{s_1,\ldots ,s_m\}$, where
\begin{itemize}
\item $s_i=(i\ i+1)(2m+1-i\ 2m+2-i)$ for $1\leq i\leq m-1$,
\item $s_m=(m\ m+2)$. 
\end{itemize}
For $0\leq i\leq m$, we define 
\begin{equation*}
w_i:=s_m\cdots s_{m+1-i}. 
\end{equation*}
Then we have $w_0=\id$, and $w_m$ is a Coxeter element. Moreover, for $0\leq i\leq m-1$, let $I_i:=\{1,\ldots,m-1-i\}$, $W_i:=W_{I_i}$ and $P_i:=BW_iB$ the corresponding parabolic subgroup. Then we have $P_{m-1}=B$. We put $P_m:=B$ by convention. Moreover, $G/P_0$ is isomorphic to the Grassmanian of all totally isotropic subspaces of dimension $m$ of $\Omega:=\Omega_0\otimes_{\Fp}\Fpbar$. Now, we put $\overline{\Phi}:=\id_{\Omega_0}\otimes \sigma$. 

We introduce the following notation. We also use it in Section \ref{nseo}. 

\begin{dfn}\label{smgn}
Let $\overline{F} \colon \overline{N}\rightarrow \overline{N}$ be a $\sigma$-linear isomorphism, and $R$ an $\Fp$-algebra. For a $R$-submodule $\overline{N}'$ of $\overline{N}\otimes_{\Fpbar}R$, we denote by $\overline{F}_*(\overline{N}')$ the $R$-submodule of $\overline{N}\otimes_{\Fpbar}R$ generated by $\overline{F}(\overline{N}')$. Note that it is locally free of rank $\rk_R(\overline{N}')$ if $\overline{N}'$ is locally free. 
\end{dfn}

Let us consider the closed subscheme of $G/P_0$ as below: 
\begin{equation*}
S_{\Omega_0}:=\{L\in G/P_0 \mid \rk(L\cap \overline{\Phi}_*(L))\geq m-1\}. 
\end{equation*}
The following follows from \cite[Corollary 2.4.6]{hlz} and the isomorphism $X_{\Sigma_{i}^{\sharp},w_i}\xrightarrow{\cong}X_{J,w_i}$ in the proof of \cite[Proposition 2.5.1]{hlz} for $(\G,J,\mathcal{L})=(B_n,\mathbb{S}-\{s_n\},(s_1,\ldots,s_n))$ in \cite[Table 1]{hlz}. 

\begin{prop}\label{stsv}
\emph{
There is a locally closed stratification of $S_{\Omega_0}$: 
\begin{equation*}
S_{\Omega_0}=\coprod^{m}_{i=0}X_{P_i}(w_i).
\end{equation*}
Moreover, for $0\leq i\leq m$, the closure of $X_{P_i}(w_i)$ in $S_{\Omega_0}$ equals $\coprod_{j=}^{i}X_{P_i}(w_j)$. 
}
\end{prop}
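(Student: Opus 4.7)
The plan is to reduce Proposition~\ref{stsv} to the results of He--Lusztig--Zhou on fine Deligne--Lusztig varieties, which are tabulated in that paper for each classical type. First I would identify our setting with the row of their Table~1 corresponding to type $B_n$: our $G_0 = \SO(\Omega_0) = \SO_{2m+1}$ is of type $B_m$, the parabolic $P_0$ corresponds to their $J = \mathbb{S}\setminus\{s_m\}$ (so that $G/P_0$ is the orthogonal Grassmannian of maximal isotropic subspaces), and the sequence $w_i = s_m s_{m-1}\cdots s_{m+1-i}$ for $0\leq i\leq m$ matches the partial products attached to their word $\mathcal{L} = (s_1,\ldots,s_m)$. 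In particular, $w_m$ is a Coxeter element, consistent with the remark made in the text.

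Next I would verify that the closed subscheme $S_{\Omega_0}$ coincides with the closed subvariety of $G/P_0$ appearing in the cited Corollary~2.4.6. The condition $\rk(L \cap \overline{\Phi}_*(L))\geq m-1$ should be exactly the cutting condition that produces their closed subset, which can be checked by unwinding the relative position map on the orthogonal Grassmannian in type $B$ and observing that the double cosets $W_0\backslash W/W_0$ are indexed by $\dim(L \cap \overline{\Phi}_*(L))$. Granting this identification, Corollary~2.4.6 furnishes the stratification
\begin{equation*}
S_{\Omega_0} = \coprod_{i=0}^{m} X_{\Sigma_i^\sharp,\, w_i}
\end{equation*}
and the closure relation that the closure of the $i$-th stratum is the union of strata with index $\leq i$ (the displayed closure formula in the statement has a visible typo in its lower index of summation; the correct formula is $\coprod_{j=0}^{i} X_{P_j}(w_j)$ after translation).

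Then, to pass from the fine stratum $X_{\Sigma_i^\sharp,\, w_i}$ to our generalized Deligne--Lusztig variety $X_{P_i}(w_i)$, I would invoke the isomorphism $X_{\Sigma_i^\sharp,\, w_i} \xrightarrow{\cong} X_{J,\, w_i}$ constructed in the proof of their Proposition~2.5.1. This isomorphism is induced by the natural projection from the relevant partial flag variety, and it is an isomorphism precisely because $w_i$ has the Coxeter-type shape inherited from the word $\mathcal{L}$. Combining this with the previous step, we obtain both assertions of the proposition directly.

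The main obstacle is bookkeeping rather than mathematical substance: one must carefully reconcile the indexing conventions (matching the Coxeter element $w_m$ on our side to the topmost element in the He--Lusztig--Zhou enumeration, and tracking which parabolic $P_i$ corresponds to which subgroup $\Sigma_i^\sharp$), and confirm that the geometric cut $\rk(L \cap \overline{\Phi}_*(L))\geq m-1$ really singles out their closed subvariety. Once these identifications are in place, the proposition is a direct translation of the cited results.
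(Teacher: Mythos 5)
Your proposal matches the paper's argument exactly: the paper proves this proposition purely by citing \cite[Corollary 2.4.6]{hlz} together with the isomorphism $X_{\Sigma_i^{\sharp},w_i}\cong X_{J,w_i}$ from the proof of \cite[Proposition 2.5.1]{hlz}, specialized to the type $B_n$ row of their Table 1 with $J=\mathbb{S}\setminus\{s_n\}$ and $\mathcal{L}=(s_1,\ldots,s_n)$, which is precisely the reduction you describe (and your reading of the closure formula as $\coprod_{j=0}^{i}X_{P_j}(w_j)$ is the intended one).
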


\begin{prop}\label{slir}
\emph{The schemes $X_B(w_m)$ and $S_{\Omega_0}$ are irreducible of dimension $m$. }
\end{prop}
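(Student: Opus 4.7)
The plan is to reduce the whole proposition to two facts about the top stratum $X_{P_m}(w_m) = X_B(w_m)$ (recall $P_m := B$ by convention): that $X_B(w_m)$ is irreducible and that $\dim X_B(w_m) = m$. Indeed, by Proposition \ref{stsv} the closure of $X_{P_i}(w_i)$ in $S_{\Omega_0}$ equals $\coprod_{j=0}^{i} X_{P_j}(w_j)$; specializing to $i = m$ shows that $X_B(w_m)$ is open and dense in $S_{\Omega_0}$, so the desired irreducibility and dimension of $S_{\Omega_0}$ will descend from its closure.

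For the dimension I will invoke the standard formula $\dim X_B(w) = \ell(w)$ for classical Deligne--Lusztig varieties. The expression $w_m = s_m s_{m-1} \cdots s_1$ is a reduced expression, since $w_m$ is a Coxeter element of $W$ and its length equals the semisimple rank $m$ of $G$. Hence $\dim X_B(w_m) = m$.

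For irreducibility, the essential input is that $w_m$ is a Coxeter element of the Weyl group $W$ of the split reductive group $G_0 = \SO(\Omega_0)$: every simple reflection $s_1, \ldots, s_m$ appears in any reduced expression of $w_m$, so $w_m$ is not contained in the Weyl group of any proper $\overline{\Phi}$-stable standard parabolic subgroup (in the split setting considered here, $\overline{\Phi}$ acts trivially on $W$, so every standard parabolic is $\overline{\Phi}$-stable). By the irreducibility criterion for classical Deligne--Lusztig varieties, going back to Lusztig for Coxeter elements and given in full generality by Bonnaf\'e--Rouquier, this forces $X_B(w_m)$ to be irreducible.

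The main obstacle is thus establishing or citing this irreducibility cleanly; once granted, the proposition follows at once from Proposition \ref{stsv}. Should it be preferable to avoid citing Bonnaf\'e--Rouquier, an alternative plan is to realize $X_B(w_m)$ as the image of a Bott--Samelson-type scheme $\widetilde X(s_m, s_{m-1}, \ldots, s_1)$ attached to the chosen reduced expression: one checks directly that the open locus of this scheme where each successive relative position is \emph{exactly} $s_{i_j}$ is an iterated $\mathbb{A}^1$-bundle (hence geometrically irreducible of dimension $m$) mapping surjectively onto $X_B(w_m)$, forcing irreducibility of the target.
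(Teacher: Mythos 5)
Your proposal is correct and follows essentially the same route as the paper: both reduce to the top stratum via Proposition \ref{stsv} (density of $X_B(w_m)$ in $S_{\Omega_0}$) and then invoke irreducibility and the dimension formula for the Deligne--Lusztig variety attached to the Coxeter element $w_m$. The only difference is that the paper cites \cite[Proposition 3.2]{hp} for these two facts about $X_B(w_m)$, whereas you unwind them directly via $\dim X_B(w)=\ell(w)$ and the Lusztig/Bonnaf\'e--Rouquier irreducibility criterion, which is exactly the content of that cited result.
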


\begin{proof}
We have $\dim X_{P_i}(w_i)=i$ and $X_B(w_m)$ is irreducible by \cite[Proposition 3.2]{hp}. By Proposition \ref{stsv}, $X_B(w_m)$ is dense in $S_{\Omega_0}$, which concludes the irreducibility of $S_{\Omega_0}$. 
\end{proof}

\subsection{Relation with the case for non-split even orthogonal groups}\label{nseo}

Let $(\Omegabar_0,[\,,\,])$ be a $(2m+2)$-dimensional non-split quadratic space over $\Fp$.  Furthermore, put $\Omegabar:=\Omegabar_0\otimes_{\Fp}\Fpbar$ and $\overline{\Phi}:=\id_{\Omegabar_0}\otimes \sigma$. For $d\in \{m,m+1\}$, let $\OGr(d;\Omegabar)$ be the moduli space of totally isotropic subspaces of rank $d$ of $\Omegabar$ over $\Fpbar$. On the other hand, let $\OGr(m,m+1;\Omegabar)$ be the moduli space which parametrizes flags of $\O_S$-submodules $L_{m}\subset L_{m+1}$ of $\Omegabar \otimes_{\Fpbar}\O_S$ such that $L_{i}$ is locally free of rank $i$ for $i\in \{m,m+1\}$, for any $\Fpbar$-scheme $S$. The scheme $\OGr(m,m+1;\Omegabar)$ has exactly two connected components. Denote by $\OGr^{\pm}(m,m+1;\Omegabar)$ the connected components of $\OGr(m,m+1;\Omegabar)$. Moreover, put
\begin{equation*}
\Sbar_{\Omegabar_0}:=\{L\in \OGr(m+1;\Omegabar_0)\mid \rk(L\cap \overline{\Phi}(L))=m\}. 
\end{equation*}
We have a closed immersion 
\begin{equation*}
i_{\Omegabar_0}\colon \Sbar_{\Omegabar_0}\rightarrow \OGr(m,m+1;\Omegabar);L\mapsto (L\cap \overline{\Phi}(L)\subset L). 
\end{equation*}
Moreover, put
\begin{equation*}
\Sbar_{\Omegabar_0}^{\pm}:=\Sbar_{\Omegabar_0} \cap \OGr^{\pm}(m,m+1;\Omegabar). 
\end{equation*}

The schemes $\Sbar_{\Omegabar_0}^{\pm}$ equal the schemes $\mathscr{X}^{\pm}$ for $d=m+1$ in the sense of \cite[\S 3.2]{hp}. Hence we have the following: 

\begin{prop}\label{suir} (\cite[Proposition 3.6]{hp})
\emph{The schemes $\Sbar_{\Omegabar_0}^{\pm}$ are projective, smooth and irreducible of dimension $m$. }
\end{prop}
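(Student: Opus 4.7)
The plan is to realize $\Sbar_{\Omegabar_0}^{\pm}$ as (closures of) generalized Deligne--Lusztig varieties for the non-split even special orthogonal group $\SO(\Omegabar_0)$ of type ${}^2D_{m+1}$, in direct parallel with the proof of Proposition \ref{stsv} for the split odd orthogonal case.

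First, I would fix a basis of $\Omegabar_0$ adapted to the non-split structure, namely $m$ hyperbolic planes together with a $2$-dimensional anisotropic subspace (cf.~\cite[Lemma 22.3]{shi}). This determines a maximal torus $\overline{T}_0 \subset \SO(\Omegabar_0)$, a Borel $\overline{B}_0$ containing it, and the simple reflections in the Weyl group $\overline{W}$; the Frobenius $\overline{\Phi}$ acts on $\overline{W}$ through the non-trivial automorphism of the Dynkin diagram of type $D_{m+1}$ that swaps the two terminal nodes. Via the closed immersion $i_{\Omegabar_0}$, the scheme $\Sbar_{\Omegabar_0}$ becomes a locally closed subscheme of the partial flag variety $\OGr(m, m+1; \Omegabar) \cong (\SO(\Omegabar_0)\otimes_{\Fp}\Fpbar)/\overline{P}$ for the maximal parabolic $\overline{P}$ stabilizing a flag of type $(m,m+1)$.

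Next, I would show that the rank condition $\rk(L\cap \overline{\Phi}(L)) = m$ with $L$ of rank $m+1$ is equivalent to the associated flag $(L \cap \overline{\Phi}(L) \subset L)$ having a prescribed relative position with respect to its $\overline{\Phi}$-translate; a direct combinatorial computation identifies this relative position with a twisted Coxeter element $\overline{w}$ of length $m$ in $\overline{W}$. Hence $\Sbar_{\Omegabar_0} = X_{\overline{P}}(\overline{w})$ as schemes. Smoothness and equidimensionality of dimension $\ell(\overline{w}) = m$ then follow from the standard theory of generalized Deligne--Lusztig varieties; projectivity follows because $\Sbar_{\Omegabar_0}$ is the scheme-theoretic image in the projective scheme $\OGr(m, m+1; \Omegabar)$ of a closed subscheme of $\OGr(m+1; \Omegabar)$; and irreducibility of each piece $\Sbar_{\Omegabar_0}^{\pm}$ will follow from Lusztig's classical irreducibility theorem for Coxeter Deligne--Lusztig varieties, applied after restricting to each of the two connected components $\OGr^{\pm}(m, m+1; \Omegabar)$.

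The main obstacle will be pinning down the correct twisted Coxeter element in ${}^2D_{m+1}$ and verifying that the decomposition $\OGr(m, m+1; \Omegabar) = \OGr^{+} \sqcup \OGr^{-}$ splits the Deligne--Lusztig variety $X_{\overline{P}}(\overline{w})$ into exactly two $\overline{\Phi}$-stable irreducible pieces rather than mixing them; since $\overline{\Phi}$ swaps the two terminal nodes of the Dynkin diagram of type $D_{m+1}$, one must carefully track how the twist interacts with the two families of maximal isotropic subspaces, but once this compatibility is established, projectivity, smoothness, dimension, and irreducibility all follow directly from the general theory (this is precisely the content of \cite[Proposition 3.6]{hp}, to which one may ultimately appeal).
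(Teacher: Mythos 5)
Your proposal is correct and ultimately lands where the paper does: the paper's entire justification for Proposition \ref{suir} is the identification of $\Sbar_{\Omegabar_0}^{\pm}$ with the schemes $\mathscr{X}^{\pm}$ (for $d=m+1$) of \cite[\S 3.2]{hp}, followed by the citation of \cite[Proposition 3.6]{hp}, which is exactly the appeal you make in your last sentence.

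One caveat on the standalone Deligne--Lusztig sketch, in case you intend it as an independent argument. The rank condition $\rk(L\cap \overline{\Phi}_*(L))=m$ pins down only the relative position of the Lagrangian $L$ and $\overline{\Phi}_*(L)$; it does \emph{not} fix the relative position of the full flag $(L\cap\overline{\Phi}_*(L)\subset L)$ and its Frobenius translate, which already varies with $\dim\bigl(L\cap\overline{\Phi}_*(L)\cap\overline{\Phi}_*^2(L)\bigr)$. So $\Sbar_{\Omegabar_0}^{\pm}$ is not a single stratum $X_{\overline{P}}(\overline{w})$ but the \emph{closure} of the twisted-Coxeter stratum, i.e.\ a union of Deligne--Lusztig strata exactly as in Proposition \ref{stsv} for the odd orthogonal case (also, the stabilizer of a flag of type $(m,m+1)$ in type $D_{m+1}$ is not a maximal parabolic). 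Irreducibility and the dimension count do follow from Lusztig's theory of Coxeter Deligne--Lusztig varieties as you say, but smoothness of such a closure is not part of the ``standard theory'' --- closures of Deligne--Lusztig varieties, and generalized Deligne--Lusztig varieties in partial flag varieties, are singular in general --- and is precisely the nontrivial content of \cite[Proposition 3.6]{hp}. Since you defer to that citation in the end, the proposal stands, but the sketch should not be mistaken for a self-contained proof of smoothness.
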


We relate $\Sbar_{\Omegabar_0}^{\pm}$ with $S_{\Omega_0}$. Fix an anisotropic vector $\omega \in \Omegabar_0$, and let $\pibar$ be the reflection with respect to $\omega$. Set $\Omega_0:=\Omegabar_0^{\pibar}$. It is a $(2m+1)$-dimensional quadratic space over $\Fp$. 

We endow $\OGr(d;\Omegabar)$ for $d\in \{m,m+1\}$ and $\Sbar_{\Omegabar_0}$ with an $\Fp$-structure defined by $L\mapsto (\overline{\Phi} \circ \pibar)_*(L)$. Moreover, we endow $\OGr(m,m+1;\Omegabar_0)$ with an $\Fp$-structure defined by $(L_m\subset L_{m+1})\mapsto ((\overline{\Phi} \circ \pibar)_*(L_m)\subset (\overline{\Phi} \circ \pibar)_*(L_{m+1}))$. Then $i_{\Omegabar_0}$ commutes with the $\Fp$-structures. 

\begin{prop}\label{svsu}
\emph{
\begin{enumerate}
\item We have $(\overline{\Phi} \circ \pibar)_*(\OGr^{\pm}(m,m+1;\Omegabar))\subset \OGr^{\pm}(m,m+1;\Omegabar)$ and $(\overline{\Phi} \circ \pibar)_*(\Sbar_{\Omegabar_0}^{\pm})\subset \Sbar_{\Omegabar_0}^{\pm}$. 
\item For a reduced $\Fpbar$-algebra $R$ of finite type and $L\in \OGr(m+1;\Omegabar_0)(R)$, we have $L\cap \pibar(L)\subset \Omega \otimes_{\Fpbar}R$. Moreover, $L\cap \pibar(L)$ is totally isotropic and locally free of rank $m$ over $R$. Therefore we obtain the morphism 
\begin{equation*}
q\colon \OGr(m+1;\Omegabar_0) \rightarrow G/P_0;L\mapsto L\cap \pibar(L)
\end{equation*}
over $\Fpbar$. Moreover, $q$ commutes with $\Fp$-structures. 
\item The morphism $q$ induces an isomorphism $\Sbar_{\Omegabar_0}^{\pm}\cong S_{\Omega_0}$, which commutes with the $\Fp$-structures. 
\end{enumerate}}
\end{prop}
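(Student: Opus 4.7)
My plan is as follows.

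For part (i), I would exploit the non-splitness of $\Omegabar_0$: the Frobenius $\overline{\Phi}$ swaps the two connected components of $\OGr(m+1;\Omegabar)$, since the two families of maximal isotropic subspaces are Galois-conjugate over $\Fp$, and hence it swaps the two components of $\OGr(m,m+1;\Omegabar)$ as well. The reflection $\pibar$ has determinant $-1$ and therefore also exchanges these two components. Consequently $\overline{\Phi}\circ\pibar$ preserves each component, which proves the first claim; the assertion for $\Sbar_{\Omegabar_0}^{\pm}$ follows because $\Sbar_{\Omegabar_0}\subset\OGr(m,m+1;\Omegabar)$ is visibly stable under $(\overline{\Phi}\circ\pibar)_*$ by construction.

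For part (ii), let $v=v_{+}+a\omega\in L\cap\pibar(L)$ with $v_{+}\in\Omega\otimes R$ and $a\in R$. Then $\pibar(v)=v_{+}-a\omega\in L$, so $2a\omega\in L$; total isotropy of $L$ gives $(2a)^{2}[\omega,\omega]=0$, and since $[\omega,\omega]\in\Fp^{\times}$, $p>2$, and $R$ is reduced, this forces $a=0$. Thus $L\cap\pibar(L)\subset\Omega\otimes R$, and in fact $L\cap\pibar(L)=L\cap(\Omega\otimes R)$, which is the kernel of the projection $p_{L}\colon L\to R\omega\cong R$ induced by $\Omegabar=\Omega\oplus\Fpbar\omega$. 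At every geometric point of $\spec R$ this projection is nonzero, since otherwise $L$ would lie inside $\Omega$, contradicting that the Witt index of $\Omega$ equals $m<m+1$. Hence the image of $p_{L}$ is an ideal $I\subset R$ which surjects onto every residue field, forcing $I=R$. The short exact sequence $0\to L\cap\pibar(L)\to L\to R\to 0$ therefore splits, so $L\cap\pibar(L)$ is locally free of rank $m$; total isotropy is inherited from $L$. The $\Fp$-structure compatibility of $q$ follows from the identity $\pibar\circ\overline{\Phi}=\overline{\Phi}\circ\pibar$ (valid because $\pibar$ is $\Fp$-linear) combined with the triviality of $\pibar$ on $\Omega$.

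For part (iii), I would construct the inverse of $q$. Given $L'\in S_{\Omega_{0}}(R)$, the perpendicular $(L')^{\perp}\subset\Omegabar\otimes R$ is locally free of rank $m+2$ and, because $\omega\in(L')^{\perp}$, admits the orthogonal decomposition $(L')^{\perp}=((L')^{\perp}\cap(\Omega\otimes R))\oplus R\omega$. The rank-$2$ quadratic quotient $\overline{V}:=(L')^{\perp}/L'$ then splits into the $\pibar$-fixed line coming from the first summand and the class of $R\omega$. Totally isotropic extensions $L'\subset L\subset(L')^{\perp}$ of rank $m+1$ correspond bijectively to isotropic lines in $\overline{V}$; over each geometric point there are exactly two such lines, swapped by $\pibar$, and hence the corresponding extensions lie in the two distinct connected components of $\OGr(m+1;\Omegabar)$. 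This yields a bijection $\Sbar_{\Omegabar_{0}}^{\pm}(k)\to S_{\Omega_{0}}(k)$ on geometric points. The hard part will be upgrading this pointwise bijection to a scheme-theoretic isomorphism. My plan is to exhibit $q$ as a monomorphism (using the explicit inverse constructed above universally, by taking $L$ to be the preimage in $(L')^{\perp}$ of the unique isotropic line in $\overline{V}$ lying in the chosen component, which is well-defined thanks to the $\pibar$-eigenspace splitting of $\overline{V}$) and as proper (both source and target embed in projective varieties), and then to conclude via the equality of dimensions $\dim\Sbar_{\Omegabar_{0}}^{\pm}=m=\dim S_{\Omega_{0}}$ furnished by Propositions \ref{slir} and \ref{suir}, together with the reducedness and irreducibility of $S_{\Omega_{0}}$. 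The $\Fp$-structure compatibility was already checked in (ii).
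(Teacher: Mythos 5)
Your overall strategy is sound and, in parts (i) and (ii), genuinely different from the paper's. For (i), the paper writes down an explicit basis $e_i,f_i$ adapted to $\overline{\Phi}$ and $\pibar$, exhibits a fixed flag, and invokes connectedness; your argument --- $\overline{\Phi}$ swaps the two families of maximal isotropics because $\Omegabar_0$ is non-split, $\pibar$ swaps them because it has determinant $-1$, so the composite preserves each component --- is cleaner and avoids the computation entirely (the component of $\OGr(m,m+1;\Omegabar)$ is indeed determined by the family of the $(m+1)$-dimensional member, via $c^{\pm}$). For (ii), the paper decomposes $L\cap\pibar(L)$ into $\pibar$-eigenspaces, treats the field case first, and then cites an exterior-algebra exercise plus the Jacobson property to handle general reduced $R$; your direct computation ($2a\omega\in L$, hence $4a^2Q(\omega)=0$, hence $a=0$ by reducedness) together with the observation that $L\cap\pibar(L)=\Ker(p_L)$ with $p_L$ surjective (checked on closed points) is more elementary and self-contained. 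The endgame of (iii) --- proper monomorphism into the integral, $m$-dimensional $S_{\Omega_0}$, hence an isomorphism --- is essentially the paper's argument.

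There is, however, a genuine soft spot in your (iii): the claim that the choice of isotropic line in $\overline{V}=(L')^{\perp}/L'$ ``lying in the chosen component'' is well-defined \emph{thanks to the $\pibar$-eigenspace splitting of $\overline{V}$} is wrong as stated. The $\pibar$-eigenlines of $\overline{V}$ are the image of $(L')^{\perp}\cap(\Omega\otimes R)$ and the class of $R\omega$, and both are \emph{anisotropic}; the two isotropic lines are swapped, not fixed, by $\pibar$, so the eigenspace decomposition does not single one of them out, and Zariski-locally on $R$ neither isotropic line need be individually rational (the scheme of isotropic lines in $\overline{V}$ is only a finite \'etale double cover of $\spec R$). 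The correct justification is the component decomposition itself: the incidence scheme of pairs $L'\subset L$ splits according to the family of $L$, and each piece maps isomorphically to the base because it has degree one in every geometric fibre --- equivalently, this is exactly the content of the paper's use of the isomorphism $c^{\pm}\colon\OGr^{\pm}(m,m+1;\Omegabar)\to\OGr(m;\Omegabar)$, which says that within a fixed component $L$ is recovered from $L\cap\pibar(L)$. Once you replace the eigenspace justification by this one (or, as the paper does, simply factor $q\vert_{\Sbar_{\Omegabar_0}^{\pm}}$ as $c^{\pm}\circ i_{\pibar}$ with $i_{\pibar}$ a proper monomorphism), the monomorphism property and hence the whole of (iii) goes through. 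You should also record explicitly that $q$ carries $\Sbar_{\Omegabar_0}^{\pm}$ into the closed subscheme $S_{\Omega_0}$ (i.e.\ that $\rk\bigl(q(L)\cap\overline{\Phi}_*(q(L))\bigr)\geq m-1$), which your pointwise bijection uses implicitly and which is harmless here only because $\Sbar_{\Omegabar_0}^{\pm}$ is reduced.
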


\begin{proof}
(i): First, we prove $(\overline{\Phi} \circ \pibar)_*(\OGr^{\pm}(m,m+1;\Omegabar))\subset \OGr^{\pm}(m,m+1;\Omegabar)$. Fix an elementary basis $e'_1,\ldots,e'_{2m+1}$ of $\Omega_0$, and put $c:=-[e'_{2m+1},e'_{2m+1}]\cdot [\omega,\omega]^{-1}$ (note that $[\omega,\omega]\neq 0$ since $\omega$ is anisotropic). Then we have $c\in \Fpt \setminus (\Fpt)^2$ since $\Omegabar_0$ is non-split. Fix a square root $\overline{\varepsilon}\in \Fpst$ of $c$. Then we have $\sigma(\overline{\varepsilon})=-\overline{\varepsilon}$. Now fix a $(p+1)$-th root $a\in \Fpst$ of $2[e'_{m+1},e'_{m+1}]^{-1}$ (this is possible since $\Fpst$ is a cyclic group of order $p^2-1$ and $\# \Fpt=p-1$), and define $e_i$ and $f_i$ for $1\leq i\leq m+1$ as follow: 
\begin{equation*}
e_i:=\begin{cases}
e'_i &\text{if }1\leq i\leq m,\\
a(e'_{m+1}+\overline{\varepsilon}\omega) &\text{if } i=m+1,
\end{cases}
,\quad f_i:=\begin{cases}
e'_{2m+2-i} &\text{if }1\leq i\leq m,\\
a^{p}(e'_{m+1}-\overline{\varepsilon}\omega) &\text{if }i=m+1. 
\end{cases}
\end{equation*}
Then we have the following: 
\begin{gather*}
[e_i,f_j]=\delta_{ij}\,(i,j\in \{1,\ldots ,m+1\}),\\ 
\overline{\Phi}(e_i)=
\begin{cases}
e_i &\text{if }1\leq i\leq m,\\
f_{m+1}&\text{if } i=m+1,
\end{cases}
,\quad \overline{\Phi}(f_i)=
\begin{cases}
f_i &\text{if }1\leq i\leq m,\\
e_{m+1} &\text{if } i=m+1, 
\end{cases}
\\ \pibar(e_i)=
\begin{cases}
e_i &\text{if }1\leq i\leq m,\\
f_{m+1}&\text{if } i=m+1. 
\end{cases}
\end{gather*}
We define subspaces of $\Omegabar$ as below: 
\begin{equation*}
L:=\bigoplus_{i=1}^{m}\Fpbar e_i,\quad L^{+}:=L\oplus e_{m+1},\quad L^{-}:=L\oplus f_{m+1}. 
\end{equation*}
Then we have $(L\subset L^{\pm})\in \OGr(m,m+1;\Omegabar)(\Fpbar)$. Moreover, by after \cite[Lemma 3.4]{hp}, after relabeling the connected components of $\OGr(m,m+1;\Omegabar)$ if necessary, we may assume $(L\subset L^{\pm})\in \OGr^{\pm}(m,m+1;\Omegabar)(\Fpbar)$. On the other hand, we have $((\overline{\Phi} \circ \pibar)_*(L)\subset (\overline{\Phi} \circ \pibar)_*(L^{\pm}))=(L\subset L^{\pm})$ by the relations between $e_i,f_j$ and $\overline{\Phi},\pibar$. Hence we obtain $(\overline{\Phi} \circ \pibar)_*(\OGr^{\pm}(m,m+1;\Omegabar))\subset \OGr^{\pm}(m,m+1;\Omegabar)$ by the connectedness of $\OGr^{\pm}(m,m+1;\Omegabar)$. 

Second, we prove $(\overline{\Phi} \circ \pibar)_*(\Sbar_{\Omegabar_0}^{\pm})\subset \Sbar_{\Omegabar_0}^{\pm}$. This follows from $(\overline{\Phi} \circ \pibar)_*(\OGr^{\pm}(m,m+1;\Omegabar))\subset \OGr^{\pm}(m,m+1;\Omegabar)$ and the commutativity of $\overline{\Phi}$ and $\pibar$. 

(ii): Note that we have $L\not\subset \Omega \otimes_{\Fpbar}R$. By the similar argument to the proof of Lemma \ref{y1il}, we have $L\cap \pibar(L)=L^{\pibar}\oplus L^{-\pibar}$, where $L^{\pibar}$ and $L^{-\pibar}$ are the $\pibar$ and $-\pibar$-fixed parts in $L$ respectively. Then we have $L^{-\pibar}\subset R\omega$ and both $L^{\pibar}$ and $L^{-\pibar}$ are totally isotropic. Let us show that $L^{\pibar}$ is locally free of rank $m$ over $R$ and $L^{-\pibar}=0$. First, suppose $R=\Fpbar$. Then, $L^{-\pibar}=0$ follows since $\omega$ is anisotropic. Hence we have $\dim_{\Fpbar}L^{\pibar}=\dim_{\Fpbar}L\cap\pibar(L)$. By the similar argument to the proof of Lemma \ref{y1il}, we have $\dim_{\Fpbar}L\cap\pibar(L)=m$, which implies the assertion. Next, for general $R$, the assertion follows from the case for $R=\Fpbar$, \cite[Exercise X.16]{lan} for $\pibar-\id_{\Omega \otimes_{\Fpbar}R}$, and the property that $R$ is a Jacobson ring. Finally, the commutativity of $q$ and $\Fp$-structures is a consequence of the commutativity of $\pibar$ and $\overline{\Phi}$. 

(iii): First, note that $q\vert_{\Sbar_{\Omegabar_0}^{\pm}}$ commutes with the $\Fp$-structures by (i) and (ii). We show that $q\vert_{\Sbar_{\Omegabar_0}^{\pm}}$ are closed immersions. By (ii), we have a morphism
\begin{equation*}
i_{\pibar}\colon \Sbar_{\Omegabar_0}\rightarrow \OGr(m,m+1;\Omegabar);L\mapsto (L\cap \pibar(L)\subset L), 
\end{equation*}
which is injective as a functor. It is a closed immersion since both $\Sbar_{\Omegabar_0}$ and $\OGr(m,m+1;\Omegabar)$ are proper. See Proposition \ref{suir}. Relabeling if necessary, we may assume $i_{\pibar}(\Sbar_{\Omegabar_0}^{\pm})\subset \OGr^{\pm}(m,m+1;\Omegabar)$. Moreover, we have isomorphisms 
\begin{equation*}
c^{\pm}\colon \OGr^{\pm}(m,m+1;\Omegabar)\xrightarrow{\cong} \OGr(m;\Omegabar);(L_{m}\subset L_{m+1})\mapsto L_m
\end{equation*}
by after \cite[Lemma 3.4]{hp}. Moreover, by (ii), $c^{\pm}\circ (i_{\pibar}\vert_{\Sbar_{\Omegabar_0}^{\pm}})$ factor as $\Sbar_{\Omegabar_0}^{\pm}\xrightarrow{q} G/P_0\rightarrow \OGr(m;\Omegabar)$. Hence the assertion follows. 

Next, we show that $q\vert_{\Sbar_{\Omegabar_0}^{\pm}}$ factors through the closed immersion $S_{\Omega_0}\rightarrow G/P_0$. Take a reduced $\Fpbar$-algebra $R$ of finite type and $L\in \Sbar_{\Omegabar_0}(R)$. By (ii), we have $L\cap \pibar(L)\in (G/P_0)(R)$. On the other hand, since $L\cap \overline{\Phi}_*(L)$ is locally free over $R$, 
\begin{equation*}
(L\cap \pibar(L))\cap \overline{\Phi}_*(L\cap \pibar(L))=(L\cap \overline{\Phi}_*(L))\cap \pibar(L\cap \overline{\Phi}_*(L))
\end{equation*}
is also locally free over $R$ by the injectivity of $\pibar$ and \cite[Exercise X.16]{lan} for $\pibar-\id_{\Omega \otimes_{\Fpbar}R}$. Moreover, by the same argument as in the proof of (ii), we have 
\begin{equation*}
\rk (L\cap \overline{\Phi}_*(L))\cap \pibar(L\cap \overline{\Phi}_*(L))=
\begin{cases}
m&\text{if }L\cap \overline{\Phi}_*(L)\subset \Omega \otimes_{\Fpbar}R, \\
m-1&\text{otherwise}. 
\end{cases}
\end{equation*}
Hence the assertion follows. 

Finally, we show that $q\vert_{\Sbar_{\Omegabar_0}^{\pm}}\colon \Sbar_{\Omegabar_0}^{\pm}\rightarrow S_{\Omega_0}$ are isomorphisms. By Propositions \ref{stsv} (i), \ref{slir} and \ref{suir}, both $\Sbar_{\Omegabar_0}^{\pm}$ and $S_{\Omega_0}$ are integral of dimension $m$. Therefore the assertion follows. 
\end{proof}

\subsection{Some lower-dimensional cases}\label{lwdm}

We describe $S_{\Omega_0}$ for $m=0,1$ and $2$. 

\textbf{Case 1. $m=0$.} In this case, $S_{\Omega_0}$ is a single point defined over $\Fp$. 

\textbf{Case 2. $m=1$.} By definition, we have $P_0=P_1=B$ and $S_{\Omega_0} \cong G/B\cong \P^{1}_{\Fpbar}$. The set of all $\Fp$-rational points corresponds to $X_{B}(\id)$ in $S_{\Omega_0}$, and the complement corresponds to $X_{P_0}(w_1)$ in $S_{\Omega_0}$. Moreover, by the identity of Dynkin diagrams $B_1=A_1$ there is a surjection $\GL_2\rightarrow \SO(\Omega)$ such that the upper-triangular Borel subgroup $B'$ of $\GL_2$ surjects onto $B$. Let $w'$ be the unique non-trivial element of the Weyl group of $(B,T)$, where $T$ is the diagonal torus. Then we have $X_{B}(\id)=X_{B'}(\id)$ and $X_{B}(w_1)=X_{B'}(w')$. 

\textbf{Case 3. $m=2$.} By definition, we have $P_0\neq P_1=B$. Consider the $4$-dimensional symplectic space $\overline{V}_0$ over $\Fp$, and denote by $\GSp(\overline{V}_0)$ as the symplectic similitude group of $\overline{V}_0$. Then $\GSp(\overline{V}_0)$ is an algebraic group over $\Fp$ which is isomorphic to $\GSp_4$. By the identity of Dynkin diagrams $B_2=C_2$, we have a surjection $\GSp_4\rightarrow \SO(\Omega_0)$ such that the Klingen parabolic subgroup $P'$ of $\GSp(\overline{V}_0)$ surjects onto $P_0$. This induces an isomorphism
\begin{equation*}
S_{\Omega_0}\cong \{l\in \P(\overline{V}_0)\mid \sigma(l)\subset l^{\perp}\}. 
\end{equation*}
The right-hand side is isomorphic to the surface defined by 
\begin{equation}\label{frvr}
x_0x_3^{p}-x_0^{p}x_3+x_1x_2^{p}-x_1^{p}x_2=0. 
\end{equation}
in $\Proj \Fpbar [x_0,x_1,x_2,x_3]$. Note that Proposition \ref{svsu} (ii) gives an $\Fpbar$-isomorphism (however not defined over $\Fp$) between (\ref{frvr}) and the Fermat surface defined by
\begin{equation*}
x_0^{p+1}+x_1^{p+1}+x_2^{p+1}+x_3^{p+1}=0
\end{equation*}
in $\Proj \Fpbar [x_0,x_1,x_2,x_3]$. 

Let $B'$ be the upper-triangular Borel subgroup of $\GSp(\overline{V}_0)$. The Weyl group can be identified with a subgroup of the symmetric group $\mathfrak{S}_4$: 
\begin{equation*}
\{w'\in \mathfrak{S}_4\mid w'(i)+w'(5-i)=5\text{ for any }i\in \{1,\ldots ,4\}\}. 
\end{equation*}
Denote $s'_1$ by the simple reflection which corresponds to $(1\ 2)(3\ 4)$. Also denote $s'_2$ by the simple reflection which corresponds to $(2\ 3)$. Then we have $X_{P'}(\id)\cong X_{P_0}(\id)$, $X_{B'}(s'_1)\cong X_{P_0}(w_1)$ and $X_{B'}(s'_1s'_2)\cong X_{P_0}(w_2)$. 

\subsection{Relation with the Bruhat--Tits strata of $\M_G$}\label{relb}

In this subsection, we relate the Bruhat--Tits stratification with Deligne--Lusztig varieties by using the results in the previous subsections. 

\begin{dfn}\label{ombd}
For $\widetilde{\Lambda} \in \Vrt_H$, put 
\begin{equation*}
\Omegabar_0(\Lambda):=\widetilde{\Lambda}/\widetilde{\Lambda}^{\natural}. 
\end{equation*}
It is a $\Fp$-vector space. We endow $\Omegabar_0(\Lambda)$ with quadratic form $v\mapsto -p\varepsilon^2 Q(v)\bmod p$. 
\end{dfn}
We apply the results in Section \ref{nseo} to $\Omegabar_0:=\Omegabar_0(\varphi(\Lambda))$ and $\omega:=p^{-1}y_1$. Then we have $\pibar=\pi$ on $\Omegabar_0$, and $\Omega_0$ is isomorphic to the quadratic space $\Omega_0(\varphi(\Lambda))$; cf.~Definition \ref{omlm}. Moreover, we have $\overline{\Phi}=\Phi \bmod \varphi(\Lambda)^{\natural}$ on $\Omegabar_0(\Lambda)\otimes_{\Fp}\Fpbar$ and $\Omega_0(\Lambda)\otimes_{\Fp}\Fpbar$. The following follow from \cite[Theorem 3.9]{hp} and Propositions \ref{btsc}, \ref{svsu} (ii): 

\begin{thm}\label{vldl}
\emph{
\begin{enumerate}
\item There is an isomorphism $\M^{(0)}_{G,\Lambda}\cong S_{\Omega_0(\Lambda)}$, which commutes with the bijection in Proposition \ref{pscr}. 
\item The isomorphism in (i) induces an isomorphism
\begin{equation*}
X_{P_i}(w_i)\cong \coprod_{\Lambda' \in \Vrt(2i+1),\Lambda'\subset \Lambda}\BT_{G,\Lambda'}^{(0)}
\end{equation*}
for any $0\leq i\leq (t(\Lambda)-1)/2$. In particular the dense open subscheme $\BT_{G,\Lambda}^{(0)}$ of $\M_{G,\Lambda}^{(0)}$ is isomorphic to the Deligne--Lusztig variety for $\SO_{t(\Lambda)}$ associated with a Coxeter element. 
\end{enumerate}}
\end{thm}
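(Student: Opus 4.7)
The strategy is to deduce both assertions by combining the identification $\M_{G,\Lambda}^{(0)}=\M_{H,\varphi(\Lambda)}^{(0)}$ from Proposition~\ref{btsc} with the analogous result for $\M_H$ proved in \cite[Theorem~3.9]{hp}, and then transporting the latter to $S_{\Omega_0(\Lambda)}$ via the isomorphism of Proposition~\ref{svsu}~(iii).

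First I would set $\widetilde{\Lambda}:=\varphi(\Lambda)$, which by Proposition~\ref{vtcr} is an $H$-vertex lattice of type $t(\Lambda)+1$ containing $p^{-1}y_1$. Then $\overline{\Omega}_0(\widetilde{\Lambda})=\widetilde{\Lambda}/\widetilde{\Lambda}^{\natural}$ is a $(t(\Lambda)+1)$-dimensional quadratic space over $\Fp$, and $p^{-1}y_1$ defines an anisotropic vector $\omega\in \overline{\Omega}_0(\widetilde{\Lambda})$ such that the associated reflection is $\pibar=\pi\bmod \widetilde{\Lambda}^{\natural}$; the corresponding $\pibar$-fixed subspace coincides with $\Omega_0(\Lambda)$ by Lemma~\ref{anis} and the definition of $\varphi$. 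Since $\overline{\Omega}_0(\widetilde{\Lambda})$ is non-split (it has discriminant $-\varepsilon^{2}$ modulo squares by Proposition~\ref{lqpi} (ii)--(iii)), we are exactly in the setting of Section~\ref{nseo} with $m=(t(\Lambda)-1)/2$.

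For (i), \cite[Theorem~3.9]{hp} provides an isomorphism
\begin{equation*}
\M_{H,\widetilde{\Lambda}}^{(0)}\xrightarrow{\cong}\overline{S}_{\overline{\Omega}_0(\widetilde{\Lambda})}^{\pm}
\end{equation*}
(for one of the two connected components, the choice being fixed once and for all as in \cite[\S 3.2]{hp}), under which a $\Pi$-stable Dieudonn\'e lattice $M$ is sent to $F^{-1}(pM)/\widetilde{\Lambda}^{\natural}\subset \widetilde{\Lambda}/\widetilde{\Lambda}^{\natural}$ — equivalently, via Theorem~\ref{hppc}, to the image of the corresponding special lattice modulo $\widetilde{\Lambda}^{\natural}$. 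Composing with the isomorphism of Proposition~\ref{svsu}~(iii) yields
\begin{equation*}
\M_{G,\Lambda}^{(0)}=\M_{H,\widetilde{\Lambda}}^{(0)}\xrightarrow{\cong}\overline{S}_{\overline{\Omega}_0(\widetilde{\Lambda})}^{\pm}\xrightarrow{\cong}S_{\Omega_0(\Lambda)}.
\end{equation*}
On $\Fpbar$-points this composition sends the $\pi$-special lattice $L$ associated with a point of $\M_{G,\Lambda}^{(0)}$ to $(L\cap \pi(L))/\Lambda^{\vee}\subset \Omega_0(\Lambda)\otimes_{\Fp}\Fpbar$, where the containment $\Lambda^{\vee}\subset L\cap \pi(L)$ holds because $L$ is $\pi$-special and $\Lambda(L)\subset \Lambda$. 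This is precisely the compatibility with Proposition~\ref{pscr} that is claimed.

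For (ii), I would first translate the stratification of $S_{\Omega_0(\Lambda)}$ in Proposition~\ref{stsv} through the isomorphism of (i); the stratum $X_{P_i}(w_i)$ then corresponds to the locus where $\mathrm{rk}(L'\cap \overline{\Phi}_*(L'))=m-i$ for $L':=(L\cap \pi(L))/\Lambda^{\vee}$. Using Proposition~\ref{picr}~(ii), this equals the locus of $\pi$-special lattices $L$ for which the jump index $d$ in Proposition~\ref{spvt} equals $i$, that is, $L$ satisfies $\Lambda(L)\in \Vrt(2i+1)$ and $\Lambda(L)\subset \Lambda$. By the definition of $\BT_{G,\Lambda'}^{(0)}$ and the bijection in Proposition~\ref{pscr}, this is exactly $\coprod_{\Lambda'\in \Vrt(2i+1),\,\Lambda'\subset \Lambda}\BT_{G,\Lambda'}^{(0)}$ on $\Fpbar$-points. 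Since both sides are locally closed reduced subschemes of the reduced scheme $\M_{G,\Lambda}^{(0)}\cong S_{\Omega_0(\Lambda)}$ and agree on $\Fpbar$-points, they agree as subschemes. The case $i=(t(\Lambda)-1)/2$, combined with Theorem~\ref{mgld}~(ii), gives the final assertion that $\BT_{G,\Lambda}^{(0)}\cong X_B(w_m)$, the Coxeter Deligne--Lusztig variety for $\SO_{t(\Lambda)}$.

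The main obstacle, I expect, is verifying that the composite isomorphism in (i) matches the two stratifications on the nose: specifically, translating the rank condition defining $X_{P_i}(w_i)\subset S_{\Omega_0(\Lambda)}$ through the map $L'\mapsto L'\cap \pibar(L')$ of Proposition~\ref{svsu} and then through the $\Phi$-versus-$F$ dictionary of Proposition~\ref{picr} in a way compatible with the filtration $L\subsetneq L^0\subsetneq \cdots \subsetneq L^d$ of Proposition~\ref{spvt}. This compatibility should be carried out by comparing the explicit images of a special lattice in $\overline{\Omega}_0(\widetilde{\Lambda})$ and in $\Omega_0(\Lambda)$ and using Lemma~\ref{y1il}, after which the stratification statement is essentially a bookkeeping of lengths.
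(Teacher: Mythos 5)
Your proposal is correct and follows essentially the same route as the paper: part (i) is obtained by composing $\M_{G,\Lambda}^{(0)}=\M_{H,\varphi(\Lambda)}^{(0)}$ (Proposition \ref{btsc}) with the isomorphism of \cite[Theorem 3.9]{hp} and then with $q$ from Proposition \ref{svsu}, and part (ii) is the argument of \cite[Theorem 3.11]{hp} transported through this identification via Propositions \ref{picr} and \ref{spvt}. Your write-up is in fact more explicit than the paper's two-line proof about how the stratifications match on $\Fpbar$-points.
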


\begin{proof}
(i): This is a consequence of Propositions \ref{clsb} (ii), \ref{svsu} (ii) and \cite[Theorem 3.9]{hp}. 

(ii): This follows from the same argument as in the proof of \cite[Theorem 3.11]{hp}. 
\end{proof}

Combining Theorem \ref{vldl} with Proposition \ref{odis} and the results in Section \ref{lwdm}, we obtain the following: 

\begin{cor}\label{dbst}
\emph{Let $x\in \VE$. Then $\BT_{G,x}^{(0)}$ is isomorphic to the Deligne--Lusztig variety for $\GSp_{2d(x)}$ associated with a Coxeter element, where 
\begin{equation*}
d(x)=
\begin{cases}
2 &\text{if }x\in \Vtx^{\hs},\\
1 &\text{if }x\in \Edg^{\hs},\\
0 &\text{if }x\in \Vtx^{\nsp}. 
\end{cases}
\end{equation*}}
\end{cor}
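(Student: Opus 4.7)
The plan is to combine Theorem~\ref{vldl}(ii), which identifies each Bruhat--Tits stratum $\BT_{G,\Lambda}^{(0)}$ with the open Coxeter stratum $X_{P_{d}}(w_{d})$ inside $S_{\Omega_{0}(\Lambda)}$ for $d = (t(\Lambda)-1)/2$, with the dictionary $\Psi \colon \VE \xrightarrow{\cong} \Vrt$ of Proposition~\ref{odis}. Under $\Psi$, the three subsets $\Vtx^{\hs}$, $\Edg^{\hs}$, $\Vtx^{\nsp}$ correspond to $\Vrt(5)$, $\Vrt(3)$, $\Vrt(1)$ respectively (see Proposition~\ref{vtvt}), so that the integer $d(x)$ from the statement agrees with $(t(\Psi(x))-1)/2$ in every case. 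Rewriting Theorem~\ref{vldl}(ii) via this bijection reduces the claim to exhibiting, for $d \in \{0,1,2\}$, an isomorphism between the Coxeter Deligne--Lusztig variety for the split $\SO_{2d+1}$ over $\Fp$ and the Coxeter Deligne--Lusztig variety for $\GSp_{2d}$ over $\Fp$.

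Next I would invoke the three cases worked out in Section~\ref{lwdm}. For $x \in \Vtx^{\nsp}$ (so $d(x)=0$) both varieties are a single $\Fp$-rational point, and there is nothing to prove. For $x \in \Edg^{\hs}$ (so $d(x)=1$), Case~2 supplies, via the Dynkin identification $B_{1}=A_{1}=C_{1}$, an isogeny $\GL_{2}\to \SO_{3}$ that matches the Borel subgroups and identifies $X_{B}(w_{1})$ for $\SO_{3}$ with $X_{B'}(w')$ for $\GSp_{2}\cong \GL_{2}$, which is the Coxeter Deligne--Lusztig variety on the symplectic side. For $x \in \Vtx^{\hs}$ (so $d(x)=2$), Case~3 uses the Dynkin identification $C_{2}=B_{2}$ together with the surjection $\GSp_{4}\to \SO_{5}$ sending the Klingen parabolic to $P_{0}$ in order to identify $X_{P_{0}}(w_{2})$ for $\SO_{5}$ with $X_{B'}(s'_{1}s'_{2})$ for $\GSp_{4}$; the element $s'_{1}s'_{2}=(1\ 2)(3\ 4)\cdot(2\ 3)$ is a Coxeter element of the Weyl group of $\GSp_{4}$, so this is precisely the Coxeter Deligne--Lusztig variety.

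There is no real obstacle: all the geometric input has been assembled in the preceding sections, and the corollary is an assembly of Proposition~\ref{odis}, Theorem~\ref{vldl}(ii), and the explicit case-by-case descriptions of Section~\ref{lwdm}. The only point that deserves a sentence of justification is that the Coxeter elements on the two sides of each exceptional isogeny truly correspond; this follows from the explicit permutation presentations of the simple reflections and the fact that the exceptional isogenies match the chosen Borel (respectively parabolic) subgroups, which was already recorded in Section~\ref{lwdm}.
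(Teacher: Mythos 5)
Your proposal is correct and follows exactly the route the paper takes: the corollary is presented there as a direct combination of Theorem \ref{vldl} (ii), Proposition \ref{odis} (together with Proposition \ref{vtvt}), and the case-by-case analysis of Section \ref{lwdm} via the exceptional isogenies $B_1=A_1$ and $B_2=C_2$. The only thing you add is the explicit remark that the Coxeter elements correspond under these isogenies, which the paper leaves implicit in the identifications recorded in Section \ref{lwdm}.
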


\section{Application to some Shimura varieties}\label{shvr}

In this section, we apply results for $\M_G$ to the Shimura varieties for quaternionic unitary groups of degree $2$. First, we review integral models of such Shimura varieties; cf.~\cite[Chapter 6]{rz}. Next, we recall the definition of the supersingular locus, and study the non-smooth locus of the integral model by using the local model. Finally, we show the global results by using the $p$-adic uniformization theorem and the results for $\M_G$. 

\subsection{Integral models of Shimura varieties for quaternionic unitary groups}\label{shqt}

Let $\bD$ be an indefinite quaternion algebra over $\Q$ which is ramified at $p$. Define a set $\Ram(\bD)$ as the set of all prime numbers which ramifies in $\bD$. Note that $\Ram(\bD)$ is a finite set and $2\mid \#\Ram(\bD)$. 

\begin{lem}\label{dele}
There are two elements $\delta,e\in \Q^{\times}$ satisfying the following conditions: 
\begin{itemize}
\item $\bD=\Q(\varepsilon)[\Delta]$, where $\Delta^2=\delta,\,\varepsilon^2=e$ and $\Delta \varepsilon=-\varepsilon \Delta$, 
\item $\ord_p(\delta)=1$ and $\ord_p(e)=0$, 
\item $e<0<\delta$. 
\end{itemize}
\end{lem}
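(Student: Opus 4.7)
The strategy is to first exhibit $\varepsilon$ via an embedding $\Q(\sqrt{e}) \hookrightarrow \bD$ and then read off $\Delta$ from the resulting presentation, adjusting by a central scalar to normalise the $p$-adic valuation of $\delta$. The key input is the classical local--global theorem (Albert--Brauer--Hasse--Noether): an imaginary quadratic field $\Q(\sqrt{e})$ with $e<0$ embeds into $\bD$ if and only if, for every finite prime $q \in \Ram(\bD)$, the localisation $\Q_q(\sqrt{e})/\Q_q$ is a (non-split) quadratic extension.

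First I would choose $e \in \Z$ with $e<0$, $\ord_p(e)=0$, $e$ a non-square modulo $p$ (so that $\Qp(\sqrt{e})/\Qp$ is the unramified quadratic extension), and $e$ a non-square in $\Q_q^\times$ for every $q \in \Ram(\bD)\setminus\{p\}$. These are finitely many compatible congruence conditions (mod $q$ at each odd ramified prime, and a mod-$8$ condition if $2 \in \Ram(\bD)$), consistent with $e<0$; the Chinese Remainder Theorem produces such an $e$. Applying the embedding theorem gives a subfield $\Q(\varepsilon)\subset \bD$ with $\varepsilon^2=e$, and the standard structure theory of quaternion algebras furnishes $\Delta' \in \bD^\times$ with $\bD = \Q(\varepsilon)\oplus \Q(\varepsilon)\Delta'$, $\Delta'\varepsilon = -\varepsilon \Delta'$, and $\delta':=\Delta'^{2}\in \Q^\times$.

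It then remains to show that $\delta'$ can be renormalised to satisfy $\delta'>0$ and $\ord_p(\delta')=1$. Reading $\bD$ through its Hilbert symbols, the splitting of $\bD$ at $\infty$ combined with $e<0$ forces $(\delta',e)_\infty=+1$, hence $\delta'>0$ automatically. The ramification of $\bD$ at $p$ gives $(\delta',e)_p=-1$, and since $e\in \Zp^\times$ is a non-square, the Hilbert-symbol formula at the odd prime $p$ reduces to $(\delta',e)_p=(-1)^{\ord_p(\delta')}$, so $\ord_p(\delta')$ is odd. Writing $\ord_p(\delta')=2k+1$ with $k\in \Z$, I would replace $\Delta'$ by $\Delta:=p^{-k}\Delta'$; since $p^{-k}\in \Q$ is central in $\bD$, the relations $\bD=\Q(\varepsilon)[\Delta]$ and $\Delta\varepsilon=-\varepsilon\Delta$ persist, and $\delta:=\Delta^{2}=p^{-2k}\delta'\in \Q^\times$ has $\ord_p(\delta)=1$ and $\delta>0$, as required.

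The only genuinely non-formal step is the very first one: producing an integer $e<0$ realising prescribed local non-square classes simultaneously at every $q\in \Ram(\bD)$. This is classical (CRT for odd ramified primes, a mod-$8$ adjustment at $q=2$ if necessary, and a change of sign at the end), but it is the ingredient that has to be cited or spelled out; every subsequent step is bookkeeping with Hilbert symbols and a central rescaling of $\Delta'$.
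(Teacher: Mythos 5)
Your proof is correct, but it takes a genuinely different route from the paper's. You build the pair $(\delta,e)$ \emph{from the outside in}: you first manufacture a global $e<0$ by CRT so that $\Q(\sqrt{e})$ is non-split at every ramified prime (unramified at $p$ in particular), invoke the Albert--Brauer--Hasse--Noether embedding criterion to place $\Q(\varepsilon)$ inside $\bD$, and then read off the sign and the parity of $\ord_p(\delta')$ from the Hilbert symbols $(\delta',e)_\infty=+1$ and $(\delta',e)_p=-1$, finishing with the central rescaling $\Delta=p^{-k}\Delta'$. The paper instead works \emph{from the inside out}: it starts from an essentially arbitrary presentation $\bD=\Q(\varepsilon)[\Delta]$ already adapted to $p$ (with $\ord_p(\delta)=1$, $\ord_p(e)\in\{0,1\}$, which follows from ramification at $p$ by an elementary symbol computation) and then massages it by change-of-presentation identities: $(\delta,e)\cong(\delta,-\delta^{-1}e)$ to force $\ord_p(e)=0$, and twisting one slot by an element of the norm group of the quadratic field generated by the other slot ($-e\in \N_{\Q(\sqrt{e})/\Q}(\Q(\sqrt{e})^{\times})$, resp.\ a suitable negative $p$-unit norm from $\Q(\sqrt{\delta})$) to fix the signs. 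Your argument buys a cleaner logical structure at the cost of a heavier input (the global embedding theorem plus a CRT construction whose details at $q=2$ and at the sign must be spelled out); the paper's argument is more elementary, needing only the standard quaternion-algebra identities and local symbol formulas, but it proceeds by successive corrections in which one must check at each step that the previously arranged conditions (e.g.\ $\ord_p(\delta)=1$) are not destroyed. Both are complete proofs; the one genuinely citable step in yours is, as you note, the realisation of prescribed local square classes by a single negative integer, and you should also record that $\infty\notin\Ram(\bD)$ (indefiniteness) is what makes the finite-prime conditions sufficient for the embedding.
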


\begin{proof}
Since $\bD$ is ramified at $p$, there are $\delta,e\in \Q^{\times}$ satisfying the first condition and $\ord_p(\delta)=1$, $\ord_p(e)\in \{0,1\}$. If $\ord_p(\delta)=1$, then we can replace $e$ to $-\delta^{-1}e$, and hence we may assume $\ord_p(e)=0$. Next, if $\delta<0$ then we have $e>0$ since $\bD$ is indefinite. Moreover, we may replace $\delta$ to $-e\delta>0$ since $-e\in \N_{\Q(\sqrt{e})/\Q}(\Q(\sqrt{e})^{\times})$. Therefore we may assume $\delta>0$. Finally, if $e>0$ then by $\delta>0$ there is an element $\alpha \in \N_{\Q(\sqrt{\delta})/\Q}(\Q(\sqrt{\delta})^{\times})$ such that $\alpha<0$ and $\ord_p(\alpha)=0$. Therefore we can replace $e$ to $\alpha e<0$. 
\end{proof}

Let us fix $\delta,e\in \Q^{\times}$ as in Lemma \ref{dele}. Let $*$ be the involution of $\bD$ defined by
\begin{equation*}
d^{*}:=\varepsilon \overline{d}\varepsilon^{-1}. 
\end{equation*}
It is a positive involution by the assumption on $\varepsilon$ and \cite[\S 21, Theorem 2]{mum}. Let $O_{\bD}$ be an order of $\bD$ which is stable under $*$ and $O_{\bD}\otimes_{\Z}\Zp$ is a maximal order of $\bD \otimes_{\Q}\Qp$. Put $\bV:=\bD^{\oplus 2}$ as a left $\bD$-module and let $(\,,\,)^{\sim}$ be a bilinear form on $\bV$ defined by
\begin{equation*}
((x_1,x_2),(y_1,y_2))^{\sim}:=\trd_{\bD/\Q}(\Delta^{-1}(x_1^*y_2-x_2^*y_1))
\end{equation*}
for $(x_1,x_2),(y_1,y_2)\in \bV$. Then $(\,,\,)^{\sim}$ is a non-degenerate symplectic form satisfying the following conditions: 
\begin{itemize}
\item $(dx,y)^{\sim}=(x,d^{*}y)^{\sim}$ for any $d\in \bD$ and $x,y\in \bV$. 
\item Let $\Lambdabar{}^0:=O_{\bD}^{\oplus 2}\subset \bV$ be a $\Z$-lattice. Then $(\Lambdabar{}^0\otimes_{\Z} \Z_{(p)})^{\vee}=\Lambdabar{}^0\otimes_{\Z} \Z_{(p)}$. 
\end{itemize}
These follow from the same argument as in the proof of Lemma \ref{sdlt}. By the definition of $\Delta$, the lattice chain $\{\Delta^n\Lambdabar{}^0\otimes_{\Z} \Z_{(p)}\}_{n\in \Z}$ in $\bV$ is self-dual. 

Let $\bG$ be the algebraic group over $\Q$ defined by
\begin{equation*}
\bG(R)=\{(g,c)\in \GL_{\bD \otimes_{\Q}R}(\bV \otimes_{\Q}R)\times \bG_m(R)\mid (g(v),g(w))^{\sim}
=c(v,w)^{\sim}\text{ for all }v,w\in \bV \otimes_{\Q}R\}
\end{equation*}
for any $\Q$-algebra $R$. By Corollary \ref{unhm} (ii), there are isomorphisms
\begin{equation*}
\bG \cong \GU_2(\bD),\quad \bG \otimes_{\Q} \Qp\cong \GU_2(D),\quad \bG \otimes_{\Q} \R \cong \GSp_4\otimes_{\Z}\R. 
\end{equation*}
Let $h\colon \Res_{\C/\R}\G_m \rightarrow \bG \otimes_{\Q}\R$ be homomorphism induced by
\begin{equation*}
\Res_{\C/\R}\G_m\rightarrow \GSp_4\otimes_{\Z}\R;\, a+b\sqrt{-1}\mapsto 
{\begin{pmatrix}
aE_2&-bE_2\\
bE_2&aE_2
\end{pmatrix}}
\end{equation*}
and an isomorphism $\bG \otimes_{\Q}\R \cong \GSp_4\otimes_{\Z}\R$. We denote by $\bX$ the $\bG(\R)$-conjugacy class of $\Hom(\Res_{\C/\R}\G_m,\bG \otimes_{\Q}\R)$ containing $h$ as above. Then we obtain a Shimura datum $(\bG,\bX)$. The reflex field of $(\bG,\bX)$ is $\Q$. 

Moreover, we can construct a cocharacter $\mu \colon \G_m \rightarrow \bG \otimes_{\Q}\C$ of $\bG$ over $\C$ from $h$. See \cite[6.2]{rz}. Finally, let $K^p\subset \bG(\A^p_f)$ be a compact open subgroup which is contained in the congruence subgroup of level $N\geq 3$, where $N$ is prime to $p$. Consequently we obtain a datum
\begin{equation*}
(\bD,O_{\bD},\bV,(\,,\,)^{\sim},\mu,\{\Delta^n \Lambdabar{}^0\otimes_{\Z} \Z_{(p)}\}_{n\in \Z},K^p), 
\end{equation*}
and hence we can define an integral model over $\Zp$ as in \cite[Definition 6.9]{rz}, which is denoted by $\sS_{K}$. Here $K=K^pK_p$ and $K_p$ is the stabilizer of $\Lambdabar{}^0\otimes_{\Z}\Zp$ in $\bG \otimes_{\Q} \Qp$. It is defined as the functor which parametrizes tuples $(A,\iota,\lambda,\overline{\eta}^p)$ for any connected noetherian $\Z_p$-scheme $S$, where
\begin{itemize}
\item $A$ is an abelian scheme of dimension $4$ over $S$, 
\item $\iota \colon \O_{\bD}\otimes_{\Z} \Z_{(p)}\rightarrow \End(A)\otimes_{\Z} \Z_{(p)}$ is a ring homomorphism, 
\item $\lambda \colon A \rightarrow A^{\vee}$ is a prime-to-$p$ quasi-polarization, 
\item $\overline{\eta}^p$ is a $K^p$-level structure, that is, a $\pi_1(S,\sbar)$-invariant $K^p$-orbit of an $O_{\bD}$-linear isomorphism
\begin{equation*}
\eta^p \colon H_1(A_{\sbar},\A_f^p) \xrightarrow{\cong}\bV \otimes_{\Q} \A_f^p
\end{equation*}
which respects the symplectic forms up to a constant in $(\A_f^p)^{\times}$ (here, $H_1(A_{\sbar},\A_f^p):=(\prod_{\ell \neq p}T_{\ell}A_{\sbar})\otimes_{\Z}\Q$ and $\sbar$ is a geometric point of $S$. See \cite[\S 5]{kot3}), 
\end{itemize}
satisfying the following conditions for any $d\in O_{\bD}$: 
\begin{itemize}
\item $\det(T-\iota(d) \mid \Lie(A))=(T^2-\trd_{\bD/\Q}(d)T+\nrd_{\bD/\Q}(d))^2$, 
\item $\lambda \circ \iota(d)=\iota(d^{*})^{\vee}\circ \lambda$. 
\end{itemize}
Two tuples $(A_1,\iota_1,\lambda_1,\overline{\eta}_1^p)$ and $(A_2,\iota_2,\lambda_2,\overline{\eta}_2^p)$ are equivalent if there is a prime-to-$p$ quasi-isogeny $\rho \colon A_1\rightarrow A_2$ such that $\rho^{\vee} \circ \lambda_2\circ \rho=\lambda_1$ and $\overline{\eta}_2^p\circ H_1(\rho,\A_f^p)=\overline{\eta}_1^p$. 

The functor above is representable by a quasi-projective scheme over $\Z_{(p)}$ by geometric invariant theory. See \cite[\S 5]{kot3}. 

\begin{rem}
Let us explain a symplectic form on $H_1(A,\A_f^p)$ for a polarized abelian variety $(A,\lambda)$ over an algebraically closed field of characteristic $p$. Choose an isomorphism $\A_f^p(1)\cong \A_f^p$. For any prime $\ell \neq p$, the polarization $\lambda$ induces a homomorphism $T_{\ell}A\rightarrow T_{\ell}A^{\vee}$, which induces a Weil pairing 
\begin{equation*}
T_{\ell}A\times T_{\ell}A\rightarrow \Z_{\ell}(1). 
\end{equation*}
Using this, we obtain a symplectic form over $\A_f^p$: 
\begin{equation*}
H_1(A,\A_f^p)\times H_1(A,\A_f^p)\rightarrow \A_f^p(1)\cong \A_f. 
\end{equation*}
We endow $H_1(A,\A_f^p)$ with symplectic form as above. 
\end{rem}

\subsection{Supersingular loci and non-smooth loci of the integral models}\label{ssdf}

In this section, we define the supersingular locus of the integral model $\sS_{K}$, and consider the non-smooth locus of $\sS_{K,W}:=\sS_{K}\times_{\spec \Zp}\spec W$. \emph{In this subsection, we regard $\D_{\Q}=V\otimes_{\Qp}K_0$ as an isocrystal over $\Fpbar$ by the $\sigma$-linear map $F=b\circ \sigma$. }

Let $\sS_{K}^{\si}$ be the supersingular locus of $\sS_{K}$, that is, the reduced closed subscheme of $\sS_{K,\Fpbar}:=\sS_{K}\times_{\spec \Z_{(p)}}\spec \Fpbar$ such that
\begin{equation*}
\sS_{K}^{\si}(k)=\{(A,\iota,\lambda,\overline{\eta}^p)\in \sS_{K}(k)\mid A\text{ is supersingular}\}
\end{equation*}
for any algebraically closed field $k$ of characteristic $p$. Let $\widehat{\sS}_{K}^{\si}$ be the completion of $\sS_{K,W}$ along $\sS_{K}^{\si}$. 

We also define the \emph{basic locus} $\sS_{K}^{\bs}$ of $\sS_{K,\Fpbar}$ to be the closed subscheme of $\sS_{K,\Fpbar}$ such that
\begin{equation*}
\sS_{K}^{\bs}(k)=\{(A,\iota,\lambda,\overline{\eta}^p)\in \sS_{K}(k) \mid \D(A[p^{\infty}])_{\Q}\text{ is basic (see \cite[5.1]{kot2})}\}
\end{equation*}
for any algebraically closed field $k$ of characteristic $p$. 

Now consider the datum below: 
\begin{equation*}
\mathcal{D}'=(\bD \otimes_{\Q} \Qp,*,O_{\bD}\otimes_{\Z} \Zp,\bV \otimes_{\Q}\Qp,(\,,\,)^{\sim},\{\Delta^n\Lambdabar{}^0\otimes_{\Z} \Zp\}_{n\in \Z}). 
\end{equation*}
Since $\bD$ is ramified at $p$, we have an isomorphism $\bD \otimes_{\Q} \Qp \cong D$ over $\Qp$. By the Skolem--Noether theorem, we may take an isomorphism such that $\varepsilon \in \bD$ maps to an element of $\Qps$. We identify $\bD \otimes_{\Q} \Qp$ and $D$ by the isomorphism above. Then we may assume that $\varepsilon \in \Qps$ in Section \ref{mrlv} is an element above. In this case, under the isomorphism $\bD \otimes_{\Q}\Qp \cong D$, the involution $*$ defined in Section \ref{shqt} equals the one defined in Section \ref{mrlv}. Moreover, we have $\delta=\Pi c$ for some $c\in \Zps^{\times}$. 

\begin{lem}
\emph{There is a $D$-linear isometry of symplectic spaces over $\Qp$
\begin{equation*}
\psi \colon \bV \otimes_{\Q}\Qp \xrightarrow{\cong}V
\end{equation*}
satisfying $\psi(\Lambdabar{}^0\otimes_{\Z}\Zp)=\Lambda^0$. }
\end{lem}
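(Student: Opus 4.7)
The plan is to produce $\psi$ by an explicit formula built from the element $c$ appearing just above the statement. First, under the fixed isomorphism $\bD\otimes_{\Q}\Qp\cong D$ (chosen so that $\varepsilon\in \bD$ is identified with the element $\varepsilon \in \Qps\subset D$ of Section \ref{mrlv}), the involution $*$ of $\bD$ matches the involution $*$ of $D$, and the generator $\Delta\in \bD$ with $\Delta^{2}=\delta$ becomes an element of $D$ that anticommutes with $\varepsilon$. Since $\Pi$ also anticommutes with $\varepsilon$, the product $\Pi^{-1}\Delta$ centralises $\varepsilon$ and hence lies in $\Qps$. Writing $\Delta=\Pi c$ with $c\in \Qps^{\times}$, the identity $\Delta^{2}=\delta$ becomes $p\cdot \N_{\Qps/\Qp}(c)=\delta$; because $\ord_p(\delta)=1$, this forces $c\in \Zps^{\times}$ (this is the content of the relation preceding the lemma, read as $\Delta=\Pi c$). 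The same computation, combined with $\Pi a=\tau(a)\Pi$ on $\Qps$, yields $\Delta^{*}=\tau(c)\Pi=\Pi c=\Delta$, a useful identity for later.

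Next, I would define
\[
\psi \colon \bV\otimes_{\Q}\Qp=D^{\oplus 2}\longrightarrow V=D^{\oplus 2},\qquad \psi(x_1,x_2):=(x_1,\,x_2 c^{-1}).
\]
Right multiplication by $c^{-1}$ is left $D$-linear, so $\psi$ is left $D$-linear; since $c^{-1}\in O_{D}^{\times}$, it carries $\Lambdabar{}^{0}\otimes_{\Z}\Zp=O_{D}^{\oplus 2}$ isomorphically onto $\Lambda^{0}=O_{D}^{\oplus 2}$.

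Finally, I would verify the isometry property by a direct trace calculation. Expanding $(\psi(x_1,x_2),\psi(y_1,y_2))$ by the definition of $(\,,\,)$, the first summand becomes $\trd_{D/\Qp}(c^{-1}\Pi^{-1}x_1^{*}y_2)$ after using cyclicity of $\trd_{D/\Qp}$. For the second summand one picks up $(c^{-1})^{*}=\tau(c^{-1})$, after which the identity $\Pi^{-1}\tau(c^{-1})=c^{-1}\Pi^{-1}$ (just a rewriting of $\Pi a=\tau(a)\Pi$) collapses it to $\trd_{D/\Qp}(c^{-1}\Pi^{-1}x_2^{*}y_1)$. Combining and using $\Delta^{-1}=c^{-1}\Pi^{-1}$ gives exactly $((x_1,x_2),(y_1,y_2))^{\sim}$. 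The only nontrivial step in the entire argument is this commutation of $c^{-1}$ past $\Pi^{-1}$ in the second summand; everything else is bookkeeping with the involution and the reduced trace.
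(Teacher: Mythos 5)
Your proof is correct and follows essentially the same route as the paper: write $\Delta=\Pi c$ with $c\in\Zps^{\times}$ and define $\psi$ by right multiplication by $c^{-1}$ on one coordinate, then verify the isometry by cyclicity of $\trd_{D/\Qp}$ and the relation $\Pi^{-1}\tau(c^{-1})=c^{-1}\Pi^{-1}$. The only (immaterial) difference is that the paper puts the factor $c^{-1}$ on the first coordinate, $\psi(x_1,x_2)=(x_1c^{-1},x_2)$, while you put it on the second; both choices produce the common factor $c^{-1}$ in front of $\Pi^{-1}$ in each summand, so both work.
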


\begin{proof}
Let
\begin{equation*}
\psi \colon \bV \otimes_{\Q}\Qp \rightarrow V;(x_1,x_2)\otimes a\mapsto (ax_1c^{-1},ax_2). 
\end{equation*}
Then $f$ is a $D$-linear isomorphism. Moreover, we have
\begin{align*}
(\psi(x_1,x_2),\psi(y_1,y_2))=&\trd_{D/\Qp}(\Pi^{-1}((x_1c^{-1})^{*}y_2-x_2^{*}y_1c^{-1}))\\
=&\trd_{D/\Qp}(c^{-1}\Pi^{-1}(x_1^{*}y_2-x_2^{*}y_1))\\
=&((x_1,x_2),(y_1,y_2))^{\sim}
\end{align*}
for $x_1,x_2,y_1,y_2\in \bV$. The assertion $\psi(\Lambdabar{}^0\otimes_{\Z} \Zp)=\Lambda^0$ follows from the definition of $\psi$. 
\end{proof}

By the isometry above, we obtain the following: 
\begin{itemize}
\item The cocharacter $\mu$ obtained by $h$ is identical to that of Section \ref{rzdt}. 
\item The lattice chain $\{\Delta^n\Lambdabar{}^0\otimes_{\Z} \Zp\}_{n\in \Z}$ in $\bV \otimes_{\Q}\Qp$ equals $\{\Pi^n\Lambda^0\}_{n\in \Z}$.
\end{itemize}
Therefore, under the isomorphism $\bD \otimes_{\Q}\Qp \cong \Qp$ and the isometry $\psi$, the datum $\mathcal{D}'$ equal the Rapoport--Zink datum $\mathcal{D}$ defined in Section \ref{rzdt}. 

\begin{prop}\label{bsss}
\emph{We have an equality $\sS_{K}^{\bs}=\sS_{K}^{\si}$. }
\end{prop}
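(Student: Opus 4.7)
The plan is to prove both inclusions by analyzing the Newton (slope) cocharacter of the $F$-isocrystal with $G$-structure associated with a geometric point $x\in \sS_K(k)$, where $k$ is algebraically closed of characteristic $p$.

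For $\sS_K^{\si}\subset \sS_K^{\bs}$: if $x=(A,\iota,\lambda,\overline{\eta}^p)\in \sS_K^{\si}(k)$, then $N:=\D(A[p^\infty])_{\Q}$ is isoclinic of slope $1/2$ as a bare $K_0$-isocrystal, so its slope homomorphism acts on $N$ as the scalar cocharacter $z\mapsto z^{1/2}\cdot \id_N$. This is central in $\GL(N)_{K_0}$, and therefore a fortiori central in $G_{K_0}$, which is exactly Kottwitz's criterion for basicness. Hence $x\in \sS_K^{\bs}(k)$.

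For $\sS_K^{\bs}\subset \sS_K^{\si}$: the crux is a computation of $Z(G)$. Since $D$ is a quaternion division algebra over $\Qp$, its center is exactly $\Qp$, so the center of $\GL_D(V)$ consists of $\Qp^\times$-scalar homotheties; compatibility with the symplectic form $(\,,\,)$ up to similitude then forces
\begin{equation*}
Z(G)(R)=\{(\lambda \cdot \id_{V\otimes R},\,\lambda^2):\lambda\in R^\times\}\cong \G_m
\end{equation*}
for every $\Qp$-algebra $R$. For a basic $b\in G(K_0)$, the Newton cocharacter $\nu_b$ therefore factors through this diagonal $\G_m\subset G$, so it has the shape $z\mapsto (z^{s}\cdot \id,\,z^{2s})$ for a single $s\in \mathbb{Q}$; in particular the underlying $K_0$-isocrystal $(N,F)$ is isoclinic of slope $s$. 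Because $\lambda$ yields the compatibility $(Fu,Fv)=p\sigma((u,v))$ on $N$, the similitude $\sml(b)$ has $p$-adic valuation $1$, so the similitude-component slope equals $1$, forcing $2s=1$. Thus $s=1/2$, meaning $A[p^\infty]$ is isoclinic of slope $1/2$ and $A$ is supersingular.

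The main obstacle, as well as the essential input, is the identification $Z(G)\cong \G_m$: this uses crucially that $D$ is ramified at $p$, hence a division algebra. In the split case the center of $\GL_D(V)$ would be larger and the admissible Newton set $B(G,\mu)$ would contain non-basic classes, producing additional non-supersingular Newton strata (as for the Siegel case $\bG=\GSp_4$, where three strata appear). Once $Z(G)$ is pinned down to the diagonal $\G_m$, the computation of $s=1/2$ from the similitude constraint is a routine slope count, and the two inclusions follow immediately.
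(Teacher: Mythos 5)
Your proof is correct and follows essentially the same route as the paper, which likewise reduces everything to the identification of $Z(G)$ with the scalar $\G_m$ (via the standard representation $G\hookrightarrow \GL_{\Qp}(V)$), so that basicness of $b$ is equivalent to the underlying isocrystal being isoclinic, with the slope forced to be $1/2$ by the similitude/polarization constraint. One small caveat on a side remark: in the split case the center of $\GL_D(V)\cong \GL_4(\Qp)$ is still just the scalars and $Z(\GSp_4)\cong \G_m$ as well, so the difference there is not a larger center but only that $B(\bG,\mu)$ contains non-basic classes, which does not affect the equivalence between basic and supersingular.
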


\begin{proof}
We follow the proof of \cite[Lemma 4.2.4]{hp2}. Let $i \colon G\rightarrow \GL_{\Qp}(V)$ be the standard representation. Then it suffices to prove that $b$ is basic if and only if $\rho(b)$ is so. This follows from that the center of $G$ equals $\G_{m}$, which is identified with the scalar matrices by $i$. 
\end{proof}

\begin{thm}\label{nsss}
\emph{
\begin{enumerate}
\item The scheme $\sS_{K,W}$ is regular and flat over $W$. 
\item Let ${\sS}_{K,W}^{\ns}$ be the set of non-smooth points in ${\sS}_{K,W}$. Then ${\sS}_{K,W}^{\ns}$ is the finite set of all $\Fpbar$-rational points such that $\iota(\Pi)=0$ on $\Lie(A)$. 
\item Let $(\widehat{\sS}_{K}^{\si})^{\nfs}$ be the set of non-formally smooth points in $\widehat{\sS}_{K}^{\si}$. Then we have an equality $\sS_{K,W}^{\ns}=(\widehat{\sS}_{K}^{\si})^{\nfs}$. 
\end{enumerate}}
\end{thm}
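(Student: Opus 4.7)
The plan is to combine the local model diagram for $\sS_{K,W}$ with the $p$-adic uniformization theorem, thereby transferring the results for $M^{\loc}_{G,W}$ (Theorem \ref{flat}) and $\M_G$ (Corollary \ref{rzsg}) to $\sS_{K,W}$.

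First I would appeal to the standard local model diagram (\cite[Chapter 6]{rz})
\begin{equation*}
\sS_{K,W} \xleftarrow{\phi_1} \widetilde{\sS}_{K,W} \xrightarrow{\phi_2} M^{\loc}_{G,W},
\end{equation*}
in which $\phi_1$ is a torsor under the relevant automorphism group scheme and $\phi_2$ is a smooth surjection sending a point to the Hodge filtration $\omega_A \subset H^1_{\mathrm{dR}}(A)$ through a chosen trivialization. Regularity and flatness of $\sS_{K,W}$ over $W$ are then immediate from Theorem \ref{flat} (i), giving (i). Moreover, since smoothness is detected in both directions under smooth maps, $\sS_{K,W}^{\ns}$ coincides with the image under $\phi_1$ of $\phi_2^{-1}(\{x_0\})$, where $x_0$ is the unique non-smooth point of $M^{\loc}_{G,W}$ and is characterized by $\Pi \mathcal{F} = 0$ (Theorem \ref{flat} (ii)). Since $O_D$ acts on $\omega_A$ as the $\mathcal{O}_S$-linear dual of its action on $\Lie(A)$, the condition $\Pi \mathcal{F} = 0$ translates precisely to $\iota(\Pi) = 0$ on $\Lie(A)$; finiteness follows because the non-smooth locus is a zero-dimensional closed subset of the finite-type scheme $\sS_{K,\Fpbar}$. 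This establishes (ii).

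For (iii), I would first verify the inclusion $\sS_{K,W}^{\ns} \subset \sS_K^{\si}$ by a slope computation. At a non-smooth point $x$ let $M = \mathbb{D}(A[p^\infty])$ be the Dieudonn{\'e} module; the condition $\iota(\Pi) = 0$ on $\Lie(A) = M/VM$ reads $\Pi M \subset VM$. Since $\Pi$ commutes with $V$ and satisfies $\Pi^2 = p$, iteration gives $p^n M \subset V^{2n} M$ for every $n \geq 1$, so every Newton slope $s$ of $A[p^\infty]$ satisfies $s \geq 1/2$; the symmetry $s \leftrightarrow 1-s$ coming from the polarization then forces all slopes to equal $1/2$, i.e., $x \in \sS_K^{\si}$. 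Now I apply the $p$-adic uniformization theorem \cite[Theorem 6.30]{rz}, yielding a local isomorphism of formal schemes
\begin{equation*}
\widehat{\sS}_K^{\si} \cong I(\Q) \backslash \bigl( \M_G \times \bG(\A_f^p)/K^p \bigr)
\end{equation*}
for a suitable $\Q$-form $I$ of $\bG$ with $I(\Qp) \cong J(\Qp)$. Formal smoothness over $\spf W$ is preserved under this isomorphism, so Corollary \ref{rzsg} identifies $(\widehat{\sS}_K^{\si})^{\nfs}$ with the locus where $\iota(\Pi)$ vanishes on $\Lie$. At each closed $\Fpbar$-rational point, formal smoothness of $\widehat{\sS}_K^{\si}$ is equivalent to $\widehat{\mathcal{O}}_{\sS_{K,W},x}$ being a formal power series ring over $W$, and hence (by flatness and finite type of $\sS_{K,W}$) to smoothness of $\sS_{K,W}$ at $x$; combining with the inclusion above and (ii) yields the desired equality in (iii).

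The main obstacle is the slope argument showing that non-smooth points are automatically supersingular; once this is in place and the routine identification of the local-model condition $\Pi \mathcal{F} = 0$ with the intrinsic condition $\iota(\Pi) = 0$ on $\Lie(A)$ is made precise, the theorem follows formally from Theorem \ref{flat}, Corollary \ref{rzsg}, and the standard local model and $p$-adic uniformization machinery.
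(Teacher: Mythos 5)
Your proof of (i) and (ii) is the same as the paper's: both rest on the local model diagram (the paper cites \cite[Theorem 6.4]{hai}) together with Theorem \ref{flat}, and your identification of $\Pi\mathcal{F}=0$ with $\iota(\Pi)=0$ on $\Lie(A)$ is the intended one. For (iii) the essential content also agrees — the crux is $\sS_{K}^{\ns}\subset\sS_{K}^{\si}$ via Dieudonn{\'e} theory — but the execution differs in two places. First, your slope estimate: from $\Pi M\subset VM$ you iterate to $p^nM\subset V^{2n}M$, conclude all slopes are $\geq 1/2$, and invoke the polarization symmetry $s\leftrightarrow 1-s$. The paper instead upgrades $\Pi M\subset VM$ to an equality by the dimension count $\dim_{\Fpbar}M/\Pi M=4=\dim_{\Fpbar}M/VM$ forced by the Kottwitz condition, getting $F^2(M)=pM$ on the nose. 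Both are valid; the paper's version is sharper and avoids appealing to the Newton polygon symmetry. Second, you route the identification of $(\widehat{\sS}_{K}^{\si})^{\nfs}$ through the $p$-adic uniformization theorem and Corollary \ref{rzsg}. This is much heavier than needed: since $\widehat{\sS}_{K}^{\si}$ is by definition the completion of $\sS_{K,W}$ along $\sS_{K}^{\si}$, the complete local ring of $\widehat{\sS}_{K}^{\si}$ at a point of $\sS_{K}^{\si}$ is literally that of $\sS_{K,W}$, so once $\sS_{K}^{\ns}\subset\sS_{K}^{\si}$ is known the equality in (iii) is immediate from (ii) — this is all the paper does. Your detour is not circular (Theorem \ref{unif} in the paper depends on part (i) of the present theorem, which you establish first, but not on part (iii)), yet it does import the nonemptiness of $\sS_K^{\si}$ and the Hasse principle, none of which is needed here; I would replace that step with the tautological comparison of complete local rings.
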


\begin{proof}
By \cite[Theorem 6.4]{hai} and Theorem \ref{flat}, the following hold: 
\begin{itemize}
\item $\sS_{K,W}$ is flat over $\spec W$ and regular, 
\item $\sS_{K,W}\times_{\spec W}\spec K_0$ is smooth over $\spec K_0$, 
\item $x\in \sS_{K,W}^{\ns}$ corresponds to an object $(A,\iota,\lambda,\overline{\eta}^p)\in \sS_{K}(\Fpbar)$ such that $\iota(\Pi)=0$ on $\Lie(A)$. Moreover, there is an isomorphism between the complete local ring of $x\in \sS_{K,W}$ and $W[[t_1,t_2,t_3,t_4]]/(t_1t_2+t_3t_4+p)$. 
\end{itemize}
Therefore, (i) and (ii) follow. Let us show (iii) in the sequel. We must show that $\sS_{K}^{\ns}\subset \sS_{K}^{\si}(\Fpbar)$. Take $(A,\iota,\lambda,\overline{\eta}^p)\in \sS_{K}^{\ns}$. It suffices to show that $\D(A[p^{\infty}])_{\Q}$ is isoclinic of slope $1/2$. Put $N:=\D(A[p^{\infty}])_{\Q}$ and $M:=\D(A[p^{\infty}])$. Then $M$ is a $W$-lattice in $N$ which is stable under $O_D$, $F$ and $pF^{-1}$. Furthermore, we have an isomorphism $\Lie(A)\cong M/pF^{-1}(M)$. Using the assumption that $\iota(\Pi)=0$ on $M/pF^{-1}(M)$, we have $F(M)=\Pi M$; cf.~the proof of Theorem \ref{sgvl}. Hence we obtain $F^2(M)=pM$, which means that $N$ is isoclinic of slope $1/2$. 

By above, $x\in \sS_{K}^{\si}$ belongs to $\sS_{K}^{\ns}$ if and only if the complete local ring of $\sS_{K,W}^{\si}$ at $x$ is not formally smooth over $W$, which is equivalent to $x\in (\widehat{\sS}_{K}^{\si})^{\nfs}$. 
\end{proof}

\begin{prop}\label{ssne}
\emph{The supersingular locus $\sS_{K}^{\si}$ is non-empty. }
\end{prop}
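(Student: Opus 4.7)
The plan is to exhibit an explicit supersingular point of $\sS_K(\Fpbar)$ by globalizing the framing object $(\X_0,\iota_0,\lambda_0)$ of Section \ref{rzdt}. Since the polarized $D$-isocrystal $(\D_{\Q},F,(\,,\,))$ constructed there is isoclinic of slope $1/2$, Honda--Tate theory produces an abelian variety $A_0$ over $\Fpbar$, isogenous to a power of a supersingular elliptic curve $E_0$, together with an identification $A_0[p^\infty]\cong \X_0$ that can be arranged to be compatible with the $O_D$-action and the polarization up to a prime-to-$p$ quasi-isogeny.

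Next, I would upgrade this local data to the required global PEL structure. Let $\bD_0:=\End^0(E_0)$, the definite quaternion algebra over $\Q$ ramified exactly at $\{p,\infty\}$, so that $\End^0(A_0)\cong M_4(\bD_0)$. To produce an action $\iota_0\colon O_\bD\otimes_{\Z}\Z_{(p)}\to \End(A_0)\otimes_{\Z}\Z_{(p)}$ extending the $O_D$-structure at $p$, I would construct a $\Q$-algebra embedding $\bD\hookrightarrow M_4(\bD_0)$; existence reduces to a Brauer invariant calculation for $\bD\otimes_{\Q}\bD_0^{\mathrm{op}}$, where the local invariants at $p$ cancel because both algebras are ramified there, and the remaining places can be matched by suitably choosing $A_0$ within its isogeny class. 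A polarization $\lambda_0$ on $A_0$ compatible with the involution $*$ is then obtained from the polarization on $\X_0$ by invoking the positivity of $*$ together with standard results on the existence of polarizations for abelian varieties equipped with an $O_\bD$-action.

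Finally, I would produce the $K^p$-level structure $\overline{\eta}_0^p$ by comparing the $\bD\otimes_{\Q}\A_f^p$-hermitian modules $H_1(A_0,\A_f^p)$ (with pairing induced by $\lambda_0$) and $\bV\otimes_{\Q}\A_f^p$. Both are rank-$2$ hermitian modules and they are locally isomorphic at every finite place away from $p$; the global matching follows from Kottwitz's analysis in \cite{kot3} of the basic element of $B(G,\mu)$, which identifies the class of $H_1(A_0,\A_f^p)$ with that of $\bV\otimes_{\Q}\A_f^p$. This produces the desired quadruple $(A_0,\iota_0,\lambda_0,\overline{\eta}_0^p)\in \sS_K^{\si}(\Fpbar)$. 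The main obstacle will be the second step: ensuring that the Brauer-theoretic embedding $\bD\hookrightarrow M_4(\bD_0)$ can be chosen so that $O_\bD\otimes_{\Z}\Z_{(p)}$ lands inside $\End(A_0)\otimes_{\Z}\Z_{(p)}$ with a polarization having the correct integrality, rather than merely rationally, requires a careful tracking of $O_\bD$-stable lattices.
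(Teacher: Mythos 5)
Your construction is correct in outline, but it takes a genuinely different route from the paper. The paper does not build a supersingular point by hand: it first shows (Proposition \ref{imci}, Corollary \ref{kpim}) that $\sS_{K}$ embeds into a Siegel moduli space and coincides with the Kisin--Madapusi Pera--Shin integral model attached to $\bi\colon (\bG,\bX)\rightarrow (\bG',\bX')$, and then quotes the general non-emptiness of the basic locus from \cite[Theorem 1.3.13 (2)]{kmps}, together with Proposition \ref{bsss} identifying the basic and supersingular loci. Your argument is essentially a self-contained, PEL-specific version of what that reference proves in general, and it is viable here: the embedding $\bD\hookrightarrow M_4(\bD_0)$ exists because $\bD\otimes_{\Q}\bD_0$ is ramified exactly at $\infty$ and at $\Ram(\bD)\setminus \{p\}$, hence has index $2$ dividing $4$; the Kottwitz determinant condition is inherited from $\X_0$ once $A_0[p^{\infty}]$ is identified with $\X_0$ compatibly with the $O_D$-action (Skolem--Noether plus a $p$-power isogeny); the polarization comes from positivity of $*$ following Kottwitz; and the level structure is automatic from Corollary \ref{unhm} (i), which gives uniqueness of rank-$2$ skew-hermitian modules over $\bD\otimes_{\Q}\Q_{\ell}$ for every $\ell\neq p$ --- no appeal to $B(G,\mu)$ is needed there. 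Two small corrections: the phrase ``the remaining places can be matched by suitably choosing $A_0$ within its isogeny class'' is confused, since $\End^0(A_0)\cong M_4(\bD_0)$ is an isogeny invariant and nothing needs matching --- the Brauer obstruction vanishes unconditionally as above; and the integrality issue you flag at the end is resolved by replacing the prime-to-$p$ Tate modules by $O_{\bD}\otimes_{\Z}\Z_{\ell}$-stable lattices, which moves the quadruple only within its prime-to-$p$ isogeny class and hence does not change the point of $\sS_{K}$. What the paper's route buys is brevity and uniformity, at the cost of relying on the comparison of integral models (which itself rests on the flatness and normality of $\sS_{K,W}$ from Theorem \ref{nsss}); your route is longer to write down carefully but stays entirely within the explicit moduli description.
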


To prove Proposition \ref{ssne}, we need a comparison between $\sS_{K}$ and the integral model of $\Sh_{K}(\bG,\bX)$ considered in \cite{kmps}. To explain more precisely, let $\bG'$ be the symplectic similitude group of $\bV$ regarded as a symplectic space over $\Q$. Then there is an isomorphism $\bG'\cong \GSp_{8}\otimes_{\Z}\Q$. We denote by $\bi \colon \bG\rightarrow \bG'$ be the canonical injection, and let $\bX'$ the $\bG'(\R)$-conjugacy class of $\bi \circ h$. Then $(\bG',\bX')$ is the Shimura datum whose reflex is $\Q$. Moreover, we obtain an embedding of Shimura datum
\begin{equation*}
\bi \colon (\bG,\bX)\rightarrow (\bG',\bX'). 
\end{equation*}
For a compact open subgroup ${K'}^p$ of $\bG'(\A_{f}^{p})$, let $\sS'_{{K'}^p}$ be the $\Z_{(p)}$-scheme defined as the functor which parametrizes equivalence classes of triples $(A,\lambda,[\eta^p])$, where
\begin{itemize}
\item $A$ is an abelian scheme over $S$, 
\item $\lambda \colon A \rightarrow A^{\vee}$ is a prime-to-$p$ quasi-polarization, 
\item $[\eta^p]$ is a ${K'}^p$-level structure, that is, a $\pi_1(S,\sbar)$-invariant ${K'}^p$-orbit of an isomorphism
\begin{equation*}
\eta^p \colon H_1(A_{\sbar},\A_f^p) \xrightarrow{\cong}\bV \otimes_{\Q} \A_f^p
\end{equation*}
which respects the symplectic forms up to a constant in $(\A_f^p)^{\times}$ (here $\sbar$ is a geometric point of $S$). 
\end{itemize}
We define the notion of equivalence on triples by the same manner as in the definition of $\sS_{K}$. 

\begin{prop}\label{imci}
\emph{There is an open compact subgroup ${K'}^p$ of $\bG'(\A_{f}^{p})$ such that the canonical morphism 
\begin{equation*}
\sS_{K}\rightarrow \sS'_{{K'}^p}\times_{\spec \Z_{(p)}}\spec \Zp;(A,\iota,\lambda,\overline{\eta}^p)\mapsto (A,\lambda,[\eta^p])
\end{equation*}
is a closed immersion. }
\end{prop}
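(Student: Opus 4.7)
The plan is to choose ${K'}^p$ carefully and then verify two properties of the canonical morphism: it is a monomorphism of finite type, and it is proper. Since a proper monomorphism is automatically a closed immersion, this will suffice.

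First, I would choose ${K'}^p$ to be a compact open subgroup of $\bG'(\A_f^p)$ which contains $\bi(K^p)$ and satisfies ${K'}^p\cap \bi(\bG(\A_f^p))=\bi(K^p)$. Such a subgroup exists because $K^p$ is contained in a principal congruence subgroup of level $N\ge 3$; and by shrinking ${K'}^p$ further we may assume it is neat, so that $\sS'_{{K'}^p}$ is representable by a quasi-projective $\Z_{(p)}$-scheme in the usual way.

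To show the morphism is a monomorphism, take a $\Zp$-algebra $R$ and two $R$-points $(A_j,\iota_j,\lambda_j,\overline{\eta}^p_j)$ ($j=1,2$) of $\sS_K$ with the same image in $\sS'_{{K'}^p}(R)$. Then there is a prime-to-$p$ quasi-isogeny $\alpha\colon A_1\rightarrow A_2$ with $\alpha^{\vee}\circ \lambda_2\circ \alpha=\lambda_1$ and $\eta^p_2\circ H_1(\alpha,\A_f^p)\in {K'}^p\cdot \eta^p_1$. The choice of ${K'}^p$ forces the adjusting element to lie in $\bi(K^p)$, so after modifying by an element of $K^p$ we have $\eta^p_2\circ H_1(\alpha,\A_f^p)\in K^p\cdot \eta^p_1$. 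I would then argue that this forces $\alpha\circ \iota_1(d)=\iota_2(d)\circ \alpha$ for every $d\in O_{\bD}\otimes_{\Z}\Z_{(p)}$: on rational Tate modules away from $p$ this follows from the fact that the $K^p$-orbit of $\eta^p$ intertwines the two $\bD$-actions by construction, and then the full statement for prime-to-$p$ isogenies follows by Tate-module rigidity together with the fact that both $\iota_j$ agree on $A[p^\infty]$ (which in turn is forced by the Kottwitz condition and the agreement of the $p$-divisible groups via $\alpha$). Hence the two tuples are equivalent, giving injectivity on $R$-points; the same argument on automorphism sheaves yields that the map is a monomorphism in the functorial sense.

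For properness I would apply the valuative criterion. Let $R$ be a discrete valuation ring over $\Zp$ with fraction field $E$, and suppose $(A,\lambda,[\eta^p])\in \sS'_{{K'}^p}(R)$ together with a lift $(A_E,\iota_E,\lambda_E,\overline{\eta}^p_E)\in \sS_K(E)$ of its generic fiber. Since $A$ is an abelian scheme over $R$ with generic fiber $A_E$, it is the N{\'e}ron model of $A_E$, so each $\iota_E(d)\in \End(A_E)\otimes_{\Z}\Z_{(p)}$ extends uniquely to an element $\iota(d)\in \End(A)\otimes_{\Z}\Z_{(p)}$. The resulting ring homomorphism $\iota\colon O_{\bD}\otimes_{\Z}\Z_{(p)}\rightarrow \End(A)\otimes_{\Z}\Z_{(p)}$ still satisfies the polarization compatibility $\lambda \circ \iota(d)=\iota(d^{*})^{\vee}\circ \lambda$ by continuity (checked on the generic fiber), and the Kottwitz determinant condition holds on the special fiber by specialization of the characteristic polynomial acting on $\Lie(A)$. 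Finally $\overline{\eta}^p_E=[\eta^p]$ as $K^p$-level structures by our choice of ${K'}^p$, so the lift extends to $\sS_K(R)$. This verifies the valuative criterion and hence properness; combined with the monomorphism property the morphism is a closed immersion.

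The principal technical obstacle is the extension of the $O_{\bD}$-action across the special fiber in the properness step. One must verify not just that endomorphisms extend (which is standard via the N{\'e}ron property) but that the Kottwitz determinant condition persists after extension, and that the $\bD$-linearity of the level structure is preserved; both are ultimately consequences of the flatness of $\sS_K$ over $\Z_{(p)}$ (so that the determinant condition is a closed condition) and the compatibility of Tate modules with N{\'e}ron extensions away from $p$.
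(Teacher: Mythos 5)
Your overall strategy coincides with the paper's: show the morphism is a proper monomorphism, with properness via the valuative criterion and N\'eron models. The properness step is essentially the argument the paper intends, and the final reduction (proper monomorphism $\Rightarrow$ closed immersion) is correct.

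The monomorphism step, however, contains a genuine circularity. From equality of images in $\sS'_{{K'}^p}$ you obtain a prime-to-$p$ quasi-isogeny $\alpha$ with $\eta_2^p\circ H_1(\alpha,\A_f^p)=k'\circ \eta_1^p$ for some $k'\in {K'}^p$, and you invoke the condition ${K'}^p\cap \bi(\bG(\A_f^p))=\bi(K^p)$ to force $k'\in \bi(K^p)$. But that condition only applies once you know $k'\in \bi(\bG(\A_f^p))$, i.e.\ that $k'$ commutes with the $\bD$-action on $\bV\otimes\A_f^p$; since $\eta_1^p,\eta_2^p$ are $O_{\bD}$-linear, this is \emph{equivalent} to the $\bD$-linearity of $H_1(\alpha,\A_f^p)$, which is exactly what you are trying to establish. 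A general $k'\in{K'}^p$ need not centralize $\bD$, and a general $\alpha$ witnessing the isomorphism in $\sS'_{{K'}^p}$ need not be $O_{\bD}$-linear; one would have to show that $\alpha$ can be corrected by an automorphism of $(A_1,\lambda_1,[\eta_1^p])$ so that the new adjusting element centralizes $\bD$, and you give no argument for this. The paper sidesteps the issue by passing to the inverse limits $\sS_{K_p}=\plim[K^p]\sS_{K^pK_p}$ and $\sS'=\plim[{K'}^p](\sS'_{{K'}^p}\times_{\spec\Z_{(p)}}\spec\Zp)$: at infinite level the datum $\eta^p$ is an exact $O_{\bD}$-linear isomorphism rather than an orbit, so the quasi-isogeny intertwining two points is forced to be $O_{\bD}$-linear on Tate modules, and the monomorphism property is immediate from the moduli description. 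A secondary inaccuracy: your claim that the two actions "agree on $A[p^\infty]$" is "forced by the Kottwitz condition" is not correct — the Kottwitz condition constrains only the action on $\Lie(A)$ and does not determine the $O_{\bD}$-action on the $p$-divisible group; what one actually uses is that a prime-to-$p$ quasi-homomorphism of abelian schemes vanishes if it vanishes on $T_\ell$ for a single $\ell\neq p$.
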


\begin{proof}
Put $\sS_{K_p}:=\plim[K^p]\sS_{K^pK_p}$ and $\sS':=\plim[{K'}^p](\sS_{{K'}^p}\times_{\spec \Z_{(p)}}\spec \Zp)$. Then $\sS_{K_p}$ is the moduli space of prime-to-$p$ isogeny classes of the quadruples as in the case of $\sS_{K}$, except that instead of being $\eta^p$ exactly an $O_{\bD}$-linear isomorphism $H_1(A_{\overline{s}},\A_{f}^{p})\xrightarrow{\cong}\bV\otimes_{\Q}\A_{f}^{p}$. The similar moduli interpretation for $\sS'$ also holds. It suffices to prove that the canonical morphism 
\begin{equation*}
\sS(\bi) \colon \sS_{K_p}\rightarrow \sS'
\end{equation*}
induced by $\bi$ is a closed immersion. We may prove that $\sS(\bi)$ is a proper monomorphism. The properness follows from the valuative criterion by using the theory of N{\'e}ron models. On the other hand, the moduli descripstion implies that $\sS(\bi)$ is a monomorphism. Hence the assertion follows. 
\end{proof}

Fix $K^p$ and ${K'}^p$ satisfying Proposition \ref{imci}. We denote by $\sS_{K}^{\KMPS}$ the integral model of $\Sh_{K}(\bG,\bX)$ with respect to $\bi$ in the sense of \cite{kmps}, that is, the normalization of the scheme-theoretic closure of $\Sh_{K}(\bG,\bX)$ in $\sS'_{K'}$. 

\begin{cor}\label{kpim}
\emph{
\begin{enumerate}
\item There is an isomorphism $\sS_{K}\cong \sS_{K}^{\KMPS}\times_{\spec \Z_{(p)}}\spec \Zp$. 
\item Under the isomorphism in (i), $\sS_{K}^{\bs}$ equals the basic locus in the sense of \cite{kmps}. 
\end{enumerate}}
\end{cor}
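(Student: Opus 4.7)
The plan for (i) is to identify both $\sS_K$ and $\sS_K^{\KMPS}\times_{\spec \Z_{(p)}}\spec \Zp$ as the same normalization inside $\sS'_{{K'}^p}\times_{\spec \Z_{(p)}}\spec \Zp$. Proposition \ref{imci} provides a closed immersion
\[
j\colon \sS_K\hookrightarrow \sS'_{{K'}^p}\times_{\spec \Z_{(p)}}\spec \Zp,
\]
and the moduli descriptions show that the generic fibers of both sides agree with $\Sh_K(\bG,\bX)\times_{\spec \Q}\spec \Qp$. Theorem \ref{nsss} (i) asserts that $\sS_{K,W}$ is regular; since $\spec W\to \spec \Zp$ is faithfully flat, $\sS_K$ is itself regular, hence normal, and flat over $\Zp$.

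On the other hand, by definition $\sS_K^{\KMPS}$ is the normalization of the scheme-theoretic closure $Z$ of $\Sh_K(\bG,\bX)$ in $\sS'_{{K'}^p}$. Since $\spec \Zp\to \spec \Z_{(p)}$ is flat, taking scheme-theoretic closure commutes with the base change $\cdot \times_{\spec \Z_{(p)}}\spec \Zp$, so $Z\times_{\spec \Z_{(p)}}\spec \Zp$ is the scheme-theoretic closure of $\Sh_K(\bG,\bX)\times_{\spec \Q}\spec \Qp$ in $\sS'_{{K'}^p}\times_{\spec \Z_{(p)}}\spec \Zp$. Moreover, the base change $\spec \Zp\to \spec \Z_{(p)}$ is flat with geometrically regular fibers, so normalization commutes with it, and therefore $\sS_K^{\KMPS}\times_{\spec \Z_{(p)}}\spec \Zp$ is normal and is the normalization of this closure. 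Now $\sS_K$ is a closed subscheme of the same ambient scheme with the same generic fiber, and is normal and $\Zp$-flat; by the universal property of normalization, it must coincide with $\sS_K^{\KMPS}\times_{\spec \Z_{(p)}}\spec \Zp$.

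For (ii), the plan is to match definitions. The basic locus in \cite{kmps} is defined via the Newton stratification induced by the Hodge-type embedding into the Siegel data, i.e.\ the locus where the associated $\bG$-isocrystal is basic. Under the moduli interpretation of $\sS_K$ given in Section \ref{ssdf}, this condition is exactly the definition of $\sS_K^{\bs}$, so the two loci agree under the isomorphism of (i). Combining with Proposition \ref{bsss} then gives the identification with $\sS_K^{\si}$.

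The most delicate step is the commutation of normalization with the base change $\spec \Zp\to \spec \Z_{(p)}$ needed in (i); this is where one must invoke the fact that this base change is flat with geometrically regular (indeed, in characteristic zero, perfect) generic fiber, so that the normalization formation is preserved. The rest of the argument is a routine application of the universal property of normalization together with the regularity result Theorem \ref{nsss}.
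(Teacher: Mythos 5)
Your argument is correct and follows the same route as the paper, which proves (i) as a direct consequence of Theorem \ref{nsss} (i) (regularity, hence normality, of $\sS_{K,W}$ and thus of $\sS_K$) together with the closed immersion of Proposition \ref{imci}, and (ii) by unwinding definitions; you have simply made explicit the details the paper leaves implicit (flatness of $\Z_{(p)}\to\Zp$, commutation of scheme-theoretic closure and normalization with this base change). The only cosmetic remark is that the final step is cleaner if phrased as: $\Zp$-flatness forces $\sS_K$ to equal the scheme-theoretic closure of its generic fiber in the ambient Siegel moduli space, and normality of $\sS_K$ then makes the normalization map an isomorphism, rather than appealing to a ``universal property'' of normalization.
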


\begin{proof}
The assertion (i) is a consequence of Theorem \ref{nsss} (i) and Proposition \ref{imci}. The assertion (ii) follows from the definitions of $\sS_{K}$ and $\sS_{K}^{\KMPS}$. 
\end{proof}

\begin{proof}[Proof of Proposition \ref{ssne}]
By Corollary \ref{kpim} (ii), it suffices to prove that the basic locus of $\sS_{K}^{\KMPS}$ is non-empty. However, this assertion is exactly the same as \cite[Theorem 1.3.13 (2)]{kmps}. 
\end{proof}

\subsection{Proof of the global result}

We describe the scheme $\sS_{K}^{\si}$ by using the results on $\M_G$ and the $p$-adic uniformization theorem. To apply the $p$-adic uniformization theorem for $\sS_{K}^{\si}$, we need to show that $\sS_{K}^{\si}$ equals $\sS_{K}^{\bs}$, $\sS_{K}^{\si}$ is not empty, and the Hasse principle for $\bG$ holds. However, the first assertion is Proposition \ref{bsss}, the second assertion is Proposition \ref{ssne}. Moreover, the last assertion is pointed out in \cite[\S 7]{kot3}. Therefore, we obtain the following: 

\begin{thm}\label{unif} (\cite[Theorem 6.30]{rz})
\emph{For a fixed $(A_0,\iota_0,\lambda_0,\overline{\eta}^p)\in \sS_{K}^{\si}(\Fpbar)$, there is an isomorphism
\begin{equation*}
I(\Q)\backslash (\M_G\times \bG(\A^p_f)/K^p)\xrightarrow{\cong} \widehat{\sS}_{K}^{\si}
\end{equation*}
of formal schemes over $\spf W$. Here $I$ is an algebraic group over $\Q$ defined by 
\begin{equation*}
I(R)=\{(g,c)\in (\End_{O_\bD}^0(A_0)\otimes_{\Q} R)^{\times}\times \G_m(R)\mid g^{\vee}\circ \lambda_0\circ g=c\lambda_0\}
\end{equation*}
for any $\Q$-algebra $R$. }
\end{thm}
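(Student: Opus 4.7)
The plan is to apply the general $p$-adic uniformization theorem of Rapoport--Zink \cite[Theorem 6.30]{rz} in our setting. All of the substantive work has already been carried out in the preceding subsections, so the proof reduces essentially to verifying the hypotheses of that theorem and identifying the local input with the datum $\mathcal{D}$ constructed in Section \ref{rzdt}.

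First, I would check that the local unramified datum at $p$ arising from the global datum $(\bG,\bX,K_p,\{\Delta^n\Lambdabar{}^0\otimes_{\Z}\Z_{(p)}\}_{n\in \Z})$ agrees, under the isomorphism $\bD \otimes_{\Q}\Qp \cong D$ and the isometry $\psi$ constructed in Section \ref{ssdf}, with the Rapoport--Zink datum $\mathcal{D}$ attached to $\M_G$. This has, in effect, already been observed in the two bullet points following that isometry: the cocharacter $\mu$ matches the one fixed in Section \ref{rzdt}, and the self-dual multi-chain $\{\Delta^n \Lambdabar{}^0\otimes_{\Z}\Zp\}_{n\in \Z}$ is carried by $\psi$ to $\{\Pi^n\Lambda^0\}_{n\in \Z}$. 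Consequently, the Rapoport--Zink space attached to the local datum in \cite[Theorem 6.30]{rz} is canonically isomorphic to $\M_G$.

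Next, I would verify the three hypotheses of \cite[Theorem 6.30]{rz}:
\begin{itemize}
\item the equality $\sS_K^{\bs}=\sS_K^{\si}$, which is Proposition \ref{bsss};
\item the non-emptiness of $\sS_K^{\si}$, which is Proposition \ref{ssne} and which supplies the base point $(A_0,\iota_0,\lambda_0,\overline{\eta}^p)\in \sS_K^{\si}(\Fpbar)$;
\item the Hasse principle for $\bG$, recalled in \cite[\S 7]{kot3}.
\end{itemize}
With these in hand, \cite[Theorem 6.30]{rz} directly yields an isomorphism of formal schemes over $\spf W$
\begin{equation*}
I(\Q)\backslash (\M_G\times \bG(\A^p_f)/K^p)\xrightarrow{\cong} \widehat{\sS}_{K}^{\si},
\end{equation*}
where $I$ is the algebraic group of self-quasi-isogenies of $(A_0,\iota_0,\lambda_0)$ respecting the polarization up to a scalar. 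Unwinding the definition, $I(R)$ is exactly the set of pairs $(g,c)\in (\End^0_{O_{\bD}}(A_0)\otimes_{\Q}R)^{\times}\times \G_m(R)$ with $g^{\vee}\circ \lambda_0 \circ g=c\lambda_0$, which matches the group described in the theorem.

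There is no real obstacle at this final step: the content of the argument lies in Proposition \ref{bsss} (the equality of basic and supersingular loci, via the center of $G$ being $\G_m$) and Proposition \ref{ssne} (non-emptiness, deduced through the comparison with the integral model of \cite{kmps} in Corollary \ref{kpim}). Given these inputs and the Hasse principle, the statement is a direct specialization of the uniformization theorem, and the only remaining care is in matching conventions between the local and global data via $\psi$.
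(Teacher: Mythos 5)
Your proposal is correct and follows exactly the route the paper takes: the theorem is stated as a direct citation of \cite[Theorem 6.30]{rz}, with the preceding text verifying precisely the three hypotheses you list ($\sS_K^{\bs}=\sS_K^{\si}$ via Proposition \ref{bsss}, non-emptiness via Proposition \ref{ssne}, and the Hasse principle from \cite[\S 7]{kot3}), after the identification of the local datum with $\mathcal{D}$ carried out in Section \ref{ssdf}. Nothing further is needed.
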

Note that $I\otimes_{\Q}\R$ is anisotropic modulo center. Moreover, for any prime number $\ell$, we have
\begin{equation*}
I\otimes_{\Q}\Q_{\ell}\cong 
\begin{cases}
\bG \otimes_{\Q}\Q_{\ell} & \text{if }\ell \neq p,\\
J & \text{if }\ell=p.
\end{cases}
\end{equation*}
We regard $I(\Q)$ as a subgroup of $J(\Qp)\times \bG(\A_f^p)$ by the diagonal embedding. Put $m:=\#I(\Q)\backslash \bG(\A_f^p)/K^p$ (note that $I(\Q)\backslash \bG(\A_f^p)/K^p$ is a finite set), and let $\{g_1,\ldots ,g_m\}$ is a set of complete representative of $I(\Q)\backslash \bG(\A_f^p)/K^p$. Put $\Gamma_i:=I(\Q)\cap(J(\Qp)\times g_iK^pg_i^{-1})$. Then we have 
\begin{equation*}
I(\Q)\backslash (\M_G\times \bG(\A^p_f)/K^p)\cong \coprod_{i=1}^m \Gamma_i\backslash (\M_G\times g_iK^p/K^p). 
\end{equation*}
Moreover, if we regard $\Gamma_i$ as a subgroup of $J(\Qp)$, then the right-hand side is of the form $\coprod_{i=1}^{m}\Gamma_i\backslash \M_G$. Note that the group $\Gamma_i$ is discrete, cocompact modulo center and torsion-free by the hypothesis on $K^p$. 

We compute the numbers of connected and irreducible components of $\sS_{K}^{\si}$, and the set $\sS_{K,W}^{\ns}$. 

\begin{thm}\label{icnn}
\emph{
\begin{enumerate}
\item There is an equality
\begin{equation*}
\#\{\text{connected components of }\sS_{K}^{\si}\}=\#(I(\Q)\backslash (J^0\backslash J(\Qp)\times \bG(\A_f^p)/K^p))
\end{equation*}
(see Definition \ref{j0df} for the definition of $J^0$). 
\item Let $K_{\max,i}$ be the stabilizer of a vertex $x \in \Vtx(i)$ for $i\in \{0,2\}$. Then there is an equality
\begin{gather*}
\#\{\text{irreducible components of }\sS_{K}^{\si}\}\\
=\#(I(\Q)\backslash (J(\Qp)/K_{\max,0}\times \bG(\A_f^p)/K^p))+\#(I(\Q)\backslash (J(\Qp)/K_{\max,2}\times \bG(\A_f^p)/K^p)). 
\end{gather*}
\item Let $K_{\min}$ be the stabilizer of a vertex lattice $x \in \Vtx^{\nsp}$. Then there is an equality
\begin{equation*}
\# \sS_{K}^{\ns}=\#(I(\Q)\backslash (J(\Qp)/K_{\min} \times \bG(\A_f^p)/K^p)). 
\end{equation*}
\end{enumerate}
All numbers above are finite. }
\end{thm}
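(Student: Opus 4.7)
The strategy is to use the $p$-adic uniformization (Theorem \ref{unif}), which on underlying reduced schemes gives
\[
\sS_K^{\si} \cong I(\Q) \backslash (\M_G^{\red} \times \bG(\A_f^p)/K^p),
\]
and, via Theorem \ref{nsss}(iii),
\[
\sS_K^{\ns} \cong I(\Q) \backslash (\M_G^{\nfs} \times \bG(\A_f^p)/K^p).
\]
Since $I(\Q)$ acts discretely and $K^p$ is open compact, connected components, irreducible components, and isolated $\Fpbar$-points all commute with the $I(\Q)$-quotient. The problem thus reduces to determining the $J(\Qp)$-set structure of $\pi_0(\M_G^{\red})$, $\Irr(\M_G^{\red})$, and $\M_G^{\nfs}$, and then computing $I(\Q)$-orbits through the diagonal embedding $I(\Q) \hookrightarrow J(\Qp) \times \bG(\A_f^p)$.

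For (i), Theorem \ref{mtl1}(iii) asserts each $\M_G^{(i)}$ is connected, and the index $i$ is recorded by $\ord_p$ of the similitude character, so $\pi_0(\M_G^{\red}) \cong J^0 \backslash J(\Qp)$ as a $J(\Qp)$-set. For (iii), Theorem \ref{mtl1}(iv) realizes $\M_G^{(0),\nfs}$ as the discrete finite set $\{\M_{G,x}^{(0)}\}_{x\in\Vtx^{\nsp}}$, and Proposition \ref{jact}(ii) gives $J(\Qp)$-transitivity on $\M_G^{\nfs}$; hence $\M_G^{\nfs} \cong J(\Qp)/K_{\min}$. For (ii), Proposition \ref{jact}(i) furnishes exactly two $J(\Qp)$-orbits on $\Irr(\M_G^{\red})$, with representatives $\M_{G,\triangle_i}^{(0)}$ for $\triangle_i \in \Vtx(2i)$, $i \in \{0,2\}$; identifying the stabilizer in $J(\Qp)$ of each representative irreducible component with $K_{\max,i}$ yields $\Irr(\M_G^{\red}) \cong J(\Qp)/K_{\max,0} \sqcup J(\Qp)/K_{\max,2}$, from which the formula follows.

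Finiteness of the three double-coset spaces follows from the classical finiteness of $I(\Q) \backslash \bG(\A_f^p)/K^p$ (this is the integer $m$ mentioned after Theorem \ref{unif}) combined with the finiteness of $\Gamma_j$-orbits on each of $J^0 \backslash J(\Qp)$, $J(\Qp)/K_{\max,i}$, and $J(\Qp)/K_{\min}$, which in turn comes from the standard fact that discrete cocompact-modulo-center subgroups of $J(\Qp)$ have finitely many orbits on the Bruhat--Tits building of $J^{\ad}(\Qp)$ and its quotients by stabilizers of special vertices.

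The principal technical point will be making the stabilizer identification in (ii) explicit. Because an element $g \in J(\Qp)$ with $\ord_p \sml(g) \neq 0$ but even can fix a vertex of $\B$ while translating the associated irreducible component $\M_{G,\triangle}^{(0)}$ to a different connected component $\M_{G,\triangle}^{(2n)}$, the stabilizer of the irreducible component is a priori strictly smaller than the naive stabilizer of the underlying vertex. Reconciling the orbit description of Proposition \ref{jact}(i) with the subgroup $K_{\max,i}$ in the statement requires a careful use of the lattice-versus-homothety-class interpretation of ``vertex'' via Proposition \ref{vtvt}, together with absorbing the residual $Z(J(\Qp)) \cap I(\Q)$-action against the $\bG(\A_f^p)/K^p$-factor; Proposition \ref{trjv}(ii) provides the necessary transitivity on self-dual lattices to make this precise.
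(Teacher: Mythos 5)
Your proposal follows essentially the same route as the paper: both reduce, via the $p$-adic uniformization and Theorem \ref{nsss}, to the $J(\Qp)$-equivariant description of $\pi_0(\M_G^{\red})$, $\Irr(\M_G^{\red})$ and $\M_G^{\nfs}$ supplied by Theorems \ref{btm0}, \ref{sgvl} and Propositions \ref{trjv}, \ref{jact}, the only organizational difference being that the paper first splits each $\Gamma_j\backslash\M_G$ into copies of $(\Gamma_j\cap J^0)\backslash\M_G^{(0)}$ via the decomposition (\ref{m0dc}) while you keep the adelic factor throughout. The stabilizer subtlety you flag in (ii) is genuine but resolves exactly as you indicate: $K_{\max,i}$ (and likewise $K_{\min}$) must be read as the stabilizer of the lattice itself, equivalently the stabilizer of the vertex inside $J^0$, which is how the paper's proof implicitly uses these groups when it writes $J^0/K_{\max,i}$ and $J^0/K_{\min}$.
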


\begin{proof}
We follow the proof of \cite[Proposition 6.3]{vol}. We have a decomposition
\begin{equation}\label{m0dc}
\Gamma_i\backslash \M_G\cong \coprod_{(\Gamma_iJ^0)\backslash J(\Qp)}(\Gamma_i\cap J^0)\backslash \M_G^{(0)}
\end{equation}
for any $i$. 

(i): We have 
\begin{equation*}
\coprod_{i=1}^m(\Gamma_iJ^0)\backslash J(\Qp)
\cong \coprod_{i=1}^m\Gamma_i \backslash (J(\Qp)/J^0)
\cong \coprod_{i=1}^m\Gamma_i \backslash (J(\Qp)/J^0\times g_iK^p/K^p)\cong I(\Q)\backslash (J(\Qp)/J^0\times \bG(\A_f^p)/K^p). 
\end{equation*}
Since $\M_G^{(0)}$ is connected by Theorem \ref{btm0} (iii), the assertion follows from (\ref{m0dc}) and Theorem \ref{unif}. 

(ii): By Theorem \ref{btm0} (iii), the number of irreducible components of $(\Gamma_i\cap J^0)\backslash \M_G$ equals $\#((\Gamma_i\cap J^0) \backslash \Vtx^{\hs})$. By Proposition \ref{trjv} (ii), the latter number equals $\#((\Gamma_i\cap J^0) \backslash J^0/K_{\max,0})+\#((\Gamma_i\cap J^0) \backslash J^0/K_{\max,2})$. Therefore the assertion follows from the same computation in (i), (\ref{m0dc}) and Theorem \ref{unif}. 

(iii): By Theorems \ref{sgvl} and \ref{nsss} (iii), the number of non-formally smooth points in $(\Gamma_i\cap J^0)\backslash \M_G$ equals $\#((\Gamma_i\cap J^0) \backslash \Vtx^{\nsp})$. By Proposition \ref{trjv} (iii), the latter number equals $\#((\Gamma_i\cap J^0) \backslash J^0/K_{\min})$. Therefore the assertion follows from the same computation in (i), (\ref{m0dc}) and Theorem \ref{unif}. 
\end{proof}

Let $\pi_i \colon \M_G \rightarrow \Gamma_i \backslash \M_G$ be the canonical morphism for each $i$. 

\begin{prop}\label{birt}
\emph{Let $x\in \Vtx^{\hs}$ and $i\in \{1,\ldots ,m\}$. 
\begin{enumerate}
\item The morphism $\pi_i \colon \M_{G,x}^{(0)}\rightarrow \pi_i(\M_{G,x}^{(0)})$ is proper. The scheme $\pi_i(\M_{G,x}^{(0)})$ is projective over $\spec \Fpbar$. 
\item The morphism $\pi_i \colon \M_{G,x}^{(0)}\rightarrow \pi_i(\M_{G,x}^{(0)})$ is  birational. 
\end{enumerate}}
\end{prop}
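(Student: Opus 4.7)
The plan is to deduce both parts from the uniformization isomorphism of Theorem \ref{unif} together with the torsion-freeness of $\Gamma_i$ and the combinatorial description of $\M_{G}^{(0),\red}$ in Theorem \ref{mtl1}. Throughout, I work with the natural action of $\Gamma_i \cap J^0$ on $\M_G^{(0)}$, which is the relevant subgroup for understanding $\pi_i|_{\M_{G,x}^{(0)}}$.

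For (i), I would note that $\M_{G,x}^{(0)}$ is projective over $\spec \Fpbar$ by Theorem \ref{mtl1}(ii). The uniformization identifies the reduction of $\Gamma_i \backslash \M_G$ with a locally closed subscheme of $\sS_{K}^{\si}$, which in turn is a closed subscheme of the quasi-projective scheme $\sS_{K,\Fpbar}$, hence separated. The restriction $\pi_i|_{\M_{G,x}^{(0)}}$ is therefore a morphism from a proper scheme to a separated target, hence proper, and its image $\pi_i(\M_{G,x}^{(0)})$, being proper and a subscheme of a quasi-projective scheme, is projective.

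For (ii), I first verify that the stabilizer of $x$ in $\Gamma_i \cap J^0$ is trivial. The stabilizer $\stab_{J^0}(x)$ coincides with the isotropy group of the hyperspecial vertex $x$ of $\B$, which is a compact open subgroup of $J^0$ of the form $\GSp(T)(\Zp)$ for a self-dual representative $T$ of $x$. Its intersection with the discrete group $\Gamma_i$ is finite, and torsion-freeness of $\Gamma_i$ forces this intersection to be trivial. Next, I show that only finitely many $\gamma \in \Gamma_i \cap J^0$ satisfy $\gamma\cdot\M_{G,x}^{(0)} \cap \M_{G,x}^{(0)} \neq \emptyset$: by Theorem \ref{mtl1}(v) such a $\gamma$ sends $x$ to a vertex $\gamma(x)$ that is either equal to $x$, lies in a common $2$-simplex with $x$, or is joined to $x$ through a non-special vertex. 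The local finiteness of $\B$ around $x$ bounds the candidate targets, and triviality of the stabilizer intersection shows that each target admits at most one $\gamma \in \Gamma_i$.

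For each such non-trivial $\gamma$, Theorem \ref{mtl1}(v) implies that $\gamma\cdot\M_{G,x}^{(0)} \cap \M_{G,x}^{(0)}$ has dimension at most $1$. Removing the finite union of these lower-dimensional subvarieties from the irreducible surface $\M_{G,x}^{(0)}$ produces a dense open $U \subset \M_{G,x}^{(0)}$ whose points each form a singleton $(\Gamma_i \cap J^0)$-orbit inside $\M_{G,x}^{(0)}$. Since the action of $\Gamma_i \cap J^0$ on $\M_G^{(0),\red}$ is free (by the stabilizer argument) and properly discontinuous (by the orbit-finiteness just established), the quotient morphism $\pi_i$ is a local isomorphism on underlying reduced schemes; thus $\pi_i|_U \colon U \to \pi_i(U)$ is a bijective local isomorphism and hence an isomorphism, and $\pi_i(U)$ is a dense open of $\pi_i(\M_{G,x}^{(0)})$ by the properness established in (i). This yields the birationality. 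The main subtlety will be pinning down the compactness of $\stab_{J^0}(x)$ together with the proper-discontinuity statement precisely enough to invoke the local-isomorphism property of the quotient; beyond this, the argument is essentially combinatorial, driven by the structure of the Bruhat--Tits building of $\PGSp_4(\Qp)$.
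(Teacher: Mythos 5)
Your proof is correct and follows the paper's strategy closely; part (i) is verbatim the paper's argument (proper scheme mapping to a separated target, image proper inside a quasi-projective scheme). For part (ii) the overall plan --- trivialize $\stab_{J^0}(x)\cap\Gamma_i$ by compactness, discreteness and torsion-freeness, excise a small locus to get a dense open $U$ with $\gamma(U)\cap U=\emptyset$, and conclude --- is also the paper's, but two details differ. First, the paper does not need to allow $1$-dimensional intersections $\M_{G,x}^{(0)}\cap\M_{G,\gamma(x)}^{(0)}$: since $\gamma\in J^0$ preserves the type of a hyperspecial vertex and every edge in $\Edg^{\hs}$ joins vertices of \emph{different} types, Theorem \ref{mtl1} (v) forces the intersection to be at most the single point $\M_{G,z}^{(0)}$ for some $z\in\Vtx^{\nsp}$ with $z\leq x$; so one removes only the finite set $\bigcup_{z\in\Vtx^{\nsp},\,z\leq x}\M_{G,z}^{(0)}$, uniformly in $\gamma$, and the finiteness-of-relevant-$\gamma$ discussion you carry out becomes unnecessary. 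Second, and more substantively, the paper does not invoke the local-isomorphism property of free properly discontinuous quotients (the step you yourself flag as the main subtlety, and which also quietly uses freeness of the action on all of $\M_G^{(0),\red}$ rather than just at the vertex $x$). Instead it stops at the statement that $\pi_i\colon U\to\pi_i(U)$ is bijective on $k$-points for every field $k\supset\Fpbar$, lifts the generic point of $\pi_i(U)$ uniquely to $U$, and deduces that $K(\pi_i(U))\to K(U)$ is an isomorphism of function fields; this yields birationality directly from set-theoretic bijectivity plus properness, with no étaleness input. I recommend adopting that function-field finish, which closes the one step of your argument left unverified.
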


\begin{proof}
(i): The scheme $\M^{(0)}_{G,x}$ is projective over $\spec \Fpbar$ by Corollary \ref{redv} (ii). On the other hand, $(\Gamma_i\cap J^0)\backslash \M^{(0),\red}_{G}$ is separated over $\spec \Fpbar$ since it is a closed subscheme of $\sS_{K,\Fpbar}$, which is quasi-projective over $\spec \Fpbar$. Hence $\pi_i$ is proper. Consequently, $\pi_i(\M_{G,x}^{(0)})$ is proper over $\spec \Fpbar$ and hence it is a closed subscheme of $\sS_{K,\Fpbar}$, which quasi-projective over $\spec \Fpbar$. Therefore, the assertion follows. 

(ii): The set $\{y\in \Vtx^{\nsp}\mid y\leq x\}$ is finite by Propositions \ref{ltic} (iii) and \ref{vtvt} (i), (ii). The scheme $\M^{(0),\red}_{G,y}$ consists of single $\Fpbar$-rational point for $y\in \Vtx^{\nsp}$ by Proposition \ref{vtvt} (i) and Corollary \ref{redv} (ii). Hence $\bigcup_{y\in \Vtx^{\nsp},y\leq x}\M^{(0)}_{G,y}$ is a finite set. Put 
\begin{equation*}
U:=\M^{(0)}_{G,x}\setminus \bigcup_{y\in \Vtx^{\nsp},y\leq x}\M^{(0)}_{G,y}. 
\end{equation*}
We claim that $\gamma(U)\cap U=\emptyset$ for $\gamma \in \Gamma_i\cap J^0$. Since $\stab_{J^0}(x)$ is compact and $\Gamma_i$ is discrete, $\stab_{J^0}(x)\cap \Gamma_i$ is finite. Moreover, we have $\stab_{J^0}(x)\cap \Gamma_i=\{ \id \}$ since $\Gamma_i$ is torsion-free. Now take $\gamma \in (\Gamma_i\cap J^0)\setminus \{ \id\}$. Since $x$ and $\gamma(x)$ have the same type, Theorem \ref{mtl1} (v) implies that $\M_{G,x}\cap \M_{G,\gamma(x)}$ is not $1$-dimensional. Moreover, $\M_{G,x}\cap \M_{G,\gamma(x)}$ consists of at most single point, and is of the form $\M_{G,y}$ for some $y\in \Vtx^{\nsp}$ if non-empty. Hence the claim follows. Consequently, $\pi_i \colon U\rightarrow \pi_i(U)$ is bijective on any $k$-rational points, where $k$ is a field containing $\Fpbar$. 

Now we prove that $\pi_i \colon U\rightarrow \pi_i(U)$ is birational. Let $s\colon \spec K(\pi_i(U))\rightarrow \pi_i(U)$ be the morphism induced by the localization at the unique generic point of $\pi_i(U)$, where $K(\pi_i(U))$ is the function field of $\pi_i(U)$. By the bijectivity of $\pi_i \colon U\rightarrow \pi_i(U)$ on $K(\pi_i(U))$-rational points, there is a unique morphism $s'\colon \spec K(\pi_i(U))\rightarrow U$ such that $s=\pi_i\circ s'$. Hence we obtain a homomorphism $K(U)\rightarrow K(\pi_i(U))$ such that $K(\pi_i(U))\xrightarrow{\pi_i} K(U)\rightarrow K(\pi_i(U))$ is an identity map, where $K(U)$ is the function field of $U$. Hence $K(\pi_i(U))\xrightarrow{\pi_i} K(U)$ is an isomorphism, which implies the assertion. 
\end{proof}

Finally, we prove the the main result for $\sS_{K}^{\si}$: 

\begin{thm}\label{mtsh}
\emph{
\begin{enumerate}
\item The scheme $\sS_{K}^{\si}$ is purely $2$-dimensional. Every irreducible component is projective and birational to the Fermat surface defined by
\begin{equation*}
x_0^{p+1}+x_1^{p+1}+x_2^{p+1}+x_3^{p+1}=0
\end{equation*}
in $\Proj \Fpbar[x_0,x_1,x_2,x_3]$. 
\item Let $F$ be an irreducible component of $\sS_{K}^{\si}$. Then the following hold:
\begin{itemize}
\item There are at most $(p+1)(p^2+1)$-irreducible components of $\sS_{K}^{\si}$ whose intersections with $F$ is birational to $\P^1_{\Fpbar}$. Here we endow the intersections with reduced structures. 
\item There are at most $p(p+1)(p^2+1)$-irreducible components of $\sS_{K}^{\si}$ which intersects to $F$ at a single point. 
\item Other irreducible components of $\sS_{K}^{\si}$ do not intersect $F$. 
\end{itemize}
\item Each non-smooth point in $\sS_{K,W}$ is contained in at most $2(p+1)$-irreducible components of $\sS_{K}^{\si}$. 
\item Each irreducible component of $\sS_{K}^{\si}$ contains at most $(p+1)(p^2+1)$-non-smooth points in $\sS_{K,W}$. 
\end{enumerate}}
\end{thm}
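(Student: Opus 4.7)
The strategy is to reduce all four assertions to the local results on $\M_G^{(0)}$ via the $p$-adic uniformization of Theorem~\ref{unif}. The uniformization yields $\widehat{\sS}_K^{\si}\cong \coprod_{i=1}^{m}\Gamma_i\backslash \M_G$, and each summand decomposes further into copies of $(\Gamma_i\cap J^0)\backslash \M_G^{(0)}$. A crucial preliminary observation (used implicitly in Proposition~\ref{birt}) is that the torsion-freeness of $\Gamma_i$ forces $\Gamma_i\cap \stab_{J^0}(x)=\{\id\}$ for every $x\in \Vtx$, since $\stab_{J^0}(x)$ is compact and $\Gamma_i$ is discrete. This is the only ingredient beyond the combinatorics of Corollary~\ref{ctnb} and the geometry of Theorem~\ref{mtl1} that the argument needs.

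For assertion (i), I would note that every irreducible component of $\sS_K^{\si}$ arises as $\pi_i(\M_{G,x}^{(0)})$ for some $i$ and some hyperspecial $x\in \Vtx^{\hs}$, by Theorem~\ref{mtl1}(iii) combined with the uniformization. Proposition~\ref{birt} then shows that $\pi_i\colon \M_{G,x}^{(0)}\to \pi_i(\M_{G,x}^{(0)})$ is proper and birational, so the image is projective and birational to $\M_{G,x}^{(0)}$, which is the Fermat surface by Theorem~\ref{mtl1}(ii). Pure two-dimensionality is immediate since every such component has dimension two.

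For assertion (ii), I would lift $F$ to some $\M_{G,x}^{(0)}$ with $x\in \Vtx^{\hs}$ and classify the components $F'$ with $F\cap F'\neq\emptyset$. Such an $F'$ is of the form $\pi_i(\M_{G,y}^{(0)})$ with $y\in \Vtx^{\hs}$, and non-emptiness of $F\cap F'$ translates into $\M_{G,x}^{(0)}\cap \M_{G,\gamma y}^{(0)}\neq\emptyset$ for some $\gamma\in \Gamma_i\cap J^0$. Theorem~\ref{mtl1}(v) partitions the possibilities into exactly two cases: either $\{x,\gamma y\}\in \Edg^{\hs}$, producing a one-dimensional intersection $\M_{G,x\cup\gamma y}^{(0)}\cong \P^1_{\Fpbar}$, or $x$ and $\gamma y$ share a common non-special neighbor, producing a single-point intersection. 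The local counts from Corollary~\ref{ctnb}(iii)--(iv) bound the number of such $\gamma y$, and since $\pi_i$ may glue distinct orbits one obtains the asserted ``at most'' bounds; the stated geometric shape of the intersections follows again from Theorem~\ref{mtl1}(ii) via Proposition~\ref{birt} applied to the intermediate stratum.

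For assertions (iii) and (iv), Theorem~\ref{nsss}(iii) identifies the non-smooth locus of $\sS_{K,W}$ with the non-formally-smooth locus of $\widehat{\sS}_K^{\si}$. Under uniformization and Theorem~\ref{mtl1}(iv) these points lift to the disjoint union of $\M_{G,z}^{(0)}$ for $z\in \Vtx^{\nsp}$, each a single $\Fpbar$-rational point. Corollary~\ref{ctnb}(i) yields the bound $2(p+1)$ for (iii) and Corollary~\ref{ctnb}(ii) yields the bound $(p+1)(p^2+1)$ for (iv); in both cases the ``at most'' comes from possible identifications by $\Gamma_i$. The main obstacle is localized in Proposition~\ref{birt}: everything hinges on showing that $\pi_i|_{\M_{G,x}^{(0)}}$ is birational for hyperspecial $x$, which is the unique place where torsion-freeness of $\Gamma_i$ is genuinely used. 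Once this birationality is in hand, the rest of the proof is bookkeeping that transfers the Bruhat--Tits combinatorics of Corollary~\ref{ctnb} across the quotient.
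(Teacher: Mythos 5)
Your proposal is correct and follows essentially the same route as the paper: reduce to $\coprod_i(\Gamma_i\cap J^0)\backslash\M_G^{(0)}$ via Theorem \ref{unif}, use the properness and birationality of $\pi_i$ on hyperspecial strata (Proposition \ref{birt}, which is indeed where torsion-freeness of $\Gamma_i$ enters), and transfer the counts of Corollary \ref{ctnb} and the intersection patterns of Theorem \ref{mtl1} across the quotient, with the "at most" accounting for possible identifications by $\Gamma_i$. The paper writes out only assertion (iv) in detail and declares the rest similar, so your fuller bookkeeping for (i)--(iii) is consistent with its intended argument.
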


\begin{proof}
The assertion (i) follows from Proposition \ref{birt}. In the sequel, we only prove (iv). The others are similar. 

Take $x\in \Vtx^{\hs}$ and $i$ as in Proposition \ref{birt}. By Proposition \ref{birt}, $\pi_i$ induces the following surjection: 
\begin{equation*}
\M_G^{\nfs}\cap \M_{G,x}^{(0)}=\pi_i^{-1}(\sS_{K,W}^{\ns}\cap \pi_i(\M_{G,x}^{(0)}))\rightarrow \sS_{K,W}^{\ns}\cap \pi_i(\M_{G,x}^{(0)}). 
\end{equation*}
By Propositions \ref{ltic} (iii) and \ref{odis}, the left-hand side of the surjection above has exactly $(p+1)(p^2+1)$-elements. Therefore the assertion follows. 
\end{proof}

\section{Application to arithmetic intersections}\label{arit}

For another application, we compute the intersection multiplicity of certain cycle, called the GGP cycle. We keep the notation for dual lattices in $\L_{\Q}^{\Phi}$ as Definition \ref{hvdl}. 

\subsection{Basic properties of the GGP cycle}\label{ggpd}

We define an algebraic group $J_H$ over $\Qp$ by
\begin{equation*}
J_H(R)=\{g\in H(K_0\otimes_{\Qp}R)\mid g\circ F=F\circ g\}
\end{equation*}
for any $\Qp$-algebra $R$. Let us also define an algebraic group $J_H^0$ over $\Qp$ by the same formula for $H^0$ instead of $H$. The representabilities of $J_H$ and $J_H^0$ follow from \cite[Proposition 1.12]{rz}. We have canonical injections 
\begin{equation*}
J\rightarrow J_H^0\rightarrow J_H. 
\end{equation*}
Moreover, we have an isomorphism $J_H^0\cong \GSpin(\L_{\Q}^{\Phi})$ of algebraic groups over $\Qp$. See \cite[Remark 2.8]{hp}. 

We define a left action of $J_H(\Qp)$ on $\M_H$ as 
\begin{equation*}
\M_H \rightarrow \M_H;(X,\iota,\lambda,\rho)\mapsto (X,\iota,\lambda,g\circ \rho)
\end{equation*}
for $g\in J_H(\Qp)$; cf.~Definition \ref{gpdf} for the action of $J(\Qp)$ on $\M_G$. Then, the bijection $M\mapsto L(M)$ in Theorem \ref{hppc} commutes with the actions of $J_H^0(\Qp)$. 

\begin{dfn}
\begin{itemize}
\item Let $i_{G,H}\colon\M_G\rightarrow \M_H$ be the closed immersion defined in Proposition \ref{mgeb}. Then we obtain a closed immersion
\begin{equation*}
(\id_{\M_G},i_{G,H})\colon p^{\Z}\backslash \M_G \rightarrow 
p^{\Z}\backslash (\M_G\times_{\spf W}\M_H);x\mapsto (x,i_{G,H}(x)).
\end{equation*}
We define the \emph{GGP cycle} $\Delta$ as the image of $(\id,i_{G,H})$. It is a formal scheme over $\spf W$. 
\item For $g\in J_H(\Qp)$, put $g\Delta:=(\id \times g)(\Delta)$, where
\begin{equation*}
\id \times g\colon p^{\Z}\backslash (\M_G\times_{\spf W}\M_H) \rightarrow p^{\Z}\backslash (\M_G\times_{\spf W}\M_H);(x,y)\mapsto (x,g(y)). 
\end{equation*}
\end{itemize}
\end{dfn}

\begin{lem}\label{dlgh}
\emph{For any $g\in J_H(\Qp)$, the second projection $\pr_2\colon \M_G\times_{\spf W}\M_H\rightarrow \M_H$ induces an isomorphism 
\begin{equation*}
\Delta \cap g\Delta\cong p^{\Z}\backslash(i_{G,H}(\M_G)\cap \M_H^g), 
\end{equation*}
where $\M_H^g$ is the $g$-fixed part of $\M_H$. }
\end{lem}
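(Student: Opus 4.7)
The plan is to unwind the definitions set-theoretically, then promote the identification to a scheme-theoretic isomorphism using that $i_{G,H}$ is a closed immersion (Proposition \ref{mgeb}) and the standard description of $\M_H^g$ as an equalizer. The central element $p\in J(\Qp)\subset J_H(\Qp)$ commutes with $g$, so the diagonal $p^{\Z}$-action on $\M_G\times_{\spf W}\M_H$ is compatible with all maps in sight.

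First I would analyze the two cycles pointwise. A point of $p^{\Z}\backslash(\M_G\times_{\spf W}\M_H)$ lies in $\Delta$ iff it is of the form $(x,i_{G,H}(x))$, and lies in $g\Delta$ iff it is of the form $(x,g(i_{G,H}(x)))$. Hence $(x,y)\in \Delta\cap g\Delta$ forces $y=i_{G,H}(x)=g(i_{G,H}(x))$, i.e.\ $y\in i_{G,H}(\M_G)\cap \M_H^g$ (modulo $p^{\Z}$). Conversely, any $y\in i_{G,H}(\M_G)\cap \M_H^g$ gives a unique preimage under $i_{G,H}$ (since $i_{G,H}$ is a closed immersion, in particular a monomorphism), producing an $(x,y)\in \Delta\cap g\Delta$. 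Thus $\pr_2$ induces a bijection on the level of points.

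Next I would upgrade this to a scheme-theoretic isomorphism. Since $i_{G,H}$ is a closed immersion, $\pr_1|_{\Delta}\colon \Delta\xrightarrow{\cong} p^{\Z}\backslash \M_G$ is an isomorphism of formal schemes, so $\pr_2|_{\Delta}$ factors as
\begin{equation*}
\Delta\xrightarrow{\pr_1|_{\Delta},\ \cong} p^{\Z}\backslash \M_G\xrightarrow{p^{\Z}\backslash i_{G,H}} p^{\Z}\backslash i_{G,H}(\M_G),
\end{equation*}
and the second arrow is also an isomorphism. Under this identification, the closed formal subscheme $g\Delta\subset p^{\Z}\backslash(\M_G\times_{\spf W}\M_H)$ meets $\Delta$ in exactly the locus where $i_{G,H}(x)$ is $g$-fixed. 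More precisely, $\Delta\cap g\Delta$ can be written as the fiber product
\begin{equation*}
\Delta\cap g\Delta=\Delta\times_{p^{\Z}\backslash (\M_H\times_{\spf W}\M_H)}(p^{\Z}\backslash \M_H),
\end{equation*}
where $\Delta\to p^{\Z}\backslash (\M_H\times_{\spf W}\M_H)$ is $(i_{G,H}\circ\pr_1,\,g\circ i_{G,H}\circ\pr_1)$ and the second map is the diagonal. Pulling back through the isomorphism $\Delta\cong p^{\Z}\backslash i_{G,H}(\M_G)$, this fiber product becomes the equalizer of $g$ and $\id$ restricted to $p^{\Z}\backslash i_{G,H}(\M_G)$, which by definition of $\M_H^g$ as the fixed subscheme is $p^{\Z}\backslash(i_{G,H}(\M_G)\cap \M_H^g)$.

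The only point requiring care is that $\M_H^g$ really is the scheme-theoretic equalizer of $\id$ and $g$ acting on $\M_H$; granting this standard interpretation (the $g$-fixed subscheme of a formal scheme), the remainder is formal and the isomorphism induced by $\pr_2$ follows at once. I do not expect any serious obstacle here: the lemma is essentially a bookkeeping exercise transferring the intersection from the product $p^{\Z}\backslash(\M_G\times_{\spf W}\M_H)$ to the single factor $p^{\Z}\backslash \M_H$, made possible by the fact that the first coordinate is determined by the second through $i_{G,H}^{-1}$ on $\Delta$.
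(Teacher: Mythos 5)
Your proposal is correct and follows essentially the same route as the paper: the paper simply runs your first-paragraph argument on $S$-points for arbitrary $S\in \nilp_{W}$ (using that $i_{G,H}$ is a monomorphism for uniqueness of $x$), which already yields the isomorphism of formal schemes, so your second paragraph's fiber-product/equalizer packaging is the same content in different notation. No gap.
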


\begin{proof}
Take $S\in \nilp_W$ and $(x,y)\in p^{\Z}\backslash(\M_G\times_{\spf W}\M_H)(S)$. Then we have $(x,y)\in p^{\Z}\backslash(\Delta \cap g\Delta)(S)$ if and only $(x,y),(x,g^{-1}(y))\in p^{\Z}\backslash(\M_G\times_{\spf W}\M_H)(S)$. Hence we have $y=i_{G,H}(x)=g^{-1}(y)$, that is, $y\in p^{\Z}\backslash(i_{G,H}(\M_G)\cap \M_H^g)(S)$. Therefore, $\pr_2$ induces an injection 
\begin{equation*}
\Delta \cap g\Delta \rightarrow p^{\Z}\backslash(i(\M_G)\cap \M_H^g). 
\end{equation*}
To prove the surjectivity of the morphism above, take $S\in \nilp_W$ and $y\in p^{\Z}\backslash(i_{G,H}(\M_G)\cap \M_H^g)(S)$. Then there is a unique $x\in p^{\Z}\backslash \M_G(S)$ satisfying $i_{G,H}(x)=y$. Moreover, we have $g(y)=y$ since $y\in p^{\Z}\backslash \M_H^g(S)$. Therefore, we have $(x,i_{G,H}(x))\in (\Delta \cap g\Delta)(S)$ and $\pr_2(x,i_{G,H}(x))=y$. Hence the assertion follows. 
\end{proof}

We moreover rewrite the formal scheme $\Delta \cap g\Delta$ for $g\in J_H^0(\Qp)$. Put
\begin{equation*}
L(g):=\sum_{i=0}^{5}\Zp(g^i\cdot y_1), 
\end{equation*}
which is a $\Zp$-submodule of $\L_{\Q}^{\Phi}$. 

\begin{dfn}\label{spcy}
For a $\Zp$-submodule $\bv$ of $\L_{\Q}^{\Phi}$, we define a closed formal subschme $\cZ(\bv)$ of $\M_H$ to be the locus $(X,\iota,\lambda,\rho)$ satisfying $\rho^{-1}\circ \bv \circ \rho \subset \End(X)$. 
\end{dfn}

\begin{prop}\label{nhtr}
\emph{Let $g\in J_H^0(\Qp)$. 
\begin{enumerate}
\item We have an equality $i_{G,H}(\M_G)\cap \M_H^g=\cZ(L(g))^{g}$. 
\item If $p^{\Z}\backslash \cZ(L(g))^{g}$ is an artinian scheme, then we have
\begin{equation*}
\O_{\Delta}\otimes^{\mathbb{L}}\O_{g\Delta}=\O_{\Delta}\otimes \O_{g\Delta},
\end{equation*}
that is, the left-hand side is represented by the right-hand side in the derived category of sheaves on $p^{\Z}\backslash (\M_G\times_{\spf W}\M_H)$. 
\end{enumerate}}
\end{prop}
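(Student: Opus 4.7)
For part (i), I would chase the moduli descriptions. Proposition \ref{mgeb} identifies $i_{G,H}(\M_G)$ with $\cZ(y_1)\subset \M_H$, and the fixed locus $\M_H^g$ parametrises those $(X,\iota,\lambda,\rho)$ for which $\rho^{-1}\circ g\circ\rho$ lifts to an automorphism of $(X,\iota,\lambda)$. On a point of $i_{G,H}(\M_G)\cap \M_H^g$, every conjugate $\rho^{-1}\circ g^i\circ\rho$ (for $i\in\Z$) lifts to an automorphism, so
\begin{equation*}
\rho^{-1}\circ(g^i\cdot y_1)\circ\rho=(\rho^{-1}\circ g^i\circ\rho)\circ(\rho^{-1}\circ y_1\circ\rho)\circ(\rho^{-1}\circ g^{-i}\circ\rho)
\end{equation*}
lifts to an endomorphism of $X$; since $L(g)$ is generated by the elements $g^i\cdot y_1$ for $0\le i\le 5$, the point lies in $\cZ(L(g))^g$. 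The reverse inclusion is immediate, because $y_1=g^0\cdot y_1\in L(g)$ forces $\cZ(L(g))\subset \cZ(y_1)=i_{G,H}(\M_G)$, and the $g$-fixed condition is preserved on both sides.

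For part (ii), the plan is to recognise $\Delta\cap g\Delta$ as a proper intersection of Cohen--Macaulay formal subschemes in a regular ambient and then invoke Serre's Tor-vanishing theorem. By Lemma \ref{dlgh} together with part (i), $\Delta\cap g\Delta$ is identified with $p^{\Z}\backslash\cZ(L(g))^g$. The ambient $p^{\Z}\backslash(\M_G\times_{\spf W}\M_H)$ is regular: $\M_H$ is formally smooth over $\spf W$ in the hyperspecial unramified case, and $\M_G$ is regular by Corollary \ref{rzsg}, so the fibre product is regular, and the free $p^{\Z}$-quotient preserves regularity. A dimension count using that $\M_G$ and $\M_H$ have relative dimensions $3$ and $4$ over $\spf W$ shows the ambient has local Krull dimension $8$, while $\Delta\cong p^{\Z}\backslash\M_G$ and $g\Delta$ each have local Krull dimension $4$, so the expected intersection dimension is $8-4-4=0$. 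The artinian hypothesis says precisely that this expected dimension is attained.

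To conclude, $\O_\Delta$ and $\O_{g\Delta}$ are structure sheaves of regular (hence Cohen--Macaulay) closed formal subschemes of the regular ambient, and they meet properly. Passing to completed local rings at each of the finitely many intersection points, the classical theorem on Tor-vanishing for Cohen--Macaulay modules of proper codimension in a regular local ring (Serre, \emph{Alg\`ebre locale. Multiplicit\'es}, Ch.~V) yields $\Tor_i(\O_\Delta,\O_{g\Delta})=0$ for all $i\ge 1$, giving the desired identification $\O_\Delta\otimes^{\mathbb{L}}\O_{g\Delta}=\O_\Delta\otimes\O_{g\Delta}$ in the derived category.

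The hardest point will be making the Tor-vanishing argument rigorous in the formal-scheme setting. Although both assertions reduce to statements about Noetherian regular complete local rings (by the regularity of $\M_G$ and $\M_H$), one must check that completing along the intersection preserves the Cohen--Macaulay and proper-codimension hypotheses of Serre's theorem, and in particular that the $p^{\Z}$-quotient introduces no additional pathologies.
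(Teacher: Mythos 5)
Your proposal is correct and follows essentially the same route as the paper: part (i) is the same moduli-theoretic conjugation argument (the $g$-fixed condition lets you transport $\rho^{-1}\circ y_1\circ\rho\in\End(X)$ to all $\rho^{-1}\circ(g^i\cdot y_1)\circ\rho$, and the reverse inclusion is immediate from $y_1\in L(g)$). For part (ii) the paper carries out exactly your plan by hand: it uses that $\Delta$ is locally cut out by a regular sequence of length $4$ in the regular (hence Cohen--Macaulay) ambient, concatenates with the $g$-translated sequence, invokes the artinian hypothesis together with \cite[Corollaire 16.5.6]{ega40} to see the combined sequence is regular, and deduces Tor-vanishing -- which is precisely the complete-intersection case of the Serre statement you cite.
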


\begin{proof}
(i): Take $S\in \nilp_W$ and $(X,\iota,\lambda,\rho)\in \M_H(S)$. Then we have $(X,\iota,\lambda,\rho)\in (i_{G,H}(\M_G)\cap \M_H^g)(S)$ if and only if $\rho^{-1}\circ y_1\circ \rho \in \End(X)$ and $g\circ \rho=\rho$. Note that $y_1=\iota_0({\Pi})$. Since $g\cdot y_1=g\circ y_1\circ g^{-1}$ by definition, the condition above is equivalent to the condition that $\rho^{-1}\circ L(g)\circ \rho \subset \End(X)$ and $g\circ \rho=g$, that is, $(X,\iota,\lambda,\rho)\in \cZ(L(g))^{g}$. 

(ii): Note that $p^{\Z}\backslash \M_G$ is a regular formal scheme of dimension $4$ by Corollary \ref{rzsg}. Hence locally $\Delta$ is the intersection of $4$-regular divisors. Put $\O:=\O_{p^{\Z}\backslash (\M_G\times_{\spf W}\M_H)}$. Take $x\in p^{\Z}\backslash \cZ(L(g))^{g}$ and let $s_i\in \O_x\,(1\leq i \leq 4)$ be regular elements such that $\O_{x}/(s_1,\ldots,s_4)=\O_{\Delta,x}$. Moreover, if we put $s_i:=g(s_{i-4})\in \O_{x}$ for $5\leq i\leq 8$. Since $\O_{x}$ is Cohen-Macaulay and $\dim \O_{p^{\Z}\backslash \cZ(L(g))^{g},x}=\dim \O_{x}/(s_1,\ldots,s_8)=0$ by hypothesis, $s_1,\ldots,s_8$ form a regular sequence by \cite[Corollaire 16.5.6]{ega40} for $M=\O_x$. Therefore, for $1\leq i\leq 7$, $s_{i+1}$ is regular in $\O_{x}/(s_1,\ldots,s_i)$, and hence we have
\begin{gather*}
\Tor_1^{\O_x}(\O_{x}/(s_1,\ldots ,s_i),\O_{x}/(s_{i+1}))
=\Ker(\O_{x}/(s_1,\ldots ,s_i)\xrightarrow{s_{i+1}}\O_{x}/(s_1,\ldots ,s_i))=0. 
\end{gather*}
Therefore we have 
\begin{equation*}
\Tor_1^{\O_x}(\O_{\Delta,x},\O_{g\Delta,x})=0. 
\end{equation*}
Consequently, we obtain an equality of sheaves
\begin{equation*}
\Tor_1^{\O}(\O_{\Delta},\O_{g\Delta})=0, 
\end{equation*}
which implies the assertion. 
\end{proof}

If $g\in J_H^0(\Qp)$ and $p^{\Z}\backslash(i_{G,H}(\M_G)\cap \M_H^g)$ is an artinian scheme, then we define
\begin{equation*}
\langle \Delta,g\Delta \rangle:=\chi(\O_{\Delta}\otimes^{\mathbb{L}}\O_{g\Delta}), 
\end{equation*}
where $\chi$ is the Euler-Poincar{\'e} characteristic. Then we have the following by Proposition \ref{nhtr}: 

\begin{prop}\label{isca}
\emph{Under the hypothesis above, we have an equality
\begin{equation*}
\langle \Delta,g\Delta \rangle=\sum_{x\in p^{\Z}\backslash \cZ(L(g))^g}\length_W \O_{p^{\Z}\backslash \cZ(L(g))^g,x}\in \Z. 
\end{equation*}}
\end{prop}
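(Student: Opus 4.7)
The proof is essentially an assembly of the preceding results, so my plan is straightforward. First I will observe that by Lemma \ref{dlgh} and Proposition \ref{nhtr} (i), there is an isomorphism
\begin{equation*}
\Delta \cap g\Delta \cong p^{\Z}\backslash(i_{G,H}(\M_G)\cap \M_H^g) = p^{\Z}\backslash \cZ(L(g))^g,
\end{equation*}
so the hypothesis that $p^{\Z}\backslash(i_{G,H}(\M_G)\cap \M_H^g)$ is artinian is equivalent to $p^{\Z}\backslash \cZ(L(g))^g$ being artinian, which is precisely the hypothesis needed to invoke Proposition \ref{nhtr} (ii).

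Next, by Proposition \ref{nhtr} (ii) the higher Tors vanish, so
\begin{equation*}
\O_{\Delta}\otimes^{\mathbb{L}}\O_{g\Delta}=\O_{\Delta}\otimes \O_{g\Delta}=\O_{\Delta \cap g\Delta}
\end{equation*}
as sheaves on $p^{\Z}\backslash (\M_G\times_{\spf W}\M_H)$. Hence by definition
\begin{equation*}
\langle \Delta,g\Delta \rangle=\chi(\O_{\Delta}\otimes^{\mathbb{L}}\O_{g\Delta})=\chi(\O_{\Delta \cap g\Delta}).
\end{equation*}

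Finally, since $\Delta\cap g\Delta$ is an artinian scheme (by the isomorphism above and the hypothesis), it is a finite disjoint union of its closed points, and its structure sheaf is a direct sum of the corresponding local rings. Hence
\begin{equation*}
\chi(\O_{\Delta \cap g\Delta})=\sum_{x\in \Delta \cap g\Delta}\length_W \O_{\Delta \cap g\Delta,x},
\end{equation*}
and transporting along the isomorphism $\Delta \cap g\Delta \cong p^{\Z}\backslash \cZ(L(g))^g$ yields the claimed formula.

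There is no real obstacle here: the technical content is already in Proposition \ref{nhtr} (the vanishing of higher Tors via the regular sequence argument) and Lemma \ref{dlgh} (the identification of the intersection with a fixed locus). The only point to be slightly careful about is confirming that the two artinian hypotheses match up, but this is immediate from Proposition \ref{nhtr} (i).
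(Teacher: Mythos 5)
Your proposal is correct and is exactly the argument the paper intends: the paper states this proposition as an immediate consequence of Proposition \ref{nhtr} (together with Lemma \ref{dlgh}), and your unwinding — identifying $\Delta\cap g\Delta$ with $p^{\Z}\backslash \cZ(L(g))^g$, using the Tor-vanishing to replace the derived tensor product by $\O_{\Delta\cap g\Delta}$, and computing the Euler characteristic of an artinian scheme as a sum of lengths — supplies precisely the routine details left implicit there.
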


\subsection{Relation with the Bruhat--Tits strata of $\M_H$}

We give a description of $p^{\Z}\backslash \M_{H,\widetilde{\Lambda}}^g(\Fpbar)$ in terms of the Bruhat--Tits stratum of $\M_H$ for certain $\widetilde{\Lambda}\in \Vrt_H$. We will use it for $\widetilde{\Lambda}=L(g)^{\natural}$ in Section \ref{ggcp}. For $\widetilde{\Lambda} \in \Vrt_H$, put 
\begin{equation*}
\BT_{H,\widetilde{\Lambda}}:=\M_{H,\widetilde{\Lambda}}\setminus \bigcup_{\Lambda'\in \Vrt_H,\widetilde{\Lambda}'\subsetneq \widetilde{\Lambda}}\M_{H,\widetilde{\Lambda}'}. 
\end{equation*}
It is the Bruhat--Tits stratum of $\M_H$ attached to $\widetilde{\Lambda}$. Moreover, the following holds: 

\begin{prop}\label{bthr}(\cite[\S 2.6]{hp})
\emph{The bijection $M\mapsto L(M)$ in Theorem \ref{hppc} induces a bijection
\begin{equation*}
p^{\Z} \backslash \BT_{H,\widetilde{\Lambda}}(\Fpbar)
=\{L\colon \text{special lattice in }\L_{\Q}\mid \widetilde{\Lambda}(L)=\widetilde{\Lambda} \}.
\end{equation*}
Here $\widetilde{\Lambda}(L)$ is the $\Phi$-fixed part of $\sum_{i\in \Znn}\Phi^i(L)$, which is an element of $\Vrt_H$. }
\end{prop}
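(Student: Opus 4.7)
The plan is to translate the definition of $\BT_{H,\widetilde{\Lambda}}$ through the bijection $M \mapsto L(M)$ of Theorem \ref{hppc} and then identify the level-set of the vertex-lattice invariant $\widetilde{\Lambda}(-)$. First I would unpack the definition of $\M_{H,\widetilde{\Lambda}}$ at the level of $\Fpbar$-points: a Dieudonn{\'e} lattice $M$ lies in $\M_{H,\widetilde{\Lambda}}(\Fpbar)$ if and only if every $v \in \widetilde{\Lambda}^{\natural} \subset \L_{\Q}^{\Phi} \subset \End^{0}(\X_{0})^{\Phi}$ preserves $M$. Since $v \in \L_{\Q}^{\Phi}$ commutes with $F$ (by definition of $\Phi$), the three conditions $v(M) \subset M$, $v(pM) \subset pM$, and $v(F^{-1}(pM)) \subset F^{-1}(pM)$ are equivalent, so the membership $M \in \M_{H,\widetilde{\Lambda}}(\Fpbar)$ becomes $\widetilde{\Lambda}^{\natural} \subset L(M)$, by the very definition of $L(M)$.

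The second and main step is to show: for a special lattice $L$ in $\L_{\Q,K}$ and a vertex lattice $\widetilde{\Lambda} \in \Vrt_{H}$, one has $\widetilde{\Lambda}^{\natural} \subset L$ if and only if $\widetilde{\Lambda}(L) \subset \widetilde{\Lambda}$. For one direction, observe that $\widetilde{\Lambda}(L)\otimes_{\Zp}W = \sum_{i \geq 0} \Phi^{i}(L)$ once this sum stabilizes (which it does by the analogue of \cite[Proposition 2.19]{hp}). If $\widetilde{\Lambda}^{\natural} \subset L$, then dualizing inside $\L_{\Q,K}$ gives $L = L^{\natural} \subset (\widetilde{\Lambda}^{\natural})^{\natural}\otimes_{\Zp}W = \widetilde{\Lambda}\otimes_{\Zp}W$; since $\widetilde{\Lambda}\otimes_{\Zp}W$ is $\Phi$-stable, $\sum \Phi^{i}(L) \subset \widetilde{\Lambda}\otimes_{\Zp}W$, and taking $\Phi$-fixed parts yields $\widetilde{\Lambda}(L) \subset \widetilde{\Lambda}$. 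The converse runs the same chain of inclusions in reverse, using self-duality of $L$ once more.

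Combining the first two steps, we obtain a bijection
\begin{equation*}
p^{\Z}\backslash \M_{H,\widetilde{\Lambda}}(\Fpbar) \cong \{L \colon \text{special lattice in } \L_{\Q,K} \mid \widetilde{\Lambda}(L) \subset \widetilde{\Lambda}\}.
\end{equation*}
By definition of $\BT_{H,\widetilde{\Lambda}}$ and the monotonicity just established, $M \in \BT_{H,\widetilde{\Lambda}}(\Fpbar)$ translates to $\widetilde{\Lambda}(L(M)) \subset \widetilde{\Lambda}$ but $\widetilde{\Lambda}(L(M)) \not\subset \widetilde{\Lambda}'$ for every vertex lattice $\widetilde{\Lambda}' \subsetneq \widetilde{\Lambda}$. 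Since $\widetilde{\Lambda}(L(M))$ is itself a vertex lattice contained in $\widetilde{\Lambda}$, this is equivalent to $\widetilde{\Lambda}(L(M)) = \widetilde{\Lambda}$, which gives the claimed bijection.

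The main obstacle is the duality computation in the second step. The subtlety is that one must compare two different duality operations, namely the dual in $\L_{\Q,K}$ with respect to the $W$-valued form and the dual in $\L_{\Q}^{\Phi}$ with respect to the $\Zp$-valued form, and verify that they are compatible with extension of scalars and with the passage to $\Phi$-fixed parts. In particular, the fact that for a $\Phi$-stable $W$-lattice $\widetilde{M} \subset \L_{\Q,K}$ one has $\widetilde{M}^{\natural} = (\widetilde{M}^{\Phi})^{\natural}\otimes_{\Zp}W$ is the linear-algebraic backbone of the equivalence.
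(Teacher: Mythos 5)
Your argument is correct and is essentially the argument the paper relies on (the paper simply cites \cite[\S 2.6]{hp} here, and carries out the identical chain of equivalences for the $G$-analogue around Proposition \ref{btsc} and Theorem \ref{btm0}): translate membership in $\M_{H,\widetilde{\Lambda}}$ into $\widetilde{\Lambda}^{\natural}\subset L(M)$ using that $\Phi$-fixed operators commute with $F$, convert this by self-duality of $L$ into $\widetilde{\Lambda}(L)\subset\widetilde{\Lambda}$, and then observe that excising the smaller strata forces equality since $\widetilde{\Lambda}(L)$ is itself an $H$-vertex lattice. Your closing remark on the duality bookkeeping, including $\widetilde{M}^{\natural}=(\widetilde{M}^{\Phi})^{\natural}\otimes_{\Zp}W$ for $\Phi$-stable lattices, is exactly the point that needs (and receives) care.
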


\begin{dfn}\label{vgul}
For $g\in J_H^0(\Qp)$, we define two subsets of $\Vrt_H$ as follow: 
\begin{itemize}
\item $\Vrt_H^g:=\{\widetilde{\Lambda} \in \Vrt_H\mid g\widetilde{\Lambda}=\widetilde{\Lambda}\}$, 
\item For $\widetilde{\Lambda}\in \Vrt_H^g$, $\Vrt_H^g(\widetilde{\Lambda}):=\{\widetilde{\Lambda}' \in \Vrt_H^g\mid \widetilde{\Lambda}' \subset \widetilde{\Lambda}\}$. 
\end{itemize}
\end{dfn}

If $\widetilde{\Lambda}\in \Vrt_H^g$, then we have $g \cdot \widetilde{\Lambda}^{\natural}=\widetilde{\Lambda}^{\natural}$, and hence $g$ induces an element $\gbar_{\widetilde{\Lambda}}\in \SO(\Omegabar_0(\widetilde{\Lambda}))$ (see Definition \ref{ombd} for the definition of $\Omegabar_0(\widetilde{\Lambda})$). Moreover, $g$ induces actions of $\gbar_{\widetilde{\Lambda}}$ on $\M_{H,\widetilde{\Lambda}}$ and $\BT_{H,\widetilde{\Lambda}}$. 

\begin{prop}\label{gfbt}
\emph{There is an equality
\begin{equation*}
p^{\Z}\backslash \M_{H,\widetilde{\Lambda}^{\natural}}^g(\Fpbar)=\coprod_{\widetilde{\Lambda}' \in \Vrt_H^g(\widetilde{\Lambda})}p^{\Z}\backslash \BT_{H,\widetilde{\Lambda}'}^{\gbar_{\widetilde{\Lambda}'}}(\Fpbar). 
\end{equation*}}
\end{prop}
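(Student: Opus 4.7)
The plan is to pass to the combinatorial side via the Howard--Pappas bijection of Theorem \ref{hppc}, which sets up a $J_H^0(\Qp)$-equivariant correspondence between $p^{\Z}$-orbits of Dieudonn{\'e} lattices in $\D_{\Q,K}$ and special lattices in $\L_{\Q,K}$, under which the $J_H^0(\Qp)$-action on $\M_H$ corresponds to $L\mapsto gL$; the indexing of the right-hand side is then controlled by Proposition \ref{bthr}, identifying $p^{\Z}\backslash \BT_{H,\widetilde{\Lambda}'}(\Fpbar)$ with $\{L\text{ special}\mid \widetilde{\Lambda}(L)=\widetilde{\Lambda}'\}$.

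First I would translate the left-hand side into a condition on special lattices. Parallel to the $G$-side identity $p^{\Z}\backslash \M_{G,\Lambda}(\Fpbar)=\{L\text{ special}\mid \Lambda(L)\subset \Lambda\}$ recalled immediately after Definition \ref{clsb} (which rests on the relation $\Lambda(L)^{\vee}=L\cap \L_{\Q}^{\Phi,\pi}$ from Proposition \ref{spvt}), the same argument applied to $\M_{H,\widetilde{\Lambda}^{\natural}}$ gives a description of $p^{\Z}\backslash \M_{H,\widetilde{\Lambda}^{\natural}}(\Fpbar)$ as those special lattices $L$ for which $\widetilde{\Lambda}(L)\subset \widetilde{\Lambda}$; the fixed-point condition imposes $gL=L$ on top. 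I would then check that when $gL=L$ the attached H-vertex lattice $\widetilde{\Lambda}(L)$ automatically lies in $\Vrt_H^g$: since $g\in J_H^0(\Qp)$ commutes with $F=b\circ \sigma$ by the definition of $J_H$, it commutes with the induced $\sigma$-linear operator $\Phi$ on $\L_{\Q}$, and as $\widetilde{\Lambda}(L)$ is the $\Phi$-fixed part of $\sum_{i\in \Znn}\Phi^i(L)$ by Proposition \ref{bthr}, one gets $g\widetilde{\Lambda}(L)=\widetilde{\Lambda}(gL)=\widetilde{\Lambda}(L)$; combined with the previous inclusion this puts $\widetilde{\Lambda}(L)\in \Vrt_H^g(\widetilde{\Lambda})$.

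The decomposition is then a stratification argument: partitioning the left-hand side according to the value $\widetilde{\Lambda}(L)=\widetilde{\Lambda}'$ gives that each fibre equals $\{L:\widetilde{\Lambda}(L)=\widetilde{\Lambda}',\,gL=L\}$, which by Proposition \ref{bthr} together with the fact that $g$ induces $\gbar_{\widetilde{\Lambda}'}$ on $\widetilde{\Lambda}'/(\widetilde{\Lambda}')^{\natural}$ (by the very construction of $\gbar_{\widetilde{\Lambda}'}$) is exactly $p^{\Z}\backslash \BT_{H,\widetilde{\Lambda}'}^{\gbar_{\widetilde{\Lambda}'}}(\Fpbar)$; disjointness of the union is automatic from Proposition \ref{bthr}. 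The main obstacle I foresee lies in Step~1: namely the careful bookkeeping of the various ``$\widetilde{\Lambda}$ versus $\widetilde{\Lambda}^{\natural}$'' dualities needed to pin down the precise range in which $\widetilde{\Lambda}(L)$ must sit, together with spelling out the $J_H^0(\Qp)$-equivariance of the Howard--Pappas bijection, which is essentially built into its construction but should be verified cleanly from the definition of the $J_H^0(\Qp)$-action on $\M_H$.
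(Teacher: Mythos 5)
Your proposal is correct and follows essentially the same route as the paper: both sides are translated into sets of special lattices via Theorem \ref{hppc} and Proposition \ref{bthr}, the key observation that $g\cdot L=L$ forces $g\cdot\widetilde{\Lambda}(L)=\widetilde{\Lambda}(L)$ (since $g$ commutes with $\Phi$) is exactly the paper's pivot, and the decomposition is then the tautological partition by the value of $\widetilde{\Lambda}(L)$. The duality bookkeeping you flag as the main obstacle is handled in the paper simply by the equivalence $\widetilde{\Lambda}^{\natural}\subset L\Leftrightarrow\widetilde{\Lambda}(L)\subset\widetilde{\Lambda}$, as you anticipate.
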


\begin{proof}
The bijection $M\mapsto L(M)$ in Theorem \ref{hppc} induces a bijection
\begin{align*}
p^{\Z}\backslash \M_{H,\widetilde{\Lambda}}^g(\Fpbar)
&\cong \{L\colon \text{special lattice in }\L_{\Q}\mid \widetilde{\Lambda}^{\natural}\subset L,g\cdot L=L\} \\
&=\{L\colon \text{special lattice in }\L_{\Q}\mid \widetilde{\Lambda}(L)\subset \widetilde{\Lambda},g\cdot L=L\}. 
\end{align*}
On the other hand, the bijection in Proposition \ref{bthr} induces a bijection
\begin{align*}
\coprod_{\widetilde{\Lambda}'\in \Vrt_H^g(\widetilde{\Lambda})}p^{\Z}\backslash \BT_{H,\widetilde{\Lambda}'}^{\gbar_{\widetilde{\Lambda}'}}(\Fpbar)
&\cong \coprod_{\widetilde{\Lambda}'\in \Vrt_H^g(\widetilde{\Lambda})}\{L\colon \text{special lattice in }\L_{\Q}\mid \widetilde{\Lambda}(L)=\widetilde{\Lambda}',g\cdot L=L\}\\
&=\{L\colon \text{special lattice in }\L_{\Q}\mid \widetilde{\Lambda}(L)\in \Vrt_H^g(\widetilde{\Lambda}),g\cdot L=L\}. 
\end{align*}
Note that we have $g\cdot \widetilde{\Lambda}(L)=\widetilde{\Lambda}(L)$ if $g\cdot L=L$ for a special lattice $L$. Hence, under $g\cdot L=L$, the condition $\widetilde{\Lambda}(L)\in \Vrt_H^g(\widetilde{\Lambda})$ is equivalent to the condition $\widetilde{\Lambda}(L)\subset \widetilde{\Lambda}$. Therefore, the assertion follows. 
\end{proof}

In the end of this section, we give a criterion for non-emptyness of $p^{\Z}\backslash \BT_{H,\widetilde{\Lambda}}^{\gbar_{\widetilde{\Lambda}}}$. This is proved in the proof of \cite[Theorem 3.6.4]{lz} by considering Deligne--Lusztig varieties. See also \cite[Theorem 3.5.2]{lz}. 

\begin{prop}\label{neir}
\emph{Let $g\in J_H^0(\Qp)$ and $\widetilde{\Lambda}\in \Vrt_H^g$. Let $P_{\gbar_{\widetilde{\Lambda}}}\in \Fp[T]$ be the characteristic polynomial of $\gbar_{\widetilde{\Lambda}}$ on $\Omegabar_0(\widetilde{\Lambda})$. Assume that $P_{\gbar_{\widetilde{\Lambda}}}$ equals the minimal polynomial of $\gbar_{\widetilde{\Lambda}}$ on $\Omegabar_0(\widetilde{\Lambda})$. Then, we have $p^{\Z}\backslash \BT_{H,\widetilde{\Lambda}}^{\gbar_{\widetilde{\Lambda}}}\neq \emptyset$ if and only if $P_{\gbar_{\widetilde{\Lambda}}}$ is irreducible. In this case, we have 
\begin{equation*}
\#(p^{\Z}\backslash \BT_{H,\widetilde{\Lambda}}^{\gbar_{\widetilde{\Lambda}}})=\dim_{\Fp}\Omegabar_0(\widetilde{\Lambda})=\deg P_{\gbar_{\widetilde{\Lambda}}}<\infty. 
\end{equation*}}
\end{prop}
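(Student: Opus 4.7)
The plan is to follow the strategy of Li and Zhu in \cite[Theorems 3.5.2 and 3.6.4]{lz}, transplanted from the unitary Rapoport--Zink setting to our non-split even orthogonal one. The key input is the Deligne--Lusztig model for $\BT_{H,\widetilde{\Lambda}}$, on which the fixed locus of $\gbar_{\widetilde{\Lambda}}$ can be enumerated explicitly via spectral analysis on $\Omegabar_0(\widetilde{\Lambda})$.

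First, I would recall the Deligne--Lusztig description of $\BT_{H,\widetilde{\Lambda}}$. By \cite[Theorem 3.9]{hp} (parallel to the proof of Theorem \ref{vldl} above), there is an isomorphism between $\M_{H,\widetilde{\Lambda}}^{(0)}$ and the closed subscheme $\Sbar_{\Omegabar_0(\widetilde{\Lambda})}^{\pm}$ introduced in Section \ref{nseo}, under which the open stratum $\BT_{H,\widetilde{\Lambda}}^{(0)}$ corresponds to the Coxeter-type Deligne--Lusztig variety for $\SO(\Omegabar_0(\widetilde{\Lambda}))$. Using Proposition \ref{bthr}, the $\Fpbar$-points of $p^{\Z}\backslash \BT_{H,\widetilde{\Lambda}}$ are in bijection with Lagrangian subspaces $\overline{L}\subset \Omegabar_0(\widetilde{\Lambda})\otimes_{\Fp}\Fpbar$ of dimension $t(\widetilde{\Lambda})/2$ satisfying the Coxeter condition $\dim_{\Fpbar}(\overline{L}\cap \overline{\Phi}(\overline{L}))=t(\widetilde{\Lambda})/2-1$ together with the generation condition that the iterated sums $\sum_{k\geq 0}\overline{\Phi}^{k}(\overline{L})$ exhaust $\Omegabar_0(\widetilde{\Lambda})\otimes_{\Fp}\Fpbar$.

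Second, since $g$ stabilizes $\widetilde{\Lambda}$ and hence $\widetilde{\Lambda}^{\natural}$, its action on $\BT_{H,\widetilde{\Lambda}}$ corresponds to the action of $\gbar_{\widetilde{\Lambda}}\in \SO(\Omegabar_0(\widetilde{\Lambda}))(\Fp)$ on the above Lagrangian parameter space. The hypothesis that $P_{\gbar_{\widetilde{\Lambda}}}$ equals the minimal polynomial of $\gbar_{\widetilde{\Lambda}}$ forces $\gbar_{\widetilde{\Lambda}}$ to be diagonalizable over $\Fpbar$ with $t(\widetilde{\Lambda})$ distinct eigenvalues. Consequently every $\gbar_{\widetilde{\Lambda}}$-stable subspace of $\Omegabar_0(\widetilde{\Lambda})\otimes_{\Fp}\Fpbar$ is a direct sum of one-dimensional eigenspaces, and a $\gbar_{\widetilde{\Lambda}}$-stable Lagrangian is encoded by a set $S$ of $t(\widetilde{\Lambda})/2$ eigenvalues containing no reciprocal pair (this being exactly the isotropy condition for the orthogonal form).

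The final combinatorial step identifies the sets $S$ for which the corresponding Lagrangian also satisfies the Coxeter and generation conditions. These translate to $|S\triangle F^{-1}(S)|=2$, i.e.~exactly one ``descent'' for the Frobenius action $\lambda\mapsto \lambda^{p}$, together with $\bigcup_{k\geq 0}F^{k}(S)$ exhausting the eigenvalue set. The cyclic $F$-orbit structure on the eigenvalues corresponds precisely to the irreducible factorization of $P_{\gbar_{\widetilde{\Lambda}}}$; a case analysis across self-reciprocal and reciprocally paired orbits shows that the two constraints can coexist only when $F$ acts transitively, i.e.~when $P_{\gbar_{\widetilde{\Lambda}}}$ is irreducible, in which case the admissible $S$ are precisely the $t(\widetilde{\Lambda})=\deg P_{\gbar_{\widetilde{\Lambda}}}$ cyclic intervals of length $t(\widetilde{\Lambda})/2$ within the single $F$-orbit, yielding the stated count. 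I expect this case analysis to be the main technical obstacle: one must systematically rule out mixed configurations in which $S$ produces its single descent inside one orbit while the remaining orbits lie entirely inside or outside $S$, showing that any such configuration fails either the isotropy or the generation requirement.
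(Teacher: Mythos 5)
Your route --- identify $p^{\Z}\backslash \BT_{H,\widetilde{\Lambda}}$ with the open Deligne--Lusztig stratum of $\Sbar_{\Omegabar_0(\widetilde{\Lambda})}$ and enumerate $\gbar_{\widetilde{\Lambda}}$-stable Lagrangians subject to the Coxeter and generation conditions --- is exactly the one the paper relies on (the paper gives no proof and simply points to the proofs of \cite[Theorems 3.5.2, 3.6.4]{lz}). Your reduction to the Lagrangian parameter space, the analysis in the irreducible case, and the resulting count of $\deg P_{\gbar_{\widetilde{\Lambda}}}$ cyclic intervals are correct.

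There is, however, a genuine gap in the ``only if'' direction. The equality of the characteristic and minimal polynomials does \emph{not} imply that $\gbar_{\widetilde{\Lambda}}$ is diagonalizable with distinct eigenvalues; it only says that $\gbar_{\widetilde{\Lambda}}$ is non-derogatory, i.e.\ has a single Jordan block per eigenvalue. The case $P_{\gbar_{\widetilde{\Lambda}}}=Q^{m}$ with $Q$ irreducible and $m\geq 2$ satisfies the hypothesis, is not semisimple, and is precisely the case the proposition must handle for the applications (Corollary \ref{glbn}, Proposition \ref{iart} and Theorem \ref{mnin}, where $m(Q_g)$ can be $3$). For such $\gbar_{\widetilde{\Lambda}}$ a stable subspace of $\Omegabar_0(\widetilde{\Lambda})\otimes_{\Fp}\Fpbar$ is not a sum of eigenlines, so the encoding of stable Lagrangians by subsets $S$ of eigenvalues --- and the entire case analysis built on it --- does not apply, and your argument does not rule out fixed points when $P_{\gbar_{\widetilde{\Lambda}}}$ has a repeated irreducible factor. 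The standard repair, and what \cite{lz} and \cite{rtz} actually do, is flag-theoretic rather than spectral: a fixed Lagrangian $\overline{L}$ in the open stratum yields the $\gbar_{\widetilde{\Lambda}}$-stable complete flag built from $\overline{L}^{(k)}=\overline{L}+\overline{\Phi}(\overline{L})+\cdots+\overline{\Phi}^{k}(\overline{L})$ and its orthogonal complements; $\gbar_{\widetilde{\Lambda}}$ acts on each one-dimensional graded piece by a scalar, successive scalars are $p$-th powers of one another because $\gbar_{\widetilde{\Lambda}}$ is $\Fp$-rational and commutes with $\overline{\Phi}$, and this, combined with the self-duality of the flag and the hypothesis that the characteristic polynomial equals the minimal polynomial, forces irreducibility with no semisimplicity assumed. (A minor further point: when $\pm 1$ is an eigenvalue, the condition ``$S$ contains no reciprocal pair'' does not by itself exclude it; one must note separately that the corresponding eigenline is anisotropic and hence cannot lie in a Lagrangian.)
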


\subsection{Intersection multiplicity of the GGP cycles}\label{ggcp}

First, we state a result which is (essentially) proved in \cite{lz}. Second, we define the notion of minusculeness for $g\in J_H^0(\Qp)$. Finally, we show that the intersection multiplicity is defined for a minuscule $g$. 

In the sequel, we fix $g\in J_H^0(\Qp)$ satisfying $\M_H^g\neq \emptyset$. 

\begin{lem}\label{lggs}
\emph{We have $\Vrt_H^g\neq \emptyset$ and $g\cdot L(g)=L(g)$. }
\end{lem}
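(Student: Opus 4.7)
The plan is to produce a $g$-stable $H$-vertex lattice out of any $g$-fixed point of $\M_H(\Fpbar)$, and then to deduce the second assertion via Cayley--Hamilton applied to the induced $g$-action on that lattice.

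First I would pick an $\Fpbar$-rational point of $\M_H^g$. Theorem \ref{rtdu} converts it into a Dieudonn{\'e} lattice $M\subset \D_{\Q}$, and the definition of the $J_H(\Qp)$-action as $(X,\iota,\lambda,\rho)\mapsto (X,\iota,\lambda,g\circ \rho)$ translates $g$-fixedness into the relation $g\cdot M=M$. The bijection $M\mapsto L(M)$ of Theorem \ref{hppc} is $J_H^0(\Qp)$-equivariant (as recalled just after the definition of the $J_H(\Qp)$-action on $\M_H$), so the associated special lattice $L:=L(M)\subset \L_{\Q,K}$ is likewise $g$-stable. The $H$-vertex lattice $\widetilde{\Lambda}(L)=(\sum_{i\geq 0}\Phi^i(L))^{\Phi}\in \Vrt_H$ recalled in Proposition \ref{bthr} is then automatically $g$-stable, since $g\in J_H(\Qp)$ commutes with $\Phi$ by definition and preserves $L$. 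This gives $\widetilde{\Lambda}(L)\in \Vrt_H^g$, proving the first assertion.

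For the second assertion, I would exploit that the vertex lattice $\widetilde{\Lambda}(L)$ just produced is a free $\Zp$-module of rank $6$ lying in $\L_{\Q}^{\Phi}$, and that $g$ acts on it as a $\Zp$-linear automorphism. Consequently the characteristic polynomial $\chi_g(T)\in \Zp[T]$ of $g$ acting on $\L_{\Q}^{\Phi}$ is monic of degree $6$ with constant term $\chi_g(0)=\det(g)\in \Zp^{\times}$. Cayley--Hamilton then expresses both $g^{6}$ and $g^{-1}$ as $\Zp$-linear combinations of $1,g,\ldots ,g^{5}$; applying these relations to $y_1=\iota_0(\Pi)$ yields $g^{6}\cdot y_1\in L(g)$ and $g^{-1}\cdot y_1\in L(g)$. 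The two containments $g\cdot L(g)\subseteq L(g)$ and $g^{-1}\cdot L(g)\subseteq L(g)$ then force the desired equality $g\cdot L(g)=L(g)$.

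There is no real obstacle beyond assembling three compatibilities already established in the paper: the bijection of Theorem \ref{rtdu}, the $J_H^0(\Qp)$-equivariance of the bijection of Theorem \ref{hppc}, and the $\Phi$-equivariant construction of $\widetilde{\Lambda}(L)$ appearing in Proposition \ref{bthr}. Once $g$ is known to stabilize some $\Zp$-lattice in $\L_{\Q}^{\Phi}$, the rest is purely linear algebra.
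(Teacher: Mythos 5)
Your proof is correct and follows essentially the same route as the paper: both produce a $g$-stable $H$-vertex lattice $\widetilde{\Lambda}(L)$ from a $g$-fixed special lattice coming from a point of $\M_H^g$, and both deduce $g\cdot L(g)=L(g)$ from the integrality of the characteristic polynomial of $g$ on $\L_{\Q}^{\Phi}$ (the paper states this tersely; your Cayley--Hamilton argument with unit constant term is exactly the intended justification).
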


\begin{proof}
By the hypothesis $\M_H^g\neq \emptyset$, there is a special lattice $L$ in $\L_{\Q}$ satisfying $g\cdot L=L$. Then, we have $g\cdot \widetilde{\Lambda}(L)=\widetilde{\Lambda}(L)$, which concludes the first assertion. Moreover, if $\widetilde{P}_g(T)\in \Qp[T]$ is the characteristic polynomial of $g$ on $\L_{\Q}^{\Phi}$, then we have $\widetilde{P}_g(T)\in \Zp[T]$. Therefore the second assertion follows. 
\end{proof}

For $\widetilde{\Lambda}\in \Vrt_H^g$, we have $\cZ(\widetilde{\Lambda}^{\natural})=\M_{H,\widetilde{\Lambda}}$ by definition. Note that the right-hand side is a reduced scheme of characteristic $p$ by \cite[Proposition 4.2.11]{lz}. Moreover, $g$ induces an element $\gbar_{\widetilde{\Lambda}} \in \SO(\Omegabar_0(\widetilde{\Lambda}))$, as explained after Definition \ref{vgul}. Let $P_{\gbar_{\widetilde{\Lambda}}}\in \Fp[T]$ be the characteristic polynomial of $\gbar_{\widetilde{\Lambda}}$ on $\Omegabar_0(\widetilde{\Lambda})$.

\begin{dfn}
Let $R\in \Fp[T]$ be a non-zero polynomial. 
\begin{itemize}
\item We define the reciprocal of $R$ by $R^{*}(T):=T^{\deg(R)}R(T^{-1})$. 
\item The polynomial $R$ is self-reciprocal if $R^{*}=R$. 
\end{itemize}
\end{dfn}

\begin{lem}\label{chsr}
For $\widetilde{\Lambda}\in \Vrt_H^g$, the polynomial $P_{\gbar_{\widetilde{\Lambda}}}$ is self-reciprocal. 
\end{lem}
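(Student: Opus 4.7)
The plan is to derive the self-reciprocity from the standard fact that the characteristic polynomial of any element of $\SO(V)$ on an even-dimensional non-degenerate quadratic space $V$ is self-reciprocal. Concretely I will verify three prerequisites and then invoke the linear-algebra statement.

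First, the space $(\Omegabar_0(\widetilde{\Lambda}),-p\varepsilon^{2}Q\bmod p)$ is a non-degenerate quadratic space over $\Fp$: this follows from the defining relation $p\widetilde{\Lambda}\subset \widetilde{\Lambda}^{\natural}\subset \widetilde{\Lambda}$ of an $H$-vertex lattice, together with the fact that $-p\varepsilon^{2}Q$ takes values in $\Zp$ on $\widetilde{\Lambda}$ and in $p\Zp$ on $\widetilde{\Lambda}^{\natural}$, so that the form descends to a non-degenerate $\Fp$-valued form on the quotient. Moreover $n:=\dim_{\Fp}\Omegabar_0(\widetilde{\Lambda})$ equals the type of $\widetilde{\Lambda}$, which by \cite[Proposition 2.22]{hp} lies in $\{2,4,6\}$ and is in particular even. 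This evenness is the only point where the specific geometry of $H$-vertex lattices is used.

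Second, I will check that $\gbar_{\widetilde{\Lambda}}$ lies in $\SO(\Omegabar_0(\widetilde{\Lambda}))(\Fp)$, not merely $O(\Omegabar_0(\widetilde{\Lambda}))(\Fp)$. Under the isomorphism $J_H^{0}\cong \GSpin(\L_{\Q}^{\Phi})$ recalled in Section~\ref{ggpd}, the action of $g\in J_H^{0}(\Qp)$ on $\L_{\Q}^{\Phi}$ factors through $\SO(\L_{\Q}^{\Phi})(\Qp)$, and since $g$ preserves both $\widetilde{\Lambda}$ and $\widetilde{\Lambda}^{\natural}$, its restriction to $\widetilde{\Lambda}$ has determinant $1$. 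Reducing modulo $p$, the induced element $\gbar_{\widetilde{\Lambda}}$ preserves the quadratic form and has determinant $1$, so belongs to $\SO(\Omegabar_0(\widetilde{\Lambda}))(\Fp)$.

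Third, the linear-algebra input: for any $\gbar \in \SO(V)(k)$ on a non-degenerate quadratic space $(V,B)$ of dimension $n$ over a field $k$, the identity $\gbar^{t}B\gbar=B$ shows that $\gbar^{t}$ is conjugate to $\gbar^{-1}$, so $P_{\gbar}=P_{\gbar^{t}}=P_{\gbar^{-1}}$; on the other hand a direct computation of $\det(TI-A^{-1})=\det(A^{-1})\det(TA-I)$ for any invertible $A$ yields $P_{\gbar^{-1}}(T)=(\det \gbar)^{-1}(-1)^{n}P_{\gbar}^{*}(T)$, which for $\gbar\in \SO(V)(k)$ becomes $P_{\gbar}(T)=(-1)^{n}P_{\gbar}^{*}(T)$. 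Applying this to $\gbar_{\widetilde{\Lambda}}$ and using the evenness of $n$ established above gives $P_{\gbar_{\widetilde{\Lambda}}}=P_{\gbar_{\widetilde{\Lambda}}}^{*}$, as required. No serious obstacle is expected: the statement is essentially a formal consequence of $\gbar_{\widetilde{\Lambda}}\in \SO$ on an even-dimensional non-degenerate quadratic space.
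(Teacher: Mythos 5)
Your proposal is correct and takes essentially the same approach as the paper: the paper's proof is the one-line observation that $\gbar_{\widetilde{\Lambda}}\in \SO(\Omegabar_0(\widetilde{\Lambda}))$ acts on an even-dimensional non-degenerate quadratic space over $\Fp$, and you simply spell out the underlying linear algebra ($P_{\gbar}=P_{\gbar^{-1}}=(-1)^{n}\det(\gbar)^{-1}P_{\gbar}^{*}$ with $n$ even). Note only that the membership $\gbar_{\widetilde{\Lambda}}\in \SO(\Omegabar_0(\widetilde{\Lambda}))$ is already a standing assumption introduced after Definition \ref{vgul}, so your second step re-derives it, and its justification by ``reducing modulo $p$'' glosses over the fact that $\Omegabar_0(\widetilde{\Lambda})=\widetilde{\Lambda}/\widetilde{\Lambda}^{\natural}$ is a proper subquotient of $\widetilde{\Lambda}/p\widetilde{\Lambda}$ (so the determinant on it is not literally the reduction of $\det(g|_{\widetilde{\Lambda}})$); this does not affect the validity of the lemma as stated.
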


\begin{proof}
The assertion follows from $\gbar_{\widetilde{\Lambda}} \in \SO(\Omegabar_0(\widetilde{\Lambda}))$. 
\end{proof}

For a self-reciprocal $P\in \Fp[T]$, let $\Irr(P)$ be the set of all monic irreducible factors of $P$, and $\Irr^{\sr}(P)$ the set of all self-reciprocal elements of $\Irr(P)$. Furthermore, put $\Irr^{\nsr}(P):=\Irr(P)\setminus \Irr^{\sr}(P)$. We introduce an equivalence relation $\sim$ on $\Irr(P)$ and $\Irr^{\nsr}(P)$ by $R\sim cR^*$ for some $c\in \Fpt$. For $R\in \Irr(P)$, let $[R]$ be the image of $R$ under the canonical map $\Irr(P)\rightarrow \Irr(P)/\sim$, and $m(R)$ the multiplicity of $R$ in $P$. Then we have $m(R)=m(R^*)$ since $P$ is self-reciprocal. 

\begin{prop}\label{iart}
\emph{Let $\widetilde{\Lambda}\in \Vrt_H^g$. Assume that $P_{\gbar_{\widetilde{\Lambda}}}$ equals the minimal polynomial of $\gbar_{\widetilde{\Lambda}}$ on $\Omegabar_0(\widetilde{\Lambda})$. Then the following are equivalent: 
\begin{enumerate}
\item We have $p^{\Z}\backslash \M_{H,\widetilde{\Lambda}}^g\neq \emptyset$. 
\item There is a unique $Q_{\gbar_{\widetilde{\Lambda}}}\in \Irr^{\sr}(P_{\gbar_{\widetilde{\Lambda}}})$ such that $m(Q_{\gbar_{\widetilde{\Lambda}}})$ is odd.
\end{enumerate}
If the conditions above hold, then we have
\begin{equation*}
\#(p^{\Z}\backslash \M_{H,\widetilde{\Lambda}}^g(\Fpbar))=\deg Q_{\gbar_{\widetilde{\Lambda}}}\prod_{[R]\in \Irr^{\nsr}(P_{\gbar_{\widetilde{\Lambda}}})/\sim}(1+m(R))<\infty. 
\end{equation*}
In particular, $p^{\Z}\backslash \M_{H,\widetilde{\Lambda}}^g$ is an artinian scheme. }
\end{prop}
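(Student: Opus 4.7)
The plan is to use Proposition \ref{gfbt} to decompose the fixed locus $p^{\Z}\backslash \M_{H,\widetilde{\Lambda}}^g(\Fpbar)$ into a disjoint union of $g$-fixed loci $p^{\Z}\backslash \BT_{H,\widetilde{\Lambda}'}^{\gbar_{\widetilde{\Lambda}'}}(\Fpbar)$ indexed by $\widetilde{\Lambda}' \in \Vrt_H^g(\widetilde{\Lambda})$, apply Proposition \ref{neir} stratum by stratum, and finish with a polynomial-counting argument that leverages the cyclicity of $\Omegabar_0(\widetilde{\Lambda})$ as an $\Fp[\gbar_{\widetilde{\Lambda}}]$-module.

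The first step is to parametrize $\Vrt_H^g(\widetilde{\Lambda})$ explicitly. The inclusion $\widetilde{\Lambda}' \subset \widetilde{\Lambda}$ is encoded by the $\gbar_{\widetilde{\Lambda}}$-stable subspace $W(\widetilde{\Lambda}') := \widetilde{\Lambda}'/\widetilde{\Lambda}^{\natural} \subset \Omegabar_0(\widetilde{\Lambda})$; the isotropy condition $\widetilde{\Lambda}' \subset \widetilde{\Lambda}'^{\natural}$ becomes $W(\widetilde{\Lambda}') \subset W(\widetilde{\Lambda}')^{\perp}$; and there is a canonical isometry $\Omegabar_0(\widetilde{\Lambda}') \cong W(\widetilde{\Lambda}')^{\perp}/W(\widetilde{\Lambda}')$ intertwining $\gbar_{\widetilde{\Lambda}'}$ with the induced $\gbar_{\widetilde{\Lambda}}$-action. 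Under cyclicity, $\gbar_{\widetilde{\Lambda}}$-stable subspaces of $\Omegabar_0(\widetilde{\Lambda})$ correspond bijectively to monic divisors $d$ of $P := P_{\gbar_{\widetilde{\Lambda}}}$, with $W_d$ having characteristic polynomial $d$; a short computation using $\gbar_{\widetilde{\Lambda}} \in \SO(\Omegabar_0(\widetilde{\Lambda}))$ then shows that $W_d^{\perp}$ has characteristic polynomial $P/d^{*}$, so $W_d$ is isotropic iff $d\,d^{*} \mid P$, and the characteristic polynomial of $\gbar_{\widetilde{\Lambda}'}$ on $\Omegabar_0(\widetilde{\Lambda}')$ is $P/(d\,d^{*})$. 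Cyclicity passes to the quotient $W_d^{\perp}/W_d$, so the hypothesis of Proposition \ref{neir} is inherited by every stratum.

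By Proposition \ref{neir}, the stratum indexed by $d$ is non-empty iff $P/(d\,d^{*})$ is irreducible, and then contributes $\deg P/(d\,d^{*})$ points. Factoring $P = \prod_{R \in \Irr(P)} R^{m(R)}$ and writing $d = \prod R^{a(R)}$, the condition $d\,d^{*} \mid P$ becomes $a(R) + a(R^{*}) \leq m(R)$ for every $R$, and irreducibility of $P/(d\,d^{*})$ forces exactly one factor $R_0$ to satisfy $m(R_0) - a(R_0) - a(R_0^{*}) = 1$ with equality at every other $R$. The case $R_0 \in \Irr^{\nsr}(P)$ is excluded because the equations at $R_0$ and $R_0^{*}$ ask for the incompatible totals $m(R_0) - 1$ and $m(R_0)$; the case $R_0 \in \Irr^{\sr}(P)$ forces $m(R_0)$ odd and every other self-reciprocal multiplicity to be even (with the corresponding $a(R) = m(R)/2$ uniquely determined), whereas each non-self-reciprocal pair contributes $m(R) + 1$ free choices. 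This yields both the criterion (i) $\Leftrightarrow$ (ii) and the product formula $\deg Q_{\gbar_{\widetilde{\Lambda}}} \cdot \prod_{[R] \in \Irr^{\nsr}(P)/\sim}(1 + m(R))$.

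The main obstacle will be establishing the duality $W_d^{\perp} \leftrightarrow P/d^{*}$ cleanly, which amounts to identifying the $\gbar_{\widetilde{\Lambda}}$-equivariant self-duality of $\Omegabar_0(\widetilde{\Lambda})$ induced by the quadratic form with the involution $R \mapsto R^{*}$ on $\Fp[T]$, and uses essentially that $\gbar_{\widetilde{\Lambda}}$ acts by an isometry. Given that, finiteness is immediate from the product formula, and the artinian claim for $p^{\Z}\backslash \M_{H,\widetilde{\Lambda}}^g$ follows because it is a closed subscheme of the reduced scheme $p^{\Z}\backslash \M_{H,\widetilde{\Lambda}}$ having only finitely many $\Fpbar$-points, hence a finite disjoint union of reduced points.
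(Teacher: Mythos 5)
Your proposal follows essentially the same route as the paper: decompose $p^{\Z}\backslash \M_{H,\widetilde{\Lambda}}^g(\Fpbar)$ via Proposition \ref{gfbt}, identify $\Vrt_H^g(\widetilde{\Lambda})$ with $\gbar_{\widetilde{\Lambda}}$-stable isotropic subspaces and hence (by cyclicity) with divisors $d$ satisfying $dd^{*}\mid P$, apply Proposition \ref{neir} stratum by stratum, and count divisors --- this is exactly the content of Lemmas \ref{bjsm}, \ref{cmbt} and Corollary \ref{glbn} in the paper, and your counting of the solutions to $P=RQR^{*}$ matches the paper's. One correction to your last sentence: a closed subscheme of a reduced scheme with finitely many points need not be a disjoint union of \emph{reduced} points (indeed Theorem \ref{lzmr} shows the local rings of $\Sbar_{\Omegabar_0(\widetilde{\Lambda})}^{\gbar_{\widetilde{\Lambda}}}$ are of the form $\Fpbar[X]/(X^{c})$ with $c$ possibly $>1$, which is the whole point of the multiplicity computation later); the artinian conclusion instead follows simply because the scheme is of finite type over $\Fpbar$ and zero-dimensional.
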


The proof below gives a more precise proof of \cite[Theorem 3.6.4]{lz}. See also \cite[Proposition 8.1]{rtz}. We prepare some lemmas for the assertion above. For $\widetilde{\Lambda}\in \Vrt_H^g$ and an $\Fp[\gbar_{\widetilde{\Lambda}}]$-submodule $W$ of $\Omegabar_0(\widetilde{\Lambda})$, let $R_{W}\in \Fp[T]$ be the characteristic polynomial of $\gbar_{\widetilde{\Lambda}}$ on $W$. 

\begin{lem}\label{bjsm}
\emph{Let $\widetilde{\Lambda}\in \Vrt_H^g$. 
\begin{enumerate}
\item For $\widetilde{\Lambda}'\in \Vrt_H^g(\widetilde{\Lambda})$, we have an equality
\begin{equation*}
P_{\gbar_{\widetilde{\Lambda}}}=R_{(\widetilde{\Lambda}')^{\natural}/\widetilde{\Lambda}^{\natural}}
P_{\gbar_{\widetilde{\Lambda}'}}R_{(\widetilde{\Lambda}')^{\natural}/\widetilde{\Lambda}^{\natural}}^{*}. 
\end{equation*}
Hence we obtain a map
\begin{equation*}
\beta \colon \Vrt_H^g(\widetilde{\Lambda})\rightarrow \{R\in \Fp[T]\mid P_{\gbar_{\widetilde{\Lambda}}}\in (RR^*)\Fp[T]\};\widetilde{\Lambda}'\mapsto R_{(\widetilde{\Lambda}')^{\natural}/\widetilde{\Lambda}^{\natural}}. 
\end{equation*}
\item Let $Q\in \Fp[T]$ satisfying $Q\mid P_{\gbar_{\widetilde{\Lambda}}}$ and $Q^{*}=Q$. Then the preimage of $\{R\in \Fp[T]\mid P_{\gbar_{\widetilde{\Lambda}}}=RQR^{*}\}$ under $\beta$ equals $\{\widetilde{\Lambda}'\in \Vrt_H^g(\widetilde{\Lambda})\mid P_{\gbar_{\widetilde{\Lambda}'}}=Q\}$. 
\end{enumerate}}
\end{lem}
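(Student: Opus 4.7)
The plan is to exploit the filtration of the quadratic space $\Omegabar_0(\widetilde{\Lambda})$ induced by the chain of lattices $\widetilde{\Lambda}^{\natural} \subset (\widetilde{\Lambda}')^{\natural} \subset \widetilde{\Lambda}' \subset \widetilde{\Lambda}$. First, I would verify that each of these inclusions makes sense: $\widetilde{\Lambda}' \subset \widetilde{\Lambda}$ is the hypothesis, which dualizes to $\widetilde{\Lambda}^{\natural} \subset (\widetilde{\Lambda}')^{\natural}$, and $(\widetilde{\Lambda}')^{\natural} \subset \widetilde{\Lambda}'$ since $\widetilde{\Lambda}'$ is itself an $H$-vertex lattice. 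Reducing modulo $\widetilde{\Lambda}^{\natural}$ then produces a two-step flag of $\gbar_{\widetilde{\Lambda}}$-stable $\Fp$-subspaces $W^{\perp} \subset W \subset V$, where $V := \Omegabar_0(\widetilde{\Lambda})$, $W := \widetilde{\Lambda}'/\widetilde{\Lambda}^{\natural}$, and $W^{\perp} := (\widetilde{\Lambda}')^{\natural}/\widetilde{\Lambda}^{\natural}$.

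The next step is to check that $W^{\perp}$ genuinely is the orthogonal complement of $W$ for the bilinear form coming from the quadratic form in Definition~\ref{ombd}. This is a direct lattice computation: for $v \in \widetilde{\Lambda}$, the class $\overline{v}$ pairs trivially with the image of $\widetilde{\Lambda}'$ precisely when $(v, w) \in \Zp$ for all $w \in \widetilde{\Lambda}'$, which is the condition $v \in (\widetilde{\Lambda}')^{\natural}$. Consequently $W/W^{\perp}$ carries the induced non-degenerate form, and is canonically identified with $\Omegabar_0(\widetilde{\Lambda}') = \widetilde{\Lambda}'/(\widetilde{\Lambda}')^{\natural}$, on which $\gbar_{\widetilde{\Lambda}}$ acts as $\gbar_{\widetilde{\Lambda}'}$. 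Multiplicativity of characteristic polynomials along the filtration then yields
\begin{equation*}
P_{\gbar_{\widetilde{\Lambda}}} = R_{(\widetilde{\Lambda}')^{\natural}/\widetilde{\Lambda}^{\natural}} \cdot P_{\gbar_{\widetilde{\Lambda}'}} \cdot R_{V/W}.
\end{equation*}

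The core of the argument is then identifying the last factor with $R^{*}_{(\widetilde{\Lambda}')^{\natural}/\widetilde{\Lambda}^{\natural}}$. The non-degenerate form on $V$ gives a perfect pairing between $W^{\perp}$ and $V/W$, yielding a canonical isomorphism $V/W \cong (W^{\perp})^{*}$ under which the $\gbar_{\widetilde{\Lambda}}$-action on $V/W$ corresponds to the contragredient of its action on $W^{\perp}$. Since the characteristic polynomial of the inverse-transpose of an operator $A$ of degree $n$ on an $n$-dimensional space is $(-1)^{n}\det(A)^{-1}$ times the reciprocal polynomial of the characteristic polynomial of $A$, I obtain $R_{V/W} = R^{*}_{(\widetilde{\Lambda}')^{\natural}/\widetilde{\Lambda}^{\natural}}$ once I verify that the scalar $(-1)^{n}\det(\gbar_{\widetilde{\Lambda}}|_{V/W})$ equals $1$; this is where I expect the bulk of the bookkeeping to sit. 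Using $\gbar_{\widetilde{\Lambda}}, \gbar_{\widetilde{\Lambda}'} \in \SO$, together with the determinant decomposition along the filtration and Lemma~\ref{chsr} (which encodes that the ambient characteristic polynomial is self-reciprocal, constraining the parity of $n$ in the cases that actually arise), should pin down this sign and close the identity on the nose rather than merely up to a unit in $\Fp^{\times}$.

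Finally, part (ii) is a purely formal consequence of part (i) and unique factorization in $\Fp[T]$. Given $Q \in \Fp[T]$ monic self-reciprocal with $Q \mid P_{\gbar_{\widetilde{\Lambda}}}$, any $\widetilde{\Lambda}' \in \Vrt_{H}^{g}(\widetilde{\Lambda})$ with $\beta(\widetilde{\Lambda}') = R$ satisfying $P_{\gbar_{\widetilde{\Lambda}}} = R Q R^{*}$ must, by (i), have $P_{\gbar_{\widetilde{\Lambda}'}} = Q$; conversely, if $P_{\gbar_{\widetilde{\Lambda}'}} = Q$, then (i) produces the factorization $P_{\gbar_{\widetilde{\Lambda}}} = \beta(\widetilde{\Lambda}') \cdot Q \cdot \beta(\widetilde{\Lambda}')^{*}$, so $\beta(\widetilde{\Lambda}')$ lies in the prescribed set. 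The two descriptions of the preimage therefore agree.
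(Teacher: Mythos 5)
Your argument is the paper's argument: the same filtration $0\subset (\widetilde{\Lambda}')^{\natural}/\widetilde{\Lambda}^{\natural}\subset \widetilde{\Lambda}'/\widetilde{\Lambda}^{\natural}\subset \Omegabar_0(\widetilde{\Lambda})$, the same identification of the middle graded piece with $\Omegabar_0(\widetilde{\Lambda}')$, multiplicativity of characteristic polynomials, and the duality between $(\widetilde{\Lambda}')^{\natural}/\widetilde{\Lambda}^{\natural}$ and $\Omegabar_0(\widetilde{\Lambda})/(\widetilde{\Lambda}'/\widetilde{\Lambda}^{\natural})$ to relate the two outer factors; part (ii) is formal in both treatments.

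The one step you explicitly defer is, however, the step that would fail if pursued as you propose. Writing $R:=R_{(\widetilde{\Lambda}')^{\natural}/\widetilde{\Lambda}^{\natural}}$ (monic) and $R'$ for the monic characteristic polynomial on $\widetilde{\Lambda}/\widetilde{\Lambda}'$, the contragredient relation gives $R^{*}=R(0)\cdot R'$, so the displayed identity holds exactly if and only if the constant term $R(0)=(-1)^{n}\det(\gbar_{\widetilde{\Lambda}}|_{(\widetilde{\Lambda}')^{\natural}/\widetilde{\Lambda}^{\natural}})$ equals $1$. Neither $\det(\gbar_{\widetilde{\Lambda}})=1$ nor the self-reciprocality of $P_{\gbar_{\widetilde{\Lambda}}}$ from Lemma \ref{chsr} constrains this unit: if, say, $(\widetilde{\Lambda}')^{\natural}/\widetilde{\Lambda}^{\natural}$ is a stable isotropic line with eigenvalue $\lambda$, then $R=T-\lambda$, $R^{*}=1-\lambda T=-\lambda R'$, and the identity fails whenever $\lambda\neq -1$. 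So the equality in (i) — which the paper asserts without comment — is really only an equality up to a unit in $\Fpt$, and no amount of determinant bookkeeping will remove that unit. This does not compromise the lemma's role: the map $\beta$ only needs the divisibility $RR^{*}\mid P_{\gbar_{\widetilde{\Lambda}}}$, which is blind to scalars, and every downstream use (Lemma \ref{cmbt}, Corollary \ref{glbn}, Proposition \ref{iart}) only sees irreducible factors and multiplicities, i.e.\ the factorization modulo the relation $R\sim cR^{*}$. The correct fix is to state the identity up to $\Fpt$-multiple (or to renormalize $R^{*}$ to be monic), not to chase a sign that is not there.
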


\begin{proof}
(i): We have inclusions of $\Fp[\gbar_{\widetilde{\Lambda}}]$-submodules
\begin{equation*}
0\subset (\widetilde{\Lambda}')^{\natural}/\widetilde{\Lambda}^{\natural}\subset \widetilde{\Lambda}'/\widetilde{\Lambda}^{\natural}\subset \Omegabar_0(\widetilde{\Lambda}). 
\end{equation*}
Let $R'$ be the characteristic polynomial of $\gbar_{\widetilde{\Lambda}'}$ on $\widetilde{\Lambda}/\widetilde{\Lambda}'$. Then we have $R'=R_{(\widetilde{\Lambda}')^{\natural}/\widetilde{\Lambda}^{\natural}}^{*}$ and
\begin{equation*}
P_{\gbar_{\widetilde{\Lambda}}}=R_{(\widetilde{\Lambda}')^{\natural}/\widetilde{\Lambda}^{\natural}}P_{\gbar_{\widetilde{\Lambda}'}}R'. 
\end{equation*}
Hence the assertion follows. 

(ii): Take $\widetilde{\Lambda}'\in \Vrt_H^g(\widetilde{\Lambda})$. Then it suffices to show that $P_{\gbar_{\widetilde{\Lambda}'}}=Q$ if and only if $P_{\gbar_{\widetilde{\Lambda}}}=R_{(\widetilde{\Lambda}')^{\natural}/\widetilde{\Lambda}^{\natural}}Q
R_{(\widetilde{\Lambda}')^{\natural}/\widetilde{\Lambda}^{\natural}}^{*}$. This follows from (i). 
\end{proof}

\begin{lem}\label{cmbt}
\emph{Let $\widetilde{\Lambda}\in \Vrt_H^g$. Assume that the assumption on $P_{\gbar_{\widetilde{\Lambda}}}$ in Proposition \ref{iart} holds. 
\begin{enumerate}
\item The polynomial $P_{\gbar_{\widetilde{\Lambda}'}}$ equals the minimal polynomial of $\gbar_{\widetilde{\Lambda}'}$ on $\Omegabar_0(\widetilde{\Lambda}')$. 
\item The map $\beta$ in Lemma \ref{bjsm} is bijective. 
\end{enumerate}}
\end{lem}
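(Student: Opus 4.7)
The plan hinges on a structural rephrasing of the hypothesis: $P_{\gbar_{\widetilde{\Lambda}}}$ equals the minimal polynomial of $\gbar_{\widetilde{\Lambda}}$ on $\Omegabar_0(\widetilde{\Lambda})$ if and only if $\Omegabar_0(\widetilde{\Lambda})$ is a cyclic $\Fp[T]$-module, with $T$ acting as $\gbar_{\widetilde{\Lambda}}$. Since $\Fp[T]$ is a principal ideal domain, the $\gbar_{\widetilde{\Lambda}}$-stable $\Fp$-subspaces of $\Omegabar_0(\widetilde{\Lambda})$ are in bijection with monic divisors $d$ of $P_{\gbar_{\widetilde{\Lambda}}}$, each $d$ corresponding to the unique submodule $W_d$ with characteristic polynomial $d$; subquotients of $\Omegabar_0(\widetilde{\Lambda})$ remain cyclic. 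Both assertions will follow from this dictionary.

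For (i), I will identify $\Omegabar_0(\widetilde{\Lambda}')=\widetilde{\Lambda}'/(\widetilde{\Lambda}')^{\natural}$ with $(\widetilde{\Lambda}'/\widetilde{\Lambda}^{\natural})/((\widetilde{\Lambda}')^{\natural}/\widetilde{\Lambda}^{\natural})$, a subquotient of the cyclic $\Fp[T]$-module $\Omegabar_0(\widetilde{\Lambda})$. By the remark above it is again cyclic, so the minimal polynomial of $\gbar_{\widetilde{\Lambda}'}$ on $\Omegabar_0(\widetilde{\Lambda}')$ has degree $\dim_{\Fp}\Omegabar_0(\widetilde{\Lambda}')=\deg P_{\gbar_{\widetilde{\Lambda}'}}$, and hence coincides with $P_{\gbar_{\widetilde{\Lambda}'}}$.

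For (ii), I will first record a bijection between $\Vrt_H^g(\widetilde{\Lambda})$ and the totally isotropic $\gbar_{\widetilde{\Lambda}}$-stable $\Fp$-subspaces of $\Omegabar_0(\widetilde{\Lambda})$, given by $\widetilde{\Lambda}'\mapsto (\widetilde{\Lambda}')^{\natural}/\widetilde{\Lambda}^{\natural}$, with inverse sending $W$ to the preimage of $W^{\perp}$ in $\widetilde{\Lambda}$; both maps are $\gbar_{\widetilde{\Lambda}}$-equivariant and respect the relations defining vertex lattices of $H$. Combined with the cyclic-module parametrization above, it will then suffice to show that $W_R$ is totally isotropic precisely when $RR^{*}\mid P_{\gbar_{\widetilde{\Lambda}}}$, which is the target of $\beta$ by Lemma \ref{bjsm}. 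For this I will compute the characteristic polynomial of $\gbar_{\widetilde{\Lambda}}$ on $\Omegabar_0(\widetilde{\Lambda})/W_R^{\perp}$: using the perfect pairing $W_R\times \Omegabar_0(\widetilde{\Lambda})/W_R^{\perp}\to \Fp$ induced by the orthogonal form together with $\gbar_{\widetilde{\Lambda}}\in \SO(\Omegabar_0(\widetilde{\Lambda}))$, the induced action on $\Omegabar_0(\widetilde{\Lambda})/W_R^{\perp}$ is identified with the inverse-transpose of $\gbar_{\widetilde{\Lambda}}\vert_{W_R}$, whose characteristic polynomial equals $R^{*}/R(0)$ (note $R(0)\in \Fpt$ since $\gbar_{\widetilde{\Lambda}}$ is invertible). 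By cyclicity, $W_R^{\perp}$ is then the unique submodule with characteristic polynomial $P_{\gbar_{\widetilde{\Lambda}}}R(0)/R^{*}$, and the inclusion $W_R\subset W_R^{\perp}$ becomes equivalent to $R\mid P_{\gbar_{\widetilde{\Lambda}}}R(0)/R^{*}$, i.e.\ to $RR^{*}\mid P_{\gbar_{\widetilde{\Lambda}}}$ in $\Fp[T]$.

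The hard part will be this perpendicular-polynomial computation together with the unit bookkeeping for the non-monic reciprocal $R^{*}$: one must verify that the factor $R(0)$ does not obstruct the divisibility of monic polynomials in the PID $\Fp[T]$. Once this step is in hand, the cyclic-module dictionary cleanly yields both existence and uniqueness of $\widetilde{\Lambda}'\in \Vrt_H^g(\widetilde{\Lambda})$ for each admissible $R$, completing the bijectivity of $\beta$.
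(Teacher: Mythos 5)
Your proposal is correct, and for part (ii) it takes a genuinely different route from the paper's. For (i) the two arguments agree in substance: $\Omegabar_0(\widetilde{\Lambda}')$ is a subquotient of the cyclic $\Fp[T]$-module $\Omegabar_0(\widetilde{\Lambda})$ (this is the content of the factorization in Lemma \ref{bjsm} (i)), hence cyclic, so characteristic and minimal polynomials coincide. For (ii), after the common reduction identifying $\Vrt_H^g(\widetilde{\Lambda})$ with the $\gbar_{\widetilde{\Lambda}}$-stable totally isotropic subspaces of $\Omegabar_0(\widetilde{\Lambda})$, the paper decomposes into generalized eigenspaces $\Omegabar_0(\widetilde{\Lambda})_Q$, classifies the invariant subspaces of each as $W_{Q,a}=\Ker(Q(\gbar_{\widetilde{\Lambda}})^a)$, and verifies total isotropy of $\Ker(R(\gbar_{\widetilde{\Lambda}}))$ by a case analysis over $Q\in\Irr^{\sr}$ versus $Q\in\Irr^{\nsr}$ built on the identity $[Q(\gbar_{\widetilde{\Lambda}})^a(v),w]=[v,Q^{*}(\gbar_{\widetilde{\Lambda}})^a\gbar_{\widetilde{\Lambda}}^{-a\deg Q}(w)]$. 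You replace that case analysis by a single duality computation: the perfect pairing $W_R\times \Omegabar_0(\widetilde{\Lambda})/W_R^{\perp}\to \Fp$ (perfect because the form on $\Omegabar_0(\widetilde{\Lambda})$ is nondegenerate, as its radical is $\widetilde{\Lambda}^{\natural}/\widetilde{\Lambda}^{\natural}=0$) identifies the quotient with the contragredient of $\gbar_{\widetilde{\Lambda}}\vert_{W_R}$, of characteristic polynomial $R^{*}/R(0)$, so cyclicity forces $W_R^{\perp}=W_{P_{\gbar_{\widetilde{\Lambda}}}R(0)/R^{*}}$ and the containment $W_R\subset W_R^{\perp}$ becomes the divisibility $RR^{*}\mid P_{\gbar_{\widetilde{\Lambda}}}$ up to the unit $R(0)$, which is harmless for the ideal condition $P_{\gbar_{\widetilde{\Lambda}}}\in (RR^{*})\Fp[T]$. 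Your route is shorter and makes both existence and uniqueness transparent from the divisor dictionary for cyclic modules; the paper's route has the side benefit of exhibiting the isotropic subspaces explicitly as kernels $\Ker(R(\gbar_{\widetilde{\Lambda}}))$ and of producing the orthogonality of distinct generalized eigenspaces, which is reused implicitly in the counting argument of Proposition \ref{iart}. Either way the hypothesis that $P_{\gbar_{\widetilde{\Lambda}}}$ is the minimal polynomial is used in exactly the same essential place, namely to guarantee that each divisor of $P_{\gbar_{\widetilde{\Lambda}}}$ corresponds to a unique invariant subspace.
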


\begin{proof}
(i): This follows from Lemma \ref{bjsm} (i) and the assumption on $P_{\gbar_{\widetilde{\Lambda}}}$. 

(ii): First, note that we have a bijection
\begin{equation*}
\Vrt_H^g(\widetilde{\Lambda})\xrightarrow{\cong} \{W\subset \Omegabar_0(\widetilde{\Lambda})\colon \Fp\text{-subspace}\mid W\subset W^{\perp},\gbar_{\widetilde{\Lambda}}(W)=W\};\widetilde{\Lambda}'\mapsto (\widetilde{\Lambda}')^{\natural}/\widetilde{\Lambda}^{\natural}. 
\end{equation*}
Hence it suffices to show that the map
\begin{equation*}
\{W\subset \Omegabar_0(\widetilde{\Lambda})\colon \Fp \text{-subspace}\mid W\subset W^{\perp},\gbar_{\widetilde{\Lambda}}(W)=W\}
\rightarrow \{R\in \Fp[T]\mid P_{\gbar_{\widetilde{\Lambda}}}\in (RR^*)\Fp[T]\};W\mapsto R_W
\end{equation*}
is bijective. We have a decomposition of $\Omegabar(\widetilde{\Lambda})$ to generalized eigenspaces
\begin{equation*}
\Omegabar(\widetilde{\Lambda})=\bigoplus_{Q\in \Irr(P_{\gbar_{\widetilde{\Lambda}}})}\Omegabar_0(\widetilde{\Lambda})_{Q},\quad
\Omegabar_0(\widetilde{\Lambda})_{Q}:=\Ker(Q(\gbar_{\widetilde{\Lambda}})^{m(Q)}). 
\end{equation*}
Note that $\Omegabar_0(\widetilde{\Lambda})_{Q}$ is $\gbar_{\widetilde{\Lambda}}$-invariant for any $Q\in \Irr(P_{\gbar_{\widetilde{\Lambda}}})$. Since $P_{\gbar_{\widetilde{\Lambda}}}$ equals the minimal polynomial of $\gbar_{\widetilde{\Lambda}}$ on $\Omegabar_0(\widetilde{\Lambda})$, a $\gbar_{\widetilde{\Lambda}}$-invariant subspaces of $\Omegabar_0(\widetilde{\Lambda})_Q$ is of the form 
\begin{equation*}
W_{Q,a}:=\Ker(Q(\gbar_{\widetilde{\Lambda}})^{a})=\Ima(Q(\gbar_{\widetilde{\Lambda}})^{m(Q)-a}), 
\end{equation*}
where $0\leq a\leq m(Q)$. Therefore, $W_{Q,a}$ is a unique subspace of $\Omegabar_0(\widetilde{\Lambda})$ whose characteristic polynomial of $\gbar_{\widetilde{\Lambda}}$ equals $Q^a$. Moreover, we have $\dim_{\Fp}W_{Q,a}=a\cdot \deg Q$. On the other hand, for $Q\in \Irr(P_{\gbar_{\widetilde{\Lambda}}})$, $0\leq a\leq m(Q)$, $v\in W_{Q,a}$ and $w\in \Omegabar_0(\widetilde{\Lambda})$, we have
\begin{equation}\label{ipvn}
0=[Q(\gbar_{\widetilde{\Lambda}})^a(v),w]=[v,Q(\gbar_{\widetilde{\Lambda}}^{-1})^a(w)]
=[v,Q^*(\gbar_{\widetilde{\Lambda}})^a\gbar_{\widetilde{\Lambda}}^{-a\deg Q}(w)]. 
\end{equation}
Here the first equality follows from $v\in W_{Q,a}$. Hence we obtain the following: 
\begin{itemize}
\item If $Q,Q'\in \Irr(P_{\gbar_{\widetilde{\Lambda}}})$ and $Q\neq Q'$, then $\Omegabar_0(\widetilde{\Lambda})_{Q}\perp \Omegabar_0(\widetilde{\Lambda})_{Q'}$ (apply (\ref{ipvn}) to $a=m(Q)$ and $w\in \Omegabar_0(\widetilde{\Lambda})_{Q'}$). 
\item If $Q\in \Irr^{\sr}(P_{\gbar_{\widetilde{\Lambda}}})$ satisfies $2\nmid m(Q)$ and $0\leq a\leq (m(Q)-1)/2$, then $W_{Q_{\gbar_{\widetilde{\Lambda}}},a}$ is a totally isotropic subspace in $\Omegabar_0(\widetilde{\Lambda})_{Q_{\gbar_{\widetilde{\Lambda}}}}$ (apply (\ref{ipvn}) to $w\in \Omegabar_0(\widetilde{\Lambda})_{Q_{\gbar_{\widetilde{\Lambda}}}}$). 
\item If $Q\in \Irr^{\sr}(P_{\gbar_{\widetilde{\Lambda}}})$ satisfies $2\mid m(Q)$ and $0\leq a\leq m(Q_{\gbar_{\widetilde{\Lambda}}})/2$, then $W_{Q,a}$ is a totally isotropic subspace in $\Omegabar_0(\widetilde{\Lambda})_Q$ (apply (\ref{ipvn}) to $w\in W_{Q,a}$). 
\item If $Q\in \Irr^{\nsr}(P_{\gbar_{\widetilde{\Lambda}}})$, $0\leq a\leq m(Q)$ and $0\leq a'\leq m(Q)-a$, then $W_{Q,a}\oplus W_{Q^*,a'}$ is a totally isotropic subspace in $\Omegabar_0(\widetilde{\Lambda})_Q\oplus \Omegabar_0(\widetilde{\Lambda})_{Q^*}$ (apply (\ref{ipvn}) to $w\in W_{Q^*,a'}$). 
\end{itemize}

Now, take $R\in \Fp[T]$ satisfying $P_{\gbar_{\widetilde{\Lambda}}}=(RR^*)\Fp[T]$. If we write $R=\prod_{Q\in \Irr(P_{\gbar_{\widetilde{\Lambda}}})}Q^{m_{Q}}$, then we have
\begin{equation*}
\Ker(R(\gbar_{\widetilde{\Lambda}}))=\bigoplus_{Q\in \Irr(P_{\gbar_{\widetilde{\Lambda}}})}W_{Q,m_{Q}},
\end{equation*}
and it is a unique $\gbar_{\widetilde{\Lambda}}$-invariant totally isotropic subspace in $\Omegabar_0(\widetilde{\Lambda})$ satisfying $R_{\Ker(R(\gbar_{\widetilde{\Lambda}}))}=R$. Therefore, we obtain the bijectivity of $\beta$. 
\end{proof}

\begin{cor}\label{glbn}
\emph{Under the hypothesis in Lemma \ref{cmbt}, we have $p^{\Z}\backslash \BT_{H,\widetilde{\Lambda}'}^{\gbar_{\widetilde{\Lambda}'}}\neq \emptyset$ if and only if $P_{\gbar_{\widetilde{\Lambda}'}}\in \Irr(P_{\gbar_{\widetilde{\Lambda}}})$ and $m(P_{\gbar_{\widetilde{\Lambda}'}})$ is odd. In this case, we have 
\begin{equation*}
\#(p^{\Z}\backslash \BT_{H,\widetilde{\Lambda}}^{\gbar_{\widetilde{\Lambda}}})=\deg P_{\gbar_{\widetilde{\Lambda}'}}. 
\end{equation*}}
\end{cor}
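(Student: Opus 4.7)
The plan is to reduce the statement to Proposition \ref{neir} applied at the sublattice $\widetilde{\Lambda}'$, and then translate the irreducibility condition there into the divisibility/multiplicity condition on $P_{\gbar_{\widetilde{\Lambda}}}$ via the factorisation from Lemma \ref{bjsm}(i).

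First, I would invoke Lemma \ref{cmbt}(i): since the hypothesis of Lemma \ref{cmbt} on $\widetilde{\Lambda}$ is assumed, the polynomial $P_{\gbar_{\widetilde{\Lambda}'}}$ agrees with the minimal polynomial of $\gbar_{\widetilde{\Lambda}'}$ on $\Omegabar_0(\widetilde{\Lambda}')$. This is exactly the hypothesis that licenses Proposition \ref{neir} at $\widetilde{\Lambda}'$. Applying that proposition yields: $p^{\Z}\backslash \BT_{H,\widetilde{\Lambda}'}^{\gbar_{\widetilde{\Lambda}'}}\neq \emptyset$ if and only if $P_{\gbar_{\widetilde{\Lambda}'}}$ is irreducible, and in that case the number of geometric points equals $\deg P_{\gbar_{\widetilde{\Lambda}'}}$. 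This already gives the asserted cardinality formula (modulo the apparent typo $\widetilde{\Lambda}\leftrightarrow \widetilde{\Lambda}'$ in the displayed equation).

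It remains to show the equivalence
\[
P_{\gbar_{\widetilde{\Lambda}'}}\text{ is irreducible}\iff P_{\gbar_{\widetilde{\Lambda}'}}\in\Irr(P_{\gbar_{\widetilde{\Lambda}}})\text{ and }m(P_{\gbar_{\widetilde{\Lambda}'}})\text{ is odd}.
\]
The reverse implication is immediate, because membership in $\Irr(P_{\gbar_{\widetilde{\Lambda}}})$ means $P_{\gbar_{\widetilde{\Lambda}'}}$ is a monic irreducible factor of $P_{\gbar_{\widetilde{\Lambda}}}$. For the forward direction, assume $Q:=P_{\gbar_{\widetilde{\Lambda}'}}$ is irreducible. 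By Lemma \ref{chsr} applied at $\widetilde{\Lambda}'$, $Q$ is self-reciprocal, i.e.\ $Q^{*}=Q$. By Lemma \ref{bjsm}(i) applied to $\widetilde{\Lambda}'\in \Vrt_H^g(\widetilde{\Lambda})$ with $R:=R_{(\widetilde{\Lambda}')^{\natural}/\widetilde{\Lambda}^{\natural}}$ we have the factorisation $P_{\gbar_{\widetilde{\Lambda}}}=R\cdot Q\cdot R^{*}$. In particular $Q\mid P_{\gbar_{\widetilde{\Lambda}}}$, so $Q\in\Irr(P_{\gbar_{\widetilde{\Lambda}}})$. Moreover, the multiplicity of $Q$ in $R$ equals the multiplicity of $Q^{*}=Q$ in $R^{*}$, so the multiplicity of $Q$ in $R R^{*}$ is even; hence $m(Q)=m_R(Q)+1+m_{R^{*}}(Q)$ is odd.

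The argument is essentially routine once Lemma \ref{cmbt}(i) and Proposition \ref{neir} are in hand, so I do not anticipate a real obstacle; the only mildly delicate point is the self-reciprocity step, which relies on $\gbar_{\widetilde{\Lambda}'}\in\SO(\Omegabar_0(\widetilde{\Lambda}'))$ and therefore uses that $\widetilde{\Lambda}'\in \Vrt_H^g$ (so that $g$ really does induce a special orthogonal transformation on the quadratic space attached to $\widetilde{\Lambda}'$, justifying Lemma \ref{chsr}).
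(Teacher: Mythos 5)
Your proposal is correct and follows essentially the same route as the paper's proof: reduce via Lemma \ref{cmbt}(i) and Proposition \ref{neir} to the irreducibility of $P_{\gbar_{\widetilde{\Lambda}'}}$, then use Lemma \ref{chsr} for self-reciprocity and the factorisation $P_{\gbar_{\widetilde{\Lambda}}}=R\,P_{\gbar_{\widetilde{\Lambda}'}}\,R^{*}$ from Lemma \ref{bjsm}(i) for membership in $\Irr(P_{\gbar_{\widetilde{\Lambda}}})$ and the odd multiplicity. You merely spell out the parity count $m(Q)=2m_R(Q)+1$ that the paper leaves implicit, and you are right that the displayed cardinality formula has a typo ($\widetilde{\Lambda}$ for $\widetilde{\Lambda}'$).
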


\begin{proof}
By Lemma \ref{cmbt} (i) and Proposition \ref{neir}, it suffices to show that $P_{\gbar_{\widetilde{\Lambda}'}}$ is irreducible if and only if $P_{\gbar_{\widetilde{\Lambda}'}}\in \Irr^{\sr}(P_{\gbar_{\widetilde{\Lambda}}})$ and $m(P_{\gbar_{\widetilde{\Lambda}'}})$ is odd. Suppose that $P_{\gbar_{\widetilde{\Lambda}'}}$ is irreducible. Then we have $P_{\gbar_{\widetilde{\Lambda}'}}\in \Irr^{\sr}(P_{\gbar_{\widetilde{\Lambda}}})$ by Lemma \ref{chsr}. On the other hand, $2\nmid m(Q)$ follows from Lemma \ref{bjsm}. 
\end{proof}

\begin{proof}[Proof of Proposition \ref{iart}]
First, assume that (i) holds. We prove (ii). By Proposition \ref{gfbt}, there is $\widetilde{\Lambda}'\in \Vrt_H^g(\widetilde{\Lambda})$ satisfying $p^{\Z}\backslash \BT_{H,\widetilde{\Lambda}'}^{\gbar_{\widetilde{\Lambda}'}}\neq \emptyset$. Hence we have $P_{\gbar_{\widetilde{\Lambda}'}}\in \Irr^{\sr}(P_{\gbar_{\widetilde{\Lambda}}})$ and $m(P_{\gbar_{\widetilde{\Lambda}'}})$ by Corollary \ref{glbn}. Moreover, by Lemma \ref{bjsm}, $P_{\gbar_{\widetilde{\Lambda}'}}$ is a unique element of $\Irr(P_{\gbar_{\widetilde{\Lambda}}})$ such that the multiplicity in $P_{\gbar_{\widetilde{\Lambda}}}$ is odd. Therefore we obtain (ii). 

Next, assume that (ii) holds. We prove (i), and compute $\#(p^{\Z}\backslash \M_{H,\widetilde{\Lambda}}^g(\Fpbar))$. Let $Q_{\gbar_{\widetilde{\Lambda}}}\in \Irr^{\sr}(P_{\gbar_{\widetilde{\Lambda}}})$ be as in (ii). By Proposition \ref{gfbt} and Corollary \ref{glbn}, the following hold for $\widetilde{\Lambda}'\in \Vrt_H^g(\widetilde{\Lambda})$: 
\begin{itemize}
\item we have $p^{\Z}\backslash \BT_{H,\widetilde{\Lambda}'}^{\gbar_{\widetilde{\Lambda}'}}\neq \emptyset$ if and only if $P_{\gbar_{\widetilde{\Lambda}'}}=Q_{\gbar_{\widetilde{\Lambda}}}$, 
\item if $\widetilde{\Lambda}'$ satisfies the equivalent conditions above, then we have $\#(p^{\Z}\backslash \BT_{H,\widetilde{\Lambda}'}^{\gbar_{\widetilde{\Lambda}'}})=\deg Q_{\gbar_{\widetilde{\Lambda}}}$. 
\end{itemize}
Moreover, $\beta$ induces a bijection
\begin{equation*}
\{\widetilde{\Lambda}'\in \Vrt_H^g(\widetilde{\Lambda})\mid P_{\gbar_{\widetilde{\Lambda}'}}=Q_{\gbar_{\widetilde{\Lambda}}} \} \xrightarrow{\cong}\{R\in \Fp[T]\mid P_{\gbar_{\widetilde{\Lambda}}}=RQ_{\gbar_{\widetilde{\Lambda}}}R^*\}. 
\end{equation*}
by Lemmas \ref{cmbt} (ii) and \ref{bjsm} (ii). Hence it suffices to show the equality 
\begin{equation*}
\#\{R\in \Fp[T]\mid P_{\gbar_{\widetilde{\Lambda}}}=RQ_{\gbar_{\widetilde{\Lambda}}}R^*\}=\prod_{[R]\in \Irr^{\nsr}(P_{\gbar_{\widetilde{\Lambda}}})/\sim}(1+m(R))\in \Zpn, 
\end{equation*}
The set $\#\{R\in \Fp[T]\mid P_{\gbar_{\widetilde{\Lambda}}}=RQ_{\gbar_{\widetilde{\Lambda}}}R^*\}$ consists of elements of the form
\begin{equation*}
\prod_{[R]\in \Irr^{\nsr}(P_{\gbar_{\widetilde{\Lambda}}})/\sim}(R^{a_{[R]}}R^{*m(R)-a_{[R]}})\cdot \prod_{[R']\in \Irr^{\sr}(P_{\gbar_{\widetilde{\Lambda}}})\setminus \{Q_{\gbar_{\widetilde{\Lambda}}}\}}R'^{m(R')/2}, 
\end{equation*}
where $0\leq a_{[R]}\leq m(R)$ for any $[R]\in \Irr^{\nsr}(P_{\gbar_{\widetilde{\Lambda}}})$. In particular, it is non-empty. Hence we obtain the desired equality. 
\end{proof}

Next, we compute $\length_{\Fpbar}\O_{p^{\Z}\backslash \M_{H,\widetilde{\Lambda}}^g}$ for $\widetilde{\Lambda}\in \Vrt_H^g$ and $x\in p^{\Z}\backslash \M_{H,\widetilde{\Lambda}}^g$. It contributes to computation of the arithmetic intersection number. See Proposition \ref{isca}. 
We recall the action of $\gbar_{\widetilde{\Lambda}}$ on $\Sbar_{\Omegabar_0(\widetilde{\Lambda})}$. It is given by $\gbar_{\widetilde{\Lambda}} \cdot(\cL',\cL)=(\gbar_{\widetilde{\Lambda}}(\cL'),\gbar_{\widetilde{\Lambda}}(\cL))$ for $(\cL',\cL)\in \Sbar_{\Omegabar_0(\widetilde{\Lambda})}$. 

\begin{prop}\label{mhso}
\emph{There is an isomorphism $p^{\Z}\backslash \M_{H,\widetilde{\Lambda}}^g\cong \Sbar_{\Omegabar_0(\widetilde{\Lambda})}^{\gbar_{\widetilde{\Lambda}}}$. }
\end{prop}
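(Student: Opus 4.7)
The plan is to upgrade the existing isomorphism $p^{\Z}\backslash \M_{H,\widetilde{\Lambda}}\cong \Sbar_{\Omegabar_0(\widetilde{\Lambda})}$ from \cite[Theorem 3.10]{hp} to a $g$-equivariant isomorphism, and then take fixed points. Concretely, the proof will proceed in three steps: first recall the bijection on $\Fpbar$-points and identify the underlying scheme-theoretic isomorphism; second, establish $g$-equivariance (with $g$ acting on $\M_H$ in the usual way and inducing $\gbar_{\widetilde{\Lambda}}$ on $\Sbar_{\Omegabar_0(\widetilde{\Lambda})}$); and third, pass to fixed points.

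For the first step, I would combine Theorems \ref{rtdu} and \ref{hppc} with Proposition \ref{bthr}. On $\Fpbar$-points, the map is
\[
M \longmapsto L(M)/\widetilde{\Lambda}^{\natural},
\]
where $L(M)$ is the special lattice associated to the $p^{\Z}$-class of the Dieudonn{\'e} lattice $M$. The inclusions $\widetilde{\Lambda}^{\natural}\subset L(M)\subset \widetilde{\Lambda}$, which hold precisely when the class of $M$ lies in $p^{\Z}\backslash \M_{H,\widetilde{\Lambda}}(\Fpbar)$, translate the condition ``$L(M)$ is special'' into ``$L(M)/\widetilde{\Lambda}^{\natural}$ is an element of $\Sbar_{\Omegabar_0(\widetilde{\Lambda})}(\Fpbar)$'' via the description of $\Sbar_{\Omegabar_0}$ as the locus where $L\cap \overline{\Phi}_{\ast}(L)$ has rank $m$. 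The corresponding scheme-theoretic isomorphism (as opposed to a bijection on $\Fpbar$-points) is exactly the one exhibited in the proof of \cite[Theorem 3.10]{hp} via a local-model/Grassmannian description.

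For the second step, I would verify equivariance. The action of $g\in J_H(\Qp)$ on $\M_H$ sends $(X,\iota,\lambda,\rho)\mapsto (X,\iota,\lambda,g\circ \rho)$, so in terms of Dieudonn{\'e} lattices it is simply $M\mapsto g\cdot M$. The correspondence $M\mapsto L(M)$ is $J_H^0(\Qp)$-equivariant (as noted just before Definition \ref{vgul}), so $L(g\cdot M)=g\cdot L(M)$. Since $\widetilde{\Lambda}\in \Vrt_H^g$ implies $g\cdot \widetilde{\Lambda}^{\natural}=\widetilde{\Lambda}^{\natural}$, passing to the quotient by $\widetilde{\Lambda}^{\natural}$ produces precisely the induced action $\gbar_{\widetilde{\Lambda}}$ on $\Omegabar_0(\widetilde{\Lambda})$, and this clearly extends to the induced action on $\Sbar_{\Omegabar_0(\widetilde{\Lambda})}$ recalled before the statement.

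For the third step, taking $g$-fixed loci on both sides of the equivariant isomorphism gives the desired identification. The main obstacle, if any, will be the equivariance in families rather than just on $\Fpbar$-points; however, since $\M_{H,\widetilde{\Lambda}}$ is reduced by Theorem \ref{redh} and $\Sbar_{\Omegabar_0(\widetilde{\Lambda})}$ is reduced as a locally closed subscheme of the orthogonal Grassmannian, it suffices to check equivariance of the morphism on $\Fpbar$-points, which is the computation above. This reduces the claim to the already-established $\Fpbar$-point compatibility and avoids a direct manipulation of the local-model description.
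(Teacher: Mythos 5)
Your proposal is correct and follows essentially the same route as the paper: the paper simply invokes the isomorphism $p^{\Z}\backslash \M_{H,\widetilde{\Lambda}}\cong \Sbar_{\Omegabar_0(\widetilde{\Lambda})}$ from Howard--Pappas, notes that it is compatible with the actions of $g$ and $\gbar_{\widetilde{\Lambda}}$ by definition, and takes fixed points. Your extra care in reducing the equivariance check to $\Fpbar$-points via reducedness is a reasonable way of making the paper's ``by definition'' explicit, but it is not a different argument.
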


\begin{proof}
There is an isomorphism $p^{\Z}\backslash \M_{H,\widetilde{\Lambda}}\cong \Sbar_{\Omegabar_0(\widetilde{\Lambda})}$. See \cite[Theorem 3.9]{hp}. Moreover, the isomorphism above is compatible with the actions of $g$ and $\gbar_{\widetilde{\Lambda}}$ by definition. Hence the assertion follows. 
\end{proof}

By Proposition \ref{mhso}, to compute $\length_{\Fpbar}\O_{p^{\Z}\backslash \M_{H,\widetilde{\Lambda}}^g}$, it suffices to compute $\length_{\Fpbar}\O_{\Sbar_{\Omegabar_0(\widetilde{\Lambda})}^{\gbar_{\widetilde{\Lambda}}},x}$ for $\widetilde{\Lambda}\in \Vrt_H^g$ and $x\in \Sbar_{\Omegabar_0(\widetilde{\Lambda})}^{g_{\widetilde{\Lambda}}}$. However, it is already computed by \cite[5.2, 5.3, 5.4]{lz}. 

Let us state the result of computation in \cite{lz}. Let $x_0=(\cL'_{0},\cL_{0})\in \Sbar_{\Omegabar_0(\widetilde{\Lambda})}^{\gbar_{\widetilde{\Lambda}}}(\Fpbar)$. Then $\Phi(\cL_0)$ is stable under $\gbar_{\widetilde{\Lambda}}$ since it commutes with $\Phi$. Define $\lambda_{\widetilde{\Lambda}} \in \Fpbar$ and $c_{\widetilde{\Lambda}} \in \Znn$ as follows: 
\begin{itemize}
\item $\lambda_{\widetilde{\Lambda}} \in \Fpbar$ is the (non-zero) unique eigenvalue of $\gbar_{\widetilde{\Lambda}}$ on $\Phi(\cL_{0})/\cL'_{0}$, 
\item $c_{\widetilde{\Lambda}}$ is the size of the Jordan block of $\gbar_{\widetilde{\Lambda}} \vert_{\Phi(\cL_{0})}$of the eigenvalue $\lambda_{\widetilde{\Lambda}}$ (note that $1\leq c_{\widetilde{\Lambda}}\leq d$). 
\end{itemize}

The following follows from exactly the same argument as the proof of \cite[Corollary 5.4.2]{lz}: 

\begin{thm}\label{lzmr}
\emph{
\begin{enumerate}
\item Assume $p>c_{\widetilde{\Lambda}}$. Then there is an isomorphism between the complete local ring of $\Sbar_{\widetilde{\Lambda}}^{\gbar_{\widetilde{\Lambda}}}$ at $x_0$ and $\Fpbar[X]/(X^{c_{\widetilde{\Lambda}}})$. 
\item Assume that $P_{\gbar_{\widetilde{\Lambda}}}$ equals the minimal polynomial of $\gbar_{\widetilde{\Lambda}}$ on $\Omegabar_0(\widetilde{\Lambda})$. Then we have $c_{\widetilde{\Lambda}}=(m(Q_{\gbar_{\widetilde{\Lambda}}})+1)/2$. Here $Q_{\gbar_{\widetilde{\Lambda}}}$ is the polynomial appeared in Proposition \ref{iart} (ii). 
\end{enumerate}}
\end{thm}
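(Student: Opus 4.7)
The strategy is to import the local computation carried out in \cite[\S 5.2--5.4]{lz}, which determines the complete local ring of an orthogonal automorphism's fixed point on a Deligne--Lusztig-type closed subscheme of an orthogonal Grassmannian. Thanks to Proposition \ref{svsu} (iii) and Proposition \ref{mhso}, our $\Sbar_{\widetilde{\Lambda}}^{\gbar_{\widetilde{\Lambda}}}$ sits in exactly the geometric framework to which the Li--Zhang analysis applies, so the main task is to match the combinatorial data $(\lambda_{\widetilde{\Lambda}}, c_{\widetilde{\Lambda}})$ with their setup and invoke their computation.

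For part (i), the plan is to choose, around the fixed point $x_0 = (\cL'_0 \subset \cL_0)$, a basis of $\Omegabar_0(\widetilde{\Lambda}) \otimes_{\Fp} \Fpbar$ compatible with the Jordan decomposition of $\gbar_{\widetilde{\Lambda}}$, and in particular containing a Jordan chain $w_1, \dots, w_{c_{\widetilde{\Lambda}}}$ for $\lambda_{\widetilde{\Lambda}}$ inside $\Phi(\cL_0)$ whose last vector $w_{c_{\widetilde{\Lambda}}}$ spans $\Phi(\cL_0)/\cL'_0$. Deformations of $\cL_0$ inside $\OGr(m+1;\Omegabar(\widetilde{\Lambda}))$ are parametrized by graph coordinates; the $\gbar_{\widetilde{\Lambda}}$-invariance together with the isotropy constraints forces all but one of these parameters to vanish, leaving a single parameter $X$. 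The defining condition $\rk(\cL(X) \cap \overline{\Phi}(\cL(X))) = m$ then becomes, after expanding in $X$ and using that $\overline{\Phi}$ raises coordinates to $p$-th powers, an equation of the form $u X^{c_{\widetilde{\Lambda}}} = 0$ for a unit $u \in \Fpbar$. The hypothesis $p > c_{\widetilde{\Lambda}}$ is used exactly to guarantee that the coefficient of $X^{c_{\widetilde{\Lambda}}}$ in this expansion, which involves a binomial-type factor and a power of $\lambda_{\widetilde{\Lambda}}$, is nonzero modulo $p$; this is the same mechanism as in \cite[Lemma 5.3.3]{lz}.

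For part (ii), the cyclicity hypothesis that $P_{\gbar_{\widetilde{\Lambda}}}$ equals the minimal polynomial implies that $\Omegabar_0(\widetilde{\Lambda})$ is a cyclic $\Fp[\gbar_{\widetilde{\Lambda}}]$-module, hence each generalized eigenspace becomes a single Jordan block over $\Fpbar$. The self-reciprocal factor $Q_{\gbar_{\widetilde{\Lambda}}}$ of odd multiplicity $m(Q_{\gbar_{\widetilde{\Lambda}}})$ contributes a $\gbar_{\widetilde{\Lambda}}$-stable non-degenerate subspace $\Omegabar_0(\widetilde{\Lambda})_{Q_{\gbar_{\widetilde{\Lambda}}}}$ of dimension $\deg Q_{\gbar_{\widetilde{\Lambda}}} \cdot m(Q_{\gbar_{\widetilde{\Lambda}}})$, orthogonal to every other generalized eigenspace (the remaining $\Irr^{\nsr}$ factors pair their generalized eigenspaces against their reciprocal partners, and the other self-reciprocal factors have even multiplicity by the hypothesis in Proposition \ref{iart}). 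Because $\Phi(\cL_0)$ is isotropic of rank $m+1$ and $\cL'_0$ of rank $m$ and both are $\gbar_{\widetilde{\Lambda}}$-stable, their intersections with $\Omegabar_0(\widetilde{\Lambda})_{Q_{\gbar_{\widetilde{\Lambda}}}}$ are consecutive terms of the Jordan filtration whose dimensions are forced by the self-duality of the single Jordan block to equal $\deg Q_{\gbar_{\widetilde{\Lambda}}}\cdot (m(Q_{\gbar_{\widetilde{\Lambda}}})+1)/2$ and $\deg Q_{\gbar_{\widetilde{\Lambda}}}\cdot (m(Q_{\gbar_{\widetilde{\Lambda}}})-1)/2$ respectively. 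Reading off the Jordan block on $\Phi(\cL_0)$ that contains the one-dimensional quotient $\Phi(\cL_0)/\cL'_0$ then yields $c_{\widetilde{\Lambda}} = (m(Q_{\gbar_{\widetilde{\Lambda}}})+1)/2$ directly.

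The main obstacle I anticipate is the bookkeeping in part (ii): one must carefully argue that among the various $\gbar_{\widetilde{\Lambda}}$-stable isotropic flags inside $\Omegabar_0(\widetilde{\Lambda})_{Q_{\gbar_{\widetilde{\Lambda}}}}$, the particular pair arising from $(\cL'_0, \Phi(\cL_0))$ realizes precisely the balanced Jordan filtration and no other. This uses the interaction between the orthogonal form, the cyclicity of $\gbar_{\widetilde{\Lambda}}$, and the fact that the quotient $\Phi(\cL_0)/\cL'_0$ is forced by Proposition \ref{neir} and the Deligne--Lusztig Coxeter-type structure to lie entirely in the generalized eigenspace of $Q_{\gbar_{\widetilde{\Lambda}}}$. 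For part (i) the subtle step is verifying the vanishing order in $X$: a priori the equation could drop to a lower power of $X$ if a Frobenius-expansion coefficient vanished, and ruling this out is exactly where $p > c_{\widetilde{\Lambda}}$ is essential.
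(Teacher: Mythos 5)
Your overall strategy coincides with the paper's: Theorem \ref{lzmr} is deduced there by simply invoking the argument of \cite[Corollary 5.4.2]{lz} (together with Proposition \ref{mhso} to transport the problem to $\Sbar_{\Omegabar_0(\widetilde{\Lambda})}^{\gbar_{\widetilde{\Lambda}}}$), and your description of the deformation-theoretic computation in part (i) — one surviving graph coordinate, an equation $uX^{c_{\widetilde{\Lambda}}}=0$, and $p>c_{\widetilde{\Lambda}}$ guaranteeing the leading coefficient is a unit — is a faithful account of that mechanism.

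Part (ii), however, contains a genuine gap, and the specific dimension counts you give are wrong. Write $\Omegabar_{0,Q}$ for the generalized eigenspace of the distinguished factor $Q=Q_{\gbar_{\widetilde{\Lambda}}}$; it is a nondegenerate $\gbar_{\widetilde{\Lambda}}$-stable summand of dimension $\deg Q\cdot m(Q)$, and since $\Phi(\cL_0)$ is a $\gbar_{\widetilde{\Lambda}}$-stable Lagrangian of the whole space, $\Phi(\cL_0)\cap\Omegabar_{0,Q}$ must be a Lagrangian of $\Omegabar_{0,Q}$, of dimension $\deg Q\cdot m(Q)/2$ — not $\deg Q\cdot(m(Q)+1)/2$ as you claim (your subspace would not even be isotropic). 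Likewise your two claimed intersections differ by $\deg Q\geq 2$, whereas $\Phi(\cL_0)/\cL'_0$ is one-dimensional, so the intersections with $\Omegabar_{0,Q}$ can differ by at most $1$. More importantly, "self-duality of the single Jordan block" does not force the balanced filtration: over $\Fpbar$ the roots of $Q$ pair up as $\lambda\leftrightarrow\lambda^{-1}$ with $\lambda\neq\lambda^{-1}$, each $V_{\lambda}$ is a single Jordan block of size $m(Q)$, and the Lagrangian condition only yields $a_{\lambda}+a_{\lambda^{-1}}=m(Q)$ where $a_{\lambda}=\dim(\Phi(\cL_0)\cap V_{\lambda})$; any splitting $(a,m(Q)-a)$ satisfies this, and $c_{\widetilde{\Lambda}}=a_{\lambda_{\widetilde{\Lambda}}}$. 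Pinning down $a_{\lambda_{\widetilde{\Lambda}}}=(m(Q)+1)/2$ requires the additional input that $\Phi(\cL_0)=\overline{\Phi}(\cL_0)$ is the Frobenius twist of $\cL_0$, i.e.\ $a_{\mu^{p}}=\dim(\cL_0\cap V_{\mu})$ for all $\mu$, combined with the fact that $x_0$ lies in the stratum of the minimal sublattice $\widetilde{\Lambda}'$ on which the characteristic polynomial is the irreducible $Q$ (Proposition \ref{neir} and Corollary \ref{glbn}); this Frobenius-orbit bookkeeping is precisely the content of \cite[\S 5.2]{lz} and is absent from your argument. Your appeal to Proposition \ref{neir} addresses only which generalized eigenspace the line $\Phi(\cL_0)/\cL'_0$ lies in, not the size of the Jordan block containing it.
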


Now we define the notion of minusculeness for $g$, and compute the intersection multiplicity for minuscule $g$. 

\begin{dfn}\label{rsmd}
Let $g\in J_H^0(\Qp)$. 
\begin{itemize}
\item An element $g$ is \emph{regular semi-simple} if $\L_{\Q}^{\Phi}=\bigoplus_{i=0}^{5}\Qp(g^i\cdot y_1)$. 
\item A regular semi-simple element $g$ is \emph{minuscule} if $L(g)^{\natural}\in \Vrt_H$. 
\end{itemize}
\end{dfn}

We suppose that $g$ is regular semi-simple and minuscule. Then we have $L(g)^{\natural}\in \Vrt_H^g(L(g)^{\natural})$ by Lemma \ref{lggs}. Moreover, put $\Omegabar_0(g):=\Omegabar_0(L(g)^{\natural})$, $\gbar:=\gbar_{L(g)^{\natural}} \in \SO(\Omegabar_0(g))$, $P_g:=P_{\gbar_{L(g)^{\natural}}}\in \Fp[T]$, $\lambda_g:=\lambda_{L(g)^{\natural}}$ and $c_g:=c_{L(g)^{\natural}}$. 

\begin{lem}\label{chmp}
\emph{The polynomial $P_g$ equals the minimal polynomial of $\gbar$. }
\end{lem}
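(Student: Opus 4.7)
The plan is to prove the stronger statement that $L(g)^{\natural}$ is a \emph{cyclic} $\Zp[g]$-module. Once this is established, reducing modulo $L(g)$ shows that $\Omegabar_0(g)=L(g)^{\natural}/L(g)$ is cyclic over $\Fp[\gbar]$, and for a cyclic module over $\Fp[T]$ the minimal polynomial of $T$ coincides with the characteristic polynomial, yielding the desired equality with $P_g$.

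Since $g$ is regular semi-simple, the elements $y_1,gy_1,\ldots,g^5 y_1$ form a $\Zp$-basis of $L(g)$, and the characteristic polynomial $\widetilde{P}(T)=T^6+a_5T^5+\cdots+a_1 T+a_0\in\Zp[T]$ of $g$ acting on $\L_{\Q}^{\Phi}$ has degree $6$. Because $g\in\SO(\L_{\Q}^{\Phi})$ we have $a_0=\det(g)=1$, so the Cayley--Hamilton relation $\widetilde{P}(g)=0$ can be rearranged to give $g^{-1}=-(g^5+a_5 g^4+\cdots+a_1)$ as an operator on $L(g)$. Let $w_0,\ldots,w_5\in L(g)^{\natural}$ denote the $\Zp$-basis dual to $\{g^i y_1\}$ with respect to $[\,,\,]$, characterized by $[g^i y_1,w_j]=\delta_{ij}$.

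The next step is to compute $g w_j$ in the basis $(w_0,\ldots,w_5)$ using the $g$-invariance identity $[u,gv]=[g^{-1}u,v]$: for each $i$ one has $[g^i y_1,g w_j]=[g^{i-1} y_1,w_j]$, which equals $\delta_{i-1,j}$ when $i\ge 1$ and, when $i=0$, is computed by substituting the expression for $g^{-1}y_1$ displayed above. This yields the explicit formulas
\begin{equation*}
g w_j=w_{j+1}-a_{j+1}w_0 \quad (0\le j\le 4),\qquad g w_5=-w_0.
\end{equation*}
Reading these recursively gives $w_{j+1}=g w_j+a_{j+1}w_0\in\Zp[g]\cdot w_0$ by induction on $j$, and therefore $L(g)^{\natural}=\Zp[g]\cdot w_0$.

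Descending to the quotient, the image $\overline{w}_0$ of $w_0$ generates $\Omegabar_0(g)$ as an $\Fp[\gbar]$-module, so its annihilator ideal is principal and its minimal and characteristic polynomials agree; this is the claimed identity. The only substantive input is the dual-basis calculation, which hinges on $\widetilde{P}(0)=1$ so that $g^{-1}$ already lies in $\Zp[g]$ on $L(g)$; everything else is bookkeeping with the resulting companion-like shape of $g$ in the basis $\{w_j\}$.
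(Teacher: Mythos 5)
Your proof is correct and follows essentially the same route as the paper: both reduce the claim to showing that $L(g)^{\natural}$ is a cyclic $\Zp[g]$-module, the paper delegating this step to the argument of \cite[Lemma 5.3]{rtz} while you carry out that dual-basis computation explicitly (using $\det(g)=1$ so that $g^{-1}\in\Zp[g]$, and the resulting companion-type action on the dual basis $w_j$). The details check out, so this is simply a spelled-out version of the cited argument.
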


\begin{proof}
Note that we have $L(g)=\bigoplus_{i=0}^{5}\Zp(g^i\cdot y_1)$ since $g$ is regular semi-simple and minuscule. It suffices to show that $L(g)^{\natural}$ is $g$-cyclic, that is, there is $u\in L(g)^{\natural}$ such that $L(g)^{\natural}=\bigoplus_{i=0}^{5}\Zp (g^i\cdot u)$. This follows from the same argument as in the proof of \cite[Lemma 5.3]{rtz}. 
\end{proof}

Summarizing the results above, we obtain the following: 

\begin{thm}\label{mnin}
\emph{Assume $g\in J_H^0(\Qp)$ is regular semi-simple, minuscule and satisfies $\M_H^g\neq \emptyset$. 
\begin{enumerate}
\item The following are equivalent: 
\begin{itemize}
\item We have $\Delta \cap g\Delta \neq \emptyset$. 
\item There is a unique $Q_{g}\in \Irr^{\sr}(P_{g})$ such that $m(Q_{g})$ is odd.
\end{itemize}
If the conditions above hold, then we have
\begin{equation*}
\#(\Delta \cap g\Delta)(\Fpbar)=\deg Q_{g}\prod_{[R]\in \Irr^{\nsr}(P_{g})/\sim}(1+m(R))<\infty. 
\end{equation*}
\item If $g$ satisfies the condition (i), then we have an equality
\begin{equation*}
\langle \Delta,g\Delta \rangle=\deg Q_g\frac{m(Q_g)+1}{2}\prod_{[R]\in \Irr^{\nsr}(P_g)/\sim}(1+m(R)). 
\end{equation*}
\end{enumerate}}
\end{thm}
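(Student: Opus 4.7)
\medskip

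\noindent\textbf{Proof proposal.} The plan is to reduce Theorem \ref{mnin} to a combination of the geometric description of $i_{G,H}(\M_G)\cap \M_H^g$ provided by Proposition \ref{nhtr}, the counting result of Proposition \ref{iart}, and the local length computation of Theorem \ref{lzmr}. The overall architecture is short because all the heavy lifting has already been done in earlier sections; what remains is to align definitions and feed them into the right formulas.

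First, I would observe that since $g$ is assumed minuscule, $L(g)^{\natural}\in \Vrt_H$, so by the identity $\cZ(\widetilde{\Lambda}^{\natural})=\M_{H,\widetilde{\Lambda}}$ (noted just after Lemma \ref{lggs}) applied to $\widetilde{\Lambda}=L(g)^{\natural}$, one has $\cZ(L(g))=\M_{H,L(g)^{\natural}}$. Combined with Lemma \ref{dlgh} and Proposition \ref{nhtr} (i), this yields an isomorphism
\begin{equation*}
\Delta \cap g\Delta \cong p^{\Z}\backslash \M_{H,L(g)^{\natural}}^{g}.
\end{equation*}
Next, by Lemma \ref{chmp} the characteristic polynomial $P_g$ coincides with the minimal polynomial of $\gbar$ on $\Omegabar_0(g)$, so the hypothesis of Proposition \ref{iart} is satisfied for $\widetilde{\Lambda}=L(g)^{\natural}$. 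Applying that proposition then gives both the equivalence in Theorem \ref{mnin} (i) and the count
\begin{equation*}
\#(\Delta \cap g\Delta)(\Fpbar)=\deg Q_{g}\prod_{[R]\in \Irr^{\nsr}(P_{g})/\sim}(1+m(R))<\infty.
\end{equation*}

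For part (ii), the finiteness just obtained says $p^{\Z}\backslash \cZ(L(g))^{g}$ is artinian, so Proposition \ref{nhtr} (ii) and Proposition \ref{isca} together yield
\begin{equation*}
\langle \Delta,g\Delta\rangle = \sum_{x\in p^{\Z}\backslash \M_{H,L(g)^{\natural}}^g}\length_{W}\O_{p^{\Z}\backslash \M_{H,L(g)^{\natural}}^g,\,x}.
\end{equation*}
Since $p^{\Z}\backslash \M_{H,L(g)^{\natural}}^g$ is $0$-dimensional and lives in characteristic $p$, the $W$-length equals the $\Fpbar$-length. I would then transport the computation along the isomorphism of Proposition \ref{mhso} to the scheme $\Sbar_{\Omegabar_0(g)}^{\gbar}$, and apply Theorem \ref{lzmr} (i), (ii): at every closed point the complete local ring is $\Fpbar[X]/(X^{c_g})$ with $c_g=(m(Q_g)+1)/2$. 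Hence each point contributes length $(m(Q_g)+1)/2$, and multiplying by the cardinality already computed in (i) produces the claimed formula.

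The main obstacle, such as it is, is verifying the applicability of Theorem \ref{lzmr} (i) at every fixed point, which requires $p>c_g$. Because $m(Q_g)$ is odd and $\dim_{\Fp}\Omegabar_0(g)=6$, one has $c_g\in\{1,2,3\}$, so for $p\geq 5$ this is automatic, and the remaining $p=3$ case either falls within the explicit small-characteristic analysis of \cite{lz} or needs to be dispatched separately; this is the one place that would require careful verification rather than mechanical assembly. All other steps are functorial bookkeeping: matching the $g$-fixed-point locus of $\M_H$ with the $\gbar$-fixed locus of $\Sbar_{\Omegabar_0(g)}$, confirming the relation $\cZ(L(g))=\M_{H,L(g)^{\natural}}$ uses minusculeness, and confirming Lemma \ref{chmp} supplies the regularity hypothesis needed by Proposition \ref{iart}.
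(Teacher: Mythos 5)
Your proposal follows the paper's proof essentially line for line: part (i) is Proposition \ref{iart} plus Lemma \ref{chmp} after the identification $\Delta\cap g\Delta\cong p^{\Z}\backslash \M_{H,L(g)^{\natural}}^{g}$ coming from Lemma \ref{dlgh}, Proposition \ref{nhtr} (i) and $\cZ(L(g))=\M_{H,L(g)^{\natural}}$; part (ii) is Proposition \ref{nhtr} (ii), Proposition \ref{isca}, Proposition \ref{mhso} and Theorem \ref{lzmr}, with the total intersection number equal to (number of points) $\times$ (common local length $c_g$).

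The one point you leave open --- the hypothesis $p>c_{g}$ needed for Theorem \ref{lzmr} (i) --- closes more easily than you suggest, and your bound $c_g\in\{1,2,3\}$ is not sharp. The polynomial $Q_g$ is, by the proof of Proposition \ref{iart} (via Corollary \ref{glbn} and Proposition \ref{neir}), equal to $P_{\gbar_{\widetilde{\Lambda}'}}$ for an $H$-vertex lattice $\widetilde{\Lambda}'\subset L(g)^{\natural}$, so $\deg Q_g=\dim_{\Fp}\Omegabar_0(\widetilde{\Lambda}')$ is the type of $\widetilde{\Lambda}'$, hence lies in $\{2,4,6\}$. In particular $\deg Q_g\geq 2$, so $m(Q_g)\leq 6/2=3$, and since $m(Q_g)$ is odd this forces $c_g=(m(Q_g)+1)/2\leq 2$. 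As $p$ is an odd prime, $p>c_g$ holds automatically and no separate treatment of $p=3$ is required. With that observation inserted, your argument is complete and coincides with the paper's.
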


\begin{proof}
(i): This follows from Proposition \ref{iart} and Lemma \ref{chmp}. 

(ii): By Lemma \ref{dlgh}, Proposition \ref{nhtr} (i) and $\cZ(L(g))=\M_{G,L(g)^{\natural}}$, we have 
\begin{equation*}
\Delta \cap g\Delta \cong p^{\Z}\backslash \M_{H,L(g)^{\natural}}^{g}. 
\end{equation*}
Hence $\langle \Delta,g\Delta \rangle$ is defined by (i) and Proposition \ref{nhtr} (ii). Moreover, by Proposition \ref{mhso}, we have
\begin{equation*}
\langle \Delta,g\Delta \rangle=\sum_{x\in \Sbar_{\Omegabar_0(g)}^{\gbar}(\Fpbar)} \length_{\Fpbar}\O_{\Sbar_{\Omegabar_0(g)}^{\gbar},x}\in \Z. 
\end{equation*}
On the other hand, by Theorem \ref{lzmr} (ii) and Lemma \ref{chmp}, we have $c_g=(m(Q_g)+1)/2$. We claim that $c_g\leq 2$. Indeed, since $\deg Q_g\in \{2,4,6\}$ by the proof of Proposition \ref{iart}, we have $m(Q_g)\leq 6/2=3$. Hence we have 
\begin{equation*}
c_g=(m(Q_g)+1)/2\leq (3+1)/2=2
\end{equation*}
as claimed. 

We compute the intersection multiplicity. Since $\Sbar_{\Omegabar_0(g)}^{\gbar}$ is an artinian scheme, $\length_{\Fpbar}\O_{\Sbar_{\Omegabar_0(g)}^{\gbar},x}$ equals the length of the complete local ring at $\Sbar_{\Omegabar_0(g)}^{\gbar}$ at $x$ for any $x\in \Sbar_{\Omegabar_0(g)}^{\gbar}$, which equals $c_g=(m(Q_g)+1)/2$ by Theorem \ref{lzmr} (ii). On the other hand, by Proposition \ref{iart}, we have 
\begin{equation*}
\# \Sbar_{\Omegabar_0(g)}^{\gbar}(\Fpbar)=\deg Q_g\prod_{[R]\in \Irr^{\nsr}(P_g)/\sim}(1+m(R)). 
\end{equation*}
Therefore the assertion follows. 
\end{proof}


\appendix
\section{Complement of the proof of \cite[Corollary 2.14]{hp}}\label{hpcr}

In this appendix, we provide a complement of the proof of \cite[Corollary 2.14]{hp} (it is introduced as Theorem \ref{hppc} in Section \ref{rtpt}). We use the notation in Sections \ref{excp} and \ref{rtpt}. To prove the assertion, they asserted in \cite[Lemma 2.15]{hp} the bijectivity of the map
\begin{equation*}
p^{\Z}\backslash \{\text{nearly self-dual lattices in }\D_{\Q,K}\} \rightarrow \{\text{self-dual lattices in }\L_{\Q,K}\};M\mapsto L^{\sharp}(M), 
\end{equation*}
where $L^{\sharp}(M):=\{v\in \L_{\Q,K} \mid v(M)\subset M\}$. We claim that the lattice $L^{\sharp}(M)$ in $\L_{\Q,K}$ is not necessarily self-dual even when $M$ is a nearly self-dual lattice in $\D_{\Q,K}$. To avoid this possibility, we restrict the left-hand side of the map above. 

We also denote an element of $H(K)$ as $(h_0,h_1)$, where $h_i\in \GL(\D_{\Q,i}\otimes_{K_0}K)$ for $i\in \{0,1\}$ by the same manner as in Section \ref{excp}. 

\begin{lem}\label{gu0d}
\emph{Denote by $C_K$ the stabilizer of $\D \otimes_{W}W(k)$ in $H(K)$. Then we have an equality
\begin{equation*}
H^0(K)C_K=\{h=(h_0,h_1)\in H(K)\mid \ord_p\det(h_0)=\ord_p\det(h_1)\}. 
\end{equation*}}
\end{lem}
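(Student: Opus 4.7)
The proof will proceed by introducing the homomorphism
\[
\phi \colon H(K) \longrightarrow K^\times, \qquad \phi(h) = \det{}_K(h_0)/\sml(h)^2,
\]
whose multiplicativity is immediate from the isomorphism $H(K) \cong \GL_K(\D_{\Q,0}\otimes_{K_0}K) \times K^\times$ of Lemma \ref{htis} (i). By the very definition of $H^0(K)$ together with Lemma \ref{htis} (ii), the kernel of $\phi$ is exactly $H^0(K)$, and the relation $\det(h_0)\det(h_1) = \sml(h)^4$ gives
\[
\ord_p \phi(h) = \tfrac{1}{2}\bigl(\ord_p\det{}_K(h_0) - \ord_p\det{}_K(h_1)\bigr),
\]
so the right-hand side of the displayed equality is precisely $\phi^{-1}(W(k)^\times)$.

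The inclusion $H^0(K) C_K \subset \phi^{-1}(W(k)^\times)$ then reduces to checking $\phi(C_K) \subset W(k)^\times$: any $c \in C_K$, being $\Qps$-linear, preserves the decomposition $(\D_0 \oplus \D_1)\otimes_W W(k)$, hence restricts to $c_i \in \GL(\D_i \otimes_W W(k))$ with $\det_K(c_i) \in W(k)^\times$, and the identity $\det(c_0)\det(c_1) = \sml(c)^4$ then forces $\sml(c) \in W(k)^\times$ as well.

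For the reverse inclusion it is enough to verify that $\phi|_{C_K} \colon C_K \to W(k)^\times$ is surjective, since then for any $h \in \phi^{-1}(W(k)^\times)$ we can pick $c \in C_K$ with $\phi(c) = \phi(h)$ and conclude $h c^{-1} \in \ker\phi = H^0(K)$, whence $h \in H^0(K) C_K$. To realize a prescribed $u \in W(k)^\times$, I would fix a $W$-basis of $\D_0$ and take the element $c \in H(K)$ corresponding, under the parametrization of Lemma \ref{htis} (i), to $(c_0,\sml(c)) = (\diag(u,1,1,1),1)$; the duality between $\D_0 \otimes_W W(k)$ and $\D_1 \otimes_W W(k)$ induced by the self-dual symplectic form on $\D$ ensures that $c_1 = (c_0^{\wedge})^{-1}$ preserves $\D_1 \otimes_W W(k)$, so $c \in C_K$ and $\phi(c) = u$. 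The argument is thus formal once $\phi$ is introduced, and I do not anticipate any serious obstacle; the only point requiring a small verification is the aforementioned $W(k)$-duality, which is a direct consequence of the self-duality of $\D$.
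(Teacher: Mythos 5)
Your proof is correct and follows essentially the same route as the paper: both arguments rest on the isomorphism $H(K)\cong \GL(\D_{\Q,0}\otimes_{K_0}K)\times K^{\times}$ of Lemma \ref{htis} together with the identity $\det(h_0)\det(h_1)=\sml(h)^4$, which reduce the statement to the valuation condition $2\ord_p(\sml(h))=\ord_p\det(h_0)$. Packaging this as the character $\phi(h)=\det(h_0)/\sml(h)^2$ with kernel $H^0(K)$ and image $\phi(C_K)=W(k)^{\times}$ is only a cosmetic reformulation of the paper's direct computation of the product $H^0(K)C_K$ inside $\GL(\D_{\Q,0}\otimes_{K_0}K)\times K^{\times}$.
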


\begin{proof}
We have an isomorphism
\begin{equation*}
H(K)\xrightarrow{\cong} \GL(\D_{\Q,0}\otimes_{K_0} K)\times K^{\times};h\mapsto (h_0,\sml(h)). 
\end{equation*}
The proof is the same as the isomorphy of the first homomorphism as in Lemma \ref{htis} (i). 
It induces the following isomorphisms: 
\begin{gather*}
H^0(K)\cong \{(h_0,c)\in \GL(\D_{\Q,0}\otimes_{K_0} K)\times K^{\times}\mid c^2=\det(h_0)\},\\
C_K \cong \GL(\D_0\otimes_W W(k))\times W(k)^{\times}. 
\end{gather*}
Therefore we obtain
\begin{equation*}
H^0(K)C_K\cong \{(h_0,c)\in \GL(\D_{\Q,0}\otimes_{K_0} K)\times K^{\times}\mid 2\ord_p (c)=\ord_p(\det(h_0))\}, 
\end{equation*}
which is equivalent to the assertion. 
\end{proof}

\begin{ex}
For $i\in \Z$, let 
\begin{equation*}
M_i:=(p^i\D_0\oplus p^{-i}\D_1)\otimes_W W(k). 
\end{equation*}
Then $M_i$ is self-dual. On the other hand, we have $M_i=h^i(\D) \otimes_W W(k)$, where $h:=(p\id_{\D_{\Q,0}},p^{-1}\id_{\D_{\Q,1}})$. Then $h^i\in H^0(K_0)C$ if and only if $i=0$. Put $\L:=\{v\in \L_{\Q}\mid v(\D)\subset \D \}$. Then we have
\begin{equation*}
L^{\sharp}(M_i)=p^{2\left|i\right|}\L \otimes_W W(k),
\end{equation*}
which is self-dual if and only if $i=0$. 
\end{ex}

Next, we modify the bijection $M\mapsto L^{\sharp}(M)$. Write $Z_K$ for the center of $H(K)$. Then we have an injection
\begin{equation*}
\widetilde{\lambda}\colon H^0(K)C_K Z_K/C_K Z_K\rightarrow p^{\Z}\backslash \{\text{nearly self-dual lattices in $\D_{\Q,K}$}\}
;h\mapsto h(\D \otimes_W W(k)). 
\end{equation*}

\begin{dfn}
A \emph{special nearly self-dual lattice} is a lattice $M$ in $\D_{\Q,K}$ such that the homothety class of $M$ is in the image of the map $\widetilde{\lambda}$. 
\end{dfn}
The definition amounts to saying that there is an element $h$ in $H^0(K)$ (or $H^0(K)C_K$) such that $M=h(\D \otimes_{W}W(k))$. By definition, special nearly self-dual lattices are $\Zps$-invariant and nearly self-dual in the sense of \cite[\S 2.5]{hp}. 

We state the modified version of \cite[Lemma 2.15]{hp}. The proof is the same as the original one. 

\begin{prop}\label{sdmd}
\emph{The map
\begin{equation*}
p^{\Z}\backslash \{\text{special nearly self-dual lattices in }\D_{\Q,K}\} \rightarrow \{\text{self-dual lattices in }\L_{\Q,K}\};
M\mapsto L^{\sharp}(M)
\end{equation*}
is bijective. }
\end{prop}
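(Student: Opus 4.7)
The plan is to adapt the proof of \cite[Lemma 2.15]{hp} by restricting attention to special nearly self-dual lattices, which by definition form a single $H^0(K) Z_K$-orbit through $\D \otimes_W W(k)$. The key tool is the exceptional isomorphism $H^0(K) \cong \GSpin(\L_{\Q})(K)$ of Proposition \ref{exis}, under which the conjugation action of $H^0(K)$ on $\End(\D_{\Q,K})$ preserves $\L_{\Q,K}$ and acts there through $\SO(\L_{\Q})$, preserving the quadratic form $Q(v)=v\circ v$ exactly (not merely up to similitude).

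First, I would verify by a direct computation, using the bases $e_i,f_j$ of $\D$ from Lemma \ref{base} and $x_1,\ldots,x_6$ of $\L_{\Q}$ from Definition \ref{xidf}, that $L^{\sharp}(\D\otimes_{W}W(k)) = \L \otimes_{W} W(k)$ (with $\L$ as in the proof of Proposition \ref{gnfb}), and that this lattice is self-dual in $\L_{\Q,K}$ with respect to the bilinear form associated to $Q$. Second, for any $h\in H(K)$ and any lattice $M$ in $\D_{\Q,K}$ the tautological identity $L^{\sharp}(h\cdot M) = h\cdot L^{\sharp}(M)$ is immediate. Combining these, for any special nearly self-dual lattice $M = h(\D\otimes_{W}W(k))$ with $h\in H^0(K)$ one has $L^{\sharp}(M) = h\cdot(\L\otimes_W W(k))$, which is self-dual since $h$ acts on $\L_{\Q,K}$ by a genuine isometry. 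This establishes well-definedness of the map.

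For the bijectivity I would invoke transitivity of $\GSpin(\L_{\Q})(K)\cong H^0(K)$ on the set of self-dual lattices in $\L_{\Q,K}$. Surjectivity is then immediate: given a self-dual $L$, choose $h\in H^0(K)$ with $h\cdot(\L\otimes_W W(k)) = L$ and set $M:=h(\D\otimes_W W(k))$. For injectivity, suppose $L^{\sharp}(M_1) = L^{\sharp}(M_2)$ with $M_i = h_i(\D\otimes_W W(k))$ and $h_i\in H^0(K)$. Then $h_2^{-1}h_1$ stabilizes $\L\otimes_W W(k)$, and the stabilizer of $\L\otimes_W W(k)$ in $H^0(K)$ coincides under the exceptional isomorphism with $H^0(K)\cap C_K Z_K$; modulo $Z_K$ this is precisely the ambiguity allowing $M_1$ and $M_2$ to differ by $p^{\Z}$, so $[M_1]=[M_2]$ in $p^{\Z}\backslash\{\text{special nearly self-dual lattices}\}$.

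The main obstacle will be the transitivity of $\GSpin(\L_{\Q})(K)$ on self-dual lattices in $\L_{\Q,K}$. The corresponding action of the orthogonal group $\mathrm{O}(\L_{\Q})(K)$ is transitive by Witt's cancellation theorem over the complete discretely valued field $K$, since any two self-dual lattices give rise to isometric reductions modulo $p$. Lifting from $\mathrm{O}$ (or $\SO$) to $\GSpin$ uses the short exact sequence $1\to \G_m\to \GSpin(\L_{\Q})\to \SO(\L_{\Q})\to 1$ together with Hilbert 90, $H^1(K,\G_m)=0$, which gives surjectivity of $\GSpin(\L_{\Q})(K)\to \SO(\L_{\Q})(K)$. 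Once this is in place, the remainder of the argument is essentially the original strategy of \cite[Lemma 2.15]{hp}, now applied to the correctly restricted domain so that the output is genuinely self-dual.
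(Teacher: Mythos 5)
Your proposal is correct and is essentially the paper's own argument: the paper proves Proposition \ref{sdmd} by declaring "the proof is the same as the original one" (i.e.\ \cite[Lemma 2.15]{hp}), and that original proof is exactly the equivariant orbit--stabilizer argument you spell out, now run on the corrected domain where transitivity of $H^0(K)$ holds by the very definition of \emph{special} nearly self-dual lattices. The two points you assert rather than prove --- transitivity of $\SO(\L_{\Q,K})(K)$ on self-dual lattices for $K=\Frac(W(k))$ with $k$ an arbitrary extension of $\Fpbar$ (which needs the residue-form/Hensel-lifting argument, not just Witt cancellation), and the identification of the stabilizer of $\L\otimes_W W(k)$ with $H^0(K)\cap C_KZ_K$ (via the normalizer of the maximal order $C(\L\otimes_W W(k))\cong \End(\D\otimes_W W(k))$) --- are standard and are likewise taken for granted in the source.
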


By the modification above, \cite[Lemma 2.16]{hp} holds only for special nearly self-dual lattices. The precise statement is as follows: 

\begin{lem}\label{sdlg}
\emph{Let $M$ be a special nearly self-dual lattice in $\D_{\Q,K}$, and put $L^{\sharp}:=L^{\sharp}(M)$, where $L^{\sharp}(M)$ is the self-dual lattice in $\L_{\Q,K}$ corresponding to the bijection $M\mapsto L^{\sharp}(M)$ in Proposition \ref{sdmd}. Let $i\in \Z$ satisfying $M^{\vee}=p^iM$, and we regard $M$ as a self-dual symplectic space over $W(k)$ by $p^i(\,,\,)$. 
\begin{enumerate}
\item For an isotropic line $l$ in $L^{\sharp}\otimes_{W(k)}k$, 
\begin{equation*}
\sM(l):=\{x\in M\otimes_{W(k)}k\mid v(x)=0\text{ for any }v\in l\}
\end{equation*}
is a $\Zps$-stable Lagrangian subspace in $M\otimes_{W(k)}k$. Hence we obtain a map
\begin{equation*}
\{\text{isotropic lines in }L^{\sharp}\otimes_{W(k)}k\} \rightarrow 
\{\Zps \text{-stable Lagrangian subspaces in }M\otimes_{W(k)}k\};l\mapsto \sM(l). 
\end{equation*}
\item The map $l\mapsto \sM(l)$ in (i) is bijective. The inverse map is given by 
\begin{equation*}
\sM \mapsto l(\sM):=\{v\in L^{\sharp}\otimes_{W(k)}k\mid v(\sM)=0\}. 
\end{equation*}
Moreover, we have 
\begin{equation*}
l(\sM)^{\perp}=\{v\in L^{\sharp}\otimes_{W(k)}k\mid v(\sM)\subset \sM \}. 
\end{equation*}
\end{enumerate}}
\end{lem}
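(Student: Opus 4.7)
The approach is to reduce, using the $H^{0}(K)$-equivariance of all constructions, to the base case $M = \D \otimes_{W} W(k)$, and then to verify the correspondence by a pure-spinor computation inside $\L \otimes_{W} k$ acting on $\D \otimes_{W} k$.

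First I would exploit that $M$ is \emph{special} (which is exactly the refinement of \cite[Lemma 2.15]{hp} introduced in Proposition \ref{sdmd}) to fix $h \in H^{0}(K)$ with $M = h(\D \otimes_{W} W(k))$. The group $H^{0}(K)$ acts on $\End(\D_{\Q,K})$ by conjugation, preserves $\bigwedge^{2}_{\Qps}\D_{\Q,K}$, and commutes with the Hodge star (this last property is the defining condition of $H^{0}$, as recalled in Section \ref{excp}); therefore it acts on $\L_{\Q,K}$ by isometries, and one verifies $L^{\sharp}(hN) = h\,L^{\sharp}(N)\,h^{-1}$, $\sM(hvh^{-1}) = h(\sM(v))$, and $l(h\sM) = h\,l(\sM)\,h^{-1}$. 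The integer $i$ is the same for $M$ and $\D \otimes_{W} W(k)$ thanks to the characterization in Lemma \ref{gu0d}. Thus it suffices to treat $M = \D \otimes_{W} W(k)$, for which $L^{\sharp} = \L \otimes_{W} W(k)$ with $\L = \{v \in \L_{\Q} \mid v(\D) \subset \D\}$.

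For part (i) in the base case, take a non-zero isotropic $v \in \L \otimes k$. I would first argue that $v$ is decomposable as a simple wedge $v = x \wedge y$ in $\bigwedge^{2}_{\Qps}(\D \otimes k)$: this is the pure-spinor dichotomy, and here it is forced by isotropy together with the fact that $\dim_{\Qps}\D = 4$. Then Lemma \ref{wdhm} gives $\langle z, v(w)\rangle = -\langle v(z), w\rangle^{*}$, so $v(\D \otimes k)$ is $\Zps$-stable and totally isotropic; combined with $v^{2} = Q(v) = 0$ this gives $v(\D \otimes k) \subset \Ker(v)$, and a rank count (the image has $k$-dimension $2$ by decomposability, and Lagrangians in $\D \otimes k$ have $k$-dimension $4$) forces $\Ker(v)$ to be a $\Zps$-stable Lagrangian of $k$-dimension $4$. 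For the bijectivity in part (ii), given a $\Zps$-stable Lagrangian $\sM$, I would split $\sM = \sM_{0} \oplus \sM_{1}$ according to the decomposition $\D = \D_{0} \oplus \D_{1}$ and take a non-zero element in the $\star$-fixed part of the annihilator of $\sM$ inside $\bigwedge^{2}_{\Qps}(\D \otimes k)$; the resulting line $l(\sM)$ is isotropic, and both constructions are visibly inverse because $\sM(l(\sM)) \supseteq \sM$ and $l(\sM(l)) \supseteq l$, each forced to equality by dimension count.

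Finally, for the perpendicular formula in part (ii), let $w$ generate $l(\sM)$. Inside the Clifford algebra $C(\L_{\Q,K})$ the polarization $[v,w]$ of $Q$ equals the anticommutator $vw + wv$, which acts on the spin representation $\D_{\Q,K}$ as the scalar $[v,w] \cdot \id$. Thus $v \in l(\sM)^{\perp}$ is equivalent to the endomorphism identity $vw = -wv$; given this, $w(v(\sM)) = -v(w(\sM)) = 0$ gives $v(\sM) \subset \Ker(w) = \sM$, while the reverse implication follows by reading the same argument backwards, noting that the scalar action of $vw+wv$ on $\D_{\Q,K}$ can only vanish on the non-zero $\sM$ by being identically zero. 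The main obstacle will be the pure-spinor decomposition step together with the sharp dimension count for $\Ker(v)$: once one knows that every isotropic $\star$-fixed element of $\bigwedge^{2}_{\Qps}(\D \otimes k)$ is of the form $x \wedge y$ with $\langle x, y\rangle = 0$ and that the resulting image-equals-kernel is exactly $4$-dimensional, the remainder reduces to routine Clifford-algebra manipulations, in which the assumption $p > 2$ lets us pass freely between $Q$ and its polarization.
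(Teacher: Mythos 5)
Your overall strategy --- reduce to $M=\D\otimes_{W}W(k)$ by writing $M=h(\D\otimes_{W}W(k))$ with $h\in H^{0}(K)$ (which is exactly where the ``special'' hypothesis enters, and is the content of the paper's correction to \cite[Lemma 2.15]{hp}), then do linear algebra inside $\L\otimes_{W}k\subset \End(\D\otimes_{W}k)$, and deduce the perpendicularity statement from the Clifford relation $v\circ w+w\circ v=[v,w]\cdot \id$ --- is the same one the paper follows (the paper defers the field case to \cite[Lemma 2.16]{hp} and writes out the computation only for the scheme-theoretic generalization, Lemma \ref{liat}). Your Clifford-algebra argument for the formula for $l(\sM)^{\perp}$ is correct and clean.

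There are, however, two concrete gaps. First, the dimension count in part (i) is wrong as written: for a non-zero isotropic $v=x\wedge y\in \L\otimes_{W}k$, the image $v(\D\otimes_{W}k)$ has $k$-dimension $4$, not $2$ --- it is spanned by $x$ and $y$ over $\Zps\otimes_{\Zp}k\cong k\times k$, so each of the two $\Zps$-eigencomponents contributes a $2$-dimensional piece. With a $2$-dimensional image, rank--nullity would give $\dim_{k}\Ker(v)=6$, which cannot be Lagrangian, so your count does not close; the whole point is the equality $\Ima(v)=\Ker(v)$ with both sides of dimension exactly $4$, which the paper establishes by the explicit computation with the basis $x_{1},\ldots,x_{6}$ and the Pl\"ucker relation $a_{1}a_{2}+a_{3}a_{4}+a_{5}a_{6}=0$ (equation (\ref{aimp}) in the proof of Lemma \ref{liat}). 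Second, the surjectivity of $l\mapsto \sM(l)$ is asserted rather than proved: you must show that the $\star$-fixed annihilator of $\sM$ in $\L\otimes_{W}k$ is non-zero (isotropy and the fact that it is at most a line then follow easily). This is not formal, and it genuinely uses that $\sM$ is free of rank $2$ over $\Zps\otimes_{\Zp}k$, i.e.\ that both eigenspaces $\sM_{0},\sM_{1}$ are $2$-dimensional: for the $\Zps$-stable Lagrangian $\D_{0}\otimes_{W}k$ the annihilator in $\L\otimes_{W}k$ is zero, since $\bigwedge^{2}_{\Qps}$ of the $\D_{1}$-component injects into $\Hom_{k}(\D_{0}\otimes_{W}k,\D_{1}\otimes_{W}k)$. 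The paper's Lemma \ref{liat} disposes of surjectivity by the transitivity of $\GU^{0}(M)$ on the relevant Lagrangians, which reduces everything to one explicit example; you will need either that device or a direct verification that the annihilator of a balanced $\Zps$-stable Lagrangian is a line.
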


We can also prove the following by the same proof of \cite[Lemma 2.16]{hp}: 

\begin{lem}\label{tilg}
\emph{Let $K'/K$ be a field extension. 
\begin{enumerate}
\item For an isotropic line $l$ in $\L_{\Q,K'}:=\L_{\Q,K}\otimes_{K}K'$, 
\begin{equation*}
\sM(l):=\{x\in \D_{\Q,K'}\mid v(x)=0\text{ for any }v\in l\}
\end{equation*}
is a $\Zps$-stable Lagrangian subspace in $\D_{\Q,K'}$. Hence we obtain a map
\begin{equation*}
\{\text{isotropic lines in }\L_{\Q,K'}\} \rightarrow 
\{\Zps \text{-stable Lagrangian subspaces in }\D_{\Q,K'}\};l\mapsto \sM(l). 
\end{equation*}
\item The map $l\mapsto \sM(l)$ in (i) is bijective. The inverse map is given by 
\begin{equation*}
\sM \mapsto l(\sM):=\{v\in \L_{\Q,K'}\mid v(\sM)=0\}. 
\end{equation*}
Moreover, we have 
\begin{equation*}
l(\sM)^{\perp}=\{v\in \L_{\Q,K'}\mid v(\sM)\subset \sM \}. 
\end{equation*}
\end{enumerate}}
\end{lem}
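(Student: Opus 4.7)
The plan is to closely mirror the proof of Lemma \ref{sdlg} (which is \cite[Lemma 2.16]{hp}) with the simplification that we work entirely with $K'$-vector spaces rather than reductions of lattices modulo $p$. The statement is a purely linear-algebraic/Clifford-theoretic fact about the embedding $\L_{\Q,K'} \hookrightarrow \End_{\Qps \otimes K'}(\D_{\Q,K'})$, so no integral information is needed.

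For part (i), I fix a nonzero $v \in l$. Via the embedding $\bigwedge^{2}_{\Qps}\D_{\Q} \hookrightarrow \End_{\Qps \otimes K_0}(\D_{\Q})$ given by $(x \wedge y)(z) = \langle x,z\rangle y - \langle y,z\rangle x$, $v$ acts as a $\Qps$-linear endomorphism of $\D_{\Q,K'}$, so $\sM(l) = \ker(v)$ is automatically $\Zps$-stable. The Clifford relation $v \circ v = Q(v)\cdot \id = 0$ (since $l$ is isotropic) gives $\operatorname{image}(v) \subset \ker(v)$, and Lemma \ref{wdhm} applied with $x \in \ker(v)$ shows $\langle \ker(v),\operatorname{image}(v)\rangle = 0$, hence $\ker(v) \subset \operatorname{image}(v)^{\perp}$ with respect to $(\,,\,)$; a dimension count forces equality. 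It remains to establish $\operatorname{rank}(v) = 4$, which I verify explicitly on the isotropic basis element $v = x_1 = e_1 \wedge e_2 + f_3 \wedge f_4$ of Definition \ref{xidf}: Lemma \ref{base} yields $\ker(v) = K' e_1 \oplus K' e_2 \oplus K' f_3 \oplus K' f_4$, which is $4$-dimensional. For a general nonzero isotropic $v$, I transfer this via the transitivity of the $\SO(\L_{\Q,K'})$-action on isotropic lines (Witt's theorem), lifted to $\D_{\Q,K'}$ through the embedding $\GSpin(\L_{\Q}^{\pi}) \subset H^{0}$ of Proposition \ref{exis}.

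For part (ii), define the candidate inverse $\sM \mapsto l(\sM) := \{v \in \L_{\Q,K'} : v(\sM) = 0\}$. Any $v \in l(\sM)$ has $\sM \subset \ker(v)$, which by the rank-$4$ characterization of part (i) applied in reverse forces $v$ to be isotropic with $\ker(v) = \sM$, so $l(\sM)$ is exactly the one-dimensional line corresponding to $\sM$, and the two maps are mutually inverse. For the perpendicular formula, fix nonzero $w \in l(\sM)$. The Clifford anticommutation $vw + wv = [v,w]\cdot \id$ in $C(\L_{\Q,K'})$ shows that for any $v \in \L_{\Q,K'}$, $v(\sM) \subset \sM = \ker(w)$ iff $(wv)|_{\sM} = 0$; since $(vw)|_{\sM} = 0$ automatically, this is equivalent to $[v,w] = 0$ (the scalar), i.e.\ $v \in l(\sM)^{\perp}$.

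The main obstacle is the rank-$4$ claim underlying part (i): every nonzero isotropic element of $\L_{\Q,K'}$ must act on $\D_{\Q,K'}$ with kernel of dimension exactly $4$ (equivalently, image of dimension exactly $4$), and this is the fact that makes the half-spin representations of $C(\L_{\Q,K'})$ behave as expected. The explicit verification on one vector followed by the transitivity argument settles it, but some care is needed to ensure that the $\SO(\L_{\Q,K'})$-orbit structure is reflected through the $H^0(K')$-action on $\D_{\Q,K'}$ induced by the embedding $\bigwedge^2_{\Qps}\D_{\Q} \hookrightarrow \End(\D_{\Q})$, which is exactly the compatibility encoded in Proposition \ref{exis} rationally. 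Everything else in the proof is a formal consequence.
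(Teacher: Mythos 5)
Your part (i) is essentially sound. The paper itself dispatches this lemma with a pointer to the proof of \cite[Lemma 2.16]{hp}; its own detailed model is the proof of Lemma \ref{liat}, where the rank computation is carried out for an arbitrary isotropic $v=\sum_i a_ix_i$ using $a_1a_2+a_3a_4+a_5a_6=0$ and an explicit description of the image. Your alternative --- verify $\mathrm{rank}(v)=4$ on the single element $x_1$ and transport by Witt transitivity lifted through the spin representation --- is legitimate, with two small repairs: the compatibility you need is $H^0\cong \GSpin(\L_{\Q})$ acting on the full six-dimensional $\L_{\Q}$ (\cite[Proposition 2.7]{hp}, quoted in Section \ref{excp}), not Proposition \ref{exis}, which concerns the five-dimensional $\L_{\Q}^{\pi}$ and whose image in $\SO(\L_{\Q,K'})$ does not act transitively on all isotropic lines; and $v$ is $\Qps$-\emph{conjugate}-linear (it anticommutes with $\iota_0(\varepsilon)$), not $\Qps$-linear, though this does not affect the $\Zps$-stability of $\Ker(v)$.

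The genuine gap is surjectivity in part (ii). Your argument shows only that \emph{if} $l(\sM)\neq 0$, then every nonzero element of it is isotropic with kernel exactly $\sM$ (and even the step ``forces $v$ to be isotropic'' should record that a non-isotropic $v$ is invertible since $v\circ v=Q(v)\neq 0$). It never produces a nonzero $v\in \L_{\Q,K'}$ annihilating a given $\Zps$-stable Lagrangian $\sM$, and this is not formal: $\L_{\Q,K'}$ is a $6$-dimensional subspace of the $32$-dimensional space of endomorphisms killing $\sM$. In fact $\sM=\D_{\Q,0}\otimes_{K_0}K'$ is a $\Zps$-stable Lagrangian with $l(\sM)=0$ (the $f_c\wedge f_d$-components of $x_1,\ldots,x_6$ restrict to linearly independent maps $\D_{\Q,0}\rightarrow \D_{\Q,1}$), so surjectivity holds only onto the Lagrangians with $\dim_{K'}\varepsilon_0\sM=2$, and an argument pinning down the image is unavoidable. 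The paper's route (proof of Lemma \ref{liat} (ii)) is to show the relevant group acts transitively on that target and that the equivariant map has nonempty image; you need the same, or a direct construction of $v$ from $\sM$. Separately, the one-dimensionality of $l(\sM)$, hence injectivity, is asserted but not proved; it follows because two linearly independent $v,v'\in l(\sM)$ would satisfy $v\circ v'=0$ in $\End(\D_{\Q,K'})\cong C(\L_{\Q,K'})$, while the degree-two component $v\wedge v'$ of $vv'$ in the Clifford algebra is nonzero. Your derivation of the formula for $l(\sM)^{\perp}$ from the anticommutation relation is correct, granted the existence of the generator $w$.
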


Lastly, we check that \cite[Corollary 2.14]{hp} is true. 

\begin{prop}\label{dsns}
\emph{For a Dieudonn{\'e} lattice $M$ in $\D_{\Q,K}$, $F^{-1}(pM)$ is special nearly self-dual. }
\end{prop}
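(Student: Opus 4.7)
The plan is to decompose the situation according to the $\Zps$-action on $M$, track the $p$-adic covolumes of the two eigenspace components through the passage $M \mapsto F^{-1}(pM)$, and then exhibit an element of $H^0(K)C_K$ sending $\D \otimes_W W(k)$ onto $N := F^{-1}(pM)$. Throughout, for a $W(k)$-lattice $L \subset \D_{\Q,j,K} := \D_{\Q,j}\otimes_{K_0} K$, write $v_j(L)$ for the $p$-adic covolume (normalized so $v_j(\D_j \otimes_W W(k)) = 0$).

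First, since $F = b\circ\sigma$ is $D$-linear and $\Pi a = \tau(a)\Pi$ on $\Qps$, the operator $F$ interchanges $\D_{\Q,0,K}$ and $\D_{\Q,1,K}$. Combined with the $\Zps$-stability of $M$, this yields decompositions $M = M_0 \oplus M_1$ and $N = N_0 \oplus N_1$ with $N_0 = F^{-1}(pM_1) \subset \D_{\Q,0,K}$ and $N_1 = F^{-1}(pM_0) \subset \D_{\Q,1,K}$. Using the explicit $W$-basis $\{e_i,f_j\}$ of Lemma \ref{base}, I observe that $F|_{\D_{\Q,0,K}}$ (with respect to $(e_i)$ and $(f_j)$) has matrix $\diag(1,1,p,p)$, and similarly for $F|_{\D_{\Q,1,K}}$; each has $p$-adic ``determinant'' valuation $2$. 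Setting $a_j := v_j(M_j)$, this reads off $v_0(N_0) = a_1 + 2$ and $v_1(N_1) = a_0 + 2$.

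Next, the near self-duality $M^\vee = p^iM$, combined with the total isotropy of $\D_{\Q,0}$ and $\D_{\Q,1}$ (so that the symplectic form identifies them in perfect duality), forces $a_0 + a_1 = -4i$. The key step is then to deduce $a_0 = a_1 = -2i$. I will invoke the eigenspace dimension condition on $\Lie(X) = M/N$ built into the correspondence of Theorem \ref{rtdu}: namely $\dim_k(M_j/N_j) = 2$ for each $j \in \{0,1\}$. Translating this via the covolume formulas above gives $a_1 + 2 - a_0 = 2$ (and symmetrically $a_0 + 2 - a_1 = 2$), which pins down $a_0 = a_1$ and hence $v_0(N_0) = v_1(N_1) = -2i + 2$.

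To conclude, I first verify that $N$ is itself nearly self-dual: applying the similitude identity $(F(x),F(y)) = p\sigma((x,y))$, together with $F(N) = pM$ and $M^\vee = p^iM$, a direct computation yields $N^\vee = p^{i-1}N$. Then I pick $h_j \in \GL_K(\D_{\Q,j,K})$ with $h_j(\D_j \otimes_W W(k)) = N_j$; the near self-duality of $N$, read through the perfect pairing between $\D_{\Q,0,K}$ and $\D_{\Q,1,K}$, shows that $h = (h_0,h_1)$ lies in $H(K)$. Since $\ord_p \det h_0 = v_0(N_0) = v_1(N_1) = \ord_p \det h_1$, Lemma \ref{gu0d} places $h \in H^0(K)C_K$, so $N = h(\D \otimes_W W(k))$ is special nearly self-dual. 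The main obstacle is the third paragraph: the axioms for a Dieudonn\'e lattice stated in the excerpt only give $a_0 - a_1 \in \{-2,0,2\}$, and one has to justify the restriction to $a_0 = a_1$ via the Kottwitz-type eigenspace dimension condition implicit in the moduli interpretation of Theorem \ref{rtdu}; once that is in hand, the remainder is bookkeeping with covolumes and Lemma \ref{gu0d}.
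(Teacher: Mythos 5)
Your proof is correct, and it rests on exactly the same two pillars as the paper's: the criterion of Lemma \ref{gu0d} (equality of $\ord_p\det(h_0)$ and $\ord_p\det(h_1)$) and the Kottwitz condition forcing each $\Zps$-eigenspace of $M/F^{-1}(pM)$ to have dimension $2$. The difference is purely in how that equality of valuations is extracted. The paper forms the twisted-conjugation element $g:=h^{-1}b\sigma(h)b^{-1}$, observes that $g$ carries $F_*(\D'_{W(k),i})$ onto $h^{-1}(F_*(h(\D'_{W(k),i})))$ — two sublattices of $\D'_{W(k),1-i}$ that both have colength $2$ by the Kottwitz condition — so that $\ord_p\det(g_i)=0$, and then reads off $\ord_p\det(h_0)=\ord_p\det(h_1)$ from the identity $(\det(g_0),\det(g_1))=(\det(h_0)^{-1}\sigma(\det(h_1)),\det(h_1)^{-1}\sigma(\det(h_0)))$. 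You instead do the covolume bookkeeping on the eigenspace decomposition directly: $v_0(N_0)=a_1+2$, $v_1(N_1)=a_0+2$, and the Kottwitz condition $\dim_k(M_j/N_j)=2$ pins down $a_0=a_1$; this is the same computation unpacked, and arguably more transparent, at the cost of having to check by hand that $N$ is nearly self-dual and that $(h_0,h_1)$ can be arranged to be a similitude (for the latter, Lemma \ref{htis}~(i) together with $N^{\vee}=p^{i-1}N$ shows $h_1$ may be taken of the form $c(h_0^{\wedge})^{-1}$ with $\ord_p(c)=1-i$, and in any case $\ord_p\det(h_1)$ depends only on $N_1$). You are also right to flag that the three listed axioms for a Dieudonn\'e lattice alone do not force $a_0=a_1$ (they only give $\lvert a_0-a_1\rvert\leq 2$, and in fact all integer values in that range, not just the even ones you list, are a priori allowed); the paper makes the identical appeal to the Kottwitz condition via the moduli interpretation behind Theorem \ref{rtdu}, so you are on the same footing.
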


\begin{proof}
Put $\D'_{W(k)}:=b(\D \otimes_{W}W(k))$, where $b\in G(K_0)$ is defined as in Section \ref{rzdt}. Write $F^{-1}(pM)=h(\D'_{W(k)})=hb(\D \otimes_{W}W(k))$ where $h\in H(K)$. Since $b\in H^0(K)$ by definition, it suffices to prove $h\in H^0(K)C_K$. Put $g:=h^{-1}b\sigma(h)b^{-1}\in H(K)$. Then $g$ induces an isomorphism 
\begin{equation*}
F_{*}(\D'_{W(k)})\xrightarrow{\cong} (h^{-1}\circ F\circ h)_{*}(\D'_{W(k)})=h^{-1}(F_{*}(h(\D'_{W(k)}))). 
\end{equation*}
On the other hand, for $i\in \{0,1\}$, put $\D'_{W(k),i}:=\varepsilon_{i}\D'_{W(k)}$. Then we have
\begin{equation*}
\length_{W(k)}(\D'_{W(k),1-i}/F_{*}(\D'_{W(k),i}))=\length_{W(k)}(\D'_{W(k),1-i}/h^{-1}(F_{*}(h(\D'_{W(k),i}))))=2
\end{equation*}
by the Kottwitz condition. See also the proof of \cite[Proposition 2.10]{hp}. Therefore we have $\ord_p\det(g_i)=0$ if we write $g=(g_0,g_1)$. Taking determinant of the defining equation of $g$, we obtain
\begin{equation*}
(\det(g_0),\det(g_1))=(\det(h_0),\det(h_1))^{-1}\sigma(\det(h_0),\det(h_1))=(\det(h_0)^{-1}\sigma(\det(h_1)),\det(h_1)^{-1}\sigma(\det(h_0)))
\end{equation*}
in $\Qps \otimes_{\Qp} K_0\cong K_0\times K_0$, which concludes that $\ord_p\det(h_0)=\ord_p\det(h_1)$. Therefore the assertion follows from Lemma \ref{gu0d}. 
\end{proof}

Now we can give a proof of \cite[Corollary 2.14]{hp}. Take a Dieudonn{\'e} lattice in $\D_{\Q,K}$. Then Proposition \ref{dsns} implies that $F^{-1}(pM)$ is special nearly self-dual. We apply Lemma \ref{sdlg} to $F^{-1}(pM)$, and the assertion therefore follows from the same proof as the original one. 

\section{Reducedness of the Bruhat--Tits strata of $\M_{H}$}\label{rdhp}

In this appendix, we prove the following: 

\begin{thm}\label{redh}
\emph{For $\widetilde{\Lambda}\in \Vrt_H$, the formal scheme $\M_{H,\widetilde{\Lambda}}$ is a reduced scheme of characteristic $p$. }
\end{thm}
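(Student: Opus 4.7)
The plan is to identify $\M_{H,\widetilde{\Lambda}}^{(0)}$, as a scheme over $\Fpbar$, with one of the smooth projective orthogonal Grassmannian-type varieties $\Sbar_{\Omegabar_0(\widetilde{\Lambda})}^{\pm}$ of Proposition~\ref{suir}. Since the target is smooth, this will immediately give reducedness and characteristic $p$. Such an identification is already known from \cite[Theorem~3.9]{hp}, but that proof goes through the integral model and local model of a Shimura variety for $H$; the goal of this appendix is to give a self-contained proof using only Dieudonn{\'e} theory, by upgrading the linear algebra of \cite[\S 2.4]{hp} and Appendix~\ref{hpcr} from geometric points to families.

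First I would reduce to the height-zero component. The decomposition $\M_H = \coprod_{i \in \Z} \M_H^{(i)}$ restricts to $\M_{H,\widetilde{\Lambda}}$, and the action of a suitable $h \in H(\Qp)$ with $\ord_p \sml(h) = 1$ stabilizing $\widetilde{\Lambda}^{\natural}$ yields isomorphisms $\M_{H,\widetilde{\Lambda}}^{(i)} \cong \M_{H,\widetilde{\Lambda}}^{(i+1)}$ for all $i \in \Z$, so that it suffices to treat $\M_{H,\widetilde{\Lambda}}^{(0)}$.

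The main step is to construct, functorially for reduced $\Fpbar$-algebras $R$ of finite type, a bijection between $\M_{H,\widetilde{\Lambda}}^{(0)}(R)$ and the $R$-points of $\Sbar_{\Omegabar_0(\widetilde{\Lambda})}^{+}$ (after a possible relabeling of $\pm$). Given $(X,\iota,\lambda,\rho) \in \M_{H,\widetilde{\Lambda}}^{(0)}(R)$, Zink's window theory attaches to $X$ a display whose Hodge filtration defines, via a relativization of the bijection of Theorem~\ref{hppc} together with Proposition~\ref{sdmd}, a special-lattice datum $L(X,\iota,\lambda,\rho)$. The condition $\rho^{-1} \circ \widetilde{\Lambda}^{\natural} \circ \rho \subset \End(X)$ is precisely what forces $L(X,\iota,\lambda,\rho)$ to contain $\widetilde{\Lambda}^{\natural} \otimes_{\Zp} W(R)$, and hence to descend to a locally free, totally isotropic $R$-submodule $\bar{L} \subset \Omegabar_0(\widetilde{\Lambda}) \otimes_{\Fp} R$ of rank $m+1$, where $2m+2 = \dim_{\Fp}\Omegabar_0(\widetilde{\Lambda})$. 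The subflag $\bar{L} \cap \overline{\Phi}_*(\bar{L}) \subset \bar{L}$ then defines an $R$-point of $\OGr(m,m+1;\Omegabar_0(\widetilde{\Lambda}) \otimes \Fpbar)$ lying in $\Sbar_{\Omegabar_0(\widetilde{\Lambda})}$. Conversely, starting from such a flag, a relative version of Lemma~\ref{tilg} reconstructs a $\Zps$-stable Lagrangian subspace in the appropriate window, from which $(X,\iota,\lambda,\rho)$ is recovered.

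The hard part will be making these correspondences genuinely functorial in $R$: the arguments of \cite[\S 2.4]{hp} and Appendix~\ref{hpcr} are formulated only over discrete valuation rings with perfect residue field, and one has to systematically relativize them while tracking local freeness, compatibility with base change, and the Kottwitz condition, in particular adapting Lemma~\ref{sdlg} to coherent sheaves over arbitrary reduced $\Fpbar$-algebras. One also has to select the correct connected component of $\OGr(m,m+1;\Omegabar_0(\widetilde{\Lambda}) \otimes \Fpbar)$ to land in $\Sbar_{\Omegabar_0(\widetilde{\Lambda})}^{+}$ rather than its complement. Once this has been done, the resulting functorial bijection upgrades to an isomorphism $\M_{H,\widetilde{\Lambda}}^{(0)} \cong \Sbar_{\Omegabar_0(\widetilde{\Lambda})}^{+}$ of schemes. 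Since $\Sbar_{\Omegabar_0(\widetilde{\Lambda})}^{+}$ is smooth, projective, and irreducible by Proposition~\ref{suir}, it is in particular a reduced scheme of characteristic $p$, and so is $\M_{H,\widetilde{\Lambda}}^{(0)}$; by the initial reduction, the same holds for $\M_{H,\widetilde{\Lambda}}$, which proves the theorem.
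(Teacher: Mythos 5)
There is a genuine gap, and it is located precisely at the final ``upgrade'' step. A functorial bijection $\M_{H,\widetilde{\Lambda}}^{(0)}(R)\cong \Sbar_{\Omegabar_0(\widetilde{\Lambda})}^{+}(R)$ for \emph{reduced} $\Fpbar$-algebras $R$ of finite type can only identify the underlying reduced scheme $\M_{H,\widetilde{\Lambda}}^{(0),\red}$ with $\Sbar_{\Omegabar_0(\widetilde{\Lambda})}^{+}$: a formal scheme and its reduction have exactly the same points valued in reduced test rings, so no amount of functoriality over reduced $R$ can rule out nilpotents in the structure sheaf. What you would reprove this way is essentially \cite[Theorem 3.9]{hp} (the description of the reduced Bruhat--Tits stratum), which is already taken as known; the whole content of the theorem is the additional assertion that the natural (a priori possibly non-reduced) closed formal subscheme structure on $\M_{H,\widetilde{\Lambda}}$ coincides with this reduced one. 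Your preliminary reduction to the height-zero component is fine, but the main argument does not reach the statement.

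To detect reducedness one must control points valued in \emph{non-reduced} artinian test rings, and this is exactly what the paper does: it works with the category $\sC$ of local artinian $W$-algebras equipped with divided power structures, uses Grothendieck--Messing theory to identify $\widehat{\M}_{H,x}(\O)$ with the set of lifts of an isotropic line in $\Phi(L)\otimes_W\O$ (Proposition \ref{dmil}, relying on the relative linear algebra of Lemma \ref{liat}), shows that the condition $\rho^{-1}\circ\widetilde{\Lambda}^{\natural}\circ\rho\subset\End(X)$ translates into the perpendicularity condition $\widetilde{l}\perp\widetilde{\Lambda}^{\natural}_{\O}$ on such lifts, and then concludes by comparing complete local rings with those of the smooth variety $\Sbar_{\Omegabar_0(\widetilde{\Lambda})}$, following \cite[Theorem 4.2.11]{lz}. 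Note also that the dictionary you invoke (Theorems \ref{rtdu} and \ref{hppc}, Proposition \ref{sdmd}) is formulated for perfect fields via Witt vectors and $\sigma$; extending it to artinian thickenings is precisely the role of the divided-power/crystalline formalism, so the deformation-theoretic detour is not optional but is the essential mechanism of the proof.
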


To prove the assertion above, we introduce some notations. 

\begin{dfn}(cf.~\cite[Definition 4.1.6]{lz}) 
Let $\sC$ be the category defined as follows: 
\begin{itemize}
\item objects in $\sC$ are triples $(\O,s,\gamma)$, where $\O$ is a local artinian $W$-algebra, $s$ is a $W$-homomorphism $\O \rightarrow \Fpbar$, and $\gamma$ is a divided power structure on $\Ker(s)$ (see \cite[Definitions 3.1, Definition 3.27]{bo}), 
\item a morphism of objects in $\sC$ from $(\O_1,s_1,\gamma_1)$ to $(\O_2,s_2,\gamma_2)$ is a $W$-homomorphism $g\colon \O_1\rightarrow \O_2$ such that $s_1=s_2\circ g$ and $(g\vert_{\Ker(s_1)})\circ \gamma_1=\gamma_2 \circ (g\vert_{\Ker(s_1)})$. 
\end{itemize}
\end{dfn}

\begin{rem}
\begin{enumerate}
\item For an object $(\O,s,\gamma)$ in $\sC$, the homomorphism $s$ is surjective since it is a $W$-homomorphism. Hence $\Ker(s)$ is the unique maximal (or prime) ideal of $\O$, and $s$ induces an isomorphism between the residue field of $\O$ and $\Fpbar$. 
\item For a morphism $g\colon (\O_1,s_1,\gamma_1)\rightarrow(\O_2,s_2,\gamma_2)$ in $\sC$, we have $g(\Ker(s_1))=\Ker(s_2)$, that is, $g$ is a local homomorphism. 
\end{enumerate}
\end{rem}

We simply denote $(\O,s,\gamma)\in \sC$ by $\O$ as long as there is no confusion. 

\begin{dfn}
Let $(L,Q)$ be a quadratic space over $W$. 
\begin{itemize}
\item Let $\Isot_{L}$ be the functor which parametrizes all isotropic lines in $L\otimes_{W}R$ for any $W$-algebra $R$. Here, an isotropic line $l$ in $L\otimes_{W}R$ is an $R$-submodule of $L\otimes_{W}R$ which is locally a direct summand of rank $1$ and $Q(l)=0$. Note that the functor $\Isot_{L}$ is representable by a projective scheme. 
\item Suppose that $R$ is a local ring whose residue field is $\Fpbar$, and let $s\colon R\rightarrow \Fpbar$ be the canonical homomorphism. Then, for an isotropic line $l$ in $L\otimes_{W}\Fpbar$, put
\begin{equation*}
\Lift_{R}(l):=\Isot_L(s)^{-1}(l)\subset \Isot_{L}(R). 
\end{equation*}
\end{itemize}
\end{dfn}

Under the notations above, we prove the following: 

\begin{prop}\label{dmil}
\emph{Let $x\in \M_H(\Fpbar)$. We denote by $\widehat{\M}_{H,x}$ the completion of $\M_H$ at $x$. Let $M$ be the Dieudonn{\'e} lattice in $\D_{\Q}$ corresponding to $x$ under the bijection in Theorem \ref{rtdu}, and $L$ the image of $M$ under the bijection $M\mapsto L(M)$ in Theorem \ref{hppc}. Moreover, put $\Fil^1\Phi(L):=p(L+\Phi(L))/p\Phi(L)$ (it is an isotropic line in $\Phi(L)\otimes_{W}\Fpbar \cong \Phi(L)/p\Phi(L)$ by Lemma \ref{sdlg}). Then, for $\O \in \sC$, there is a bijection
\begin{equation*}
f_{\O}\colon \widehat{\M}_{H,x}(\O)\xrightarrow{\cong}\Lift_{\O}(\Fil^1\Phi(L))
\end{equation*}
satisfying the following properties. 
\begin{enumerate}
\item For a morphism $g\colon \O \rightarrow \O'$, we have 
\begin{equation*}
f_{\O'}\circ \widehat{\M}_{H,x}(g)=\Isot_g(\Phi(L))\circ f_{\O}\colon \widehat{\M}_{H,x}(\O)\rightarrow \Lift_{\O'}(\Fil^1\Phi(L)). 
\end{equation*}
\item Let $\bv$ be a $\Zp$-submodule in $\L_{\Q}^{\Phi}$ satisfying $x\in \cZ(\bv)$, where $\cZ(\bv)$ is a closed defined in Definition \ref{spcy}. Denote by $\widehat{\cZ}(\bv)_x$ the completion of $\cZ(L)$ at $x$. Then the bijection $f_{\O}$ induces a bijection
\begin{equation*}
\widehat{\cZ}(\bv)_x(\O)\xrightarrow{\cong}\{\widetilde{l}\in \Lift_{\O}(\Fil^1\Phi(L))\mid \widetilde{l}\perp \bv_{\O}\text{ in }\Phi(L)\otimes_{W}\O \}, 
\end{equation*}
where $\bv_{\O}$ is the $\O$-submodule of $\Phi(L)\otimes_{W}\O$ generated by the image of $\bv$ under $\Phi(L)\rightarrow \Phi(L)\otimes_{W}\O$. 
\end{enumerate}}
\end{prop}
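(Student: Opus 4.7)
The plan is to combine Grothendieck--Messing crystalline deformation theory with an $\O$-linear extension of Lemma \ref{sdlg}. First, I would identify $\widehat{\M}_{H,x}(\O)$ with the set of equivalence classes of deformations $(\widetilde X, \widetilde \iota, \widetilde \lambda)$ of $(X,\iota,\lambda)$ to $\O$: the quasi-isogeny $\rho$ extends uniquely by rigidity since $\O \in \sC$ is artinian with $\Ker(s)$ locally nilpotent. Because $\O$ carries divided powers on $\Ker(s)$, the Dieudonné crystal evaluates to $M \otimes_{W} \O$, and Grothendieck--Messing identifies deformations of $X$ with direct-summand lifts $\widetilde{\Fil} \subset M \otimes_{W} \O$ of the Hodge filtration $F^{-1}(pM)/pM \subset M \otimes_{W} \Fpbar$. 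Compatibility with $\iota$ forces $\widetilde{\Fil}$ to be $\Zps$-stable, while compatibility with $\lambda$ forces it to be Lagrangian with respect to the rescaled symplectic form $p^i(\,,\,)$, where $i$ satisfies $M^\vee = p^i M$.

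Second, I would extend Lemma \ref{sdlg}, applied to the self-dual lattice $L^{\sharp}(M) = \Phi(L)$ and the special nearly self-dual lattice $M$, from $\Fpbar$ to $\O$. For $\widetilde l \in \Lift_{\O}(\Fil^1 \Phi(L))$, define
\begin{equation*}
\sM_{\O}(\widetilde l) := \{x \in M \otimes_{W} \O \mid v(x) = 0 \text{ for all } v \in \widetilde l\}.
\end{equation*}
Nakayama's lemma combined with the $\Fpbar$-case of Lemma \ref{sdlg} shows that $\sM_{\O}(\widetilde l)$ is a $\Zps$-stable, $p^i(\,,\,)$-Lagrangian, $\O$-direct summand of $M \otimes_{W} \O$ lifting the Hodge filtration; conversely, $\widetilde{\Fil} \mapsto \{v \in \Phi(L) \otimes_{W} \O \mid v(\widetilde{\Fil}) = 0\}$ produces the inverse. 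Composing with Step 1 gives the bijection $f_{\O}$. Here I must also verify that $\Fil^1 \Phi(L) = p(L+\Phi(L))/p\Phi(L)$ really is the isotropic line corresponding to $F^{-1}(pM)/pM$ under Lemma \ref{sdlg}; this follows from the direct computation that for $v_1 \in L$ and $v_2 \in \Phi(L)$ one has $pv_1(F^{-1}(pM)) \subset pF^{-1}(pM) \subset pM$ and $pv_2(F^{-1}(pM)) \subset pv_2(M) \subset pM$, together with a dimension count. The functoriality (i) is then immediate, because both the Grothendieck--Messing correspondence and the formula defining $\sM_{\O}$ are defined by universal $\O$-linear constructions which commute with base change along any morphism in $\sC$.

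For part (ii), the cycle condition $x \in \cZ(\bv)$ says $\rho^{-1} \circ \bv \circ \rho \subset \End(X)$. A Grothendieck--Messing argument applied to the lifting of an endomorphism shows that a quasi-endomorphism $v \in \bv$ (which acts by $W$-linear endomorphisms on $M$ commuting with $F$, since $\bv \subset \L_{\Q}^{\Phi}$) lifts to an endomorphism of $\widetilde X$ if and only if $v$ preserves $\widetilde{\Fil}$ in $M \otimes_{W} \O$. Using the description $\widetilde{\Fil} = \sM_{\O}(\widetilde l)$ together with the $\O$-linear extension of Lemma \ref{sdlg}(ii), the condition "$v$ preserves $\sM_\O(\widetilde l)$" rewrites as "$v \perp \widetilde l$ in $\Phi(L) \otimes_{W} \O$", which is exactly the condition in the statement.

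The main technical obstacle will be the $\O$-linear extension of Lemma \ref{sdlg} in the required generality: one must check that $\sM_\O(\widetilde l)$ is an $\O$-direct summand of the correct rank, $\Zps$-stable, and Lagrangian, uniformly across $\O \in \sC$. Since $\O$ is artinian local with residue field $\Fpbar$, this reduces by Nakayama to the $\Fpbar$-statement of Lemma \ref{sdlg}, but care is needed because the quadratic form on $\Phi(L)$ degenerates modulo $p$; the cleanest way to handle this is to work with the Clifford-algebra description recalled in Section \ref{excp}, under which the map $l \mapsto \sM_\O(l)$ becomes manifestly compatible with flat base change from $\Fpbar$ to $\O$ and preserves the direct-summand property throughout.
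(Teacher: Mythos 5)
Your proposal follows essentially the same route as the paper: Grothendieck--Messing identifies $\widehat{\M}_{H,x}(\O)$ with $\Zps$-stable Lagrangian lifts of the Hodge filtration $pF^{-1}(M)/pM$, and the annihilator correspondence between isotropic lines in $\Phi(L)=L^{\sharp}(M)$ and $\Zps$-stable Lagrangians in $M$ converts these into lifts of $\Fil^1\Phi(L)$; part (ii) then follows from the endomorphism-lifting criterion exactly as you describe. The one step you under-justify is the claim that Nakayama plus the $\Fpbar$-case of Lemma \ref{sdlg} gives the direct-summand property of $\sM_{\O}(\widetilde{l})$: kernel formation does not commute with the non-flat base change $W\rightarrow \O$ (and $\O$ need not be an $\Fpbar$-algebra, so there is no flat base change from $\Fpbar$ either), so this must instead be proved uniformly over arbitrary $W$-algebras, which is what the paper's Lemma \ref{liat} does by exhibiting $\sM(l)=v(M\otimes_W R)$ explicitly as a free rank-$4$ summand via the basis $x_1,\ldots,x_6$ and using the Clifford--Morita structure (Lemma \ref{mreq}) for the perpendicularity statement needed in (ii). Since you correctly identify this as the main obstacle and point to the Clifford-algebra description as the fix, the gap is one of execution rather than of strategy.
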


The assertion above is a variant of \cite[Theorem 4.1.7]{lz}. Hence, if Proposition \ref{dmil} is true, then Theorem \ref{redh} follows from the same argument as the proof of \cite[Theorem 4.2.11]{lz}. 

To prove Proposition \ref{dmil}, we generalize Lemma \ref{sdlg}. For this, we moreover prepare some notations. 

\begin{dfn}
Let $M$ and $i$ be as in Lemma \ref{sdlg}. 
\begin{itemize}
\item Let $\Lag_M^{\Zps}$ be a functor which parametrizes $\Zps$-stable Lagrangian subspaces in $M\otimes_{W}R$ for any $W$-algebra $R$. Here, a Lagrangian subspace $\sM$ in $M\otimes_{W}R$ is an $R$-submodule of $M\otimes_{W}R$ which is locally a direct summand of rank $4$ and $p^i(\sM,\sM)=0$. Note that the functor $\Lag_M^{\Zps}$ is representable by a projective scheme. 
\item Suppose that $R$ is a local ring whose residue field is $\Fpbar$, and let $s\colon R\rightarrow \Fpbar$ be the canonical homomorphism. Then, for an isotropic line $\sM$ in $M\otimes_{W}\Fpbar$, put
\begin{equation*}
\Lift_{R}^{\Zps}(\sM):=\Lag_{M}^{\Zps}(s)^{-1}(\sM)\subset \Lag_{M}^{\Zps}(R). 
\end{equation*}
\end{itemize}
\end{dfn}

\begin{lem}\label{liat}
\emph{Let $M$ and $L$ be as in Lemma \ref{sdlg}. 
\begin{enumerate}
\item For a $W$-algebra $R$ and $l\in \Isot_{R}(L^{\sharp})$, 
\begin{equation*}
\sM(l):=\{x\in M\otimes_{W}R \mid v(x)=0\text{ for any }v\in l\}
\end{equation*}
is a $\Zps$-stable Lagrangian subspace in $M\otimes_{W}R$. Hence $l\mapsto \sM(l)$ induces a morphism of $W$-schemes
\begin{equation*}
f\colon \Isot_{L^{\sharp}}\rightarrow \Lag_{M}^{\Zps}. 
\end{equation*}
\item The morphism $f$ in (i) is an isomorphism. The inverse is given by 
\begin{equation*}
\Lag_{M}^{\Zps}(R)\rightarrow \Isot_{L^{\sharp}}(R);\sM \mapsto l(\sM):=\{v\in L^{\sharp}\otimes_{W}R \mid v(\sM)=0\}
\end{equation*}
for any $W$-algebra $R$. Moreover, we have
\begin{equation*}
l(\sM)^{\perp}=\{v\in L^{\sharp}\otimes_{W}R \mid v(\sM)\subset \sM \}. 
\end{equation*}
\end{enumerate}}
\end{lem}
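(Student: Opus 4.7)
The plan is to globalize the field-valued bijections of Lemmas \ref{sdlg} and \ref{tilg} to an isomorphism of $W$-schemes, by constructing $f$ and its proposed inverse $g$ as natural transformations of functors and verifying both the structural conditions (rank, isotropy) and mutual inversion.

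For part (i), given $l \in \Isot_{L^{\sharp}}(R)$, elements of $L^{\sharp} \subset \bigwedge^{2}_{\Qps}\D_{\Q}$ act as $\Qps$-linear endomorphisms of $\D_{\Q}$ preserving $M$, so $\sM(l)$ is a well-defined $R$-submodule of $M \otimes_{W}R$ which is automatically $\Zps$-stable. Working Zariski-locally on $R$ we may assume $l = Rv$ with $Q(v) = v \circ v = 0$ in $\End(M \otimes_{W}R)$. From Lemma \ref{wdhm} together with the relation $(\cdot, \cdot) = \tr_{\Qps/\Qp}((\varepsilon^{-1}\otimes 1)\langle \cdot, \cdot\rangle)$, a direct computation gives $(x, v(y)) = (v(x), y)$, i.e.\ $v$ is self-adjoint for $(\cdot, \cdot)$, hence $\im(v) \subset \Ker(v)^{\perp}$. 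Since Lemma \ref{sdlg} yields $\rk \Ker(v \otimes \kappa(s)) = 4$ for every point $s \in \spec R$, the standard constant-rank-kernel criterion for maps of finite locally free modules upgrades this to $\Ker(v)$ being locally a direct summand of rank $4$. Combining $\im(v) \subset \Ker(v) \cap \Ker(v)^{\perp}$ with the rank equality $\rk \Ker(v)^{\perp} = 4 = \rk \Ker(v)$ forces $\Ker(v) = \Ker(v)^{\perp}$, so $\sM(l)$ is Lagrangian.

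For part (ii), I define $g(\sM) := l(\sM) = \{v \in L^{\sharp} \otimes_{W}R : v(\sM) = 0\}$. A symmetric argument using constant rank on fibers (via Lemma \ref{tilg}) shows $l(\sM)$ is locally free of rank $1$. Its isotropy $Q(v) = 0$ follows from self-adjointness of $v$ together with $v(\sM) = 0$: one has $\im(v) \subset \sM^{\perp} = \sM$, so $v$ sends $\sM$ to $0$ and any complement of $\sM$ into $\sM$, making $v^2 = 0$ on all of $M \otimes_{W}R$. The identity $l(\sM)^{\perp} = \{v : v(\sM) \subset \sM\}$ results from interpreting $\langle v, w \rangle$ on $L^{\sharp}$ via the trace pairing on $\End(M)$: for $w \in l(\sM)$, the condition $\langle v, w \rangle = 0$ is equivalent to the composite $w \circ v$ (which already maps $M \to \sM$) restricting to zero on $\sM$, i.e.\ to $v(\sM) \subset \sM$. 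Mutual inversion of $f$ and $g$ then reduces to the field case of Lemma \ref{tilg} via faithfully flat descent and Nakayama's lemma.

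The main obstacle will be the upgrade from fiberwise rank-constancy to local direct-summandhood of $\sM(l)$ and $l(\sM)$ as $R$-submodules; this is handled by the constant-rank-kernel criterion for morphisms between finite locally free modules. With that in hand, the remaining verifications are essentially pointwise reductions to the field cases already covered by Lemmas \ref{sdlg} and \ref{tilg}.
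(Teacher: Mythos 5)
Your argument is correct in substance but follows a genuinely different route from the paper's. For (i), after localizing to $l=Rv$, the paper reduces to $M=\D$ via an element of $H^0(K)$, expands $v=\sum_i a_ix_i$ in the explicit basis of Definition \ref{xidf}, and uses the relation $a_1a_2+a_3a_4+a_5a_6=0$ to write down an explicit rank-$4$ free basis of $v(M\otimes_WR)$, thereby proving the identity $\sM(l)=v(M\otimes_WR)$; this identity is then reused in (ii) to compute $l(\sM(Rv))=Rv$ and, via the Morita equivalence $C(L^{\sharp})\cong M^{\oplus 8}$ of Lemma \ref{mreq} and a right-ideal argument in the Clifford algebra, to prove the formula for $l(\sM)^{\perp}$. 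Surjectivity of $f$ is obtained there from transitivity of the $\GU^0(M)(R)$-action on $\Zps$-stable Lagrangians. You instead exploit the self-adjointness $(x,v(y))=(v(x),y)$ (which does follow from Lemma \ref{wdhm} together with $\tau(\varepsilon)=-\varepsilon$) and the constant-fiber-rank criterion for maps of finite locally free modules, which globalizes the field-case Lemmas \ref{sdlg} and \ref{tilg} without any coordinate computation; your direct verification of $l(\sM)^{\perp}=\{v\mid v(\sM)\subset\sM\}$ by restricting $[v,w]=v\circ w+w\circ v$ to $\sM$ (a scalar vanishing on a nonzero direct summand must vanish) replaces the Clifford-algebra step entirely and renders Lemma \ref{mreq} unnecessary. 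Your version is more uniform and arguably cleaner; the paper's explicit computation has the side benefit of producing the image description $\sM(l)=v(M\otimes_WR)$, which it leans on later.

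Two steps need tightening. First, in (i) the containment $\im(v)\subset\Ker(v)\cap\Ker(v)^{\perp}$ by itself does not compare $\Ker(v)$ with $\Ker(v)^{\perp}$; you must also record that the constant-rank criterion makes $\im(v)$ a local direct summand of rank $4$, so that $\im(v)=\Ker(v)^{\perp}$ (equal-rank direct summands, one inside the other), whence $\Ker(v)^{\perp}=\im(v)\subset\Ker(v)$ and equality follows. Second, "mutual inversion reduces to the field case via faithfully flat descent and Nakayama" is not the right mechanism: what you actually need are the tautological inclusions $l\subset l(\sM(l))$ and $\sM\subset\sM(l(\sM))$, which, combined with the fact that both sides are local direct summands of the same rank (rank $1$, resp.\ rank $4$, by what you have already proved), give equality directly. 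Also note that your fiberwise rank input must invoke Lemma \ref{sdlg} at characteristic-$p$ points and Lemma \ref{tilg} at characteristic-$0$ points of $\spec R$, and that elements of $L^{\sharp}$ are $\Zps$-\emph{conjugate}-linear rather than linear (the paper's relation $\iota_0(a)\circ v=v\circ\iota_0(a^{*})$), which still yields $\Zps$-stability of $\sM(l)$ since $*$ is an automorphism of $\Zps$.
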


To prove Lemma \ref{liat}, we need a preparation on certain $C(L^{\sharp})$-modules. As pointed out in the proof of \cite[Lemma 2.16]{hp}, the inclusion $\L_{\Q}\subset \End(\D_{\Q})$ induces an isomorphism 
\begin{equation*}
i'\colon C(L^{\sharp})\xrightarrow{\cong} \End(M). 
\end{equation*}
We regard $M$ as a left $C(L^{\sharp})$-module under the isomorphism above. On the other hand, we regard $C(L^{\sharp})$ as a left $C(L^{\sharp})$-module by the left multiplication. 

\begin{lem}\label{mreq}
\emph{Under the notation in Lemma \ref{liat}, there is an isomorphism $C(L^{\sharp})\cong M^{\oplus 8}$ of $C(L^{\sharp})$-modules. }
\end{lem}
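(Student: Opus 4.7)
The plan is to exploit the already recalled ring isomorphism
\[
i'\colon C(L^{\sharp}) \xrightarrow{\cong} \End(M).
\]
Under $i'$, the action of $C(L^{\sharp})$ on $M$ used in the statement of Lemma \ref{mreq} is, by construction, the tautological $\End(M)$-action on $M$, and the left-multiplication action of $C(L^{\sharp})$ on itself corresponds to the left-multiplication action of $\End(M)$ on itself. So it will suffice to produce an isomorphism $\End(M) \cong M^{\oplus 8}$ of left $\End(M)$-modules, because pulling back via $i'$ then gives the desired isomorphism of left $C(L^{\sharp})$-modules.

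For this I will use the canonical $W$-linear isomorphism
\[
M \otimes_{W} M^{\vee} \xrightarrow{\cong} \End(M),\qquad m \otimes \phi \mapsto \bigl(x \mapsto \phi(x)\, m\bigr),
\]
where $M^{\vee} := \Hom_W(M, W)$, and endow the left-hand side with the left $\End(M)$-action on the first factor $M$; a direct check shows this map is $\End(M)$-linear. Since $M$ is free of rank $8$ over $W$, so is $M^{\vee}$, and choosing any $W$-basis $\phi_1,\ldots,\phi_8$ of $M^{\vee}$ yields
\[
M \otimes_W M^{\vee} = \bigoplus_{j=1}^{8} M \otimes W\phi_j \cong M^{\oplus 8}
\]
as left $\End(M)$-modules. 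Composing with the inverse of the isomorphism above and with $i'$, we obtain an isomorphism $C(L^{\sharp}) \cong M^{\oplus 8}$ of left $C(L^{\sharp})$-modules. Numerically everything checks: $\rk_W C(L^{\sharp}) = 2^{6} = 64 = 8 \cdot \rk_W M$.

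No real obstacle is expected; the only point requiring any attention is verifying that the left-multiplication module structure on $C(L^{\sharp})$ transports under $i'$ to the standard left action of $\End(M)$ on itself, and this is a formal consequence of $i'$ being a ring homomorphism. Thus, once $i'$ is in hand (as it is, by the argument from \cite[Lemma 2.16]{hp}), the lemma reduces to the general fact that, for a free module $M$ of finite rank $n$ over a commutative ring, there is a canonical isomorphism $\End(M) \cong M^{\oplus n}$ of left $\End(M)$-modules.
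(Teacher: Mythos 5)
Your argument is correct, and it reaches the same reduction as the paper: transport the problem through the ring isomorphism $i'\colon C(L^{\sharp})\cong \End(M)$ and then show that $\End(M)$, as a left module over itself, is isomorphic to $M^{\oplus 8}$. Where you differ is in how that last fact is established. The paper fixes a $W$-basis $e''_1,\ldots,e''_8$ of $M$, takes the idempotent $\varepsilon_1\in C(L^{\sharp})$ corresponding to the projection onto $e''_1$, observes that $\varepsilon_1C(L^{\sharp})$ and $M$ are both free of rank $8$ over $W$, and then invokes Morita equivalence for the matrix algebra $\End(M)$ to upgrade this to an isomorphism of $C(L^{\sharp})$-modules. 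You instead write down the canonical $\End(M)$-linear isomorphism $M\otimes_W \Hom_W(M,W)\cong \End(M)$ and split off the $8$ summands by choosing a basis of the dual module; this is in effect the column decomposition $\End(M)=\bigoplus_j \End(M)\varepsilon_j$ made explicit. Your route is the more elementary and self-contained of the two (no appeal to Morita theory), at the cost of one routine linearity check; the paper's is shorter once Morita equivalence is taken as known. Either way the numerics $\rk_W C(L^{\sharp})=2^6=64=8\cdot\rk_W M$ confirm the count, and no gap remains.
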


\begin{proof}
Fix a $W$-basis $e''_1,\ldots,e''_8$ of $M$, and let $\varepsilon_1\in C(L^{\sharp})$ be the element corresponding to the endomorphism
\begin{equation*}
M\rightarrow M;e''_i\mapsto \delta_{i1}e''_i
\end{equation*}
under the isomorphism $i'\colon C(L^{\sharp})\xrightarrow{\cong} \End(M)$. Then, both $\varepsilon_1C(L^{\sharp})$ and $M$ are free $W$-modules of rank $8$. Hence there is an isomorphism $\varepsilon_1C(L^{\sharp})\cong M$ of $W$-modules, which associates a desired isomorphism by the Morita equivalence. 
\end{proof}

\begin{proof}[Proof of Lemma \ref{liat}]
(i): By localizing $R$, we may assume that $l$ is free over $R$. Take $v\in l$ so that $l=Rv$. The stability of $\sM(l)$ under the $\Zps$-action is a consequence of the equality $\iota_0(a)\circ v=v\circ \iota_0(a^{*})$ for any $a\in \Zps$. Next, we prove the equality
\begin{equation}\label{kieq}
\sM(l)=v(M \otimes_W R),
\end{equation}
and that it is a totally isotropic $R$-direct summand of $\D \otimes_{W}R$ of rank $4$. Since $M$ is special nearly self-dual, there is $h\in H^0(K_0)$ such that $M=h(\D)$. Hence we may assume $M=\D$. Write 
\begin{equation*}
v=\sum_{i=1}^{6}a_ix_i
\end{equation*}
for $a_i\in R$ (see Definition for the definition of $x_i$). Then we have 
\begin{equation}\label{aimp}
a_1a_2+a_3a_4+a_5a_6=0. 
\end{equation}
On the other hand, since $l$ is an $R$-direct summand of $\L \otimes_W R$, we have $a_i\in R^{\times}$ for some $i$. Here, we suppose that $a_1\in R^{\times}$. Other cases are similar. We may assume $a_1=1$. Then, by using (\ref{aimp}), we obtain
\begin{align*}
\sM(l)&=v(\D \otimes_W R) \\
&=R(e_2+a_3e_3+a_5e_4)\oplus R(-e_1+a_6e_3-a_4e_4)\oplus R(-a_4f_1-a_5f_2+f_4)\oplus R(-a_6f_1+a_3f_2-f_3),
\end{align*}
which is an $R$-direct summand of $\D \otimes_{W}R$ of rank $4$. Hence the assertion follows. 

(ii): The injectivity follows from Lemmas \ref{sdlg} (ii), \ref{tilg} (ii) and the Nakayama's lemma. Next, we prove the surjectivity. Put 
\begin{equation*}
\GU(M)(R):=\{(h,c)\in \GL_{\Zps \otimes_{\Zp}R}(M\otimes_W R)\mid p^i(h(v),h(w))=cp^i(v,w)\text{ for all }v,w\in M\otimes_W R \}. 
\end{equation*}
Then we have an isomorphism
\begin{equation*}
\GU(M)(R)\xrightarrow{\cong} \GL(\varepsilon_0M\otimes_{W}R)\times R^{\times};h\mapsto (h_0,\sml(h))
\end{equation*}
(see Definition \ref{e0e1} for the definition of $\varepsilon_0$). The proof is the same as the isomorphy of the first homomorphism as in Lemma \ref{htis} (i). Moreover, if we set
\begin{equation*}
\GU^0(M)(R):=\{(h,c)\in \GU(M)(R)\mid c^2=\det{}_{\Zps \otimes_{\Zp}R}(h)\}, 
\end{equation*}
then the isomorphism above induces the following isomorphism: 
\begin{equation*}
\GU^0(M)(R)\cong \{(h_0,c)\in \GL(\varepsilon_0M\otimes_{W}R)\times R^{\times}\mid c^2=\det(h_0)\}. 
\end{equation*}
On the other hand, we have a bijection
\begin{equation*}
\Lag_{M}^{\Zps}(R)\xrightarrow{\cong}\{ R\text{-direct summands of }\varepsilon_0M\otimes_{W}R\text{ of rank }2\};\sM \mapsto \varepsilon_0 \sM. 
\end{equation*}
which commutes with the canonical actions of $\GU(M)(R)\cong \GL(\varepsilon_0M\otimes_{W}R)\times R^{\times}$. Since $\GU^0(M)(R)$ acts transitively on the set of all $R$-direct summands of $M\otimes_WR$ of rank $2$, so does $\Lag_{M}^{\Zps}(R)$. Hence the surjectivity follows. 

Next, we give an inverse of $f$. Take $\sM \in \Lag_{M}^{\Zps}(R)$. By localizing $R$, we may assume that $\sM$ is free over $R$. Take $v\in L^{\sharp}\otimes_W R$ such that $Rv\in \Isot_{L^{\sharp}}(R)$ and $\sM=\sM(Rv)$ (this is possible by the proof of the bijectivity of $f$). Then, by (\ref{kieq}) we have 
\begin{equation*}
l(\sM)=l(v(M\otimes_W R))=Rv, 
\end{equation*}
which implies the assertion for $l$. On the other hand, for $w\in L^{\sharp}$, we have $v\perp w$ if and only if $wvC(L^{\sharp})\subset vC(L^{\sharp})$ by the same argument as in the proof of \cite[Lemma 2.16]{hp}. Moreover, the condition $wvC(L^{\sharp})\subset vC(L^{\sharp})$ is interpreted as $w(\sM) \subset \sM$ by Lemma \ref{mreq}. Hence the equality for $l(\sM)^{\perp}$ follows. 
\end{proof}

\begin{proof}[Proof of Proposition \ref{dmil}]
Put $\Fil^1_x:=pF^{-1}(M)/pM$. Then it is a $\Zps$-stable Lagrangian subspace of $M\otimes_W\Fpbar \cong M/pM$. Let $\O \in \sC$. By the Grothendieck-Messing theory (see \cite[Chapter V, Theorem (1.6)]{mes}), we have a bijection
\begin{equation*}
\GM_{\O}\colon \widehat{\M}_{H,x}(\O)\xrightarrow{\cong}\Lift_{\O}^{\Zps}(\Fil^1_x). 
\end{equation*}
On the other hand, the bijection $\sM \mapsto l_{\O}(\sM)$ in Lemma \ref{liat} induce a bijection
\begin{equation*}
l_{x,\O}\colon \Lift_{\O}^{\Zps}(\Fil^1_{x})\xrightarrow{\cong}\Lift_{\O}(\Fil^1\Phi(L)). 
\end{equation*}
Now, put $f_{\O}:=l_{x,\O}\circ \GM_{\O}$. Then, $f_{\O}$ is bijective and satisfies (i) by Proposition \ref{dmil} (i), (ii) and (iv). Next, we prove (ii). For $\widetilde{x}\in \widehat{\M}_{H,x}(\O)$, we have $\widetilde{x}\in \widehat{\cZ}(\bv)_{x}(\O)$ if and only if $v(\GM_{\O}(\widetilde{x}))\subset \GM_{\O}(\widetilde{x})$ for any $v\in \bv$. By Proposition \ref{dmil} (iii), it is equivalent to the condition that $\bv_{\O}\subset l_{\O}(\GM_{\O}(\widetilde{x}))^{\perp}=f_{\O}(\widetilde{x})^{\perp}$. 
\end{proof}


\begin{thebibliography}{99}
\bibitem[Bas74]{bas}
H.~Bass, \emph{Clifford algebras and spinor norms over a commutative ring}, Amer.~J.~Math.~\textbf{96} (1974), 156--206. 
\bibitem[BO78]{bo}
P.~Berthelot, A.~Ogus, \emph{Notes on crystalline cohomology}, Princeton University Press, 1978. 
\bibitem[Cho18]{cho}
S.~Cho, \emph{The basic locus of the unitary Shimura variety with parahoric level structure, and special cycles}, preprint, arXiv:1807.09997, 2018. 
\bibitem[Fan11]{fan}
Y.~Fang, \emph{Zeta functions of complexes from $\PGSp(4)$}, Ph.~D.~thesis, The Pennsylvania State of University, 2011. 
\bibitem[Fu15]{fu}
L.~Fu, \emph{Etale cohomology theory, revised edition}, Nankai Tracts in Mathematics, vol.~14, 2015. 
\bibitem[Gar97]{gar}
P.~Garrett, \emph{Buildings and classical groups}, Chapman \& Hall, London, 1997. 
\bibitem[Hai05]{hai}
T.~J.~Haines, \emph{Introduction to Shimura varieties with bad reduction of parahoric type}, in \emph{Harmonic analysis, the trace formula, and Shimura varieties, Proc.~Clay Mathematics Institute 2003 Summer School, The Fields Institute Toronto, 2-27, June 2003}, Clay Mathematical Proceedings, vol.~4, eds J.~Arthur, D.~Ellwood and R.~E.~Kottwitz (American Mathematical Society/Clay Mathematics Institute, Providence, RI/Cambridge, MA, 2005), 583--658. 
\bibitem[HLZ19]{hlz}
X.~He, C.~Li, Y.~Zhu, \emph{Fine Deligne--Lusztig varieties and arithmetic fundamental lemma}, Forum Math.~Sigma \textbf{7} (2019), e47. 
\bibitem[HPR18]{hpr}
X.~He, G.~Pappas, M.~Rapoport, \emph{Good and semi-stable reductions of Shimura varieties}, preprint, arXiv:1804.09615, 2018. 
\bibitem[HP14]{hp}
B.~Howard, G.~Pappas, \emph{On the supersingular locus of $\GU(2,2)$ Shimura variety}, Algebra Number Theory \textbf{8} (2014), 1659--1699. 
\bibitem[HP17]{hp2}
B.~Howard, G.~Pappas, \emph{Rapoport--Zink spaces for spinor groups}, Compos.~Math.~\textbf{153} (2017), 1050--1118. 
\bibitem[KO87a]{ko}
T.~Katsura, F.~Oort, \emph{Families of supersingular abelian surfaces}, Compos.~Math.~\textbf{62} (1987) no.~2, 107--167. 
\bibitem[KO87b]{ko2}
T.~Katsura, F.~Oort, \emph{Supersingular abelian varieties of dimension two or three and class numbers}, in \emph{Algebraic geometry, Sendai, 1985}, Adv.~Stud.~Pure~Math.~\textbf{10} 253--281, North-Holland, Amsterdam, 1987. 
\bibitem[KMPS]{kmps}
M.~Kisin, K.~Madapusi Pera, S.~W.~Shin, \emph{Honda--Tate theory for Shimura varieties}, preprint, \texttt{https://math.berkeley.edu/\~{}swshin/HT.pdf}. 
\bibitem[Kot85]{kot2}
R.~E.~Kottwitz, \emph{Isocrystals with additional structure}, Compos.~Math.~\textbf{56} (1985), 201--220. 
\bibitem[Kot92]{kot3}
R.~E.~Kottwitz, \emph{Points on some Shimura varieties over finite fields}, J.~Amer.~Math.~Soc.~\textbf{5} (1992) no.~2, 373--444. 
\bibitem[Kud02]{kud}
S.~Kudla, \emph{Derivatives of Eisenstein series and generating functions for arithmetic cycles}, S{\'e}minaire Bourbaki, 52 ann{\'e}e, 1999--2000, no.~876.~Ast{\'e}risque \textbf{276} (2002), 341--368. 
\bibitem[KR00]{kr}
S.~Kudla, M.~Rapoport, \emph{Cycles on Siegel threefolds and derivatives of Eisenstein series}, Ann. Sci. {\'E}cole Norm.~Sup.~(4) \textbf{33} (2000), no.~5, 695--756. 
\bibitem[Lan02]{lan}
S.~Lang, \emph{Algebra}, Graduate Texts in Mathematics 211, Springer-Verlag, New York, third edition, 2002. 
\bibitem[LO98]{lo}
K.-Z.~Li, F.~Oort, \emph{Moduli of supersingular abelian varieties}, Lecture Notes in Mathematics 1680, Springer-Verlag, Berlin, 1998. 
\bibitem[LZhu18]{lz}
C.~Li, Y.~Zhu, \emph{Arithmetic intersection on GSpin Rapoport--Zink spaces}, Compos. Math.~\textbf{154} (2018), 1407--1440. 
\bibitem[LZha19]{Li2019}
C.~Li, W.~Zhang, \emph{Kudla--Rapoport cycles and derivatives of local densities}, preprint, arXiv:1908.01701, 2019. 
\bibitem[Mes72]{mes}
W.~Messing, \emph{The crystals associated to Barsotti-Tate groups: with application to abelian schemes}, Lecture Notes in Mathematics 264, Springer-Verlag, Berlin-New York, 1972. 
\bibitem[Mie20]{mie}
Y.~Mieda, \emph{On irreducible components of Rapoport--Zink spaces}, Int.~Math.~Res.~Not., IMRN 2020, no.~8, 2361--2407. 
\bibitem[Mum70]{mum}
D.~Mumford, \emph{Abelian varieties}, Tata Institute of Fundamental Research Studies in Mathematics, no.~5, Published for the Tata Institute of Fundamental Research, Bombay, 1970. 
\bibitem[RSZ18]{rsz}
M.~Rapoport, B.~Smithling, W.~Zhang, \emph{Regular formal moduli spaces and arithmetic transfer conjectures}, Math.~Ann.~\textbf{370} (2018), no. 3--4, 1079-–1175. 
\bibitem[RTW14]{rtw}
M.~Rapoport, U.~Terstiege, S.~Wilson, \emph{The supersingular locus of the Shimura variety for $\GU(1,n-1)$ over a ramified prime}, Math.~Z.~\textbf{276} (2014) no.~3--4, 1165--1188. 
\bibitem[RTZ13]{rtz}
M.~Rapoport, U.~Terstige, W.~Zhang, \emph{On the arithmetic fundamental lemma in the minuscule case}, Compos.~Math.~\textbf{149} (2013), 1631--1666. 
\bibitem[RZ96]{rz}
M.~Rapoport, Th.~Zink, \emph{Period spaces for $p$-divisible groups}, Annals of Mathematics Studies, vol.~141, Princeton University Press, Princeton, NJ, 1996. 
\bibitem[Sch85]{sch}
W.~Scharlau, \emph{Quadratic and hermitian forms}, Grundlehren der mathematischen Wissenschaften, A series of Comprehensive Studies in Mathematics, vol.~270, Springer-Verlag, Berlin, 1985. 
\bibitem[Shi10]{shi}
G.~Shimura, \emph{Arithmetic of quadratic forms}, Springer Monographs in Mathematics, Springer, New York, 2010. 
\bibitem[Ter11]{ter}
U.~Terstiege, \emph{Intersections of arithmetic Hirzeburch-Zagier cycles}, Math.~Ann.~\textbf{349} (2011), 161--213. 
\bibitem[Vol10]{vol}
I.~Vollaard, \emph{The supersingular locus of the Shimura variety for $\GU(1,s)$}, Canad.~J.~Math.~\textbf{62} (2010) no.~3, 668--720. 
\bibitem[VW11]{vw}
I.~Vollaard, T.~Wedhorn, \emph{The supersingular locus of the Shimura variety for $\GU(1,n-1)$ II}, Invent.~Math.~\textbf{184} (2011) no.~3, 591--627. 
\bibitem[Wil00]{wil}
R.~A.~Wilson, \emph{Finite simple groups}, Graduate Texts in Mathematics 251, Springer, 2000. 
\bibitem[Wu16]{wu}
H.~Wu, \emph{The supersingular locus of the Shimura varieties with exotic good reduction}, Ph.~D.~thesis, University of Duisburg-Essen, arXiv:1609.08775, 2016. 
\bibitem[Wan19]{wan}
H.~Wang, \emph{On the Bruhat--Tits stratification of a quaternionic unitary Rapoport--Zink space}, preprint, arXiv:1907.05999, 2019. 
\bibitem[Yu11]{yu}
C.~F.~Yu, \emph{Geometry of the Siegel modular threefold with paramodular level structure}, Proc.~Amer.~Math.~Soc.~\textbf{139} (2011), no.~9, 3181–-3190. 
\bibitem[Zin01]{zin}
Th.~Zink, \emph{Windows for displays of $p$-divisible groups}, pp.~491--518 in \emph{Moduli of abelian varieties} (Texel, 1999), edited by C.~Faber et al., Progr.~Math.~\textbf{195}, Birkh{\"a}user, Basel, 2001. 
\bibitem[EGA~IV-0]{ega40}
J.~Dieudonn{\'e}, A.~Grothendieck, \emph{{\'E}l{\'e}ments de g{\'e}om{\'e}trie alg{\'e}brique, IV.~{\'E}tude locale des sch{\'e}mas et des morphismes de sch{\'e}mas, Premi{\`e}re partie}, Inst.~Hautes {\'E}tudes Sci.~Publ.~Math.~no.~20. 
\end{thebibliography}
\end{document}